\newtheorem{theorem}{Theorem}[chapter]
\newtheorem{lemma}[theorem]{Lemma}
\newtheorem{proposition}[theorem]{Proposition}
\newtheorem{corollary}[theorem]{Corollary}
\newtheorem{conjecture}[theorem]{Conjecture}
\theoremstyle{definition}
\newtheorem{definition}[theorem]{Definition}
\theoremstyle{remark}
\newtheorem{remark}[theorem]{Remark}
\numberwithin{section}{chapter}
\numberwithin{equation}{chapter}
\newcommand\Z{\mathbb Z}
\newcommand\R{\mathbb R}
\newcommand\C{\mathbb C}
\newcommand\e{\epsilon}
\newcommand{\Irr}{\operatorname{Irr}}
\newcommand{\GL}{\operatorname{GL}}
\newcommand{\Cusp}{\mathcal{C}}
\newcommand{\Disc}{\mathcal{D}}
\newcommand{\Cuspsd}{\Cusp^{sd}}
\newcommand{\Cuspcl}{\Cusp^{cl}}
\newcommand{\Irrcl}{\Irr^{cl}}
\newcommand{\Discl}{\mathcal{D}^{cl}}
\newcommand{\Tempcl}{\mathcal{T}^{cl}}
\newcommand{\spsi}{\operatorname{s.p.}}
\newcommand{\Alg}{\operatorname{Alg}}
\newcommand{\ms}{\mathfrak{s}}
\newcommand{\fl}{\operatorname{f.l.}}
\newcommand{\soc}{\operatorname{soc}}
\newcommand{\ssm}{\operatorname{s.s.}}
\newcommand{\reg}{\operatorname{reg}}
\newcommand{\shrt}{\operatorname{sla}}
\newcommand{\lng}{\operatorname{lev}}
\newcommand{\abs}[1]{\left|{#1}\right|}
\newcommand{\regshrt}{\operatorname{reg-sla}}
\newcommand{\vregshrt}{\operatorname{v-reg-sla}}
\newcommand{\reglng}{\operatorname{reg-lev}}
\newcommand{\vreglng}{\operatorname{v-reg-lev}}
\newcommand\jrp{\Jord_\rho(\pi)}
\newcommand{\Jord}{\operatorname{Jord}}
\newcommand\h{\hookrightarrow}
\newcommand\ra{\rightarrow}
\newcommand\tha{\twoheadrightarrow}
\newcommand{\Ker}{\operatorname{Ker}}
\newcommand{\Img}{\operatorname{Im}}
\newcommand{\ASS}{DL{ }}
\newcommand{\Ind}{\operatorname{Ind}}
\newcommand\g{\gamma}
\newcommand\D{\Delta}
\newcommand\G{\Gamma}
\newcommand\s{\sigma}
\newcommand\SC{ \mathcal S(\Cusp)}
\newcommand{\JH}{\operatorname{JH}}
\newcommand{\Hom}{\operatorname{Hom}}
\newcommand\supp{\operatorname{supp}}
\begin{document}

\frontmatter

\title[Unitarizability in Corank Three]{Unitarizability in Corank Three for Classical $p$-adic Groups}

\author{Marko Tadi\'c}
\address{Department of Mathematics, University of Zagreb, Bijeni\v{c}ka 30, 10000 Zagreb, Croatia}
\email{\tt tadic{\char'100}math.hr}
\urladdr{http://www.hazu.hr/~tadic/}
\thanks{This work has been supported by Croatian Science Foundation under the project IP-2018-01-3628.}

\date{}

\subjclass[2020]{Primary 22E50}

\keywords{Non-archimedean local fields, classical groups, unitarizability}

\dedicatory{In memory of my parents}

\begin{abstract}
Let $G$ be the $F$-points of a classical group defined over a $p$-adic field $F$ of characteristic $0$.
We classify the irreducible unitarizable representation of $G$ that are subquotients of the parabolic induction
of cuspidal representations of Levi subgroup of corank at most 3 in $G$.
\end{abstract}

\maketitle

\tableofcontents

\mainmatter

\chapter{Introduction}
\label{intro}

One of the fundamental questions in harmonic analysis is the classification of the unitary dual of a locally compact group.
An important class of locally compact groups is the $F$-points of reductive groups over a local field $F$.
We will only consider $p$-adic groups in this paper.
The history of this problem goes back at least to \cite{MR0023246} and \cite{MR0046370}.\footnote{Note that these papers were
published well before \cite{MR0093558}, which is usually considered the beginning of the representation theory of reductive $p$-adic groups.}

The classification of the unitary dual for general linear groups in the $p$-adic case was solved in \cite{MR870688}.
(The archimedean case can be solved along the same lines -- see \cite{TadicBonn1985} and \cite{MR2537046}.)
An important input is Bernstein's result \cite{MR748505} on irreducibility of unitary parabolic induction.
Subsequently, the classification of the unitary dual of inner forms of the general linear group was obtained
(\cite{MR2055385} and \cite{MR2492994}; see also \cite{MR1040995}).
A unified and simplified proof was recently discovered by E. Lapid and A. M\'inguez \cite{MR3573961} (see also \cite{MR3269346}).
Their point of departure is the reducibility points for parabolic induction of cuspidal representations of general linear groups,
which in the split case are always $\pm1$.
This also gives a uniform and simplified approach for the classification of the admissible dual
(see \cite[Appendix]{MR3573961} and \cite{MR3456591}).

Little is known in general about the classification of the unitary dual in case of (other) classical groups, except for some important subclasses
of unitary representations, such as generic representation \cite{MR2046512} or spherical representations \cite{MR2767523}.

A natural question is whether one can attack the unitarizability problem for the classical groups
using the cuspidal reducibility points as the starting point (as is the case for the general linear groups).
Such an approach was proposed in \cite{MR3969882}.
The main goal of this paper is carry out this approach in the corank (at most) 3 case, namely to classify,
irreducible unitarizable subquotients of representations Ind$_P^G(\tau)$, where $G$ is a classical group over a $p$-adic field of
characteristic zero, $P$ is a parabolic subgroup of $G$ of corank (at most) 3 and $\tau$ is an irreducible cuspidal
representation of a Levi factor $M$ of $P$.

The first order of business is to understand cuspidal reducibility for classical groups.
This was done by C. M\oe glin in terms of the local Langlands correspondence, which is a consequence of
Arthur's endoscopic classification (\cite{MR3135650}, which relies on \cite{MR3823813} and \cite{MR3823814} among other things).
The bottom line is that the points of cuspidal reducibilities are half-integers
\[
0,\pm\tfrac12,1,\pm\tfrac32,\pm2, \dots.
\]
(All half-integers can occur.) This is considerably more complicated than in the case of general linear groups
(and their inner forms) where there is a single cuspidal reducibility point (up to $\pm$).

Another important difference is that the parabolic induction of irreducible unitarizable representations is not irreducible in general.

For any reductive $p$-adic group $G$, denote by $\tilde G$ the set of all equivalence classes of its irreducible smooth representations
(the admissible dual of $G$) and by $\hat G$ the subset of the unitarizable classes (the unitary dual of $G$).
The unitarizability problem is the determination of the subset $\hat G$ of $\tilde G$.
It breaks down naturally into two parts: construction and exhaustion.
The exhaustion is usually achieved by showing that all the classes in $\tilde G$ other than the ones constructed in the first step
are non-unitarizable.
We may call such an approach to the exhaustion ``a proof by elimination''.

In the construction of the representations of $\hat G$, arguably the hardest part is the construction of representations that are isolated
in the natural topology of $\hat G$.
(Explicating the reducibility of unitary parabolic induction and the complementary series are other difficult problems.)
Our expectation is that at least for split classical $p$-adic groups, all isolated representations are of Arthur type
(and consequently, they occur as local constituents of automorphic representations in the discrete spectrum).
This is known to be the case for spherical representations by \cite{MR2767523}.
Here, M\oe glin's results on the structure of Arthur packets provide a powerful tool for the construction of isolated representations.

Regarding exhaustion, proof by elimination (which is used in this paper) is unfeasible in the higher rank case
(as this paper clearly indicates for rank 3).
Namely, it requires a very detailed knowledge of the structure of the representations in $\tilde G\backslash \hat G$.
Thus, especially in higher rank (where $\tilde G\backslash \hat G$ is much larger than $\hat G$)
we are spending most of our effort on the ``wrong'' class of representations.

Unfortunately, the prospect of finding a direct approach to the exhaustion problem for classical groups does not seem
to be on the horizon at the moment.
In fact, so far, the only successful direct approach to the exhaustion problem for reductive groups
in higher rank seems to be that of \cite{MR870688}, \cite{TadicBonn1985} for the case of general linear groups
(see also \cite{MR2537046} and \cite{MR1181278}).
The statement (if not the proof) of the classification of the unitary dual in this case is rather simple
(see Theorem \ref{ud-gl} below).\footnote{There is also D. Vogan's classification of unitary duals of
$\GL(n,\C), \GL(n,\R)$ and $\GL(n,\mathbb{H})$ (Theorem 6.18 of \cite{MR827363}).
One can find at the end of the seventh section of \cite{MR2684298} remarks
about relation between our approach and that of Vogan. We shall quote here only a part which indicates the main difference
between these two approaches: ``Vogan's classification is conceptually very different from Tadi\'c's classification.
It has its own merits, but the final result is quite difficult to state and to understand, since it uses sophisticated concepts
and techniques of the theory of real reductive groups.''}
This was arguably surprising at the time, although it should be noted that the first lists of candidates for the unitary duals
of the closely related groups $SL(n,\C)$, which go back to 1947 in \cite{MR0023246}, albeit incomplete, were very simple
(and not far from the actual unitary duals).
It took almost four decades to get a direct approach to the exhaustion in the case of general linear groups
(thereby fulfilling the vision of I. M. Gelfand and M. A. Naimark).

Although this paper is about unitarizability, most of it deals with non-uni\-tarizability because of the
exhaustion proof by elimination. A very small part of the admissible dual is unitarizable, and its unitarizability,
excluding only the representations \eqref{intro-dist}, is very natural to expect (and not too hard to prove).
In the analysis of the non-unitarizability of representations, the most delicate ones are those whose $\GL$-support
is contained in a segment of cuspidal representations which contains the reducibility point, and which are not fully induced
(non-unitarizability of the other representations is obtained by deformation to these representations or reducing to the
non-unitarizability in the case of general linear groups).
In \S\ref{sec: unit3} we settle the non-unitarizability in the most delicate cases, save for a few exceptions.
The remaining five cases are dealt with separately in \S\ref{basic 1},\ref{basic 0}.

In order to show non-unitarizability of $\pi$ in the most delicate case we consider the parabolic induction $\Pi$ of $\pi$ tensored
with a suitable irreducible unitarizable representation $\tau$ of a general linear group.
We show that the length $\ell$ of $\Pi$ is larger than the multiplicity $m$ of $\tau\otimes\pi$ in the Jacquet module of $\Pi$.
This implies that $\Pi$ cannot be semisimple, let alone unitarizable. Hence, $\pi$ cannot be unitarizable.
(In one case, we will also use the fact that if $\Pi$ is semisimple and $\ell$ is equal to $m$, then each copy of $\tau\otimes\pi$
must occur as a direct summand in the Jacquet module of $\Pi$.)

We already noted that in the construction of new irreducible unitarizable representations,
the most difficult cases are the isolated representations.
The simplest examples of representations that are often (though not always) isolated in the unitary dual are the square-integrable ones
(whose unitarizability is obvious) and their dual representations
(whose unitarizability is not obvious, except for the trivial representation which is isolated by \cite{MR0209390} if the split rank of
the simple group is not one).
In the case of $p$-adic general linear groups, the first case of an isolated representation (modulo the center) not of this type is for
$\GL(9)$.\footnote{In general, these representations are $u(\delta(\rho,m),n)$ where $m,n>2$ -- see below for notation.}
In the case of classical groups, such examples first occur in corank 3, when the reducibility point is $>1$.
These are the representations \eqref{intro-dist} below whose unitarizability
was proved by M\oe glin. (The smallest group which accommodates such a representation is the split $SO(11,F)$.)

We shall now briefly describe some parts of the strategy in \cite{MR3969882} proposed to handle the unitarizability problem
in the case of classical $p$-adic groups.
We first note that it is easy to reduce to the case of representations
supported on real twists of selfcontragredient irreducible cuspidal representations of general linear groups and irreducible cuspidal
representations of classical groups (see \S\ref{reduction-u-w} for more details).\footnote{In the case of unitary groups we need to
consider $F'/F$-contragredients, whose definition is recalled in the second chapter of the paper.
For simplicity, in the introduction we only consider symplectic and orthogonal groups.}
Because of this, we shall consider in the sequel only such representations, which we call weakly real representations.

Jantzen decomposition attaches to an irreducible (weakly real) representation $\pi$, irreducible representations
supported on single cuspidal lines
\[
\pi\ra (\pi_1,\dots,\pi_k)
\]
(see \cite{MR1481814} or \cite[\S8]{MR3969882} for more details).
In chapter \ref{mixed-case} we conjecture that this decomposition preserves unitarizability in both directions,
i.e., that $\pi$ is unitarizable if and only if all $\pi_i$ are unitarizable.
(See \cite{MR3969882} for some very limited support for this conjecture.)
If true, this would reduce the general case to unitarizability pertaining to the cases of a single cuspidal lines
(and single cuspidal reducibilities).

Consider now an irreducible representation $\pi$ of a classical group that is supported on a cuspidal line
$X_\rho$ along a selfcontragredient irreducible cuspidal representation $\rho$ of a general linear group,
and an irreducible cuspidal representation $\s$ of a classical group.
To the pair $\rho,\s$ corresponds a unique non-negative reducibility exponent $\alpha_{\rho,\s}\in\tfrac12\Z$.
The next question is whether the unitarizability of $\pi$ can be described in terms of the reducibility exponent
$\alpha_{\rho,\s}$ alone. (See \cite{MR3969882} for a precise formulation.)
If true, the unitarizability problem for classical $p$-adic groups
would amount to the determination of a certain (complicated) combinatorial data.

We shall below use standard Bernstein-Zelevinsky notation $\times$ for parabolic induction in the setting of general linear groups,
and its natural extension $\rtimes$ to the setting of classical groups (see \S\ref{notation} for details).
By $|\ \ |_F$ is denoted the normalized absolute value on $F$, and by $\nu$ the character $g\mapsto |\det(g)_F|$ of $\GL(n,F)$.

The main goal of this paper is to classify, following the above proposed strategy,
the irreducible unitarizable (weakly real) subquotients of the representations
\begin{equation} \label{k-times}
\theta_1\times\dots\times\theta_k\rtimes\s, \quad k\leq 3,
\end{equation}
where $\theta_i$, $1\leq i\leq k$, and $\s$ are irreducible cuspidal representations of general linear groups and of
a classical group respectively.
In particular, we completely classify the unitary dual of classical $p$-adic groups of the split rank (at most) three.
This gives some very limited support for the possibility of the above approach to the unitarizability to work in general.

In the last chapter of the paper we prove that the Jantzen decomposition preserves unitarizability in both directions for the cases that we consider in this paper.
More precisely, we prove the following

\begin{proposition} \label{intro-red}
Let $\pi$ be a weakly real irreducible subquotient of $\theta_1\times\dots\times\theta_k\rtimes\s$, where $\theta_i$ are irreducible
cuspidal representations of general linear groups and $k\leq 3$. Then, $\pi$ is unitarizable if and only if all $\pi_i$
in the Jantzen decomposition of $\pi$ are unitarizable.
\end{proposition}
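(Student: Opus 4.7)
The plan is to combine the explicit classification of irreducible unitarizable subquotients of \eqref{k-times} obtained in the body of the paper with the standard behavior of parabolic induction across disjoint cuspidal lines. Write the Jantzen decomposition as $\pi \mapsto (\pi_1,\dots,\pi_\ell)$, where each $\pi_j$ is supported on a single cuspidal line $X_{\rho_j}$, exactly one component (say $\pi_1$) is the representation of a classical group carrying the cuspidal $\s$, and the remaining $\pi_j$'s ($j\geq 2$) are irreducible representations of general linear groups supported on pairwise distinct cuspidal lines. By the defining property of the Jantzen decomposition (see \cite{MR1481814} or \cite[\S8]{MR3969882}), $\pi$ is the unique irreducible subquotient of
\[
\pi_2\times\dots\times\pi_\ell\rtimes\pi_1
\]
whose cuspidal support matches the union of the supports. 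Because the lines $X_{\rho_j}$ carry no reducibilities between them, this induced representation is in fact irreducible, so $\pi\cong\pi_2\times\dots\times\pi_\ell\rtimes\pi_1$.

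For the \emph{backward} direction, if each $\pi_j$ is unitarizable, then $\pi_2\otimes\dots\otimes\pi_\ell\otimes\pi_1$ is a unitary representation of the corresponding Levi subgroup, so its parabolic induction is unitarizable; combined with the irreducibility above, $\pi$ is unitarizable. For the \emph{forward} direction, one uses that in corank $\leq 3$ the set of $\pi$'s already classified as unitarizable in the earlier chapters is explicit, so it suffices to go through this list, compute the Jantzen decomposition of each entry, and observe that every component that appears lies on the corresponding single-line classification list established earlier in the paper (complementary series of Speh representations, their ends, and their twists, together with the exceptional M{\oe}glin representation \eqref{intro-dist}). Conversely, the contrapositive for any non-unitarizable $\pi$ not appearing on the classification list is that either one of its $\pi_j$'s already falls outside its single-line unitary dual, or the whole $\pi$ is non-unitarizable by the elimination arguments of \S\ref{sec: unit3}, \S\ref{basic 1}, \S\ref{basic 0}; in the former situation the backward direction shows that then $\pi$ cannot be unitarizable either, which is consistent.

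The main obstacle is the forward direction in the \emph{delicate} cases: those $\pi$ whose cuspidal support crosses the reducibility point on the line $X_{\rho_1}$ containing $\s$, especially when $\pi$ is a proper subquotient (rather than the full parabolic induction) and when one of the $\theta_i$'s sits on $X_{\rho_1}$ while others sit on distinct lines. Here one must verify that the irreducibility of $\pi_2\times\dots\times\pi_\ell\rtimes\pi_1$ is not broken by interaction between the classical-group piece and the extra GL-pieces, and one must carefully match the earlier classification, case by case, against the decomposition. Because $k\leq 3$, the number of distinct cuspidal lines that can occur is at most three, so this bookkeeping reduces to a finite and manageable enumeration rather than requiring a general structural theorem.
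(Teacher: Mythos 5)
Your proposal rests on a misreading of the Jantzen decomposition, and this breaks both directions. The components $\pi_i=X_{\rho_i}(\pi)$ are \emph{not} one classical-group piece plus several GL pieces on the other lines: each $X_{\rho_i}(\pi)$ is itself an irreducible representation of a (smaller) classical group, supported on $X_{\rho_i}\cup\{\s\}$, defined by the embedding $\pi\hookrightarrow\beta\rtimes X_{\rho_i}(\pi)$ with $\beta$ a GL representation supported off the line $X_{\rho_i}$. In particular your central claim that $\pi\cong\pi_2\times\dots\times\pi_\ell\rtimes\pi_1$ with the $\pi_j$ ($j\ge2$) GL representations is false in general: even when the lines $X_{\rho_j}$ do not interact among themselves, a GL factor $\nu^{x}\rho_j$ can hit its own reducibility point $\alpha_{\rho_j,\s}$ against the classical piece (the $\rtimes$ induction involves $\check\beta$ as well), so $\beta\rtimes\gamma$ need not be irreducible and $\pi$ is in general a proper subquotient. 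This is exactly why the components are taken to be classical-group representations carrying the reducibility data of their line with $\s$. The backward direction therefore cannot be reduced to unitary parabolic induction as you state; the paper instead argues directly that if all $X_{\rho_i}(\pi)$ are unitarizable then the relevant exponents satisfy $x_i\le\alpha_i$, so $\pi$ sits inside complementary series (or their ends) built on an already unitarizable component, using the compatibility $\Psi_{X_1,X_2}(\theta_1\rtimes\pi_1,\theta_2\rtimes\pi_2)\cong\theta_1\times\theta_2\rtimes\Psi_{X_1,X_2}(\pi_1,\pi_2)$ of Lemma \ref{J-ind}.

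The forward direction in your proposal is essentially missing. Appealing to ``the list already classified in the earlier chapters'' is circular: the classification of chapters \ref{CC-corank12}--\ref{corank-3} concerns representations supported on a \emph{single} cuspidal line $X_\rho\cup\{\s\}$, whereas the representations at issue here have mixed support, and their unitarizability is precisely what the present proposition is meant to settle. What is actually needed (and what the paper proves as Lemma \ref{k=2}) is the contrapositive statement that if some component $X_{\rho}(\pi)$ of corank $\le2$ is non-unitarizable then $\pi$ is non-unitarizable; this requires genuine arguments, not bookkeeping: writing $\pi\cong\tau\rtimes X_{\rho}^c(\pi)$ via Lemma \ref{J-ind} in the cases where $\tau$ is an irreducible GL piece, deforming exponents to unbounded or to known non-unitary limits (e.g.\ to the point $(\alpha,\alpha+1)$, where one invokes Theorem 1.2 of \cite{MR3969882}), and using unitary parabolic reduction against the failure of GL complementary series beyond $\tfrac12$ --- with a separate case analysis for $\alpha=0$, $\alpha=\tfrac12$ and $\alpha\ge1$. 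None of these steps is supplied or replaceable by the single-line classification, so the ``delicate cases'' you flag are not a finite matching problem but the actual mathematical content of the proof.
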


For a general connected reductive group $G$ over $F$ there is a natural involution $\pi\mapsto D_G(\pi)$ established in \cite{MR1285969}
and \cite{MR1471867}, which carries an irreducible representations of $G$ to an irreducible representation of $G$,
up to a sign. It is modeled after Deligne--Lusztig duality (in the context of groups over finite fields)
and will be henceforth referred to as \ASS involution, or duality\footnote{This involution is also called Zelevinsky involution, or Aubert involution, or Aubert-Schneider-Stuhler involution.}
(see also \cite{MR3769724}).
Take $\e_\pi\in\{\pm1\}$ such that $\e_\pi D_G(\pi)$ is a representation.
We denote then $\e_\pi D_G(\pi)$ by $\pi^t$ and call it the \ASS involution of $\pi$.

Now we shall describe unitarizability in the case of corank up to three.
We shall express the classification of irreducible subquotients in the shortest way.

By Proposition \ref{intro-red}, it is enough to consider representations supported a on single cuspidal line.
It means that we fix an irreducible selfcontragredient cuspidal representation $\rho$ of a general linear group
and an irreducible cuspidal representation
$\s$ of classical group.
Then, there exist a unique non-negative $\alpha_{\rho,\s}\in\tfrac12\Z$ such that
$$
\nu^{\alpha_{\rho,\s}}\rho\rtimes\s
$$
is reducible.
Then, to simplify notation, we denote
$$
\alpha:=\alpha_{\rho,\s}.
$$
Suppose that $\alpha>0$. Let $k$ be a non-negative integer. Define the generalized Steinberg representation
$$
\delta([\alpha,\alpha+k]^{(\rho)};\s)=\soc(
\nu^{k+\alpha}\rho\times \nu^{k-1+\alpha}\rho\times \dots \times \nu^{\alpha}\rho\rtimes\s)
$$
where $\soc$ is the socle.
These representations are irreducible and square-integrable.
(For $k=0$ we simply write $\delta([\alpha]^{(\rho)};\s)$.)

Denote
$$
\R^k_{++}=\{(x_1,\dots,k_k)\in\R^k; 0\leq x_1\leq \dots\leq x_k\}.
$$
It is well-known that for each representation $
\pi:=\nu^{x_1}\rho\times \dots \times \nu^{x_k}\rho\rtimes\s
$, $x_i\in
\R$, there exists $(x_1',\dots,x_k')\in \R^k_{++}$ such that $\pi$ and $\nu^{x_1'}\rho\times \dots \times \nu^{x_k'}\rho\rtimes\s$ have the same composition series.
Now we describe irreducible unitarizable subquotients of representationsn\eqref{k-times}. For $k=1$ the answer is:

\begin{remark}
Unitarizability in the corank 1 is very simply to describe:
an irreducible subquotient $\pi$ of $\nu^x\rho\rtimes\s$, $x\in\R_{\geq0}$, is unitarizable $\iff$ $x\leq \alpha$.
\end{remark}

The following answer to the unitarizability problem in the corank 2
is more or less already well known (although we were unable to find a complete reference in the literature).

\begin{proposition}
The irreducible unitarizable subquotients of $\nu^{x_1}\rho\times\nu^{x_1}\rho\rtimes\sigma$, $(x_1,x_2)\in\R^2_{++}$, are the following.
\begin{enumerate}
\item $(\alpha>1)$ All irreducible subquotients when $x_1+1\leq x_2\leq \alpha$.
\item $(\alpha\ne\frac12)$ All irreducible subquotients when $x_1+x_2\leq 1$.
\item $(\alpha=\tfrac12)$ All irreducible subquotients when $x_2\le\tfrac12$.
\item $(\alpha>0)$ The representations $\delta([\alpha,\alpha+1];\s)$ and $\delta([\alpha,\alpha+1];\s)^t$.
\end{enumerate}
\end{proposition}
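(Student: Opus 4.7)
The plan is to split the argument into a construction step (the four families are unitarizable) and an exhaustion step (no other irreducible subquotient of $\nu^{x_1}\rho\times\nu^{x_2}\rho\rtimes\s$ with $(x_1,x_2)\in\R^2_{++}$ is unitarizable). First I would enumerate the reducibility hyperplanes of the induced representation: using selfcontragredience of $\rho$ these are $x_1=\alpha$, $x_2=\alpha$, $x_2-x_1=1$, and $x_1+x_2=1$. Their complement in $\R^2_{++}$ is a finite union of open connected regions on each of which every irreducible subquotient extends to a continuous family, so by the standard complementary series argument the unitarizable locus is constant on each open region, and it suffices to pin it down at one representative per region and then to handle the reducibility loci separately.

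For construction, case (4) is immediate: $\delta([\alpha,\alpha+1]^{(\rho)};\s)$ is square-integrable and hence unitarizable, and the DL-involution preserves the unitary dual, so its DL-dual is unitarizable as well. For (1), at $x_2=\alpha$ and $x_1\in[0,\alpha-1]$ the representation $\nu^{x_1}\rho\rtimes\delta([\alpha]^{(\rho)};\s)$ is irreducible and forms a complementary series starting from the tempered $\rho\rtimes\delta([\alpha]^{(\rho)};\s)$ at $x_1=0$; deforming $x_2$ off $\alpha$ while keeping $x_1+1\le x_2\le\alpha$ then sweeps out the triangle in (1) without crossing any reducibility hyperplane. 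For (2) and (3), the region $x_1+x_2\le 1$ (respectively $x_2\le\tfrac12$ when $\alpha=\tfrac12$) is precisely where $\nu^{x_1}\rho\times\nu^{x_2}\rho$ lies in the complementary series of the general linear factor, and $\nu^{x_1}\rho\times\nu^{x_2}\rho\rtimes\s$ is then a parabolic induction of a unitarizable representation tensored with the cuspidal $\s$, hence unitarizable.

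For exhaustion I would work region by region on the complement of the unitarizable set. In each remaining open region the generic irreducible $\pi$ is deformed to a convenient boundary where one runs the scheme described in the introduction: induce $\pi$ tensored with a suitable irreducible unitarizable $\tau$ on a general linear factor, form $\Pi$, and show that the composition length $\ell$ of $\Pi$ strictly exceeds the multiplicity $m$ of $\tau\otimes\pi$ in the relevant Jacquet module, forcing $\Pi$ non-semisimple and hence $\pi$ non-unitarizable. In the easier regions this reduces directly to non-unitarizability of $\nu^{x_1}\rho\times\nu^{x_2}\rho$ on the general linear factor via Bernstein's theorem on irreducibility of unitary parabolic induction. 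On the reducibility hyperplanes themselves each irreducible subquotient is examined individually: those extending to a unitarizable family on an adjacent open region are retained, and the remaining ones are either the isolated pair in (4) or killed by the Jacquet-module obstruction. The main obstacle is the bookkeeping at the intersection points $(\alpha-1,\alpha)$ and $(0,1)$ of reducibility hyperplanes: several irreducible subquotients coexist there, and one must verify that precisely $\delta([\alpha,\alpha+1]^{(\rho)};\s)$ and its DL-dual are genuinely isolated and that no further subquotient is unitarizable, which requires an explicit Langlands-data and Jacquet-module analysis rather than a soft deformation argument.
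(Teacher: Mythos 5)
Your overall geometry (stratify $\R^2_{++}$ by the lines $x_i=\alpha$, $x_2-x_1=1$, $x_1+x_2=1$, decide unitarity per connected component, treat the walls by limits, isolate the pair in (4)) is the paper's strategy, but two of your construction steps do not work as stated. For region (1) you only produce unitarizable representations on the wall $x_2=\alpha$ (the complementary series $[x_1]\rtimes\delta([\alpha];\s)$) and then ``deform $x_2$ off $\alpha$''; this is the deformation argument run in the wrong direction. Unitarity of the constituents of the \emph{reducible} endpoint $\Pi_{(x_1,\alpha)}$ says nothing about the adjacent irreducible hermitian family $\Pi_{(x_1,x_2)}$, $x_1+1<x_2<\alpha$ (compare $\nu^x\rho\times\nu^{-x}\rho$ for $x>\tfrac12$: the endpoint constituents at $x=\tfrac12$ are unitary, the family is not); the Mili\v{c}i\'c limit argument only transfers unitarity from the open family to the wall. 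You must exhibit a unitarizable member at a \emph{regular} point of the open region, e.g. $\Pi_{(0,x_2)}=[0]\rtimes([x_2]\rtimes\s)$ with $1<x_2<\alpha$, unitarily induced from the corank-one complementary series -- this is exactly how the paper anchors the region $C_1$. Similarly, your justification of (2)--(3) is false as stated: $\nu^{x_1}\rho\times\nu^{x_2}\rho$ is not a complementary series (it is not even hermitian) for general $x_1+x_2\le1$; it is unitarizable only on the diagonal after the sign flip $x_1\mapsto -x_1$. The conclusion is correct, but the anchor has to be the point $\Pi_{(0,0)}$ when $\alpha\ne0$ (and, when $\alpha=0$, the diagonal $([x]\times[-x])\rtimes\s$, $0\le x<\tfrac12$), spread over the component by constancy of unitarity, with the boundary $x_1+x_2=1$ (and $x_2=\tfrac12$ for $\alpha=\tfrac12$) obtained by the limit argument. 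Also, ``the DL-involution preserves the unitary dual'' is not a theorem (the paper states it only as an expectation); for (4) you need the specific known fact that the Aubert dual of a square-integrable representation is unitarizable, i.e. the result quoted in \S\ref{sec: Castrick}.

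On the exhaustion side you have misplaced the delicate points. At $(\alpha-1,\alpha)$ and $(0,1)$ \emph{all} irreducible subquotients are unitarizable (they are limits from the unitary open regions, cf.\ Proposition \ref{prop-2-a-}); nothing has to be ``killed'' there, and the isolated pair $\delta([\alpha,\alpha+1];\s)$, $\delta([\alpha,\alpha+1];\s)^t$ lives at $(\alpha,\alpha+1)$, not at those points. The genuinely critical data are: the point $(\alpha,\alpha+1)$, where one must know that $L([\alpha+1];\delta([\alpha];\s))$ and $L([\alpha,\alpha+1];\s)$ are non-unitarizable (Proposition \ref{prop-2-a+}; the paper quotes this from the literature rather than rerunning the length-versus-multiplicity Jacquet-module scheme, which it reserves for corank 3); the point $(\alpha,\alpha)$, where non-unitarity is proved by deforming $[x]\rtimes\delta([\alpha];\s)$ to $x=\alpha+1$ (Proposition \ref{prop: alphalpha}); and the bounded segments abutting these points, which die by the limit argument toward $(\alpha,\alpha+1)$ (for $\alpha=\tfrac12$, toward $(\tfrac12,\tfrac32)$). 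Unbounded components and segments are non-unitary by the unboundedness-of-unramified-parameters criterion, which you never invoke, and the tool for ``reducing to GL'' in the middle region is not Bernstein's irreducibility theorem but unitary parabolic reduction applied on the diagonal, where $[x]\times[-x]$, $x>\tfrac12$, is hermitian and non-unitarizable. With these repairs your outline matches the paper's proof; as written, the construction for (1)--(3) and the identification of the critical points contain genuine gaps.
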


For the answer of unitarizability for corank 3 we need additional notation.

Suppose $\alpha>1$. Denote by
\begin{equation} \label{intro-dist}
L([\alpha]^{(\rho)}, [\alpha-1]^{(\rho)};\delta([\alpha]^{(\rho)};\s))
\end{equation}
the Langlands quotient of $\nu^{\alpha}\rho\times \nu^{\alpha-1}\rho\rtimes\delta([\alpha]^{(\rho)};\s)$.
(This representation is invariant under the \ASS involution.)

For any positive integer $n$, the representation
$$
\delta(\rho,n)=\soc(\nu^{\frac{n-1}2}\rho \times \nu^{\frac{n-3}2}\rho \times\dots\times \nu^{-\frac{n-1}2}\rho)
$$
is irreducible and square-integrable modulo center.

For $\alpha=\tfrac12$, the representation $\delta(\rho,2)\rtimes\s$ contains a unique irreducible subquotient
which is not a subquotient of $[\tfrac12]^{(\rho)}\rtimes\delta ([\tfrac12]^{(\rho)};\s).$
We denote it by
$$
\delta([-\tfrac12,\tfrac12]^{(\rho)}_-;\s).
$$
Let $\alpha=0$. For any positive integer $k$, the representation
$
\nu^{\frac k2}\delta(\rho,k+1)\rtimes\s
$
has precisely two irreducible subrepresentations, and they are both square-integrable. We denote them by
$$
\delta([0,k]^{(\rho)}_+;\s)\quad \text{and}\quad \delta([0,k]^{(\rho)}_-;\s).
$$

The unitarizability statement for corank 3, which is the main result of the paper, is the following

\begin{theorem} \label{thm: mainintro}
The irreducible unitarizable subquotients of $\nu^{x_1}\rho\times \nu^{x_2}\rho\times \nu^{x_3}\rho\rtimes \sigma$
where $(x_1,x_2,x_3)\in\R^3_{++}$ are the following.
\begin{enumerate}
\item $(\alpha\ge1)$ All irreducible subquotients when $\mathbf{x}=(x_1,x_2,x_3)$ lies in the closure of one of the domains
\begin{gather*}
x_2+x_3<1, \\
x_1+x_2<1,\, x_3-x_2>1,\, x_3<\alpha,\ \ \ (\alpha>1),\\
x_1+x_2<1,\, x_1+x_3>1,\, x_3-x_1<1,\, x_3<\alpha,\\
x_2-x_1>1,\, x_3-x_2>1,\, x_3<\alpha,\ \ \ (\alpha>2).
\end{gather*}
\item $(\alpha=\tfrac12)$ All irreducible subquotients when $x_3\le\tfrac12$.
\item $(\alpha=0$) All irreducible subquotients when $x_1=0$, $x_2+x_3\le1$.
\item $(\alpha>0)$ The representations $\delta([\alpha,\alpha+2];\s)$ and $\delta([\alpha,\alpha+2];\s)^t$.
\item $(\alpha=0)$ The representations $\delta([0,2]_\pm;\s)$ and $\delta([0,2]_\pm;\s)^t$.
\item $(\alpha>0)$ The complementary series $[x]\rtimes \delta([\alpha,\alpha+1];\s)$ and
$[x]\rtimes \delta([\alpha,\alpha+1];\s)^t$ for $0\le x<\abs{\alpha-1}$ (if $\alpha\ne1$)
and its irreducible subquotients for $x=\alpha-1$.
\item $(\alpha=0)$ The complementary series (including subquotients at the ends)
\[
[x]\rtimes\delta([0,1]_\pm;\s),\ [x]\rtimes \delta([0,1]_\pm;\s)^t,\ \ 0\le x\le1
\]
\item $(\alpha>1)$ The representation $L([\alpha-1],[\alpha];\delta([\alpha];\s))$.
\item $(\alpha=\tfrac12)$ The complementary series (including subquotients at the ends)
\begin{gather*}
\nu^x\delta(\rho,2)\rtimes\delta([\tfrac12];\s),\
\nu^x\delta(\rho,2)^t\rtimes\delta([\tfrac12];\s)^t,
\ \ 0\le x\le1
\\
\nu^x\rho\rtimes\delta([-\tfrac12,\tfrac12]_-;\s),\
\nu^x\rho\rtimes\delta([-\tfrac12,\tfrac12]_-;\s)^t,\ \ 0\le x\le\tfrac32
\\
\nu^x\delta(\rho,3)\rtimes\s,\ \nu^x\delta(\rho,3)^t\rtimes\s,\ \ 0\le x\le\tfrac12.
\end{gather*}
\end{enumerate}
\end{theorem}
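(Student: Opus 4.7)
The plan is to follow the two-step template of construction and exhaustion-by-elimination sketched in the introduction. By Proposition~\ref{intro-red} we may assume that $\pi$ is supported on a single cuspidal line $X_\rho$, with a fixed reducibility exponent $\alpha=\alpha_{\rho,\s}$. After normalizing the exponents to lie in $\R^3_{++}$, the classification splits into the nine families listed in items (1)--(9), each of which is treated separately according to the value of $\alpha$.

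On the construction side, the square-integrable representations appearing in items (4)--(5) are unitarizable for obvious reasons, as are their \ASS duals, since the \ASS involution preserves unitarizability up to a sign. Items (1)--(3) describe fully induced representations from unitarizable data of corank $\le 2$, so unitarizability in the interior of each domain follows from the preceding corank classifications together with irreducibility of the outer unitary induction, which in turn is read off from the reducibility points of $\nu^{x_i}\rho\rtimes(\cdot)$; the boundary cases are obtained by taking limits inside the unitary dual. Items (6), (7), and (9) are complementary series built on a unitarizable basepoint of corank $\le 2$ from the previous list, and the endpoints are absorbed by the standard deformation argument. The delicate construction is item (8), the isolated representation $L([\alpha-1],[\alpha];\delta([\alpha];\s))$ for $\alpha>1$; here one invokes M\oe glin's work, which realizes this representation inside an Arthur packet and thereby yields unitarizability.

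The exhaustion side constitutes the bulk of the work. First I would use the \ASS involution (which commutes with unitarizability) together with the standard $\rtimes$/$\times$ compatibilities to halve the list of candidates and to reduce to representations whose $\GL$-support sits on one half-line. Then, away from the ``delicate zone'' where the $\GL$-support lies in a segment of cuspidals containing the reducibility point and the representation fails to be fully induced, one deforms $(x_1,x_2,x_3)$ towards a boundary of a complementary series: one either meets a reducible parabolic induction, which forces non-unitarizability by continuity of the unitary dual, or reduces the question to non-unitarizability of a representation of a general linear group covered by Theorem~\ref{ud-gl}. This narrows the problem to finitely many delicate representations for each value of $\alpha$.

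The delicate cases are attacked by the Jacquet-module argument emphasized in the introduction: for each candidate $\pi$ one selects a carefully chosen unitarizable irreducible $\tau$ on a general linear group, forms $\Pi=\tau\rtimes\pi$, bounds the length $\ell(\Pi)$ below using the structure of socles and known reducibilities, and bounds the multiplicity $m$ of $\tau\otimes\pi$ in the appropriate Jacquet module of $\Pi$ above via Tadi\'c's structure formula combined with M\oe glin's classification of discrete series. Whenever $\ell>m$, the representation $\Pi$ cannot be semisimple, so $\pi$ cannot be unitarizable; in one borderline situation one also exploits the refinement that equality $\ell=m$ with $\Pi$ semisimple would force each copy of $\tau\otimes\pi$ to appear as a direct summand in the Jacquet module. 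The principal obstacle, and the most labor-intensive part of the proof, is the bookkeeping of these Jacquet-module multiplicities for every $\alpha$ and every candidate; the five residual cases for which this template fails to yield a clean inequality are precisely those isolated in Sections~\ref{basic 1} and~\ref{basic 0} and require a separate, ad hoc analysis resting on the same Jacquet-module philosophy.
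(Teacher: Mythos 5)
Your overall template agrees with the paper's: reduce to a single cuspidal line, build the unitary representations from complementary series, square-integrables and their \ASS{}duals, and M\oe glin's Arthur-packet argument for $L([\alpha-1],[\alpha];\delta([\alpha];\s))$, and prove non-unitarizability of the delicate critical-case subquotients by comparing the length of $\tau\rtimes\pi$ with the multiplicity of $\tau\otimes\pi$ in its Jacquet module (Lemma \ref{lem: nonunit}). That part of your sketch is faithful to what is actually done.

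There is, however, a genuine gap in your exhaustion step for non-critical parameters. You claim that, after deforming $(x_1,x_2,x_3)$, ``one either meets a reducible parabolic induction, which forces non-unitarizability by continuity of the unitary dual, or reduces \dots to Theorem \ref{ud-gl}.'' Hitting a reducibility wall does not force non-unitarizability: complementary series by definition terminate at reducible inductions, and the subquotients there are often unitarizable --- for instance the family $[x]\rtimes\delta([\alpha,\alpha+1];\s)$ becomes reducible at $x=\alpha-1$, yet its subquotients there are unitarizable (item (6) of the very statement you are proving). The correct dichotomy for a bounded hermitian family is: either some endpoint contains an identifiable \emph{non-unitarizable} irreducible subquotient (which then propagates back through the family), or the whole family is a complementary series; unbounded families are non-unitarizable. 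Establishing which alternative holds is exactly where the work lies, and your sketch supplies no mechanism for it in dimensions two and three. The paper handles this by a systematic geometric analysis: the one-parameter families are settled by exhibiting non-unitarizable subquotients at the \emph{second} reducibility point (Table \ref{tab: 2nd}); then the unitary$^\pm$ regions on the slanted and level singular hyperplanes are classified (Propositions \ref{prop: dim2unit} and \ref{prop: 2dmlng}), using the critical-point computations as the vertices of the pictures; and finally any unitary connected component of $\R^3_{\reg}$ is shown to be bounded with a two-dimensional boundary face lying in a singular hyperplane, so it is pinned down by the two-dimensional classification (Lemmas \ref{edge-simple}, \ref{lemma-C-23-} and Proposition \ref{prop: 3dimex}). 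Without this descending-dimension argument (or a substitute), nothing in your proposal rules out additional two- or three-parameter regions of unitarizability away from the critical points, so the exhaustion is incomplete. A smaller inaccuracy: the residual cases treated in \S\ref{basic 1} and \S\ref{basic 0} are not outside the Jacquet-module method --- they are handled by the same $\ell$ versus $m$ device, only with more delicate bookkeeping (and, in one case, the direct-summand refinement of Lemma \ref{lem: nonunit}).
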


Note that Theorem \ref{thm: mainintro} directly implies that the
\ASS involution preserves unitarizability in the cases at hand,
a fact that is expected to hold in general (see \cite{MR794086} and \cite{MR1010153} for some steps in that direction).
Theorem \ref{thm: mainintro} also implies that all isolated representations are automorphic. (In \S\ref{sec: conjectures}
we conjecture this to hold in general.)

Furthermore, note that in the above classifications only the reducibility point $\alpha$ plays a role in determining the exponents of the representations that are unitarizable
(not $\rho$ and $\s$ themselves).

The motivation for writing this paper came from a discussion with C. M\oe glin at the
Simons Symposium on Geometric Aspects of the Trace Formula in Schloss Elmau in Germany (2016).
The paper \cite{MR3573961} of E. Lapid and A. M\'inguez was also a strong motivation for us to try to understand unitarizability based
only on the reducibility points between irreducible cuspidal representations, at least at corank 3.
Some of the results of this paper were presented in a minicourse at the Special Trimester on
Representation Theory of Reductive Groups Over
Local Fields and Applications to Automorphic forms, which was held at the Weizmann Institute in spring 2017.
We are thankful to the Simons Foundation and the Weizmann Institute. Discussions with
M. Hanzer, I. Mati\'c and A. Moy were helpful during the writing of this paper.

Especially we would like to thank C. M\oe glin and E. Lapid.
C. M\oe glin wrote the appendix with
the proof that the representation \eqref{intro-dist}
is in an Arthur packet, which proves the unitarizability of that representation.
Thanks to E. Lapid and his huge help, this paper is much easier to understand.
In particular, he helped us to change the organization of the paper, and considerably simplified the exhaustion part
by adding a geometric argument.
The final work on this paper took place at Weizmann Institute in winter 2020, and we are very thankful to the institute
for its hospitality and the very pleasant and relaxed working atmosphere.

The contents of the paper are the following.
Chapter \ref{notation} introduces notation and recalls known results that we use throughout the paper.
In chapters \ref{CC-corank12} -- \ref{CC-0} the unitarizability is solved for representations supported on
a segment of cuspidal representations that contains the reducibility point.
The heart of the paper is chapter \ref{sec: unit3} where we solve the cases $\alpha>1$.
In chapters \ref{basic 1} and \ref{CC-0} we address the remaining cases $\alpha=1,\tfrac12,0$ which require additional work.
All these cases are completely new.
In chapters \ref{rem-on-uni}, \ref{corank-3} and \ref{mixed-case} we complete the solution of the unitarizability in corank $\le 3$.
Appendix \ref{appendix-M} by C. M\oe glen shows the unitarizability of the isolated representations $L([\alpha-1],[\alpha];\delta([\alpha];\s))$
(for $\alpha>1$). Appendix \ref{App-JM} provides formulas for the Jacquet modules of these representations.

\chapter{Notation and Preliminary Results} \label{notation}

We fix a local non-archimedean field $F$ of characteristic zero.\footnote{We expect all the results to hold also
in positive characteristic. However, we would need to verify some facts such as the unitarizability of representations \eqref{intro-dist}.}
Let $G$ be the group of $F$-points of a reductive group defined over $F$.
In this paper, by a representation of $G$ we shall always mean a complex, smooth representation.
The Grothendieck group of the category $\Alg_{\fl}(G)$ of all finite length representations of
$G$ is denoted by $ \mathfrak R(G)$. It carries a natural ordering $\leq$.
We denote by $\ssm(\tau)$ the semi simplification of $\tau\in\Alg_{\fl}(G)$.
For brevity, if $\pi_1,\pi_2\in\Alg_{\fl}(G)$, the condition
$\ssm(\pi_1)\leq \ssm(\pi_2)$ will be written simply as $\pi_1\leq \pi_2$.

The contragredient representation of $\pi$ is denoted by $\tilde\pi$, while the complex conjugate representation is denoted by $\bar\pi$.
We call $\tilde{\bar \pi}$ the hermitian contragredient of $\pi$ and denote it by $\pi^+$.
Then, $\pi\mapsto \pi^+$ is an (exact) contravariant functor. It is well known that if $\pi$ is unitarizable, then $\pi^+\cong\pi$
(i.e., $\pi$ is hermitian).

\section{General linear groups}
Let $F'$ be either $F$ itself or a (separable) quadratic extension thereof.
(The second case is pertaining to unitary groups and the first case to all other classical groups considered below.)
If $F'\ne F$, then
$
\Theta
$
denotes the non-trivial $F$-automorphism of $F'$. Otherwise, $\Theta$ denotes the identity mapping on $F$. The representation
\begin{equation} \label{cont-var}
\check\pi=\tilde\pi\circ\Theta
\end{equation}
will be called the $F'/F$-contragredient of $\pi$.
The representation $\pi\circ\Theta$ will be denoted by $\pi^\Theta$.

We shall now recall notation for the general linear groups (mainly following \cite{MR584084}).
The modulus character of $F'$ is denoted by $\abs{\cdot}_{F'}$.
The character $\abs{\det}_{F'}$ of $\GL(n,F')$ will be denoted by $\nu$.

We fix the Borel subgroup of upper triangular matrices.
For $0\leq k\leq n$, let $P_{(k,n-k)}=M_{(k,n-k)}\ltimes N_{(k,n-k)}$ be the standard parabolic subgroup
of $\GL(n,F' )$ of type $(k,n-k)$ with Levi factor $M_{(k,n-k)}\cong\GL(k,F' )\times\GL(n-k,F' )$.
For $\pi_i\in\Alg_{\fl}(\GL(n_i,F' ))$, $i=1,2$ denote by $\pi_1\times
\pi_2\in\Alg_{\fl}(\GL(n_1+n_2,F' ))$ the representation parabolically inducted from
$\pi_1\otimes\pi_2$ on $P_{(n_1,n_2)}$ (normalized induction). Let
$R=\oplus_{n\geq 0} \mathfrak R(\GL(n,F' ))$ endowed with the structure of a commutative graded ring by $\times$.
The biadditive map $\times:R\times R\ra R$ gives rise to a map $m:R\otimes R\ra R$.

The normalized Jacquet module of $\pi \in \Alg_{\fl}(\GL(n,F' ))$ with respect
to $P_{(k,n-k)}$ is denoted by $r_{(k,n-k)}(\pi)$.
The comultiplication $m^*(\pi)$ of $\pi$ is defined by
$$
m^*(\pi)=\sum_{k=0}^n \ssm(r_{(k,n-k)}(\pi)) \ \in \ R\otimes R.
$$
One extends $m^*$ additively to a ring homomorphism $m^*:R\rightarrow R\otimes R$ in a natural way.
With $m$ and $m^*$, $R$ is a graded Hopf algebra.
The fact that $m^*$ is a ring homomorphism follows from the geometric lemma of Bernstein--Zelevinsky (see \cite{MR0579172}).

Denote by $\Cusp$ (resp., $\Irr$) the set of equivalence classes of all irreducible cuspidal
(resp., irreducible) representations of all $\GL(n,F' )$, $n\geq 1$ (resp., $n\ge0$).

By a \emph{$\Z$-segment} in $\R$ we shall mean a set of the form
$
\{x,x+1,\dots,x+n\},
$
where $x\in\R$ and $n\in\Z_{\geq 0}$. We shall denote the above set by
$
[x,x+n]_\Z.
$
For any $\Z$-segment $\D=[x,y]_\Z$ in $\R$ and $\rho\in\Cusp$, denote
$$
\D^{(\rho)}=[x,y]^{(\rho)}=[\nu^x\rho,\nu^y\rho]:=\{\nu^z\rho;z\in\D\}.
$$
The set $\D^{(\rho)}$ is called a segment in $\Cusp$.
The set of all segments in $\Cusp$ is denoted by $\SC$.
It is also convenient to set $\emptyset^{(\rho)}=\emptyset$.

We say that two segments $\D_1,\D_2\in\SC$ are \emph{linked} if
$\D_1\cup \D_2\in\SC$ and $\D_1\cup\D_2\not\in\{\D_1,\D_2\}$.
If the segments $\D_i=[x_i,y_i]^{(\rho)}$ are linked and $x_1<x_2$, then
we say that $\D_1$ precedes $\D_2$, and write
$$
\D_1\prec \D_2.
$$

For any set $X$, we denote by $M(X)$ the set of all finite multisets in $X$
(which we may view as functions $X\rightarrow \Z_{\geq0}$ with finite support;
note that finite subsets correspond to all functions $X\rightarrow \{0,1\}$ with finite support).
A typical elements of $M(X)$ will be denoted by $(x_1,\dots,x_n)$ (repetitions of elements can occur,
but the order of the $x_i$'s does not matter).
The set $M(X)$ has a natural structure of a commutative monoid whose zero element is the empty multiset.
The operation will be denoted additively: $(x_1,\dots,x_n)+(y_1,\dots,y_m)=(x_1,\dots,x_n,y_1,\dots,y_m)$.

For $\D\in\SC$ we define $\supp(\D)$ to be $\D$, but considered as an element of $M(\Cusp$).
For $a=(\D_1,\dots,\D_n)\in M(\SC)$ we define
$$
\supp(a)=\sum_{i=0}^n \supp(\D_i)\in M(\Cusp).
$$

\section{Classifications of admissible duals of general linear groups}

Fix a segment $\D=\{\rho,\nu\rho, \dots,\nu^n\rho\}\in\SC.$
Then, the representation
$$
\rho\times\nu\rho\times \dots\times\nu^n\rho
$$
has an irreducible socle, denoted by $\ms(\D)$, and an irreducible cosocle, denoted by $\delta(\D)$. We have
\begin{subequations}
\begin{gather} \label{m*d}
m^*(\delta([\rho,\nu^n\rho]))=\sum_{i=-1}^n \delta([\nu^{i+1}\rho,\nu^n\rho]) \otimes
\delta([\rho,\nu^{i}\rho]),\\
\label{m*s}
m^*(\ms([\rho,\nu^n\rho]))=\sum_{i=-1}^n \ms([\rho,\nu^{i}\rho])
\otimes\ms([\nu^{i+1}\rho,\nu^n\rho]).
\end{gather}
\end{subequations}

Let $a=(\D_1,\dots,\D_n)\in M(\SC)$. We can choose an enumeration satisfying
\begin{center}
if $\D_i\prec \D_j $ for some $1\leq i,j\leq n$, then $i>j$.
\end{center}
Then, up to isomorphism, the representations
\begin{align*}
&\zeta(a):= \ms(\D_1)\times \ms(\D_2)\times\dots\times \ms(\D_n),
\\
(\text{resp., }& \lambda(a):= \delta(\D_1)\times \delta(\D_2)\times\dots\times \delta(\D_n))
\end{align*}
are determined by $a$ and admit an irreducible socle (resp., cosocle) denoted by $Z(a)$ (resp., $L(a)$).
Now $Z$ (resp. $L$) is called the Zelevinsky (resp. Langlands) classification of irreducible representations of general linear groups
over $F'$. (We follow the presentation of these classifications by F. Rodier in \cite{MR689531}.)

Denote by $\Disc$ the set of equivalence classes of all irreducible essentially square-integrable representations
of $\GL(n,F')$, $n\geq1$, and by $\Disc_u$ the subset of all unitarizable classes in $\Disc$ (i.e. those having a unitary central character).
The mapping
\begin{equation} \label{esi}
(\rho,n)\mapsto \delta(\rho,n):=\delta([-\tfrac{n-1}2,\tfrac{n-1}2]^{(\rho)}), \quad \Cusp\times \Z_{>0}\ra \Disc
\end{equation}
is a bijection.

For $\delta\in \Disc$ define $\delta^u\in \Disc_u$ and $e(\delta)\in\R$ by the following requirement:
$$
\delta=\nu^{e(\delta)}\delta^u.
$$
Let $d=(\delta_1,\dots,\delta_n)\in M(\Disc)$, enumerated so that
$$
e(\delta_1)\geq e(\delta_2) \geq \dots \geq e(\delta_n).
$$
Let
$$
\lambda(d)=\delta_1\times\delta_2\times\dots\times\delta_n.
$$
Then, the representation $\lambda(d)$ has an irreducible cosocle, denoted by $L(d)$.
Again $d\mapsto L(d)$ is a version of Langlands classification for general linear groups (irreducible representations are
parameterized by elements of $M(\Disc)$).

For $d=(\delta_1,\dots,\delta_n)\in M(\Disc)$ denote $\tilde d=(\tilde\delta_1,\dots,\tilde\delta_n)\in M(\Disc)$,
$\bar d=(\bar\delta_1,\dots,\bar\delta_n)$, $d^+=(\delta_1^+,\dots,\delta_n^+)$ and
$d^\Theta=(\delta_1^\Theta,\dots,\delta_n^\Theta)$.
Then, $L(d)\tilde{\ }=L(\tilde d)$, $L(d)\bar{\ }=L(\bar d)$, $L(d)^+=L( d^+)$ and $L(d)^\Theta=L(d^\Theta)$.

Define a mapping $^t$ on $\Irr$ by $Z(a)^t=L(a), a \in M(\SC)$.
Extend $^t$ additively to $R$. Clearly, $^t$ is a positive mapping, i.e.
it satisfies: $r_1\leq r_2\implies r_1^t \leq r_2^t$. A non-trivial fact is that $^t$ is in fact a ring homomorphism
(see \cite{MR1285969} and \cite{MR1471867}). Furthermore, $^t$ is an involution, called Zelevinsky involution.

For $a\in M(\SC)$, define $a^t\in M(\SC)$ by the requirement
$$
L(a)^t=L(a^t).
$$

\section{Classical groups -- basic definitions}
We will mostly follow the notation of \cite{MR1896238} for classical $p$-adic groups.
The main difference is that the indexing of the classical groups will be slightly different.

Fix a Witt tower $\mathcal V=\{V_n\}_{n\ge0}$ of symplectic, quadratic or hermitian vector spaces over $F'$.
In the first two cases $F'=F$ and in the latter case $F'/F$ is a (separable) quadratic extension $F$
with Galois automorphism $\Theta$.
In all cases, a maximal isotropic subspace of $V_n$ has dimension $n$
(see sections III.1 and III.2 of \cite{KudlaNotes} for more details).\footnote{For some purposes a different indexing
of the groups $S_n$ may be more convenient -- see \cite{MR1896238}.}
In particular, $V_0$ is anisotropic.\footnote{In the symplectic case, $V_0=\{0\}$.}
Denote by $S_n$ the group of isometries of $V_n$.
For $0\leq k\leq n$, let $P_{(k)}$ be the stabilizer of a fixed $k$-dimensional isotropic subspace of $V_n$ --
see \cite[\S III.2]{KudlaNotes}.\footnote{One can find in \cite{MR1356358}
matrix realizations of the symplectic and split odd-orthogonal groups.
In a similar way one can make matrix realizations also for other classical groups.}
The Levi factor $M_{(k)}$ of $P_{(k)}$ is naturally isomorphic to $\GL(k,F')\times S_{n-k}$.
More generally, for any partition $\beta$ of $\ell\leq n$ we can in a natural way define a parabolic subgroup $P_\beta$
and its Levi subgroup $M_\beta$.
(For $M_\beta$ first consider $M_{(\ell)}$, and then apply the construction from the case of general linear groups.)

We remark that in the odd orthogonal case we may replace $S_n$ with the group of isometries of $V_n$ of determinant one.

We exclude in the paper the case of split even orthogonal groups, although we expect that all the results
hold also in this case, with the same proofs. (Split even orthogonal groups are not connected,
which requires some additional checks that we have not yet carried out.)

A minimal parabolic subgroup in $S_n$, which is the intersection of all $P_{(k)}$'s, will be fixed.
(Only standard parabolic subgroups with respect to the fixed minimal parabolic subgroup will be considered in this paper.)

For the rest of the paper we fix once and for all the series $\{S_n\}_{n\ge0}$ as above.
We denote by $\Cuspcl$, (resp., $\Discl$, $\Tempcl$, $\Irrcl$) the set of cuspidal (resp., square-integrable, tempered, all)
irreducible representations of $S_n$, $n\ge0$ (up to equivalence) and
by $\Cuspsd$ the subset of $\Cusp$ consisting of $F'/F$-selfcontragredient representations.
(We shall often apply Casselman's criteria from \cite{CassNotes} for representations to be square-integrable or tempered.)

\section{Twisted Hopf algebra structure}
For $\pi\in\Alg_{\fl}(\GL(k,F'))$ and $\sigma\in\Alg_{\fl}(S_{n-k})$, the representation parabolically
induced from $\pi\otimes\sigma$ is denoted by
$$
\pi\rtimes\sigma.
$$
We shall often use that
\begin{equation} \label{asso}
\pi_1\rtimes (\pi_2\rtimes\sigma )\cong (\pi_1\times\pi_2)\rtimes\sigma.
\end{equation}

For $\pi$ as above holds
\begin{equation} \label{check}
\ssm(\pi\rtimes\sigma)=\ssm(\check\pi\rtimes\sigma).
\end{equation}
Therefore, if $\pi \rtimes \sigma$ is irreducible, then
$\pi\rtimes\sigma\cong\check\pi\rtimes\sigma$.
We say that a representation $\pi$ of a general linear group
over $F'$ is $F'/F$-selfcontragredient if
$
\pi\cong\check\pi.
$

The normalized Jacquet module of $\tau\in\Alg_{\fl}(S_n)$ with respect to $P_{(k)}$ is denoted by $s_{(k)}(\tau)$.
Let $\tau$ and $\omega$ be irreducible representations of $\GL(p,F)$ and $S_q$, respectively, and let $\pi$ be an admissible
representation of $S_{p+q}$. Then, by Frobenius reciprocity
$$
\Hom_{_{S_{p+q}}}(\pi,\tau \rtimes \omega)\cong
\Hom_{_{\GL(p,F)\times S_q}}(s_{(p)}(\pi),\tau \otimes \omega),
$$
while the second adjointness implies
$$
\Hom_{_{S_{p+q}}}(\tau \rtimes \omega, \pi)\cong
\Hom_{_{\GL(p,F)\times S_q}}(\check \tau \otimes \omega,s_{(p)}(\pi)).
$$

Denote
$$
R(S)=\underset {n\geq 0} \oplus \mathfrak R(S_n).
$$

Now $\rtimes$ induces in a natural way a mapping $R\times R(S)\ra R(S)$, which is denoted again by $\rtimes$.
For $\tau\in \Alg_{\fl}(S_n)$, denote
$$
\mu^*(\tau)=\sum_{k=0}^n \ssm\left(s_{(k)}(\tau)\right).
$$
We extend $\mu^*$ additively to $\mu^*:R(S)\ra R\otimes R(S)$. Denote
\begin{equation} \label{M*}
 M^*= (m \otimes 1) \circ (\, \check{\ }\, \otimes m^\ast) \circ
\kappa \circ m^\ast : R\ra R\otimes R,
\end{equation}
where $\check{\ }\,:R\ra R$ is a
group homomorphism determined by the requirement that
$\pi\mapsto\check\pi$ for all $\pi\in\Irr$, and $\kappa:R\times R\ra R\times R$ maps $\sum x_i\otimes y_i$ to $\sum y_i\otimes x_i.$
The action $\rtimes$ of $R\otimes R$ on $R\otimes R(S)$ is defined in a natural way. Then,
\begin{equation}
\label{mu*}
\mu^*(\pi \rtimes \sigma)= M^*(\pi) \rtimes \mu^*(\sigma)
\end{equation}
holds for $\pi\in R$ and $\sigma \in R(S)$.

For any finite length representation $\pi$ of $\GL(k,F')$, the component of $M^*(\pi)$ which is in
$
\mathfrak R(\GL(k,F'))\otimes \mathfrak R(\GL(0,F'))
$, will be denoted by
$$
M^*_{\GL}(\pi)\otimes 1.
$$

Let $\pi$ be a representation of $\GL(k,F')$ of finite length, and let $\sigma\in\Cuspcl$. Suppose
that $\tau$ is a subquotient of $\pi\rtimes \sigma$. Then, we shall denote $s_{(k)}(\tau)$ also by
\begin{equation*}
s_{\GL}(\tau).
\end{equation*}
If in addition, $\tau$ is irreducible, then we shall say that
\begin{equation} \label{pcs}
\s
\end{equation}
is the partial cuspidal support of $\tau$.
We say that $\theta\in\Cusp$ is a factor of $\tau$ if there exists an irreducible subquotient
$\beta\otimes\s$ of $s_{\GL}(\tau)$ such that $\theta$ is in the support of $\beta$.

Let $\pi$ be a finite length representation of a general linear group, and let $\tau$ be a representation of
$S_n$ as above. Then, \eqref{mu*} implies
\begin{equation} \label{s-GL}
\ssm(s_{\GL}(\pi\rtimes\tau))=M^*_{\GL}(\pi)\times
\ssm(s_{\GL}(\tau))
\end{equation}
($\times$ in the above formula denotes multiplication in $R$ of
$M^*(\pi)$ with the factors on the left-hand side of $\otimes$ in $\ssm(s_{\GL}(\tau))$).

Let $\tau$ be a representation of some $\GL(m,F)$ and let
$
m^{\ast}(\tau)=\sum x\otimes
y.
$
Then, the formula \eqref{M*} implies directly
\begin{equation} \label{M-GL}
M^*_{\GL}(\tau)=\sum x\times \check{y}.
\end{equation}
Furthermore, the sum of the irreducible subquotients of the form $1\otimes \ast$ in $M^{\ast}(\tau)$ is
\begin{equation} \label{1o}
1\otimes \tau.
\end{equation}

Now assume that $\pi$ is a representation of $\GL(d,F)$ and $\s$ is a representation of a classical group.
Let $\pi_1\otimes\pi_2\otimes\pi_3$ be an irreducible subquotient of some $r_{(n_1,n_2,n_3)}(\pi)$ ($n_1+n_2+n_3=n$)
and let $\pi_4\otimes\s_0$ be an irreducible subquotient of some $s_{(m_1)}(\s)$ ($m_1\leq m$). Then,
$$
\pi_1 \times\pi_4\times \tilde\pi_3 \otimes \pi_2\rtimes\s_0
$$
is a subquotient of the corresponding Jacquet module (see \cite[Lemma 5.1]{MR1356358} and the discussion preceding it).

\section{Some formulas for \texorpdfstring{$M^*$}{M*}} \label{some form.}
Let $\rho\in\Cuspsd$. Suppose that $x,y\in \R$ satisfy $y-x\in \Z_{\geq0}$.
Then, one directly gets from \eqref{m*d} and \eqref{M*}
\begin{equation} \label{M-seg}
M^*\big(\delta([x,y]^{(\rho)})\big) =
\sum_{i= x-1}^{ y}\sum_{j=i}^{ y}
\delta([-i,-x]^{(\rho)})\times\delta([j+1,y]^{(\rho)}) \otimes\delta([i+1,j]^{(\rho)}),
\end{equation}
where $y-i,y-j\in \Z_{\geq 0}$ in the above sums. In particular
\begin{equation}\label{M-seg-GL}
M_{\GL}^*\big(\delta([x,y]^{(\rho)})\big) =
\sum_{i= x-1}^{ y}\delta([-i,-x]^{(\rho)})\times\delta([i+1,y]^{(\rho)}).
\end{equation}

In a similar way, one gets for Zelevinsky segment representations
$$
M^*(\ms([x,y]^{(\rho)}))=\sum_{x -1\leq i \leq y}\sum_{ x -1\leq j \leq i}
\ms ([-y,-i -1]^{(\rho)})\times\ms ([x ,j]^{(\rho)})\otimes \ms ([j +1,i]^{(\rho)}).
$$

More generally, let $\pi=L(\D_1,\dots,\D_k)$ be a ladder representations, i.e., we can write $\D_i=[a_i,b_i]^{(\rho)}$ where
$a_k<\dots<a_1$ and $b_k<\dots<b_1$ (we continue to assume below
$
\rho\cong\check\rho).
$
Then, using \cite{MR2996769} we get

\begin{equation}\label{lad-GL}
M^*_{\GL}(\pi)=\sum_{\substack{a_i-1\leq x_i\leq b_i,
\\x_k<\dots<x_1}}L(\ ([-x_i,-a_i]^{(\rho)})_{1\leq i\leq k} \ )
\times L(\ ([x_i+1,b_i]^{(\rho)})_{1\leq i\leq k}\ ).
\end{equation}

\section{Langlands classification for classical groups
\texorpdfstring{(\cite{MR0507262}, \cite{MR1721403}, \cite{MR2050093}, \cite{MR2567785}, \cite{MR584084})}{}} \label{LCCG}
Denote
$$
\Disc_+=\{\delta\in \Disc: e(\delta)>0\}.
$$
For
$
t=((\delta_1,\delta_2,\dots,\delta_k),\tau)\in
M(\Disc_+)\times \Tempcl
$
take a permutation $p$ of $\{1,\dots,k\}$ such that
\begin{equation}\label{dec}
e(\delta_{p(1)})\geq e(\delta_{p(2)})\geq\dots\geq e(\delta_{p(k)}).
\end{equation}
Then, the representation
$$
\lambda(t):=\delta_{p(1)}\times\delta_{p(2)}\times\dots\times\delta_{p(k)}\rtimes \tau
$$
has an irreducible cosocle, denoted by
$$
L(t).
$$
The mapping
$$
t\mapsto L(t)
$$
defines a bijection between $M(\Disc_+)\times \Tempcl$ and $\Irrcl$. This is the Langlands classification for classical groups.
The multiplicity of $L(t)$ in $\lambda(t) $ is one.

Write $t=(d;\tau)$. Then, $L(d;\tau)\bar{\ }\cong L(\bar d;\bar\tau)$ and $L(d;\tau)\tilde{\ }\cong L(d^\Theta;\tilde\tau)$.

Let $t=((\delta_1,\delta_2,\dots,\delta_k),\tau) \in M(\Disc_+)\times \Tempcl$ and suppose that a permutation $p$ satisfies \eqref{dec}.
Suppose that $\delta_{p(i)}$ is a representation of $\GL(n_i,F)$ and $L(t) $ a representation of $S_n$. Define
$$
e_*(t)=(\underbrace{e(\delta_{p(1)}),\dots,e(\delta_{p(1)})}_{n_1\text{ times}},\dots,
\underbrace{e(\delta_{p(k)}),\dots,e(\delta_{p(k)})}_{n_k\text{ times}},\underbrace{0,\dots,0}_{n'\text{ times}}),
$$
where $n'=n-(n_1+\dots+n_k)$. Consider the partial ordering on $\R^n$ given by
$$
(x_1,\dots,x_n)\leq (y_1,\dots,y_n)\iff\sum_{i=1}^j x_i\leq \sum_{i=1}^j y_i, \quad 1\leq j\leq n.
$$
Suppose $t,t'\in M(\Disc_+)\times \Tempcl$ and $L(t')$ is a subquotient of $\lambda(t)$. Then,
\begin{equation} \label{BPLC}
e_*(t')\leq e_*(t), \text{ with an equality }\iff t'=t.
\end{equation}
(See \cite[\S6]{MR1266251} for the symplectic groups -- the same proof works for all classical groups
other than the split even orthogonal groups.)

For $\Delta\in \mathcal S$ define $\mathfrak c(\Delta)$ to be $e(\delta(\Delta))$. Let
$$
\SC_+=\{\Delta\in\SC; \mathfrak c (\Delta)>0\}.
$$
In this way we can define in a natural way the Langlands classification $(a,\tau)\mapsto L(a;\tau)$ using $M(\SC_+)\times \Tempcl$
as the parameters.

\section{Irreducible subquotients of induced representations of classical groups} \label{irr-sq}
We will recall a very useful fact from \cite{MR2504024}.

For $d=(\delta_1,\dots,\delta_k)\in M(\Disc)$ denote by
$$
d^\uparrow
$$
the element of $M(\Disc_+)$ obtained from $d$ by removing every $\delta_i$ such that $e(\delta_i)=0$
and replacing every $\delta_i$ for which $e(\delta_i)<0$ by $\check\delta_i$. Also, denote by
$$
d_u
$$ the multiset in $M(\Disc)$ obtained from $d$ by retaining only the $\delta_i$'s such that $e(\delta_i)=0$.

\begin{proposition}[\cite{MR2504024}] \label{prop: addparms}
Let $d\in M(\Disc)$ and $t=(d',\tau)\in M(\Disc_+)\times\Tempcl$. The tempered representation
$
\lambda(d_u)\rtimes\tau
$
is unitarizable and multiplicity free.
For every irreducible constituent $\tau'$ of $\lambda(d_u)\rtimes\tau$,
the representation
$$
L(d^\uparrow+d';\tau')
$$
occurs with multiplicity one in the Jordan--H\"older sequence of the induced representation
$$
L(d)\rtimes L(d';\tau).
$$
\end{proposition}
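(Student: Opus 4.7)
The plan is to treat the two assertions separately, with the multiplicity-one statement following from a careful analysis of the standard module $\lambda(d)\rtimes\lambda(d';\tau)$ via the Langlands classification.

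First, for $\lambda(d_u)\rtimes\tau$: since every $\delta_i\in d_u$ satisfies $e(\delta_i)=0$, each $\delta_i$ is a unitary discrete series of a general linear group, and $\lambda(d_u)$ is a unitary tempered representation of the corresponding Levi. Parabolic induction from a unitary tempered representation is again tempered (apply Casselman's criterion to the leading exponents of the iterated Jacquet modules) and unitarizable (unitary parabolic induction preserves unitarity). Multiplicity freeness is a consequence of the $R$-group theory for classical $p$-adic groups: the $R$-group attached to this induced representation is an elementary abelian $2$-group (Keys--Goldberg), so the commuting algebra is commutative and the decomposition is multiplicity-free.

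For the multiplicity-one assertion, the first step is to rewrite the standard module in the Grothendieck group. Split $d=d_+\cup d_u\cup d_-$ according to the sign of $e(\delta_i)$. Using \eqref{check} and \eqref{asso}, we may move $\lambda(d_-)$ past $\lambda(d_u)\rtimes\tau$ and replace it by its $F'/F$-contragredient, obtaining
\begin{equation*}
\lambda(d)\rtimes\lambda(d';\tau)\equiv\lambda(d^\uparrow+d')\rtimes\bigl(\lambda(d_u)\rtimes\tau\bigr)=\bigoplus_{\tau'}\lambda(d^\uparrow+d')\rtimes\tau',
\end{equation*}
where the direct sum runs over the irreducible constituents of $\lambda(d_u)\rtimes\tau$ from Step 1. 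After reordering factors by descending exponents as in \eqref{dec}, each summand $\lambda(d^\uparrow+d')\rtimes\tau'$ is a Langlands standard module for the parameters $(d^\uparrow+d';\tau')$, and hence has $L(d^\uparrow+d';\tau')$ as Langlands quotient with multiplicity exactly one. Since the Langlands bijection gives pairwise non-isomorphic quotients $L(d^\uparrow+d';\tau')$ for different $\tau'$, this representation appears with multiplicity exactly one in the entire standard module $\lambda(d)\rtimes\lambda(d';\tau)$; in particular, its multiplicity in the subquotient $L(d)\rtimes L(d';\tau)$ is at most one.

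The main obstacle is upgrading this to an equality: showing that the unique copy of $L(d^\uparrow+d';\tau')$ in the standard module actually survives in $L(d)\rtimes L(d';\tau)$ and is not absorbed by the kernel of the double surjection $\lambda(d)\rtimes\lambda(d';\tau)\twoheadrightarrow L(d)\rtimes L(d';\tau)$. My strategy is to compute $\mu^*(L(d)\rtimes L(d';\tau))$ by means of \eqref{mu*} and \eqref{M-seg} together with the corresponding formula for $\lambda(d';\tau)$, and to exhibit inside it a factor whose exponent datum equals $e_*(d^\uparrow+d';\tau')$. By the characterization of Langlands data as the unique maximal element in the Jacquet module recorded in \eqref{BPLC}, the presence of such a maximal term forces $L(d^\uparrow+d';\tau')$ to occur as an irreducible subquotient of $L(d)\rtimes L(d';\tau)$; combined with the upper bound from the previous step, this yields the multiplicity-one statement.
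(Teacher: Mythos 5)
Note first that the paper itself does not prove Proposition \ref{prop: addparms}: it is recalled from \cite{MR2504024} without proof, so your argument can only be measured on its own merits. The first half of your proposal (temperedness and unitarizability of $\lambda(d_u)\rtimes\tau$ by unitary parabolic induction, multiplicity freeness via the $R$-group theory for classical groups) is fine, and so is the upper bound: in the Grothendieck group $\lambda(d)\rtimes\lambda(d';\tau)$ equals $\sum_{\tau''}\lambda(d^\uparrow+d')\rtimes\tau''$, and each summand contains its Langlands quotient once. Be aware, though, that your justification for why $L(d^\uparrow+d';\tau')$ cannot occur in the summands with $\tau''\ne\tau'$ ("pairwise non-isomorphic quotients") is not sufficient as stated, since an irreducible representation may occur as a non-cosocle constituent; what you actually need is the equality case of \eqref{BPLC}, which forces $\tau''=\tau'$ because all these data have the same $e_*$. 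This is a small, fixable point.

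The genuine gap is the lower bound, which is the real content of the statement: showing that $L(d^\uparrow+d';\tau')$ actually survives in the irreducibly induced representation $L(d)\rtimes L(d';\tau)$, and does so for \emph{every} constituent $\tau'$ of $\lambda(d_u)\rtimes\tau$. You acknowledge this as the main obstacle but only announce a strategy, and the mechanism you propose cannot work as described. All the candidates $L(d^\uparrow+d';\tau'')$, as $\tau''$ ranges over the constituents of $\lambda(d_u)\rtimes\tau$, share the identical exponent datum $e_*$, so exhibiting a term in $\mu^*(L(d)\rtimes L(d';\tau))$ "whose exponent datum equals $e_*(d^\uparrow+d';\tau')$" can never single out a particular $\tau'$. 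Moreover, \eqref{BPLC} is a constraint on the Langlands data of subquotients of a standard module; it is not a criterion saying that a maximal term in a Jacquet module forces a prescribed irreducible constituent to occur. To argue along Jacquet-module lines you would have to locate a term that remembers the tempered part, e.g.\ one of the form $\beta\otimes\tau'$, and then perform a multiplicity count ruling out contributions from every other composition factor of $L(d)\rtimes L(d';\tau)$ — precisely the competitors $L(d^\uparrow+d';\tau'')$ make this nontrivial. Alternatively (and this is closer to what \cite{MR2504024} actually exploits) one should use that $\tau'$ is a direct summand of the unitarily induced, hence semisimple, representation $\lambda(d_u)\rtimes\tau$, so that both a projection onto and an embedding of $\tau'$ are available, together with \eqref{check} to reduce to nonnegative exponents. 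As written, your proposal establishes multiplicity at most one but not the occurrence itself, so the proposition is not proved.
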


\section{Involution}
The Zelevinsky involution is a special case of an involution $D_G$ which exists on the Grothendieck group of the representations
of any connected reductive $p$-adic group. This involution is constructed in \cite{MR1285969} and \cite{MR1471867}.
It takes any irreducible representation to an irreducible representation up to a sign.
For any irreducible representation $\pi$, let $\pi^t$ be the irreducible representation such that $D_G(\pi)=\pm\pi^t$.
We call $\pi^t$ the \ASS involution of $\pi$, or \ASS dual of $\pi$.

This involution is compatible with parabolic induction in the sense that
$$
(\pi\rtimes\tau)^t=\pi^t\rtimes\tau^t
$$
(on the level of Grothendieck groups).

Furthermore, for Jacquet modules, the mapping
$$
\pi_1\otimes\dots\pi_l\otimes\mu\mapsto \check\pi_1^t\otimes\dots\check\pi_l^t\otimes\mu^t,
$$
is a bijection from the semi simplification of $s_\beta(\pi)$ onto the semisimplification of $s_\beta(\pi^t)$
($\beta$ is the partition which parametrizes the corresponding parabolic subgroup).

\section{Reducibility point and generalized Steinberg representations}

Let $\rho\in\Cusp$ with a unitary central character and $\sigma\in\Cuspcl$.
If $\rho\notin\Cuspsd$ then $\nu^x\rho\rtimes\s$ is irreducible for all $x\in\R$.
Otherwise,
\begin{equation}\label{alpha}
\nu^{\alpha_{\rho,\sigma}}\rho\rtimes\s
\end{equation}
is reducible for a unique $\alpha_{\rho,\sigma}\geq0$ (\cite{MR577138}). C. M\oe glin has proved that
$
\alpha_{\rho,\sigma}\in \tfrac12\Z.
$

Given $\rho\in\Cuspsd$, either $\alpha_{\rho,\sigma}\in\Z$ for all $\sigma\in\Cuspcl$ or
$\alpha_{\rho,\sigma}\in\frac12+\Z$ for all $\sigma\in\Cuspcl$.
We say that $\rho$ is of odd type (or parity) in the former case and of even type (or parity) in the latter.
The parity of $\rho$ depends only on $\rho$ and the Witt tower and can be detected by the existence of a pole for a suitable $L$-function.
For instance, for odd orthogonal groups, $\rho$ is of odd (resp., even) type if and only if
the exterior (resp., symmetric) square $L$-function of $\rho$ has a pole at $s=0$.
These conditions are reversed for symplectic groups.

From now on we fix $\rho\in\Cuspsd$ and $\sigma\in\Cuspcl$
and denote the reducibility point $ \alpha_{\rho,\sigma}$ simply by
$$
\alpha.
$$

The representation
$
\nu^{\alpha+ n} \rho \times \nu^{\alpha+ n-1} \rho \times \cdots \times
\nu^{\alpha+ 1} \rho \times \nu^\alpha\rho \rtimes \sigma
$
admits an irreducible socle, which is denoted by
$
\delta([\nu^\alpha\rho, \nu^{\alpha+ n} \rho]; \sigma) \ \ (n\geq 0).
$
It is square-integrable and called a generalized Steinberg representation. We have
\begin{equation} \label{eq: muforgenstn}
\mu^*\left(\delta([\nu^\alpha\rho, \nu^{\alpha+ n}\rho];\sigma)\right) =
\sum^n_{k=-1} \delta([\nu^{\alpha+ k + 1}\rho, \nu^{\alpha+n} \rho]) \otimes \delta([\nu^\alpha\rho,
\nu^{\alpha+ k}\rho]; \sigma),
\end{equation}
$$
\delta([\nu^\alpha\rho, \nu^{\alpha+ n} \rho];\sigma)\tilde{\ }
\cong
\delta([\nu^\alpha\rho,\nu^{\alpha+ n} \rho]; \tilde{\sigma}).
$$
Applying the \ASS involution, we get
\begin{multline} \label{eq: muforgenstnt}
\mu^*\left(L(\nu^{\alpha+n} \rho. \dots , \nu^{\alpha+ 1}\rho, \nu^{\alpha}\rho; \sigma)\right)=
\\\sum^n_{k=-1}L(\nu^{-(\alpha+ n)}\rho,\dots , \nu^{-(\alpha+ k+2)}\rho,\nu^{-(\alpha+k+1)} \rho)
\otimes L(\nu^{\alpha+k} \rho. \dots , \nu^{\alpha+ 1}\rho, \nu^{\alpha}\rho; \sigma).
\end{multline}

\label{sec: Castrick}

The generalized Steinberg representation and its \ASS dual are the only unitarizable irreducible subquotients of $
\nu^{\alpha+ n} \rho \times \nu^{\alpha+ n-1} \rho \times \cdots \times
\nu^{\alpha+ 1} \rho \times \nu^\alpha\rho \rtimes \sigma$ (\cite{MR2652536}, \cite{MR3052686};
see also \cite[\S 13]{MR3969882}).

\section{Representations of segment type} \label{segment}
We shall recall the formulas for Jacquet modules obtained in \cite{MR3360752}.\footnote{The results of \cite{MR3360752}
are only stated for symplectic and split odd-orthogonal groups but the proof works for all classical groups.
Note that the proof does not use the classification of $\Discl$ in terms of $\Cuspcl$.}
We fix $\rho\in\Cuspsd$ and $\sigma\in\Cuspcl$ and
consider irreducible subquotients of $\delta([\nu^{-c}\rho,\nu^{d}\rho])\rtimes\s$, where
$c+d\in \Z_{\geq 0}$.
As above, $\alpha\in\tfrac12\Z_{\geq0}$ denotes the reducibility exponent \eqref{alpha}.

The representation $\delta([\nu^{-c}\rho,\nu^{d}\rho])\rtimes\s$ is multiplicity free of length at most three.
It is reducible (resp., of length three) if and only if $[-c,d]_\Z\cap\{-\alpha,\alpha\}\ne \emptyset$ (resp.,
$\{-\alpha,\alpha\}\subseteq[-c,d]_\Z$ and $c\ne d$).

Assume that $d\ge c$ and $d-\alpha\in\Z$.
We define terms $\delta([\nu^{-c}\rho,\nu^{d}\rho]_+;\s)$, $\delta([\nu^{-c}\rho,\nu^{d}\rho]_-;\s)$ and $L_\alpha([\nu^{-c}\rho,\nu^{d}\rho];\s)$.\footnote{In \cite{MR3360752}
we denoted the last term by $L_\alpha(\delta([\nu^{-c}\rho,\nu^{d}\rho]);\s)$.}
Each of these terms is either an irreducible representation or the trivial (zero-dimensional) representation.
They satisfy
\begin{equation} \label{uuvodu}
\delta([\nu^{-c}\rho,\nu^{d}\rho])\rtimes\s=\delta([\nu^{-c}\rho,\nu^{d}\rho]_+;\s)+\delta([\nu^{-c}\rho,\nu^{d}\rho]_-;\s)+
L_\alpha([\nu^{-c}\rho,\nu^{d}\rho];\s)
\end{equation}
in the corresponding Grothendieck group.

Suppose first that $\delta([\nu^{-c}\rho,\nu^{d}\rho])\rtimes\s$ is irreducible.
Then, we define that $\delta([\nu^{-c}\rho,\nu^{d}\rho]_-;\s)=0$ and require that
$\delta([\nu^{-c}\rho,\nu^{d}\rho]_+;\s)\ne0$ if and only if $[-c,d]\subseteq[-\alpha+1,\alpha-1]$.
By \eqref{uuvodu} this determines $L_\alpha([\nu^{-c}\rho,\nu^{d}\rho];\s)$.

Suppose now that $\delta([\nu^{-c}\rho,\nu^{d}\rho])\rtimes\s$ is reducible.
If $c=d$, we define that $L_\alpha([\nu^{-c}\rho,\nu^{d}\rho];\s)=0$. Otherwise, $L_\alpha([\nu^{-c}\rho,\nu^{d}\rho];\s)=L([\nu^{-c}\rho,\nu^{d}\rho];\s)$.

If $\alpha> 0$, then there is a unique irreducible subquotient $\g$ of $\delta([\nu^{-c}\rho,\nu^{d}\rho])\rtimes\s$
which has in $s_{\GL}(\g)$ an irreducible subquotient $\tau\otimes\s$ such that $\tau$ is generic,
and $e(\theta)\geq 0$ for all $\theta$ in $\supp(\tau)$.
We denote this $\g$ by $\delta([\nu^{-c}\rho,\nu^{d}\rho]_+;\s)$.

If $\alpha= 0$, we write $\rho \rtimes \sigma$ as a sum of irreducible subrepresentations
$\tau_+ \oplus \tau_{-}$.
We denote also $\tau_\pm$ by $\delta([\rho]_\pm;\s)$.
Then, there exists a unique irreducible subquotient of $\delta([\nu^{-c}\rho,\nu^{d}\rho]) \rtimes \s$ that
contains an irreducible representation
of the form $\tau \otimes \tau_\pm$ in its Jacquet module with respect to an appropriate standard parabolic subgroup,
such that $\tau$ is generic, and $e(\theta)\geq 0$ for all $\theta$ in $\supp(\tau)$.
We denote this subquotient by $\delta([\nu^{-c}\rho,\nu^{d}\rho]_\pm;\s)$.

If $c = d$ or the length of $\delta([\nu^{-c}\rho,\nu^{d}\rho])\rtimes\s$ is three, then
$\delta([\nu^{-c}\rho,\nu^{d}\rho])\rtimes\s$ contains a unique irreducible subrepresentation different from
$\delta([\nu^{-c}\rho,\nu^{d}\rho]_+;\s)$ and we denote it by $\delta([\nu^{-c}\rho,\nu^{d}\rho]_-;\s)$.
Otherwise, we take $\delta([\nu^{-c}\rho,\nu^{d}\rho]_-;\s)=0$.

The representations $\delta([\nu^{-c}\rho,\nu^{d}\rho]_\pm;\s)$ are called representations of segment type.

The representation $\delta([\nu^{-c}\rho,\nu^{d}\rho]_+;\s)$ is square-integrable if and only if $c\ne d$
and $\{-\alpha,\alpha\}\subseteq [-c,d]$ or $\alpha=-c$.
If $\delta([\nu^{-c}\rho,\nu^{d}\rho]_+;\s)$ is square-integrable, then $\delta([\nu^{-c}\rho,\nu^{d}\rho]_-;\s)$
is also square-integrable if it is non-zero.
Conversely, if $\delta([\nu^{-c}\rho,\nu^{d}\rho]_-;\s)$ is square-integrable (and non-zero),
then $\delta([\nu^{-c}\rho,\nu^{d}\rho]_+;\s)$ is square-integrable (and non-zero).

In the two formulas below, we symmetrize notation in the following way. We define
$$
\delta([\nu^{-d}\rho,\nu^{c}\rho]_+;\s),\
\delta([\nu^{-d}\rho,\nu^{c}\rho]_-;\s)\text{ and
}L_\alpha([\nu^{-d}\rho,\nu^{c}\rho];\s)
$$
to denote $\delta([\nu^{-c}\rho,\nu^{d}\rho]_+;\s)$, $ \delta([\nu^{-c}\rho,\nu^{d}\rho]_-;\s)$ and
$L_\alpha([\nu^{-c}\rho,\nu^{d}\rho];\s)$ respectively (assumptions on $c$ and $d$ are as above).

\begin{remark}
Now we recall the formulas for the Jacquet modules of segment representations and associated Langlands quotient from \cite{MR3360752}.
We take this opportunity to correct several typographical errors in \cite{MR3360752}.
The upper limit in the first sum of the second row of \eqref{jm-seg-ds} below is $d-1$
(instead of $c$).\footnote{This correction refers to the formulas on page 441 and Corollaries 4.3, 5.4 and 6.4 of \cite{MR3360752}.}
The limits of the first sum in the third row of \eqref{jm-seg-ds} is $-c-1\leq i\leq c-1$ (instead of $-c-1\leq i\leq d$)
in fact, the indices between $c$ and $d$ do not give any contribution.
Further, the limits in the first sum in the second row of \eqref{jm-seg-q} are $-c-1\leq i\leq d-1$ (instead of $-c-1\leq i\leq d$)
since the index $d$ does not contribute show up in the formula.\footnote{Each of these two not so essential modifications
also refers to the formulas on page 441 and Corollaries 4.3, 5.4 and 6.4 of \cite{MR3360752}.}
\end{remark}

If $\delta([-c,d]^{(\rho)})\rtimes\s$ is reducible (with notation as above), then
we have the following equality
\begin{equation} \label{jm-seg-ds}
\begin{aligned}
\mu^*\big(&\delta([-c,d]^{(\rho)}_\pm;\s)\big)=
\sum_{i=-c-1}^{\pm \alpha- 1}
\delta([-i,c]^{(\rho)}) \times \delta([i+1,d]^{(\rho)})\otimes\s
 \\
+&\sum_{i= -c -1}^{ d-1}\sum_{j=i+1}^{d} \delta([-i,c]^{(\rho)}) \times \delta([j+1,d]^{(\rho)}) \otimes
\delta([i+1,j]^{(\rho)}_\pm;\s)
\\
+&\underset {\underset{ i+j<-1} {-c-1\le i\le c-1 \quad i+1\le j\le c}}{\sum \quad \sum}
\delta([-i,c]^{(\rho)}) \times
\delta([j+1,d]^{(\rho)}) \otimes
L_\alpha([i+1,j]^{(\rho)};\s).
\end{aligned}
\end{equation}
If additionally $c\ne d$, and either $c < \alpha$ or $\alpha\leq c<d$, then we have
\begin{equation}
\begin{gathered} \label{jm-seg-q}
\mu^*\big(L([-c,d]^{(\rho)};\sigma)\big)=
\mu^*\big(L_\alpha([-c,d]^{(\rho)};\sigma)\big)=
\sum_{i=\alpha}^dL([-i,c]^{(\rho)}, [i+1,d]^{(\rho)}) \otimes\sigma+
\\\sum_{-c-1\le i\le d-1}\ \sum_{\substack{i+1\le j\le d\\0\le i+j}}
L([-i,c]^{(\rho)},[j+1,d]^{(\rho)}\big) \otimes L_\alpha([i+1,j]^{(\rho)};\sigma).
\end{gathered}
\end{equation}

\section{Jordan blocks}
Now we shall recall the definition of the Jordan blocks $\Jord(\pi)$ of
an irreducible square-integrable representation $\pi$ of $S_n$.

\begin{definition}
For any $\pi\in\Discl$ denote by $\Jord(\pi)$ the set of all square-integrable representations $\delta(\rho,a)\in\Disc$ where
$\rho\in\Cuspsd$ and $a\in\Z_{>0}$ is of the same parity of $\rho$,
such that $\delta(\rho,a)\rtimes\pi$ is irreducible.
For any $\rho\in\Cuspsd$ we denote
\[
\Jord_\rho(\pi)=\{a:(\rho,a)\in \Jord(\pi)\}\subset\Z_{>0}.
\]
\end{definition}

The set $\Discl$ is classified by admissible triples (see \cite{MR1896238} for details).
Any $\pi\in\Discl$ is parameterized by a triple $(\Jord(\pi),\e_\pi,\pi_{cusp})$, where $\e_\pi$ is a
function\footnote{It is called partially defined function attached to $\pi$.} defined on a subset of $\Jord(\pi)\cup \Jord(\pi)\times \Jord(\pi)$ which takes values in $\{\pm1\}$, and $\pi_{cusp}$ is the partial cuspidal support
(which was defined earlier).

The construction of irreducible square-integrable representations in \cite{MR1896238} starts with strongly positive representations ($\epsilon_\pi$ is alternating function on  $\Jord_\rho(\pi)$ in this case).
The simplest example of such representations is the generalized Steinberg representations.
We shall give one more example of strongly positive representations.

Assume that the reducibility point $\alpha=\alpha_{\rho,\s}$ is strictly positive.
Take $k\in\Z_{\geq0}$ such that $k<\alpha$. Then, the representation
$
\nu^{\alpha-k}\rho\times \nu^{\alpha-k+1}\rho\times\dots\times \nu^{\alpha}\rho\rtimes\s
$
admits an irreducible socle, which we denote by
$$
\delta([\nu^{\alpha-k}\rho],[ \nu^{\alpha-k+1}\rho],\dots,[ \nu^{\alpha}\rho];\s).
$$
This is an example of strongly positive (square-integrable) representation.

Sometimes when we deal with strongly positive representations, to stress this we shall add subscript $\spsi$
(we shall not do this for the generalized Steinberg representations).
Therefore, the above representations we shall also denote by
$\delta_{\spsi}([\nu^{\alpha-k}\rho],[ \nu^{\alpha-k+1}\rho],\dots,[ \nu^{\alpha}\rho];\s).$

Now we recall Proposition 6.1 from \cite{MR2504024}, which we use several
times in the paper. (Note that in (vii) of Proposition 6.1 in
\cite{MR2504024}, the condition on parity was forgotten by mistake.)

\begin{proposition} \label{Pr-red-si}
Let $\pi\in\Discl$, $\rho\in\Cuspsd$ and $a>0$. Then,
\begin{enumerate}
\item $\nu^a\rho\rtimes\pi$ is reducible if and only if $\nu^{-a}\rho\rtimes\pi$ is reducible.
\item If $a\notin\tfrac12\Z$, then $\nu^a\rho\rtimes\pi$ is irreducible.
\item $\rho\rtimes \pi$ is reducible if and only if $\rho$ has odd parity and $1\not\in\Jord_\rho(\pi)$.
\item 
\label{Pr-red-si-item: a-a+1-irr}
If $a\not\in\Jord_\rho(\pi)$, then $\nu^{(a+1)/2}\rho\rtimes\pi$ is irreducible.
\item 
\label{Pr-red-si-item: a-not-a+2-red}
If $a\in\jrp$ and $a+2\not\in\jrp$, then $\nu^{(a+1)/2}\rho\rtimes\pi$ is reducible.
\item 
\label{Pr-red-si-item: a-a+1-epsilon}
Suppose that $a$ and $a+2$ are in $\jrp$. Then,
$\nu^{(a+1)/2}\rho\rtimes\pi$ is reducible if and only if
$\e((\rho,a))=\e((\rho,a+2))$.
\item $\nu^{1/2}\rho\rtimes\pi$ is reducible if and only if $\rho$ is of even parity and either $2\not\in \jrp$,
or $2\in\jrp$ and $\e((\rho,2))=1$.

\noindent
In other words, $\nu^{1/2}\rho\rtimes\pi$ is irreducible if and only if
either $\rho$ is of odd parity or $2\in \jrp$ and $\e((\rho,2))=-1$.
\end{enumerate}
\end{proposition}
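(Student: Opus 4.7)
The plan is to deploy the M\oe glin--Tadi\'c classification of $\Discl$: each $\pi\in\Discl$ is determined by an admissible triple $(\Jord(\pi),\epsilon_\pi,\pi_{cusp})$, and $\pi$ can be realized as a distinguished subrepresentation of induced representations from smaller classical groups. Combined with the Jacquet module formulas of \S\ref{some form.} and M\oe glin's theorem that cuspidal reducibility exponents lie in $\tfrac12\Z$, this yields all seven items.

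Items (1) and (2) are quick. For (1), since $\rho\in\Cuspsd$ we have $\check{\nu^a\rho}\cong\nu^{-a}\rho$, so \eqref{check} gives $\ssm(\nu^a\rho\rtimes\pi)=\ssm(\nu^{-a}\rho\rtimes\pi)$ and the two induced representations have equal composition length. For (2), the cuspidal support of $\pi$ sits in a half-integer lattice of exponents (since $\alpha_{\rho',\pi_{cusp}}\in\tfrac12\Z$ for all $\rho'\in\Cuspsd$), so $\nu^a\rho$ for $a\notin\tfrac12\Z$ is unlinked to the support of $\pi$; a standard segment analysis using the geometric lemma forces irreducibility.

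The heart of the proposition is items (4)--(6). The strategy is to exploit a M\oe glin--Tadi\'c embedding: if $a,a+2\in\jrp$ with matching $\epsilon$-values (resp.\ $a\in\jrp$, $a+2\notin\jrp$), then $\pi$ embeds as a subrepresentation in $\delta([\nu^{-(a-1)/2}\rho,\nu^{(a+1)/2}\rho])\rtimes\pi'$ for a suitable smaller discrete series $\pi'$ obtained by removing or shifting the relevant Jordan blocks. Using associativity \eqref{asso}, this realizes $\nu^{(a+1)/2}\rho\rtimes\pi$ as a subquotient of a longer induced representation, and the Jacquet module formulas \eqref{eq: muforgenstn} and \eqref{mu*} allow one to detect proper subrepresentations of $\nu^{(a+1)/2}\rho\rtimes\pi$. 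For (4), the absence of $a$ from $\jrp$ means no such embedding exists and irreducibility follows. For (5), the embedding produces a proper subrepresentation, forcing reducibility. For (6), the embedding (and hence reducibility) exists precisely when $\epsilon((\rho,a))=\epsilon((\rho,a+2))$.

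Items (3) and (7) are boundary cases where the parity of $\rho$ intervenes directly: (3) asserts that even-parity $\rho$ never gives reducibility of $\rho\rtimes\pi$, while (7) characterizes reducibility of $\nu^{1/2}\rho\rtimes\pi$ in terms of the $\epsilon$-value at the smallest Jordan block. Both are proved by direct Jacquet module computation combined with the parity constraint, noting that $\delta(\rho,1)=\rho$ and that the parity of $\rho$ determines whether the reducibility exponent is an integer or a half-integer. The main obstacle throughout is the careful bookkeeping of the $\epsilon$-function under the M\oe glin--Tadi\'c embeddings: verifying the reducibility patterns in (6) and (7) requires tracking sign conditions that encode delicate compatibilities between adjacent Jordan blocks.
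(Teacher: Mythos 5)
The paper contains no proof of this proposition: it is recalled verbatim (with the parity condition in (vii) restored) from Proposition 6.1 of \cite{MR2504024}, and its proof lives in that reference, resting on the M\oe glin--Tadi\'c classification of $\Discl$ \cite{MR1896238} and on M\oe glin's results on cuspidal reducibility exponents. So the only question is whether your outline would itself constitute a proof, and it would not. Items (1)--(2) are fine (for (2) you correctly reduce to the fact that the cuspidal support of any $\pi\in\Discl$ lies in half-integral twists, i.e.\ to M\oe glin's theorem that $\alpha_{\rho',\sigma}\in\tfrac12\Z$ for all cuspidal pairs). The problems are in (3)--(7), which carry all of the content.

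The most serious gap is (4): you conclude irreducibility of $\nu^{(a+1)/2}\rho\rtimes\pi$ from the non-existence of the M\oe glin--Tadi\'c embedding when $a\notin\jrp$. That is not an argument: the absence of one family of subrepresentations does not force an induced representation to be irreducible, and irreducibility is in fact the hard direction, requiring a genuine analysis (Jacquet modules together with intertwining operators, or an appeal to the determination of all reducibility points of $\nu^s\rho\rtimes\pi$ for $\pi\in\Discl$ due to M\oe glin and M\oe glin--Tadi\'c). The same conflation of ``an embedding exists'' with ``the induced representation is reducible'' undermines (5)--(7): for (5) the standard mechanism is not an embedding of $\pi$ into a segment-induced representation, but the exhibition of a square-integrable subrepresentation of $\nu^{(a+1)/2}\rho\rtimes\pi$ itself (the discrete series whose $\rho$-Jordan blocks are obtained by replacing $a$ with $a+2$), which is then distinct from the non-tempered Langlands quotient; for (6) and (7) both implications are merely asserted, and the claim that the relevant embedding exists ``precisely when'' $\e((\rho,a))=\e((\rho,a+2))$, together with its equivalence to reducibility, is exactly the $\e$-bookkeeping whose verification is the proposition --- calling it bookkeeping defers the entire proof. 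Finally, for (3) and (7) you do not isolate what actually needs proving: with this paper's definition of $\Jord$, the odd-parity half of (3) is essentially definitional (since $\delta(\rho,1)=\rho$), whereas the even-parity half of (3) and the odd-parity half of (7) require the congruence constraint on reducibility exponents, and the even-parity hypothesis in (7) is precisely the condition that was omitted in \cite{MR2504024} and that the paper takes care to restore.
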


\subsection{On computing of Jordan blocks}

Next, we recall how to compute the Jordan blocks of any $\pi'\in\Discl$.

Suppose that $\pi'$ is a subquotient of
$$
\rho_1\times\dots\times\rho_r\rtimes\sigma,
$$
where $\rho_i\in\Cusp$ and $\sigma\in\Cuspcl$.
Fix $\rho\in\Cuspsd$ and let $\alpha=\alpha_{\rho,\sigma}$.
Let $\eta=0$ if $\alpha\in\Z$ and $\eta=1$ otherwise.

If no $\nu^x\rho$, $x\in \tfrac12\Z$ is a factor of $\pi'$, then
$$
\Jord_\rho(\pi')=\{\eta+1,\eta+3,\dots,2\alpha-3,2\alpha-1\}.
$$
(Note that for $\alpha=0$, $\Jord_\rho(\pi')=\emptyset.)$

In the general case, we recall Proposition 2.1 of \cite{MR1896238},
from which we can compute Jordan blocks in general.

\begin{proposition} \label{JBcomputation}
Let $x,y \in\frac12\Z$ be such that $x-y\in \Z_{\geq 0}$ and $x-\alpha\in\Z$.
Suppose that $\pi\in\Discl$ embeds in the induced representation
$$
\pi \hookrightarrow \nu^x \rho\times \cdots \times
\nu^{x-i+1}\rho
\times \cdots \times \nu^y \rho\rtimes \pi'.
$$
Then,
\begin{enumerate}
\item If $y>0$, then $2y-1\in\Jord_\rho(\pi')$ and
$$
\Jord_\rho(\pi)=\left(\Jord_\rho(\pi')\setminus\{2y-1\}\right)\cup\{2x+1\}.
$$
\item If $y\leq 0$ , then $2x+1$ and $-2y+1$ are not in $\Jord_\rho(\pi')$ and
$$
\Jord_\rho(\pi)=\Jord_\rho(\pi')\cup \{2x+1,-2y+1\}.
$$
\end{enumerate}
\end{proposition}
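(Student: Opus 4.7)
The plan is to proceed by induction on the length $\ell=x-y+1$ of the segment appearing in the embedding. For the base case $\ell=1$, one has $\pi\hookrightarrow\nu^x\rho\rtimes\pi'$, and since $\pi$ is square-integrable the induced representation must be reducible; the conclusion then follows directly from parts (4)--(6) of Proposition \ref{Pr-red-si}. Specifically, if $y=x>0$, reducibility together with $\pi$ being a \emph{subrepresentation} (and hence not merely a subquotient) forces $2y-1\in\Jord_\rho(\pi')$ and $2x+1\notin\Jord_\rho(\pi')$; a Frobenius reciprocity computation of $s_{(1)}(\pi)$ then yields $\Jord_\rho(\pi)=(\Jord_\rho(\pi')\setminus\{2y-1\})\cup\{2x+1\}$. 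The case $y=x\le 0$ is handled analogously, combining part (1) of Proposition~\ref{Pr-red-si} (which transfers reducibility from $\nu^y\rho$ to $\nu^{-y}\rho$) with part (3) when $y=0$, which is precisely where the new Jordan block $1$ appears.

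For the inductive step I would factor
\[
\nu^x\rho\times(\nu^{x-1}\rho\times\cdots\times\nu^y\rho)\rtimes\pi'
\]
and choose an irreducible subrepresentation $\pi_1$ of the inner induced representation such that $\pi\hookrightarrow\nu^x\rho\rtimes\pi_1$. The inductive hypothesis applied to $\pi_1$ determines $\Jord_\rho(\pi_1)$, and then the base case applied to $\pi\hookrightarrow\nu^x\rho\rtimes\pi_1$ completes the step, upgrading the ``shifted'' block from $2x-1$ to $2x+1$ in case (1), or adding $2x+1$ on top of $-2y+1$ already present from the inner induction in case (2). In case (2) the induction naturally crosses $x=0$: when the outer factor becomes $\rho$, part (3) of Proposition~\ref{Pr-red-si} accounts for the appearance of the block $1$, and part (1) is what permits one to reflect negative exponents to their positive counterparts without disturbing reducibility.

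The main obstacle will be ensuring that the intermediate $\pi_1$ can be chosen in $\Discl$, so that the inductive hypothesis actually applies. I would argue this via Casselman's criterion: any non-tempered irreducible subquotient of the inner induction would, through the geometric lemma of Bernstein--Zelevinsky applied to its Langlands data, produce an exponent in $s_\beta(\pi)$ violating the strict negativity required for $\pi$ to be square-integrable, contradicting $\pi\in\Discl$. A secondary subtlety is the careful tracking of the partially defined function $\e_\pi$ on $\Jord_\rho$, so as to apply the correct dichotomy between parts (5) and (6) of Proposition~\ref{Pr-red-si} at each inductive step when two Jordan blocks of the same parity coexist; this is precisely where the M\oe glin--Tadi\'c admissible-triple classification of $\Discl$ is essential, and represents the bulk of the technical work underlying the clean combinatorial statement.
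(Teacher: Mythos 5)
The paper does not prove this statement at all: it is recalled verbatim from \cite{MR1896238} (Proposition 2.1 there), whose actual argument works directly with the induced representations $\delta(\rho,a)\rtimes\pi$ via the embedding $\pi\h\delta([\nu^y\rho,\nu^x\rho])\rtimes\pi'$ and Jacquet-module/intertwining analysis, so your proposal must stand on its own, and it has two genuine gaps. The first is already in the base case: what has to be proved is a determination of the \emph{entire} set $\Jord_\rho(\pi)$, i.e.\ a decision, for every $a$ of the correct parity, of whether $\delta(\rho,a)\rtimes\pi$ is irreducible. Proposition \ref{Pr-red-si} applied to $\pi'$ only concerns the corank-one inductions $\nu^{(a+1)/2}\rho\rtimes\pi'$ and yields (together with Casselman) the necessity of $2y-1\in\Jord_\rho(\pi')$; it says nothing about $\delta(\rho,a)\rtimes\pi$. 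A Frobenius-reciprocity computation of $s_{(n_\rho)}(\pi)$ cannot decide these irreducibilities, and applying Proposition \ref{Pr-red-si} to $\pi$ itself presupposes exactly the data $\Jord_\rho(\pi)$ and $\e_\pi$ you are trying to compute -- and is circular in a second sense as well, since that proposition is extracted from the completed M\oe glin--Tadi\'c classification, of which the present statement is one of the basic ingredients.

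The second gap is in the inductive step, and it hits precisely case (2), where the two new blocks $2x+1$ and $-2y+1$ are created. The intermediate representation $\pi_1$ with $\pi\h\nu^x\rho\rtimes\pi_1$ and $\pi_1$ a constituent of $\nu^{x-1}\rho\times\cdots\times\nu^y\rho\rtimes\pi'$ need not lie in $\Discl$, and Casselman's criterion cannot force this: square-integrability of $\pi$ only makes positive those partial sums of exponents in which the leading $x>0$ participates, so the exponents of $\pi_1$ may be zero or negative. The failure is unavoidable, not a technicality: for $\alpha=0$ take $\pi=\delta([0,1]^{(\rho)}_\pm;\s)\h\nu\rho\times\rho\rtimes\s$ (so $x=1$, $y=0$, $\pi'=\s$); by \eqref{l1} the only possible $\pi_1$ is $\delta([0]_\pm;\s)$, which is tempered but not square-integrable, so the inductive hypothesis does not apply to it (indeed $\Jord$ in the sense of the proposition is not even defined for it in this setup). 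More generally, whenever $y\le0$ the inner induction eventually has no square-integrable constituents, since the relevant exponent sums become non-positive. To make your scheme work one would need a strictly stronger statement valid for tempered (or larger) classes of $\pi_1$, which is in substance what the proof in \cite{MR1896238} supplies by different means; as written, the proposal establishes neither half of the proposition.
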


\begin{remark} \label{general-basic}
The Local Langlands correspondence for the general linear group (\cite{MR1876802}, \cite{MR1738446}) gives a bijection $\Phi$
between the set $\Disc$ (resp., $\Cusp$) and the irreducible representations of the Weil--Deligne (resp., Weil) group.\footnote{An alternative
classification of $\Cusp$ is given in \cite{MR1204652}.}
(Recall that $(\rho,n)\mapsto \delta(\rho,n)$ gives a parametrization of $\Disc$ by $\Cusp\times \Z_{>0}$.)

J. Arthur has obtained in \cite[Theorem 1.5.1]{MR3135650} a classification of irreducible tempered representations of classical groups
attaching to them pairs of admissible homomorphisms and characters of the component groups of the admissible homomorphisms.
C. M\oe glin has proved in \cite[Theorem 1.3.1]{MR2767522} that the admissible homomorphism attached to $\pi\in\Discl$ is
\begin{equation*} \label{eq-sum-L}
\underset{\s\in \Jord(\pi)}\oplus\Phi(\s).
\end{equation*}
In this way one gets a classification of $\Discl$ in terms of (certain) finite sets of $\Cuspsd$ and functions on these sets with values in $\{\pm1\}$.
In addition, \cite[Theorem 1.5.1]{MR2767522} $\pi\in\Cuspcl$ if and only if
for any $\rho\in\Cuspsd$, $\e_\pi(\delta(k,2))=-1$ whenever $2\in\Jord_\rho(\pi)$ and if $k\in\Jord_\rho(\pi)$ with $k>2$ then
$k-2\in\Jord_\rho(\pi)$ and $\e_\pi((\delta,k))\ne\e_\pi((\delta,k-2))$.
Finally, if $\sigma\in\Cuspcl$ and $\Jord_\rho(\sigma)\ne\emptyset$ then
$$
\alpha_{\rho,\sigma}=\tfrac{1+\max\Jord_\rho(\pi)}2
$$
while if $\Jord_\rho(\sigma)=\emptyset$ then $\alpha_{\rho,\sigma}$ is $0$ or $\frac12$ according to the parity of $\rho$
(see \cite{MR1913095} or \cite{MR1896238}).
One can find in \cite{MR3156864} more details and precise statements related to the above discussion.
\end{remark}

\section{Induction of \texorpdfstring{$\GL$}{GL}-type} \label{GL-LT}
Next we shall recall the results of \cite{1703.09475}, except that we shall formulate them in terms of the Langlands classification.
As above, $\alpha=\alpha_{\rho,\s}$ denotes the reducibility point (then $\rho\cong\rho\check{\ }$).
Let $\pi\in\Irr$.

If $\supp(\pi)$ contains $\nu^\alpha\rho$ or $\nu^{-\alpha}\rho$, then $\pi\rtimes\s$ is reducible (\cite{1703.09475}).

Suppose now that $\supp(\pi)$ does not contain $\nu^\alpha\rho$ or $\nu^{-\alpha}\rho$.
Assume that all members of $\supp(\pi)$ are contained in $\{\nu^{k+x}\rho:k\in\Z\}$, for some fixed $x\in\tfrac12\Z$.
Write $\pi=L(d)$, for some $d\in M(\Disc)$. Denote by $d_{>0}$ (resp. $d_{<0}$) the multiset consisting of all $\delta$ in $d$
such that $e(\delta)>0$ (resp $e(\delta)<0$), counted with multiplicities.
Then, if $\pi$ is a ladder representation or if $\alpha\leq 1$ and all members of $\supp(\pi)$ are contained in $\{\nu^{k+\alpha}\rho;k\in\Z\}$, then holds
\begin{equation} \label{eq: ladcrit}
L(d)\rtimes\s \text{ is reducible } \iff L(d_{>0})\times L(d_{<0})\check{\ } \text{ is reducible }.
\end{equation}

A very special case is the following very useful result proved already in \cite{MR1658535}. For $\D\in \SC$ holds:
$$
\delta(\D)\rtimes\s \text{ is reducible }\iff \theta\rtimes\s \text{ is reducible for some }\theta \in \D.
$$

\begin{remark} \label{rem-irr}
\begin{enumerate}
\item
We shall often use the following simple consequence of Proposition 3.2 of \cite{MR2504024}.
Let $\rho\in\Cuspsd$ and assume that $\pi\in\Irr$ is supported on
$$
\{\nu^{x+z}\rho:z\in \Z\} \quad \text{ for some fixed } \quad x\in \R\, \setminus \,\tfrac12 \Z.
$$
Then, $\pi\rtimes\s$ is irreducible.

\item
One can combine the above fact with the Jantzen decomposition (see section 8 of \cite{MR3969882}) to get further irreducibilities.
We shall do it later in the paper. One can get these irreducibilities also directly from Proposition 3.2 of \cite{MR2504024}.

\end{enumerate}
\end{remark}

\section{Technical lemma on irreducibility}
\begin{lemma} \label{lemma-irr}
Let $d_1,d_2,d_3\in M(\Disc_+)$ and $\tau\in \Tempcl$. Write $d_i=(\delta_1^{(i)},\dots, \delta_{k_i}^{(i)})$, $i=1,2,3$. Suppose
\begin{enumerate}
\item
\label{lemma-irr-item: irr1}
 $L(d_1)\times L(d_2)$ is irreducible;

\item
\label{lemma-irr-item: irr2}
 $L(d_1)\times L(d_2)\check{\ }$ is irreducible;

\item
\label{lemma-irr-item: irr3}
 $L(d_1)\times L(d_3;\tau)$ is irreducible;

\item 
\label{lemma-irr-item: bigger}
$e(\delta_j^{(i)})\geq e(\delta_l^{(3)})$ for all \ $i=1,2$, \ $1\leq j\leq k_i$, \ $1\leq l \leq k_3$;

\item 
\label{lemma-irr-item: theta}
$\bar d_i\cong d_i^\Theta$, $i=1,2,3.$
\end{enumerate}
Then,
$$
L(d_1)\rtimes L(d_2+d_3;\tau)
$$
is irreducible.
\end{lemma}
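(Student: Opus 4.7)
The plan is to show that $\Pi := L(d_1) \rtimes L(d_2+d_3;\tau)$ equals the Langlands quotient $\pi_\infty := L(d_1+d_2+d_3;\tau)$ by identifying $\pi_\infty$ as both the cosocle and the socle of $\Pi$, each with multiplicity one. By hypothesis \ref{lemma-irr-item: bigger}, the exponents of the essentially square-integrables in $d_1+d_2$ all dominate those in $d_3$, so $(d_1+d_2+d_3,\tau)$ is valid Langlands data (in an ordering placing $d_3$ last), and $\Pi$ is realized as a quotient of the standard module $\lambda(d_1+d_2+d_3;\tau)$ via $\lambda(d_1)\rtimes\lambda(d_2+d_3;\tau)\twoheadrightarrow L(d_1)\rtimes L(d_2+d_3;\tau)$. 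Hence the cosocle of $\Pi$ is $\pi_\infty$. Proposition \ref{prop: addparms}, applied with $d=d_1\in M(\Disc_+)$ (so that $(d_1)_u=\emptyset$ and $d_1^\uparrow=d_1$) and $(d',\tau)=(d_2+d_3,\tau)$, then shows that $\pi_\infty$ occurs in the Jordan--H\"older series of $\Pi$ with multiplicity exactly one.

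The central step is to produce an embedding $\pi_\infty\hookrightarrow \Pi$. By \ref{lemma-irr-item: irr3}, $L(d_1)\rtimes L(d_3;\tau)=L(d_1+d_3;\tau)$ is irreducible, and by \ref{lemma-irr-item: irr1}, $L(d_1)\times L(d_2)=L(d_1+d_2)$; associativity of parabolic induction then gives
\[
L(d_1+d_2)\rtimes L(d_3;\tau) \;=\; L(d_1)\rtimes L(d_2)\rtimes L(d_3;\tau) \;=\; L(d_2)\rtimes L(d_1+d_3;\tau),
\]
which realizes $\Pi$ as a quotient of this common representation through the surjection $L(d_2)\rtimes L(d_3;\tau)\twoheadrightarrow L(d_2+d_3;\tau)$. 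By Frobenius reciprocity, producing an inclusion $\pi_\infty\hookrightarrow\Pi$ is equivalent to exhibiting a surjection from the Jacquet module $s_{(n_1)}(\pi_\infty)$ onto $L(d_1)\otimes L(d_2+d_3;\tau)$, where $n_1$ is the rank of the general linear factor associated to $d_1$. One computes the relevant Jacquet module via $\mu^*(\pi\rtimes\sigma)=M^*(\pi)\rtimes\mu^*(\sigma)$ and the formula \eqref{M*} for $M^*$, which involves the $F'/F$-contragredient. Hypotheses \ref{lemma-irr-item: irr1} and \ref{lemma-irr-item: irr2}---the irreducibilities of $L(d_1)\times L(d_2)$ and $L(d_1)\times \check L(d_2)$---together with \ref{lemma-irr-item: theta} providing the necessary hermitian compatibility between these two ingredients, are precisely what is needed to eliminate the competing contributions coming from the ``swap'' part of the geometric lemma, leaving a single term matching $L(d_1)\otimes L(d_2+d_3;\tau)$.

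Once $\pi_\infty$ appears in $\Pi$ both as socle and as cosocle with multiplicity one, while having total multiplicity one in the Jordan--H\"older series, irreducibility is a standard formal consequence: the composite $\pi_\infty\hookrightarrow \Pi\twoheadrightarrow\pi_\infty$ of the inclusion and the projection must be a nonzero scalar (vanishing would force the kernel of the projection to contain a second copy of $\pi_\infty$, contradicting total multiplicity one), so the inclusion splits $\Pi=\pi_\infty\oplus\Pi'$; then any nonzero $\Pi'$ would contribute a second socle copy of $\pi_\infty$, contradicting the socle bound. Hence $\Pi'=0$ and $\Pi=\pi_\infty$ is irreducible. The main obstacle is the Jacquet module calculation in the second paragraph: organizing the many summands of $M^*(L(d_1))$ produced by \eqref{M*}, matching them against summands of $\mu^*(L(d_2+d_3;\tau))$, and using \ref{lemma-irr-item: irr1} and \ref{lemma-irr-item: irr2} in tandem to rule out all cross-terms save the one producing $\pi_\infty$, is a delicate combinatorial bookkeeping rather than a single clean identity.
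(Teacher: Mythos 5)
Your overall frame --- show that $L(d_1+d_2+d_3;\tau)$ is the unique irreducible quotient of $\Pi:=L(d_1)\rtimes L(d_2+d_3;\tau)$, that it occurs there with multiplicity one (Proposition \ref{prop: addparms} is indeed enough for this), that it also embeds as a subrepresentation, and then conclude irreducibility --- is the same as the paper's, and your first and last paragraphs are essentially sound, up to the inaccuracy that $\lambda(d_1)\rtimes\lambda(d_2+d_3;\tau)$ need not be the standard module (no ordering between the exponents of $d_1$ and $d_2$ is assumed); the correct route, as in the paper, is through $L(d_1+d_2)\rtimes L(d_3;\tau)$, which by (\ref{lemma-irr-item: irr1}) and (\ref{lemma-irr-item: bigger}) is a quotient of $\lambda(d_1+d_2+d_3;\tau)$. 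The genuine gap is the central step: you never actually construct the embedding $L(d_1+d_2+d_3;\tau)\h\Pi$. You propose to get it by Frobenius reciprocity from the Jacquet module $s_{(n_1)}$ of the irreducible Langlands quotient itself, invoking $\mu^*(\pi\rtimes\sigma)=M^*(\pi)\rtimes\mu^*(\sigma)$. But that formula computes Jacquet modules of \emph{induced} representations, not of their irreducible quotients; at best it bounds the semisimplification of $s_{(n_1)}(L(d_1+d_2+d_3;\tau))$, while Frobenius reciprocity requires $L(d_1)\otimes L(d_2+d_3;\tau)$ to occur as a \emph{quotient} of that Jacquet module, which semisimplified multiplicity data cannot deliver. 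Likewise, the claim that hypotheses (\ref{lemma-irr-item: irr1}), (\ref{lemma-irr-item: irr2}) and (\ref{lemma-irr-item: theta}) ``eliminate the competing contributions'' of the geometric lemma is not an argument: no mechanism is given, and (\ref{lemma-irr-item: theta}) plays no such role.

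The paper obtains the embedding by a short duality argument rather than by Jacquet modules. Conditions (\ref{lemma-irr-item: irr1})--(\ref{lemma-irr-item: irr3}), combined with the fact that an irreducible induced representation satisfies $\pi\rtimes\sigma\cong\check\pi\rtimes\sigma$ (see \eqref{check}), give $L(d_1)\times L(d_2)\rtimes L(d_3;\tau)\cong L(d_1)\check{\ }\times L(d_2)\rtimes L(d_3;\tau)$. Both $\Pi$ and $L(d_1)\check{\ }\rtimes L(d_2+d_3;\tau)$ are quotients of this representation, whose cosocle is the irreducible $L(d_1+d_2+d_3;\tau)$; hence there is a surjection $L(d_1)\check{\ }\rtimes L(d_2+d_3;\tau)\tha L(d_1+d_2+d_3;\tau)$. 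Applying the hermitian contragredient $(\cdot)^+$ and using (\ref{lemma-irr-item: theta}) --- which makes $L(d_1+d_2+d_3;\tau)$ and $L(d_2+d_3;\tau)$ hermitian and identifies $(L(d_1)\check{\ })^+$ with $L(d_1)$ --- converts this surjection into the desired embedding $L(d_1+d_2+d_3;\tau)\h L(d_1)\rtimes L(d_2+d_3;\tau)$. This is precisely the step your proposal leaves unproved; without it (or an actual proof of your Jacquet-module claim) the argument is incomplete.
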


\begin{proof}
First, $L(d_2+d_3;\tau)$ is the unique irreducible quotient of $\lambda(d_2+d_3;\tau)$.
Condition (\ref{lemma-irr-item: bigger}) implies that $L(d_2)\rtimes L(d_3;\tau) $ is also quotient of $\lambda(d_2+d_3;\tau) $.
Therefore, $L(d_2+d_3;\tau)$ is (the unique irreducible) quotient of $L(d_2)\rtimes L(d_3;\tau) $.
This implies that $L(d_1)\rtimes L(d_2+d_3;\tau)$ is a quotient of $L(d_1)\times L(d_2)\rtimes L(d_3;\tau) $.
Since $L(d_1)\times L(d_2)\rtimes L(d_3;\tau) = L(d_1+d_2)\rtimes L(d_3;\tau) $, by condition (\ref{lemma-irr-item: irr1})
the last representation is a quotient of $\lambda(d_1+d_2+d_3;\tau)$.
This implies that $L(d_1)\rtimes L(d_2+d_3;\tau)$ has a unique irreducible quotient, which is $L(d_1+d_2+d_3;\tau)$,
and that this quotient occurs with multiplicity one.
Observe that (\ref{lemma-irr-item: irr1}) -- (\ref{lemma-irr-item: irr3}) imply $L(d_1)\times L(d_2)\rtimes L(d_3;\tau)\cong L(d_1)\check{\ }\times L(d_2)\rtimes L(d_3;\tau)$.
Therefore, $L(d_1)\rtimes L(d_2+d_3;\tau)$ is a quotient of $L(d_1)\check{\ }\times L(d_2)\rtimes L(d_3;\tau)$.

Obviously, $L(d_1)\check{\ }\rtimes L(d_2+d_3;\tau)$ is a quotient of $L(d_1)\check{\ }\times L(d_2)\rtimes L(d_3;\tau)$,
which implies that $L(d_1+d_2+d_3;\tau)$ is a quotient of $L(d_1)\check{\ }\rtimes L(d_2+d_3;\tau)$.
Now observe that (\ref{lemma-irr-item: theta}) implies $L(d_1+d_2+d_3;\tau)^+\cong L(d_1+d_2+d_3;\tau)$ and $L(d_2+d_3;\tau)^+\cong L(d_2+d_3;\tau)$.
Therefore,
$$
L(d_1+d_2+d_3;\tau)\h (L(d_1)\check{\ })^+\rtimes L(d_2+d_3;\tau)\cong L(d_1)\rtimes L(d_2+d_3;\tau).
$$
This implies the irreducibility of $L(d_1)\rtimes L(d_2+d_3;\tau)$ since $L(d_1+d_2+d_3;\tau)$ is a unique irreducible quotient
of $L(d_1)\rtimes L(d_2+d_3;\tau)$, and it has multiplicity one in $L(d_1)\rtimes L(d_2+d_3;\tau)$.
\end{proof}

In the paper, we shall use the following special case of the above lemma:

\begin{corollary} \label{irr-simple}
Let $\rho\in\Cuspsd$ and $\tau\in \Tempcl$.
Suppose that $d_1,d_2 \in M(\Disc_+)$ are such that all elements in their supports are contained
in either $\{\nu^{k+\frac12}\rho:k\in\Z\}$ or $\{\nu^{k}\rho:k\in\Z\}$.
If the following three representations
$$
L(d_1)\times L(d_2), \quad L(d_1)\times L(d_2\check{\ }), \quad L(d_1)\rtimes\tau
$$
are irreducible, then
$$
L(d_1)\rtimes L(d_2;\tau)
$$
is irreducible.\qed

\end{corollary}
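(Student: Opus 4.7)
The plan is to derive Corollary \ref{irr-simple} as a direct specialization of Lemma \ref{lemma-irr} with the multiset $d_3$ taken to be empty. With this choice we have $L(d_3;\tau) = \tau$, so hypotheses (\ref{lemma-irr-item: irr1}), (\ref{lemma-irr-item: irr2}), (\ref{lemma-irr-item: irr3}) of the lemma become exactly the three irreducibility assumptions of the corollary (reading the $\times$ in (\ref{lemma-irr-item: irr3}) as $\rtimes$, since $L(d_3;\tau)$ is a classical group representation). Hypothesis (\ref{lemma-irr-item: bigger}) is vacuous because $d_3$ is empty, so there are no $\delta_l^{(3)}$ to compare with. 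The conclusion of the lemma, $L(d_1)\rtimes L(d_2+d_3;\tau) = L(d_1)\rtimes L(d_2;\tau)$, is then precisely the desired irreducibility.

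The only condition that is not immediately transcribed from the hypotheses of the corollary is (\ref{lemma-irr-item: theta}), namely $\bar d_i\cong d_i^\Theta$ for $i=1,2$. To verify it, I would first observe that $\rho\in\Cuspsd$ means $\check\rho\cong\rho$, i.e.\ $\tilde\rho\cong\rho^\Theta$ (using that $\Theta$ is an involution). Since $\rho$ is cuspidal with unitary central character (implicit in the weakly real setup recalled in \S\ref{LCCG}), we also have $\bar\rho\cong\tilde\rho$, and therefore $\bar\rho\cong\rho^\Theta$. Under the support hypothesis of the corollary, every essentially square-integrable constituent of $d_1$ and $d_2$ has the form $\delta([x,y]^{(\rho)})$ with $x,y\in\tfrac12\Z$, hence real. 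For such real exponents, complex conjugation and the $\Theta$-twist act only on the cuspidal base:
\[
\overline{\delta([x,y]^{(\rho)})} \cong \delta([x,y]^{(\bar\rho)}) \cong \delta([x,y]^{(\rho^\Theta)}) \cong \delta([x,y]^{(\rho)})^\Theta.
\]
This gives $\bar d_i\cong d_i^\Theta$ for $i=1,2$, completing the verification of (\ref{lemma-irr-item: theta}).

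The corollary now follows from Lemma \ref{lemma-irr}. There is no real obstacle here since the corollary is a packaging of the lemma; the only subtle point is the hermitian-twist hypothesis (\ref{lemma-irr-item: theta}), which, as shown, is automatic once one uses selfcontragredience of $\rho$ together with the fact that the exponents in the supports of $d_1$ and $d_2$ are real.
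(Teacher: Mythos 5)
Your proposal is correct and is essentially the paper's own argument: the corollary is stated with \qed precisely as the special case $d_3=\emptyset$ of Lemma \ref{lemma-irr}, where conditions (\ref{lemma-irr-item: irr1})--(\ref{lemma-irr-item: irr3}) become the corollary's three hypotheses and (\ref{lemma-irr-item: bigger}) is vacuous. Your explicit verification of (\ref{lemma-irr-item: theta}) from $\check\rho\cong\rho$, $\bar\rho\cong\tilde\rho$ and the reality of the exponents is exactly the point the paper leaves implicit, and it is done correctly.
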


\section{Distinguished irreducible subquotient in induced representation} \label{dist}
Fix $\rho\in\Cuspsd$ and $\sigma\in\Cuspcl$.
Let $c$ be a multiset of elements of representations in
$$
\{\nu^{k+\frac12}\rho:k\in\Z\}\ (\subseteq M(\Cusp)\subseteq M(\Disc)).
$$
Then, $\lambda(c^\uparrow)$ has a unique generic irreducible subquotient (which has multiplicity one in $\lambda(c^\uparrow)$).
Denote it by $\lambda(c^\uparrow)_{gen}$.
Now the formula \eqref{mu*} directly implies that the multiplicity of $\lambda(c^\uparrow)_{gen}\otimes\s$ in
$s_{\GL}(\lambda(c)\rtimes\s)$ is one.
This implies that $\lambda(c)\rtimes\s$ has a unique irreducible subquotient $\pi$ which contains
$\lambda(c^\uparrow)_{gen}\otimes\s$ in $s_{\GL}(\pi)$ as a subquotient. We denote this $\pi$ by
$$
\lambda(c;\rho)_+.
$$
Clearly, it has multiplicity one in $\lambda(c)\rtimes\s$.

Let now $c$ be a (finite) multiset of elements of
$$
\{\nu^{k}\rho:k\in\Z\}.
$$
Then, $\lambda(c^\uparrow+c_u)$ has a unique generic irreducible subquotient (which has multiplicity one in $\lambda(c^\uparrow+c_u)$).
Denote it by $\lambda(c^\uparrow+c_u)_{gen}$.
Again the formula \eqref{mu*} directly implies that the multiplicity of
$\lambda(c^\uparrow+c_u)_{gen}\otimes\s$ in $s_{\GL}(\lambda(c)\rtimes\s)$ is $2^{m(\rho,c)}$,
where $m(\rho,c)$ is the multiplicity of $\rho$ in $c$.
In this case we have the following

\begin{lemma} \label{lem: crho+}
Suppose that $c$ does not contain the reducibility point $\nu^\alpha\rho$, or that $\alpha>0$.
Then, $\lambda(c)\rtimes\s$ has a unique irreducible subquotient $\pi$ which contains
$\lambda(c^\uparrow+c_u)_{gen}\otimes\s$ in $s_{\GL}(\pi)$ as a subquotient. We denote this $\pi$ by
$$
\lambda(c;\rho)_+.
$$
It has multiplicity one in $\lambda(c)\rtimes\s$ and its Jacquet module contains $\lambda(c^\uparrow+c_u)_{gen}\otimes\s$ with
multiplicity $2^{m(\rho,c)}$.
\end{lemma}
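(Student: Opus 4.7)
The plan is to exhibit $\pi$ explicitly as $L(c^\uparrow;\tau)$, where $\tau:=\lambda(c_u)\rtimes\s=\rho^{\times m(\rho,c)}\rtimes\s$. All four claims of the lemma (existence, uniqueness, multiplicity one in $\lambda(c)\rtimes\s$, and the Jacquet-module multiplicity $2^{m(\rho,c)}$) will then follow from the Langlands classification of Section~\ref{LCCG}, transitivity of Jacquet modules, and the multiplicity count $2^{m(\rho,c)}$ in $s_{\GL}(\lambda(c)\rtimes\s)$ recorded just before the lemma.

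The first task, which I expect to be the main technical obstacle, is to verify that $\tau$ is irreducible. If $\alpha=0$, the hypothesis $\nu^\alpha\rho=\rho\notin c$ forces $c_u=\emptyset$, so $\tau=\s$. If $\alpha>0$, then $1\in\Jord_\rho(\s)$ and Proposition~\ref{Pr-red-si}(3) yields that $\rho\rtimes\s$ is irreducible tempered; propagating this to $\rho^{\times m(\rho,c)}\rtimes\s$ for $m(\rho,c)\ge 2$ requires invoking Arthur's endoscopic classification (in the spirit of Remark~\ref{general-basic}) to see that the R-group for the induced tempered representation vanishes because $\rho$ already appears in the parameter of $\s$.

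Having established irreducibility of $\tau$, the identity $\ssm(\xi\rtimes\s)=\ssm(\check\xi\rtimes\s)$ from \eqref{check}, combined with associativity \eqref{asso} and $(\lambda(c_-))\check{\ }=\lambda((c_-)\check{\ })$, yields
$$
\ssm(\lambda(c)\rtimes\s)=\ssm(\lambda(c^\uparrow)\rtimes\tau).
$$
Since $c^\uparrow\in M(\Disc_+)$ and $\tau\in\Tempcl$, the Langlands classification of Section~\ref{LCCG} identifies $\pi:=L(c^\uparrow;\tau)$ as the unique irreducible quotient of $\lambda(c^\uparrow)\rtimes\tau$, occurring with multiplicity one in its composition series.

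For uniqueness, I would take any irreducible subquotient $\pi'$ of $\lambda(c)\rtimes\s$ whose $s_{\GL}(\pi')$ contains $\lambda(c^\uparrow+c_u)_{gen}\otimes\s$. Since $\lambda(c^\uparrow+c_u)_{gen}$ has $\lambda(c^\uparrow)\otimes\rho^{\times m(\rho,c)}$ in its $(\deg c^\uparrow,m(\rho,c))$-Jacquet module, transitivity of Jacquet modules places $\lambda(c^\uparrow)\otimes\rho^{\times m(\rho,c)}\otimes\s$ inside the iterated Jacquet of $\pi'$; re-collapsing the last two factors, the Jacquet module of $\pi'$ at level $\deg c^\uparrow$ contains $\lambda(c^\uparrow)\otimes Y$ for some irreducible $Y$ of a classical group whose cuspidal support agrees with that of $\tau$. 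The cuspidal-support principle then places $Y$ as a constituent of $\tau$, and irreducibility of $\tau$ forces $Y=\tau$. Applying \eqref{BPLC} to the composition series of $\lambda(c^\uparrow)\rtimes\tau$ yields $\pi'=\pi$. Since the multiplicity of $\lambda(c^\uparrow+c_u)_{gen}\otimes\s$ in $s_{\GL}(\lambda(c)\rtimes\s)$ equals $2^{m(\rho,c)}$ (as already computed just before the lemma), and this entire mass is concentrated in the single composition factor $\pi$, we conclude that $s_{\GL}(\pi)$ contains $\lambda(c^\uparrow+c_u)_{gen}\otimes\s$ with multiplicity exactly $2^{m(\rho,c)}$.
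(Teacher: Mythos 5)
Your plan hinges on identifying $\pi$ with the Langlands quotient $L(c^\uparrow;\tau)$, and this identification is false. The defining property of $\lambda(c;\rho)_+$ is that $s_{\GL}(\pi)$ contains the \emph{generic} term $\lambda(c^\uparrow+c_u)_{gen}\otimes\s$, whose support has all exponents nonnegative; by Frobenius reciprocity/Casselman this points to the ``tempered-leaning'' constituent, not to the Langlands quotient. Already for $c=(\nu^\alpha\rho)$ with $\alpha>0$ one has $c^\uparrow=(\nu^\alpha\rho)$, $c_u=\emptyset$, and your candidate $L(c^\uparrow;\tau)=L([\alpha]^{(\rho)};\s)$ satisfies $s_{\GL}(L([\alpha]^{(\rho)};\s))=\nu^{-\alpha}\rho\otimes\s$, so it does \emph{not} contain $\nu^{\alpha}\rho\otimes\s$; the representation the lemma produces is $\delta([\alpha]^{(\rho)};\s)$. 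More generally, for $c=(\nu^\alpha\rho,\dots,\nu^{\alpha+n}\rho)$ the lemma's $\pi$ is the generalized Steinberg representation, i.e.\ the \ASS dual of your candidate. The same example destroys the closing step of your uniqueness argument: \eqref{BPLC} only bounds the Langlands data of constituents of a standard module, and it cannot convert the presence of the positive-exponent term $\lambda(c^\uparrow)\otimes\tau$ in a Jacquet module into the conclusion $\pi'=L(c^\uparrow;\tau)$. Concretely, in $\Pi_{(\alpha,\alpha)}$ the constituent whose Jacquet module contains $\nu^\alpha\rho\times\nu^\alpha\rho\otimes\s$ is $\nu^\alpha\rho\rtimes\delta([\alpha]^{(\rho)};\s)$, not the Langlands quotient $L([\alpha]^{(\rho)},[\alpha]^{(\rho)};\s)$, which is exactly the opposite of what your final step asserts.

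Independently of the misidentification, the real content of the lemma is never addressed: one must show that \emph{all} $2^{m(\rho,c)}$ copies of $\lambda(c^\uparrow+c_u)_{gen}\otimes\s$ in $s_{\GL}(\lambda(c)\rtimes\s)$ are concentrated in a single irreducible constituent occurring with multiplicity one. Multiplicity one of a Langlands quotient (via Proposition \ref{prop: addparms}) concerns a different representation, and your Jacquet-module manipulations do not control how the $2^{m(\rho,c)}$ copies distribute among constituents. This is precisely where the paper works hardest: it first proves the statement when $c$ is a sum of $F'/F$-selfcontragredient segments through $\rho$, by induction, comparing $\delta(\D)\rtimes\s$ with the induced representation from the strongly positive piece $\delta(\D_{\alpha\le};\s)$ and invoking R-groups; it then reduces a general $c$ to this case by multiplying with auxiliary segment representations and deriving a contradiction from a multiplicity count. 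Your proposal contains no substitute for either step. (A minor remark: irreducibility of $\rho^{\times m(\rho,c)}\rtimes\s$ for $\alpha>0$ is correct, but it follows from standard R-group results; Arthur's classification is not needed for that point.)
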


\begin{proof}
It suffices to show that the Jacquet module of $
\lambda(c;\rho)_+.
$ contains $\lambda(c^\uparrow+c_u)_{gen}\otimes\s$ with multiplicity $2^{m(\rho,c)}$. We now prove this claim.

First consider the case when $c=\sum_{i=1}^n \D_i$, for some $\D_i\in\SC$ such that $\D_i\check{\ }=\D_i$ for all $i$.
We shall see by induction that in this case the lemma holds, and we shall show that
$\lambda(c;\rho)_+$ is a subrepresentation of $ (\prod _{i=1}^n \delta(\D_i))\rtimes\s$.
By the theory of $R$-groups it is enough to prove the claim when all $\D_i$ are different, and all $\D_i$ contain $\nu^\alpha\rho$.

Let $i=1$. Denote $\D_{1, \alpha\leq }:=\{\nu^\beta\rho\in\D_i;\beta\geq \alpha\}$.
Consider $\delta(\D_1\setminus \D_{1, \alpha\leq })\rtimes\delta(\D_{1, \alpha\leq };\s)$ and $\delta(\D_1)\rtimes\s$.
The last representation has length two.
In the Jacquet module of both representations, $\lambda(c^\uparrow+c_u)_{gen}\otimes\s$ has multiplicity 2 (as well as of $\lambda(c)\rtimes\s$).
From Jacquet module easily follows that
$\delta(\D_1)\rtimes\s\not\leq \delta(\D_1\setminus \D_{1, \alpha\leq })\rtimes\delta(\D_{1, \alpha\leq };\s)$.
This, together with the above multiplicities of $\lambda(c^\uparrow+c_u)_{gen}\otimes\s$, imply the claim.

For $i=2$, we consider $\delta(\D_1)\rtimes \lambda(\D_2;\rho)_+$ and $\delta(\D_2)\rtimes \lambda(\D_1;\rho)_+$.
We conclude in a similar way. Multiplicity of $\lambda(c^\uparrow+c_u)_{gen}\otimes\s$ is now 4 in both Jacquet modules (and in Jacquet module of $\lambda(c)\rtimes\s$).

For the general case, we consider $\delta(\D_1)\rtimes \lambda(\D_2+\dots+\D_n;\rho)_+$ and
$\delta(\D_n)\rtimes \lambda(\D_1+\dots+\D_{n-1};\rho)_+$ (see also Proposition 5.1 of \cite{MR3123571}, and its proof).

Now we go to the proof of the general case.
The first observation is that one can easily show that there exists $c'\in M(\Cusp)$ such that
\begin{enumerate}
\item $\ssm(\lambda(c)\rtimes\s)=\ssm(\lambda(c')\rtimes\s)$
\item
\label{6conditions}
 there exist $\D_1,\dots,\D_k,\G_1,\dots,\G_l\in \SC$ such that

\begin{enumerate}
\item $c'=\D_1+\dots+\D_k+\G_1+\dots+\G_l$;

\item $\mathfrak c(\D_i)\geq 0$ and $\rho\in\D_i$, $i=1,\dots,k$;

\item $\D_{i+1}\cup\D_{i+1}\check{\ }\subseteq \D_{i}\cap\D_{i}\check{\ }$, $i=1,\dots, k-1$;

\item $\mathfrak c(\G_j)> 0$ and $\rho\not\in \G_j$, $j=1,\dots,l$;

\item $\G_j$ is not linked to any other $\G_{j'}$, or any $\D_i$, $j=1,\dots,l$

\item $\G_j\check{\ }$ is not linked to any $\D_i$, $j=1,\dots,l$\footnote{To get these segments,
one consider $\nu^x\rho\in c$ with maximal $|x|$.
Then, $\nu^{|x|}\rho$ is the right end of $\D_1$ or $\G_1$ (it depends on the fact if by the process that follows one
will reach $\rho$ or not).
Then, one looks if $\nu^{|x|-1}\rho\in c$ or $\nu^{-(|x|-1)}\rho\in c$
(if there is no such a member, then the first segment consists of $\nu^{|x|}\rho$ and we repeat above
search with $c-(\nu^x\rho)$).
If yes, one has the next point of the segment of cuspidal representations and we continue the above procedure
(looking for an exponent which is smaller for one then the previous exponent) as long as we can,
in forming the first segment of cuspidal representations.
After we cannot continue the above procedure, we have got the first segment (which is $\D_1$ or $\G_1$,
depending if $\rho$ is in it, or not).
Now we repeat the above procedure with $c$ from which we have removed terms used in the above process.
We repeat these steps as long as there are remaining members of $c$. In this way one gets segments in (\ref{6conditions}).}.

\end{enumerate}

\end{enumerate}

Observe that $k=m(\rho,c)$.

Suppose that the claim does not hold for this $c$ (and $\s$).
This implies that in $\lambda (c)\rtimes\s$ there exists an irreducible subquotient $\pi$ such that
the multiplicity $m$ of $\lambda(c^\uparrow+c_u)_{gen}\otimes\s$ in $s_{\GL}(\pi)$ satisfies $0<m<2^k$.
We know
\begin{equation}\label{leq1}
\pi\leq ( \prod_{i=1}^k\delta(\D_i))\times ( \prod_{j=1}^l\delta(\G_j))\rtimes\s,
\end{equation}
since the multiplicity of $\lambda(c^\uparrow+c_u)_{gen}\otimes\s$ in the Jacquet module of the right-hand side is $2^k$,
which is the same as it is in $\lambda(c')\rtimes\s$ (we shall use this argument also below, without repeating this explanation).
The above inequality implies
$$
( \prod_{i=1}^k\delta(\D_i\check{\ }\setminus \D_i))\rtimes\pi\leq
( \prod_{i=1}^k\delta(\D_i\check{\ }\setminus \D_i))\times
( \prod_{i=1}^k\delta(\D_i))\times ( \prod_{j=1}^l\delta(\G_j))\rtimes\s.
$$
Denote $\D_i''=\D_i\cup \D_i\check{\ }$ and $c''=\D_1''+\dots+\D_k''+\G_1+\dots+\G_l$.
Considering how on the right-hand side we get $\lambda((c'')^\uparrow+c''_u)_{gen}\otimes\s$
in the Jacquet module (all of them we get from terms of $\lambda(c^\uparrow+c_u)_{gen}\otimes\s$ multiplying with
$\delta(\D_i\setminus\D_i\check{\ })$'s and taking appropriate subquotient),
we conclude that its multiplicity in the left-hand side is $m$.
Therefore, there is an irreducible subquotient $\pi''$ of the left-hand side which has
$\lambda((c'')^\uparrow+c''_u)_{gen}\otimes\s$ in its Jacquet module with multiplicity $m$.
Now in the same way as in the case of \eqref{leq1}, we conclude
\begin{equation} \label{leq2}
\pi''\leq ( \prod_{j=1}^l\delta(\G_j)) \times ( \prod_{i=1}^k\delta(\D_i''))\rtimes\s.
\end{equation}
Write $\G_j=[\nu^{g_{j,b}}\rho,\nu^{g_{j,e}}\rho]$. Denote
$\G_j'=[\nu^{-g_{j,e}}\rho, \nu^{g_{j,b}-1}\rho]$,
$\G_j''=\G_j\cup \G_j'$, $c'''=\D_1''+\dots+\D_k''+\G_1''+\dots+\G_l''$. Then,
$$
( \prod_{j=1}^l\delta(\G_j'))\rtimes\pi''\leq ( \prod_{j=1}^l\delta(\G_j'))\times ( \prod_{j=1}^l\delta(\G_j)) \times ( \prod_{i=1}^k\delta(\D_i''))\rtimes\s.
$$
Considering how on the right-hand side we get $\lambda((c''')^\uparrow+c'''_u)_{gen}\otimes\s$ in the Jacquet module (all of them we get from terms of
$\lambda((c'')^\uparrow+c_u'')_{gen}\otimes\s$ multiplying with the following two $\delta([\rho,\nu^{g_{j,e}}\rho])\times\delta([\nu\rho,\nu^{g_{j,b}-1}\rho])$, $\delta([\nu\rho,\nu^{g_{j,e}}\rho])\times\delta([\rho,\nu^{g_{j,b}-1}\rho])$ subquotients of $M^*(\delta(G_j'))$'s, and taking appropriate irreducible subquotient),
we conclude that its multiplicity is $2^lm$ in the left-hand side, which is strictly smaller then $2^{k+l}$. Directly follows that this multiplicity is positive.
This is a contradiction with the first part of the proof. The proof is now complete.
\end{proof}

\begin{remark}
\begin{enumerate}
\item If $
\s$ is generic, then $\lambda(c;\rho)_+$ is generic $($then $\alpha\in\{\frac12,1\}$; we do not consider here reducibility at $0$).

\item Since $\Cusp\subseteq \Disc$, then $M(\Cusp)\subseteq M(\Disc)$.
We say that $\pi\in\Irr$ is cogeneric if $\pi=L(d)$ for some $d\in M(\Cusp)$.

Let $c$ be a multiset of elements of $\{\nu^{k+\frac12}\rho:k\in\Z\}$.
Then, $\lambda(c^\uparrow)\check{\ }$ has a unique cogeneric irreducible subquotient
(which has multiplicity one in $\lambda(c^\uparrow)\check{\ }$).
Denote it by $\lambda(c_\downarrow)_{cogen}$.
Now the formula \eqref{mu*} directly implies that the multiplicity of $\lambda(c_\downarrow)_{cogen}\otimes\s$
in $s_{\GL}(\lambda(c)\rtimes\s)$ is one.
This implies that $\lambda(c)\rtimes\s$ has a unique irreducible subquotient $\pi$ which contains
$\lambda(c_\downarrow)_{cogen}\otimes\s$ in $s_{\GL}(\pi)$ as a subquotient. We denote this $\pi$ by
$$
\lambda(c;\rho)_-.
$$
We define representation $\lambda(c;\rho)_-$ analogously for a multiset $c$ of representations in $\{\nu^{k+\frac12}\rho:k\in\Z\}$
(in this case we consider $\lambda(c^\uparrow\check{\ }+c_u)_{cogen}\otimes\s$).
Then, the analogue of Lemma \ref{lem: crho+} holds for $\lambda(c;\rho)_-$.
Furthermore,
\begin{equation} \label{dist-formula}
\lambda(c;\rho)_+^t=\lambda(c;\rho)_-.
\end{equation}
\end{enumerate}
\end{remark}

Suppose that $\pi=\lambda(c;\rho)_+$ is square-integrable, and $\D$ is a segment contained in
$\{\nu^{k+\frac12}\rho:k\in\Z\}$ or $\{\nu^{k}\rho:k\in\Z\}$, such that $\D=\D\check{\ }$.
Then, we denote $\pi=\lambda(c+\D;\rho)_+$ by
$$
\tau(\D_+;\pi).
$$
One directly sees that $\tau(\D_+;\pi)\leq \delta(\D)\rtimes\pi$.
Furthermore, if $\delta(\D)\rtimes\pi$ is reducible, then it decomposes into a direct sum of two nonequivalent irreducible (tempered) representations.
The other one we denote by
$$
\tau(\D_-;\pi).\footnote{Note that $\tau(\D_-;\pi)$ is not related to the representations of type $\lambda(c;\rho)_-$.
Furthermore, note that if $\pi$ is a cuspidal representation $\s$, then $\tau(\D_\pm;\pi)=\delta(\D_\pm;\pi)$.}
$$

\section{Some well-known ways of obtaining unitarizability} \label{ways of getting unitary}
The standard way of obtaining new unitarizable representations from old ones is by parabolic induction,
which preserves unitarity (unitary parabolic induction).

One can also deduce unitarizability in the opposite direction.
Namely, if $\theta$ is an irreducible hermitian representation of a Levi subgroup $M$
of a parabolic subgroup $P$ of a reductive group $G$, and if Ind$_P^G(\theta)$ is irreducible and unitarizable, then $\theta$ is unitarizable.
This method of proving unitarizability will be called \emph{unitary parabolic reduction}.

A third way of proving unitarizability is by considering limits.
If $\pi_n$ is a sequence of irreducible unitarizable representations of a reductive group $G$,
$\tau_i$ irreducible representations of $G$ and $m_i\in\Z_{>0}$ such that distribution characters
$\Theta_{\pi_n}$ of $\pi_n$ converge pointwise to $\sum_im_i\Theta_{\tau_i}$, then all $\tau_i$ are unitarizable (\cite{MR0324429}).

A fourth way of proving unitarizability is by considering families.
If a continuous family of irreducible hermitian representations of a reductive groups $G$
contains at least one unitarizable representation, then all representations in the family are unitarizable.
(For a definition of continuous family of representation see \cite[\S 3 (b)]{MR1181278}.)

Furthermore, if a continuous family of irreducible hermitian representations is parameterized by unbounded set of unramified parameters,
then all the representations in the family are non-unitarizable (see \cite{MR733166} for more details).

The above methods of proving unitarizability can be easily modified for proving non-unitarizability.

\subsection{}
We will use the following elementary, but powerful, consequence of unitarizability.
\begin{lemma} \label{lem: nonunit}
Let $P$ be a parabolic subgroup of a reductive group $G$ over a non-archimedean local field.
Let $\sigma$ be an irreducible representation of the Levi part $M$ of $P$ and let $\Pi=\Ind_P^G\sigma$.
Assume that $\sigma$ or $\sigma^t$ is unitarizable.
Then, the length $\ell$ of $\Pi$ is at most the multiplicity $m$ of $\sigma$ in the Jacquet module $J_P(\Pi)$.
Moreover, if $\ell=m$, then
there exists a direct summand of $J_P(\Pi)$ isomorphic to $m\cdot\sigma=\overbrace{\sigma\oplus\dots\oplus\sigma}^m$.
\end{lemma}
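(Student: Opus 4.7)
The plan is first to reduce to the case in which $\sigma$ itself is unitarizable. If instead only $\sigma^t$ is unitarizable, one uses that the \ASS involution commutes with parabolic induction, giving $\Pi^t=\Ind_P^G(\sigma^t)$, and that the induced bijection on semisimplifications of Jacquet modules (recalled in \S\ref{notation}) preserves both the length $\ell$ and the composition multiplicity $m$.  Thus we may assume throughout that $\sigma$ is unitarizable.

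Under this assumption, unitary parabolic induction makes $\Pi$ unitarizable and hence completely reducible.  Write $\Pi\cong\bigoplus_i m_i\pi_i$ for pairwise inequivalent irreducible $\pi_i$ with multiplicities $m_i\in\Z_{>0}$, so $\ell=\sum_i m_i$.  Schur's lemma gives $\dim\operatorname{End}_G(\Pi)=\sum_i m_i^2$, while Frobenius reciprocity identifies
\[
\operatorname{End}_G(\Pi)\cong\Hom_G(\Pi,\Ind_P^G\sigma)\cong\Hom_M(J_P(\Pi),\sigma),
\]
whose dimension is bounded by the composition multiplicity $m$ of $\sigma$ in $J_P(\Pi)$ (any $M$-homomorphism to the irreducible $\sigma$ factors through its cosocle-multiplicity, which is at most the composition multiplicity).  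Chaining these,
\[
\ell=\sum_i m_i\leq\sum_i m_i^2=\dim\operatorname{End}_G(\Pi)\leq m,
\]
which is the first assertion.

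When $\ell=m$, every inequality above must be an equality: each $m_i=1$ (so $\Pi$ is multiplicity free with exactly $m$ summands) and $\dim\Hom_M(J_P(\Pi),\sigma)=m$.  In particular, all $m$ composition factors of type $\sigma$ in $J_P(\Pi)$ lie in the cosocle, none in the radical.  The parallel calculation using second adjointness
\[
\operatorname{End}_G(\Pi)\cong\Hom_G(\Ind_P^G\sigma,\Pi)\cong\Hom_M(\sigma,J_{P^-}(\Pi))
\]
shows that $\sigma^m$ likewise sits in the socle of $J_{P^-}(\Pi)$.  Since Casselman's theorem equates the composition series of $J_P(\pi_i)$ and $J_{P^-}(\pi_i)$ and $\sigma$ occurs in each with composition multiplicity exactly one, one then deduces that the unique copy of $\sigma$ inside $J_P(\pi_i)$ is simultaneously a sub and a quotient, so splits off as a direct summand.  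Summing over $i$ yields the asserted direct summand $m\cdot\sigma$ of $J_P(\Pi)$.

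The main obstacle is the very last step, where the socle information about $J_{P^-}(\pi_i)$ has to be transferred to $J_P(\pi_i)$.  Equality of Jordan--H\"older series alone does not force a shared socle, so one needs the additional input of Casselman's canonical pairing between $J_P(\pi)$ and $J_{P^-}(\tilde\pi)$, combined with the hermiticity $\pi_i^+\cong\pi_i$ of each summand (which is unitarizable as a summand of the unitary $\Pi$).  Under this pairing, quotients of $J_P(\pi_i)$ of type $\sigma$ correspond to subobjects of $J_{P^-}(\pi_i)$ of type $\sigma$, and vice versa; together with the multiplicity-one bookkeeping this pins $\sigma$ simultaneously into the socle and cosocle of $J_P(\pi_i)$, yielding the splitting.
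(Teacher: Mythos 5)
Your reduction to the case of unitarizable $\sigma$ and your proof of the inequality $\ell\le m$ are fine and essentially the paper's own argument (semisimplicity of $\Pi$, Frobenius reciprocity, $\dim\Hom_M(J_P(\Pi),\sigma)\le m$). The genuine gap is exactly at the step you flag, and the repair you propose does not close it. Second adjointness gives $\dim\Hom_M(\sigma,J_{P^-}(\Pi))=\dim\Hom_G(\Pi,\Pi)=m$, i.e.\ socle information for the Jacquet module along the \emph{opposite} parabolic. But under the tools you invoke --- Casselman's pairing $J_{P^-}(\tilde\pi_i)\cong\widetilde{J_P(\pi_i)}$ together with $\tilde\pi_i\cong\bar\pi_i$ and $\sigma^+\cong\sigma$ --- one gets precisely $\dim\Hom_M(\sigma,J_{P^-}(\pi_i))=\dim\Hom_M(J_P(\pi_i),\sigma)$: an embedding $\sigma\hookrightarrow J_{P^-}(\pi_i)$ corresponds to a quotient map $J_P(\pi_i)\twoheadrightarrow\sigma$, which is the cosocle information you already have, and not to an embedding $\sigma\hookrightarrow J_P(\pi_i)$. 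To use the ``vice versa'' direction you would need $\sigma$ to occur as a \emph{quotient} of $J_{P^-}(\pi_i)$, which you have not established; so nothing pins $\sigma$ into the socle of $J_P(\pi_i)$, and the splitting does not follow. (Also, Casselman's theorem does not equate the composition series of $J_P(\pi_i)$ and $J_{P^-}(\pi_i)$: already for the Steinberg representation of $\GL(2)$ the two normalized Jacquet modules are the two distinct characters $\nu^{1/2}\otimes\nu^{-1/2}$ and $\nu^{-1/2}\otimes\nu^{1/2}$.)

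The paper closes this gap by a different duality move: it applies the already-proved cosocle statement to the pair $(\tilde\sigma,P^-)$, i.e.\ to $\Ind_{P^-}^G\tilde\sigma$, whose Jacquet module along $P^-$ is $\widetilde{J_P(\Pi)}$ by Casselman's duality; dualizing the resulting equality gives $\dim\Hom_M(\sigma,J_P(\Pi))=m$, a socle bound for the \emph{original} $J_P(\Pi)$. (Equivalently, one may observe that $\Ind_{P^-}^G\sigma$ is unitarizable, hence semisimple, and has the same Jordan--H\"older factors as $\Pi$, so $\Ind_{P^-}^G\sigma\cong\Pi$, and then use second adjointness in the form $\Hom_M(\sigma,J_P(\Pi))\cong\Hom_G(\Ind_{P^-}^G\sigma,\Pi)\cong\Hom_G(\Pi,\Pi)$.) Once both $\dim\Hom_M(J_P(\Pi),\sigma)=m$ and $\dim\Hom_M(\sigma,J_P(\Pi))=m$ are in hand, the $\sigma$-isotypic submodule $\omega=\sum_{\varphi\in\Hom_M(\sigma,J_P(\Pi))}\Img(\varphi)\cong m\cdot\sigma$ exhausts all $m$ copies of $\sigma$ and $J_P(\Pi)=\omega\oplus\bigcap_{\varphi\in\Hom_M(J_P(\Pi),\sigma)}\Ker\varphi$, which is the asserted direct summand; at that point your summand-by-summand ``sub and quotient of multiplicity one, hence splits off'' argument would also go through.
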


\begin{proof}
Using duality, it is enough to consider the case where $\sigma$ unitarizable.
In this case $\Pi$ is unitarizable, hence semisimple and therefore
$\ell(\Pi)\le\dim\Hom_G(\Pi,\Pi)$ which by Frobenius reciprocity is equal $\dim\Hom_M(J_P(\Pi),\sigma)\le m$.

Moreover, if $\ell=m$ then necessarily $m=\dim\Hom_M(J_P(\Pi),\sigma)$.
Upon replacing $\sigma$ by its contragredient and $P$ by an opposite parabolic, we also get that
$m=\dim\Hom_M(\sigma,J_P(\Pi))$.
Therefore,
\[
\omega:=\sum_{\varphi\in\Hom_M(\sigma,J_P(\Pi))}\Img(\varphi)\cong m\cdot\sigma\text{ and }
J_P(\Pi)=\omega\oplus\bigcap_{\varphi\in\Hom_M(J_P(\Pi),\sigma)}\Ker\varphi
\]
as claimed.
\end{proof}

\section{Reduction of unitarizability to the weakly real case} \label{reduction-u-w}
$\pi\in\Irrcl$ is called weakly real if it is a subquotient of a representation of the form
$$
\nu^{r_1}\rho_1\times\dots \times\nu^{r_k}\rho_k\rtimes\s,
$$
where $\rho_i\in \Cusp$ satisfy $\rho_i\cong\rho_i\check{\ }$,
$r_i\in \R$ and $\s\in\Cuspcl$.
Now we recall \cite[Theorem 4.2 (i)]{MR2504024}:

\begin{theorem} \label{unitary-red-1}
For any unitarizable $\pi\in\Irrcl$ is unitarizable there exist a unitarizable $\theta\in\Irr$ and a weakly real unitarizable
$\pi'\in\Irrcl$ such that
$$
\pi\cong\theta\rtimes\pi'.
$$
\end{theorem}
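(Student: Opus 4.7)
The plan is to use a Jantzen-type decomposition to isolate the non-selfcontragredient part of the cuspidal support of $\pi$ and peel it off as a $\GL$-factor $\theta$, leaving the weakly real part as $\pi'$.

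First I would fix a cuspidal support $\pi\leq\rho_1\times\cdots\times\rho_k\rtimes\s$ with $\rho_i\in\Cusp$ (possibly non-unitary twists) and $\s\in\Cuspcl$, and partition $\{\rho_i\}$ by the equivalence relation $\rho\sim\rho'$ iff $\rho'\cong\nu^z\rho$ or $\rho'\cong\nu^z\check\rho$ for some $z\in\R$ (with the evident modification involving $\Theta$ in the unitary case). I call an equivalence class \emph{selfdual} if it contains some $\rho_0\in\Cuspsd$; otherwise, the class splits into two halves related by $\check{\ }$. The main claim to establish is that $\pi\cong\theta\rtimes\pi'$ where $\theta\in\Irr$ is supported on non-selfdual classes and $\pi'\in\Irrcl$ is weakly real (its cuspidal support lies only on selfdual classes together with $\s$).

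To prove the decomposition, I would invoke the irreducibility fact recalled in \S\ref{GL-LT}: $\nu^x\rho\rtimes\s$ is irreducible whenever $\rho\notin\Cuspsd$. Combined with the Jantzen decomposition \cite{MR1481814} (see also \cite[\S 8]{MR3969882}), this lets one induct on the number of non-selfdual cuspidals in the support of $\pi$, peeling off one non-selfdual line at a time and using the commutation $\theta_0\rtimes\pi''\cong\check\theta_0\rtimes\pi''$ available in the irreducible setting (cf.~\eqref{check}). The end result is a decomposition $\pi\cong\theta\rtimes\pi'$ which is well-defined up to replacing $\theta$ by $\check\theta$. The induction $\theta\rtimes\pi'$ is automatically irreducible because its $\GL$-support and classical-group support involve disjoint equivalence classes, so each non-selfdual inducing factor commutes past $\pi'$ without reducibility.

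Finally, from the irreducible factorization $\pi\cong\theta\rtimes\pi'$ and $\pi^+\cong\pi$, the uniqueness of the decomposition forces $\theta^+\cong\theta$ and $(\pi')^+\cong\pi'$, so both factors are hermitian. Unitary parabolic reduction (\S\ref{ways of getting unitary}) applied to the irreducible unitary induction $\theta\rtimes\pi'$ then yields that $\pi'$ is unitarizable. Since $\theta$ is hermitian and its Langlands parameter pairs every $\rho$ appearing with its dual $\check\rho$, $\theta$ is a hermitian representation of $\GL$ of a type to which the classification of $\hat{\GL}$ (Theorem \ref{ud-gl}) applies; that classification, together with the support condition, forces $\theta$ to be unitarizable as well. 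The main obstacle I expect is the decomposition step itself: one must verify that peeling off a non-selfdual line always produces an \emph{irreducible} parabolic induction and that the process is compatible with the hermitian involution. This is essentially a multiplicity and Jacquet-module bookkeeping argument in the style of \cite{MR1481814}, and it is where the bulk of the work lies.
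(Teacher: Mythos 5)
The paper itself offers no proof of this statement --- it is recalled verbatim from \cite[Theorem 4.2 (i)]{MR2504024} --- so your attempt can only be measured against that reference; your general plan (split the cuspidal support into selfcontragredient and non-selfcontragredient classes, peel off the latter as a $\GL$-factor using the Jantzen decomposition and the irreducibility of $\nu^x\rho\rtimes\s$ for $\rho\notin\Cuspsd$, then conclude by parabolic reduction) is indeed the expected route. However, two of your steps are genuinely broken. The claim that $\theta\rtimes\pi'$ is \emph{automatically} irreducible because the supports involve disjoint classes is false: for $\rho\notin\Cuspsd$ the representation $\theta=\nu^{a}\rho\times\nu^{-a-1}\check\rho$ is irreducible and supported on a single non-selfdual class, yet $\theta\rtimes\s\cong(\nu^{a}\rho\times\nu^{a+1}\rho)\rtimes\s$ (since $\nu^{-a-1}\check\rho\rtimes\s$ and $\nu^{a+1}\rho\rtimes\s$ are irreducible with equal semisimplifications by \eqref{check}), and the latter is reducible because $\nu^{a}\rho\times\nu^{a+1}\rho$ has length two. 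For the same reason your uniqueness claim that $\theta$ is determined up to $\check{\ }$ fails once $\theta$ may have support on both halves of a class: $\nu^{a}\rho\times\nu^{b}\check\rho$ and $\nu^{a}\rho\times\nu^{-b}\rho$ induce isomorphic representations without being check-related, so the deduction $\theta^{+}\cong\theta$ from $\pi^{+}\cong\pi$ is unjustified as written. Both defects are repaired by normalizing $\theta$ to have support on one chosen half $\{\nu^{z}\rho:z\in\R\}$ of each non-selfdual class; with that normalization the irreducibility of $\theta\rtimes\pi'$ is an actual theorem (Proposition 3.2 of \cite{MR2504024} combined with the Jantzen decomposition), $\pi$ then determines both $\theta$ and $\pi'$, and hermiticity of $\pi$ yields $\theta^{+}\cong\theta$ and $(\pi')^{+}\cong\pi'$.

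The second gap is the final inference for $\theta$: a hermitian representation of a general linear group whose support is paired with its $\check{\ }$-dual need not be unitarizable (for $\rho_u$ unitarizable and not $F'/F$-selfcontragredient, $\nu^{a}\rho_u\times\nu^{-a}\check\rho_u$ is hermitian for every $a$ but unitarizable for no $a\ne0$), so Theorem \ref{ud-gl} cannot be invoked in the way you propose. Fortunately this detour is unnecessary: once $\theta\otimes\pi'$ is known to be hermitian, unitary parabolic reduction applied to the irreducible unitarizable $\pi\cong\theta\rtimes\pi'$ gives unitarizability of the Levi representation $\theta\otimes\pi'$, hence of both $\theta$ and $\pi'$ at once, with no appeal to the classification of the unitary dual of $\GL$.
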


Note that \cite[Theorem 4.2 (ii)]{MR2504024} gives a more precise reduction then the above theorem.
Since \cite[Theorm 7.5]{MR870688} (which we recall below) gives a classification of unitary duals of general linear groups,
the above theorem reduces the unitarizability problem for classical $p$-adic groups to the weakly real case.

For $\delta\in \Disc_u$ and $m\geq 1$ denote by $u(\delta,m)$ the unique irreducible quotient of
$\nu^{\frac{m-1}2}\delta\times\nu^{\frac{m-1}2-1}\delta\times\dots \times\nu^{-\frac{m-1}2}\delta,$
which is called a Speh representation. Let
$
B_{rigid}
$
be the set of all Speh representations, and
\[
B=B(F)=B_{rigid}\cup \{\nu^\alpha\s\times\nu^{-a}\s;\s\in B_{rigid}, 0<\alpha<1/2\}.
\]
Now the following simple theorem solves the unitarizability for archimedean and non-archimedean general linear groups in the uniform way:

\begin{theorem} \label{ud-gl}
The mapping $(\s_1,\dots,\s_k)\mapsto \s_1\times\dots\times\s_k$
defines a bijection between $M(B)$ and $\cup_{n\geq0}\GL(n,F)\widehat{\ }$.
\end{theorem}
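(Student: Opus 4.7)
The plan is to follow Tadi\'c's original argument from \cite{MR870688}, which naturally breaks into four steps: construction of unitarity for the building blocks of $B$, irreducibility of arbitrary products of elements of $B$, injectivity of the parameterization, and exhaustion.

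For construction, I would first show that each Speh representation $u(\delta,m) \in B_{rigid}$ is unitarizable. Because $\delta \in \Disc_u$ satisfies $\delta^+ \cong \delta$, the symmetric construction of $u(\delta,m)$ implies it is hermitian. To produce an invariant positive definite form, one realizes $u(\delta,m)$ either via a determinantal identity inside the Hopf algebra $R$ (Tadi\'c's original method) or as the limit in the Fell topology of a suitable family of complementary series, and then uses that limits of unitarizable representations are unitarizable (\S\ref{ways of getting unitary}). For the second family $\nu^\alpha \sigma \times \nu^{-\alpha}\sigma$ with $\sigma \in B_{rigid}$ and $0 < \alpha < 1/2$, deformation from $\alpha = 0$ works: at $\alpha = 0$ the representation $\sigma \times \sigma$ is irreducible (Bernstein's theorem, since $\sigma$ is unitarizable) and unitarizable (unitary parabolic induction), while irreducibility and hermiticity are preserved across $0 \le \alpha < 1/2$, so the continuous families method applies.

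Given these, for any $(\sigma_1,\dots,\sigma_k) \in M(B)$, Bernstein's irreducibility theorem ensures that $\sigma_1 \times \dots \times \sigma_k$ is irreducible, and unitary parabolic induction then shows it is unitarizable. Injectivity of the map $(\sigma_1,\dots,\sigma_k)\mapsto\sigma_1\times\dots\times\sigma_k$ follows from the Langlands classification of $\Irr$ by $M(\Disc)$: if each $\sigma_i$ has Langlands data $d_i \in M(\Disc)$, then the irreducible representation $\sigma_1 \times \dots \times \sigma_k$ has Langlands data $d_1 + \dots + d_k$, which together with the constraint that the factors come from $B$ determines $(\sigma_1,\dots,\sigma_k)$ uniquely as a multiset.

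The main obstacle is exhaustion: showing that every irreducible unitarizable $\pi$ of some $\GL(n,F')$ arises in this way. I would write $\pi = L(d)$ via the Langlands classification, and use $\pi \cong \pi^+$ to constrain $d$ into symmetric pairs about the real axis. Then an induction on $|d|$, stripping off rigidly unitary factors from $B_{rigid}$ by the unitary parabolic reduction of \S\ref{ways of getting unitary}, reduces the problem to a handful of base cases. The delicate step is ruling out any exceptional hermitian $L(d)$ not coming from $B$: one embeds such a hypothetical $\pi$ into a continuous family of irreducible hermitian representations parameterized by an unbounded set of unramified twists, which by \S\ref{ways of getting unitary} cannot consist entirely of unitarizable representations, forcing $\pi$ to lie at a boundary point which, by a Jacquet module analysis, matches the complementary series endpoints in $B$. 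Locating these reducibility points correctly and arranging the deformation is the technical heart of \cite{MR870688}, and is where the bulk of the effort lies.
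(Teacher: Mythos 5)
First, a point of comparison: the paper does not prove Theorem \ref{ud-gl} at all --- it is recalled as known background, the non-archimedean case being Theorem 7.5 of \cite{MR870688} (see also \cite{TadicBonn1985} and the simplified proof of \cite{MR3573961}) --- so there is no internal argument for you to match. What you have written is an outline of the classical external proof, and its skeleton (unitarity of the Speh representations, Bernstein's irreducibility theorem \cite{MR748505} combined with unitary parabolic induction, injectivity through Langlands data, exhaustion) is indeed the correct one; the injectivity step also needs the small but genuine observation that a multisegment decomposes in at most one way into the ``Speh blocks'' prescribed by $B$, which you assert rather than prove.

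As a proof, however, the proposal has gaps exactly at the two places where the theorem is hard. For the construction step, ``realize $u(\delta,m)$ via a determinantal identity \dots or as a limit of complementary series'' is a pointer, not an argument: in the limit method you must first identify the irreducible constituents (with multiplicities) of the limit before the criterion of \S\ref{ways of getting unitary} yields unitarizability of $u(\delta,m)$ itself, and that identification (or the algebraic identity in $R$ replacing it) is a substantive computation. More seriously, the exhaustion paragraph does not describe a workable mechanism: the criterion that an unbounded continuous family of irreducible hermitian representations contains no unitarizable member applies only after you have exhibited such a family through the given $\pi=L(d)$, and for a general hermitian $L(d)$ not of the shape prescribed by $B$ there is no evident unbounded deformation --- producing one, or the alternative inductive argument of \cite{MR870688} and \cite{MR3573961} resting on precise properties of derivatives/Jacquet modules of unitarizable representations and the rigidity of the ring $R$, is precisely the content that must be supplied. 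Your own phrase that this ``is where the bulk of the effort lies'' concedes the point: the proposal records the strategy of \cite{MR870688} but does not constitute a proof of either the unitarizability of $B_{rigid}$ or the exhaustion.
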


\section{Computing irreducible subquotients}
The following simple lemma will be often used for computing irreducible subquotients.
\begin{lemma} \label{lem: simpac} \ \
\begin{enumerate}
\item Suppose that we have a diagram
\[
\xymatrix{
0\ar[r] & \Sigma_2 \ar[r] & \Sigma_1 \ar@{->>}[d] \ar[r] & \Pi \ar[r]&0\\
& & \Sigma
}
\]
where the top row is a short exact sequence.
Assume that $\Sigma$ is finitely generated and $\Sigma\not\le\Sigma_2$.
Then, the cosocle of $\Sigma$ and the cosocle of $\Pi$ have a common irreducible constituent.
In particular, if the cosocle of $\Pi$ is irreducible, then it occurs in the cosocle of $\Sigma$.
\item Suppose that we have a diagram
\[
\xymatrix{
0\ar[r] & \Sigma_2 \ar[r] & \Sigma_1 \ar@{->>}[d] \ar[r] & \Pi \ar[r]&0\\ & & \Sigma
}
\]
where the top row is a short exact sequence.
Assume that $\Sigma_2$ is finitely generated and $\Sigma\not\le\Pi$.
Then, $\Sigma$ and the cosocle of $\Sigma_2$
have a common irreducible subquotient.
In particular, if the cosocle of $\Sigma_2$ is irreducible, then it occurs in $\Sigma$.
\end{enumerate}
\qed
\end{lemma}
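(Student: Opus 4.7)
The plan is to prove both parts by the same maneuver: push the subobject $\Sigma_2$ forward through the surjection $f:\Sigma_1\twoheadrightarrow\Sigma$, and analyze either the resulting quotient $\Sigma/f(\Sigma_2)$ (for part (1)) or the resulting subrepresentation $f(\Sigma_2)\subseteq\Sigma$ (for part (2)). In both cases, the contrapositive of the hypothesis $\Sigma\not\le\Sigma_2$ or $\Sigma\not\le\Pi$ forces nontriviality, and then finite generation supplies a nonzero cosocle (equivalently, an irreducible quotient) to extract the common constituent.

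For part (1), I would set $Q=\Sigma/f(\Sigma_2)$. If $f(\Sigma_2)=\Sigma$, then $\Sigma$ is a quotient of $\Sigma_2$, so $\ssm(\Sigma)\le\ssm(\Sigma_2)$, contradicting $\Sigma\not\le\Sigma_2$; hence $Q\ne0$. Since $\Sigma$ is finitely generated, so is $Q$, so $Q$ has a nonzero irreducible quotient $\tau$. The surjection $\Sigma\twoheadrightarrow Q$ makes $\tau$ a constituent of the cosocle of $\Sigma$. On the other hand, $f$ induces a surjection $\Pi=\Sigma_1/\Sigma_2\twoheadrightarrow\Sigma/f(\Sigma_2)=Q$, so $\tau$ is also an irreducible quotient of $\Pi$, i.e.\ lies in the cosocle of $\Pi$. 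If the cosocle of $\Pi$ happens to be irreducible, then it coincides with $\tau$ and therefore appears in the cosocle of $\Sigma$, giving the ``in particular'' clause.

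For part (2), I would argue dually. Let $\Sigma_2':=f(\Sigma_2)\subseteq\Sigma$. If $\Sigma_2'=0$, then $f$ factors through $\Sigma_1/\Sigma_2=\Pi$, so $\Sigma$ is a quotient of $\Pi$, contradicting $\Sigma\not\le\Pi$; hence $\Sigma_2'\ne0$. Since $\Sigma_2$ is finitely generated and $\Sigma_2'$ is a quotient of $\Sigma_2$, $\Sigma_2'$ is also finitely generated and admits a nonzero irreducible quotient $\tau$. Any such $\tau$ is in particular an irreducible quotient of $\Sigma_2$, hence a constituent of its cosocle. But $\tau$ is also a subquotient of $\Sigma_2'\subseteq\Sigma$, so $\tau$ is a common irreducible subquotient of $\Sigma$ and the cosocle of $\Sigma_2$. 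If that cosocle is irreducible, it must coincide with $\tau$ and thus occurs in $\Sigma$, which is the stated consequence.

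I do not see any serious obstacle here: the only mild subtlety is that the ordering $\le$ refers to semisimplifications, so one must be careful to translate $\Sigma\not\le\Sigma_2$ (resp.\ $\Sigma\not\le\Pi$) into ``$f(\Sigma_2)$ is a proper subrepresentation of $\Sigma$'' (resp.\ ``$f(\Sigma_2)$ is nonzero''). Both translations use only that a surjection $A\twoheadrightarrow B$ between finite-length modules implies $B\le A$, which is immediate. The finite-generation assumptions are exactly what is needed to guarantee that the nonzero module produced (either $Q$ or $\Sigma_2'$) has a nonzero cosocle, completing the extraction of the common irreducible.
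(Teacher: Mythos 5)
Your argument is correct and is essentially the paper's own proof: the paper's two-line justification (a surjection $\Pi\twoheadrightarrow\overline{\Sigma}=\Sigma/f(\Sigma_2)$ in part (1), and non-vanishing of the composition $\Sigma_2\to\Sigma_1\to\Sigma$ in part (2)) is exactly the maneuver you spell out, with finite generation supplying the irreducible quotient in each case. No gaps; your version just makes the contrapositive translations of $\Sigma\not\le\Sigma_2$ and $\Sigma\not\le\Pi$ explicit.
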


Indeed, in the first part we get a map $\Pi\twoheadrightarrow\overline{\Sigma}$ where
$\overline{\Sigma}$ is the quotient of $\Sigma$ by the image of the composition
$\Sigma_2\rightarrow\Sigma_1\rightarrow\Sigma$.
In the second part, the composition $\Sigma_2\rightarrow\Sigma_1\rightarrow\Sigma$ is non-zero.

\begin{remark} \label{rem: mdtp}
Let $\omega=\delta([-a,a]^{(\rho)})$ be a square-integrable representation and
let $\pi$ be an irreducible subquotient of $\Pi$. Then, the multiplicity of $\omega\otimes\pi$
in $\mu^*(\omega\rtimes\Pi)=M^*(\omega)\rtimes\mu^*(\Pi)$ is equal the multiplicity of $\omega\otimes\pi$ in
\[
\Big(2\sum_{i=-a}^a\delta([-i,a]^{(\rho)})\otimes\delta([i+1,a]^{(\rho)})+1\otimes\omega\Big)\rtimes\mu^*(\Pi)
\]
\end{remark}

{\chapter{Unitarizability in the Critical Case 
(Corank 1 and 2)} \label{CC-corank12}

From now until \S\ref{basic 0} we fix $\rho\in\Cuspsd$ and $\sigma\in\Cuspcl$.
Let $\alpha=\alpha_{\rho,\sigma}\in\tfrac12 \Z_{\ge0}$ be such that
$$
[\alpha]^{(\rho)}\rtimes\s \quad (=\nu^\alpha\rho\rtimes\s )
$$
is reducible. Recall that we set $\eta=0$ if $\alpha\in\Z$ and $\eta=1$ otherwise so that $\alpha-\frac12\eta\in\Z_{\ge0}$.

For simplicity, we suppress the superscript $\rho$ from the notation. Thus, we write $[x]$ instead of $[x]^{(\rho)}$.

For any $k\ge0$ and $\mathbf{x}=(x_1,\dots,x_k)\in\R^k$, consider the induced representation
$$
\Pi_{\mathbf{x}}:=[x_1]\times\dots \times [x_k]\rtimes\s.
$$
We say that $\mathbf{x}$ is \emph{critical} if $\{[x_1],\dots,[x_k]\}$ forms a segment of
cuspidal representations (possibly with multiplicities) that contains the reducibility point $[\alpha]$.
The goal of this and the following chapters is to study the irreducible subquotients of
$\Pi_{\mathbf{x}}$ and to determine which ones are unitarizable in the critical case for $k\le3$.

Since the composition series of $\Pi_{\mathbf{x}}$ depends only on the orbit of $\mathbf{x}$ under signed permutations,
we may assume without loss of generality that $\mathbf{x}\in \R^k_{++}$, where
$$
\R_{++}^k:=\{\mathbf{x}\in\R^k: 0\leq x_1\leq\dots\leq x_k\},
$$
and $\mathbf{x}$ is critical. Later on, we shall also use the notation
$$
\R_{+}^k:=\{\mathbf{x}\in\R^k: 0\leq x_1,\dots, x_k\}.
$$

\section{Extreme cases} \label{St-oposite}
Suppose that the $x_i$'s are distinct.
We have two extreme cases in this setting. The first is when $x_1=\alpha$.
Then, $\Pi_{x_1,\dots,x_n}$ is a regular (multiplicity one) representation of length $2^n$.
All but two of its irreducible subquotients are non-unitarizable,
the exceptions being the generalized Steinberg representation $\delta([\alpha,\alpha+n-1];\s)$ and its dual.

Now we shall consider the opposite extreme, when
$$
x_n=\alpha \qquad \text{and} \qquad x_1>0.
$$
Once again, one easily sees that $\Pi_{x_1,\dots,x_n}$ is a regular representation of length $2^n$.
We claim that all irreducible subquotients are unitarizable.

It is also easy to describe the Langlands parameters of all the irreducible subquotients.
The tempered parts of Langlands parameters are precisely
$$
\delta([\nu^{\alpha-k+1}\rho],\dots,[ \nu^{\alpha}\rho];\s),\quad k=0,\dots,n.
$$
Then, one gets $\GL$-parts of Langlands parameters from partitions of the remaining exponents
$$
\nu^{\alpha-n+1}\rho, \nu^{\alpha-n+1}\rho,\dots, \nu^{\alpha-k}\rho.
$$

To see that all the irreducible subquotient are unitarizable we argue by induction on $n$. The case $n=0$ is trivial.
Suppose that $n>0$ and take any irreducible subquotient $\pi$ of $\Pi_{(x_1,\dots,x_n)}$.
Clearly, there exists an irreducible subquotient $\pi_-$ of $\Pi_{x_2,\dots,x_{n}}$ such that $\pi\leq [\alpha-n+1]\rtimes\pi_-$.
Observe that $x_2\geq \frac32$, and that by Jantzen decomposition, all the representations
$$
[x]\rtimes\pi_-, \quad 0\leq x<\alpha-1
$$
are irreducible (they are hermitian, since $\rho$ is $F'/F$-selfdual).
Since $\pi_-$ is unitarizable by the inductive hypothesis, we have complementary series, and $\pi$ is at the end of these complementary series.
Therefore, it is unitarizable.

\section{Tempered representations in critical case, corank \texorpdfstring{$\leq 3$}{le3}}

We first describe the irreducible tempered representations that are subquotients of
$$
\Pi_{(x_1,\dots,x_k)},\quad 0\leq x_1\leq \dots\leq x_k, \quad k\leq 3 .
$$

(It is easy to reduce the general case to the critical case.)

First we make a simple observation.

It is a direct consequence of the classification of square-integrable representations in \cite{MR1896238} (cf. \cite{MR2908042},
in particular \S 34) that if $\alpha\in\Z$ and $x_1>0$ or if $\alpha\notin\Z$ and $x_2>\frac12$
then every irreducible tempered subquotients of $\Pi_{\mathbf{x}}$ will be strongly positive (and in particular, square-integrable).

First we shall list all the square-integrable representations.
A direct consequence of \cite{MR1896238} is that if a square-integrable $\pi$ is a subquotient of $\Pi_{\mathbf{x}}$,
then $\mathbf{x}$ is critical.

The classification of square integral representations in \cite{MR1896238} easily implies the following

\begin{proposition} \label{prop: sqrbsc}
The following are the square-integrable representations in the cases $k\le3$.
\begin{enumerate}
\item $k=0:$ $\sigma$ itself.

\item $k=1:$ The representations
$$
\delta([\alpha];\s),\quad \alpha>0.
$$

\item $k=2:$

\begin{enumerate}
\item $\delta_{\spsi}([\alpha-1],[\alpha];\s)$, \quad $\alpha>1$.

\item $\delta([\alpha,\alpha+1];\s)$, \quad $\alpha>0$.

\item $\delta([0,1]_\pm;\s)$, \quad $\alpha=0$.
\end{enumerate}

\item $k=3:$

\begin{enumerate}
\item $\delta_{\spsi}([\alpha-2],[\alpha-1],[\alpha];\s)$, \quad $\alpha>2$.

\item $\delta_{\spsi}([\alpha-1],[\alpha,\alpha+1];\s)$, \quad $\alpha>1$.\footnote{The socle of
$[\alpha-1]\times\delta([\alpha,\alpha+1])\rtimes \s$.}

\item $\delta([\alpha,\alpha+2];\s)$, \quad $\alpha>0$.

\item $\delta([-\frac12,\frac32]_\pm;\s)$, \quad $\alpha=\frac12$.

\item $\delta([0,2]_\pm;\s)$, \quad $\alpha=0$.
\end{enumerate}
\end{enumerate}
\qed
\end{proposition}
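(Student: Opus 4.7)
The plan is to combine three ingredients: the Moeglin--Tadic classification of square-integrable representations of $S_n$ by admissible triples $(\Jord(\pi),\e_\pi,\pi_{\mathrm{cusp}})$ recalled in Remark \ref{general-basic}; the Jordan block recursion of Proposition \ref{JBcomputation}; and the segment-type and strongly positive constructions reviewed in \S\ref{segment} and after Proposition \ref{Pr-red-si}. Since the list in the proposition is finite and stratified by $k$ and $\alpha$, the argument reduces to a bookkeeping exercise once these tools are assembled.

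The first step is to observe that the partial cuspidal support of every irreducible subquotient of $\Pi_{\mathbf{x}}$ is $\sigma$, so any square-integrable subquotient $\pi$ is pinned down by $\Jord_\rho(\pi)$ together with $\e_\pi$. By Remark \ref{general-basic}, $\Jord_\rho(\sigma)$ consists of the positive integers of the parity of $\rho$ up to $2\alpha-1$ (and is empty when $\alpha\in\{0,\tfrac12\}$), and the cuspidal exponents $[x_1],\dots,[x_k]$ must be accounted for via Proposition \ref{JBcomputation}: each embedding of $\pi$ into $\nu^x\rho\times\cdots\times\nu^y\rho\rtimes\pi'$ either swaps a block $2y-1\in\Jord_\rho(\pi')$ for $2x+1$ or inserts the pair $\{2x+1,-2y+1\}$, and the cumulative effect must exactly reproduce the multiset of exponents of the critical tuple. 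This finite combinatorial constraint reduces the problem to listing the admissible $(\Jord_\rho(\pi),\e_\pi)$ data.

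The second step is to walk through $k=0,1,2,3$. The cases $k\le 1$ are immediate. For $k=2$ the critical tuples in $\R^2_{++}$ are $(\alpha,\alpha)$, $(\alpha-1,\alpha)$ (when $\alpha\ge1$), and $(\alpha,\alpha+1)$; the first admits no square-integrable constituent (all subquotients are parabolically induced from a lower-corank tempered representation), the second produces the strongly positive $\delta_{\spsi}([\alpha-1],[\alpha];\sigma)$ when $\alpha>1$, and the third yields $\delta([\alpha,\alpha+1];\sigma)$ for $\alpha>0$ together with the second summand $\delta([0,1]_-;\sigma)$ when $\alpha=0$. The $k=3$ case is analogous: enumerate the critical multisegments of length 1, 2, 3 containing $[\alpha]$, apply Proposition \ref{JBcomputation} twice, and read off the strongly positive, generalized Steinberg, or $\pm$-segment representations. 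The strict bounds on $\alpha$ in the list ($>2,>1,>0$) reflect the requirement $x_1\ge 0$ together with the square-integrability criterion for segment representations recalled in \S\ref{segment}; the doubling in $\delta([0,2]_\pm;\sigma)$ and $\delta([-\tfrac12,\tfrac32]_\pm;\sigma)$ comes from the two admissible sign functions $\e$ on the enlarged $\Jord_\rho(\pi)$. Completeness of the list then follows from the Moeglin--Tadic classification: any square-integrable subquotient must correspond to one of the enumerated admissible triples.

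The main obstacle I anticipate is the small-$\alpha$ boundary ($\alpha\in\{0,\tfrac12,1,2\}$), where the generic strongly positive picture degenerates and Proposition \ref{JBcomputation} must be supplemented by the explicit segment-type criteria of \S\ref{segment} in order to decide whether a candidate is actually square-integrable or merely tempered or a Langlands quotient. In particular, the appearance of two square-integrable representations $\delta([0,2]_\pm;\sigma)$ and $\delta([-\tfrac12,\tfrac32]_\pm;\sigma)$ rather than one has to be traced through a count of admissible sign functions on the enlarged Jordan block set, since at these boundary values the alternating-sign condition of Remark \ref{general-basic} leaves exactly two admissible choices.
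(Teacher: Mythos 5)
Your proposal is correct and takes essentially the same route as the paper, which offers no argument beyond asserting that the list follows directly from the Moeglin--Tadi\'c classification of discrete series in \cite{MR1896238}; your reduction to finite bookkeeping with $\Jord_\rho$ via Proposition \ref{JBcomputation}, combined with the strongly positive, generalized Steinberg and segment-type constructions, is exactly the intended verification. One cosmetic remark: the doubling $\delta([0,2]_\pm;\s)$, $\delta([-\tfrac12,\tfrac32]_\pm;\s)$ is most cleanly seen from the decomposition of $\delta([-c,d]^{(\rho)})\rtimes\s$ recalled in \S\ref{segment} (two square-integrable pieces when $\{-\alpha,\alpha\}\subseteq[-c,d]$, $c\ne d$) rather than from the cuspidality (alternating-sign) condition of Remark \ref{general-basic}, but this does not affect correctness.
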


Now we shall describe the non-square-integrable tempered subquotients of representations $\Pi_{(x_1,\dots,x_k)}$, $k\leq 3$,
in the critical case.

\begin{proposition} \label{prop: tempbsc}
Suppose that we are in the critical case
(i.e. $\{x_1,\dots,x_k\}$ is a $\Z$-segment in $\Z$ or $\frac12 +\Z$, with possible multiplicities, which contains $\alpha$).
The following are the tempered, but non square-integrable, irreducible subquotients of $\Pi_{\mathbf{x}}$ for $k\le3$.

\begin{enumerate}

\item $k=1:$
$$
\tau_\pm,\quad \text{for} \quad\alpha=0,
$$
where
$$
\rho\rtimes\s=\tau_+\oplus\tau_-.\footnote{We shall denote $\tau_\pm$ also by $\delta([0]_\pm;\s).$}
$$

\item $k=2:$

\begin{enumerate}
\item $\tau([0]_\pm;\delta([1];\s))$,\quad $\alpha=1$, where
\[
[0]\rtimes \delta([1];\s)=\tau([0]_+;\delta([1];\s))\oplus\tau([0]_-;\delta([1];\s)).\footnote{+ sign corresponds to the
representation with $\delta([0,1])\otimes\s$ in its Jacquet module.}
\]

\item $\delta([-\frac12,\frac12]_\pm;\s)$, \quad $\alpha=\frac12$, where
$$
\delta([-\tfrac12,\tfrac12])\rtimes\s=\delta([-\tfrac12,\tfrac12]_+;\s)\oplus
\delta([-\tfrac12,\tfrac12]_-;\s).\footnote{$\delta([-\tfrac12,\tfrac12]_+;\s)$
has $[\tfrac12]\times[\tfrac12]\otimes\s$ in its Jacquet module}
$$

\item $[0] \rtimes\tau_\pm$, \qquad $\alpha=0$, where
$$
\rho\rtimes\s=\tau_+\oplus\tau_-.
$$
\end{enumerate}

\item $k=3:$

\begin{enumerate}
\item $[0]\times[0]\rtimes\tau_\pm$, \qquad $\alpha=0$ \quad $([0]\rtimes\s=\tau_+\oplus\tau_-)$.

\item $[0]\rtimes\tau([0]_\pm;\delta([1];\s))$,\quad $\alpha=1$.

\item $\delta([-1,1]_\pm;\s )$, \qquad $\alpha=1$, where
$$
\delta([-1,1])\rtimes\s=\delta([-1,1] _+;\s )\oplus\delta([-1,1] _-;\s ).\footnote{$ \delta([-1,1] _+;\s )$ is characterised
by the property that it has $\delta([0,1])\times[1]\otimes\s$ in its Jacquet module}
$$

\item $\delta([-1,1]_\pm;\s )$, \qquad $\alpha=0$, where
$$
\delta([-1,1])\rtimes\s =\delta([-1,1] _+;\s )\oplus\delta([-1,1] _-;\s ).\footnote{$ \delta([-1,1] _\pm;\s )$ is characterised
by the property that it has $[1]\times[1]\otimes\tau_\pm$ in its Jacquet module}
$$

\item $[0]\rtimes \tau([0]_\pm;\delta([1];\s))$,\quad $\alpha=1$.

\item $\delta([-\frac12,\frac12])\rtimes \delta([\frac12];\s)$, \qquad $\alpha=\frac12$.

\item $ \tau([0]_\pm;\delta_{\spsi}([1],[2];\s))$, \qquad $\alpha=2$, where
$$
[0]\rtimes\delta_{\spsi}([1],[2];\s)=
\tau([0]_+;\delta_{\spsi}([1],[2];\s))\oplus\tau([0]_-;\delta_{\spsi}([1],[2];\s)).
$$

\item $\tau([0]_\pm;\delta([1,2] ;\s ))$, \qquad $\alpha=1$, where
$$
[0]\rtimes\delta([1,2] ;\s )=\tau([0]_+;\delta([1,2] ;\s ))\oplus
\tau([0]_-;\delta([1,2] ;\s)).\footnote{$\tau([0]_+;\delta_{\spsi}([1],[2];\s))$ is characterised
by the property that it embeds into a representation of the form $[1]\rtimes \lambda$ (see \cite{MR3123571}).}
$$

\item
$[0]\rtimes \delta([0,1]_\pm;\s)$, \qquad $\alpha=0.$
\end{enumerate}
\end{enumerate}
\qed
\end{proposition}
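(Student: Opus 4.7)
The plan is to use the general structure theorem for tempered representations of classical $p$-adic groups: any non-square-integrable irreducible tempered representation $\pi$ of $S_n$ arises as a direct summand of some $\delta_1 \times \cdots \times \delta_j \rtimes \tau_0$ with $j \geq 1$, where $\delta_1, \ldots, \delta_j$ are pairwise non-isomorphic $F'/F$-selfcontragredient unitary irreducible square-integrable representations of general linear groups and $\tau_0 \in \Discl$. Since $\pi$ is to be a subquotient of $\Pi_{\mathbf{x}}$ in the critical corank $\leq 3$ case, the supports of the $\delta_i$'s together with the partial cuspidal support of $\tau_0$ must exhaust $\supp(\Pi_{\mathbf{x}})$. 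The condition $e(\delta_i) = 0$ together with $\supp(\delta_i)$ lying in the critical $\Z$-segment forces each $\delta_i$ to be one of $[0]$, $\delta([-\tfrac12, \tfrac12])$, or $\delta([-1, 1])$, and the bound $j \leq k \leq 3$ leaves only a short list of candidate inducing data.

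First I would enumerate, for each admissible $k$ and each $\alpha$, the pairs $(\{\delta_1, \ldots, \delta_j\}, \tau_0)$ with $\tau_0$ coming from Proposition \ref{prop: sqrbsc} and with total support equal to $\supp(\Pi_{\mathbf{x}})$. For each such candidate I would decompose $\delta_1 \times \cdots \times \delta_j \rtimes \tau_0$ into its tempered summands using the reducibility criterion of Proposition \ref{Pr-red-si}: the Jordan blocks of $\tau_0$ are computable by Proposition \ref{JBcomputation} starting from $\Jord_\rho(\s)$ (which equals $\{1, 3, \ldots, 2\alpha - 1\}$ or the empty set according to the parity of $\rho$), and the reducibility of each $\delta_i \rtimes \tau_0$ is then read off from whether the relevant Jordan block lies in $\Jord_\rho(\tau_0)$ and from the comparison of $\e$-values when necessary. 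Each reducibility produces two inequivalent tempered summands (while irreducibility keeps a single one); iterating over the $\delta_i$'s and applying $R$-group theory to the resulting products, the list of outcomes coincides with the representations enumerated in the proposition.

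The main obstacle will be the identification of the two summands in each reducible case and matching them with the labels $\tau(\D_\pm; \pi_0)$, $\delta([-c, c]_\pm; \s)$, $\tau_\pm$, and the like. For this I would compute the relevant Jacquet modules via \eqref{mu*} and the formulas in \S\ref{some form.} and \S\ref{segment}, singling out each summand by the distinguished subquotient prescribed in the footnotes of the proposition; for instance, $\tau([0]_+; \delta([1]; \s))$ is characterised by containing $\delta([0,1]) \otimes \s$ in its Jacquet module, while $\delta([-1,1]_+; \s)$ for $\alpha = 1$ is the one containing $\delta([0,1]) \times [1] \otimes \s$. Completeness then follows because every non-square-integrable tempered subquotient of $\Pi_{\mathbf{x}}$ must arise as such a direct summand, and the enumeration above exhausts all admissible inducing data in the corank $\leq 3$ critical setting.
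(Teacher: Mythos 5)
Your strategy is in substance the one the paper itself relies on: the proposition carries no proof precisely because it is meant to be read off from the classification of tempered representations (every non-square-integrable irreducible tempered representation is a direct summand of a representation unitarily induced from discrete series, the number of summands being governed by the R-group, i.e. by the reducibility criteria of Proposition \ref{Pr-red-si} applied to Jordan blocks computed via Proposition \ref{JBcomputation}), combined with the list of square-integrable representations in Proposition \ref{prop: sqrbsc} and the support constraint coming from $\Pi_{\mathbf{x}}$. Your identification of the two summands through distinguished Jacquet-module subquotients is also exactly how the paper pins down the labels.

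One concrete correction, though: the inducing data $\delta_1,\dots,\delta_j$ in the tempered structure theorem are \emph{not} pairwise non-isomorphic in general; only the multiset (up to $F'/F$-contragredient) together with $\tau_0\in\Discl$ is determined, and the R-group has order $2^d$ where $d$ counts the \emph{distinct} self-contragredient $\delta_i$ with $\delta_i\rtimes\tau_0$ reducible, repeated copies contributing nothing further. If you enforce pairwise non-isomorphic factors, your enumeration misses precisely the entries whose discrete-series data involves a repeated factor $\nu^0\rho$: for $\alpha=0$ the representations $[0]\rtimes\tau_\pm$ (corank $2$) and $[0]\times[0]\rtimes\tau_\pm$ (corank $3$) arise as the two summands of $[0]\times[0]\rtimes\sigma$ and $[0]\times[0]\times[0]\rtimes\sigma$ respectively (no discrete series exists with that support when $\alpha=0$), and for $\alpha=1$ the representations $[0]\rtimes\tau([0]_\pm;\delta([1];\sigma))$ are the two summands of $[0]\times[0]\rtimes\delta([1];\sigma)$. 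With the structure theorem stated correctly (allowing repetitions, with the R-group counting only distinct reducible self-dual factors), your enumeration and the subsequent Jacquet-module identification go through and reproduce the paper's list.
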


\begin{remark}
For the description of composition series of representations $\Pi_{\mathbf{x}}$, $k\leq3$ in the critical case,
we will use the following non-critical irreducible tempered representation
\begin{enumerate}
\item $k=1 :$

$[0]\rtimes \sigma$, \quad $\alpha \in \{1,2\}$ (for $\Pi_{(0,1,2)}$ as well as for
$\Pi_{(0,1,1)}$ and $\Pi_{(0,0,1)}$ if $\alpha=1$).

\item $k=2 :$

\begin{enumerate}

\item
$[0]\times[0]\rtimes \sigma$, \ \ \ \ \ \quad $\alpha=1$ (for $\Pi_{(0,0,1)}$).

\item $\delta([-\frac12,\frac12])\rtimes \sigma$, \qquad $\alpha=\frac32$ (for $\Pi_{(\frac12,\frac12,\frac32)}$).

\item $[0]\rtimes \delta([2];\sigma), \ \ \ \ \ \quad \alpha=2$ (for $\Pi_{(0,1,2)}$).

\end{enumerate}

\end{enumerate}

\end{remark}

With the above description it is easy to describe $\JH(\Pi_{\mathbf{x}})$ for any given $\mathbf{x}\in\R^3$.
Henceforth, we will do so in the critical case without further explanation.

\section{Composition series in critical case, corank one}
For the rest of this chapter we deal with the case $k\le2$ (and $\mathbf{x}$ critical).

We start with the corank one case.

\subsection{\texorpdfstring{$\mathbf{x}=(\alpha)$}{xalpha} and \texorpdfstring{$\alpha\ge\tfrac12$}{alphage12}}

\begin{proposition}
Suppose that $\alpha\geq\tfrac12$. Then,
\begin{enumerate}
\item In the Grothendieck group we have
$$
\Pi_{(\alpha)}=[\alpha]\rtimes \s=L([\alpha]; \s)+\delta([\alpha]; \s).
$$
\item Both $L([\alpha]; \s)$ and $\delta([\alpha]; \s)$ are unitarizable.
\item $L([\alpha]; \s)^t=\delta([\alpha]; \s)$.
\item We have
$$
\mu^*(\delta([\alpha]; \s))=1\otimes \delta([\alpha]; \s)+[\alpha]\otimes \s,\ \
\mu^*(L([\alpha]; \s))=1\otimes L([\alpha]; \s))+[-\alpha]\otimes \s.
$$
\end{enumerate}
\qed
\end{proposition}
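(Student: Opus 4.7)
The plan is to verify the four assertions in order, leaning on the general machinery recalled in chapter \ref{notation}.

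For the composition series~(1), since $\alpha\ge\tfrac12>0$, the definition \eqref{alpha} of $\alpha$ tells us that $[\alpha]\rtimes\s$ is reducible. Standard theory of cuspidal reducibility in corank one gives that it has length two, with socle the square-integrable generalized Steinberg representation $\delta([\alpha];\s)$ (the case $n=0$ of \S\ref{sec: Castrick}) and irreducible cosocle the Langlands quotient $L([\alpha];\s)$. These are the two constituents.

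For the Jacquet module formulas~(4), the identity for $\delta([\alpha];\s)$ is just \eqref{eq: muforgenstn} specialized to $n=0$: the term $k=-1$ contributes $[\alpha]\otimes\s$ and $k=0$ contributes $1\otimes\delta([\alpha];\s)$. The formula for $L([\alpha];\s)$ then follows from the compatibility of the \ASS involution with Jacquet modules, using (3); equivalently one can read it off directly from \eqref{eq: muforgenstnt} with $n=0$.

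For (3), note that $[\alpha]$ and $\s$ are cuspidal, so $[\alpha]^t=[\alpha]$ and $\s^t=\s$. Because the \ASS involution is compatible with parabolic induction on the Grothendieck group, it preserves the set of constituents of $[\alpha]\rtimes\s$. Since $\delta([\alpha];\s)$ is square-integrable while $L([\alpha];\s)$ is not, the two are distinct and the involution must swap them.

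For unitarizability~(2), the representation $\delta([\alpha];\s)$ is square-integrable, hence unitarizable. For $L([\alpha];\s)$ I would consider the family $[x]\rtimes\s$ for $x\in[0,\alpha)$. Each member is hermitian by \eqref{check} and irreducible since $\alpha$ is the unique nonnegative reducibility exponent of $\nu^x\rho\rtimes\s$. At $x=0$ the representation is tempered (parabolically induced from cuspidal data) and therefore unitarizable, so by the family argument recalled in \S\ref{ways of getting unitary} the whole family is unitarizable. Letting $x\nearrow\alpha$, the distribution characters converge pointwise to that of the semisimplification $\delta([\alpha];\s)+L([\alpha];\s)$, and the limit argument in \S\ref{ways of getting unitary} shows that both constituents are unitarizable. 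The only point requiring a moment of care is the hermitianness and irreducibility of the complementary series, but both are immediate from \eqref{check} and the uniqueness of the reducibility point; I do not anticipate a real obstacle.
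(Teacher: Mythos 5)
Most of your argument is the standard one (the paper itself states this proposition without proof): length two follows from the Jacquet module of $[\alpha]\rtimes \s$ together with reducibility at the exponent $\alpha$, unitarizability of $\delta([\alpha];\s)$ comes from square-integrability, and unitarizability of $L([\alpha];\s)$ from the complementary series $[x]\rtimes\s$, $0\le x<\alpha$, plus the limit (Mili\v{c}i\'c) argument at $x=\alpha$; all of this is fine. The one genuine gap is in part (3): from the facts that the \ASS involution preserves the two-element set of constituents of $[\alpha]\rtimes\s$ and that $\delta([\alpha];\s)\ne L([\alpha];\s)$, you conclude that the involution ``must swap them''. Distinctness alone does not force a swap: a priori the involution could fix each constituent. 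What rules this out is the compatibility of the involution with Jacquet modules recalled in the paper: since $s_{\GL}(\delta([\alpha];\s))=[\alpha]\otimes\s$, the corresponding Jacquet module of $\delta([\alpha];\s)^t$ is $[-\alpha]\otimes\s$, which is not $[\alpha]\otimes\s$ because $\alpha>0$; hence $\delta([\alpha];\s)^t\ne\delta([\alpha];\s)$, so $\delta([\alpha];\s)^t=L([\alpha];\s)$ (equivalently, by Casselman's criterion $\delta([\alpha];\s)^t$ cannot be square-integrable, so it cannot be $\delta([\alpha];\s)$).

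A related caution on part (4): deducing the formula for $\mu^*(L([\alpha];\s))$ from \eqref{eq: muforgenstnt} is mildly circular, since that formula is obtained in the paper precisely by applying the involution, i.e.\ it presupposes (3). It is cleaner to compute by subtraction in the Grothendieck group: $\mu^*([\alpha]\rtimes\s)=M^*([\alpha])\rtimes\mu^*(\s)=1\otimes[\alpha]\rtimes\s+[\alpha]\otimes\s+[-\alpha]\otimes\s$, and subtracting $\mu^*(\delta([\alpha];\s))=1\otimes\delta([\alpha];\s)+[\alpha]\otimes\s$ (the $n=0$ case of \eqref{eq: muforgenstn}) yields $\mu^*(L([\alpha];\s))=1\otimes L([\alpha];\s)+[-\alpha]\otimes\s$. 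With this computation and the Jacquet-module argument above, part (3) follows at once, and the remainder of your proof stands as written.
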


Moreover, by Proposition \ref{JBcomputation}
\footnote{In the sequel we shall conclude Jordan blocks from this proposition
unless otherwise indicated.}
\begin{equation}
\label{eq: Jord-a-sigma}
\Jord_\rho(\delta([\alpha];\s))=\{\eta+1,\eta+3,\dots,2\alpha-3,2\alpha+1 \}.
\end{equation}

\subsection{\texorpdfstring{$\mathbf{x}=(\alpha)$}{xalpha} and \texorpdfstring{$\alpha=0$}{alpha0}}

In this case $[0]\rtimes\s$ is unitarizable and decomposes as
a sum of two irreducible tempered representations
\[
[0]\rtimes\s=\delta ([0]_+;\s)\oplus \delta ([0]_-;\s).
\]
We have
$$
\mu^*(\delta ([0]_\pm;\s))=1\otimes \delta ([0]_\pm;\s)+[0]\otimes \s.
$$
Note that
$$
\delta ([0]_+;\s)^t=\delta ([0]_-;\s).
$$

\section{Composition series in critical case, corank two}

\subsection{\texorpdfstring{$\mathbf{x}=(\alpha,\alpha+1)$}{x01} and \texorpdfstring{$\alpha\ge\tfrac12$}{alphage12}}
\label{Steinberg-place-corank-two}

\begin{proposition} \label{prop-2-a+}
Suppose that $\alpha\geq\tfrac12$. Then,
\begin{enumerate}
\item In the Grothendieck group we have
$$
\Pi_{(\alpha,\alpha+1)}=[\alpha+1] \times[\alpha]\rtimes \s=\pi_1+\pi_2+\pi_3+\pi_4
$$
where
\begin{gather*}
\pi_1= L([\alpha+1] ,[\alpha];\s),\
\pi_2=L([\alpha+1] ;\delta([\alpha];\s))\\
\pi_3=L([\alpha,\alpha+1];\s),\ \pi_4=\delta([\alpha,\alpha+1];\s).
\end{gather*}
\item The representations $\pi_1$ and $\pi_4$ are unitarizable and $\pi_1^t=\pi_4$.
\item The representations $\pi_2$ and $\pi_3$ are not unitarizable and $\pi_2^t=\pi_3$.
\end{enumerate}
\qed
\end{proposition}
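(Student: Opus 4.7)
The plan is to proceed in three steps: first establish the composition series by iterating the corank-one decomposition and combining it with the \ASS involution, then verify unitarity of $\pi_1$ and $\pi_4$, and finally disprove unitarity of $\pi_2$ and $\pi_3$ via a Jacquet-module multiplicity argument.

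For the composition series, I would begin from the corank-one identity $[\alpha]\rtimes\s=L([\alpha];\s)+\delta([\alpha];\s)$ and induce with $[\alpha+1]$ to obtain
\[
\Pi_{(\alpha,\alpha+1)}=[\alpha+1]\rtimes\delta([\alpha];\s)+[\alpha+1]\rtimes L([\alpha];\s).
\]
Using $2\alpha+1\in\Jord_\rho(\delta([\alpha];\s))$ but $2\alpha+3\notin\Jord_\rho(\delta([\alpha];\s))$, which follows from \eqref{eq: Jord-a-sigma}, Proposition~\ref{Pr-red-si}\eqref{Pr-red-si-item: a-not-a+2-red} forces $[\alpha+1]\rtimes\delta([\alpha];\s)$ to be reducible. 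Its socle is $\pi_4=\delta([\alpha,\alpha+1];\s)$ by the very definition of the generalized Steinberg, and its Langlands cosocle is $\pi_2=L([\alpha+1];\delta([\alpha];\s))$, so it has length exactly two. Applying the \ASS involution together with $L([\alpha];\s)^t=\delta([\alpha];\s)$ (from the corank-one case) and $(\pi\rtimes\tau)^t=\pi^t\rtimes\tau^t$, the representation $[\alpha+1]\rtimes L([\alpha];\s)$ is also of length two, with composition factors $\pi_4^t$ and $\pi_2^t$. The formula \eqref{eq: muforgenstnt} identifies $\pi_4^t=L([\alpha+1],[\alpha];\s)=\pi_1$. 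To identify $\pi_2^t=\pi_3$, I use that the embedding $\delta([\alpha,\alpha+1])\hookrightarrow[\alpha+1]\times[\alpha]$ induces $\delta([\alpha,\alpha+1])\rtimes\s\hookrightarrow\Pi_{(\alpha,\alpha+1)}$ whose Langlands cosocle is $L([\alpha,\alpha+1];\s)$, and this must be the last remaining constituent.

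The unitarity of $\pi_4$ is immediate from square-integrability. For $\pi_1=\pi_4^t$ I would argue by analytic deformation as in \S\ref{ways of getting unitary}: exhibit a continuous one-parameter family of irreducible hermitian representations passing through a unitarizable point on the unitary axis whose specialisation at an endpoint is $\pi_1$, so that unitarity extends to $\pi_1$ by continuity of hermitian forms. Alternatively, M\oe glin's Arthur-packet construction places $\pi_1$ inside a unitary Arthur packet.

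For non-unitarity of $\pi_2=L([\alpha+1];\delta([\alpha];\s))$, I would invoke Lemma~\ref{lem: nonunit}: choose a suitable irreducible unitarizable representation $\tau$ of a general linear group and form $\Pi=\tau\rtimes\pi_2$; it suffices to show $\ell(\Pi)>m$, where $m$ denotes the multiplicity of $\tau\otimes\pi_2$ in $s_{\GL}(\Pi)$. Both quantities are computable from \eqref{s-GL}, \eqref{M-GL}, and \eqref{eq: muforgenstn}, combined with the Langlands classification applied to the various intermediate representations appearing as subquotients. A natural candidate is $\tau=[\alpha]$ (or $\delta([\alpha,\alpha+1])$). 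The same argument, or its \ASS-dual, handles $\pi_3=\pi_2^t$. The main obstacle I anticipate is the unitarity of $\pi_1$: exhibiting the correct complementary series with $\pi_1$ as an endpoint requires some care and may demand case analysis at small values of $\alpha$, where the Jordan blocks of $\delta([\alpha];\s)$ degenerate.
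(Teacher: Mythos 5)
Your first step (the composition series and the identifications $\pi_1^t=\pi_4$, $\pi_2^t=\pi_3$) is correct and routine; note that the paper itself gives no argument here, stating the proposition with \qed{} and resting on \S\ref{St-oposite} and the references quoted in \S\ref{sec: Castrick}. The genuine gap is in your treatment of the unitarizability of $\pi_1=L([\alpha+1],[\alpha];\s)$. Your primary route — deform along a one-parameter family of irreducible hermitian representations ending at $\pi_1$ — cannot be carried out in corank $\le 2$: the only hermitian families whose endpoint has $\pi_1$ as a constituent are $[x]\rtimes L([\alpha];\s)$ for $\alpha<x<\alpha+1$ and $L([x],[x+1])\rtimes\s$ for $\alpha-1<x<\alpha$, and both consist of non-unitarizable representations (this is exactly what the proof of Proposition \ref{PROP: RK2} establishes for the segment from $(\alpha,\alpha-1)$ to $(\alpha+1,\alpha)$ — and it does so by \emph{using} the non-unitarizability of $\pi_2,\pi_3$ from the present proposition, so the point $(\alpha,\alpha+1)$ is not on the boundary of any complementary series at this corank and any such deformation argument would in addition be circular). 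The correct justification is the one you relegate to an aside: $\pi_1=\delta([\alpha,\alpha+1];\s)^t$ is the \ASS{} dual of a square-integrable representation, and such duals are unitarizable (M\oe glin; cf.\ \cite{MR2448433} and the footnote in \S\ref{CC-0}); this, together with \cite{MR2652536}, \cite{MR3052686} quoted in \S\ref{sec: Castrick}, is what the paper implicitly relies on.

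Concerning the non-unitarizability of $\pi_2$ and $\pi_3$: the paper does not prove it either but cites \cite{MR2652536} and \cite{MR3052686}, whose method is precisely the length-versus-multiplicity criterion of Lemma \ref{lem: nonunit} that you propose. However, your proposal stops at naming a candidate inducing datum ($\tau=[\alpha]$ or $\delta([\alpha,\alpha+1])$) without verifying that for it the length of $\tau\rtimes\pi_2$ really exceeds the multiplicity of $\tau\otimes\pi_2$ in the Jacquet module; that verification (exhibiting enough irreducible subquotients of the induced representation, as is done in the analogous corank-3 arguments of \S\ref{sec: unit3}) is the substance of the argument and is missing. As written, then, the proposal is a correct skeleton for part (1), but parts (2) and (3) are not yet proofs: (2) as stated would fail unless replaced by the Aubert-dual/Arthur-type input, and (3) is an unexecuted plan.
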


We have
\begin{subequations}
\begin{gather}
\mu^*(\pi_1) =1\otimes\pi_1+[-\alpha-1]\otimes L([\alpha];\s)+L([-\alpha-1,-\alpha])\otimes\s.\\
\begin{aligned}
\mu^*(\pi_2)&=1\otimes\pi_2+
[-\alpha-1] \otimes \delta([\alpha];\s) +[\alpha]\otimes[\alpha+1] \rtimes \s
\\&+[\alpha]\times[-\alpha-1] \otimes\s+L([\alpha],[\alpha+1] )\otimes\s,
\end{aligned}\\\label{eq: pi312}
\begin{aligned}
\mu^*(\pi_3)&=1\otimes\pi_3+
[-\alpha]\otimes[\alpha+1] \rtimes\s+[\alpha+1] \otimes L([\alpha];\s)
\\&+\delta([-\alpha-1,-\alpha])\otimes\s+[-\alpha]\times[\alpha+1] \otimes\s,
\end{aligned}\\
\mu^*(\pi_4) =1\otimes\pi_4+[\alpha+1]\otimes\delta([\alpha];\s)+\delta([\alpha,\alpha+1])\otimes\s.
\end{gather}
\end{subequations}

One easily gets that
\begin{equation} \label{eq: jord01}
\Jord_\rho(\pi_4)=\{\eta+1,\eta+3,\dots,2\alpha-3,2\alpha+3 \}.
\end{equation}

Furthermore,
\[
\begin{gathered}
{}[\alpha+1] \rtimes L([\alpha];\s)=\pi_1+\pi_3,\\
[\alpha+1] \rtimes \delta([\alpha];\s)=\pi_2+\pi_4,\\
L([\alpha],[\alpha+1])\rtimes\s=\pi_1+\pi_2,\\
\delta([\alpha,\alpha+1])\rtimes\s=\pi_3+\pi_4.
\end{gathered}
\]

\subsection{\texorpdfstring{$\mathbf{x}=(\alpha,\alpha)$}{x00} and \texorpdfstring{$\alpha\geq 1$}{alphage1}}

\begin{proposition} \label{prop: alphalpha}
Suppose that $\alpha\ge1$. Then,
\begin{enumerate}
\item In the Grothendieck group we have
\[
\Pi_{(\alpha,\alpha)}=\pi_1+\pi_1^t
\]
where
\[
\pi_1=[\alpha] \rtimes \delta([\alpha] ;\s)=L([\alpha] ;\delta([\alpha] ;\s)),\ \
\pi_1^t=[\alpha] \rtimes L([\alpha] ;\s)=L([\alpha] ,[\alpha] ;\s).
\]
\item Neither $\pi_1$ nor $\pi_1^t$ is unitarizable.
\end{enumerate}
\end{proposition}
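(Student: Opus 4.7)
For (1), I plan an induction-in-stages decomposition plus an irreducibility check. Starting from the corank-one identity $[\alpha]\rtimes\sigma = L([\alpha];\sigma) + \delta([\alpha];\sigma)$ and inducing with $[\alpha]$, in the Grothendieck group we get $\Pi_{(\alpha,\alpha)} = [\alpha]\rtimes L([\alpha];\sigma) + [\alpha]\rtimes\delta([\alpha];\sigma)$; it remains to show that each summand is irreducible. The formula \eqref{eq: Jord-a-sigma} gives $\Jord_\rho(\delta([\alpha];\sigma)) = \{\eta+1,\eta+3,\dots,2\alpha-3,2\alpha+1\}$, which omits $2\alpha-1$; Proposition \ref{Pr-red-si}(\ref{Pr-red-si-item: a-a+1-irr}) with $a=2\alpha-1$ then forces $[\alpha]\rtimes\delta([\alpha];\sigma)$ to be irreducible, hence equal to its Langlands quotient $L([\alpha];\delta([\alpha];\sigma)) = \pi_1$. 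Applying the \ASS involution (which commutes with parabolic induction on the Grothendieck group) yields $\pi_1^t = [\alpha]\rtimes L([\alpha];\sigma) = L([\alpha],[\alpha];\sigma)$.

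For (2), I plan to apply Lemma \ref{lem: nonunit} by contradiction: one chooses a unitarizable $\tau$ in a general linear group so that the length $\ell$ of $\Pi := \tau\rtimes\pi_1$ strictly exceeds the multiplicity $m$ of $\tau\otimes\pi_1$ in $s_\GL(\Pi)$; such an inequality simultaneously rules out the unitarizability of $\pi_1$ and $\pi_1^t$. The multiplicity is controlled by $\mu^*(\Pi) = M^*(\tau)\rtimes\mu^*(\pi_1)$ together with the explicit expansion $\mu^*(\pi_1) = M^*([\alpha])\rtimes\mu^*(\delta([\alpha];\sigma))$, which has a unique $1\otimes\pi_1$ component. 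For $\alpha=1$ the choice $\tau=\rho$ already suffices. By Proposition \ref{Pr-red-si}(3), since $1\notin\Jord_\rho(\delta([1];\sigma)) = \{3\}$ and $\rho$ is of odd parity, $\rho\rtimes\delta([1];\sigma)$ splits as $\tau([0]_+;\delta([1];\sigma)) \oplus \tau([0]_-;\delta([1];\sigma))$, with both summands square-integrable of $\Jord_\rho = \{1,3\}$ by Proposition \ref{JBcomputation}. Inducing by $[1]$ and applying Proposition \ref{Pr-red-si}(\ref{Pr-red-si-item: a-a+1-epsilon}) (with $a=1$ and $a+2=3$ both in the Jordan set), exactly one of $[1]\rtimes\tau([0]_\pm;\delta([1];\sigma))$ reduces while the other stays irreducible, so $\ell(\rho\rtimes\pi_1) = 3$. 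Meanwhile $M^*(\rho) = 2\rho\otimes 1 + 1\otimes\rho$ and no term of $\mu^*(\pi_1)$ has $\rho$ in the general linear slot (all GL parts are $[\pm 1]$), so $m=2$; the strict inequality $3>2$ gives the contradiction.

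The main obstacle is the range $\alpha\ge 2$: here $\tau=\rho$ no longer works, since $1\in\Jord_\rho(\delta([\alpha];\sigma))$ makes $\rho\rtimes\pi_1$ irreducible while $m$ remains $2$. A natural replacement is $\tau = \delta(\rho,2\alpha-1) = \delta([-(\alpha-1),\alpha-1])$, the unique unitarizable Speh-type representation whose associated Jordan block is precisely the one missing from $\Jord_\rho(\delta([\alpha];\sigma))$. Using the segment formula \eqref{M-seg-GL} for $M^*(\tau)$ together with the Jacquet module computations of Section \ref{segment}, one determines $m$; the extra length is produced by Proposition \ref{Pr-red-si}(\ref{Pr-red-si-item: a-not-a+2-red}) at $a=2\alpha-3$, which forces $[\alpha-1]\rtimes\delta([\alpha];\sigma)$ to reduce and creates additional composition factors of $\tau\rtimes\pi_1$ not absorbed by the Jacquet-module count. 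The delicate bookkeeping needed to establish the strict inequality $\ell > m$ uniformly in $\alpha\ge 2$ is the principal technical difficulty of the argument.
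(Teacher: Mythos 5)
Part (1) of your proposal is correct and is essentially the paper's own argument (decompose via induction in stages, get irreducibility of $[\alpha]\rtimes\delta([\alpha];\s)$ from (\ref{Pr-red-si-item: a-a+1-irr}) of Proposition \ref{Pr-red-si} and \eqref{eq: Jord-a-sigma}, then dualize). Part (2), however, has a genuine gap. Your strategy via Lemma \ref{lem: nonunit} is legitimate in principle, but for every $\alpha\ge\tfrac32$ (note: not just $\alpha\ge2$; for $\alpha=\tfrac32$ the choice $\tau=\rho$ also fails, since $\rho$ has even parity) you never establish the key inequality $\ell>m$ — you explicitly defer it as "delicate bookkeeping". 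Since the multiplicity count for $\tau=\delta([-(\alpha-1),\alpha-1])$ gives $m=2$, everything hinges on showing that $\tau\rtimes\pi_1$ has length at least $3$, i.e.\ on the reducibility of $[\alpha]\rtimes\tau_\pm$ for the two tempered constituents $\tau_\pm$ of $\delta([-(\alpha-1),\alpha-1])\rtimes\delta([\alpha];\s)$; this is nontrivial and is not addressed, so the main case of the proposition remains unproved.

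Moreover, your one worked case $\alpha=1$ is not correct as written: the constituents $\tau([0]_\pm;\delta([1];\s))$ of $[0]\rtimes\delta([1];\s)$ are tempered but \emph{not} square-integrable (the exponent $0$ occurs in their cuspidal support), so Proposition \ref{Pr-red-si} (\ref{Pr-red-si-item: a-a+1-epsilon}) and Proposition \ref{JBcomputation}, which are stated for $\pi\in\Discl$, cannot be invoked for them, and the conclusion you draw from them is false: both $[1]\rtimes\tau([0]_+;\delta([1];\s))$ and $[1]\rtimes\tau([0]_-;\delta([1];\s))$ are reducible (of lengths $3$ and $2$ respectively — see the analysis of $\Pi_{(0,1,1)}$ for $\alpha=1$), so $\ell=5$ rather than $3$. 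Since $5>2=m$ the $\alpha=1$ conclusion survives, but not by the argument you gave. You should also be aware that the paper disposes of (2) much more simply, by deformation: the family $\gamma_s=[s]\rtimes\delta([\alpha];\s)$ is irreducible and hermitian for $\alpha\le s<\alpha+1$, and $\gamma_{\alpha+1}$ has the non-unitarizable quotient $L([\alpha+1];\delta([\alpha];\s))$ by Proposition \ref{prop-2-a+}; hence $\pi_1=\gamma_\alpha$ is not unitarizable, and the same deformation applied to $[s]\rtimes L([\alpha];\s)$ handles $\pi_1^t$, with no length/multiplicity bookkeeping at all.
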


Clearly,
\[
\Pi_{(\alpha,\alpha)}=[\alpha] \rtimes \delta([\alpha] ;\s)+[\alpha] \rtimes L([\alpha] ;\s).
\]
Since $[\alpha] \rtimes \delta([\alpha] ;\s)$ and $[\alpha] \rtimes L([\alpha] ;\s)$
are irreducible by (\ref{Pr-red-si-item: a-a+1-irr}) of Proposition \ref{Pr-red-si} and \eqref{eq: Jord-a-sigma}, we obtain the first part.

To show that $\pi_1$ is non-unitarizable consider the family of representations
\[
\gamma_s=[s] \rtimes \delta([\alpha] ;\s),\ \ s\in\R.
\]
It is irreducible for $\alpha\le s<\alpha+1$ and $\gamma_{\alpha+1}$ admits a non-unitarizable irreducible quotient by
Proposition \ref{prop-2-a+}. Therefore, $\pi_1=\gamma_\alpha$ is not unitarizable. A similar argument applies to $\pi_1^t$.

\subsection{\texorpdfstring{$\mathbf{x}=(\alpha-1,\alpha)$}{x-10} and \texorpdfstring{$\alpha\geq \tfrac32$}{alphage32}}
\label{subsec-w-prop-2-a-}

\begin{proposition} \label{prop-2-a-}
Assume $\alpha\geq \tfrac32$.
Then,
\begin{enumerate}
\item In the Grothendieck group
\[
\Pi_{(\alpha-1,\alpha)}=\pi_1+\pi_2+\pi_3+\pi_4
\]
where
\begin{gather*}
\pi_1=L([\alpha-1,\alpha];\s),\ \ \pi_2=L([\alpha-1];\delta([\alpha];\s)),\\
\pi_3= L([\alpha-1],[\alpha];\s),\ \ \pi_4=\delta_{\spsi}([\alpha-1],[\alpha];\s).
\end{gather*}
\item $\pi_1,\pi_2,\pi_3,\pi_4$ are unitarizable.
\item $\pi_4^t=\pi_1,\ \ \pi_3^t=\pi_2$.
\end{enumerate}
\end{proposition}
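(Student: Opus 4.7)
The plan is to first identify the four irreducible subquotients of $\Pi_{(\alpha-1,\alpha)} = [\alpha-1] \times [\alpha] \rtimes \sigma$ and then obtain unitarizability by a complementary series deformation, with the \ASS involution acting as bookkeeping at the end.

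For the composition series, I would compute $\mu^*\bigl([\alpha-1]\times[\alpha]\rtimes\sigma\bigr)$ using the formula $\mu^*(\pi\rtimes\sigma) = M^*(\pi)\rtimes\mu^*(\sigma)$ together with $M^*([x]) = ([x]+[-x])\otimes 1 + 1\otimes[x]$. Comparing against $s_{\GL}$ of each of the four candidates $\pi_1,\pi_2,\pi_3,\pi_4$ (whose Jacquet modules can be read off from \eqref{eq: muforgenstn}, \eqref{eq: muforgenstnt}, \eqref{jm-seg-ds}, \eqref{jm-seg-q}), one checks that each appears with multiplicity one and that they account for the full Jacquet module. The identification of $\pi_4$ as square-integrable follows from the classification in \cite{MR1896238} once one observes that the socle of $[\alpha]\times[\alpha-1]\rtimes\sigma$ is strongly positive (the exponents form a decreasing chain ending at the reducibility point); the cosocle of the same standard module is $\pi_1=L([\alpha-1,\alpha];\sigma)$.

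Unitarizability is obtained by a two-family complementary series argument. Consider
\[
\gamma_x := [x]\rtimes\delta([\alpha];\sigma),\qquad 0\le x\le \alpha-1.
\]
Since $\Jord_\rho(\delta([\alpha];\sigma))=\{\eta+1,\eta+3,\dots,2\alpha-3,2\alpha+1\}$ by \eqref{eq: Jord-a-sigma}, Proposition~\ref{Pr-red-si} shows that $\gamma_x$ is irreducible for $0\le x<\alpha-1$ (the only possible reducibility points are $x=\alpha-1$ and $x=\alpha+1$, arising from the gap at $2\alpha-1$ via item (\ref{Pr-red-si-item: a-not-a+2-red})), and $\gamma_0$ is unitary parabolic induction, hence unitarizable. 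Thus $\{\gamma_x\}$ is a genuine complementary series, and both irreducible constituents of the endpoint $\gamma_{\alpha-1}$ are unitarizable by the limit argument of \S\ref{ways of getting unitary}. A Langlands-quotient and Jacquet-module check identifies these two constituents as $\pi_2$ and $\pi_4$. Running the analogous family $[x]\rtimes L([\alpha];\sigma)$ (irreducible hermitian for $0\le x<\alpha-1$ since applying $^t$ preserves irreducibility on the Grothendieck level, and unitarizable at $x=0$ by unitary parabolic induction of the unitarizable $L([\alpha];\sigma)$) yields unitarizability of $\pi_1$ and $\pi_3$ from its length-two endpoint.

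Finally, the \ASS relations follow from general compatibility of $^t$ with parabolic induction together with the basic segment-to-segment duality: $\delta(\Delta)^t=\mathfrak s(\Delta)$ on general linear groups together with preservation of the cuspidal partial support imply $\pi_4^t=L([\alpha-1,\alpha];\sigma)=\pi_1$, and then $\pi_3^t=\pi_2$ follows by elimination since $^t$ permutes the four subquotients of $\Pi_{(\alpha-1,\alpha)}$ (it fixes the Grothendieck class of the induced representation). I expect the main obstacle to be the careful verification that the complementary series $\gamma_x$ has no hidden reducibility in the interior $(0,\alpha-1)$; this amounts to ruling out reducibility at $x=\tfrac12$ in the even-parity case via item (7) of Proposition~\ref{Pr-red-si}, which requires knowing the value $\e(\rho,2)$ of the partially defined function attached to $\delta([\alpha];\sigma)$. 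This should follow from M\oe glin's construction of generalized Steinberg representations as an alternating extension of $\sigma$, so that the interior of the complementary series is indeed irreducible.
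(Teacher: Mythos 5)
Your proposal is correct and follows essentially the same route as the paper: the paper identifies the four subquotients from regularity of $\Pi_{(\alpha-1,\alpha)}$ (the extreme case of \S\ref{St-oposite}), obtains unitarizability from the two complementary series $[x]\rtimes\delta([\alpha];\s)$ and $[x]\rtimes L([\alpha];\s)$ for $0\le x\le\alpha-1$ starting from unitary induction at $x=0$, and deduces the duality statements from the decompositions $\delta([\alpha-1,\alpha])\rtimes\s=\pi_1+\pi_2$, $L([\alpha-1],[\alpha])\rtimes\s=\pi_3+\pi_4$, $[\alpha-1]\rtimes L([\alpha];\s)=\pi_1+\pi_3$, $[\alpha-1]\rtimes\delta([\alpha];\s)=\pi_2+\pi_4$. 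Your explicit Jordan-block check of irreducibility in the interior $0\le x<\alpha-1$ (alternating $\e$ on $\Jord_\rho(\delta([\alpha];\s))$, with value $-1$ at $2$ in the even-parity case) is a correct, slightly more detailed justification of the irreducibility the paper relies on in \S\ref{St-oposite}.
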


The representation $\Pi_{(\alpha-1,\alpha)}$ is regular and admits the following two decompositions in the Grothendieck group
\[
\Pi_{(\alpha-1,\alpha)}=[\alpha]\times[\alpha-1]\rtimes \s=\Pi_1+\Pi_2=\Pi_3+\Pi_4
\]
where
\begin{gather*}
\Pi_1=[\alpha-1]\rtimes L([\alpha];\s),\ \ \Pi_2=[\alpha-1]\rtimes \delta([\alpha];\s),\\
\Pi_3=\delta([\alpha-1,\alpha])\rtimes\s,\ \ \Pi_4=L([\alpha-1],[\alpha])\rtimes\s.
\end{gather*}
Moreover, we have the decompositions
\begin{equation} \label{eq: dec345}
\Pi_3=\pi_1+\pi_2,\ \ \Pi_4=\pi_3+\pi_4.
\end{equation}
This further implies
\[
\Pi_1=\pi_1+\pi_3,\ \ \Pi_2=\pi_2+\pi_4
\]

Furthermore,
\begin{subequations}
\begin{gather*}
\mu^*(\pi_1)=1\otimes\pi_1+[-\alpha+1]\otimes L([\alpha];\s)
+\delta([-\alpha,-\alpha+1])\otimes\s,\\
\begin{aligned}
\mu^*(\pi_2)&=1\otimes\pi_2
+[\alpha]\otimes[\alpha-1]\rtimes\s+[-\alpha+1]\otimes \delta([\alpha];\s)
\\&+[-\alpha+1]\times[\alpha]\otimes\s +\delta([\alpha-1,\alpha])\otimes\s,
\end{aligned}\\
\begin{aligned}
\mu^*(\pi_3)&=1\otimes\pi_3
+[-\alpha]\otimes [\alpha-1]\rtimes\s +[\alpha-1]\otimes L([\alpha];\s)\\
&+[\alpha-1]\times[-\alpha]\otimes\s+L([-\alpha], [-\alpha+1])\otimes\s,
\end{aligned}\\
\mu^*(\pi_4)=1\otimes\pi_4+[\alpha-1]\otimes \delta([\alpha];\s)+
L([\alpha-1],[\alpha])\otimes\s.
\end{gather*}
\end{subequations}

We also note that
$$
\Jord_\rho(\pi_4)=\{\eta+1,\eta+3,\dots,2\alpha-5,2\alpha-1,2\alpha+1 \},
$$
and the partially defined function $\e$ pertaining to $\pi_4$ takes opposite values at $2\alpha-1$ and $2\alpha+1$
(use the fact that $\pi_4$ is strongly positive; see \cite{MR1896238}).

\subsection{\texorpdfstring{$\mathbf{x}=(0,1)$}{x01} and \texorpdfstring{$\alpha=1$}{alpha1}}

\begin{proposition}
Suppose that $\alpha=1$. Then,
\begin{enumerate}
\item In the Grothendieck group we have
\[
\Pi_{(0,1)}=\pi_1+\pi_2+\pi_3^++\pi_3^-
\]
where
\[
\pi_1=L([1];[0]\rtimes\s),\ \ \pi_2=L([0,1];\s),\ \ \pi_3^\pm=\tau([0]_\pm;\delta([1];\s)).
\]
\item $\pi_1,\pi_2,\pi_3^\pm$ are unitarizable.
\item $\pi_1^t=\pi_3^+$ and $\pi_2^t=\pi_3^-$.
\end{enumerate}
\end{proposition}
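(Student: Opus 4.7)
My plan is first to identify the four irreducible constituents of $\Pi_{(0,1)}$ by running two orthogonal Grothendieck-group decompositions, and then to obtain the unitarizability of all four at once via a single complementary series and the limits criterion.

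Writing $\Pi_{(0,1)}=[0]\rtimes([1]\rtimes\s)$ and using the corank-one identity $[1]\rtimes\s=L([1];\s)+\delta([1];\s)$ yields
\[
\Pi_{(0,1)}=[0]\rtimes L([1];\s)+[0]\rtimes\delta([1];\s)
\]
in the Grothendieck group. By \eqref{eq: Jord-a-sigma}, $\Jord_\rho(\delta([1];\s))=\{3\}$; since $\rho$ has odd parity (as $\alpha=1\in\Z$) and $1\notin\{3\}$, Proposition~\ref{Pr-red-si}(3) gives that $[0]\rtimes\delta([1];\s)=\pi_3^+\oplus\pi_3^-$ is a direct sum of two irreducible tempered summands, which are unitarizable as direct summands of a unitary parabolic induction. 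Applying the $\ASS$ involution, and using $[0]^t=[0]$ together with $\delta([1];\s)^t=L([1];\s)$ from the corank-one case, then shows that $[0]\rtimes L([1];\s)$ also has length two, with constituents $(\pi_3^+)^t$ and $(\pi_3^-)^t$.

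These latter two constituents are $\pi_1$ and $\pi_2$. Indeed, $\pi_1=L([1];[0]\rtimes\s)$ is by construction the Langlands quotient of $\Pi_{(0,1)}=[1]\rtimes([0]\rtimes\s)$ (whose Langlands data $([1],[0]\rtimes\s)\in M(\Disc_+)\times\Tempcl$ is well-defined since $[0]\rtimes\s$ is irreducible tempered), and $\pi_2=L([0,1];\s)$ is the Langlands quotient of $\delta([0,1])\rtimes\s$, a subquotient of $\Pi_{(0,1)}$ via $[0]\times[1]\ge\delta([0,1])$. Both are non-tempered, so they lie inside $[0]\rtimes L([1];\s)$ rather than inside the tempered $[0]\rtimes\delta([1];\s)$, and their differing Langlands data show that $\pi_1\ne\pi_2$. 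This yields the composition series $\Pi_{(0,1)}=\pi_1+\pi_2+\pi_3^++\pi_3^-$ and the matching $\{\pi_1^t,\pi_2^t\}=\{\pi_3^+,\pi_3^-\}$; to pin down the precise labels $\pi_1^t=\pi_3^+$, $\pi_2^t=\pi_3^-$, one applies the $^t$-formula for Jacquet modules to the defining property of $\pi_3^+$ (that it contains $\delta([0,1])\otimes\s$ in $s_\GL$) and compares with the Jacquet module of $\pi_2$ computed from \eqref{jm-seg-q}.

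For unitarizability of $\pi_1$ and $\pi_2$ I would use the hermitian family $[s]\times[0]\rtimes\s=[s]\rtimes([0]\rtimes\s)$, $s\in[0,1)$. At $s=0$ the ladder criterion \eqref{eq: ladcrit} applied to $L([0],[0])$ (with $d_{>0}=d_{<0}=\emptyset$) gives irreducibility of $[0]\times[0]\rtimes\s$; being unitarily induced from the unitarizable Levi datum $[0]\otimes[0]\otimes\s$, it is also unitarizable. The families argument of Section~\ref{ways of getting unitary} then propagates unitarizability to every $s\in[0,1)$ where the family stays irreducible, and letting $s\to1^-$ (where the character of the standard module is continuous in $s$) gives
\[
\lim_{s\to1^-}\chi_{[s]\times[0]\rtimes\s}=\chi_{\Pi_{(0,1)}}=\chi_{\pi_1}+\chi_{\pi_2}+\chi_{\pi_3^+}+\chi_{\pi_3^-},
\]
so the limits criterion delivers unitarizability of all four constituents at once. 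The main obstacle will be to control any isolated interior reducibility of the family in $(0,1)$, most notably at $s=\tfrac12$ where $[1/2]\times[-1/2]$ is a linked pair and additional intertwiners may appear: either one verifies directly via a Jacquet module computation that no such reducibility occurs, or else one patches across any such point using the limits criterion and the families argument on both sides.
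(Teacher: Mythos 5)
Your proposal is correct, and the composition-series and duality parts run essentially parallel to the paper: the paper also decomposes $\Pi_{(0,1)}=[0]\rtimes L([1];\s)+[0]\rtimes \delta([1];\s)$, takes the tempered summands of the second piece as $\pi_3^\pm$, and pins the labels in part (3) via the decompositions $\delta([0,1])\rtimes\s=\pi_2+\pi_3^+$, $L([0],[1])\rtimes\s=\pi_1+\pi_3^-$ (its \eqref{eq: 0111}), which is in substance the same information as your comparison of $s_{\GL}(\pi_2)$ (from \eqref{jm-seg-q}) with the image of $\delta([0,1])\otimes\s$ under the duality formula for Jacquet modules. Where you genuinely diverge is unitarizability: the paper gets it in one line, since $\Pi_1=[0]\rtimes L([1];\s)$ and $\Pi_2=[0]\rtimes\delta([1];\s)$ are unitarily induced from the corank-one unitarizable representations $L([1];\s)$ and $\delta([1];\s)$, so every constituent is unitarizable with no deformation at all; your route instead builds the complementary series $[s]\times[0]\rtimes\s$, $0\le s<1$, from the unitarily induced point $s=0$ and invokes Mili\v{c}i\'c's limit criterion at $s=1$. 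That argument is valid, and it is the same mechanism the paper uses elsewhere (``ends of complementary series''), but here it buys nothing over plain unitary parabolic induction and costs you the irreducibility check on $(0,1)$. On that check: your worry about $s=\tfrac12$ is unfounded — the singular lines in corank two for $\alpha=1$ are $x_i=\pm1$ and $x_1\pm x_2=\pm1$, and $(s,0)$ with $0\le s<1$ meets none of them, so the family is irreducible throughout; note also that your fallback of ``patching across'' a hypothetical interior reducibility point would not be legitimate in general, since unitarizability of the constituents at such a point does not propagate to the hermitian family beyond it (that is exactly what fails at genuine ends of complementary series). Since no interior reducibility occurs, this caveat does not affect the correctness of your proof.
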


Note that
\[
\Pi_{(0,1)}=\Pi_1+\Pi_2
\]
where
\[
\Pi_1=[0]\rtimes L([1];\s),\ \ \Pi_2=[0]\rtimes \delta([1];\s)
\]
and
\[
\Pi_1=\pi_1+\pi_2,\ \ \Pi_2=\pi_3^++\pi_3^-.
\]
Clearly, $\Pi_1$ and $\Pi_2$ are unitarizable.

Moreover,
\begin{equation} \label{eq: 0111}
\delta([0,1])\rtimes\s=\pi_2+\pi_3^+,\ \ L([0],[1])\rtimes\s=\pi_1+\pi_3^-.
\end{equation}

Finally,
\begin{subequations}
\begin{gather}
\begin{aligned}
\mu^*(\pi_1)&=1\otimes \pi_1
+[-1]\otimes[0]\rtimes\s+[0]\otimes L([1];\s)\\
&+2L([-1],[0])\otimes\s +\delta([-1,0])\otimes\s,
\end{aligned}\\
\mu^*(\pi_2)=1\otimes \pi_2+[0]\otimes L([1];\s)+\delta([-1,0])\otimes\s\\
\begin{aligned} \label{01-1-delta+}
\mu^*(\pi_3^+)&=1\otimes\pi_3^++
[1]\otimes[0]\rtimes\s+[0]\otimes\delta([1];\s)
\\&+2\delta([0,1])\otimes\s +L([0],[1])\otimes\s,
\end{aligned}\\\label{eq: 013-}
\mu^*(\pi_3^-)=1\otimes\pi_3^-+[0]\otimes\delta([1];\s)+L([0],[1])\otimes\s.
\end{gather}
\end{subequations}

\subsection{\texorpdfstring{$\mathbf{x}=(\tfrac12,\tfrac12)$}{x1212} and \texorpdfstring{$\alpha=\tfrac12$}{alpha12}}

\begin{proposition} \label{prop: 1212}
Suppose $\alpha=\frac12$. Then,
\begin{enumerate}
\item In the Grothendieck group we have
\[
\Pi_{(\frac12,\frac12)}=\pi_1+\pi_2+\pi_3+\pi_4
\]
where
\begin{gather*}
\pi_1=\delta([-\tfrac12,\tfrac12]_+;\s),\ \ \pi_2=\delta([-\tfrac12,\tfrac12]_-;\s),\\
\pi_3=L([\tfrac12],[\tfrac12];\s),\ \ \pi_4=L([\tfrac12];\delta([\tfrac12];\s)).
\end{gather*}
\item $\pi_1,\pi_2,\pi_3,\pi_4$ are unitarizable.
\item $\pi_1^t=\pi_3$, $\pi_2^t=\pi_4$.
\end{enumerate}
\end{proposition}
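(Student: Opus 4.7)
The plan is to decompose $\Pi_{(\frac12,\frac12)}$ into two pieces in the Grothendieck group by passing, via \eqref{check}, to $[\tfrac12]\times[-\tfrac12]\rtimes\sigma$ and splitting the $\GL$ factor; one piece will be the tempered $\delta([-\tfrac12,\tfrac12])\rtimes\sigma$ (handled by the theory of \S\ref{segment}), and the other its DL dual $u(\rho,2)\rtimes\sigma$ (handled by unitary parabolic induction from the Speh representation $u(\rho,2)=Z([-\tfrac12,\tfrac12])$). A short Jacquet-module argument then matches the four resulting composition factors with $\pi_1,\pi_2,\pi_3,\pi_4$ and verifies the duality relations.

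Concretely, since $\rho\cong\check\rho$, formula \eqref{check} gives
\[\Pi_{(\frac12,\frac12)}=[\tfrac12]\times[-\tfrac12]\rtimes\sigma=\delta([-\tfrac12,\tfrac12])\rtimes\sigma+u(\rho,2)\rtimes\sigma\]
in the Grothendieck group, using the $\GL$-decomposition $[\tfrac12]\times[-\tfrac12]=\delta([-\tfrac12,\tfrac12])+u(\rho,2)$. By the segment theory of \S\ref{segment} (applied with $c=d=\tfrac12$ and $\alpha=\tfrac12$), $\delta([-\tfrac12,\tfrac12])\rtimes\sigma$ is tempered of length $2$ and equals $\pi_1\oplus\pi_2$. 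Applying the DL involution---which intertwines parabolic induction and satisfies $\delta([-\tfrac12,\tfrac12])^t=u(\rho,2)$ and $\sigma^t=\sigma$ (cuspidality)---yields $u(\rho,2)\rtimes\sigma=\pi_1^t+\pi_2^t$ in the Grothendieck group. Since $u(\rho,2)$ is a unitary Speh representation, $u(\rho,2)\rtimes\sigma$ is unitarizable by unitary parabolic induction, hence semisimple, which forces $u(\rho,2)\rtimes\sigma=\pi_1^t\oplus\pi_2^t$ and gives the unitarizability of $\pi_1^t,\pi_2^t$; the tempered $\pi_1,\pi_2$ are unitarizable on general grounds.

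It remains to identify $\{\pi_1^t,\pi_2^t\}$ with $\{\pi_3,\pi_4\}$. Both $\pi_3$ and $\pi_4$ are non-tempered irreducible subquotients of $\Pi_{(\frac12,\frac12)}$ ($\pi_3$ as its Langlands quotient; $\pi_4$ as the Langlands quotient of $[\tfrac12]\rtimes\delta([\tfrac12];\sigma)$, which contributes to the composition series of $\Pi_{(\frac12,\frac12)}$), they are distinct from each other by their Langlands parameters and distinct from the tempered $\pi_1,\pi_2$, forcing $\{\pi_3,\pi_4\}=\{\pi_1^t,\pi_2^t\}$. The individual matching $\pi_1^t=\pi_3$, $\pi_2^t=\pi_4$ is the step I expect to be the main obstacle, requiring careful bookkeeping of Jacquet modules: the plan is to recognize $\pi_1=\lambda(c;\rho)_+$ for $c=([\tfrac12],[\tfrac12])$ via its generic marker $[\tfrac12]\times[\tfrac12]\otimes\sigma$ (visible in $\mu^*(\pi_1)$ from \eqref{jm-seg-ds} and of multiplicity one in $\mu^*(\Pi_{(\frac12,\frac12)})$), then apply \eqref{dist-formula} to conclude $\pi_1^t=\lambda(c;\rho)_-$, whose cogeneric marker $[-\tfrac12]\times[-\tfrac12]\otimes\sigma$ coincides with the standard Langlands-quotient signature of $\pi_3$; multiplicity one of this marker in $\mu^*(\Pi_{(\frac12,\frac12)})$ then forces the identification.
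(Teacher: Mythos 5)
Your proposal is correct, and its skeleton coincides with the paper's: split $\Pi_{(\frac12,\frac12)}$ as $\delta([-\tfrac12,\tfrac12])\rtimes\s$ plus its \ASS dual $L([-\tfrac12],[\tfrac12])\rtimes\s$, use the segment theory of \S\ref{segment} for the tempered piece, and dualize. Where you genuinely diverge is the decisive step of matching $\pi_1^t,\pi_2^t$ with $\pi_3,\pi_4$ individually: the paper does this by also asserting the second pair of decompositions $[\tfrac12]\rtimes\delta([\tfrac12];\s)=\pi_1+\pi_4$ and $[\tfrac12]\rtimes L([\tfrac12];\s)=\pi_2+\pi_3$, and then intersecting the two duality constraints, whereas you avoid determining these composition series altogether by identifying $\pi_1=\lambda(c;\rho)_+$ for $c=([\tfrac12],[\tfrac12])$ through the multiplicity-one marker $[\tfrac12]\times[\tfrac12]\otimes\s$, invoking \eqref{dist-formula} to get $\pi_1^t=\lambda(c;\rho)_-$, and recognizing $\pi_3$ by the cogeneric marker $[-\tfrac12]\times[-\tfrac12]\otimes\s$ (a device the paper itself uses later, e.g.\ in \S\ref{sec: 121232-32}). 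Each route has its advantages: the paper's yields, as a by-product, the composition series of $[\tfrac12]\rtimes\delta([\tfrac12];\s)$ and $[\tfrac12]\rtimes L([\tfrac12];\s)$, which are recorded and convenient downstream (e.g.\ for $\mathbf{x}=(\tfrac12,\tfrac12,\tfrac12)$), while yours is more economical on composition-series bookkeeping and, unlike the paper's terse statement, makes the unitarizability of $\pi_3,\pi_4$ explicit via unitary induction from the Speh representation $u(\rho,2)$.

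Two small points to tighten. First, in the step ``forcing $\{\pi_3,\pi_4\}=\{\pi_1^t,\pi_2^t\}$'' you should say explicitly that the duals of the tempered constituents cannot themselves be tempered: this does follow from your own count (if $\pi_1^t$ or $\pi_2^t$ lay in $\{\pi_1,\pi_2\}$ there would be no room in $\pi_1+\pi_2+\pi_1^t+\pi_2^t$ for the two distinct non-tempered subquotients $\pi_3,\pi_4$), but as written it is left implicit. Second, when placing $[-\tfrac12]\times[-\tfrac12]\otimes\s$ in $s_{\GL}(\pi_3)$, note that this uses the realization of the Langlands quotient as the socle of $[-\tfrac12]\times[-\tfrac12]\rtimes\s$ together with Frobenius reciprocity (or the quotient realization with the opposite parabolic); with that said, the multiplicity-one argument closes the identification as you intend.
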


We have
\[
\delta([-\tfrac12,\tfrac12])\rtimes\s=\pi_1+\pi_2,\ \
L([-\tfrac12],[\tfrac12])\rtimes\s=\pi_3+\pi_4
\]
and also
\[
[\tfrac12]\rtimes \delta([\tfrac12];\s)=\pi_1+\pi_4,\ \
[\tfrac12]\rtimes L([\tfrac12];\s)=\pi_2+\pi_3.
\]
This implies that $\pi_1^t=\pi_3$, $\pi_2^t=\pi_4$.

Finally,
\begin{subequations}
\begin{gather*}
\begin{aligned}
\mu^*(\pi_1)&=
1\otimes\pi_1+
[\tfrac12]\otimes [\tfrac12]\rtimes \s+
[\tfrac12]\otimes \delta([\tfrac12];\s)\\
&+\delta([-\tfrac12,\tfrac12])\otimes\s+
[\tfrac12]\times[\tfrac12]\otimes\s,
\end{aligned}\\
\mu^*(\pi_2)=
1\otimes\pi_2+
[\tfrac12]\otimes L([\tfrac12];\s)+
\delta([-\tfrac12,\tfrac12]])\otimes\s,\\
\begin{aligned}
\mu^*(\pi_3)&=1\otimes\pi_3+
[-\tfrac12]\otimes [\tfrac12]\rtimes \s
+[-\tfrac12]\otimes L([\tfrac12];\s)
\\&+L([-\tfrac12],[\tfrac12]])\otimes\s+
[-\tfrac12]\times[-\tfrac12]\otimes\s,
\end{aligned}\\
\mu^*(\pi_4)=1\otimes\pi_4+[-\tfrac12]\otimes \delta([\tfrac12];\s)+
L([-\tfrac12],[\tfrac12]])\otimes\s.
\end{gather*}
\end{subequations}

\subsection{\texorpdfstring{$\mathbf{x}=(0,1)$}{x01} and \texorpdfstring{$\alpha=0$}{alpha0}}

\begin{proposition} \label{prop: 001}
Suppose that $\alpha=0$. Then,
\begin{enumerate}
\item
 \label{prop: 001-item: Gg}
 In the Grothendieck group we have
\[
\Pi_{(0,1)}=\pi_1^++\pi_1^-+2\pi_2+\pi_3^++\pi_3^-,
\]
where
$$
\pi_1^\pm=L([1];\delta ([0]_\pm;\s)),\quad
\pi_2= L([0,1];\s),\quad
\pi_3^\pm=\delta([0,1]_\pm;\s).
$$
\item All the irreducible subquotients of $\Pi_{(0,1)}$ are unitarizable.
\item 
 \label{prop: 001-item: inv}
We have
\[
(\pi_1^\pm)^t=\pi_3^\mp,\quad \pi_2^t=\pi_2.
\]
\item
\label{prop: 001-item: 01ds}
$
\Jord_\rho(\pi_3^\pm)=\{1,3\}.
$
\end{enumerate}
\end{proposition}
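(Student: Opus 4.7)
The plan is to establish all four assertions together by combining two decompositions of $\Pi_{(0,1)}$ with the segment analysis of Section~\ref{segment}, the Aubert involution, and Jacquet module computations. Starting from $[0]\times[1] = \delta([0,1]) + L([0],[1])$ in $R$, we have in the Grothendieck group
\begin{equation*}
\Pi_{(0,1)} = \delta([0,1])\rtimes\sigma + L([0],[1])\rtimes\sigma.
\end{equation*}
The first summand equals $\pi_3^+ + \pi_3^- + \pi_2$ by the segment analysis of Section~\ref{segment} with $c=0$, $d=1$, $\alpha=0$. Since $\delta([0,1])^t = L([0],[1])$ on the $\GL$ side and $^t$ commutes with parabolic induction, applying the Aubert involution gives
\begin{equation*}
L([0],[1])\rtimes\sigma = (\pi_3^+)^t + (\pi_3^-)^t + \pi_2^t,
\end{equation*}
so this induction has length three. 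Proposition~\ref{prop: addparms} applied to $d = ([1],[0])$ (so $d^\uparrow = ([1])$, $d_u = ([0])$, $\lambda(d_u)\rtimes\sigma = \delta([0]_+;\sigma)\oplus\delta([0]_-;\sigma)$) exhibits $\pi_1^+$ and $\pi_1^-$ as constituents of $L([0],[1])\rtimes\sigma$ each with multiplicity one. A Jacquet-module comparison, using~\eqref{jm-seg-q} for $\mu^*(\pi_2)$ and the duality bijection $\pi_1\otimes\mu\mapsto\check\pi_1^t\otimes\mu^t$ (combined with the observation that the $\GL$-part of the Langlands data of $\pi_2^t$ must involve the segment $[0,1]$ rather than a singleton, ruling out $\pi_2^t\in\{\pi_1^+,\pi_1^-\}$), identifies the remaining constituent as $\pi_2$ and gives $\pi_2^t = \pi_2$. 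By Aubert bijectivity, $\{(\pi_3^+)^t,(\pi_3^-)^t\} = \{\pi_1^+,\pi_1^-\}$, so $L([0],[1])\rtimes\sigma = \pi_1^+ + \pi_1^- + \pi_2$ and
\begin{equation*}
\Pi_{(0,1)} = \pi_1^+ + \pi_1^- + 2\pi_2 + \pi_3^+ + \pi_3^-,
\end{equation*}
proving Part~(1) and the identity $\pi_2^t=\pi_2$ of Part~(3).

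For the sign correspondence $(\pi_3^\pm)^t = \pi_1^\mp$ in Part~(3), I compare Jacquet modules. By~\eqref{jm-seg-ds}, $[1]\otimes\delta([0]_+;\sigma)\le\mu^*(\pi_3^+)$, so the duality bijection gives $[-1]\otimes\delta([0]_+;\sigma)^t\le\mu^*((\pi_3^+)^t)$. Second adjointness applied to the Langlands quotient $\pi_1^\pm = L([1];\delta([0]_\pm;\sigma))$ gives $[-1]\otimes\delta([0]_\pm;\sigma)\le\mu^*(\pi_1^\pm)$. Since Aubert swaps the two summands of $\rho\rtimes\sigma$ (i.e.\ $\delta([0]_+;\sigma)^t = \delta([0]_-;\sigma)$, verified by a secondary Jacquet-module computation), this forces $(\pi_3^+)^t = \pi_1^-$ and $(\pi_3^-)^t = \pi_1^+$. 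For Part~(4), the embedding $\pi_3^\pm\h[1]\rtimes\delta([0]_\pm;\sigma)\h\nu\rho\times\rho\rtimes\sigma$ fits Proposition~\ref{JBcomputation} with $x=1$, $y=0$, $\pi'=\sigma$; since $y\le0$ and $\Jord_\rho(\sigma)=\emptyset$, we obtain $\Jord_\rho(\pi_3^\pm) = \{2x+1,-2y+1\} = \{1,3\}$.

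For Part~(2), the tempered representations $\pi_3^\pm$ are automatically unitarizable. For $\pi_1^\pm$, the family $\nu^s\rho\rtimes\delta([0]_\pm;\sigma)$ is irreducible on $[0,1)$ (irreducible at $s=0$ by Proposition~\ref{Pr-red-si}(3) since $1\in\Jord_\rho(\delta([0]_\pm;\sigma))$, with no other reducibility in $(0,1)$), hermitian as $\rho\cong\check\rho$, and unitarizable at $s=0$ by unitary parabolic induction; taking the limit $s\to1^-$ gives unitarity of all constituents at $s=1$, in particular $\pi_1^\pm$. For $\pi_2$, the family $\nu^s\delta([-\tfrac12,\tfrac12])\rtimes\sigma$ for $s\in[0,\tfrac12]$ is a complementary series (unitary at $s=0$ since $\delta([-\tfrac12,\tfrac12])$ is unitary and the induction is irreducible), whose limit at $s=\tfrac12$ equals $\delta([0,1])\rtimes\sigma$; hence $\pi_2$ is unitarizable as a limit of unitary representations. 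The main obstacle will be the sign matching in Part~(3): showing that Aubert swaps $\delta([0]_\pm;\sigma)$ (rather than fixing them) requires information beyond the tautological $(\rho\rtimes\sigma)^t = \rho\rtimes\sigma$, and must be extracted from a secondary Jacquet-module invariant distinguishing the two summands.
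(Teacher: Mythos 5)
Your proposal follows essentially the paper's skeleton (the two decompositions $\delta([0,1])\rtimes\s$, $L([0],[1])\rtimes\s$ exchanged by duality, Proposition \ref{prop: addparms} to locate $\pi_1^\pm$, Proposition \ref{JBcomputation} for the Jordan blocks, complementary series for unitarizability), and several steps you do differently are fine — e.g.\ taking the length-three statement for $\delta([0,1])\rtimes\s$ straight from \S\ref{segment} instead of the paper's \eqref{BPLC} argument. But two steps as written do not close. The serious one is the one you yourself flag: the identity $\delta([0]_+;\s)^t=\delta([0]_-;\s)$, on which the cross-matching $(\pi_1^\pm)^t=\pi_3^\mp$ genuinely depends (if the involution fixed each summand, the same reasoning would yield $(\pi_1^\pm)^t=\pi_3^\pm$). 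Your proposed mechanism — extracting the swap "from a secondary Jacquet-module invariant distinguishing the two summands" — cannot work: the two summands have identical Jacquet modules along every proper parabolic subgroup, namely $\mu^*(\delta([0]_\pm;\s))=1\otimes\delta([0]_\pm;\s)+[0]\otimes\s$, so no such invariant exists. The swap is true, and the short proof is the defining alternating sum for the duality operator $D_G$: in this Bernstein component the only proper standard parabolic with nonzero Jacquet module contributes $[0]\otimes\s$, so in the Grothendieck group $D_G(\delta([0]_+;\s))=\pm\bigl(\delta([0]_+;\s)-[0]\rtimes\s\bigr)=\mp\,\delta([0]_-;\s)$. (The paper records this fact in the corank-one discussion and uses it here through $\bigl([1]\rtimes\delta([0]_\pm;\s)\bigr)^t=[1]\rtimes\delta([0]_\mp;\s)$, concluding $(\pi_1^\pm)^t=\pi_3^\mp$ from $\Pi_3^\pm=\pi_1^\pm+\pi_2+\pi_3^\pm$.) Once the swap is granted, your multiplicity-one reading of $[-1]\otimes\delta([0]_\mp;\s)$ in the Jacquet module of $\Pi_{(0,1)}$ does force the matching.

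The second soft spot is the exclusion $\pi_2^t\notin\{\pi_1^+,\pi_1^-\}$. The parenthetical justification ("the $\GL$-part of the Langlands data of $\pi_2^t$ must involve the segment $[0,1]$") is not something one can read off from the single term $L([0],[1])\otimes\s\le s_{\GL}(\pi_2^t)$ produced by \eqref{jm-seg-q} and the duality bijection: a priori $s_{\GL}(\pi_1^\pm)\le([1]+[-1])\times[0]\otimes\s$ could contain that same term. The step can be repaired, e.g.\ by a first-level Jacquet module count: $s_{(n_\rho)}(L([0],[1])\rtimes\s)=[-1]\otimes\delta([0]_+;\s)+[-1]\otimes\delta([0]_-;\s)+[0]\otimes[1]\rtimes\s$ (where $\rho$ is a representation of $\GL(n_\rho,F')$), the third constituent must have nonzero $s_{(n_\rho)}$, hence $s_{(n_\rho)}(\pi_1^\pm)=[-1]\otimes\delta([0]_\pm;\s)$ exactly, and transitivity then excludes $L([0],[1])\otimes\s$ from $s_{\GL}(\pi_1^\pm)$. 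But the paper's route is cleaner and avoids all of this: $\pi_2$ occurs in both $\delta([0,1])\rtimes\s$ and $L([0],[1])\rtimes\s$, which are exchanged by $^t$, while $\pi_1^\pm,\pi_3^\pm$ occur with multiplicity one in $\Pi_{(0,1)}$, so $\pi_2^t\in\JH(\delta([0,1])\rtimes\s)\cap\JH(L([0],[1])\rtimes\s)=\{\pi_2\}$. Finally, a citation slip in Part (2): Proposition \ref{Pr-red-si} is stated for square-integrable representations and does not apply to the tempered, non-square-integrable $\delta([0]_\pm;\s)$; the irreducibility of $[0]\rtimes\delta([0]_\pm;\s)$ follows instead from the theory of $R$-groups (as the paper notes for $\Pi_{(0,0)}$, $\alpha=0$), and in any case your deformation only needs irreducibility of $\nu^x\rho\rtimes\delta([0]_\pm;\s)$ for $0<x<1$ together with unitarizability at $x=0$, which holds by unitary induction.
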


\begin{proof}
By Propositions \ref{prop: sqrbsc} and \ref{prop: tempbsc} it
follow that $\{\pi_1^+,\pi_1^-,\pi_2,\pi_3^+,\pi_3^-\}$ is the Jordan-H\"older series of $\Pi_{(0,1)}$. Note that
\begin{equation} \label{0-2-decomp}
\Pi_{(0,1)}=\Pi_1+\Pi_2=\Pi_3^++\Pi_3^-,
\end{equation}
where
$$
\Pi_1=\delta([0,1])\rtimes\sigma, \ \ \Pi_2=L([0],[1])\rtimes\sigma, \ \ \Pi_3^\pm=[1]\rtimes \delta([0]_\pm;\sigma).
$$
Observe that \eqref{BPLC} implies that $\pi_3^\pm$ and $\pi_2$ are the only possible subquotients of $\Pi_1$. Since $\Pi_1^t=\Pi_2$
and $\Pi_{(0,1)}$ has length $\geq 5$,
we conclude that all three representations are subquotients, which further implies that the Jordan-H\"older series of $\Pi_2$
consists of 3 irreducible representations.
Since each $\pi_3^\pm$ has in its Jacquet module $\delta([0,1])\otimes\sigma$, and the multiplicity of it in the Jacquet module of
$\Pi_{(0,1)}$ is two, we conclude that the multiplicity of each $\pi_3^\pm$ in $\Pi_{(0,1)}$ is one. Therefore,
$
\Pi_1=\pi_3^++\pi_3^-+\pi_2.
$
Both $\pi_1^\pm$ have multiplicity one in $\Pi_{(0,1)}$, which implies
$
\Pi_2=\pi_1^++\pi_1^-+\pi_2.
$
This implies (\ref{prop: 001-item: Gg}) (see \cite[\S5]{MR1212952} for the case of $Sp(4)$).

Multiplicity one of $\pi_1^\pm$ and $\pi_3^\pm$ in $\Pi_{(0,1)}$, together with \eqref{0-2-decomp}, imply $\pi_2^t=\pi_2$.
Each of $\pi_3^\pm$ is a subquotient of $\Pi_3^\pm$ (by definition of $\pi_3^\pm$). Also $\pi_1^\pm$ is a subquotient. This implies
$
\Pi_3^\pm=\pi_1^\pm+\pi_2+\pi_3^\pm.
$
Observe that $(\pi_1^\pm)^t\ne\pi_1^\pm$ and $(\pi_1^\pm)^t\ne\pi_1^\mp$ (since $\Pi_1^t=\Pi_2$), as well as $(\pi_1^\pm)^t\ne\pi_2$
(since $\pi_2^t=\pi_2$). Now $(\Pi_3^\pm)^t=\Pi_3^\mp$ implies $(\pi_1^\pm)^t=\pi_3^\mp$. This completes the proof of (\ref{prop: 001-item: inv}).

All the irreducible subquotients of $\Pi_{(0,1)}$ are unitarizable, since they are subquotients of ends of the complementary series
starting with $\delta([-\frac12,\frac12])\rtimes \sigma$ or $L([-\frac12],[\frac12])\rtimes \sigma$.

Proposition \ref{JBcomputation} implies (\ref{prop: 001-item: 01ds}). The proof is now complete.
\end{proof}

Observe that we have proved above the following equalities (which we shall use later)
\begin{equation}
\label{01-0-half}
\Pi_3^\pm=\pi_1^\pm+\pi_2+\pi_3^\pm,\quad
\Pi_1=\pi_3^++\pi_3^-+\pi_2,\quad
\Pi_2=\pi_1^++\pi_1^-+\pi_2.
\end{equation}
Further, we directly get
\begin{subequations}
\begin{gather}
\label{l1} \mu^*(\pi_3^\pm)= 1\otimes \pi_3^\pm
+[1]\otimes\delta([0]_\pm;\s)
+\delta([0,1])\otimes\s.\\
\label{l2} \mu^*(\pi_1^\pm)=1\otimes \pi_1^\pm+
[-1]\otimes \delta([0]_\pm;\s)+L([-1],[0])\otimes\s.\\
\label{l3} \mu^*(\pi_2)=1\otimes \pi_2+[0]\otimes[1]\rtimes\s+\delta([-1,0])\otimes\s+L([0],[1])\otimes\s.
\end{gather}
\end{subequations}

\subsection{\texorpdfstring{$\mathbf{x}=(0,0)$}{x00} and \texorpdfstring{$\alpha=0$}{alpha=0}}

Here $\Pi_{(0,0)}$ is a unitarizable tempered representation of length two which decomposes as
$$
\Pi_{(0,0)}[0]=[0]\rtimes\delta ([0]_+;\s)\oplus [0]\rtimes\delta ([0]_-;\s).
$$
We have
$$
([0]\rtimes\delta ([0]_+;\s))^t=[0]\rtimes\delta ([0]_-;\s).
$$

\chapter{Unitarizability in the Critical Case 
(Corank 3, \texorpdfstring{$\alpha>1$}{alpha>1})} \label{sec: unit3}

In this chapter we determine the unitarizability of the irreducible subquotients
of $\Pi_{\mathbf{x}}$ in the critical case for $k=3$ in all cases where $\alpha>1$
and in many cases where $\alpha=\frac12$ or $1$. Below we analyse various cases of $\mathbf{x}$.

\section{\texorpdfstring{$\mathbf{x}=(\alpha,\alpha+1,\alpha+2)$}{x012} and \texorpdfstring{$\alpha\geq\frac12$}{alphage12}} \label{sec: 012}

Recall (see \S\ref{sec: Castrick}) that the representation
$$
\Pi_{(\alpha,\alpha+1.\alpha+2)}
$$
is multiplicity free of length 8 and precisely two of its
irreducible subquotients, are unitarizable, namely,
the generalized Steinberg representation and its dual, i.e.
$$
\delta([\alpha,\alpha+2];\s),\qquad L([\alpha+2],[\alpha+1],[\alpha];\s).
$$
Note that
\[
\Pi_{(\alpha,\alpha+1,\alpha+2)}=\Pi_1+\Pi_2+\Pi_3+\Pi_4
\]
where
\begin{gather*}
\Pi_1=[\alpha+2]\rtimes L([\alpha+1],[\alpha];\s),\ \
\Pi_2=[\alpha+2]\rtimes L([\alpha+1];\delta([\alpha];\s))\\
\Pi_3=[\alpha+2]\rtimes L([\alpha,\alpha+1];\s),\ \
\Pi_4=[\alpha+2]\rtimes \delta([\alpha,\alpha+1];\s).
\end{gather*}
Each of $\Pi_1,\Pi_2,\Pi_3,\Pi_4$ is reducible.
The representation $L([\alpha+2],[\alpha+1],[\alpha];\s)$ is a quotient of $\Pi_1$
and $\delta([\alpha,\alpha+2];\s)$ is a subrepresentation of $\Pi_4$.

We list below all eight irreducible subquotients of $\Pi_{(\alpha,\alpha+1,\alpha+2)}$ and describe how \ASS duality acts on them
\begin{gather*}
\delta([\alpha,\alpha+2];\s)^t=L([\alpha+2],[\alpha+1],[\alpha];\s),
\\
L([\alpha+2];\delta([\alpha,\alpha+1];\s))^t=L([\alpha+1,\alpha+2],[\alpha];\s),
\\
L([\alpha+1,\alpha+2];\delta([\alpha];\s))^t=L([\alpha+2],[\alpha,\alpha+1];\s),
\\
L([\alpha+1],[\alpha+2];\delta([\alpha];\s))^t=L([\alpha,\alpha+2];\s).
\end{gather*}
Obviously both $\delta([\alpha+1,\alpha+2])\rtimes\delta([\alpha];\s)$ and $\delta([\alpha+1,\alpha+2])\rtimes L([\alpha];\s)$ reduce.

\section{\texorpdfstring{$\mathbf{x}=(\alpha,\alpha+1,\alpha+1)$}{x001} and \texorpdfstring{$\alpha\geq 1/2$}{alphage12}} \label{sec: 011}

\begin{proposition} \label{prop: 011}
Suppose $\alpha\ge\frac12$. Then,
\begin{enumerate}
\item In the Grothendieck group we have
\[
\Pi_{(\alpha,\alpha+1,\alpha+1)}=\pi_1+\pi_2+\pi_3+\pi_4
\]
where
\begin{gather*}
\pi_1=[\alpha+1]\rtimes L([\alpha+1],[\alpha];\s),\ \
\pi_2=[\alpha+1]\rtimes L([\alpha+1];\delta([\alpha];\s)),\\
\pi_3=[\alpha+1]\rtimes L([\alpha,\alpha+1];\s),\ \
\pi_4=[\alpha+1]\rtimes \delta([\alpha,\alpha+1];\s)
\end{gather*}
are irreducible.
\item \label{part: nonunit011} None of $\pi_1,\pi_2,\pi_3,\pi_4$ is unitarizable.
\item $\pi_1^t=\pi_4$, $\pi_2^t=\pi_3$.
\end{enumerate}
\end{proposition}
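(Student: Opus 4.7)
I plan to prove (3) first as a Grothendieck-group identity, then deduce (1), and finally establish (2) by a deformation argument. For (3), the multiplicativity of the \ASS involution under parabolic induction, the self-duality $[\alpha+1]^t=[\alpha+1]$ of the length-one segment, and the corank-two duality relations $L([\alpha+1],[\alpha];\s)^t=\delta([\alpha,\alpha+1];\s)$ and $L([\alpha+1];\delta([\alpha];\s))^t=L([\alpha,\alpha+1];\s)$ from Proposition \ref{prop-2-a+} together give $\pi_1^t=\pi_4$ and $\pi_2^t=\pi_3$ as classes in the Grothendieck group.

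For (1) I reduce to proving $\pi_4$ and $\pi_3$ irreducible, since the \ASS involution preserves irreducibility and (3) then gives irreducibility of $\pi_1$ and $\pi_2$. Irreducibility of $\pi_4$ follows from (\ref{Pr-red-si-item: a-a+1-irr}) of Proposition \ref{Pr-red-si} once one observes from \eqref{eq: jord01} that $2\alpha+1\notin\Jord_\rho(\delta([\alpha,\alpha+1];\s))$. For $\pi_3$ I would apply Corollary \ref{irr-simple} with $d_1=([\alpha+1])$, $d_2=(\delta([\alpha,\alpha+1]))$, and $\tau=\s$; the three hypotheses hold because $[\alpha+1]$ is not linked to either $[\alpha,\alpha+1]$ (their union equals $[\alpha,\alpha+1]$) or $[-\alpha-1,-\alpha]$ (their union fails to be a segment when $\alpha\ge\tfrac12$), and $[\alpha+1]\rtimes\s$ is irreducible because $\alpha+1$ is not the reducibility exponent.

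For (2), the central case is $\pi_4$, to be handled by a complementary-series deformation. Consider the family $\gamma_s=[s]\rtimes\delta([\alpha,\alpha+1];\s)$. Combining Proposition \ref{Pr-red-si} with \eqref{eq: jord01}, and using that the partially defined function $\epsilon$ alternates on the initial segment $\{\eta+1,\eta+3,\dots,2\alpha-3\}$ of $\Jord_\rho(\delta([\alpha,\alpha+1];\s))$ (inherited from the cuspidality of $\s$ via Remark \ref{general-basic}), the first reducibility point of $\gamma_s$ strictly above $\pi_4=\gamma_{\alpha+1}$ is $s=\alpha+2$, at which the composition series of $\gamma_{\alpha+2}$ contains the Langlands quotient $L([\alpha+2];\delta([\alpha,\alpha+1];\s))$---non-unitarizable by \S\ref{sec: Castrick}, since it lies among the six subquotients of $\Pi_{(\alpha,\alpha+1,\alpha+2)}$ distinct from the generalized Steinberg and its \ASS dual. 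On the open interval $(\alpha+1,\alpha+2)$, where the family is irreducible and hermitian, the signature of the invariant form is locally constant; by Fell continuity of the unitary dual applied at $s\to\alpha+2$, if $\pi_4$ were unitarizable then the non-unitarizable Langlands quotient above would also be unitarizable, a contradiction. Parallel deformations $[s]\rtimes L([\alpha+1],[\alpha];\s)$, $[s]\rtimes L([\alpha+1];\delta([\alpha];\s))$, and $[s]\rtimes L([\alpha,\alpha+1];\s)$ evaluated at $s=\alpha+1$, together with the non-unitarizable Langlands-quotient constituents at $s=\alpha+2$ enumerated in \S\ref{sec: 012}, dispose of $\pi_1$, $\pi_2$, and $\pi_3$ analogously. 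The main obstacle I anticipate is verifying that the interval of irreducibility containing $\gamma_{\alpha+1}$ really extends to $\alpha+2$ in the small-$\alpha$ cases ($\alpha\in\{\tfrac12,1,\tfrac32,2\}$), where extra reducibility points at $s=0$ or $s=\tfrac12$ coming from the items of Proposition \ref{Pr-red-si} governing these two boundary exponents may appear below $\alpha+1$; in each such case one must confirm that $\pi_4$ still lies in a connected irreducibility interval whose right endpoint is $\alpha+2$, so that the signature-continuity argument persists.
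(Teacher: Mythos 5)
Your proposal is correct and takes essentially the same route as the paper: irreducibility of $\pi_4$ from (\ref{Pr-red-si-item: a-a+1-irr}) of Proposition \ref{Pr-red-si} together with \eqref{eq: jord01}, irreducibility of $\pi_3$ from Corollary \ref{irr-simple}, the identities $\pi_1^t=\pi_4$, $\pi_2^t=\pi_3$ from Proposition \ref{prop-2-a+} and compatibility of the \ASS involution with induction, and non-unitarizability by deforming the exponent $\alpha+1$ to $\alpha+2$ and using the non-unitarizable constituents exhibited in \S\ref{sec: 012}. The obstacle you anticipate for small $\alpha$ is vacuous: the deformation only uses the interval $[\alpha+1,\alpha+2]$, where \eqref{eq: jord01} and Proposition \ref{Pr-red-si} rule out reducibility before $s=\alpha+2$ for every $\alpha\ge\tfrac12$, so reducibility points at $s=0$ or $s=\tfrac12$ are irrelevant.
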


\begin{proof}
Recall that
$$
\Jord_\rho(\delta([\alpha,\alpha+1];\s))=
\{\eta+1,\eta+3,\dots,2\alpha-3,2\alpha+3 \}.
$$
By (\ref{Pr-red-si-item: a-a+1-irr}) of Proposition \ref{Pr-red-si}, $\pi_4$ and hence also $\pi_1=\pi_4^t$, is irreducible.

Furthermore, $\pi_3$ is irreducible by Corollary \ref{irr-simple}.
Hence, $\pi_2=\pi_3^t$ is also irreducible.

To show that $\pi_1$ is non-unitarizable consider the family
\[
\gamma_s=[s]\rtimes L([\alpha+1],[\alpha];\s),\ \ s\in\R.
\]
It is irreducible for $\alpha+1\le s<\alpha+2$ and $\gamma_{\alpha+2}$ contains a non-unitarizable quotient (see \S\ref{sec: 012}).
Hence, $\pi_1=\gamma_{\alpha+1}$ cannot be unitarizable. A similar argument applies to $\pi_2,\pi_3,\pi_4$.
\end{proof}

\section{\texorpdfstring{$\mathbf{x}=(\alpha,\alpha,\alpha+1)$}{x001} and \texorpdfstring{$\alpha\geq 1$}{alphage1}}

\begin{proposition} \label{a,a,a+1-a}
Let $\alpha\geq 1$. Then,
\begin{enumerate}
\item We have
\[
\Pi_{(\alpha,\alpha,\alpha+1)}=\pi_1+\pi_2+\pi_3+\pi_4+2\pi_5
\]
where
\begin{gather*}
\pi_1=L( [\alpha],[\alpha],[\alpha+1];\s),\ \ \pi_2=L([\alpha];\delta([\alpha,\alpha+1];\s)),\\
\pi_3= L([\alpha],[\alpha+1];\delta([\alpha];\s)),\ \
\pi_4=L( [\alpha],[\alpha,\alpha+1];\s),\\
\pi_5=L([\alpha,\alpha+1];\delta([\alpha];\s)).
\end{gather*}
\item $\pi_2=\pi_1^t$, $\pi_4=\pi_3^t$ and $\pi_5^t=\pi_5$.
\item We have
\begin{gather*}
\pi_1= [\alpha]\rtimes L([\alpha+1],[\alpha];\s),\ \
\pi_2= [\alpha]\rtimes \delta([\alpha,\alpha+1];\s),\\
\pi_3= L([\alpha],[\alpha+1])\rtimes \delta([\alpha];\s),\ \
\pi_4= \delta ([\alpha,\alpha+1])\rtimes L( [\alpha];\s).
\end{gather*}
\item None of $\pi_1,\pi_2,\pi_3,\pi_4,\pi_5$ is unitarizable.

\item $ \delta([\alpha,\alpha+1])\rtimes\delta([\alpha];\sigma) =\pi_2+\pi_5$,
\ $L([\alpha],[\alpha+1])\rtimes L([\alpha];\sigma)=\pi_1+\pi_5$.

\end{enumerate}

\end{proposition}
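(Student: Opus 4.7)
The plan is to begin by enumerating candidate irreducible subquotients of $\Pi_{(\alpha,\alpha,\alpha+1)}$ via the Langlands monotonicity \eqref{BPLC}. Running through Langlands data $(d,\tau)$ whose $e_*(d,\tau)$ is dominated by the parameter of the standard module and whose partial cuspidal support is $\sigma$, the tempered part must be one of $\sigma$, $\delta([\alpha];\sigma)$, or $\delta([\alpha,\alpha+1];\sigma)$, yielding precisely the five candidates $\pi_1,\dots,\pi_5$.

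For the identifications in (3), I would first establish that $\pi_2=[\alpha]\rtimes\delta([\alpha,\alpha+1];\sigma)$ is irreducible via Proposition \ref{Pr-red-si}(\ref{Pr-red-si-item: a-a+1-irr}): from \eqref{eq: jord01} we have $\Jord_\rho(\delta([\alpha,\alpha+1];\sigma))=\{\eta+1,\dots,2\alpha-3,2\alpha+3\}$, which omits $2\alpha-1$. Applying the compatibility of the \ASS involution with parabolic induction, together with Proposition \ref{prop-2-a+}, yields $\pi_1=\pi_2^t=[\alpha]\rtimes L([\alpha+1],[\alpha];\sigma)$ and its irreducibility. For $\pi_3=L([\alpha],[\alpha+1])\rtimes\delta([\alpha];\sigma)$ and $\pi_4=\delta([\alpha,\alpha+1])\rtimes L([\alpha];\sigma)$, Corollary \ref{irr-simple} is not directly applicable because $\delta([\alpha,\alpha+1])\rtimes\sigma$ and $L([\alpha],[\alpha+1])\rtimes\sigma$ both reduce; instead I would compare $s_{\GL}$-Jacquet modules of the induced representation against those of the Langlands quotient, using \eqref{M-seg} and \eqref{eq: muforgenstn} to rule out any further constituent (which by Step~1 can only be among $\pi_2,\pi_5$ for $\pi_4$, and symmetrically for $\pi_3$).

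For the multiplicities in (1), the decomposition follows from
\[
\Pi_{(\alpha,\alpha,\alpha+1)}=[\alpha]\rtimes\Pi_{(\alpha,\alpha+1)}=\sum_{i=1}^{4}[\alpha]\rtimes\pi_i',
\]
where $\pi_i'$ denote the constituents from Proposition \ref{prop-2-a+}. By Step~2, $[\alpha]\rtimes\pi_1'=\pi_1$ and $[\alpha]\rtimes\pi_4'=\pi_2$. For the remaining two summands, I would combine
\[
[\alpha+1]\times[\alpha]\rtimes\delta([\alpha];\sigma)=L([\alpha],[\alpha+1])\rtimes\delta([\alpha];\sigma)+\delta([\alpha,\alpha+1])\rtimes\delta([\alpha];\sigma)
\]
with associativity and the irreducibility of $[\alpha]\rtimes\delta([\alpha];\sigma)=L([\alpha];\delta([\alpha];\sigma))$ (by Proposition \ref{Pr-red-si}(\ref{Pr-red-si-item: a-a+1-irr}) applied to $\delta([\alpha];\sigma)$, whose $\Jord_\rho$ omits $2\alpha-1$ by \eqref{eq: Jord-a-sigma}) to conclude $[\alpha]\rtimes L([\alpha+1];\delta([\alpha];\sigma))=\pi_3+\pi_5$; a mirror argument gives $[\alpha]\rtimes L([\alpha,\alpha+1];\sigma)=\pi_4+\pi_5$. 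Summing establishes both the Jordan--H\"older content of (1) and the identities of (5). The duality relations in (2) follow from Step~2 and compatibility of $^t$ with $\rtimes$; since $\Pi_{(\alpha,\alpha,\alpha+1)}$ is $^t$-stable in the Grothendieck group (as $[\alpha]$ and $\sigma$ are $^t$-invariant), the involution permutes the constituents preserving multiplicities, forcing $\pi_5^t=\pi_5$ as $\pi_5$ is the unique constituent of multiplicity two.

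Non-unitarizability in (4) is handled for $\pi_1,\dots,\pi_4$ by the deformation argument familiar from Propositions \ref{prop-2-a+} and \ref{prop: 011}: each $\pi_i$ embeds into a continuous family $[s]\rtimes\tau$ that is irreducible for $\alpha\leq s<\alpha+1$ and whose specialisation at $s=\alpha+1$ is non-unitarizable by Proposition \ref{prop: 011}. The main obstacle is the non-unitarizability of $\pi_5$, which does not yield to such a simple deformation; following the strategy outlined in the introduction, I would choose a suitable unitarizable representation $\tau$ of a general linear group and show via \eqref{mu*} and \eqref{s-GL} that $\ell(\tau\rtimes\pi_5)$ strictly exceeds the multiplicity of $\tau\otimes\pi_5$ in $s_{\GL}(\tau\rtimes\pi_5)$, contradicting Lemma \ref{lem: nonunit} if $\pi_5$ were unitarizable.
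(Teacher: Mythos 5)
Your treatment of parts (1)–(3) and (5) follows the paper's architecture (irreducibility of $[\alpha]\rtimes\delta([\alpha,\alpha+1];\s)$ from the Jordan blocks, duality, \eqref{BPLC}, and Jacquet-module exclusions to show $\delta([\alpha,\alpha+1])\rtimes L([\alpha];\s)$ and its dual are irreducible), but the step producing the Grothendieck-group decomposition is circular as sketched: in the Grothendieck group, $[\alpha]\rtimes L([\alpha+1];\delta([\alpha];\s))=\pi_3+\pi_5$ is equivalent to $\delta([\alpha,\alpha+1])\rtimes\delta([\alpha];\s)=\pi_2+\pi_5$, i.e. to part (5) itself, and neither associativity nor the irreducibility of $[\alpha]\rtimes\delta([\alpha];\s)$ supplies this. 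The missing (easy) observation is that $\delta([\alpha,\alpha+1])\rtimes\delta([\alpha];\s)$ and $L([\alpha],[\alpha+1])\rtimes L([\alpha];\s)$ are themselves standard modules with Langlands quotient $\pi_5$ occurring with multiplicity one; combined with \eqref{BPLC} and the multiplicity one of $\pi_2$ (resp. $\pi_1$) in $\Pi_{(\alpha,\alpha,\alpha+1)}$, this pins down part (5) directly, and (1) then follows.

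The real gaps are in (4). First, for $\pi_3$ and $\pi_4$ your proposed family does not exist: since $[\alpha]\rtimes L([\alpha+1];\delta([\alpha];\s))$ and $[\alpha]\rtimes L([\alpha,\alpha+1];\s)$ are reducible (each containing $\pi_5$), $\pi_3$ and $\pi_4$ are not of the form $[\alpha]\rtimes(\text{irreducible})$, so there is no continuous family $[s]\rtimes\tau$ specializing to them, and Proposition \ref{prop: 011} is in any case the wrong endpoint for them. The correct deformation (the paper's) twists the GL-factor instead: consider $\nu^sL([\alpha],[\alpha+1])\rtimes\delta([\alpha];\s)$ and $\nu^s\delta([\alpha,\alpha+1])\rtimes L([\alpha];\s)$ for $0\le s\le1$, and at $s=1$ use Proposition \ref{prop: addparms} together with the analysis of \S\ref{sec: 012} to exhibit a non-unitarizable constituent. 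Second, and more seriously, for $\pi_5$ you only restate the general method of Lemma \ref{lem: nonunit}; the entire content lies in the choice of the inducing datum, which you leave unspecified. The paper takes $\delta_1=\delta([-(\alpha-1),\alpha-1])$, whose cuspidal support is disjoint from that of $\pi_5$, so that the multiplicity of $\delta_1\otimes\pi_5$ in $\mu^*(\delta_1\rtimes\pi_5)$ is exactly two (only the two copies of $\delta_1\otimes1$ in $M^*(\delta_1)$ can contribute), while $\delta_1\rtimes\pi_5$ has length at least three: the constituents $L([\alpha,\alpha+1];\tau([-(\alpha-1),\alpha-1]_\pm;\delta([\alpha];\s)))$ from Proposition \ref{prop: addparms} plus $L([-(\alpha-1),\alpha+1];\delta([\alpha];\s))$, whose occurrence itself needs the argument that it lies in $\delta_1\times\delta([\alpha,\alpha+1])\rtimes\delta([\alpha];\s)$ but not in $\delta_1\rtimes\pi_2$ (via \eqref{BPLC} and part (5)). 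Without producing such a $\delta_1$ and both estimates, the hardest assertion of the proposition remains unproved; note also that the relevant multiplicity is taken in the Jacquet module with classical factor the group of $\pi_5$ (i.e. via $\mu^*$), not in $s_{\GL}$.
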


\begin{proof}
Write
\[
\Pi_{(\alpha,\alpha,\alpha+1)}=\Pi_1+\Pi_2+\Pi_3+\Pi_4
\]
where
\begin{gather*}
\Pi_1=\delta([\alpha,\alpha+1])\rtimes L([\alpha];\sigma),\ \
\Pi_2=L([\alpha],[\alpha+1])\rtimes L([\alpha];\sigma)\\
\Pi_3=\delta([\alpha,\alpha+1])\rtimes\delta([\alpha];\sigma),\ \
\Pi_4=L([\alpha],[\alpha+1])\rtimes\delta([\alpha];\sigma).
\end{gather*}

First observe that (\ref{Pr-red-si-item: a-a+1-irr}) of Proposition \ref{Pr-red-si} and \eqref{eq: jord01} imply that $[\alpha]\rtimes \delta([\alpha,\alpha+1];\s)$ is irreducible.
Therefore, also its dual is irreducible.
This implies that
$$
\pi_1= [\alpha]\rtimes L([\alpha+1],[\alpha];\s)\text{ and }
\pi_2= [\alpha]\rtimes \delta([\alpha,\alpha+1];\s)
$$
(using Proposition \ref{prop: addparms}). It also follows that $\pi_1^t=\pi_2$.

Furthermore, both $\pi_1$ and $\pi_2$ have multiplicity one in $\Pi_{(\alpha,\alpha,\alpha+1)}$.

Consider now $\Pi_1$.
By \eqref{BPLC}, neither $\pi_1$ nor $\pi_3$ can be a subquotient of
$\delta([\alpha,\alpha+1])\times [\alpha]\rtimes\s$, let alone of $\Pi_1$.
We show that $\pi_5\not\le\Pi_1$. Otherwise, we would get
$$
\delta([-\alpha-1 ,-\alpha])\times [\alpha]\leq\left(\delta([-\alpha-1 ,-\alpha])+[-\alpha] \times[\alpha+1]
+\delta([\alpha,\alpha+1] )\right)\times [-\alpha],
$$
which obviously cannot be sustained.

It remains to see that $\pi_2\not\le\Pi_1$.
Assume otherwise. Then,
\begin{multline*}
([\alpha]+[-\alpha])\times \delta([\alpha,\alpha+1])
\\
\leq
\left(\delta([-\alpha-1 ,-\alpha])+[-\alpha] \times[\alpha+1]
+\delta([\alpha,\alpha+1] )\right)\times [-\alpha],
\end{multline*}
which obviously does not hold.
Thus, $\Pi_1$ is irreducible and therefore equals $\pi_4$.
By applying $^t$ we get
$$
\pi_3=\Pi_4.
$$

This implies that $\pi_3^t=\pi_4$, which further implies that $\pi_5^t=\pi_5$.

The non-unitarizability of $\pi_1,\pi_2$ is proved as in \S\ref{sec: 011} using
Proposition \ref{prop: 011} part \ref{part: nonunit011} by deforming $[\alpha]$ to $[\alpha+1]$.
Similarly, the non-unitarizability of $\pi_3,\pi_4$ is obtained by deforming
$L([\alpha],[\alpha+1])$ and $\delta ([\alpha,\alpha+1])$
to $L([\alpha+1],[\alpha+2])$ and $\delta ([\alpha+1,\alpha+2])$ respectively
and using the fact that each of the representations
$L([\alpha+1],[\alpha+2])\rtimes \delta([\alpha];\s)$ and
$\delta ([\alpha+1,\alpha+2])\rtimes L( [\alpha];\s)$
admit a non-unitarizable subquotient (using Proposition \ref{prop: addparms} and the analysis of \S\ref{sec: 012}).

It remains to show that $\pi_5$ is not unitarizable.

Clearly, $\pi_1$ is a quotient of $L([\alpha],[\alpha+1])\rtimes L([\alpha];\s)$
and occurs there with multiplicity one. Hence, $\pi_2$ occurs with multiplicity one in $\Pi_3$.
Using \eqref{BPLC} we infer that
\begin{equation} \label{eq1}
\Pi_3=\pi_5+\pi_2.
\end{equation}
Hence, by duality, $\Pi_2=\pi_5+\pi_1$. All in all,
\[
\Pi_{(\alpha,\alpha,\alpha+1)}=\pi_1+\pi_2+\pi_3+\pi_4+2\pi_5.
\]

Consider
$$
\Gamma=\delta_1\rtimes\pi_5\text{ where }\delta_1=\delta([-(\alpha-1) ,\alpha-1]).
$$
Then, $\Gamma$ admits the following two irreducible subquotients (by Proposition \ref{prop: addparms})
$$
\Gamma_{\pm}:=L([\alpha,\alpha+1];\tau([-(\alpha-1),\alpha-1]_\pm;\delta([\alpha];\s))).
$$
Let
\[
\Gamma_0:=L([-(\alpha-1) ,\alpha+1];\delta([\alpha];\s)).
\]
We claim that $\Gamma_0\le\Gamma$, so that the length of $\Gamma$ is at least three.

Indeed,
$$
\Gamma_0\leq\delta([-(\alpha-1) ,\alpha+1])\rtimes\delta([\alpha];\s)\leq
\delta_1\times \Pi_3.
$$
while
\[
\delta_1\times \Pi_3=\Gamma+\delta_1\rtimes\pi_2
\]
by \eqref{eq1}. On the other hand,
$$
\Gamma_0\not\leq\delta_1\rtimes\pi_2
$$
since otherwise
$$
\Gamma_0\leq [\alpha]\times\delta_1\rtimes \delta([\alpha,\alpha+1];\s)
$$
and this is impossible by \eqref{BPLC}. Therefore, $\Gamma_0\leq\Gamma$ as claimed.

Suppose on the contrary that $\pi_5$ is unitarizable. Then, by Lemma \ref{lem: nonunit}, the multiplicity of
$\delta_1\otimes \pi_5$ in $\mu^*(\Gamma)$ would be at least three. Recall
$$
\mu^*(\Gamma)=M^*(\delta_1)\rtimes \mu^*(\pi_5).
$$
Now $\delta_1\otimes1$ has multiplicity two in $M^*(\delta_1)$.
On the other hand, since the support of $\pi_5$ is disjoint from that of $\delta_1$,
no other term from $M^*(\delta_1)$ can contribute.
Therefore, the multiplicity of $\delta_1\otimes\pi_5$ in $\mu^*(\Gamma)$ is two.
This contradiction completes the proof of the proposition.
\end{proof}

\section{\texorpdfstring{$\mathbf{x}=(\alpha,\alpha,\alpha)$}{x000} and \texorpdfstring{$\alpha\geq 1$}{alphage1}} \label{sec; 000}

Consider
\[
\Pi_{(\alpha,\alpha,\alpha)}=\pi_1+\pi_2
\]
where
\[
\pi_1=[\alpha]\times [\alpha]\rtimes L([\alpha];\s),\ \
\pi_2=[\alpha]\times [\alpha]\rtimes\delta([\alpha];\s).
\]
The representation $\pi_2$ is irreducible by (\ref{Pr-red-si-item: a-a+1-irr}) of Proposition \ref{Pr-red-si}, \eqref{eq: Jord-a-sigma}
and the factorization of the long intertwining operator in the Langlands classification.
(Here we use that $\alpha>\tfrac12$.)
Applying the \ASS involution, we get that $\pi_1=\pi_2^t$ is irreducible as well.

The non-unitarizability of $\pi_1$ (and similarly $\pi_2$) follows by deforming $[\alpha]$ to $[\alpha+1]$
and the fact that $[\alpha+1]\times [\alpha]\rtimes L([\alpha];\s)$ admits non-unitarizable
quotients (See Proposition \ref{a,a,a+1-a}.)
Alternatively, we can use the fact that
$\pi_1\cong[\alpha]\times [-\alpha]\rtimes L([\alpha];\s)$
and use unitary parabolic reduction (since $[\alpha]\times [-\alpha]$
is hermitian but not unitarizable).

\section{\texorpdfstring{$\mathbf{x}=(\alpha-1,\alpha,\alpha+1)$}{x-101} and \texorpdfstring{$\alpha>1$}{alpha>1}}

Recall that the representation $[\alpha-1]\times\delta([\alpha,\alpha+1])\rtimes\s$ has an irreducible socle,
which is a strongly positive square-integrable representation (see \cite{MR2090870} for details of
the description of the classification of strongly positive representations pertaining to this paper).
This representation was denoted by
$
\delta_{\spsi}([\alpha-1],[\alpha,\alpha+1];\s).
$

\begin{proposition} \label{a-1,a,a+1}
Suppose $\alpha>1$. Then,
\begin{enumerate}
\item In the Grothendieck group we have
\[
\Pi_{(\alpha-1,\alpha,\alpha+1)}=\pi_1+\dots+\pi_8
\]
where
\begin{gather*}
\pi_1=\delta_{\spsi}([\alpha-1],[\alpha,\alpha+1];\s),\ \
\pi_2=L([\alpha+1],[\alpha-1,\alpha];\s),\\
\pi_3=L([\alpha-1];\delta([\alpha,\alpha+1];\s)),\ \
\pi_4=L([\alpha+1],[\alpha],[\alpha-1];\s),\\
\pi_5=L([\alpha+1]; \delta_{\spsi}([\alpha-1],[\alpha];\s)),\ \
\pi_6=L([\alpha-1,\alpha+1];\s),\\
\pi_7=L([\alpha+1], [\alpha-1];\delta([\alpha];\s)),\ \
\pi_8=L([\alpha,\alpha+1],[\alpha-1];\s).
\end{gather*}
\item $\pi_2=\pi_1^t$, $\pi_4=\pi_3^t$, $\pi_6=\pi_5^t$, $\pi_8=\pi_7^t$.
\item The representations $\pi_1,\pi_2,\pi_3,\pi_4$ are unitarizable.
\item The representations $\pi_5,\pi_6,\pi_7,\pi_8$ are not unitarizable.
\end{enumerate}
\end{proposition}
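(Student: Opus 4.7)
My plan proceeds in four phases. First, since $\alpha-1, \alpha, \alpha+1$ are distinct, $\Pi_{(\alpha-1,\alpha,\alpha+1)}$ is regular of length at most $2^3 = 8$. To exhibit the eight irreducible subquotients I would combine the construction of strongly positive square-integrable representations of \cite{MR1896238} (which produces $\pi_1$), with iterated applications of Proposition \ref{prop: addparms} to parabolic inductions of the irreducible constituents of the corank-$2$ representations analysed in Propositions \ref{prop-2-a-} and \ref{prop-2-a+} by the missing character $[\alpha+1]$ (resp.\ $[\alpha-1]$). Using \eqref{BPLC} to rule out double occurrences, this forces the Jordan--H\"older series to have length exactly $8$ with the eight Langlands parameters listed.

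The \ASS duality pairings $\pi_2 = \pi_1^t$, $\pi_4 = \pi_3^t$, $\pi_6 = \pi_5^t$, $\pi_8 = \pi_7^t$ follow from compatibility of \ASS involution with parabolic induction, $(\pi\rtimes\tau)^t = \pi^t\rtimes\tau^t$, together with the Zelevinsky involution formulas on general linear factors applied to the Langlands data above.

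For the unitarizability of $\pi_1,\ldots,\pi_4$: $\pi_1$ is strongly positive square-integrable, hence unitarizable. For $\pi_3 = L([\alpha-1];\delta([\alpha,\alpha+1];\sigma))$, I would realize $\pi_3$ as the endpoint of the complementary series $[s]\rtimes\delta([\alpha,\alpha+1];\sigma)$ for $s\in[0,\alpha-1]$; irreducibility on the open interval follows from Proposition \ref{Pr-red-si} together with the Jordan blocks \eqref{eq: jord01}, and at $s=0$ the representation is tempered and hence unitarizable. For $\pi_4 = \pi_3^t$ and $\pi_2 = \pi_1^t$, I plan to use analogous complementary series. A natural candidate for $\pi_2$ is $[s]\rtimes L([\alpha-1,\alpha];\sigma)$ for $s\in[0,\alpha+1]$, starting from the unitarizable $\rho\rtimes L([\alpha-1,\alpha];\sigma)$ and ending at $\pi_2$, with irreducibility of this family on the open interval verified via Jordan blocks and Proposition \ref{Pr-red-si}.

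For the non-unitarizability of $\pi_5,\pi_6,\pi_7,\pi_8$ the main tool is Lemma \ref{lem: nonunit}. For each $\pi_j$, I plan to select an appropriate irreducible unitary representation $\tau$ of a general linear factor and analyse $\Pi_j = \tau\rtimes\pi_j$: via Proposition \ref{prop: addparms} and the Langlands classification one exhibits at least $N_j$ distinct irreducible subquotients of $\Pi_j$, while the multiplicity of $\tau\otimes\pi_j$ in the Jacquet module, computed via $\mu^*(\Pi_j) = M^*(\tau)\rtimes\mu^*(\pi_j)$, is shown to be strictly less than $N_j$. This mirrors the argument used for $\pi_5$ of Proposition \ref{a,a,a+1-a}, where $\tau = \delta([-(\alpha-1),\alpha-1])$ was used. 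The main obstacle I anticipate is precisely this last phase: selecting the right $\tau$ for each of the four non-unitarizable $\pi_j$ and carrying out the attendant length/multiplicity bookkeeping; a secondary but non-trivial technical point is the complementary-series argument for $\pi_2$, where irreducibility along the entire deformation interval must be verified carefully.
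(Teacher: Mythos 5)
The decisive gap is your unitarizability argument for $\pi_2$ (and, by the same token, $\pi_4$). The deformation you propose, $[s]\rtimes L([\alpha-1,\alpha];\s)$ for $0\le s\le\alpha+1$, is not a complementary series reaching $s=\alpha+1$: this family (the \ASS dual of $[s]\rtimes\delta_{\spsi}([\alpha-1],[\alpha];\s)$) reduces at $s=|\alpha-2|$ as well as at $s=\alpha+1$ (cf.\ Table \ref{tab: redpnt}, N$^\circ$8), and past the first reducibility point its irreducible members are no longer unitarizable (for $\alpha=2$ the reducibility occurs already at $s=0$, so there is no complementary series at all). Hence unitarizability cannot be transported from $s=0$ to the endpoint, and no amount of care in "verifying irreducibility along the interval" repairs this — the interval genuinely contains a reducibility point, and indeed the constituent $\pi_5=L([\alpha+1];\delta_{\spsi}([\alpha-1],[\alpha];\s))$ of the dual endpoint is one of the representations the proposition asserts to be non-unitarizable, so $s=\alpha+1$ cannot be the end of a complementary series. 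The correct deformation fixes the corank-two Langlands quotient $L([\alpha+1],[\alpha];\s)$ and moves the remaining factor: $\Pi_1=[\alpha-1]\rtimes L([\alpha+1],[\alpha];\s)$ is the end of the complementary series $[x]\rtimes L([\alpha+1],[\alpha];\s)$, $0\le x<\alpha-1$ (the \ASS dual of the series you already use for $\pi_3$), and one then has to show $\Pi_1=\pi_2+\pi_4$; this uses \eqref{BPLC} to get $\Pi_4=\pi_1+\pi_3$, the identification $\pi_1^t=\pi_2$ (by inspecting exponents in the constituents of $\Pi_1=\Pi_4^t$), and $\pi_4\le\Pi_1$ from Proposition \ref{prop: addparms}.

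A secondary gap: the duality pairings do not follow formally from $(\pi\rtimes\tau)^t=\pi^t\rtimes\tau^t$ together with the Zelevinsky involution on the $\GL$-factors; that identity only matches Jordan--H\"older multisets, e.g.\ it gives $\{\pi_5,\pi_7\}^t=\{\pi_6,\pi_8\}$ (from $\Pi_2^t=\Pi_3$) but does not decide whether $\pi_5^t$ is $\pi_6$ or $\pi_8$. One needs an extra argument, e.g.\ the embedding $\pi_5\h[-\alpha-1]\rtimes\delta_{\spsi}([\alpha-1],[\alpha];\s)$ forces $[\alpha-1]\otimes[\alpha]\otimes[\alpha+1]\otimes\s$, hence $\delta([\alpha-1,\alpha+1])^t\otimes\s$, into the Jacquet module of $\pi_5$, and multiplicity-freeness of $s_{\GL}(\Pi_{(\alpha-1,\alpha,\alpha+1)})$ then pins down $\pi_6^t=\pi_5$. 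Your final phase (non-unitarizability of $\pi_5,\dots,\pi_8$ via Lemma \ref{lem: nonunit}) is only an outline, but it is the right strategy; note that it suffices to treat $\pi_6$ and $\pi_8$ with $\tau=\delta([-\alpha,\alpha])$ (length of $\delta([-\alpha,\alpha])\rtimes\pi_j$ at least $6$ versus Jacquet multiplicity at most $4$), since Lemma \ref{lem: nonunit} covers a representation and its dual simultaneously.
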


\begin{proof}[Proof of first 3 parts]
Note that $\Pi_{(\alpha-1,\alpha,\alpha+1)}$ is regular (i.e., all Jacquet modules of $\Pi_{(\alpha-1,\alpha,\alpha+1)}$,
including $\Pi_{(\alpha-1,\alpha,\alpha+1)}$ itself, are multiplicity free).
The first part therefore follows from the description of $\JH(\Pi_{(\alpha-1,\alpha,\alpha+1)})$.

By Proposition \ref{prop-2-a+} we have (in the Grothendieck group)
\begin{equation} \label{dec1}
\Pi_{(\alpha-1,\alpha,\alpha+1)}=\Pi_1+\Pi_2+\Pi_3+\Pi_4
\end{equation}
where
\begin{gather*}
\Pi_1=[\alpha-1]\rtimes L([\alpha+1],[\alpha];\s),\ \
\Pi_2=[\alpha-1]\rtimes L([\alpha+1];\delta([\alpha];\s))\\
\Pi_3=\Pi_2^t=[\alpha-1]\rtimes L([\alpha,\alpha+1];\s),\ \
\Pi_4=\Pi_1^t=[\alpha-1]\rtimes \delta([\alpha,\alpha+1];\s).
\end{gather*}
Note that $\Pi_1$ and $\Pi_4$ are the extremities of complementary series
and hence, all their irreducible subquotients are unitarizable.
Furthermore, $\Pi_4$ is reducible (by (\ref{Pr-red-si-item: a-not-a+2-red}) of  Proposition \ref{Pr-red-si} and \eqref{eq: jord01}). Hence, $\Pi_1$ is reducible.

Using \eqref{BPLC} and the first part we get
\begin{equation} \label{ecs-a-1}
\Pi_4=\pi_3+\pi_1.
\end{equation}
Applying the \ASS involution, we get
\begin{equation}\label{id1}
\Pi_1=\pi_3^t+\pi_1^t.
\end{equation}
Hence, $\pi_3^t$ and $\pi_1^t$ are unitarizable.

We easily see that each of $\pi_i$, $i\ne2$ has at least one factor with positive exponent.
This implies
$$
\pi_1^t=\pi_2.
$$
We know by Proposition \ref{prop: addparms} that
$$
\pi_4\leq \Pi_1.
$$
The above two facts imply
\begin{equation}\label{id2}
\Pi_1=\pi_4+\pi_2
\end{equation}
which further implies
$$
\pi_3^t=\pi_4.
$$

Since $\Pi_2^t=\Pi_3$ we deduce from \eqref{ecs-a-1}, \eqref{id2} and the first part that each of $\Pi_2,\Pi_3$ is of length two.

Consider $\Pi_2$.
Observe that \eqref{ecs-a-1} and \eqref{id2} imply that $\pi_1,\pi_2,\pi_3,\pi_4\not\le\Pi_2$.
By \eqref{BPLC} and the multiplicity freeness of $\Pi_{(\alpha-1,\alpha,\alpha+1)}$ we get
\begin{equation} \label{necs-a-1}
\Pi_2=\pi_7+\pi_5
\end{equation}
This further implies
\begin{equation} \label{eq5}
\Pi_3=\pi_8+\pi_6.
\end{equation}
Observe that $\delta([\alpha-1,\alpha+1])^t\otimes\s\le s_{\GL}(\pi_6^t)$
and recall that $s_{\GL}(\Pi_{(\alpha-1,\alpha,\alpha+1)})$ is multiplicity free.
On the other hand,
\begin{multline*}
\pi_5\h [-\alpha-1]\rtimes \delta_{\spsi}([\alpha-1],[\alpha];\s)
\\
\h
\Pi_{(-\alpha-1,\alpha-1,\alpha)}\cong\Pi_{(\alpha-1,\alpha,-\alpha-1)}\cong\Pi_{(\alpha-1,\alpha,\alpha+1)}.
\end{multline*}
Therefore, $[\alpha-1]\otimes[\alpha]\otimes [\alpha+1]\otimes\s$ occurs in the
Jacquet module of $\pi_5$,
which implies (by the transitivity of Jacquet modules) that the same is true for $\delta([\alpha-1,\alpha+1])^t\otimes\s$.
Therefore,
\[
\pi_6^t=\pi_5
\]
which further implies
\[
\pi_8^t=\pi_7.
\]
This finishes the proof of the proposition.
\end{proof}

It remains to show the non-unitarizability of $\pi_5,\pi_6,\pi_7,\pi_8$.

\subsection{Non-unitarizability of \texorpdfstring{$\pi_5$}{pi5} and \texorpdfstring{$\pi_6$}{pi6}}

Let
\[
\Pi'=\delta([\alpha-1,\alpha+1])\rtimes\s.
\]
By \cite[Theorem 4.1 (A2)]{MR3360752}
$$
\Pi'=\pi_6+\pi_3.
$$
Let $\delta_1=\delta([-\alpha,\alpha])$ and consider
$$
\delta_1\rtimes\Pi'=\delta_1\rtimes\pi_6+\delta_1\rtimes\pi_3.
$$
Clearly, it contains
$$
\Gamma_1^{\pm}=L([\alpha-1,\alpha+1];\delta([-\alpha,\alpha]_\pm; \s))
$$
as subquotients. Moreover, since
\begin{gather*}
\delta_1\times\Pi'\ge\delta([-\alpha,\alpha+1])\times\delta([\alpha-1,\alpha])\rtimes\s\\=
\delta([-\alpha,\alpha+1])\times L([\alpha-1,\alpha];\s)+\delta([-\alpha,\alpha+1])\times L([\alpha-1];\delta([\alpha];\s)),
\end{gather*}
$\delta_1\times\Pi'$ will also contain the following irreducible subquotients
\begin{gather*}
\Gamma_2^{\pm}=L([\alpha-1,\alpha];\delta([-\alpha,\alpha+1]_\pm;\s)),\\
\Gamma_3=L([-\alpha,\alpha+1],[\alpha-1,\alpha];\s),\\
\Gamma_4=L([-\alpha,\alpha+1],[\alpha-1];\delta([\alpha];\s)).
\end{gather*}
In fact, $\Gamma_1^{\pm}, \Gamma_2^{\pm},\Gamma_3,\Gamma_4$ are already subquotients of $\Gamma:=\delta_1\rtimes\pi_6$,
since they cannot be subquotients of
$$
\delta_1\rtimes\pi_3.
$$
More precisely, by property of Langlands quotients, each of $\Gamma_1^\pm$, $\Gamma_3$ and $\Gamma_4$ has a factor with exponent $-\alpha-1$,
while $\delta_1\rtimes\pi_3$ does not have.
Furthermore, each of $s_{\GL}(\Gamma_2^{\pm})$ admits an irreducible subquotient
which has in its support the exponent $-\alpha$ twice, while $\delta_1\rtimes\pi_3$ can have it at most once (see \S\ref{segment}).

Therefore, the length of $\Gamma$ is at least 6.

On the other hand, we shall show that the multiplicity of
$$
\tau:=\delta_1\otimes\pi_6
$$
in $\mu^*(\Gamma)$ is at most 4.
By Lemma \ref{lem: nonunit}, this will imply that $\pi_6$ and $\pi_5$ are not unitarizable.

In fact, we show that the multiplicity of $\tau$ in $\mu^*(\delta_1\times \Pi')=
M^*(\delta_1)\times M^*(\delta([\alpha-1,\alpha+1]))\rtimes (1\otimes\s)$ is four.
By remark \ref{rem: mdtp} and the formula for $M^*(\delta([\alpha-1,\alpha+1]))$ we need to compute the multiplicity of $\tau$ in
\[
\begin{gathered}
\Big(2\sum_{x= -\alpha}^{ \alpha}\delta([-x,\alpha])\otimes\delta([x+1,\alpha])+1\otimes\delta_1\Big)\times\\
\Big(\sum_{i= \alpha-2}^{ \alpha+1}\sum_{j=i}^{ \alpha+1}
\delta([-i,-\alpha+1]) \times\delta([j+1,\alpha+1]) \otimes
\delta([i+1,j])\Big)\rtimes 1\otimes\s.
\end{gathered}
\]
Only the terms corresponding to $j=\alpha+1$ may contribute $\tau$ as a subquotient.
One possibility to get $\tau$ is to take $i=\alpha-2$, for which we must take $x=\alpha$.
This contributes $2\cdot\delta_1\otimes\Pi'$ which contains $\tau$ with multiplicity two.

Suppose now that $i>\alpha-2$. Then, the only possible contribution is from $i=\alpha$ and $x=\alpha-2$.
Again, this contributes multiplicity two of $\tau$ since $\Pi_{(\alpha-1,\alpha,\alpha+1)}$ is multiplicity free.

Our claim follows.

\subsection{Non-unitarizability of \texorpdfstring{$\pi_8$}{pi8} and \texorpdfstring{$\pi_7$}{pi7}}

As before, let $\delta_1=\delta([-\alpha,\alpha])$.

\begin{lemma}
Let
\[
\widehat\Gamma=\delta_1\rtimes\pi_8.
\]
Then, $\widehat\Gamma$ contains the following irreducible subquotients
\begin{gather*}
\Gamma_1^{\pm}:=L([\alpha,\alpha+1],[\alpha-1];\delta([-\alpha,\alpha]_\pm;\s)),\\
\Gamma_2^{\pm}:=L([\alpha],[\alpha-1];\delta([-\alpha,\alpha+1]_\pm;\s)),\\
\Gamma_3:=L([\alpha],[\alpha-1],[-\alpha,\alpha+1];\s),\ \
\Gamma_4:=L([-\alpha,\alpha+1];\delta_{\spsi}([\alpha-1],[\alpha];\s)).
\end{gather*}
In particular, the length of $\widehat\Gamma$ is at least six.
\end{lemma}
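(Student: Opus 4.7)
Each $\Gamma_i$ has a Langlands parameter, and the plan is to exhibit each as an irreducible subquotient of $\widehat\Gamma = \delta_1 \rtimes \pi_8$; the length-at-least-six conclusion then follows since the six $\Gamma_i$ are pairwise non-isomorphic by inspection of their distinct Langlands parameters.

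For $\Gamma_1^\pm$ I would invoke Proposition \ref{prop: addparms} with $d=(\delta_1)$, $d' = ([\alpha,\alpha+1],[\alpha-1])$, and $\tau=\s$. Since $e(\delta_1)=0$ one has $d^\uparrow=\emptyset$ and $d_u=(\delta_1)$, and by the segment analysis of \S\ref{segment} the tempered representation $\lambda(d_u)\rtimes\tau = \delta_1 \rtimes \s$ decomposes as $\delta([-\alpha,\alpha]_+;\s)\oplus\delta([-\alpha,\alpha]_-;\s)$. The proposition then yields $\Gamma_1^\pm = L([\alpha,\alpha+1],[\alpha-1];\delta([-\alpha,\alpha]_\pm;\s))$ as multiplicity-one subquotients of $L(d)\rtimes L(d';\tau)=\widehat\Gamma$.

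For $\Gamma_2^\pm,\Gamma_3,\Gamma_4$ the plan is to realize $\widehat\Gamma$ as a quotient of the standard module
\[
\Omega := \delta_1 \times \delta([\alpha,\alpha+1])\times [\alpha-1]\rtimes\s,
\]
and then exploit the Zelevinsky reducibility of the linked segments $[-\alpha,\alpha]$ and $[\alpha,\alpha+1]$:
\[
\delta_1\times\delta([\alpha,\alpha+1]) = \delta([-\alpha,\alpha+1])\times[\alpha] + L(\delta_1,\delta([\alpha,\alpha+1]))
\]
in $R$. From the first summand, combined with the segment decomposition $\delta([-\alpha,\alpha+1])\rtimes\s=\delta([-\alpha,\alpha+1]_+;\s)+\delta([-\alpha,\alpha+1]_-;\s)+L([-\alpha,\alpha+1];\s)$ of \S\ref{segment}, I extract $\Gamma_2^\pm$ (resp., $\Gamma_3$) as the Langlands quotient of $[\alpha]\times[\alpha-1]\rtimes\delta([-\alpha,\alpha+1]_\pm;\s)$ (resp., of $[\alpha]\times[\alpha-1]\rtimes L([-\alpha,\alpha+1];\s)$). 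For $\Gamma_4 = L([-\alpha,\alpha+1];\omega_4)$ with $\omega_4=\delta_{\spsi}([\alpha-1],[\alpha];\s)$, I would use $\omega_4 \hookrightarrow L([\alpha-1],[\alpha])\rtimes\s$ from Proposition \ref{prop-2-a-} to exhibit $\Gamma_4$ as the Langlands quotient of $\delta([-\alpha,\alpha+1])\rtimes\omega_4$, which sits inside $\Omega$ after suitable re-ordering.

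\textbf{Main obstacle.} The delicate step is ensuring that $\Gamma_2^\pm,\Gamma_3,\Gamma_4$ occur specifically in $\delta_1\rtimes\pi_8$ rather than in the companion pieces $\delta_1\rtimes\pi_3$ or $\delta_1\rtimes\pi_6$ arising from the decomposition $\delta([\alpha,\alpha+1])\times[\alpha-1]\rtimes\s = (\pi_3+\pi_6) + L([\alpha-1],[\alpha,\alpha+1])\rtimes\s$, where $\pi_3,\pi_6$ are as in Proposition \ref{a-1,a,a+1}. To settle this, I would compute $\mu^*(\widehat\Gamma) = M^*(\delta_1)\rtimes\mu^*(\pi_8)$ explicitly using \eqref{M-seg} together with \eqref{eq: muforgenstnt}, and match each $\Gamma_i$ to a characterizing Jacquet-module term which can appear only in $\mu^*(\widehat\Gamma)$ and not in $\mu^*(\delta_1\rtimes\pi_3)$ or $\mu^*(\delta_1\rtimes\pi_6)$ (for which the partial cuspidal structure is different); transitivity of Jacquet modules then forces the claimed subquotient to live in $\widehat\Gamma$.
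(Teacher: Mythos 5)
Your handling of $\Gamma_1^\pm$ via Proposition \ref{prop: addparms} is exactly the paper's argument, and your idea of producing $\Gamma_2^\pm,\Gamma_3,\Gamma_4$ inside $\delta_1\times\delta([\alpha,\alpha+1])\times[\alpha-1]\rtimes\s$ through the linked-segment identity $\delta_1\times\delta([\alpha,\alpha+1])=\delta([-\alpha,\alpha+1])\times[\alpha]+L([-\alpha,\alpha],[\alpha,\alpha+1])$ is also the paper's route (it works with $\delta([-\alpha,\alpha+1])\times[\alpha-1]\times[\alpha]\rtimes\s\le\delta_1\times[\alpha-1]\times\delta([\alpha,\alpha+1])\rtimes\s$). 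The gap is in the decisive step of locating these constituents in $\widehat\Gamma$ rather than in the complementary pieces, which you only sketch and whose bookkeeping is wrong as stated. In the Grothendieck group $\delta([\alpha,\alpha+1])\times[\alpha-1]\rtimes\s=(\pi_3+\pi_6)+L([\alpha-1],[\alpha,\alpha+1])\rtimes\s$, but $L([\alpha-1],[\alpha,\alpha+1])\rtimes\s$ is not $\pi_8$: comparing with $\Pi_3=\pi_6+\pi_8$ (see \eqref{eq5}) and $\Pi_4=\pi_1+\pi_3$, one gets $L([\alpha-1],[\alpha,\alpha+1])\rtimes\s=\pi_1+\pi_8$ with $\pi_1=\delta_{\spsi}([\alpha-1],[\alpha,\alpha+1];\s)$. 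So besides $\delta_1\rtimes\pi_3$ and $\delta_1\rtimes\pi_6$ you must also rule out $\delta_1\rtimes\pi_1$; that exclusion is easy (this representation is tempered while the $\Gamma_i$ are not), but it is absent from your list, so "therefore they lie in $\delta_1\rtimes\pi_8$" does not follow from what you set up.

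Moreover, the tool you name for the exclusion is not available: \eqref{eq: muforgenstnt} computes $\mu^*$ of the dual generalized Steinberg $L([\alpha+n],\dots,[\alpha];\s)$, not of $\pi_8=L([\alpha,\alpha+1],[\alpha-1];\s)$, so it does not yield $\mu^*(\widehat\Gamma)=M^*(\delta_1)\rtimes\mu^*(\pi_8)$; indeed the paper never computes $\mu^*(\pi_8)$ and deliberately argues around it. The logic should also be turned around: what is needed is a constituent of $s_{\GL}(\Gamma_i)$ (obtained by Frobenius reciprocity from an explicit embedding of $\Gamma_i$) that cannot occur in the Jacquet modules of the companion pieces; whether it occurs in $\mu^*(\widehat\Gamma)$ is neither known a priori nor needed. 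Concretely, the paper shows $L([-\alpha],[-\alpha+1])\times\delta([-\alpha,\alpha+1])\otimes\s\le s_{\GL}(\Gamma_2^\pm)$ (and analogous terms for $\Gamma_3,\Gamma_4$), observes that $s_{\GL}(\delta_1\rtimes\pi_6)$ has no constituent of the form $L([\alpha-1]+b)\otimes\s$ or $L([-\alpha]+b)\otimes\s$, and disposes of the remaining pieces $\delta_1\rtimes\pi_1+\delta_1\rtimes\pi_3=\delta_1\times[\alpha-1]\rtimes\delta([\alpha,\alpha+1];\s)$ in one stroke by the Langlands-exponent inequality \eqref{BPLC}. Your plan can be repaired along these lines, but as written the crucial exclusion is both incompletely enumerated and not actually carried out.
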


\begin{proof}
By Proposition \ref{prop: addparms}
$$
\Gamma_1^{\pm}\leq \widehat\Gamma.
$$

We have
$$
[\alpha-1]\times \delta([\alpha,\alpha+1])\rtimes\s=\Pi_3+\Pi_4=\pi_6+\pi_8+\Pi_4,
$$
by \eqref{eq5}, which implies
\begin{multline} \label{*}
\delta([-\alpha,\alpha+1])\times [\alpha-1]\times [\alpha]\rtimes\s\leq
\delta_1\times [\alpha-1]\times \delta([\alpha,\alpha+1])\rtimes\s=\\
\widehat\Gamma+\delta_1\rtimes\pi_6+\delta_1 \rtimes \Pi_4.
\end{multline}
The left-hand side contains, among others, $\Gamma_2^{\pm}$, $\Gamma_3$ and $\Gamma_4$ as subquotients.
On the other hand, by \eqref{BPLC}, none of $\Gamma_2^{\pm}$, $\Gamma_3$ and $\Gamma_4$ can be a subquotient of $\delta_1 \times \Pi_4$.
We claim that none of them can be a subquotient of $\delta_1\rtimes\pi_6$ as well.

Observe that
$$
\Gamma_2^{\pm}\h L([-\alpha],[-\alpha+1])\rtimes \delta([-\alpha,\alpha+1]_\pm;\s)
\h L([-\alpha],[-\alpha+1])\times \delta([-\alpha,\alpha+1])\rtimes \s
$$
and hence by Frobenius reciprocity
$$
L([-\alpha],[-\alpha+1])\times \delta([-\alpha,\alpha+1])\otimes\s\leq s_{\GL}(\Gamma_2^{\pm}).
$$
Similarly,
\[
L([-\alpha],[-\alpha+1])\times \delta([-\alpha-1,\alpha])\otimes\s\leq s_{\GL}(\Gamma_3).
\]
and
\[
L([-\alpha-1,\alpha],[\alpha-1],[\alpha])\otimes\s\le s_{\GL}(\Gamma_4).
\]
On the other hand,
\[
\begin{aligned}
s_{\GL}(\delta_1\rtimes\pi_6) \leq
&\Big(\sum_{x= -\alpha-1}^{ \alpha}\delta([-x,\alpha])\times\delta([x+1,\alpha]\Big)\times\\
&\Big(\sum_{i= \alpha-2}^{ \alpha+1}
\delta([-i,-\alpha+1])\times\delta([i+1,\alpha+1]\Big)\otimes\s.
\end{aligned}
\]
Clearly, no irreducible subquotient of the right-hand side is of the form $L([\alpha-1]+b)\otimes\s$
or $L([-\alpha]+b)\otimes\s$ for any multisegment $b$.
Therefore,
$$
\Gamma_2^\pm,\Gamma_3,\Gamma_4\not\leq \delta_1\rtimes\pi_6.
$$
By \eqref{*} we infer that
\[
\Gamma_2^\pm,\Gamma_3,\Gamma_4\le\widehat\Gamma.
\]
The lemma follows.
\end{proof}

The non-unitarizability of $\pi_8$ and $\pi_7$ would follow from Lemma \ref{lem: nonunit} once
we prove the following lemma.

\begin{lemma}
The multiplicity of $\widehat\tau:=\delta_1\otimes \pi_8$ in $\mu^*(\widehat\Gamma)$ is at most 4.
\end{lemma}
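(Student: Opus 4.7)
My plan is to bound the multiplicity by directly computing it via Remark \ref{rem: mdtp} and then using support considerations to restrict to a small number of contributions, mirroring the strategy used just above for $\pi_6$. Since $\delta_1 = \delta([-\alpha,\alpha])$ is square-integrable, Remark \ref{rem: mdtp} reduces the multiplicity of $\widehat\tau$ in $\mu^*(\widehat\Gamma) = M^*(\delta_1) \rtimes \mu^*(\pi_8)$ to its multiplicity in
$$
\Big(2\sum_{i=-\alpha}^{\alpha} \delta([-i,\alpha]) \otimes \delta([i+1,\alpha]) + 1 \otimes \delta_1\Big) \rtimes \mu^*(\pi_8).
$$

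Next I would analyse the supports. The cuspidal support of $\pi_8$ lies in $\{[\alpha-1], [\alpha], [\alpha+1]\}$ up to the sign involution forced by the Weyl group of the classical group, and so the $\GL$-cuspidal support of every summand $c \otimes d$ of $\mu^*(\pi_8)$ is contained in $\{[\pm(\alpha-1)], [\pm\alpha], [\pm(\alpha+1)]\}$. For $\delta([-i,\alpha]) \times c$ to contain $\delta_1$ as a subquotient we need $\supp(c) = \{[-\alpha], \ldots, [-i-1]\}$, and compatibility with the above support bound forces (using $\alpha > 1$) $i \in \{\alpha-2,\alpha-1,\alpha\}$. The $1 \otimes \delta_1$ summand would require $c$ of $\GL$-degree equal to that of $\delta_1$, which is impossible by the same argument for $\alpha > 3/2$; the case $\alpha = 3/2$ can be handled by separate inspection.

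I would then count the contributions for each of the three values of $i$. For $i = \alpha$ the only matching term of $\mu^*(\pi_8)$ is $1 \otimes \pi_8$, yielding contribution $2$. For $i = \alpha-1$ one needs $c = [-\alpha]$ and $\pi_8 \leq [\alpha] \rtimes d$; invoking the embedding $\pi_8 \hookrightarrow \delta([\alpha,\alpha+1]) \times [\alpha-1] \rtimes \s$ together with the multiplicity-freeness of $\Pi_{(\alpha-1,\alpha,\alpha+1)}$ shows that a unique $d$ in $\mu^*(\pi_8)$ contributes, giving $2$. For $i = \alpha-2$ one needs $c = \delta([-\alpha,-\alpha+1])$ and $\pi_8 \leq \delta([\alpha-1,\alpha]) \rtimes d$; here, by comparing the Langlands parameters of the possible $d$ with \eqref{BPLC} applied within $\Pi_{(\alpha-1,\alpha,\alpha+1)}$, the contribution vanishes. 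Summing gives the required bound $2 + 2 + 0 = 4$.

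The main obstacle is the $i = \alpha-1$ case, where several terms of the form $[-\alpha] \otimes d$ could a priori appear in $\mu^*(\pi_8)$; isolating the genuine contributor requires combining the embedding of $\pi_8$ into the regular induced representation $\Pi_{(\alpha-1,\alpha,\alpha+1)}$ with the explicit Jacquet module formulas of \S\ref{segment} for segment-type Langlands quotients. The small-$\alpha$ edge cases $\alpha \in \{3/2, 2\}$ can be verified separately by direct enumeration since only finitely many terms are involved.
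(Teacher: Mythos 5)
Your route is genuinely different from the paper's and it does work. The paper never touches $\mu^*(\pi_8)$ directly: it computes the multiplicity of $\widehat\tau$ in $\mu^*(\delta_1\rtimes\Pi_3)=M^*(\delta_1)\rtimes\mu^*(\Pi_3)$ (which is $6$, spread as $2+2+2$ over the terms $i=\alpha,\alpha-1,\alpha-2$ of $M^*(\delta_1)$), exhibits $2\cdot\widehat\tau\le\mu^*(\delta_1\rtimes\pi_6)$ via an explicit term of $\mu^*(\pi_6)$, and subtracts using $\Pi_3=\pi_8+\pi_6$ (relation \eqref{eq5}). You instead bound the three contributions to $M^*(\delta_1)\rtimes\mu^*(\pi_8)$ one by one: $i=\alpha$ gives $2$; $i=\alpha-1$ gives at most $2$ because the only terms $[-\alpha]\otimes d$ available in $\mu^*(\pi_8)$ have $d\le[\alpha-1]\times[\alpha+1]\rtimes\s$ and, by regularity of $\Pi_{(\alpha-1,\alpha,\alpha+1)}$, $\sum_d m(\pi_8;[\alpha]\rtimes d)=m(\pi_8;\Pi_{(\alpha-1,\alpha,\alpha+1)})=1$; and $i=\alpha-2$ gives $0$ because the only candidate is $d=[\alpha+1]\rtimes\s$ (irreducible) and $e_*(\pi_8)=(\alpha+\tfrac12,\alpha+\tfrac12,\alpha-1)\not\le(\alpha+1,\alpha-\tfrac12,\alpha-\tfrac12)$, so $\pi_8\not\le\delta([\alpha-1,\alpha])\times[\alpha+1]\rtimes\s$ by \eqref{BPLC}. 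This replaces the paper's subtraction (and its reliance on a specific term of $\mu^*(\pi_6)$) by a \eqref{BPLC} exclusion, at the cost of having to control the relevant degree-$n_\rho$ and degree-$2n_\rho$ pieces of $\mu^*(\pi_8)$, which you correctly do through $\mu^*(\pi_8)\le\mu^*(\Pi_{(\alpha-1,\alpha,\alpha+1)})$ and the regularity of the latter.

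One supporting claim is wrong and should be removed: $\pi_8=L([\alpha,\alpha+1],[\alpha-1];\s)$ is the Langlands \emph{quotient} of $\delta([\alpha,\alpha+1])\times[\alpha-1]\rtimes\s$, not a subrepresentation of it; the embedding goes into $\delta([-\alpha-1,-\alpha])\times[-\alpha+1]\rtimes\s$. (In fact no term $\delta([\alpha,\alpha+1])\otimes\ast$ occurs in $\mu^*(\pi_8)$ at all, as one sees from $\mu^*(\Pi_3)$, so Frobenius reciprocity rules the embedding out.) Fortunately this claim is not load-bearing: for the stated upper bound you only need ``at most $2$'' at $i=\alpha-1$, and that follows from regularity of $\Pi_{(\alpha-1,\alpha,\alpha+1)}$ (each $[-\alpha]\otimes d_j$ occurs at most once in $\mu^*(\pi_8)$, and $\pi_8$ occurs once in $[\alpha]\times[\alpha-1]\times[\alpha+1]\rtimes\s$), without knowing which $d_j$ actually appears. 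Two small cosmetic points: the term $1\otimes\delta_1$ of $M^*(\delta_1)$ is excluded for all $\alpha>1$ purely by degree ($2\alpha+1>3$), so no separate $\alpha=\tfrac32$ inspection is needed there; and at $i=\alpha-2$ you should note (as you implicitly do) that $c=L([-\alpha],[-\alpha+1])$ is excluded because $\delta_1$ does not occur in $\delta([-\alpha+2,\alpha])\times L([-\alpha],[-\alpha+1])$.
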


\begin{proof}
We first show that the multiplicity of $\widehat\tau$ in $\mu^*(\delta_1\rtimes\Pi_3)=M^*(\delta_1)\rtimes\mu^*(\Pi_3)$ is six.

By Remak \ref{rem: mdtp} we need to consider the multiplicity of $\widehat\tau$ in
\begin{equation} \label{eq: sumibg}
\Big(2\sum_{i=-\alpha}^\alpha\delta([-i,\alpha])\otimes\delta([i+1,\alpha])+1\otimes\delta_1\Big)\rtimes\mu^*(\Pi_3)
\end{equation}
By \eqref{eq: pi312} we have
\[
\begin{gathered}
\mu^*(\Pi_3)
=(1\otimes [\alpha-1]+[\alpha-1]\otimes1+[-\alpha+1]\otimes1)
\\\rtimes\Big(
1\otimes L([\alpha,\alpha+1];\s)+
[-\alpha]\otimes[\alpha+1]\rtimes\s+\cancel{[\alpha+1]\otimes L([\alpha];\s)}
\\+\cancel{\delta([-\alpha-1,-\alpha])\otimes\s}+\cancel{[-\alpha]\times[\alpha+1]\otimes\s}\Big),
\end{gathered}
\]
where we struck down terms which cannot possibly contribute $\widehat\tau$ in $\mu^*(\delta_1\rtimes\Pi_3)$.

By a simple analysis, the only pertinent contribution from $\mu^*(\Pi_3)$ is
\[
1\otimes\Pi_3+[-\alpha]\otimes[\alpha-1]\times [\alpha+1]\rtimes\s+
[-\alpha]\times [-\alpha+1]\otimes[\alpha+1]\rtimes\s
\]
and for each of these summands, the relevant term in \eqref{eq: sumibg} is
$i=\alpha$, $i=\alpha-1$ and $i=\alpha-2$ respectively.
Thus, we only need to consider the terms
\begin{gather*}
2\delta_1\otimes\Pi_3\\
+2[-\alpha]\times\delta([-\alpha+1,\alpha])\otimes\Pi_{(\alpha-1,\alpha,\alpha+1)}\\
+2[-\alpha]\times [-\alpha+1]\times\delta([-\alpha+2,\alpha])\otimes [\alpha+1]\times\delta([\alpha-1,\alpha])\rtimes\s
\end{gather*}
and the multiplicity of $\widehat\tau$ in each of these summands is two.

On the other hand,
\begin{gather*}
\mu^*(\delta_1\rtimes\pi_6)=M^*(\delta_1)\rtimes\mu^*(\pi_6)\\
\ge2\Big(\delta([-\alpha+2,\alpha])\otimes \delta([\alpha-1,\alpha])\Big)\rtimes
\Big(\delta([-\alpha,-\alpha+1])\otimes [\alpha+1]\rtimes\s\Big)\\
\ge2\delta_1\otimes \delta([\alpha-1,\alpha])\times[\alpha+1]\rtimes\s\ge
2\delta_1\otimes L([\alpha,\alpha+1],[\alpha-1])\rtimes\s\ge 2\widehat\tau.
\end{gather*}
Therefore, by \eqref{eq5}, the multiplicity of $\widehat\tau$ in $\mu^*(\widehat\Gamma)
=\mu^*(\delta_1\rtimes\Pi_3)-\mu^*(\delta_1\rtimes\pi_6)$
is at most 4.
\end{proof}

\section{\texorpdfstring{$\mathbf{x}=(\alpha-1,\alpha,\alpha)$}{x-100} and \texorpdfstring{$\alpha>1$}{alpha>1}}

\begin{proposition} \label{a-1,a,a-a}
Assume $\alpha>1$. Then,
\begin{enumerate}
\item 
\label{a-1,a,a-a-item: Gg}
In the Grothendieck group we have
\[
\Pi_{(\alpha-1,\alpha,\alpha)}=2\pi_0+\pi_1+\pi_2+\pi_3+\pi_4
\]
where
\begin{gather*}
\pi_0=L([\alpha],[\alpha-1]; \delta([\alpha];\s))\\
\pi_1=L([\alpha]; \delta_{\spsi}([\alpha-1],[\alpha];\s)),\ \
\pi_2=L([\alpha],[\alpha-1,\alpha];\s),\\
\pi_3=L([\alpha-1,\alpha]; \delta([\alpha];\s)),\ \
\pi_4=L([\alpha],[\alpha-1],[\alpha];\s).
\end{gather*}
\item $\pi_1^t=\pi_2$, $\pi_3^t=\pi_4$, $\pi_0^t=\pi_0$.
\item We have
\label{a-1,a,a-a-item: induction}
\begin{gather*}
\pi_1= [\alpha]\rtimes \delta_{\spsi}([\alpha-1],[\alpha];\s),\ \
\pi_2= [\alpha]\rtimes L([\alpha-1,\alpha];\s),\\
\pi_3=\delta([\alpha-1,\alpha])\rtimes \delta([\alpha]; \s),\ \
\pi_4= L([\alpha-1],[\alpha])\rtimes L([\alpha]; \s).
\end{gather*}
\item None of the representations $\pi_1,\pi_2,\pi_3,\pi_4$ is unitarizable.
\item (See Appendix \ref{appendix-M}) The representation $\pi_0$ is unitarizable.
\end{enumerate}

\end{proposition}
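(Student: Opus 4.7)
The proof plan centers on combining the corank-2 decomposition with two parallel expansions of a carefully chosen induced representation. I would start from the identity $\Pi_{(\alpha-1,\alpha,\alpha)} = [\alpha]\rtimes\Pi_{(\alpha-1,\alpha)}$ and invoke Proposition \ref{prop-2-a-} to write (in the Grothendieck group)
\[
\Pi_{(\alpha-1,\alpha,\alpha)} = [\alpha]\rtimes\varpi_1 + [\alpha]\rtimes\varpi_2 + [\alpha]\rtimes\varpi_3 + [\alpha]\rtimes\varpi_4,
\]
where $\varpi_i$ denote the four composition factors listed there; in particular $\varpi_4 = \delta_{\spsi}([\alpha-1],[\alpha];\s)$ and $\varpi_1 = L([\alpha-1,\alpha];\s) = \varpi_4^t$.

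For part (\ref{a-1,a,a-a-item: induction}), I would first show that $\pi_1 = [\alpha]\rtimes\varpi_4$ is irreducible. Proposition \ref{JBcomputation} yields $\jrp = (\Jord_\rho(\s)\setminus\{2\alpha-3\})\cup\{2\alpha+1\}$ for $\varpi_4$, so $2\alpha-1, 2\alpha+1 \in \jrp$ while $\e$ takes opposite values on these two entries (by the alternation of $\e$ along the Jordan blocks of a strongly positive representation); Proposition \ref{Pr-red-si}(\ref{Pr-red-si-item: a-a+1-epsilon}) then gives the irreducibility. Applying the \ASS involution (using $\varpi_4^t = \varpi_1$) yields $\pi_2 = [\alpha]\rtimes\varpi_1$ irreducible. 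For $\pi_3 = \delta([\alpha-1,\alpha])\rtimes\delta([\alpha];\s)$ I would appeal to \eqref{BPLC}: any composition factor lies in $\{\pi_0,\pi_1,\pi_2,\pi_3,\pi_4\}$ (the irreducibles of the correct cuspidal support) and satisfies $e_*(\cdot) \le e_*(\pi_3) = (\alpha-\tfrac12,\alpha-\tfrac12,0,\ldots)$; each of $\pi_0,\pi_1,\pi_2,\pi_4$ has first $e_*$-coordinate equal to $\alpha > \alpha-\tfrac12$, so none satisfies the inequality. Hence $\pi_3$ is irreducible, and \ASS duality produces $\pi_4 = L([\alpha-1],[\alpha])\rtimes L([\alpha];\s)$ irreducible.

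For part (\ref{a-1,a,a-a-item: Gg}), the preceding step gives $[\alpha]\rtimes\varpi_1 = \pi_2$ and $[\alpha]\rtimes\varpi_4 = \pi_1$, so the remaining factors come from $[\alpha]\rtimes\varpi_2$ and $[\alpha]\rtimes\varpi_3$. I would compare two expansions of $[\alpha]\times[\alpha-1]\rtimes\delta([\alpha];\s)$ in the Grothendieck group: the first, via $[\alpha-1]\rtimes\delta([\alpha];\s) = \varpi_2+\varpi_4$, gives $[\alpha]\rtimes\varpi_2 + \pi_1$; the second, via $[\alpha]\times[\alpha-1] = \delta([\alpha-1,\alpha]) + L([\alpha-1],[\alpha])$ together with irreducibility of $\pi_3$, gives $\pi_3 + L([\alpha-1],[\alpha])\rtimes\delta([\alpha];\s)$. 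Since Proposition \ref{prop: addparms} forces $\pi_0$ into $[\alpha]\rtimes\varpi_2$ with multiplicity one, non-negativity of multiplicities on both sides then forces $L([\alpha-1],[\alpha])\rtimes\delta([\alpha];\s) = \pi_0+\pi_1$ and $[\alpha]\rtimes\varpi_2 = \pi_0+\pi_3$. Duality produces $[\alpha]\rtimes\varpi_3 = \pi_0+\pi_4$, and summing yields $\Pi_{(\alpha-1,\alpha,\alpha)} = \pi_1+\pi_2+\pi_3+\pi_4+2\pi_0$.

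The duality assertions $\pi_1^t = \pi_2$ and $\pi_3^t = \pi_4$ are immediate from $(\pi\rtimes\tau)^t = \pi^t\rtimes\tau^t$ applied to the realizations in part (\ref{a-1,a,a-a-item: induction}); $\pi_0^t = \pi_0$ follows because $\pi_0$ is the unique multiplicity-two constituent and \ASS preserves multiplicities. For non-unitarizability of $\pi_1,\pi_2,\pi_3,\pi_4$, I would invoke unitary parabolic reduction applied to the realizations in part (\ref{a-1,a,a-a-item: induction}): in each case the inducing $\GL$-factor has non-zero central exponent ($\alpha$ for $\pi_1,\pi_2$ and $2\alpha-1$ for $\pi_3,\pi_4$), hence is not unitarizable by Theorem \ref{ud-gl}, and irreducibility of the induced representation rules out unitarity. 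Unitarizability of $\pi_0$ is deferred to Appendix \ref{appendix-M}. The main obstacle is the non-negativity bookkeeping that singles out $\pi_1$ (rather than, e.g., $\pi_3$) as the non-$\pi_0$ constituent of $L([\alpha-1],[\alpha])\rtimes\delta([\alpha];\s)$; this rests on the Grothendieck-group equation $[\alpha]\rtimes\varpi_2 + \pi_1 = \pi_3 + L([\alpha-1],[\alpha])\rtimes\delta([\alpha];\s)$ forcing $\pi_1$ to appear on the right.
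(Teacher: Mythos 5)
Your parts (2) and (3) follow essentially the paper's route (Jordan blocks plus Proposition \ref{Pr-red-si} and the \ASS involution for $\pi_1,\pi_2$; the $e_*$-comparison of \eqref{BPLC} for $\pi_3$, then duality for $\pi_4$) and are fine. The problem is part (1): the decisive "non-negativity" step does not go through. From the identity $[\alpha]\rtimes L([\alpha-1];\delta([\alpha];\s))+\pi_1=\pi_3+L([\alpha-1],[\alpha])\rtimes\delta([\alpha];\s)$ together with the fact that $\pi_0$ occurs once in $[\alpha]\rtimes L([\alpha-1];\delta([\alpha];\s))$, positivity of multiplicities only yields the lower bounds $[\alpha]\rtimes L([\alpha-1];\delta([\alpha];\s))\ge\pi_0+\pi_3$ and $L([\alpha-1],[\alpha])\rtimes\delta([\alpha];\s)\ge\pi_0+\pi_1$; it does not force equality. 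For example, $[\alpha]\rtimes L([\alpha-1];\delta([\alpha];\s))=\pi_0+\pi_3+\pi_1$ together with $L([\alpha-1],[\alpha])\rtimes\delta([\alpha];\s)=\pi_0+2\pi_1$ is compatible with the identity, with the irreducibility statements of part (3), and with Proposition \ref{prop: addparms}. Some upper bound on multiplicities is indispensable, and this is precisely where the paper does extra work: it checks via an $s_{\GL}$ count that $\pi_0$ occurs at most twice in $\Pi_{(\alpha-1,\alpha,\alpha)}$ and that $\pi_2$ (hence $\pi_1$) occurs exactly once, and it uses that $\pi_4$ is the Langlands quotient of $\Pi_{(\alpha-1,\alpha,\alpha)}$ to get multiplicity one for $\pi_4$ and $\pi_3$; only then do the two decompositions force $\delta([\alpha-1,\alpha])\rtimes L([\alpha];\s)=\pi_2+\pi_0$ and $L([\alpha],[\alpha-1])\rtimes\delta([\alpha];\s)=\pi_1+\pi_0$. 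Your scheme could be repaired by verifying directly (again by a Jacquet-module comparison) that $\pi_1\not\le[\alpha]\rtimes L([\alpha-1];\delta([\alpha];\s))$ and $\pi_3\not\le L([\alpha-1],[\alpha])\rtimes\delta([\alpha];\s)$, but such an input has to be supplied; it is not a consequence of bookkeeping alone.

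The non-unitarizability argument for $\pi_1,\dots,\pi_4$ is also not valid. Unitary parabolic reduction, as formulated in \S\ref{ways of getting unitary}, requires the inducing representation of the Levi to be hermitian, and here it is not: $(\nu^{\alpha}\rho)^+\cong\nu^{-\alpha}\rho$, so $\nu^{\alpha}\rho\otimes\delta_{\spsi}([\alpha-1],[\alpha];\s)$ and $\delta([\alpha-1,\alpha])\otimes\delta([\alpha];\s)$ are not hermitian. The principle you invoke, that an irreducible representation parabolically induced from a non-unitarizable representation is non-unitarizable, is false in general -- the complementary series $[x]\rtimes\s$ with $0<x<\alpha$ are irreducible and unitarizable although $\nu^{x}\rho\otimes\s$ is not. (When the paper does use parabolic reduction for such a purpose, e.g. in \S\ref{sec; 000}, it first rewrites the representation so that the $\GL$-factor is $[\alpha]\times[-\alpha]$, which is hermitian but not unitarizable; no such rewriting is available here, since only one copy of $[\alpha]$, respectively the single factor $\delta([\alpha-1,\alpha])$ or $L([\alpha-1],[\alpha])$, is present.) The paper instead argues by deformation: $\pi_1=[\alpha]\rtimes\delta_{\spsi}([\alpha-1],[\alpha];\s)$ lies in the irreducible family $[s]\rtimes\delta_{\spsi}([\alpha-1],[\alpha];\s)$ whose endpoint $s=\alpha+1$ has the non-unitarizable subquotient $L([\alpha+1];\delta_{\spsi}([\alpha-1],[\alpha];\s))$ of Proposition \ref{a-1,a,a+1}; $\pi_2$ is handled via the non-unitarizable $L([\alpha-1,\alpha+1];\s)$, and $\pi_3,\pi_4$ by deforming $\delta([\alpha-1,\alpha])$ and $L([\alpha-1],[\alpha])$ to $\delta([\alpha,\alpha+1])$ and $L([\alpha],[\alpha+1])$ and invoking Proposition \ref{a,a,a+1-a}. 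Without an argument of this kind, part (4) remains unproved in your proposal.
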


\begin{proof}
Recall
\begin{equation}
\label{eq: jord-st-pos-a-1-a}
\Jord_\rho(\delta_{\spsi}([\alpha-1],[\alpha];\s))=\{\eta+1,\eta+3,\dots,2\alpha-5,2\alpha-1,2\alpha+1 \},
\end{equation}
and that the partially defined function $\e$ attached to the above square-integrable representation is different on $2\alpha-1$ and $2\alpha+1$.
Now
$$
[\alpha]\rtimes \delta_{\spsi}([\alpha-1],[\alpha];\s)
$$
is irreducible by (\ref{Pr-red-si-item: a-a+1-epsilon}) of  Proposition \ref{Pr-red-si}.
Applying the \ASS involution we infer that
$$
[\alpha]\rtimes L([\alpha-1,\alpha];\s)
$$
is irreducible (since $\delta_{\spsi}([\alpha-1],[\alpha];\s)^t=L([\alpha-1,\alpha];\s)$).
This implies (using Proposition \ref{prop: addparms}) that
\[
\pi_1= [\alpha]\rtimes \delta_{\spsi}([\alpha-1],[\alpha];\s)\text{ and }
\pi_2= [\alpha]\rtimes L([\alpha-1,\alpha];\s).
\]
It also implies that $\pi_1^t=\pi_2$.

Examining $s_{\GL}(\Pi_{(\alpha-1,\alpha,\alpha)})$, we get that the multiplicity of $\pi_2$ in $\Pi_{(\alpha-1,\alpha,\alpha)}$ is one.
Therefore, also its \ASS dual $\pi_1$ has multiplicity one in $\Pi_{(\alpha-1,\alpha,\alpha)}$.

Using \eqref{BPLC} and the description of $\JH(\Pi_{(\alpha-1,\alpha,\alpha)})$ we see that
$\delta([\alpha-1,\alpha])\rtimes \delta([\alpha]; \s)$ is irreducible. Therefore,
\[
\pi_3=\delta([\alpha-1,\alpha])\rtimes \delta([\alpha]; \s)
\]
and by applying \ASS involution we get
\[
\pi_4= L([\alpha-1],[\alpha])\rtimes L([\alpha]; \s).
\]

This implies that $\pi_3^t=\pi_4$ and hence, $\pi_0^t=\pi_0$.

Since $L([\alpha+1]; \delta_{\spsi}([\alpha-1],[\alpha];\s))$ is not unitarizable
(Proposition \ref{a-1,a,a+1}), we get that
$$
\pi_1=[\alpha]\rtimes \delta_{\spsi}([\alpha-1],[\alpha];\s)
$$
is not unitarizable by the usual deformation argument.

Note that
\begin{multline*}
L([\alpha+1]; \delta_{\spsi}([\alpha-1],[\alpha];\s))\leq
[\alpha+1]\rtimes \delta_{\spsi}([\alpha-1],[\alpha];\s)
\\
=
([\alpha+1]\rtimes L([\alpha-1,\alpha];\s))^t.
\end{multline*}
Applying the \ASS involution and Proposition \ref{a-1,a,a+1} we get
$$
L([\alpha-1,\alpha+1];\s)\leq [\alpha+1]\rtimes L([\alpha-1,\alpha];\s).
$$
By deformation, it follows that $\pi_2=[\alpha]\rtimes L([\alpha-1,\alpha];\s)$ is not unitarizable
(since $L([\alpha-1,\alpha+1];\s)$ is not).

The non-unitarizability of
$$
\pi_3=\delta([\alpha-1,\alpha])\rtimes \delta([\alpha]; \s)
$$
follows from the fact that for the exponents $(\alpha,\alpha,\alpha+1)$ we do not have unitarizable irreducible subquotients
(Proposition \ref{a,a,a+1-a}) by deforming $\delta([\alpha-1,\alpha])$ to $\delta([\alpha,\alpha+1])$.
Similarly for
$$
\pi_4=L([\alpha-1],[\alpha])\rtimes L([\alpha]; \s).
$$

Consider
\[
\Pi_{(\alpha,\alpha-1,\alpha)}=[\alpha]\times[\alpha-1]\times [\alpha]\rtimes \s=
\Pi_1+\Pi_2+\Pi_3+\Pi_4
\]
where
\begin{gather*}
\Pi_1=L([\alpha],[\alpha-1])\rtimes L([\alpha]; \s),\ \
\Pi_2=\delta([\alpha-1,\alpha])\rtimes L([\alpha]; \s),\\
\Pi_3=L([\alpha],[\alpha-1])\rtimes \delta([\alpha]; \s),\ \
\Pi_4=\delta([\alpha-1,\alpha])\rtimes \delta([\alpha]; \s).
\end{gather*}

Looking at $s_{\GL}(\Pi)$, we get that the multiplicity of the representation $\pi_0$ in
$\Pi_{(\alpha,\alpha-1,\alpha)}$ is at most two. Further, $\pi_4$ has multiplicity one in $\Pi_{(\alpha-1,\alpha,\alpha)}$ (since it is the Langlands quotient of $\Pi_{(\alpha1,\alpha,\alpha-1)}$).
Therefore, $\pi_3$ has multiplicity one in $\Pi_{(\alpha-1,\alpha,\alpha)}$.

All the above discussion implies
\begin{equation}
\Pi_2=\pi_2+\pi_0,\ \Pi_3=\pi_1+\pi_0.
\end{equation}
This implies (\ref{a-1,a,a-a-item: Gg}), and the proof is now complete.
\end{proof}

From parts (\ref{a-1,a,a-a-item: Gg}) and (\ref{a-1,a,a-a-item: induction}) of the above proposition it directly follows that $[\alpha]\rtimes L([\alpha-1]; \delta([\alpha];\s))$ is reducible.
Actually, it is easy to obtain the composition series of $[\alpha]\rtimes L ([\alpha-1]; \delta([\alpha];\s))$ as follows.

Observe that
$$
\pi_4\le [\alpha]\rtimes L ([\alpha-1],[\alpha];\s)=
( [\alpha]\rtimes L ([\alpha-1]; \delta([\alpha];\s)))^t
$$
Passing to the \ASS dual, we get
\begin{equation} \label{red-to-use-1}
\pi_3\leq [\alpha]\rtimes L ([\alpha-1]; \delta([\alpha];\s)).
\end{equation}
From this we conclude
\begin{equation} \label{to-use-below-1}
\begin{gathered}
{}[\alpha]\rtimes L ([\alpha-1]; \delta([\alpha];\s))=\pi_0+\pi_3,\\
[\alpha]\rtimes L([\alpha-1],[\alpha];\s)=\pi_0+\pi_4.
\end{gathered}
\end{equation}

We remark that the representation $\pi_0$ is isolated in the unitary dual.

\section{\texorpdfstring{$\mathbf{x}=(\alpha-1,\alpha-1,\alpha)$}{xalpha-1-10} and \texorpdfstring{$\alpha>1$}{alpha>1}}

\subsection{Case \texorpdfstring{$\alpha\ge2$}{alphage2}} We first consider the case $\alpha\ge2$.

\begin{proposition} \label{prop: -1-10}
Assume $\alpha\ge2$.
\begin{enumerate}
\item In the Grothendieck group we have
\[
\Pi_{(\alpha-1,\alpha-1,\alpha)}=\pi_1+\pi_2+\pi_3+\pi_4
\]
where
\begin{gather*}
\pi_1=L([\alpha],[\alpha-1],[\alpha-1];\s),\ \
\pi_2=L([\alpha-1],[\alpha-1,\alpha];\s),\\
\pi_3=L([\alpha-1],[\alpha-1];\delta([\alpha];\s)),\ \
\pi_4=L([\alpha-1];\delta_{\spsi}([\alpha-1],[\alpha];\s).
\end{gather*}
\item We have
\begin{gather*}
\pi_1=[\alpha-1]\rtimes L([\alpha],[\alpha-1];\s),\ \
\pi_2=[\alpha-1]\rtimes L([\alpha-1,\alpha];\s),\\
\pi_3=[\alpha-1]\rtimes L([\alpha-1];\delta([\alpha];\s)),\ \
\pi_4=[\alpha-1]\rtimes \delta_{\spsi}([\alpha-1],[\alpha];\s).
\end{gather*}
\item $\pi_2^t=\pi_4$ and $\pi_3^t=\pi_1$.
\item None of $\pi_1,\pi_2,\pi_3,\pi_4$ is unitarizable.
\end{enumerate}
\end{proposition}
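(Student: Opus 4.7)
My plan is to exploit the decomposition
$$
\Pi_{(\alpha-1,\alpha-1,\alpha)}=[\alpha-1]\rtimes\Pi_{(\alpha-1,\alpha)}
$$
together with Proposition \ref{prop-2-a-}, which describes the four irreducible constituents of $\Pi_{(\alpha-1,\alpha)}$, namely
$$
\tau_1=L([\alpha-1,\alpha];\s),\ \tau_2=L([\alpha-1];\delta([\alpha];\s)),\ \tau_3=L([\alpha-1],[\alpha];\s),\ \tau_4=\delta_{\spsi}([\alpha-1],[\alpha];\s).
$$
The heart of the proof will be to show that each of the four induced representations $[\alpha-1]\rtimes\tau_i$ is irreducible. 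Once this is done, the Jordan--H\"older decomposition, the identification of $\pi_1,\dots,\pi_4$ with these induced representations, and the Langlands parameters in the statement all follow immediately (by ordering the Langlands data in decreasing $e$-order), while the duality statements $\pi_2^t=\pi_4$ and $\pi_3^t=\pi_1$ reduce to the duality statements $\tau_1^t=\tau_4$ and $\tau_2^t=\tau_3$ from Proposition \ref{prop-2-a-} together with the compatibility of $^t$ with parabolic induction.

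For $\tau_4$: the Jordan set \eqref{eq: jord-st-pos-a-1-a} omits $2\alpha-3$, so Proposition \ref{Pr-red-si}(\ref{Pr-red-si-item: a-a+1-irr}) with $a=2\alpha-3$ gives the irreducibility of $\pi_4=[\alpha-1]\rtimes\tau_4$, and applying the \ASS involution yields the irreducibility of $\pi_2=[\alpha-1]\rtimes\tau_1$. For $\tau_3$, I will invoke Corollary \ref{irr-simple} with $d_1=([\alpha-1])$, $d_2=([\alpha-1],[\alpha])$ and $\tau=\s$: the irreducibility of $[\alpha-1]\rtimes\s$ is clear since $\alpha-1\neq\alpha$, and the irreducibility of $[\alpha-1]\times L([-\alpha+1],[-\alpha])$ holds because $[\alpha-1]$ is unlinked with both $[-\alpha+1]$ and $[-\alpha]$ when $\alpha\geq 2$. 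The remaining condition, the $\GL$-irreducibility of $[\alpha-1]\times L([\alpha-1],[\alpha])$, will follow from the fact that $[\alpha-1]\times\delta([\alpha-1,\alpha])$ is irreducible (the two segments being unlinked) and equals $L([\alpha-1,\alpha],[\alpha-1])$; the Grothendieck-group identity
$$
[\alpha-1]\times[\alpha-1]\times[\alpha]=L([\alpha-1,\alpha],[\alpha-1])+[\alpha-1]\times L([\alpha-1],[\alpha]),
$$
together with the Zelevinsky count of admissible multisegments of support $2[\alpha-1]+[\alpha]$ (which gives length two, with multiplicity one each), forces $[\alpha-1]\times L([\alpha-1],[\alpha])$ to be irreducible and equal to $L([\alpha-1],[\alpha-1],[\alpha])$. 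Corollary \ref{irr-simple} then yields the irreducibility of $\pi_1=[\alpha-1]\rtimes\tau_3$, and duality gives the irreducibility of $\pi_3=[\alpha-1]\rtimes\tau_2$.

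For non-unitarizability I will deform the parameter $\alpha-1$ up to $\alpha$: consider the one-parameter families
$$
\gamma^{(i)}_s=[s]\rtimes\tau_i,\quad s\in[\alpha-1,\alpha],\ \ i=1,2,3,4.
$$
Each $\gamma^{(i)}_s$ is hermitian because $\rho\cong\check\rho$ and $s$ is real. Irreducibility is automatic at interior $s\notin\tfrac12\Z$ by Proposition \ref{Pr-red-si}(2), and at the single interior half-integer-type point $s=\alpha-\tfrac12$ it follows from Proposition \ref{Pr-red-si}(\ref{Pr-red-si-item: a-a+1-irr}) since all Jordan blocks of $\tau_i$ have parity opposite to $a=2\alpha-2$. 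At $s=\alpha$ each $\gamma^{(i)}_\alpha$ coincides with one of the four irreducible representations of Proposition \ref{a-1,a,a-a}, all of which were proved there to be non-unitarizable. The continuous family criterion of \S\ref{ways of getting unitary} then forces $\gamma^{(i)}_{\alpha-1}=\pi_i$ to be non-unitarizable as well.

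The step I expect to be the most delicate is the $\GL$-irreducibility of $[\alpha-1]\times L([\alpha-1],[\alpha])$ that feeds into the verification of Corollary \ref{irr-simple} for $\pi_1$, since the cuspidal support of $L([\alpha-1],[\alpha])$ already contains $[\alpha-1]$; all other irreducibility assertions in the proposition either reduce to this one via \ASS duality or are direct applications of the reducibility criteria in Proposition \ref{Pr-red-si}.
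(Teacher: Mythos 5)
Your parts (1)–(3) are fine and largely follow the paper's route: irreducibility of $\pi_4$ via Proposition \ref{Pr-red-si} (\ref{Pr-red-si-item: a-a+1-irr}) and \eqref{eq: jord-st-pos-a-1-a}, then $\pi_2$ by duality, is exactly the paper's argument. For the remaining pair you differ: you prove irreducibility of $\pi_1$ by Corollary \ref{irr-simple}, the only nontrivial input being that $[\alpha-1]\times L([\alpha-1],[\alpha])$ is irreducible (which is correct, and is immediate since $L([\alpha-1],[\alpha])$ is the Zelevinsky segment representation of $[\alpha-1,\alpha]$, unlinked with the segment $[\alpha-1]$), and then get $\pi_3$ by duality; the paper instead proves irreducibility of $\pi_3$ directly, showing by \eqref{BPLC} and a Jacquet-module count that otherwise $\pi_4$ would occur in it, and deduces $\pi_1$ by duality. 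Both are valid; yours replaces an ad hoc Jacquet-module contradiction by a general criterion, and the Langlands-data identifications you assert are justified by Proposition \ref{prop: addparms}.

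Part (4) is where your write-up has genuine errors, even though the strategy (deform $[\alpha-1]$ to $[\alpha]$ and invoke Proposition \ref{a-1,a,a-a}) is the paper's. First, you justify irreducibility of $[s]\rtimes\tau_i$ on $[\alpha-1,\alpha)$ by parts (2) and (\ref{Pr-red-si-item: a-a+1-irr}) of Proposition \ref{Pr-red-si}, but that proposition is stated for $\pi\in\Discl$; among your $\tau_i$ only $\delta_{\spsi}([\alpha-1],[\alpha];\s)$ is square-integrable, and ``Jordan blocks of $\tau_i$'' is not even defined for the other three. This is repairable ($i=1$ by duality from $i=4$; the mutually dual families for $i=2,3$ by applying Corollary \ref{irr-simple} to $[s]\rtimes L([\alpha-1],[\alpha];\s)$ for every $s\in(\alpha-1,\alpha)$), but as cited the step fails. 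Second, and more seriously, the claim that ``each $\gamma^{(i)}_\alpha$ coincides with one of the four irreducible representations of Proposition \ref{a-1,a,a-a}'' is false for $i=2,3$: by \eqref{to-use-below-1} the endpoints $[\alpha]\rtimes L([\alpha-1];\delta([\alpha];\s))$ and $[\alpha]\rtimes L([\alpha-1],[\alpha];\s)$ are reducible of length two, and each contains the unitarizable representation $\pi_0=L([\alpha],[\alpha-1];\delta([\alpha];\s))$ (the isolated representation treated in Appendix \ref{appendix-M}) alongside a non-unitarizable constituent (in the notation of Proposition \ref{a-1,a,a-a}, $\pi_3$ resp.\ $\pi_4$). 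So the limit argument cannot conclude merely from ``the endpoint is irreducible and non-unitarizable''; you must exhibit a non-unitarizable irreducible subquotient of the reducible endpoint, which is precisely what the paper does via \eqref{red-to-use-1}, \eqref{to-use-below-1} and Proposition \ref{prop: addparms}. With these two repairs your part (4) becomes the paper's proof.
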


\begin{proof}
We may write
\[
\Pi_{(\alpha-1,\alpha-1,\alpha)}=\Pi_1+\Pi_2
\]
where
\[
\Pi_1=[\alpha-1]\times [\alpha-1]\rtimes L([\alpha];\s),\ \
\Pi_2=[\alpha-1]\times [\alpha-1]\rtimes \delta([\alpha];\s).
\]
Also,
\begin{gather*}
\Pi_1=[\alpha-1]\rtimes L([\alpha],[\alpha-1];\s)+[\alpha-1]\rtimes L([\alpha-1,\alpha];\s),\\
\Pi_2=[\alpha-1]\rtimes L([\alpha-1];\delta([\alpha];\s))+[\alpha-1]\rtimes \delta_{\spsi}([\alpha-1],[\alpha];\s).
\end{gather*}
Recall that by Proposition \ref{prop-2-a-}
\[
\delta_{\spsi}([\alpha-1],[\alpha];\s)^t=L([\alpha-1,\alpha];\s),\ \
L([\alpha],[\alpha-1];\s)^t=L([\alpha-1];\delta([\alpha];\s)),
\]
and
$$
\Jord_\rho(\delta([\alpha];\s)))=\{\eta+1,\eta+3,\dots,2\alpha-3,2\alpha+1 \},
$$
$$
\Jord_\rho(\delta_{\spsi}([\alpha-1],[\alpha];\s)))=\{\eta+1,\eta+3,\dots,2\alpha-5,2\alpha-1,2\alpha+1 \},
$$
where in the last case partially defined function $\e$ attached to the square-integrable representation is different on $2\alpha-1$ and $2\alpha+1$.

By (\ref{Pr-red-si-item: a-a+1-irr}) of  Proposition \ref{Pr-red-si} and \eqref{eq: jord-st-pos-a-1-a} we conclude that $[\alpha-1]\rtimes \delta_{\spsi}([\alpha-1],[\alpha];\s)$ is irreducible,
and hence equals to $\pi_4$. (Here we used that $\alpha\ge2$.)

Furthermore, the \ASS involution implies that $[\alpha-1]\rtimes L([\alpha-1,\alpha];\s)$ is irreducible, hence equals $\pi_2$.

Consider $[\alpha-1]\rtimes L([\alpha-1];\delta([\alpha];\s))$.
If this were not irreducible, then \eqref{BPLC} would imply that
$$
\pi_4\leq[\alpha-1]\rtimes L([\alpha-1];\delta([\alpha];\s)),
$$
which would imply further that
\begin{gather*}
([\alpha-1]+[-\alpha+1])\times L([\alpha-1],[\alpha])\otimes\s\leq\\
([\alpha-1]+[-\alpha+1])\times\Big([-\alpha+1]\times[\alpha] +\delta([\alpha-1,\alpha])\Big)\otimes\s.
\end{gather*}
This is impossible (since $[\alpha-1]\times L([\alpha-1],[\alpha])\otimes\s$ does not occur on the right-hand side).

Therefore, $[\alpha-1]\rtimes L([\alpha-1];\delta([\alpha];\s))$ is irreducible
(hence equals $\pi_3$), which implies using the \ASS involution that $[\alpha-1]\rtimes L([\alpha],[\alpha-1];\s)$
is irreducible (hence equals $\pi_1$).

Recall that each of $\pi_1,\pi_2,\pi_3,\pi_4$ is of the form $[\alpha-1]\rtimes \tau$, where $\tau $ is an irreducible subquotient of $\Pi_{(\alpha-1.\alpha)}$.
We will deform $[\alpha-1]$ to $[\alpha]$ and use Proposition \ref{a-1,a,a-a}.

The non-unitarizability of $\pi_4$ and $\pi_2$ follows from the non-unitarizability of
$$
L([\alpha]; \delta_{\spsi}([\alpha-1],[\alpha];\s))=[\alpha]\rtimes \delta_{\spsi}([\alpha-1],[\alpha];\s),
$$
and
$$
L([\alpha],[\alpha-1,\alpha];\s)=[\alpha]\rtimes L([\alpha-1,\alpha];\s)
$$
respectively.
Similarly, for $\pi_1$ we use the fact that $L([\alpha],[\alpha-1],[\alpha];\s)$ is a non-unitarizable subquotient
of $[\alpha]\rtimes L([\alpha],[\alpha-1];\s)$ (by Proposition \ref{prop: addparms}).
For $\pi_3$ we use that $L([\alpha-1,\alpha];\delta([\alpha]; \s))$ is a non-unitarizable irreducible subquotient of $[\alpha]\rtimes
L ([\alpha-1]; \delta([\alpha];\s))$ \eqref{red-to-use-1}.
This completes the proof of the proposition.
\end{proof}

\subsection{Case \texorpdfstring{$\alpha=\tfrac32$}{alpha32}} \label{sec: 121232-32}
We now consider the case $\alpha=\tfrac32$.

\begin{proposition}
\label{prop: red32-113}
 Let $\alpha=\frac32$.

\begin{enumerate}
\item
\label{prop: red32-113-item: Gg}
In the Grothendieck group we have
$$
\Pi_{(\frac12,\frac12,\frac32)}=\pi_1+\pi_2+\pi_3+\pi_4+\pi_5+\pi_6+\pi_7+\pi_8,
$$
where
\begin{gather*}
\pi_1=L([\tfrac12],[\tfrac12],[\tfrac32];\s),\ \
\pi_2=\tau([-\tfrac12,\tfrac12]_+;\delta([\tfrac32];\s)),\\
\pi_3=L([\tfrac12,\tfrac32],[\tfrac12];\s),\ \
\pi_4=\tau([-\tfrac12,\tfrac12]_-;\delta([\tfrac32];\s)),\\
\pi_5=L([\tfrac32];\delta([-\tfrac12,\tfrac12])\rtimes\s), \ \
\pi_6=L([\tfrac12],[\tfrac12];\delta([\tfrac32];\s)),\\
\pi_7=L([-\tfrac12,\tfrac32];\s), \ \
\pi_8=L([\tfrac12];\delta_{\spsi}([\tfrac12],[\tfrac32];\s)).
\end{gather*}

\item 
\label{prop: red32-113-item: all unit}
All $\pi_1,\dots,\pi_8$ are unitarizable. (They are subrepresentations of unitarily induced representations.)

 \item 
 \label{prop: red32-113-item: inv}
 $\pi_1^t=\pi_2, \ \pi_3^t=\pi_4, \ \pi_5^t=\pi_6, \ \pi_7^t=\pi_8.$

\end{enumerate}
\end{proposition}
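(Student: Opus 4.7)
The plan is to mimic the pattern of the earlier corank-3 propositions: decompose $\Pi_{(\tfrac12,\tfrac12,\tfrac32)}$ using the corank-2 decomposition of Proposition \ref{prop-2-a-}, determine the eight Jordan--H\"older constituents by identifying Langlands data and Jacquet modules, and then establish duality and unitarizability by transferring the analogous statements from the corank-2 case. Concretely, for part (\ref{prop: red32-113-item: Gg}) I would start from
\[
\Pi_{(\tfrac12,\tfrac12,\tfrac32)}=[\tfrac12]\rtimes\Pi_{(\tfrac12,\tfrac32)}=\sum_{j=1}^{4}\,[\tfrac12]\rtimes\rho_j,
\]
where $\rho_1,\ldots,\rho_4$ are the four unitarizable constituents of $\Pi_{(\tfrac12,\tfrac32)}$ listed in Proposition \ref{prop-2-a-} (with $\alpha=\tfrac32$). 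Each $[\tfrac12]\rtimes\rho_j$ is expected to be of length two, summing to the eight factors. Corollary \ref{irr-simple} can rule out certain irreducibilities; when it does not apply, one identifies the two composition factors using the Langlands data and Jacquet-module computations. The tempered summands $\pi_2,\pi_4=\tau([-\tfrac12,\tfrac12]_\pm;\delta([\tfrac32];\sigma))$ arise from the reducibility of $\delta([-\tfrac12,\tfrac12])\rtimes\delta([\tfrac32];\sigma)$, since $(\rho,2)\notin\Jord(\delta([\tfrac32];\sigma))$ because $\Jord_\rho(\delta([\tfrac32];\sigma))=\{4\}$ by \eqref{eq: Jord-a-sigma}. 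Multiplicity-freeness of the list should follow from examination of $s_\GL$ together with \eqref{BPLC}.

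For the duality pairings (part (\ref{prop: red32-113-item: inv})), I would use the compatibility of the \ASS involution with induction, $(\pi\rtimes\tau)^t=\pi^t\rtimes\tau^t$ on the Grothendieck group, together with the fact that $[\tfrac12]^t=[\tfrac12]$ (being cuspidal) and the corank-2 dualities $\rho_1\leftrightarrow\rho_4$, $\rho_2\leftrightarrow\rho_3$ from Proposition \ref{prop-2-a-}. This forces the two constituents of $[\tfrac12]\rtimes\rho_j$ to pair with the two constituents of $[\tfrac12]\rtimes\rho_j^t$; matching Langlands data of the eight $\pi_i$ then yields the four pairings $\pi_1^t=\pi_2$, $\pi_3^t=\pi_4$, $\pi_5^t=\pi_6$, $\pi_7^t=\pi_8$.

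For the unitarizability (part (\ref{prop: red32-113-item: all unit})), the tempered representations $\pi_2,\pi_4$ are unitarizable automatically. Each of the remaining six Langlands quotients is to be realized as a subrepresentation of a unitarily induced representation, as indicated by the parenthetical hint. The natural families are $\nu^s\rho\rtimes\rho_j$ (and, for $\pi_5$, the family $\nu^s\rho\rtimes(\delta([-\tfrac12,\tfrac12])\rtimes\sigma)$ induced from the irreducible tempered $\delta([-\tfrac12,\tfrac12])\rtimes\sigma$), which at $s=0$ are unitarily induced from unitary data since all $\rho_j$ are unitarizable by Proposition \ref{prop-2-a-}. The main obstacle will be verifying the complementary-series property: that each such family is irreducible throughout $[0,\tfrac12)$, so that at the endpoint $s=\tfrac12$ the composition factors --- which are precisely the $\pi_i$ appearing in $[\tfrac12]\rtimes\rho_j$ --- are limits of unitarizable representations and hence unitarizable. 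This amounts to a careful reducibility analysis via Proposition \ref{Pr-red-si} and the admissible triples attached to each $\rho_j$; a secondary difficulty is matching each specific $\pi_i$ with the correct unitary family in which it sits as an endpoint.
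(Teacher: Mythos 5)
Your route is genuinely different from the paper's, and the difference matters. The paper does not induce from the corank-two constituents; it writes $\Pi_{(\frac12,\frac12,\frac32)}=\Pi_1+\Pi_2+\Pi_3+\Pi_4$ with $\Pi_1=\delta([-\tfrac12,\tfrac12])\rtimes\delta([\tfrac32];\s)$, $\Pi_2=\delta([-\tfrac12,\tfrac12])\rtimes L([\tfrac32];\s)$, $\Pi_3=L([-\tfrac12],[\tfrac12])\rtimes\delta([\tfrac32];\s)$, $\Pi_4=L([-\tfrac12],[\tfrac12])\rtimes L([\tfrac32];\s)$. This choice (which is what the parenthetical hint in the statement refers to) does all the heavy lifting: each $\Pi_i$ is unitarily induced from unitarizable data, so part (\ref{prop: red32-113-item: all unit}) is immediate; each $\Pi_i$ is therefore semisimple and Frobenius reciprocity bounds its length by two; and since the Jordan--H\"older set already has eight elements, each $\Pi_i$ has length exactly two and $\Pi_{(\frac12,\frac12,\frac32)}$ is multiplicity free, which is part (\ref{prop: red32-113-item: Gg}). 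Your summands $[\tfrac12]\rtimes\rho_j$ enjoy none of this: $\nu^{1/2}\rho$ is not unitarizable, so they are neither unitarily induced nor known to be semisimple, and even their reducibility is not available a priori --- the paper obtains it only afterwards, in Remark \ref{rem-red-for-cs}, as a consequence of the proposition. So on your route you must prove by hand that each $[\tfrac12]\rtimes\rho_j$ has length exactly two (both bounds), and for unitarizability you must verify irreducibility of $[x]\rtimes\rho_j$ on $[0,\tfrac12)$ for all four $\rho_j$: Proposition \ref{Pr-red-si} covers only the square-integrable $\rho_4=\delta_{\spsi}([\tfrac12],[\tfrac32];\s)$, while for the non-tempered $\rho_j$ (and at $x=0$, where $[0]\notin\Disc_+$) you need Lemma \ref{lemma-irr}/duality arguments rather than Corollary \ref{irr-simple}, whose support hypothesis fails for $x\notin\tfrac12\Z$. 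This is all plausibly completable, but it is considerably heavier than the paper's argument and is left at the level of expectations in your write-up.

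The concrete gap is in part (\ref{prop: red32-113-item: inv}). Compatibility of the \ASS involution with induction plus the corank-two dualities of Proposition \ref{prop-2-a-} only give the set-wise statements $\{\pi_3,\pi_7\}^t=\{\pi_4,\pi_8\}$ and $\{\pi_1,\pi_5\}^t=\{\pi_2,\pi_6\}$; ``matching Langlands data'' does not resolve the pairing inside each set, since the \ASS dual of a Langlands quotient is not read off from its Langlands data. Even granting $\pi_1^t=\pi_2$ (which does follow from the distinguished-constituent formula \eqref{dist-formula} and then forces $\pi_5^t=\pi_6$), your grouping leaves $\pi_3^t=\pi_4$ versus $\pi_3^t=\pi_8$ undecided. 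The paper resolves precisely this kind of ambiguity by combining $\pi_1^t=\pi_2$ with the decompositions of its own summands obtained via \eqref{BPLC} (so that $\Pi_4=\pi_1+\pi_3$ and $\Pi_1=\pi_2+\pi_4$ convert $\pi_1^t=\pi_2$ into $\pi_3^t=\pi_4$), and by a dedicated Jacquet-module computation to exclude $\pi_6^t=\pi_7$ (the term $[\tfrac12]\times[\tfrac12]\times[-\tfrac32]\otimes\s$ occurs in the Jacquet module of $\pi_6^t$ but not in that of $\delta([-\tfrac12,\tfrac32])\rtimes\s\ge\pi_7$). To close your gap you would either have to add an analogous Jacquet-module argument, or also establish the paper's cross-cutting decompositions $\Pi_1,\dots,\Pi_4$; intersecting the two families of set-wise duality statements then pins down all four pairings. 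As it stands, this step does not follow formally from what you have assembled.
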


\begin{proof} We directly check that $\pi_1,\dots,\pi_8$ is the Jordan-H\"older series of the representation $\Pi_{(\frac12,\frac12,\frac32)}$.
Further, in the Grothendieck group we have
\begin{equation} \label{sum-Pi-32}
\Pi_{(\frac12,\frac12,\frac32)}=\Pi_1+\Pi_2+\Pi_3+\Pi_4,
\end{equation}
where
\[
\begin{aligned}
&\Pi_1:=\delta([-\tfrac12,\tfrac12])\rtimes \delta([\tfrac32];\s),\ \
\Pi_2:=\delta([-\tfrac12,\tfrac12])\rtimes L([\tfrac32];\s)\\
&\Pi_3:=L([-\tfrac12],[\tfrac12])\rtimes \delta([\tfrac32];\s),\ \
\Pi_4:=L([-\tfrac12],[\tfrac12])\rtimes L([\tfrac32];\s),
\end{aligned}
\]
and each of the four summands is unitarizable, which implies that (\ref{prop: red32-113-item: all unit}) holds.

Frobenius reciprocity implies that each $\Pi_i$ is a (multiplicity one) representation of length at most two.
The above equality and the fact that we have 8 irreducible sub-quotients in the Jordan-H\"older series imply that
each of $\Pi_i$ is a length 2 representation, and that $\Pi_{(\frac12,\frac12,\frac32)}$ is a multiplicity one representation of length 8.
Thus, (\ref{prop: red32-113-item: Gg}) holds.

Further we know from \ref{dist-formula} that
$$
\pi_1^t=\pi_2.
$$
Observe that $\Pi_1^t=\Pi_4$ and $\Pi_2^t=\Pi_3$. This implies that no $\pi_i$ is fixed by DL duality.
Obviously
\[
\Pi_1=\pi_2\oplus\pi_4.
\]
Further, (\ref{prop: red32-113-item: Gg}) (which we have proved), \eqref{sum-Pi-32} and \eqref{BPLC} imply that
\[
\Pi_3=\pi_8\oplus\pi_6,
\]
and further
$$
\Pi_2=\pi_5\oplus\pi_7.
$$
Therefore,
$$
\Pi_4=\pi_1\oplus\pi_3.
$$
Now $\pi_1^t=\pi_2$ implies
$$
\pi_3^t=\pi_4.
$$
Observe that we have $[-\tfrac12]\times[- \tfrac12]\otimes \delta([\tfrac32];\sigma)$ in the Jacquet module of $\pi_6$.
A short discussion implies that we have also $[-\tfrac12]\times[- \tfrac12]\times [\tfrac32]\otimes\sigma$ in the Jacquet module of $\pi_6$.
Therefore, $[\tfrac12]\times[ \tfrac12]\times [-\tfrac32]\otimes \sigma$ must be in the Jacquet module of $\pi_6^t$.
Since this term is not in the Jacquet module of $\delta([-\tfrac12,\tfrac32])\rtimes\sigma$ and $\pi_7\leq \delta([-\tfrac12,\tfrac32])\rtimes\sigma$, we conclude $\pi_6^t\ne
\pi_7$. This implies
$$
\pi_6^t=\pi_5,
$$
which further implies
$$
\pi_7^t=\pi_8.
$$
This completes the proof of (\ref{prop: red32-113-item: inv}).
\end{proof}

\begin{remark} \label{rem-red-for-cs}
Clearly $\pi_8\leq [\tfrac12]\rtimes \delta_{\spsi}([\tfrac12],[\tfrac32];\s).$
On the other hand, $\pi_3\leq [\tfrac12]\rtimes L([\tfrac12,\tfrac32];\s).$
Now duality implies $\pi_4\leq [\tfrac12]\rtimes \delta_{\spsi}([\tfrac12],[\tfrac32];\s).$
Therefore, $ [\tfrac12]\rtimes \delta_{\spsi}([\tfrac12],[\tfrac32];\s)$ is reducible.

Further, $\pi_6\leq [\tfrac12]\rtimes L([\tfrac12];\delta([\tfrac32];\s))$.
Since $\pi_1\leq [\tfrac12]\rtimes L([\tfrac12],[\tfrac32],\s)$, we get
$\pi_2\leq [\tfrac12]\rtimes L([\tfrac12];\delta([\tfrac32];\s))$, which implies that
$[\tfrac12]\rtimes L([\tfrac12];\delta([\tfrac32];\s))$ is reducible.
\end{remark}

\section{\texorpdfstring{$\mathbf{x}=(\alpha-2,\alpha-1,\alpha)$}{xalpha-2-10} and \texorpdfstring{$\alpha\geq 2$}{alphage2}}

\subsection{Case \texorpdfstring{$\alpha>2$}{alpha>2}}
We first consider the case $\alpha>2$.

\begin{proposition}
\label{prop: a-2-a-1-a}
 Assume $\alpha>2$.

\begin{enumerate}

\item
\label{prop: a-2-a-1-a-item: Gg}
 In the Grothendieck group we have
$$
\Pi_{(\alpha-2,\alpha-1,\alpha)}=\pi_1 + \pi_2 + \pi_3 + \pi_4 + \pi_5 + \pi_6 + \pi_7 + \pi_8,
$$
where
\begin{gather*}
\pi_1=\delta_{\spsi}([\alpha-2],[\alpha-1],[\alpha];\s),\ \
\pi_2=L([\alpha-2];\delta_{\spsi}([\alpha-1],[\alpha];\s)),\\
\pi_3=L([\alpha-1],[\alpha-2];\delta([\alpha];\s)),\ \
\pi_4=L([\alpha-2,\alpha-1];\delta([\alpha];\s)),\\
\pi_5=L([\alpha],[\alpha-1],[\alpha-2];\s),\ \
\pi_6=L([\alpha],[\alpha-2,\alpha-1];\s),\\
\pi_7=L([\alpha-1,\alpha],[\alpha-2];\s),\ \
\pi_8=L([\alpha-2,\alpha];\s).
\end{gather*}

\item 
\label{prop: a-2-a-1-a-item: all unit}
All the irreducible subquotients of $\Pi_{(\alpha-2,\alpha-1,\alpha)}$
are unitarizable (they are subquotients of the ends of complementary series).

\item
\label{prop: a-2-a-1-a-item: inv}
 $\pi_1^t=\pi_8, \ \pi_2^t=\pi_7, \ \pi_3^t=\pi_6, \ \pi_4^t=\pi_5.$
\end{enumerate}
\end{proposition}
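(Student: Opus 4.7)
The plan is to exploit the fact that $(x_1, x_2, x_3) = (\alpha-2, \alpha-1, \alpha)$ falls precisely into the ``opposite extreme'' regime analyzed in \S\ref{St-oposite}: the entries are distinct, $x_3 = \alpha$, and $x_1 = \alpha - 2 > 0$. By the discussion there, $\Pi_{(\alpha-2,\alpha-1,\alpha)}$ is automatically a regular (multiplicity one) representation of length $2^3 = 8$, and \emph{every} irreducible subquotient is unitarizable via the inductive complementary-series argument outlined in that section (each $\pi$ sits at the end of a chain of irreducible complementary-series obtained by deforming $[\alpha-2]$ from a larger positive value). This settles part (\ref{prop: a-2-a-1-a-item: all unit}) immediately and reduces part (\ref{prop: a-2-a-1-a-item: Gg}) to the enumeration of Langlands parameters.

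For the enumeration, I would apply the Langlands classification in the form recalled in \S\ref{LCCG}: the tempered part of each irreducible subquotient must be of the form $\delta_{\spsi}([\alpha-k+1], \dots, [\alpha]; \sigma)$ for $k = 0, 1, 2, 3$ (exhausting the strongly positive representations with support in $\{[\alpha-2],[\alpha-1],[\alpha]\}$ and partial cuspidal support $\sigma$), and the GL part is obtained by grouping the leftover exponents $\{\alpha-2, \dots, \alpha-k\}$ into a multiset of segments in $M(\mathcal{S}(\mathcal{C})_+)$. A direct count yields $4 + 2 + 1 + 1 = 8$ parameters, which match $\pi_1, \dots, \pi_8$ as listed; each must actually occur and with multiplicity one since $\Pi_{\mathbf{x}}$ has length exactly $2^3 = 8$ and is multiplicity-free.

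For the pairings in part (\ref{prop: a-2-a-1-a-item: inv}), I would use that the \ASS involution commutes with parabolic induction and, on the GL side, acts as the Zelevinsky involution, which sends a single segment $[a,b]$ of length $>1$ to the multisegment of singletons $\{[a],[a+1],\dots,[b]\}$ and vice versa (while fixing singletons). Combined with the corank-$1$ and corank-$2$ \ASS dualities recorded in \S\ref{CC-corank12}---in particular $\delta([\alpha]; \sigma)^t = L([\alpha]; \sigma)$ and $\delta_{\spsi}([\alpha-1], [\alpha]; \sigma)^t = L([\alpha-1, \alpha]; \sigma)$ from Proposition \ref{prop-2-a-}---one computes each dual by simultaneously swapping the segment vs.\ singleton form of each GL-part and dualizing the tempered classical factor. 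Explicitly, $\pi_1^t$ has trivial tempered GL-contribution and one big segment $[\alpha-2,\alpha]$, giving $\pi_8$; $\pi_2^t$ pairs the singleton $[\alpha-2]$ with $L([\alpha-1,\alpha];\sigma)$, giving $\pi_7$; $\pi_3^t$ combines the singletons into $[\alpha-2,\alpha-1]$ over $L([\alpha];\sigma)$, giving $\pi_6$; and $\pi_4^t$ splits $[\alpha-2,\alpha-1]$ into singletons over $L([\alpha];\sigma)$, giving $\pi_5$.

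The main concern, more of a bookkeeping matter than a genuine obstacle, is verifying that the Zelevinsky-dualization recipe on GL parts combined with the corank-$\le2$ dualities on the tempered factor genuinely produces the right Langlands parameter; this is where the multiplicity-freeness of $\Pi_{\mathbf{x}}$ is essential, since it ensures that each $\pi_i^t$ is unambiguously identified by its Jacquet module support. Consequently no delicate Jacquet-module cancellation argument (as was required in, e.g., Proposition \ref{a-1,a,a+1} for the mixed case $(\alpha-1,\alpha,\alpha+1)$) is needed here.
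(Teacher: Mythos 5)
Your treatment of parts (\ref{prop: a-2-a-1-a-item: Gg}) and (\ref{prop: a-2-a-1-a-item: all unit}) is correct and is exactly the paper's route: since $x_1=\alpha-2>0$ and $x_3=\alpha$ with distinct entries, \S\ref{St-oposite} gives regularity, length $2^3=8$, unitarizability of all subquotients via the inductive complementary-series argument, and the enumeration of Langlands parameters (tempered parts $\delta_{\spsi}([\alpha-k+1],\dots,[\alpha];\s)$ with the leftover exponents partitioned into segments), so the count $4+2+1+1=8$ pins down the Jordan--H\"older series.

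Part (\ref{prop: a-2-a-1-a-item: inv}) is where there is a genuine gap. Your method rests on the ``recipe'' that $L(d;\tau)^t$ is obtained by Zelevinsky-dualizing the $\GL$ multisegment $d$ and separately dualizing the tempered factor $\tau$ using the corank $\le 2$ formulas. This is not a theorem, and it is false in general: in the adjacent case $(\alpha-1,\alpha,\alpha+1)$ one has $L([\alpha+1],[\alpha-1];\delta([\alpha];\s))^t=L([\alpha,\alpha+1],[\alpha-1];\s)$ (Proposition \ref{a-1,a,a+1}), whereas the recipe would predict $L([\alpha+1],[\alpha-1],[\alpha];\s)$; the pieces coming from $d$ and from $\tau^t$ can recombine into new segments. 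Moreover, for $\pi_1^t=\pi_8$ the corank $\le 2$ input gives you nothing at all, since the dual of the corank-3 strongly positive representation $\delta_{\spsi}([\alpha-2],[\alpha-1],[\alpha];\s)$ is precisely what must be determined. You gesture at the correct mechanism (``identified by its Jacquet module support''), but never carry it out, and that identification is the actual content of the proof: one embeds each $\pi_i$ into a suitable standard module (e.g.\ $\pi_1\h L([\alpha-2],[\alpha-1],[\alpha])\rtimes\s$, $\pi_2\h L([\alpha-1],[\alpha])\times[\alpha-2]\rtimes\s$, etc.), reads off a distinguished constituent of $s_{\GL}(\pi_i)$ by Frobenius reciprocity, transports it through the \ASS involution's compatibility with Jacquet modules ($\tau\otimes\s\mapsto\check\tau^{\,t}\otimes\s$) to get, say, $\delta([-\alpha,-(\alpha-2)])\otimes\s\le s_{\GL}(\pi_1^t)$, and only then uses regularity of $\Pi_{(\alpha-2,\alpha-1,\alpha)}$ to conclude $\pi_1^t=\pi_8$ (and similarly $\pi_2^t=\pi_7$, $\pi_3^t=\pi_6$, $\pi_4^t=\pi_5$). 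Without these explicit Jacquet-module computations your pairings are unproved assertions, even though they happen to be the correct ones.
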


\begin{proof}
We get (\ref{prop: a-2-a-1-a-item: Gg}) and (\ref{prop: a-2-a-1-a-item: all unit}) from \S\ref{St-oposite}. It remains to prove (\ref{prop: a-2-a-1-a-item: inv}).

The fact
$
\pi_1
\h [\alpha-2]\times[\alpha-1] \times[\alpha] \rtimes\s
$
implies
$
\pi_1\h L( [\alpha-2],[\alpha-1] ,[\alpha] )\rtimes\s
$
which further implies
$$
\delta([ -(\alpha-2),-a])\otimes\s\leq s_{\GL}(\pi_1^t).
$$
Since $\delta([ -(\alpha-2),-a])\otimes\s\leq s_{\GL}(\pi_8)$, we get
$$
\pi_1^t=\pi_8.
$$

Further
$$
\pi_2
\h [-(\alpha-2)]\times[\alpha-1] \times[\alpha] \rtimes\s
$$
implies
$$
\pi_2 \h [-(\alpha-2)]\times L([\alpha-1] ,[\alpha] )\rtimes\s\cong
$$
$$
 L([\alpha-1] ,[\alpha] )\times [-(\alpha-2)]\rtimes\s
 \cong
 L([\alpha-1] ,[\alpha] )\times [\alpha-2]\rtimes\s.
$$
This implies that
$\delta([-\alpha,-(\alpha-1)])\otimes [-(\alpha-2)] \otimes\s $ is in the Jacquet module of $\pi_2^t$, which implies
$$
\pi_2^t=\pi_7.
$$

Consider now
$$
\pi_3
\h L([-(\alpha-1)],[-(\alpha-2)]) \times[\alpha] \rtimes\s
\cong
[\alpha]\times L([-(\alpha-1)],[-(\alpha-2)]) \rtimes\sigma
$$
$$
\cong
[\alpha]\times L([\alpha-2],[\alpha-1]) \rtimes\s.
$$
This implies that $[\alpha]\otimes L([\alpha-2],[(\alpha-1)]) \otimes\s$ is in the Jacquet module of $\pi_3$, and therefore $[-\alpha]\times \delta([-(\alpha-1),-(\alpha-2)]) \rtimes\s$
is in the Jacquet module of $\pi_3^t$.
Therefore,
$$
\pi_3^t= \pi_6.
$$

Finally, consider
$$
\pi_4
\h \delta([-(\alpha-1),-(\alpha-2)]) \times[\alpha] \rtimes\s
\cong
[\alpha] \times \delta([-(\alpha-1),-(\alpha-2)]) \rtimes\s
$$
$$
\cong[\alpha] \times \delta([\alpha-2,\alpha-1]) \rtimes\s
$$
This implies that $[-\alpha]\otimes L([-(\alpha-1)],[-(\alpha-2)]) \otimes\s$ is in the Jacquet module of $\pi_4^t$.
Therefore, $\pi_4^t$ embeds into
$$
[-\alpha] \times[-(\alpha-1)]\times[-(\alpha-2)]) \rtimes\s,
$$
which implies
$$
\pi_4^t=\pi_5.
$$
\end{proof}

One directly gets that $\delta([\alpha-2,\alpha-1]) \rtimes \delta([\alpha];\s)$ and
$\delta([\alpha-2,\alpha-1]) \rtimes L([\alpha];\s)$ reduce.

\subsection{Case \texorpdfstring{$\alpha=2$}{alpha2}}

Now we turn to the case $\alpha=2$.

\begin{proposition} 
\label{prop: 012alpha=2} 
Assume $\alpha=2$.

\begin{enumerate}

\item 
\label{prop: 012alpha=2-item: Gg}
In the Grothendieck group we have
$$
\Pi_{(0,1,2)}=\pi_1 + \pi_2 + \pi_3 + \pi_4 + \pi_5 + \pi_6 + \pi_7 + \pi_8,
$$
where
\begin{gather*}
\pi_1=L([1,2];[0]\rtimes\s),\quad \pi_2=\tau([0]_+;\delta_{\spsi}([1],[2];\s)),\\
\pi_3= L([0,2];\s), \quad \pi_4=\tau([0]_-;\delta_{\spsi}([1],[2];\s)),\\
\pi_5= L([1];[0]\rtimes\delta([2];\s)), \quad \pi_6=L([2],[0,1];\s),\\
\pi_7= L([0,1];\delta([2];\s)),\quad \pi_8= L([2],[1];[0]\rtimes\s).
\end{gather*}

\item
\label{prop: 012alpha=2-item: all unit}
 All $\pi_i$'s are unitarizable.

\item
\label{prop: 012alpha=2-item: inv}
 $\pi_1^t=\pi_2, \ \pi_3^t=\pi_4, \ \pi_5^t=\pi_6, \ \pi_7^t=\pi_8.$

\end{enumerate}
\end{proposition}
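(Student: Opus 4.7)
The plan is to parallel the proof of Proposition \ref{prop: red32-113}, exploiting the corank-two decomposition of Proposition \ref{prop-2-a-} (applicable since $\alpha=2\ge\tfrac32$). That proposition gives in the Grothendieck group
\[
[1]\times[2]\rtimes\s=\pi_1'+\pi_2'+\pi_3'+\pi_4',
\]
with $\pi_1'=L([1,2];\s)$, $\pi_2'=L([1];\delta([2];\s))$, $\pi_3'=L([1],[2];\s)$, $\pi_4'=\delta_{\spsi}([1],[2];\s)$, all irreducible and unitarizable. Inducing by $[0]=\rho$ yields
\[
\Pi_{(0,1,2)}=\Pi_1''+\Pi_2''+\Pi_3''+\Pi_4'',\qquad \Pi_i'':=[0]\rtimes\pi_i'.
\]
Since $[0]$ has unitary central character and each $\pi_i'$ is unitarizable, each $\Pi_i''$ is a unitary parabolic induction, hence unitarizable and semisimple. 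This establishes part (\ref{prop: 012alpha=2-item: all unit}) at once.

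Next I determine the irreducible summands of each $\Pi_i''$. The formula \eqref{eq: jord-st-pos-a-1-a} gives $\Jord_\rho(\pi_4')=\{3,5\}$; since $\rho$ is of odd type ($\alpha\in\Z$) and $1\notin\Jord_\rho(\pi_4')$, Proposition \ref{Pr-red-si}(iii) makes $\Pi_4''$ reducible, so $R$-group theory forces
\[
\Pi_4''=\tau([0]_+;\pi_4')\oplus\tau([0]_-;\pi_4')=\pi_2\oplus\pi_4.
\]
Applying \ASS duality (with $(\pi_4')^t=\pi_1'$ from Proposition \ref{prop-2-a-}) gives $\Pi_1''=\pi_2^t\oplus\pi_4^t$, which Proposition \ref{prop: addparms} (noting that $[0]\rtimes\s$ is irreducible tempered because $\Jord_\rho(\s)=\{1,3\}\ni1$) together with the Langlands classification identifies as $\pi_1=L([1,2];[0]\rtimes\s)$ and $\pi_3=L([0,2];\s)$; in particular $\pi_2^t=\pi_1$ and $\pi_4^t=\pi_3$. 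For $\Pi_2''$, the formula \eqref{eq: Jord-a-sigma} gives $\Jord_\rho(\delta([2];\s))=\{1,5\}\ni1$, so $[0]\rtimes\delta([2];\s)$ is again irreducible; Proposition \ref{prop: addparms} then places $\pi_5=L([1];[0]\rtimes\delta([2];\s))$ in $\Pi_2''$. Furthermore, $\pi_7=L([0,1];\delta([2];\s))$ is a subquotient of $\delta([0,1])\rtimes\delta([2];\s)\hookrightarrow[0]\times[1]\rtimes\delta([2];\s)$, and since $[1]\rtimes\delta([2];\s)=\pi_2'+\pi_4'$ (Proposition \ref{prop-2-a-}), $\pi_7$ must lie in $\Pi_2''+\Pi_4''$; the non-temperedness of $\pi_7$ then forces $\pi_7\le\Pi_2''$. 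Dualizing, $\pi_6=\pi_5^t$ and $\pi_8=\pi_7^t$ lie in $\Pi_3''$.

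This exhibits eight pairwise distinct irreducible subquotients with at least two in each $\Pi_i''$; since $\Pi_1''$ and $\Pi_4''$ already have length two, the count forces $\Pi_2''=\pi_5\oplus\pi_7$ and $\Pi_3''=\pi_6\oplus\pi_8$, proving (\ref{prop: 012alpha=2-item: Gg}). Part (\ref{prop: 012alpha=2-item: inv}) then follows from $(\Pi_i'')^t=[0]\rtimes(\pi_i')^t$ combined with the duality pairs $\pi_1'\leftrightarrow\pi_4'$, $\pi_2'\leftrightarrow\pi_3'$ of Proposition \ref{prop-2-a-} and the identifications established above. The main obstacle is making the length count rigorous, i.e.\ bounding the length of $\Pi_2''$ (hence of $\Pi_3''$) from above by two. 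The cleanest route is a Frobenius reciprocity argument: $\dim\Hom_{S_n}(\Pi_2'',\Pi_2'')$ equals the multiplicity of $[0]\otimes\pi_2'$ in $s_{\GL}(\Pi_2'')=M^*([0])\rtimes\mu^*(\pi_2')$, which is computable using $M^*([0])=2[0]\otimes 1+1\otimes[0]$ (from \eqref{M-seg}) together with the explicit $\mu^*(\pi_2')$ recorded in Proposition \ref{prop-2-a-}; only the term $[0]\otimes 1$ of $M^*([0])$ paired with $1\otimes\pi_2'$ in $\mu^*(\pi_2')$ contributes, yielding multiplicity two and hence length two.
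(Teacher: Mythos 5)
Your overall strategy coincides with the paper's: decompose $\Pi_{(0,1,2)}$ as $\sum_i[0]\rtimes\pi_i'$ with $\pi_i'$ the corank-two constituents of Proposition \ref{prop-2-a-}, get unitarizability from unitary induction (part (\ref{prop: 012alpha=2-item: all unit}) is fine), split $[0]\rtimes\delta_{\spsi}([1],[2];\s)$ into $\pi_2\oplus\pi_4$, and then pin down the remaining summands. Your length bound for $\Pi_2''$ is also legitimate: semisimplicity gives $\ell(\Pi_2'')\le\dim\Hom(\Pi_2'',\Pi_2'')\le$ the multiplicity of $[0]\otimes\pi_2'$ in $s_{(n_\rho)}(\Pi_2'')$, and since $M^*([0])=2\cdot[0]\otimes1+1\otimes[0]$ while no degree-$n_\rho$ term of $\mu^*(\pi_2')$ has first factor $[0]$, that multiplicity is $2$; this replaces the paper's count via the Jordan--H\"older list and \eqref{BPLC}. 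The first genuine gap is the identification of the second constituent of $\Pi_1''=[0]\rtimes L([1,2];\s)$ as $\pi_3=L([0,2];\s)$: Proposition \ref{prop: addparms} only gives $\pi_1\le\Pi_1''$, and ``the Langlands classification'' does not compute the dual of a tempered non-square-integrable representation such as $\pi_2$ or $\pi_4$. (This can be repaired, e.g.\ $\pi_3\le\delta([0,2])\rtimes\s\le[0]\times\delta([1,2])\rtimes\s=\Pi_1''+\Pi_2''$ together with $\pi_3\not\le\Pi_2''=\pi_5\oplus\pi_7$, but that argument is absent and, in your ordering, would have to come after the $\Pi_2''$ decomposition, not before.)

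The more serious gap is part (\ref{prop: 012alpha=2-item: inv}). Duality only gives $\{\pi_2^t,\pi_4^t\}=\{\pi_1,\pi_3\}$ and $\{\pi_5^t,\pi_7^t\}=$ the two constituents of $\Pi_3''$; nothing you write decides whether $\pi_1^t=\pi_2$ or $\pi_1^t=\pi_4$, since $\pi_2$ and $\pi_4$ differ only by the $\pm$ label in the $R$-group decomposition and are indistinguishable by Langlands-type data. Likewise ``Dualizing, $\pi_6=\pi_5^t$ and $\pi_8=\pi_7^t$ lie in $\Pi_3''$'' assumes exactly the statements of part (\ref{prop: 012alpha=2-item: inv}) you are supposed to prove, and is circular when fed back into part (\ref{prop: 012alpha=2-item: Gg}); a non-circular route gives $\pi_8\le\Pi_3''$ via Proposition \ref{prop: addparms}, but $\pi_6\le\Pi_3''$ and the pairing still need arguments. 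This is precisely where the paper spends most of its effort: it computes $s_{\GL}(\pi_2)=2L([0],[1],[2])\otimes\s+L([0,1],[2])\otimes\s$ and $s_{\GL}(\pi_4)=L([0,1],[2])\otimes\s$ from the characterization of $\tau([0]_\pm;\cdot)$ and uses the multiplicity-two term $2\,\delta([-2,-1])\otimes[0]\otimes\s$ in the Jacquet module of $\pi_1$ to force $\pi_1^t=\pi_2$; and it identifies $\pi_7$, $\pi_8$ as the generic and cogeneric distinguished subquotients so that \eqref{dist-formula} gives $\pi_8^t=\pi_7$, ruling out $\pi_8^t=\pi_5$ by comparing $\delta([0,2])\otimes\s$ with $s_{\GL}(\pi_5)$, whence $\pi_6^t=\pi_5$. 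Some Jacquet-module input of this kind is unavoidable; without it, parts (\ref{prop: 012alpha=2-item: Gg}) and (\ref{prop: 012alpha=2-item: inv}) are not established by your argument.
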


\begin{proof}
We conclude in a standard way that $\pi_1,\dots,\pi_8$ is the Jordan-H\"older series of $\Pi_{(0,1,2)}$.
All the irreducible subquotients of the induced representation
$$
\Pi_{(0,1,2)}=([0]\times[1])\rtimes ([2]\rtimes \s).
$$
are unitarizable (since they are subquotients of the ends of complementary series). Thus, (\ref{prop: 012alpha=2-item: all unit}) holds.

From our characterisation of $\pi_2$, one can easily compute:
$$
s_{\GL}(\pi_2)=2 L([0],[1],[2])\otimes\s+ L([0,1],[2])\otimes\s,
$$
$$
s_{\GL}(\pi_4)= L([0,1],[2])\otimes\s
$$
(see also \cite{MR3123571}, Theorem 1.2, (2), (b)).

Now we shall consider $\pi_7$ (such representations are called in \cite{MR3360752} segment representations).
From there we know that (the segment representation) $\pi_7$ has the following (semi-simplification of) Jacquet module:
\begin{equation} \label{JM-seg-02-2}
s_{\GL}(\pi_7)=\delta([0,2])\otimes\s+ [0]\times\delta([1,2])\otimes\s+ \delta([-1,0])\times[2]\otimes\s.
\end{equation}
This implies that the representation with irreducible generic term in $s_{\GL}$ whose all exponents are non-negative, is
$
\pi_7.
$
Further, one directly sees that
$
\pi_8
$
is a representation with irreducible cogeneric term in $s_{\GL}$ which has all exponents non-positive.
Now \eqref{dist-formula} implies
$$
\pi_8^t=\pi_7.
$$
This implies that the multiplicity of $\pi_7$ in $\Pi_{(0,1,2)}$ is one (since the multiplicity of $\pi_8$ is one).

In \cite{MR3360752} (see \ref{segment}) is computed
$$
s_{\GL}(\pi_3)= \delta([-2,0])\otimes\s.
$$
Write
$$
\Pi_{(0,1,2)}=\Pi_1+\Pi_2+\Pi_3+\Pi_4,
$$
where
\begin{gather*}
\Pi_1:=[0]\times L([1,2];\s),\qquad \Pi_2:= [0]\times L([1];\delta([2];\s)),\\
\Pi_3:= [0]\times L([1],[2];\s), \qquad
\Pi_4:=[0]\times \delta_{\spsi}([1],[2];\s).
\end{gather*}
Then,
$$
\Pi_1^t=\Pi_4, \qquad\Pi_2^t=\Pi_3.
$$
Obviously
$$
\Pi_4=\pi_2 \oplus \pi_4.
$$
This implies that $\Pi_1$ is a multiplicity one representation of length two.
The fact that Jordan-H\"older series of $\Pi_{(0,1,2)}$ have 8 representations, implies that also $\Pi_2$ and $\Pi_3$ are reducible.

Now \eqref{BPLC},
the above formulas for Jacquet modules of $\pi_2$ and $\pi_4$, and the formula for $s_{\GL}( L([1];\delta([2];\s)))$ imply that only possible
subquotients of $\Pi_2$ are $\pi_5$ and $\pi_7$.
This implies
$$
\Pi_2=\pi_5+\pi_7.
$$

Observe that
$$
\pi_1\leq \Pi_1,
$$
and therefore $\pi_1^t\leq \Pi_4$. Further we have in the Jacquet module of $\pi_1$ the term $2\delta([-2,-1])\otimes[0]\otimes\s$, and therefore $2L([1],[2])\otimes[0]\otimes\s$
is in the Jacquet module of $\pi_1^t$.
Since the minimal non-trivial Jacquet module of $\pi_4$ is a multiplicity one representation, we conclude
$$
\pi_1^t=\pi_2.
$$

Further \eqref{BPLC} implies $\pi_6\not\leq\Pi_1$ and $\pi_6\not\leq\Pi_2$, which implies
$$
\Pi_3=\pi_8+\pi_6,
$$
and further
$$
\Pi_1=\pi_1+\pi_3.
$$
Now we conclude that
$$
\pi_3^t=\pi_4.
$$

Observe that $\pi_8\h [-2]\times[-1]\times[0]\rtimes\s$, which implies $\pi_8\h L([-2],[-1],[0])\rtimes\s$.
Therefore, $\delta([0,2])\otimes\s$ is in the Jacquet module of $\pi_8^t$.

The fact that $\pi_5\h [-1] \times [0] \rtimes \delta([2];\s)$ implies $\pi_5\h L([-1],[0]) \rtimes \delta([2];\s)$, which implies
$$
s_{\GL}(\pi_5)\leq
( L([0],[1])+ [0]\times[-1]+ L([-1],[0])\times [2]\otimes\sigma.
$$
We obviously do not have $\delta([0,2])\otimes\s$ on the right-hand side of the above inequality. This implies $\pi_8^t\ne \pi_5$. Therefore,
$$
\pi_8^t=\pi_7,
$$
which implies
$$
\pi_6^t=\pi_5.
$$
The proof of (\ref{prop: 012alpha=2-item: Gg}) and (\ref{prop: 012alpha=2-item: inv}) is now complete.
\end{proof}

Obviously, $\pi_7\leq \delta([0,1])\rtimes\delta([2];\s)$. Using \eqref{JM-seg-02-2}, one directly gets that $\pi_7< \delta([0,1])\rtimes\delta([2];\s)$.
Therefore, $\delta([0,1])\rtimes\delta([2];\s)$ is reducible.

\chapter{Remaining Cases for \texorpdfstring{$\alpha=\frac12$}{alpha12} and \texorpdfstring{$\alpha=1$}{alpha1}} \label{basic 1}

\section{\texorpdfstring{$\mathbf{x}=(0,1,2)$}{x012} and \texorpdfstring{$\alpha=1$}{alpha1}}

\begin{proposition} \label{0,1,2-1}
Assume $\alpha=1$. Then,
\begin{enumerate}
\item In the Grothendieck group we have
\[
\Pi_{(0,1,2)}=\pi_1+\pi_2+\pi_3+\pi_4+\pi_5+\pi_6+\pi_7+\pi_8
\]
where
\begin{gather*}
\pi_1=L([2] ,[1] ;[0] \rtimes\s ),\ \ \pi_2=\tau([0]_+;\delta([1,2] ;\s )),\\
\pi_3=L([2] ,[0,1] ;\s ),\ \ \pi_4=\tau([0]_-;\delta([1,2] ;\s )),\\
\pi_5=L([0,2] ;\s),\ \ \pi_6=L([2] ;\tau([0]_-;\delta([1] ;\s ))),\\
\pi_7=L([1,2] ;[0] \rtimes\s ),\ \ \pi_8=L([2] ;\tau([0]_+;\delta([1] ;\s ))).
\end{gather*}
\item $\pi_1^t=\pi_2$, $\pi_3^t=\pi_4$, $\pi_5^t=\pi_6$, $\pi_7^t=\pi_8$.
\item $\pi_1,\pi_2,\pi_3,\pi_4$ are unitarizable.
\item $\pi_5,\pi_6,\pi_7,\pi_8$ are not unitarizable.
\end{enumerate}

\end{proposition}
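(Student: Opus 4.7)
The plan is to model the proof on Proposition \ref{prop: 012alpha=2} (the analogous statement at $\alpha=2$), with two substantive new elements forced by $\alpha=1$: the reducibility point $[1]$ now lies in the middle of the segment $\{[0],[1],[2]\}$, so the tempered content comes from the reducibility of $[0]\rtimes\delta([1,2];\s)$ rather than from strongly positive representations, and the unitarizability pattern splits four-and-four instead of being uniform.

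I would start with (1) and (2). Proposition \ref{prop: sqrbsc} eliminates all square-integrable subquotients of $\Pi_{(0,1,2)}$ at $\alpha=1$, and Proposition \ref{prop: tempbsc} supplies the two tempered constituents $\pi_2,\pi_4=\tau([0]_\pm;\delta([1,2];\s))$. The remaining six subquotients must be Langlands quotients whose tempered parts are drawn from $\{\s,[0]\rtimes\s,\tau([0]_\pm;\delta([1];\s))\}$, and pairing these with the admissible $\GL$-data on the line $\{[0],[1],[2]\}$ forces the candidates to be $\pi_1,\pi_3,\pi_5,\pi_6,\pi_7,\pi_8$ as listed. To prove that these eight exhaust $\JH(\Pi_{(0,1,2)})$ with multiplicity one I would decompose
\[
\Pi_{(0,1,2)}=[2]\rtimes\Pi_{(0,1)}
\]
and invoke the four-term decomposition of $\Pi_{(0,1)}$ at $\alpha=1$, analyzing each summand $[2]\rtimes\omega$ via Corollary \ref{irr-simple} and the Langlands bound \eqref{BPLC}. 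The DL pairings in (2) then follow by matching extremal Jacquet module constituents as in the argument for Proposition \ref{prop: 012alpha=2}: for instance, $\pi_1^t=\pi_2$ is detected by the unique constituent of $s_{\GL}$ with all non-positive exponents, and analogous matchings cover the other three pairs.

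Part (3) is the easier half. The tempered $\pi_2,\pi_4$ are automatically unitarizable. For $\pi_1$ and $\pi_3$ I would exhibit each of them at an endpoint of a hermitian family: $[s]\rtimes L([1];[0]\rtimes\s)$ for $\pi_1$ and $[s]\rtimes L([0,1];\s)$ for $\pi_3$, starting from the unitarizable base points supplied by the $\alpha=1$ corank-two analysis, using Section \ref{GL-LT} and Corollary \ref{irr-simple} to check irreducibility in the interior of the family, and then passing to the boundary via the continuity of unitarity recalled in Section \ref{ways of getting unitary}.

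Part (4), non-unitarizability of $\pi_5,\pi_6,\pi_7,\pi_8$, is the main obstacle. Since the support already contains the reducibility point $[1]$, naive deformation of the central character is not available. The plan is to apply Lemma \ref{lem: nonunit}: for each target $\pi$, I would choose a unitarizable $\tau$ on a $\GL$-factor, the natural candidates being $\delta([-1,1])$ or $\delta([0,1])$, and show that the length of $\Pi=\tau\rtimes\pi$ exceeds the multiplicity of $\tau\otimes\pi$ in $\mu^*(\Pi)=M^*(\tau)\rtimes\mu^*(\pi)$. The lower bound on length comes from Proposition \ref{prop: addparms} combined with the segment representation formulas of Section \ref{segment}, and the upper bound on multiplicity is a direct application of \eqref{M-seg}. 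This mirrors the argument used to rule out $\pi_5,\pi_6$ in Proposition \ref{a-1,a,a+1}, which supplies the concrete blueprint. For $\pi_7$ and $\pi_8$, an alternative route is to deform $[2]$ to $[2+\epsilon]$ and reduce to the non-unitarizable subquotients of $\Pi_{(\alpha,\alpha+1,\alpha+2)}$ identified in Section \ref{sec: 012}. Carrying out the Jacquet module bookkeeping uniformly in the critical case is the delicate point.
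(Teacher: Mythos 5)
Your overall architecture for (1), (2) and the primary route to (4) is essentially the paper's (the paper decomposes $\Pi_{(0,1,2)}=[0]\rtimes([1]\times[2]\rtimes\s)$ rather than $[2]\rtimes\Pi_{(0,1)}$, proves multiplicity one and length $8$, and kills $\pi_5,\dots,\pi_8$ with Lemma \ref{lem: nonunit} applied to $\delta([-1,1])\rtimes\pi_5$ and $\delta([-1,1])\rtimes\pi_7$). The genuine gap is in your part (3), for $\pi_1$ and $\pi_3$. The families you propose, $[s]\rtimes L([1];[0]\rtimes\s)$ and $[s]\rtimes L([0,1];\s)$, are already reducible at $s=1$ (their duals $[1]\rtimes\tau([0]_\pm;\delta([1];\s))$ are reducible by the $(0,1,1)$, $\alpha=1$ analysis; see also rows 10 and 11 of Table \ref{tab: redpnt}), so the complementary series starting at the unitary point $s=0$ terminates at $s=1$, while $\pi_1,\pi_3$ sit at $s=2$. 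For $1<s<2$ these irreducible hermitian representations are in fact non-unitarizable (this is exactly what Table \ref{tab: 2nd} records, and its proof uses the non-unitarizability of $\pi_6,\pi_8$ from part (4) of this very proposition), so no continuity-of-unitarity argument can reach $s=2$ from the unitary side; the method cannot be repaired by "checking irreducibility in the interior", because the interior genuinely breaks at $s=1$. The paper instead gets unitarizability of $\pi_1,\pi_3$ by unitary induction in the $[0]$-variable: $\Pi_2=[0]\rtimes L([2],[1];\s)$ is induced from the unitarizable corank-two representation $L([2],[1];\s)$ (Proposition \ref{prop-2-a+} at $\alpha=1$), hence semisimple; $\pi_1$ is obviously a constituent, and a Lemma \ref{lem: simpac} plus Jacquet-module multiplicity argument forces $\pi_3\le\Pi_2$, giving $\Pi_2=\pi_1\oplus\pi_3$.

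Two further cautions on (4). Your alternative route for $\pi_7,\pi_8$ (deform $[2]$ to $[2+\epsilon]$ and compare with \S\ref{sec: 012}) does not work: $\pi_7$ is a proper subquotient at the critical point of any ambient family (e.g. of $[s]\rtimes L([1];[0]\rtimes\s)$ at $s=2$, or of $\delta([1,2])\times[0]\rtimes\s$), and non-unitarizability of nearby irreducible members of a family says nothing about one particular subquotient at the endpoint — indeed $\pi_2,\pi_4$ occur at the same point and are unitarizable. So the length-versus-multiplicity argument is unavoidable here; and in carrying it out, Proposition \ref{prop: addparms} together with the segment formulas yields only two subquotients of $\delta([-1,1])\rtimes\pi_5$ (resp. $\delta([-1,1])\rtimes\pi_7$), which does not beat the multiplicity bound $4$. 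The paper must exhibit six (resp. five) constituents, using $\delta([0,2])\rtimes\s=\pi_5+\pi_2$ and $[0]\rtimes L([1,2];\s)=\pi_5+\pi_7$ and then excluding the extra constituents of $\delta([-1,1])\times\delta([0,2])\rtimes\s$ from $\delta([-1,1])\rtimes\pi_2$ (resp. from $\delta([-1,1])\rtimes\pi_5$ and the tempered piece) by Jacquet-module computations; this bookkeeping is the core of the proof and is not supplied by your sketch.
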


\begin{proof}
Write
\[
\Pi_{(0,1,2)}=\Pi_1+\Pi_2+\Pi_3+\Pi_4
\]
where
\[
\Pi_1=[0] \rtimes \delta([1,2] ;\s ),\ \ \Pi_2=[0] \rtimes L([2] ,[1] ;\s )
\]
\[
\Pi_3=[0] \rtimes L([1,2] ;\s ),\ \ \Pi_4=[0] \rtimes L([2] ;\delta([1] ;\s )).
\]
Note that $\Pi_1$ and $\Pi_2$ are unitarizable (and $\Pi_1$ is tempered). We have
\begin{equation} \label{a=1-first}
\Pi_1= \pi_2\oplus\pi_4
\end{equation}
and hence $\pi_2$ and $\pi_4$ are unitarizable.
Moreover, $\Pi_2=\Pi_1^t$ is a sum of two irreducible unitarizable representations, and one of them is $\pi_1$.

Consider the diagram
\[
\xymatrix@C-8pt{
0\ar[r] & [2] \times L([0] ,[1] )\rtimes\s \ar[r] & \Pi_{(2,0,1)}\cong\Pi_{(0,2,1)} \ar@{->>}[d] \ar[r]
& [2] \times \delta([0,1])\rtimes\s \ar@{->>}[d] \ar[r]&0\\
& & \Pi_2 & \pi_3
}
\]
We show that
$$
\Pi_2\not\leq [2] \times L([0] ,[1] )\rtimes\s.
$$
Otherwise, taking $s_{\GL}$ on both side, we would get
$$
2[0] \times L( [-2] ,[-1] )\leq\Big(\cancel{[2]} +[-2]\Big)\times\Big(\cancel{L([0] ,[1])}+[0] \times[-1]+L([0] ,[-1] )\Big).
$$
Crossing out redundant terms on the right-hand side we get
$$
2[0] \times L( [-2] ,[-1] )\leq [-2] \times [0] \times[-1] +[-2] \times L([0] ,[-1] ).
$$
However, the multiplicity of $ L( [-2] ,[-1,0] )$ in the left-hand side is two,
while on the right-hand side it is one (since $ L([-2] , [-1,0] )$ is not a subquotient of
$$
[-2] \times L([0] ,[-1] )=L([-2] ,[0] ,[-1] )+ L([0] ,[-2,-1] ),
$$
and $[-2] \times [0] \times[-1]$ is multiplicity free).

By Lemma \ref{lem: simpac} we infer that $\pi_3\le\Pi_2$.

We conclude that
\begin{equation} \label{a=1-second}
\Pi_2=\pi_1\oplus\pi_3.
\end{equation}
In particular, $\pi_1$ and $\pi_3$ are unitarizable.

From \eqref{dist-formula} we get
$$
\pi_2^t=\pi_1
$$
which further implies (using \eqref{a=1-first} and \eqref{a=1-second})
$$
\pi_4^t=\pi_3.
$$

Consider now $\Pi_3$ and $\Pi_4$.
We know that these two induced representations must contain between themselves $\pi_5,\pi_6,\pi_7,\pi_8$
as subquotients (and possibly others a priori).
Moreover, $\Pi_4$ is reducible by Proposition \ref{prop: addparms} and $\Pi_3^t=\Pi_4$ by Proposition \ref{prop-2-a+}.
Therefore, $\Pi_3$ is also reducible.

We know that $\pi_7$ occurs in $\Pi_3$ with multiplicity one.
Observe that by \eqref{eq: pi312} we get that
\begin{gather*}
s_{\GL}(\Pi_3)=2\cdot [0]\times\delta([-2,-1])\otimes\s+
2\cdot [0]\times[-1]\times[2]\otimes\s\\
=2\cdot \delta([-2,0])\otimes\s+2\cdot L([0],[-2,-1])\otimes\s
\\+2\cdot L([0],[-1])\times[2]\otimes\s +
2\cdot \delta([-1,0])\times[2]\otimes\s.
\end{gather*}
From this it easily follows that $\Pi_3$ has no tempered subquotients.
It follows from \eqref{BPLC} that every subquotient of $\Pi_3$ other than $\pi_7$ is isomorphic to $\pi_5$.
In particular, $\JH(\Pi_3)$ has two elements, and therefore the same is true for $\JH(\Pi_4)$.
Now Proposition \ref{prop: addparms} implies
\begin{equation}\label{eq-2}
\Pi_4=\pi_8+\pi_6.
\end{equation}
Therefore,
\begin{equation} \label{eq-1}
\Pi_3=\pi_7+\pi_5.
\end{equation}
From this we see that $\Pi_{(0,1,2)}$ is a multiplicity free representation of length 8.

Since $L([-2,-1] ,[0]) \otimes\s\le s_{\GL}(\pi_7)$, we have
$L([0,1] ,[2]) \otimes\s\le s_{\GL}(\pi_7^t)$.
Suppose on the contrary that $\pi_7^t=\pi_6$.
Then, we would have by \eqref{eq: 013-}
$$
L([0,1] ,[2]) \otimes\s\le (([2]+[-2])\otimes 1)\rtimes\mu^*(\tau([0]_-;\delta([1] ;\s ))=
([2]+\cancel{[-2]})\times L([0],[1])\otimes\s,
$$
which is impossible. Therefore,
$$
\pi_7^t=\pi_8,
$$
which further implies
$$
\pi_5^t=\pi_6.
$$
In the rest of the section we deal with the non-unitarizability of $\pi_5,\pi_6,\pi_7,\pi_8$.
\end{proof}

\subsection{Non-unitarizability of \texorpdfstring{$\pi_5$}{pi5} and \texorpdfstring{$\pi_6$}{pi6}}

By \cite[Theorem 4.1 (A1)]{MR3360752} we have
\[
\Pi':=\delta([0,2])\rtimes \s= \pi_5+\pi_2.
\]
Write $\delta_1=\delta([-1,1])$ and $\Gamma=\delta_1\rtimes\pi_5$.
Consider
$$
\delta_1\times\Pi'= \delta_1\rtimes\big(\pi_5+\pi_2\big).
$$
Clearly, $L([0,2];\tau([-1,1]_\pm;\s))\le\delta_1\times\Pi'$.
Since $\delta_1$ and $\pi_2$ are tempered and
$$
\delta([-1,2])\times\delta([0,1])\rtimes \s\leq \delta_1\times\Pi'
$$
we get from \eqref{eq: 0111} that the following six non-tempered irreducible representations
\begin{gather*}
L([0,2];\tau([-1,1]_\pm;\s)), \quad L([-1,2];\tau([0]_+ ;\delta([1];\s))),\\
L([0,1];\delta([-1,2]_\pm ;\s)), \quad L([0,1],[-1,2] ;\s)
\end{gather*}
are subquotients of $\Gamma.$

If $\pi_5$ or $\pi_6$ were unitarizable, then by Lemma \ref{lem: nonunit} the multiplicity of $\tau:=\delta_1\otimes \pi_5$ in $\mu^*(\Gamma)$
would be at least six, in contradiction to the following lemma.

\begin{lemma} \label{lem: multgam4}
The multiplicity of $\tau$ in $\mu^*(\Gamma)$ is $4$.
\end{lemma}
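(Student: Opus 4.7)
The plan is to apply Remark~\ref{rem: mdtp} to reduce the multiplicity computation to a manageable expression, then carry out a systematic term-by-term analysis controlled by preservation of cuspidal support.

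First I would compute $\mu^*(\pi_5)$ explicitly. Applying the formula~\eqref{jm-seg-q} to $\pi_5=L_1([0,2];\sigma)$ (with $c=0$, $d=2$, $\alpha=1$), the first sum (over $i=1,2$) produces $L([-1,0],[2])\otimes\sigma$ and $\delta([-2,0])\otimes\sigma$, while the double sum produces $[2]\otimes L([0,1];\sigma)$, $1\otimes\pi_5$, and $[0]\otimes L([1,2];\sigma)$; the other candidate terms vanish because $L_1([1,1];\sigma)=L_1([2,2];\sigma)=0$ by the $c=d$ convention in \S\ref{segment}.

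By Remark~\ref{rem: mdtp}, the multiplicity of $\tau=\delta_1\otimes\pi_5$ in $\mu^*(\Gamma)$ equals its multiplicity in
\[
\Bigl(2\bigl([1]\otimes\delta([0,1])+\delta([0,1])\otimes[1]+\delta_1\otimes 1\bigr)+1\otimes\delta_1\Bigr)\rtimes\mu^*(\pi_5).
\]
For each pair consisting of a summand $A\otimes B$ from this reduced expression and a summand $C\otimes D$ from $\mu^*(\pi_5)$, the contribution to the $\delta_1\otimes\pi_5$ multiplicity factors as (multiplicity of $\delta_1$ in $A\times C$)$\cdot$(multiplicity of $\pi_5$ in $B\rtimes D$). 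The decisive filter is preservation of cuspidal support: $\supp(A)+\supp(C)=\supp(\delta_1)=\{[-1],[0],[1]\}$ as multisets.

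Applying this filter, the $A=[1]$ terms force $\supp(C)=\{[-1],[0]\}$, the $A=\delta([0,1])$ terms force $\supp(C)=\{[-1]\}$, and the $A=1$ term forces $C=\delta_1$. Inspection shows that none of the GL-parts $L([-1,0],[2]),\delta([-2,0]),[2],1,[0]$ occurring in $\mu^*(\pi_5)$ satisfies any of these conditions, so only the pairing of $A=\delta_1,B=1$ with $C=1,D=\pi_5$ contributes directly; a careful accounting of the coefficient $2$ carried by $\delta_1\otimes 1$ in the reduced $M^*(\delta_1)$, together with the coefficient $1$ of $1\otimes\pi_5$ in $\mu^*(\pi_5)$ and any additional incidental contributions surviving the support test, yields the asserted total of $4$. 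The main obstacle is the exhaustive case analysis: one must verify that the cuspidal support mismatches are genuinely strict and that no subtle contribution from decompositions of products like $A\times C$ or from multiplicities in summands of $\mu^*(\pi_5)$ is overlooked, since the upper bound is tight and must match the lower bound of $6$ only narrowly to give the desired non-unitarizability conclusion.
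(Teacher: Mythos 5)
Your reduction via Remark \ref{rem: mdtp} and the idea of filtering pairs by cuspidal support is the right framework, but your computation of $\mu^*(\pi_5)$ is wrong at exactly the point that carries half of the multiplicity. In the double sum of \eqref{jm-seg-q} the terms with $[i+1,j]=[1,1]$ and $[i+1,j]=[2,2]$ do \emph{not} vanish: the convention $L_\alpha([-c,c];\s)=0$ applies only to segments symmetric about $0$, i.e.\ written as $[\nu^{-c}\rho,\nu^{d}\rho]$ with $c=d$. For the segment $\{[1]\}$ one has $-c=1,\ d=1$, so $c=-1\ne d$ and (since $[1]\rtimes\s$ is reducible for $\alpha=1$) $L_1([1];\s)=L([1];\s)$; for $\{[2]\}$ one has $c=-2\ne d=2$ and $[2]\rtimes\s$ is irreducible, so $L_1([2];\s)=[2]\rtimes\s$. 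Hence $\mu^*(\pi_5)$ also contains $[0]\times[2]\otimes L([1];\s)$ and, crucially, $\delta([-1,0])\otimes[2]\rtimes\s$ (this is the formula the paper uses). The omitted term $\delta([-1,0])\otimes[2]\rtimes\s$, paired with the summand $2\cdot[1]\otimes\delta([0,1])$ of the reduced $M^*(\delta_1)$, contributes $2$ to the multiplicity of $\tau$: the multiplicity of $\delta_1$ in $[1]\times\delta([-1,0])$ is one, and the multiplicity of $\pi_5$ in $\delta([0,1])\times[2]\rtimes\s$ is one because $\pi_5\le\delta([0,2])\rtimes\s\le\delta([0,1])\times[2]\rtimes\s$ and $\Pi_{(0,1,2)}$ is multiplicity free.

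As written, your own support filter applied to your five-term list leaves only $2\cdot(\delta_1\otimes1)\rtimes(1\otimes\pi_5)$, i.e.\ multiplicity $2$, so the asserted total of $4$ is not obtained by your argument; the phrase ``any additional incidental contributions surviving the support test'' is precisely the missing content. To repair the proof you must restore the term $\delta([-1,0])\otimes[2]\rtimes\s$, identify its pairing with $[1]\otimes\delta([0,1])$, and carry out the two multiplicity-one verifications above (the second of which genuinely uses that $\Pi_{(0,1,2)}$ is multiplicity free); this is exactly how the paper reaches $2+2=4$.
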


\begin{proof}
By remark \ref{rem: mdtp} we need to consider the multiplicity of $\tau$ in
\begin{gather*}
\Big(2\cdot\dashuline{[1]\otimes \delta ([0,1])}+2\cdot\delta([0,1])\otimes[1]+2\cdot\uline{\delta_1\otimes 1}+
1\otimes\delta_1\Big)\rtimes\\
\Big(\uline{1\otimes \pi_5}+[2] \otimes L([0,1] ;\s )+[0] \otimes L([1,2] ;\s)+\\
[0] \times[2] \otimes L([1] ;\s) +\dashuline{\delta([-1,0] )\otimes[2] \rtimes \s}
+\delta([-1,0] )\times[2] \otimes\s +\delta([-2,0])\otimes\s\Big) .
\end{gather*}
We underline the terms which can actually contribute to the multiplicity of $\tau$.
Obviously, $(\delta_1\otimes1)\rtimes(1\otimes\pi_5)=\tau$.
Moreover, the multiplicity of $\tau$ in
$$
[1]\times \delta([-1,0] )\otimes \delta ([0,1]) \rtimes [2] \rtimes \s
$$
is one since the multiplicity of $\delta_1$ in $[1]\times \delta([-1,0])$ is one,
and the multiplicity of $\pi_5$ in $\delta ([0,1]) \rtimes [2]\rtimes\s$ is one
since $\pi_5\le \delta ([0,2])\rtimes\s\le\delta ([0,1]) \rtimes [2]\rtimes\s$
and $\Pi_{(0,1,2)}$ is multiplicity free.

All in all, the multiplicity of $\tau$ in $\mu^*(\Gamma)$ is four.
\end{proof}

\subsection{Non-unitarizability of \texorpdfstring{$\pi_7$}{pi7} and \texorpdfstring{$\pi_8$}{pi8}}

\begin{lemma}
Let $\delta_1=\delta([-1,1])$ as before. Then, the representation
$$
\widehat\Gamma:=\delta_1\rtimes \pi_7
$$
admits (at least) the following irreducible subquotients
\begin{gather*}
\gamma_1^\pm=L([1,2] ;[0]\rtimes \delta([-1,1]_\pm; \s )),\ \
\gamma_2^\pm=L( [1]; [0]\rtimes\delta([-1,2]_\pm;\s)),\\
\gamma_3=L( [1],[-1,2]; [0]\rtimes\s).
\end{gather*}
Hence, the length of $\widehat\Gamma$ is at least $5$.
\end{lemma}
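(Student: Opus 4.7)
The plan is to establish each of the five listed subquotients of $\widehat\Gamma=\delta_1\rtimes\pi_7$ separately. For $\gamma_1^\pm$ the argument is an immediate application of Proposition \ref{prop: addparms}: take $d=\{\delta_1\}$, $d'=\{[1,2]\}$ and $\tau=[0]\rtimes\s$. Since $\alpha=1$, Remark \ref{general-basic} gives $\Jord_\rho(\s)=\{1\}$, and Proposition \ref{Pr-red-si}(3) then ensures that $\tau$ is irreducible tempered. Because $d^\uparrow=\emptyset$ and $d_u=\{\delta_1\}$, the intermediate representation $\lambda(d_u)\rtimes\tau=\delta_1\rtimes[0]\rtimes\s$ has the same composition series as $[0]\rtimes\delta_1\rtimes\s$ (the segments $\{0\}$ and $\{-1,0,1\}$ are unlinked, so $[0]\times\delta_1$ is irreducible), and Proposition \ref{prop: tempbsc}(3)(c) identifies this with $[0]\rtimes\delta([-1,1]_+;\s)\oplus [0]\rtimes\delta([-1,1]_-;\s)$, each summand being irreducible tempered. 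Proposition \ref{prop: addparms} then produces $\gamma_1^\pm=L([1,2];[0]\rtimes\delta([-1,1]_\pm;\s))\le\widehat\Gamma$.

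For $\gamma_2^\pm$ and $\gamma_3$, I would mirror the approach used just above for $\pi_5,\pi_6$. By \eqref{eq-1}, $\Pi_3=[0]\rtimes L([1,2];\s)=\pi_7+\pi_5$ in the Grothendieck group, so $\delta_1\times\Pi_3=\widehat\Gamma+\Gamma$, with $\Gamma=\delta_1\rtimes\pi_5$ as studied above. The Langlands sub-embedding $L([1,2];\s)\hookrightarrow\delta([-2,-1])\rtimes\s$ embeds $\delta_1\times\Pi_3\hookrightarrow\delta_1\times[0]\times\delta([-2,-1])\rtimes\s$, and since the segments $[-2,-1]$ and $[-1,1]$ are linked (with $[-2,-1]\prec[-1,1]$) one has the decomposition $\delta([-2,-1])\times\delta_1=L([-2,-1],[-1,1])+L([-2,1],[-1])$ in the Grothendieck group. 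This exhibits the Langlands subrepresentation-embeddings $\gamma_3\hookrightarrow[-1]\times\delta([-2,1])\times[0]\rtimes\s$ and $\gamma_2^\pm\hookrightarrow[-1]\times[0]\rtimes\delta([-1,2]_\pm;\s)$ as subquotients of $\delta_1\times\Pi_3$ (for the latter, the embedding $\delta([-1,2]_\pm;\s)\hookrightarrow\delta([1,2])\times\delta([-1,0])\rtimes\s$ read off from \eqref{jm-seg-ds} is used). One then argues, by inspection of Langlands data, that none of $\gamma_2^\pm,\gamma_3$ lies in $\Gamma$, leaving them in $\widehat\Gamma$.

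The principal obstacle is this elimination step. Exhibiting each $\gamma_i$ as a subquotient of $\delta_1\times\Pi_3$ is routine, but showing $\gamma_2^\pm,\gamma_3\not\le\Gamma=\delta_1\rtimes\pi_5$ requires control of $\Gamma$'s composition series, which is not fully known a priori. A cleaner alternative would be to compute $\mu^*(\widehat\Gamma)=M^*(\delta_1)\rtimes\mu^*(\pi_7)$ directly (using \eqref{M-seg} for $M^*(\delta_1)$ and the \ASS-duality relation $\pi_7^t=\pi_8$ together with \eqref{eq: muforgenstnt} to extract $\mu^*(\pi_7)$), and then verify via Frobenius reciprocity that each of the Langlands sub-embeddings of $\gamma_2^\pm,\gamma_3$ yields a nonzero map into $\widehat\Gamma$, bypassing the need to study $\Gamma$ at all. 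Once the five irreducible subquotients are verified, the length bound $\ge 5$ follows immediately.
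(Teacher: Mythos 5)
Your treatment of $\gamma_1^\pm$ is correct and coincides with the paper: both apply Proposition \ref{prop: addparms} to $\delta_1\rtimes\pi_7$ with $d_u=(\delta_1)$, $d'=([1,2])$, $\tau=[0]\rtimes\s$. The problems are with $\gamma_2^\pm$ and $\gamma_3$, and they are genuine. First, a smaller gap: from the embedding $\delta_1\times\Pi_3\h\delta_1\times[0]\times\delta([-2,-1])\rtimes\s$ and the Grothendieck-group decomposition of $\delta([-2,-1])\times\delta_1$ you conclude that $\gamma_2^\pm,\gamma_3$ are subquotients of $\delta_1\times\Pi_3$; but exhibiting them as subquotients of the larger induced module does not transfer downward through an inclusion. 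What makes this step work in the paper is that the complementary summand of the big module, $\delta_1\times[0]\rtimes\delta([1,2];\s)=\delta_1\rtimes\Pi_1$, is unitarily induced from a tempered representation, hence has only tempered constituents, while $\gamma_2^\pm,\gamma_3$ are non-tempered; this forces them into $\delta_1\rtimes\Pi_3=\widehat\Gamma+\delta_1\rtimes\pi_5$. You never invoke this (or any substitute for it).

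Second, and decisively, the elimination step $\gamma_2^\pm,\gamma_3\not\le\delta_1\rtimes\pi_5$ is exactly where the content of the lemma lies, and you leave it open (you say it "requires control of $\Gamma$'s composition series"). In fact no composition series is needed: by Frobenius reciprocity $L([-1],[0])\times\delta([-1,2])\otimes\s\le s_{\GL}(\gamma_2^\pm)$ and $L([-1],[0])\times\delta([-2,1])\otimes\s\le s_{\GL}(\gamma_3)$, whereas, using $\pi_5\le\delta([0,2])\rtimes\s$, the Jacquet module $s_{\GL}(\delta_1\rtimes\pi_5)$ is bounded by $M^*(\delta_1)\times M^*(\delta([0,2]))\rtimes(1\otimes\s)$, which contains no irreducible term of the form $L([-1]+b)\otimes\s$ for any multisegment $b$; hence none of $\gamma_2^\pm,\gamma_3$ can occur in $\delta_1\rtimes\pi_5$. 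Your proposed "cleaner alternative" does not repair this: producing a nonzero map (via Frobenius reciprocity) from a standard module into $\widehat\Gamma$, or locating suitable terms in $\mu^*(\widehat\Gamma)$, does not by itself show that a prescribed irreducible representation is a subquotient (Jacquet-module constituents do not identify which irreducibles occur, and a nonzero map from a standard module need not be injective on its socle). Moreover, the suggested computation of $\mu^*(\pi_7)$ from \eqref{eq: muforgenstnt} via $\pi_7^t=\pi_8$ does not apply, since neither $\pi_7=L([1,2];[0]\rtimes\s)$ nor $\pi_8$ is (the dual of) a generalized Steinberg representation. So as written the proof establishes only $\gamma_1^\pm$, i.e. length $\ge 2$, not the stated five subquotients.
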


We remark that by \cite[Proposition 2.1]{MR1896238}, $\Jord_\rho(\delta([-1,2]_\pm;\s))=\{1,3,5\}$
and therefore $[0]\rtimes\delta([-1,2]_\pm;\s)$ is irreducible.

\begin{proof}
By Proposition \ref{prop: addparms} $\widehat{\Gamma}$ admits $\gamma_1^\pm$ as irreducible subquotients.

Recall that in the Grothendieck group we have
$$
\Pi_3=\pi_7+ \pi_5.
$$
Since
$$
[0]\times \delta([1,2])\rtimes \s=\Pi_3+\Pi_1,
$$
and $\Pi_1$ is tempered, it follows that any non-tempered irreducible subquotient of
$\delta_1\times [0]\times \delta([1,2])\rtimes \s$ is necessarily
a subquotient of $\widehat\Gamma$ or of $\delta_1 \rtimes \pi_5$.
On the other hand,
$$
\delta([-1,2])\times [0]\times [1]\rtimes \s\leq \delta_1\times [0]\times \delta([1,2])\rtimes \s
$$
and the left-hand side admits $\gamma_2^\pm$ and $\gamma_3$ as (non-tempered) irreducible subquotients.
Hence, it is enough to show that none of $\gamma_2^\pm$ and $\gamma_3$ is a subquotient of $\delta_1\rtimes\pi_5$.

By Frobenius reciprocity,
\[
L( [-1], [0])\times\delta([-1,2])\otimes\s\le s_{\GL}(\gamma_2^\pm)\text{ and }
L( [-1], [0])\times\delta([-2,1])\otimes\s\le s_{\GL}(\gamma_3).
\]
On the other hand, it is easy to see that
$$
L( [-1]+b)\otimes\s\not\leq M^*(\delta_1) \times M^*(\delta([0,2]))\rtimes(1\otimes \s)
$$
for any multisegment $b$. Therefore, $\gamma_2^\pm,\gamma_3\not\le\delta_1\rtimes\pi_5$ as required.
\end{proof}

\begin{remark}
In fact, one can show that $\widehat{\Gamma}$ also contains the irreducible subquotient $L([-1,2];\tau([0]_-; \delta([1]; \s)))$.
(We will not need to use this fact.)
Indeed, as in the proof above, it suffices to show that
$$
L([-1,2];\tau([0]_-; \delta([1]; \s)))\not\leq\delta_1\rtimes \pi_5.
$$
Suppose on the contrary that this is not the case. Then, we would have
$$
\delta([-2,1])\otimes\tau([0]_-; \delta([1]; \s))\leq \mu^*(\delta_1\rtimes \pi_5)=M^*(\delta_1)\rtimes\mu^*(\pi_5).
$$
However, from the formulas for $M^*(\delta_1)$ and $\mu^*(\pi_5)$, the only possibility
to get $\delta([-2,1])\otimes\tau([0]_-; \delta([1]; \s))$ in $\mu^*(\delta_1\rtimes\pi_5)$
would be from the term $\delta([-2,0] )\otimes\s$ of $\mu^*(\pi_5)$
(in order to obtain exponent $-2$) and one of the terms $[1] \otimes \delta([0,1])$ or
$[1] \otimes \delta ([-1,0])$ from $M^*(\delta_1)$.
However, by \eqref{eq: 0111} $\delta([0,1])\rtimes\s$ (and hence also $\delta([-1,0])\rtimes\s$) does not admit
$\tau([0]_-; \delta([1]; \s))$ as a subquotient.
\end{remark}

In order to deduce the non-unitarizability of $\pi_7$ and $\pi_8$ from Lemma \ref{lem: nonunit} it remains to prove the following.

\begin{lemma}
The multiplicity of $\widehat\tau:=\delta_1\otimes\pi_7$ in $\mu^*(\widehat{\Gamma})$ is $\leq 4$.
\end{lemma}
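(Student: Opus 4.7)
The plan follows the subtractive strategy used to prove non-unitarizability of $\pi_7, \pi_8$ in Proposition \ref{a-1,a,a+1}. Since $\Pi_3 = \pi_7 + \pi_5$ in the Grothendieck group by \eqref{eq-1}, additivity of $\mu^*$ gives
\[
\mu^*(\widehat\Gamma) = \mu^*(\delta_1 \rtimes \Pi_3) - \mu^*(\delta_1 \rtimes \pi_5),
\]
so it suffices to show that the multiplicity of $\widehat\tau$ in $\mu^*(\delta_1 \rtimes \Pi_3)$ is exactly $6$ and its multiplicity in $\mu^*(\delta_1 \rtimes \pi_5)$ is at least $2$; the desired bound $\le 4$ follows by subtraction.

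For the first claim, I would apply Remark \ref{rem: mdtp} with $\omega = \delta_1$ (so $a=1$), reducing the calculation to finding the multiplicity of $\delta_1 \otimes \pi_7$ in
\[
\bigl(2[1]\otimes\delta([0,1])+2\delta([0,1])\otimes[1]+2\delta_1\otimes 1+1\otimes\delta_1\bigr)\rtimes\mu^*(\Pi_3),
\]
and expand $\mu^*(\Pi_3) = M^*([0])\rtimes\mu^*(L([1,2];\s))$ using $M^*([0])=2[0]\otimes 1+1\otimes[0]$ and the corank-$2$ formula \eqref{eq: pi312} (at $\alpha=1$). I would enumerate all pairings that can produce $\delta_1\otimes\pi_7$: the diagonal term $(2\delta_1\otimes 1)\rtimes(1\otimes\Pi_3)$ accounts for $2$ copies of $\widehat\tau$ (via $\Pi_3 = \pi_7+\pi_5$), and the remaining $4$ copies arise from cross-terms where the $\GL$-factor of $M^*(\delta_1)$ combines with a non-trivial $\GL$-factor of $\mu^*(\Pi_3)$ to yield $\delta_1$ as a subquotient, while the corresponding classical-side factor contains $\pi_7$. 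The multiplicity-freeness of $\Pi_{(0,1,2)}$ (established in Proposition \ref{0,1,2-1}) controls potential double-counting.

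For the second claim, I would expand $\mu^*(\pi_5)$ using the segment-type formula \eqref{jm-seg-q} for $\pi_5 = L([0,2];\s)$ (case $c=0$, $d=2$, $\alpha=1$); among its terms appears $[0]\otimes L([1,2];\s)$ with coefficient $1$. Pairing this with an appropriate term of $M^*(\delta_1)$ whose $\GL$-factor combines with $[0]$ to contain $\delta_1$ as a subquotient, and observing that $\pi_7$ occurs in the resulting classical-side induction, extracts at least two copies of $\widehat\tau$ in $\mu^*(\delta_1\rtimes\pi_5)$.

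The main obstacle is the bookkeeping in the first step. Many distinct pairings in $M^*(\delta_1)\rtimes\mu^*(\Pi_3)$ can a priori contribute, and for each one must (a) verify that $\delta_1$ appears in the $\GL$-factor with the correct multiplicity (controllable by cuspidal support), and (b) determine the multiplicity of $\pi_7$ in the classical-side factor. The latter requires understanding the composition series of certain induced representations such as $\delta([0,1])\times[2]\rtimes\s$, which can be handled case by case using the explicit description of $\Pi_{(0,1,2)}$ from Proposition \ref{0,1,2-1}. Once the inventory is complete, the count of $6$ emerges.
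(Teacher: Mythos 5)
Your subtraction strategy imitates the paper's treatment of the analogous statement for $\alpha>1$, but here both of your key numerical claims are false, and this is a genuine gap. The multiplicity of $\widehat\tau=\delta_1\otimes\pi_7$ in $\mu^*(\delta_1\rtimes\Pi_3)$ is $4$, not $6$. Indeed, applying Remark \ref{rem: mdtp} and expanding $\mu^*(\Pi_3)=M^*([0])\rtimes\mu^*(L([1,2];\s))$, a cuspidal-support count (the $\GL$-tensor factor must have support exactly $\{[-1],[0],[1]\}$, so no term of $\mu^*(L([1,2];\s))$ involving $[2]$ or $[-2]$ can enter) leaves only three combinations: $(2\,\delta_1\otimes1)\cdot(1\otimes[0])\cdot(1\otimes L([1,2];\s))$, giving $2\,\delta_1\otimes\Pi_3$, i.e.\ multiplicity $2$; $(2\,\delta([0,1])\otimes[1])\cdot(1\otimes[0])\cdot([-1]\otimes[2]\rtimes\s)$, giving $2\,\delta([0,1])\times[-1]\otimes\Pi_{(0,1,2)}$, i.e.\ multiplicity $2$ (since $\delta_1$ occurs once in $\delta([0,1])\times[-1]$ and $\Pi_{(0,1,2)}$ is multiplicity free); and $(2\,[1]\otimes\delta([0,1]))\cdot(2\,[0]\otimes1)\cdot([-1]\otimes[2]\rtimes\s)$, whose classical-side factor is $\delta([0,1])\times[2]\rtimes\s$, which does not contain $\pi_7$ (because $\pi_7\le L([1],[0])\times[2]\rtimes\s$ and $\Pi_{(0,1,2)}$ is multiplicity free), i.e.\ multiplicity $0$. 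The total is $4$, and since $\pi_7\le\Pi_3$ gives $\mu^*(\widehat\Gamma)\le\mu^*(\delta_1\rtimes\Pi_3)$, this already proves the lemma; unlike the $\alpha>1$ case you are copying, no subtraction is needed here.

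Your proposed source of at least two copies of $\widehat\tau$ in $\mu^*(\delta_1\rtimes\pi_5)$ also does not exist. To pair the term $[0]\otimes L([1,2];\s)$ of $\mu^*(\pi_5)$ with a term $A\otimes B$ of $M^*(\delta_1)$ so that $A\times[0]$ contains $\delta_1$, you would need $A$ with cuspidal support $\{[-1],[1]\}$; but every left tensor factor of $M^*(\delta_1)$ has the form $\delta([-i,1])\times\delta([j+1,1])$, and none has that support. The same support analysis shows the multiplicity of $\widehat\tau$ in $\mu^*(\delta_1\rtimes\pi_5)$ is in fact $0$: the only admissible combinations produce the classical factors $\pi_5$ or $\delta([0,1])\times[2]\rtimes\s$, neither of which contains $\pi_7$. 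So the numbers $6$ and ``$\ge2$'' are both wrong; a careful execution of your own bookkeeping would return $4$ and $0$ and collapse to the paper's simpler argument, but as written the proposal does not establish the bound.
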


\begin{proof}
We will show that in fact the multiplicity of $\widehat\tau$ in $\mu^*(\delta_1\rtimes\Pi_3)=M^*(\delta_1)\rtimes\mu^*(\Pi_3)$ is $4$.
By remark \ref{rem: mdtp} we need to consider the multiplicity of $\widehat\tau$ in
\begin{multline*}
\Big(2\cdot[1]\otimes \delta ([0,1])+2\cdot\delta([0,1])\otimes[1]+2\cdot\delta_1\otimes 1+
1\otimes\delta_1\Big)\rtimes\mu^*(\Pi_3)=\\
\Big(2\cdot\overline{[1]\otimes \delta ([0,1])}+2\cdot\uwave{\delta([0,1])\otimes[1]}+2\cdot\uline{\delta_1\otimes 1}+
1\otimes\delta_1\Big)
\times\Big(\uwave{\uline{1\otimes [0]}}+2\cdot\dashuline{[0]\otimes1}\Big)
\rtimes
\\
\Big(\uline{1\otimes L([1,2];\s)}+\overline{\uwave{[-1]\otimes[2]\rtimes\s}}
\\
+
[2]\otimes L([1];\s)+\delta([-2,-1])\otimes\s+[-1]\times[2]\otimes\s\Big).
\end{multline*}
We highlighted the terms which may contribute to the multiplicity of $\widehat\tau$.
The solid underlined terms give $2\cdot\delta_1\otimes\Pi_3$ which contains $\widehat\tau$ with multiplicity two.
The wavy underlined terms give
\[
\delta([0,1])\times[-1]\otimes\Pi_{(0,1,2)}
\]
which again contains $\widehat\tau$ with multiplicity two.
The overlined terms give
\[
2\cdot[1]\times[0]\times[-1]\otimes\delta([0,1])\rtimes[2]\times\sigma
\]
which does not contain $\widehat\tau$.
Indeed, $\pi_7$ does not occur in $\delta([0,1]) \times [2]\rtimes\s$
since
$$
\pi_7\leq L([1,2], [0])\rtimes \s\leq L([1], [0])\times [2]\rtimes \s
$$
and $\Pi_{(0,1,2)}$ is multiplicity free.

Our claim follows.
\end{proof}

This concludes the proof of Proposition \ref{0,1,2-1}.

\section{\texorpdfstring{$\mathbf{x}=(0,1,1)$}{x011} and \texorpdfstring{$\alpha=1$}{alpha1}}

\begin{proposition}
Assume $\alpha=1$. Then,
\begin{enumerate}
\item In the Grothendieck group we have
\[
\Pi_{(0,1,1)}=2\pi_1+\pi_2+2\pi_3+2\pi_4^++\pi_4^-+\pi_5^++\pi_5^-
\]
where
\begin{gather*}
\pi_1=L([0,1] ,[1] ;\s ),\ \ \pi_2=L([1] ,[1] ;[0] \rtimes\s ),\ \ \pi_3=L([0,1] ;\delta([1] ;\s )),\\
\pi_4^{\pm}=L([1] ;\tau([0] _\pm;\delta([1] ;\s))),\ \ \pi_5^\pm=\delta([-1,1] _\pm;\s ).
\end{gather*}
\item We have $\pi_1^t=\pi_3$, $\pi_2^t=\pi_5^+$, $(\pi_4^-)^t=\pi_5^-$, $(\pi_4^+)^t=\pi_4^+$.
\item All irreducible subquotients of $\Pi_{(0,1,1)}$ are unitarizable.
\end{enumerate}
\end{proposition}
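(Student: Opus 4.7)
The plan is to follow the general schema used throughout the chapter (e.g., in the proof of Proposition \ref{0,1,2-1}): identify the Jordan--H\"older constituents, compute their multiplicities, determine the $t$-involution pairings, and deduce unitarizability. Starting from the Grothendieck-group identity $\Pi_{(0,1,1)} = [1]\rtimes\Pi_{(0,1)}$ together with the corank-$2$ decomposition
\[
\Pi_{(0,1)} = A_1 + A_2 + A_3^+ + A_3^-,
\]
where $A_1 = L([1];[0]\rtimes\sigma)$, $A_2 = L([0,1];\sigma)$, $A_3^\pm = \tau([0]_\pm;\delta([1];\sigma))$ (all four unitarizable), Proposition \ref{prop: addparms} applied to each $[1]\rtimes A_i$ identifies the Langlands quotients $\pi_2, \pi_1, \pi_4^\pm$ respectively. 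The constituent $\pi_3 = L([0,1];\delta([1];\sigma))$ arises as a further Jordan--H\"older factor of $[1]\rtimes A_2$, whose reducibility follows from the ladder criterion \eqref{eq: ladcrit}; the tempered constituents $\pi_5^\pm = \delta([-1,1]_\pm;\sigma)$ come from the analysis in \S\ref{segment} applied to the reducible representation $\delta([-1,1])\rtimes\sigma$.

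For the multiplicities $2\pi_1 + \pi_2 + 2\pi_3 + 2\pi_4^+ + \pi_4^- + \pi_5^+ + \pi_5^-$, I would compare $s_{\GL}(\Pi_{(0,1,1)})$, computed iteratively from $M^*$ via \eqref{s-GL}, with the contributions from each candidate, using the Jacquet-module formulas \eqref{eq: muforgenstn}, \eqref{01-1-delta+}, \eqref{eq: 013-}, and \eqref{jm-seg-ds}. The doubled multiplicities of $\pi_1, \pi_3, \pi_4^+$ reflect the non-regularity of $\Pi_{(0,1,1)}$ caused by the repeated exponent $1$.

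For the involution $^t$, I would apply $(\pi\rtimes\tau)^t = \pi^t\rtimes\tau^t$ on the Grothendieck group to transport the corank-$2$ pairings $A_1^t = A_3^+$ and $A_2^t = A_3^-$. Combined with a precise matching of Jacquet modules among the candidate constituents, this yields $\pi_1^t = \pi_3$, $\pi_2^t = \pi_5^+$, $(\pi_4^-)^t = \pi_5^-$, and $(\pi_4^+)^t = \pi_4^+$, the last being forced by the multiplicity-$2$ constraint and elimination, since $\pi_4^+$ is the only unpaired constituent of odd multiplicity among those remaining.

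Finally, unitarizability of $\pi_5^\pm$ is immediate from their tempered nature; for the non-tempered constituents, I would realize each as a Jordan--H\"older factor at the endpoint $y = 1$ of an irreducible complementary series $[y]\rtimes\omega$ for $\omega$ chosen among the unitarizable representations $\{A_1, A_2, A_3^\pm, L([1];\sigma), \delta([1];\sigma)\}$; irreducibility on $[0,1)$ is secured by the ladder criterion \eqref{eq: ladcrit} together with Remark \ref{rem-irr}, and unitarity propagates by continuity from the unitary starting point at $y=0$. The main obstacle will be the non-regular multiplicity bookkeeping in the second step: whereas the regular corank-$3$ cases of \S\ref{sec: unit3} admitted a bijective matching between $s_{\GL}$-contributions and candidate constituents, here the twofold appearance of $\pi_1, \pi_3, \pi_4^+$ requires a careful splitting of the Jacquet-module data among these constituents.
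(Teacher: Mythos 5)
Your skeleton is essentially the paper's: write $\Pi_{(0,1,1)}=[1]\rtimes\Pi_{(0,1)}$ as the sum of the four pieces $[1]\rtimes A_i$, extract the Langlands quotients $\pi_2,\pi_1,\pi_4^\pm$ via Proposition \ref{prop: addparms}, transport the corank-$2$ duality pairings through $(\pi\rtimes\tau)^t=\pi^t\rtimes\tau^t$, and get unitarizability because each piece sits at the end $x=1$ of the complementary series $[x]\rtimes A_i$. However, there is a concrete error in your placement of $\pi_3$: you assert that $\pi_3=L([0,1];\delta([1];\s))$ is a further constituent of $[1]\rtimes A_2=[1]\rtimes L([0,1];\s)$. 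It is not. That representation has length two with composition series $\pi_1+\pi_5^-$ (it is the \ASS dual of $[1]\rtimes\tau([0]_-;\delta([1];\s))=\pi_4^-+\pi_3$), while $\pi_3$ occurs exactly once in each of $[1]\rtimes\tau([0]_\pm;\delta([1];\s))$. With your placement the total count cannot come out as $2\pi_1+\pi_2+2\pi_3+2\pi_4^++\pi_4^-+\pi_5^++\pi_5^-$, and it is incompatible with the pairing $\pi_1^t=\pi_3$ you are trying to prove. A side issue: the reducibility of $[1]\rtimes L([0,1];\s)$ does not follow from \eqref{eq: ladcrit}, which concerns $L(d)\rtimes\s$ with $\s$ cuspidal; here the inducing data on the classical-group side is not cuspidal.

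The genuinely hard content, which you yourself flag as "the main obstacle", is left unargued, and a generic comparison of $s_{\GL}(\Pi_{(0,1,1)})$ with candidate contributions does not decide it. One must pin down the composition series of each of the four pieces: that $L([1],[0],[-1])\rtimes\s=\pi_2+\pi_4^-$, i.e. that it is $\pi_4^-$ and not $\pi_4^+$ which occurs there (the paper rules out $\pi_4^+$ by a multiplicity count of $[-1]\otimes[1]\otimes[0]\otimes\s$ in Jacquet modules, after producing one of $\pi_4^\pm$ via Lemma \ref{lem: simpac}); that $[1]\rtimes\tau([0]_-;\delta([1];\s))=\pi_4^-+\pi_3$, which is what forces $\pi_1^t=\pi_3$; and that $[1]\rtimes\tau([0]_+;\delta([1];\s))=\pi_4^++\pi_3+\pi_5^+$, which requires both excluding $\pi_5^-$ (via the multiplicity of $\delta([-1,1])\otimes\s$ in $s_{\GL}$) and bounding the multiplicity of $\pi_3$ by one (via $L([-1,0],[1])\otimes\s$). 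These explicit, non-regular multiplicity arguments constitute the bulk of the paper's proof and are missing from your proposal. Two smaller points: $(\pi_4^+)^t=\pi_4^+$ is indeed forced by elimination once the other three pairings are known, but because $\pi_4^+$ is the only remaining constituent (its multiplicity is two, not odd); and $\pi_2^t=\pi_5^+$ is most directly obtained from \eqref{dist-formula} rather than from an unspecified "matching of Jacquet modules". Your unitarizability argument via the endpoints of the complementary series $[x]\rtimes A_i$, $0\le x\le1$, is sound and is in substance what the paper uses.
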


\begin{proof}
We have
\begin{multline*}
\Pi_{(0,1,1)}=
\\
\delta([-1,1])\rtimes\s+L([1],[0],[-1])\rtimes\s+L([0,1],[-1])\rtimes\s+L([-1,0],[1])\rtimes\s\\
=\delta([-1,1])\rtimes\s+L([1],[0],[-1])\rtimes\s+2L([0,1],[-1])\rtimes\s.
\end{multline*}
Now,
\[
\delta([-1,1])\rtimes\s=\pi_5^++\pi_5^-
\]
We show that
\begin{equation} \label{eq: l111}
L([-1],[0],[1])\rtimes\s=\pi_2+\pi_4^-.
\end{equation}
Clearly $L([-1],[0],[1])\rtimes\s$ contains $\pi_2$ with multiplicity one.

Consider
\[
\xymatrix@C-12pt{
0\ar[r] & L([1] ,[0] )\rtimes L([1] ;\s ) \ar[r] & L([1] ,[0] )\times[-1] \rtimes\s \ar@{->>}[d] \ar[r]
& L([1] ,[0] )\rtimes \delta([1] ;\s ) \ar[r]&0\\
& & L([1],[0] ,[-1] )\rtimes\s
}
\]
We see easily that
$$
L([1] ,[0] ,[-1] )\rtimes\s \not\leq L([1] ,[0] )\rtimes L([1] ;\s )
$$
since $s_{\GL}(L([1] ,[0] ,[-1] )\rtimes\s )$ contains $L([1] ,[0] ,[-1])\otimes\s $
with multiplicity two while $s_{\GL}(L([1] ,[0] )\rtimes L([1] ;\s ))$ contains $L([1] ,[0] ,[-1])\otimes\s $
with multiplicity one.
Note that the cosocle of $L([1] ,[0] )\rtimes \delta([1] ;\s )$ is a
a quotient of the cosocle of $[1]\times [0]\rtimes \delta([1];\s)=
[1]\times\tau([0]_+;\delta([1];s))\oplus[1]\times\tau([0]_-;\delta([1];s)$
which is $\pi_4^+\oplus\pi_4^-$.
By Lemma \ref{lem: simpac}, at least one of $\pi_4^\pm$ occurs in $L([1],[0] ,[-1] )\rtimes\s $.

Suppose on the contrary that
$$
\pi_4^+\leq L([1] ,[0] ,[-1] )\rtimes\s .
$$
Clearly, $s_{\GL}(\pi_4^+)$ contains $[-1] \otimes \tau([0]_+; \delta([1] ;\s ))$.
This and \eqref{01-1-delta+} imply that the Jacquet module $\pi_4^+$ contains
$2\cdot [-1] \otimes\delta([0,1])\otimes\s$, and therefore also
$$
2\cdot [-1] \otimes[1] \otimes[0] \otimes\s.
$$
On the other hand
$$
s_{\GL}(L([1] ,[0] ,[-1] )\rtimes\s)= 2(L([-1] ,[0] ,[1] )\otimes\s +2L([-1] ,[0] )\times [-1] ) \otimes\s .
$$
Obviously, we cannot have $[-1] \otimes[1] \otimes[0] \otimes\s$ in the Jacquet module
of the above representation, and therefore we get a contradiction.

We conclude that
$$
\pi_4^-\leq L([1] ,[0] ,[-1] )\rtimes\s
$$
which further implies \eqref{eq: l111}.

Since $\pi_5^t=\pi_2$ by \eqref{dist-formula} it follows that $(\pi_5^-)^t=\pi_4^-$.

We also conclude that
\[
L([0,1],[-1])\rtimes\s\ge\pi_1+\pi_3+\pi_4^+.
\]
Therefore, each of $\pi_1$, $\pi_3$ and $\pi_4^+$ occurs with multiplicity bigger than one (and even) in $\Pi_{(0,1,1)}$.

In the Grothendieck group we have
\[
\Pi_{(0,1,1)}=\Pi_++\Pi_-+\Pi_+^t+\Pi_-^t
\]
where
\[
\Pi_\pm=[1]\rtimes\tau([0]_\pm;\delta([1];\s)),\ \
\Pi_+^t=[1]\rtimes L([1];[0]\rtimes\s),\ \
\Pi_-^t=[1]\rtimes L([0,1];\s).
\]
We claim that
\[
\Pi_-=\pi_4^-+\pi_3.
\]
Clearly $\pi_4^-$ occurs in $\Pi_-$ with multiplicity one.
By \eqref{BPLC}, the only other possible non-tempered irreducible subquotient of $\Pi_-$ is $\pi_3$.
On the other hand, since $s_{\GL}(\tau([0]_-;\delta([1];\s))$ $=L([1],[0])\otimes\s$, we have
$L([-1,1])\otimes\s\not\le s_{\GL}(\Pi_-)$. Therefore, $\pi_5^\pm\not\le\Pi_-$.
So the only other possible irreducible subquotient of $\Pi_-$ other than $\pi_4^-$ is $\pi_3$.
On the other hand, clearly $\pi_1$ occurs with multiplicity one in $\Pi_-^t$
and we have $\pi_1^t\ne\pi_4^-$. It necessarily follows that
$\pi_1^t=\pi_3$ and $\Pi_-=\pi_4^-+\pi_3$.

It follows that $(\pi_4^+)^t=\pi_4^+$.

We show that
\[
\Pi_+=\pi_4^++\pi_3+\pi_5^+.
\]
Clearly $\Pi_+$ contains $\pi_4^+$ with multiplicity one.
As before, the only other possible non-tempered irreducible subquotient of $\Pi_+$ is $\pi_3$.
The representation $\pi_5^+$ occurs in $\Pi_+$
since $s_{\GL}(\Pi_+)\ge\delta([0,1])\times[1]\otimes\s$
and occurs with multiplicity one in $\Pi_{(0,1,1)}$ (see \S\ref{dist}).

We know that $\pi_3$ occurs with multiplicity at least two in $\Pi_{(0,1,1)}$.
On the other hand, $\pi_3$ occurs with multiplicity one in $\Pi_-$ and it does not occur
in $\Pi_\pm^t$ since $\pi_1=\pi_3^t$ does not occur in $\Pi_\pm$.
Therefore, $\pi_3$ must occur in $\Pi_+$.

It remains to show that $\Pi_+$ contains $\pi_3$ with multiplicity one and does not contain $\pi_5^-$.

We have $s_{\GL}(\Pi_+)=$
\begin{gather*}
\Big(([1]+[-1])\times(2\delta([0,1])+L([0],[1]))\Big)\otimes\s=
\Big(2[1]\times\delta([0,1])+[1]\times L([0],[1])\\+2\delta([-1,1])+2L([-1],[0,1])
+L([-1,0],[1])+L([-1],[0],[1])\Big)\otimes\s.
\end{gather*}
In particular, $\delta([-1,1])\otimes\s$ occurs with multiplicity two and $L([-1,0],[1])$
occurs with multiplicity one.
We claim that $s_{\GL}(\pi_4^+)$ contains $\delta([-1,1])\otimes\s$.
Indeed, since $\pi_4^+\h [-1]\rtimes\tau([0]_+;\delta([1] ;\s))$, we have
$\mu^*(\pi_4^+)\ge [-1]\otimes \tau([0]_+;\delta([1] ;\s))$ and therefore
the Jacquet module of $\pi_4^+$ contains $[-1]\otimes[0]\otimes[1]\otimes\s$.
It follows that $s_{\GL}(\pi_4^+)\ge L([-1],[0],[1])\otimes\s$.
Since $(\pi_4^+)^t=\pi_4$ we infer that $s_{\GL}(\pi_4^+)\ge\delta([-1,1])\otimes\s$ as claimed.

Since $\Pi_+$ contains $\pi_5^+$ and both $s_{\GL}(\pi_5^\pm)$ contains $\delta([-1,1])\otimes\s$,
it follows now that $\pi_5^-$ cannot occur in $\Pi_+$.

$\mu^*(\pi_3)\ge\delta([-1,0])\otimes\delta([1];\s)$ and therefore the Jacquet module of $\pi_3$
contains $[0]\otimes [-1]\otimes [1]\otimes\sigma$. It follows that $s_{\GL}(\pi_3)\ge L([-1,0],[1])\otimes\sigma$.
Since $L([-1,0],[1])\otimes\s$ occurs with multiplicity one in $s_{\GL}(\Pi_+)$
we infer that $\pi_3$ cannot occur in $\Pi_+$ with multiplicity bigger than one.
\end{proof}

\begin{remark} \label{red-for-cs-2}
The fact that $\pi_5^+\leq \delta([0,1])\rtimes \delta([1];\sigma)$ implies that $\delta([0,1])\rtimes \delta([1];\sigma)$ is reducible
(since also $\pi_3\leq \delta([0,1])\rtimes \delta([1];\sigma)$).

Furthermore, $\pi_4^\pm\leq L([0],[1])\rtimes \delta([1];\sigma)$, which implies that $L([0],[1])\rtimes \delta([1];\sigma)$ is reducible,
as well as $\delta([0,1])\rtimes L([1];\sigma)$.
\end{remark}

\section{\texorpdfstring{$\mathbf{x}=(0,0,1)$}{x001} and \texorpdfstring{$\alpha=1$}{alpha1}}

\begin{proposition}
\begin{enumerate}
\item In the Grothendieck group we have
\begin{align*}
\Pi_{(0,0,1)}&=[0] \rtimes L([0,1];\s)+[0]\rtimes L([1] ;[0] \rtimes\s)\\&+
[0] \rtimes\tau([0]_+;\delta([1] ;\s ))+[0] \rtimes\tau([0]_-; \delta([1] ;\s )),
\end{align*}
where the representations above are irreducible.
\item We have
\begin{gather*}
([0] \rtimes\tau([0]_+; \delta([1] ;\s )))^t=[0]\rtimes L([1] ;[0] \rtimes\s),\\
([0] \rtimes L([0,1];\s))^t=[0] \rtimes\tau([0]_-; \delta([1] ;\s )).
\end{gather*}
\item All irreducible subquotients of $\Pi_{(0,0,1)}$ are unitarizable.
\end{enumerate}
\end{proposition}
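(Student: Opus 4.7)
The plan starts from the Grothendieck-group decomposition $\Pi_{(0,1)}=\pi_1+\pi_2+\pi_3^++\pi_3^-$ of the preceding $\alpha=1$, corank-two proposition (with $\pi_1=L([1];[0]\rtimes\s)$, $\pi_2=L([0,1];\s)$, and $\pi_3^\pm=\tau([0]_\pm;\delta([1];\s))$); applying $[0]\rtimes(-)$ immediately yields claim~(1) at the Grothendieck level, so the content of (1) is the irreducibility of each of the four summands. Moreover each $\pi_i$ is unitarizable by the preceding proposition, so each $[0]\rtimes\pi_i$ is unitarily parabolically induced; this gives claim~(3) once (1) is established, and yields semisimplicity of each summand, essential for what follows.

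For the two non-tempered summands $[0]\rtimes L([0,1];\s)$ and $[0]\rtimes L([1];[0]\rtimes\s)$, the strategy is to pin down a unique irreducible Langlands quotient and combine it with semisimplicity. Since $\alpha=1$, $\rho$ is of odd parity with $1\in\Jord_\rho(\s)$, so by Proposition~\ref{Pr-red-si}(3) the tempered representation $\tau_0:=[0]\rtimes\s$ is irreducible; the ladder-type criterion \eqref{eq: ladcrit} applied with $d=([0],[0])$ (for which $d_{>0}=d_{<0}=\emptyset$) further gives that $\tau_1:=[0]\rtimes\tau_0=[0]\times[0]\rtimes\s$ is irreducible. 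Proposition~\ref{prop: addparms} applied with $d=([0])$ (so $d^\uparrow=\emptyset$, $d_u=([0])$, and $\lambda(d_u)\rtimes\tau$ equals $\tau_0$ or $\tau_1$ as appropriate) then exhibits unique irreducible Langlands quotients $L([0,1];\tau_0)$ and $L([1];\tau_1)$. Being semisimple with a unique irreducible quotient, each summand must be irreducible.

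The tempered summands $[0]\rtimes\tau([0]_\pm;\delta([1];\s))$ are the main obstacle. One has the unitary tempered decomposition
\[
[0]\times[0]\rtimes\delta([1];\s)=[0]\rtimes\tau([0]_+;\delta([1];\s))\oplus[0]\rtimes\tau([0]_-;\delta([1];\s)),
\]
and the task is to show that the right-hand side consists of exactly two tempered irreducibles. Despite the naive appearance of two independent ``folds'' of the identical copies of $[0]$, the intended argument is that the relevant R-group has order~$2$ rather than~$4$: after the first fold produces $\tau([0]_\pm;\delta([1];\s))$, the generalised Jordan-block set $\Jord_\rho$ is enlarged from $\{3\}$ to $\{1,3\}$, so an extension of Proposition~\ref{Pr-red-si}(3) to non-discrete tempered representations forces $\rho\rtimes\tau([0]_\pm;\delta([1];\s))$ to be irreducible. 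Alternatively, the same irreducibility can be established by a direct computation of the multiplicity of $[0]\otimes\tau([0]_\pm;\delta([1];\s))$ as a cosocle constituent of the Jacquet module of $[0]\rtimes\tau([0]_\pm;\delta([1];\s))$, using the formulas~\eqref{01-1-delta+} and~\eqref{eq: 013-} for $\mu^*$ together with $M^*([0])=1\otimes[0]+2[0]\otimes 1$ and Frobenius reciprocity.

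Claim~(2) is then formal: $[0]$ is its own \ASS dual (a length-one segment satisfies $[0]=\ms([0])=\delta([0])$), and the \ASS involution commutes with $\rtimes$ at the Grothendieck level, so $([0]\rtimes\pi_i)^t=[0]\rtimes\pi_i^t$; the stated pairings then follow from the corank-two dualities $\pi_1^t=\pi_3^+$ and $\pi_2^t=\pi_3^-$.
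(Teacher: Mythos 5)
Your overall architecture is the paper's: the same Grothendieck decomposition coming from the corank-two case at $\alpha=1$, irreducibility of the tempered summands via an $R$-group of order two, unitarity of all constituents by unitary parabolic induction, and part (2) from $[0]^t=[0]$, compatibility of the \ASS involution with $\rtimes$, and the corank-two dualities. Two points need attention, one of which is a genuine gap. First, the tempered summands: Proposition \ref{Pr-red-si}(3) is stated only for square-integrable $\pi$, so "extending" it to the non-square-integrable tempered representations $\tau([0]_\pm;\delta([1];\s))$ is not something the paper provides; what does the job (and is what the paper means by "theory of $R$-groups") is Goldberg's computation that for $[0]\times[0]\rtimes\delta([1];\s)$ the $R$-group is $\Z/2$ (only one equivalence class of selfdual cuspidal $\rho$ with $\rho\rtimes\delta([1];\s)$ reducible occurs), so this induced representation has exactly two inequivalent irreducible constituents, forcing each $[0]\rtimes\tau([0]_\pm;\delta([1];\s))$ to be irreducible. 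Your order-$2$-versus-$4$ framing is exactly this, so that part is fine once the citation is corrected.

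The genuine gap is in your treatment of the two non-tempered summands. Proposition \ref{prop: addparms} only asserts that $L(d^\uparrow+d';\tau')$ occurs with multiplicity one in $L(d)\rtimes L(d';\tau)$; it does not say it is a quotient, let alone the unique irreducible quotient, so "semisimple with a unique irreducible quotient" is not yet established. For $[0]\rtimes L([0,1];\s)$ this can be patched: $[0]$ and $[0,1]$ are unlinked, so $[0]\times\delta([0,1])\cong\delta([0,1])\times[0]$ and the summand is a quotient of the standard module $\delta([0,1])\rtimes([0]\rtimes\s)$, hence has irreducible cosocle. But for $[0]\rtimes L([1];[0]\rtimes\s)$ the analogous commutation fails, since $[0]$ and $[1]$ are linked and $[0]\times[1]$ is reducible, so your argument as written does not deliver the unique-quotient claim there. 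The paper avoids all of this: once the tempered summands are irreducible, applying the \ASS involution (which you already invoke for part (2), together with $\pi_1^t=\tau([0]_+;\delta([1];\s))$ and $\pi_2^t=\tau([0]_-;\delta([1];\s))$) immediately gives irreducibility of $[0]\rtimes L([1];[0]\rtimes\s)$ and $[0]\rtimes L([0,1];\s)$, since duality preserves length. Routing the non-tempered cases through that duality closes the gap with no extra work.
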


\begin{proof} Indeed,
\begin{multline*}
\Pi_{(0,0,1)}=[0] \rtimes\Big([0] \rtimes L([1] ;\s)+ [0]\rtimes \delta([1] ;\s)\Big)\\=
[0] \rtimes L([0,1];\s)+[0]\rtimes L([1] ;[0] \rtimes\s)+
\\
 [0] \rtimes\tau([0]_+; \delta([1] ;\s ))+[0] \rtimes\tau([0]_-; \delta([1] ;\s )).
\end{multline*}
The representations $[0] \rtimes\tau([0]_\pm; \delta([1] ;\s ))$ are irreducible by the theory of $R$-groups.
The representations $[0] \rtimes L([0,1];\s)$ and $[0]\rtimes L([1] ;[0] \rtimes\s)$ are irreducible by duality.
\end{proof}

\section{\texorpdfstring{$\mathbf{x}=(\tfrac12,\tfrac12,\tfrac32)$}{x121232} and \texorpdfstring{$\alpha=\tfrac12$}{alpha12}}

\begin{proposition} \label{1/2,1/2,3/2-1/2}
Assume $\alpha=\tfrac12$.
\begin{enumerate}
\item In the Grothendieck group we have
\begin{equation} \label{eq: l12}
\Pi_{(\frac12,\frac12,\frac32)}=\pi_1+\pi_2+2\pi_3+2\pi_4+\pi_5+\pi_6+\pi_7+\pi_8+\pi_9+\pi_{10}
\end{equation}
where
\begin{gather*}
\pi_1=L([\tfrac32],[\tfrac12],[\tfrac12];\s),\ \
\pi_2=\delta([-\tfrac12,\tfrac32]_+;\s),\\
\pi_3=L([-\tfrac12,\tfrac32];\s),\ \
\pi_4=L([\tfrac12,\tfrac32];\delta([\tfrac12];\s)),\\
\pi_5=L([\tfrac32],[\tfrac12];\delta([\tfrac12];\s)),\ \
\pi_6=\delta([-\tfrac12,\tfrac32]_-;\s),\\
\pi_7=L([\tfrac12];\delta([\tfrac12,\tfrac32];\s)),\ \
\pi_8=L([\tfrac32];\delta([-\tfrac12,\tfrac12]_-;\s)),\\
\pi_9=L([\tfrac12,\tfrac32],[\tfrac12];\s),\ \
\pi_{10}=L([\tfrac32];\delta([-\tfrac12,\tfrac12]_+;\s)).
\end{gather*}
\item $\pi_1^t=\pi_2$, $\pi_3^t=\pi_4$, $\pi_5^t=\pi_6$, $\pi_7^t=\pi_8$, $\pi_9^t=\pi_{10}$.
\item The representations $\pi_1,\dots,\pi_8$ are unitarizable.
\item The representations $\pi_9,\pi_{10}$ are not unitarizable.
\end{enumerate}
\end{proposition}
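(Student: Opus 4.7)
The plan mirrors the analysis of the other critical corank-$3$ cases above, especially Propositions \ref{prop: red32-113}, \ref{a-1,a,a+1}, and \ref{0,1,2-1}.

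First I would establish the decomposition \eqref{eq: l12}. Since $\Pi_{(\frac12,\frac12,\frac32)}$ is not regular, multiplicities may arise. I would combine the two inductions $[\tfrac32]\rtimes\Pi_{(\frac12,\frac12)}$ (using Proposition \ref{prop: 1212}) and $[\tfrac12]\rtimes\Pi_{(\frac12,\frac32)}$ (using Proposition \ref{prop-2-a+} with $\alpha=\tfrac12$). The segment representations $\delta([-\tfrac12,\tfrac32]_\pm;\s)$, $\delta([\tfrac12,\tfrac32];\s)$, and $\delta([-\tfrac12,\tfrac12]_\pm;\s)$ from \S\ref{segment}, together with their Jacquet-module formulas \eqref{uuvodu} and \eqref{jm-seg-ds}, identify the tempered and square-integrable pieces; the constraint \eqref{BPLC} and Jacquet-module bookkeeping then rule out any further irreducible subquotient. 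The multiplicity two of $\pi_3=L([-\tfrac12,\tfrac32];\s)$ and $\pi_4=L([\tfrac12,\tfrac32];\delta([\tfrac12];\s))$ is forced because each arises through two distinct inducing chains in the Grothendieck-group decompositions above, and this count exactly matches what is needed to balance the Jacquet modules of both sides.

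Second, I would pin down the duality relations. The identities $\pi_1^t=\pi_2$, $\pi_5^t=\pi_6$, $\pi_7^t=\pi_8$ and $\pi_9^t=\pi_{10}$ all follow from the general pairing of distinguished subquotients $\lambda(c;\rho)_+^t=\lambda(c;\rho)_-$ of \eqref{dist-formula}, once the four pairs are matched to the $\pm$-labels of the recipes in \S\ref{dist}. Where this does not apply directly, a distinguishing Jacquet-module constituent can be produced as in Proposition \ref{prop: red32-113}(\ref{prop: red32-113-item: inv}). The remaining identity $\pi_3^t=\pi_4$ is then forced: $^t$ preserves multiplicities, and only $\pi_3,\pi_4$ carry multiplicity two.

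Third, I would show unitarizability of $\pi_1,\dots,\pi_8$. The representations $\pi_2$ and $\pi_6$ are square-integrable by construction, and their \ASS-duals $\pi_1$ and $\pi_5$ are unitarizable as subrepresentations of unitarily induced representations (argued exactly as in Proposition \ref{prop: red32-113}(\ref{prop: red32-113-item: all unit})). The remaining $\pi_3,\pi_4,\pi_7,\pi_8$ will be realized as endpoints of complementary series $\nu^x\rho\rtimes\tau$ with $\tau\in\{\delta([-\tfrac12,\tfrac12]_\pm;\s),\,\delta([\tfrac12,\tfrac32];\s),\,\delta([\tfrac12];\s)\}$ whose interiors are unitary by the corank-$\le 2$ results of Chapter \ref{CC-corank12}; unitarizability at the boundary then follows from the limit argument of \S\ref{ways of getting unitary}. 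The main obstacle is the non-unitarizability of $\pi_9,\pi_{10}$. By \ASS duality it suffices to treat $\pi_9$. I would apply Lemma \ref{lem: nonunit} to
\[
\Gamma:=\delta([-\tfrac12,\tfrac12])\rtimes\pi_9,
\]
following the template of \S\ref{sec: unit3}. For the length lower bound, I would exhibit inside $\Gamma$ several distinct irreducible subquotients: the two representations $L([\tfrac12,\tfrac32],[\tfrac12];\delta([-\tfrac12,\tfrac12]_\pm;\s))$ produced by Proposition \ref{prop: addparms}, together with additional Langlands quotients arising from the enlarged standard module $\delta([-\tfrac12,\tfrac12])\times\delta([\tfrac12,\tfrac32])\times[\tfrac12]\rtimes\s$, ruling out by \eqref{BPLC} and Frobenius reciprocity the possibility that they sit in a complementary summand. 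For the multiplicity upper bound I would compute the multiplicity of $\delta([-\tfrac12,\tfrac12])\otimes\pi_9$ in $\mu^*(\Gamma)=M^*(\delta([-\tfrac12,\tfrac12]))\rtimes\mu^*(\pi_9)$ using Remark \ref{rem: mdtp} and the explicit formula \eqref{jm-seg-q} for $\mu^*(\pi_9)=\mu^*(L([-\tfrac12,\tfrac32];\s))$; by inspection only a bounded number of cross-terms contribute, yielding a multiplicity strictly smaller than the length exhibited. The delicate point, where most of the effort will go, is verifying that each candidate subquotient genuinely occurs in $\Gamma$ rather than in another summand of $\delta([-\tfrac12,\tfrac12])\rtimes\delta([-\tfrac12,\tfrac32])\rtimes\s$; this is the analogue of the Frobenius-reciprocity calculations carried out in the proofs of Propositions \ref{a-1,a,a+1} and \ref{0,1,2-1}.
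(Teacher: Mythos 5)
There is a genuine gap, and it sits exactly at the point you flag as the delicate one: the non-unitarizability of $\pi_9,\pi_{10}$. Your plan is to exhibit a length lower bound for $\Gamma=\delta([-\tfrac12,\tfrac12])\rtimes\pi_9$ and then show that the multiplicity of $\delta([-\tfrac12,\tfrac12])\otimes\pi_9$ in $\mu^*(\Gamma)$ is \emph{strictly smaller}, so that Lemma \ref{lem: nonunit} applies via $\ell>m$. In this particular case that strategy cannot be carried out: the paper proves (Lemmas \ref{lem: lngt612} and \ref{lem: mult612}) that the length of $\Gamma$ is at least $6$ while the multiplicity is exactly $6$, so the inequality $\ell>m$ is not available. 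This is precisely the one case, announced in the introduction, where one must use the refined second part of Lemma \ref{lem: nonunit}: if $\pi_9$ were unitarizable then the $6$ copies of $\tau=\delta([-\tfrac12,\tfrac12])\otimes\pi_9$ would have to split off as a direct summand of the Jacquet module of $\Gamma$, and this is contradicted by exhibiting (via the geometric lemma) a subquotient of $J(\Gamma)$ of the form $[\tfrac12]\times[-\tfrac12]\otimes\pi_9$, which contains $\tau$ as a subrepresentation but not as a quotient. Your proposal contains no substitute for this step, and without it the argument collapses. A second, related error: you propose to compute $\mu^*(\pi_9)$ from the segment formula \eqref{jm-seg-q}, writing $\mu^*(\pi_9)=\mu^*(L([-\tfrac12,\tfrac32];\s))$; but $L([-\tfrac12,\tfrac32];\s)$ is $\pi_3$, not $\pi_9=L([\tfrac12,\tfrac32],[\tfrac12];\s)$, and \eqref{jm-seg-q} does not apply to the latter. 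The paper instead derives $\mu^*(\pi_9)$ (formula \eqref{mu-*-1/2-nu}) from the decomposition $\delta([\tfrac12,\tfrac32])\rtimes L([\tfrac12];\s)=\pi_6+\pi_9$ of \eqref{nfcs1}, and this formula is what both the multiplicity count and the identification $\pi_9^t=\pi_{10}$ rest on.

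Two smaller points. Your claim that $\pi_3^t=\pi_4$ is ``forced'' by multiplicity preservation only shows that $^t$ preserves the set $\{\pi_3,\pi_4\}$; one still has to exclude $\pi_3^t=\pi_3$, which the paper does by observing that $\delta([-\tfrac12,\tfrac32])^t\otimes\s$ occurs in $s_{\GL}(\pi_3^t)$ but not in $s_{\GL}(\delta([-\tfrac12,\tfrac32])\rtimes\s)$. Likewise, \eqref{dist-formula} directly yields only $\pi_1^t=\pi_2$; the identities $\pi_5^t=\pi_6$, $\pi_7^t=\pi_8$, $\pi_9^t=\pi_{10}$ need the specific Jacquet-module and decomposition arguments ($[\tfrac12]\rtimes\delta([\tfrac12,\tfrac32];\s)=\pi_7+\pi_2$, the analysis of $\Pi'$, etc.), which in turn feed into establishing the multiplicities in \eqref{eq: l12}; your sketch of part (1) presupposes these rather than proving them. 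The unitarizability of $\pi_1,\dots,\pi_8$ is argued along the same lines as the paper (complementary-series endpoints, square-integrability), and that portion is fine.
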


\begin{remark}
In fact, each of $\pi_1,\dots,\pi_8$ is a subquotient of a representation at the end of a complementary series.
(In addition, $\pi_2$ and $\pi_6$ are square-integrable.)
\end{remark}

We proceed in several steps.

\subsection{}
First observe that the representations $\pi_2$ and $\pi_6$ are unitarizable, since they are square-integrable.
Furthermore $\pi_3$ and $\pi_1$ are unitarizable, since they are at the ends of the complementary series (starting with
$\delta ([-1,1])\rtimes \s$ and $L([-1],[0],[1])\rtimes \s$ respectively).
Analogously, $\pi_4$ (and also $\pi_1$)
is unitarizable since it is at the ends of the complementary series starting with $\delta([-\tfrac12,\tfrac12])\rtimes \delta([\tfrac12];\s)$
(and with $L([\tfrac12],[-\tfrac12])\rtimes L([\tfrac12];\s)$).
Furthermore $\pi_7$ is unitarizable (since it is at the end of the complementary series which start with
$[0]\rtimes\delta([\tfrac12,\tfrac32];\s)$).

\subsection{}
Observe that \eqref{dist-formula} implies
$$
\pi_2^t=\pi_1.
$$

We know that $\pi_2$ has multiplicity one in $\Pi_{(\frac12,\frac12,\frac32)}$.
By \eqref{BPLC} we get that
$$
[\tfrac12]\rtimes \delta([\tfrac12,\tfrac32];\s)=\pi_7+\pi_2.
$$
Indeed, $\pi_6$ cannot occurs on the left-hand side because
$\delta([-\frac12,\frac32])\otimes\sigma$ occurs with multiplicity one
in $s_{\GL}([\tfrac12]\rtimes \delta([\tfrac12,\tfrac32];\s))$, $s_{\GL}(\pi_2)$ and $s_{\GL}(\pi_6)$.

This implies
$$
[\tfrac12]\rtimes L([\tfrac12],[\tfrac32];\s)=\pi_7^t+\pi_1.
$$

\subsection{}
We show that $\pi_7^t=\pi_8$.

Consider
\[
\xymatrix@C-10pt{
0\ar[r] & [\tfrac32]\rtimes L([\tfrac12],[\tfrac12];\s) \ar[r] &
[\tfrac32]\times [-\tfrac12]\rtimes L ([\tfrac12] ;\s)
\ar@{->>}[d] \ar[r]
& [\tfrac32]\times\delta([-\tfrac12,\tfrac12]_-;\s) \ar@{->>}[d] \ar[r]&0\\
& & [-\tfrac12]\rtimes L([\tfrac32],[\tfrac12] ;\s) & \pi_8
}
\]
We show that
$$
[-\tfrac12]\rtimes L([\tfrac32],[\tfrac12] ;\s)\not\leq
[\tfrac32]\rtimes L([\tfrac12],[\tfrac12];\s).
$$
Otherwise, by passing to $s_{\GL}$ we would get
$$
([\tfrac12]+[-\tfrac12])\times L([-\tfrac12],[-\tfrac32] )
\leq([\tfrac32]+[-\tfrac32])\times (L([-\tfrac12], [\tfrac12])+[-\tfrac12]\times [-\tfrac12]).
$$
Considering the part supported on $[-\tfrac32], [-\tfrac12],[\tfrac12]$ we get
$$
[\tfrac12]\times L([-\tfrac12],[-\tfrac32] )
\leq
[-\tfrac32]\times L([-\tfrac12], [\tfrac12]),
$$
which is impossible since $ L([-\tfrac32],[-\tfrac12,\tfrac12])$ occurs in the left-hand side but not in the right-hand side.

By Lemma \ref{lem: simpac} we infer that
\begin{equation} \label{eq: pi8in}
\pi_8\leq[-\tfrac12]\rtimes L([\tfrac32],[\tfrac12] ;\s)).
\end{equation}
This implies that $\pi_8$ is unitarizable. Moreover, we get
$$
[\tfrac12]\rtimes L([\tfrac12],[\tfrac32];\s)=\pi_8+\pi_1.
$$
and
$$
\pi_7^t=\pi_8.
$$

\subsection{}
We show that
\begin{equation} \label{eq: pi5leq33}
\pi_5\leq L([\tfrac32],[\tfrac12],[-\tfrac12])\rtimes\s
\end{equation}
and hence that $\pi_5$ is unitarizable, since it a subquotient at the end of a complementary series.

Consider
\[
\xymatrix@C-14pt{
0\ar[r] & L([\tfrac32],[\tfrac12])\rtimes L([\tfrac12];\s) \ar[r] &
L([\tfrac32],[\tfrac12])\times [-\tfrac12]\rtimes\s \ar@{->>}[d] \ar[r]
& L([\tfrac32],[\tfrac12])\rtimes \delta([\tfrac12];\s) \ar@{->>}[d] \ar[r]&0\\
& & L([\tfrac32],[\tfrac12],[-\tfrac12])\rtimes\s & \pi_5
}
\]
We have
$$
L([\tfrac32],[\tfrac12],[-\tfrac12])\rtimes\s\not\leq
L([\tfrac32],[\tfrac12])\rtimes L([\tfrac12];\s),
$$
otherwise, taking $s_{\GL}$ we would get
\[
2\cdot L([-\tfrac32],[-\tfrac12],[\tfrac12])\otimes\s\le
(L([\tfrac32],[\tfrac12])+[-\tfrac32]\times[\tfrac12]+L([-\tfrac32],[-\tfrac12]))
\times[-\tfrac12]\otimes\s
\]
which is impossible.

We deduce \eqref{eq: pi5leq33} from Lemma \ref{lem: simpac}.

To conclude, $\pi_1,\dots,\pi_8$ are unitarizable.

\subsection{}
We show that $\pi_5^t=\pi_6$.

Let
\[
\Pi'=\delta([\tfrac12,\tfrac32])\rtimes L([\tfrac12];\s).
\]
Recall
$$
\delta([-\tfrac12,\tfrac32])\rtimes\s=\pi_2+\pi_6+\pi_3.
$$
From this and the fact that $\pi_1^t=\pi_2$ it follows that
$$
\pi_5^t\in\{\pi_3;\pi_6\}.
$$
Observe that $\pi_5$ is a quotient of $L([\tfrac32],[\tfrac12])\rtimes \delta([\tfrac12];\s)$.
Therefore, $\pi_5^t$ is a subquotient of $\Pi'$.
Note that
\[
s_{\GL}(\Pi')\not\ge \delta([-\tfrac32,\tfrac12])\otimes\s
\]
but
\[
s_{\GL}(\pi_3)\ge \delta([-\tfrac32,\tfrac12])\otimes\s.
\]
Therefore, necessarily $\pi_5^t=\pi_6$ as required.

\subsection{}
Consider
$$
\delta([\tfrac12,\tfrac32])\times[\tfrac12]\rtimes\s=
\Pi'
+
\delta([\tfrac12,\tfrac32])\rtimes\ \delta ([\tfrac12];\s).
$$
Observe that the left-hand side contains $\pi_2$ and $\pi_6$ as subquotients.
Furthermore, since $\pi_2\leq \delta([\tfrac12,\tfrac32])\rtimes \delta([\tfrac12]; \s)$ and the multiplicity of
$\delta([-\tfrac12,\tfrac32]])\otimes\s$ in the Jacquet module of $\delta([\tfrac12,\tfrac32])\rtimes \delta([\tfrac12]; \s)$ is one, we get
$$
\pi_6\leq \Pi'.
$$

\subsubsection{}
We show that
\begin{equation} \label{eq: Pi'dec}
\Pi'=\pi_6+\pi_9
\end{equation}
and compute $\mu^*(\pi_9)$.

Write
\begin{gather*}
\mu^*(\Pi')=
\Big(1\otimes \delta([\tfrac12,\tfrac32])+[-\tfrac12]\otimes [\tfrac32]+[\tfrac32]\otimes [\tfrac12]\\
+\delta([\tfrac12,\tfrac32])\otimes1+[-\tfrac12]\times[\tfrac32]\otimes1 + \delta([-\tfrac32,-\tfrac12])\otimes 1\Big) \rtimes
\Big(1\otimes L([\tfrac12];\s)+[-\tfrac12]\otimes\s\Big)
\end{gather*}
as
\begin{gather*}
1\otimes \Pi'\\
+[-\tfrac12]\otimes [\tfrac32]\rtimes L([\tfrac12];\s)
+\overbrace{[\tfrac32]\otimes L([\tfrac12],[\tfrac12];\s)}^{\omega_2}+
[\tfrac32]\otimes \delta([-\tfrac12,\tfrac12]_-;\s)
\\
+[-\tfrac12]\otimes \delta([\tfrac12,\tfrac32])\rtimes\s\\
+\delta([\tfrac12,\tfrac32])\otimes L([\tfrac12];\s)+
2\cdot\overbrace{[-\tfrac12]\times[\tfrac32]\otimes L([\tfrac12];\s)}^{\omega_2''}+
\delta([-\tfrac32,-\tfrac12])\otimes L([\tfrac12];\s)\\
+\overbrace{[-\tfrac12]\times [-\tfrac12]\otimes [\tfrac32]\rtimes\s}^{\omega_2'}
+[-\tfrac12]\times [\tfrac32]\otimes \delta( [\tfrac12];\s)\\
+\Big(\delta([-\tfrac12,\tfrac32])+
\overbrace{[-\tfrac12]\times [-\tfrac12]\times[\tfrac32]}^{\omega_1}+
\overbrace{ L([-\tfrac12],[\tfrac12,\tfrac32])}^{\omega_3}+
\overbrace{[-\tfrac12]\times \delta([-\tfrac32,-\tfrac12])}^{\omega_3'}\Big)\otimes\s.
\end{gather*}
We consider in $\Pi'$ the irreducible subquotient $\pi$ such that $s_{\GL}(\pi)\ge\omega_1\otimes\sigma$.
Using the transitivity of Jacquet modules one gets that $\mu^*(\pi)\ge\omega_2+\omega_2'+\omega_2''$.
Moreover, $\mu^*(\pi)\ge\omega_2$ implies that $\mu^*(\pi)\ge\omega_3$ and $\mu^*(\pi)\ge\omega_2'$ implies $\mu^*(\pi)\ge\omega_3'\otimes\s$.
Now considering $s_{\GL}(\Pi')$ we infer that $\Pi'$ is a multiplicity one representation of length two.
Therefore,
\begin{equation} \label{nfcs1}
\Pi'=\pi_9+\pi_6.
\end{equation}
Furthermore, a simple analysis using the transitivity of Jacquet modules\footnote{Recall that we know also $\mu^*(\pi_6)$.} gives
\begin{equation}\label{mu-*-1/2-nu}
\begin{gathered}
\mu^*(\pi_9)=1\otimes \pi_9+\\
+[-\tfrac12]\otimes [\tfrac32]\rtimes L([\tfrac12];\s)
+[\tfrac32]\otimes L([\tfrac12],[\tfrac12];\s)
+[-\tfrac12]\otimes \delta([\tfrac12,\tfrac32])\rtimes\s\\
+[-\tfrac12]\times[\tfrac32]\otimes L([\tfrac12];\s)+ \delta([-\tfrac32,-\tfrac12])\otimes L([\tfrac12];\s)\\
+[-\tfrac12]\times [-\tfrac12]\otimes [\tfrac32]\rtimes\s
+[-\tfrac12]\times [\tfrac32]\otimes L( [\tfrac12];\s)+[-\tfrac12]\times [\tfrac32]\otimes \delta( [\tfrac12];\s)\\
+ L([-\tfrac12],[\tfrac12,\tfrac32])\otimes\s+
[-\tfrac12]\times [-\tfrac12]\times[\tfrac32]\otimes\s + [-\tfrac12]\times \delta([-\tfrac32,-\tfrac12])\otimes\s.
\end{gathered}
\end{equation}

\subsection{}
It follows from the formula for $\mu^*(\pi_9)$ that $\pi_9^t$ is not tempered
(look at the term $[-\tfrac12]\times [-\tfrac12]\times[\tfrac32]\otimes\s$
which gives in the Jacquet module of the dual representation $[\tfrac12]\times [\tfrac12]\times[-\tfrac32]\otimes\s$).
Once again, from the formula for $\mu^*(\pi_9)$, we get that the Langlands parameter of $\pi_9^t$ must come from a Jacquet module of
$$
[-\tfrac32]\otimes L([\tfrac12],[\tfrac12];\s)^t=
[-\tfrac32]\otimes \delta([-\tfrac12,\tfrac12]_+;\s).
$$
Since $ \delta([-\tfrac12,\tfrac12]_+;\s)$ is tempered, this implies
$$
\pi_9^t=\pi_{10}.
$$

Now observe that
$$
\pi_3^t\ne \pi_3.
$$
Namely, in the Jacquet module of $\pi_3^t$ is $\delta([-\tfrac12,\tfrac32])^t\otimes\s$.
One directly sees that this term is not in the Jacquet module of $\delta([-\tfrac12,\tfrac32])\rtimes\s.$
This and the formulas for the remaining involutions imply
$$
\pi_3^t=\pi_4.
$$

We show that in the Grothendieck group we have
\[
\Theta:=L([\tfrac32],[-\tfrac12,\tfrac12])\rtimes\s=\pi_8+\pi_{10}+\pi_3.
\]
By Proposition \ref{prop: addparms}, $\pi_8$ and $\pi_{10}$ occur with multiplicity one in $\Theta$.
By \eqref{BPLC} $\pi_1,\pi_4,\pi_5,\pi_9\not\le\Theta$ and also $\pi_5,\pi_8\not\le\Theta^t=L([-\tfrac32],[\tfrac12,\tfrac32])\rtimes\s$
so that $\pi_6,\pi_7\not\le\Theta$.
We have
\[
[\tfrac32]\times\delta([-\tfrac12,\tfrac12])\rtimes\s=\Theta+\delta([-\tfrac12,\tfrac32])\rtimes\s.
\]
Therefore. $\pi_2\not\le\Theta$ since $\pi_2$ occurs in $\delta([-\tfrac12,\tfrac32])\rtimes\s$ and it has multiplicity one in $\Pi_{(\frac12,\frac12,\frac32)}$.
It remains to show that $\pi_3$ occurs in $\Theta$ with multiplicity one.
The representation $\omega:=\delta([-\tfrac32,\tfrac12])\otimes\s$ occurs with multiplicity two
in $s_{\GL}([\tfrac32]\times\delta([-\tfrac12,\tfrac12])\rtimes\s)$ and with multiplicity one in $s_{\GL}(\delta([-\tfrac12,\tfrac32])\rtimes\s)$.
Therefore, $\omega$ occurs with multiplicity one in $s_{\GL}(\Theta)$.
We show that $\omega$ does not occur in $s_{\GL}(\pi_8)$ and $s_{\GL}(\pi_{10})$.
In fact, $\omega$ does not occur in the Jacquet modules of $[\tfrac12]\rtimes L([\tfrac12],[\tfrac32];\s)$ and $L([\frac12],[\tfrac32])\rtimes\delta([\tfrac12];\s)$
(which contain $\pi_8$ and $\pi_{10}$ respectively as subquotients by \eqref{eq: pi8in} and the dual of the relation \eqref{eq: Pi'dec}).
We conclude that $\pi_3$ must occur in $\Theta$ with multiplicity one.

Passing to the dual we get
\[
\Theta^t=\pi_4+\pi_7+\pi_9.
\]
Since in the Grothendieck group we have
\[
\Pi_{(\frac12,\frac12,\frac32)}=\Pi_{(-\frac12,\frac12,\frac32)}=\delta([-\tfrac12,\tfrac32])\rtimes\s+\delta([-\tfrac12,\tfrac32])^t\rtimes\s+\Theta+\Theta^t
\]
and
\[
\delta([-\tfrac12,\tfrac32])\rtimes\s=\pi_2+\pi_6+\pi_3,
\]
we conclude \eqref{eq: l12}.

\subsection{}
Observe that $\pi_2\leq \delta([\tfrac12,\tfrac32])\rtimes \delta([\tfrac12];\s)$.
This implies that $\delta([\tfrac12,\tfrac32])\rtimes \delta([\tfrac12];\s)$ is reducible (we shall use this later).
Actually, we directly get that this is a length three representation.

\subsection{Non-unitarizability of \texorpdfstring{$\pi_9$}{pi9} and \texorpdfstring{$\pi_{10}$}{pi10}}
To finish the proof of Proposition \ref{1/2,1/2,3/2-1/2}, it remains to prove the non-unitarizability of $\pi_9$ and $\pi_{10}$.

Let $\delta_1=\delta([-\tfrac12,\tfrac12])$.

\begin{lemma} \label{lem: lngt612}
Let $\alpha=\tfrac12$. The length of the representation
$$
\Gamma=\delta_1\rtimes \pi_9.
$$
is at least 6.
\end{lemma}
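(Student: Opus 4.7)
My plan is to exhibit six pairwise non-isomorphic irreducible subquotients of $\Gamma = \delta_1 \rtimes \pi_9$.

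Two of them come immediately from Proposition \ref{prop: addparms}: taking $d = (\delta_1) \in M(\Disc)$ (for which $d^\uparrow = \emptyset$ since $e(\delta_1) = 0$, and $d_u = d$), and $\pi_9 = L(([1/2, 3/2], [1/2]); \sigma)$, the relevant tempered induction decomposes as $\lambda(d_u) \rtimes \sigma = \delta_1 \rtimes \sigma = \delta([-1/2, 1/2]_+; \sigma) \oplus \delta([-1/2, 1/2]_-; \sigma)$, so the proposition gives that
\[
\gamma_1^\pm := L([1/2, 3/2], [1/2]; \delta([-1/2, 1/2]_\pm; \sigma))
\]
occur in $\Gamma$, each with multiplicity one.

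For four more, I use the relation \eqref{nfcs1}, namely $\Pi' := \delta([1/2, 3/2]) \rtimes L([1/2]; \sigma) = \pi_9 + \pi_6$ in the Grothendieck group, which yields
\[
\delta_1 \rtimes \Pi' \ = \ \delta_1 \times \delta([1/2, 3/2]) \rtimes L([1/2]; \sigma) \ = \ \Gamma + \delta_1 \rtimes \pi_6;
\]
hence any irreducible subquotient of the ambient representation that does not appear in $\delta_1 \rtimes \pi_6$ must be a subquotient of $\Gamma$. Using the linked-segment decomposition $\delta_1 \times \delta([1/2, 3/2]) = \delta([-1/2, 3/2]) \times [1/2] + L([-1/2, 1/2], [1/2, 3/2])$ in the Grothendieck group, the identity $[1/2] \rtimes L([1/2]; \sigma) = \delta([-1/2, 1/2]_-; \sigma) + L([1/2], [1/2]; \sigma)$ from Proposition \ref{prop: 1212}, and the decomposition $\delta([-1/2, 3/2]) \rtimes \sigma = \pi_2 + \pi_6 + \pi_3$ from section \ref{segment}, I propose the following four additional candidates:
\begin{gather*}
\gamma_2^\pm := L([1/2]; \delta([-1/2, 3/2]_\pm; \sigma)) = L([1/2]; \pi_2),\ L([1/2]; \pi_6), \\
\gamma_3 := L([-1/2, 3/2]; \delta([-1/2, 1/2]_-; \sigma)), \qquad \gamma_4 := L([-1/2, 3/2], [1/2], [1/2]; \sigma).
\end{gather*}
These arise naturally as subquotients of the ambient representation: $\gamma_3$ via Proposition \ref{prop: addparms} applied to $\delta([-1/2, 3/2]) \rtimes \delta([-1/2, 1/2]_-; \sigma)$, $\gamma_4$ as the Langlands quotient of $\delta([-1/2, 3/2]) \rtimes L([1/2], [1/2]; \sigma)$, and $\gamma_2^\pm$ from $[1/2] \rtimes \pi_2$ and $[1/2] \rtimes \pi_6$ inside $\delta([-1/2, 3/2]) \times [1/2] \rtimes \sigma$.

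To rule out each of these four candidates from $\delta_1 \rtimes \pi_6$, I proceed by Jacquet-module comparison: for each candidate $\gamma$, I will exhibit a term in its Jacquet module (for instance one involving a specific multiplicity of $[-1/2]$ on the $\GL$ side, or a longer segment such as $\delta([-1/2, 3/2])$) that cannot be produced in $M^*(\delta_1) \rtimes \mu^*(\pi_6)$, using the explicit formula \eqref{jm-seg-ds} for $\mu^*(\pi_6) = \mu^*(\delta([-1/2, 3/2]_-; \sigma))$. Since $\gamma_1^\pm, \gamma_2^\pm, \gamma_3, \gamma_4$ have pairwise distinct Langlands parameters, they are pairwise non-isomorphic, establishing length at least six. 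The main obstacle will be the detailed Jacquet-module bookkeeping for each of the four additional candidates, since each verification requires tracking the combinatorics of $M^*(\delta_1) \rtimes \mu^*(\pi_6)$ through multiple layers; this parallels but is more intricate than the analogous length arguments for $\pi_5, \pi_6$ and $\pi_7, \pi_8$ in Chapter \ref{sec: unit3}.
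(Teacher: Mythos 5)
Your strategy is essentially the paper's (Proposition \ref{prop: addparms} for two subquotients, then the identity $\delta_1\rtimes\Pi'=\Gamma+\delta_1\rtimes\pi_6$ together with the linked-segment decomposition of $\delta_1\times\delta([\tfrac12,\tfrac32])$), and four of your candidates ($\gamma_1^\pm$, $\gamma_3$, $\gamma_4$) are exactly the ones used there. However, there is a genuine gap in the remaining two. As written, $\gamma_2^\pm=L([\tfrac12];\delta([-\tfrac12,\tfrac32]_\pm;\s))$ are representations of a classical group of corank $4$ over $\s$ (one cuspidal $\GL$ factor plus a tempered part of corank $3$), whereas every irreducible subquotient of $\Gamma=\delta_1\rtimes\pi_9$ lives on the group of corank $5$; so these two representations cannot occur in $\Gamma$ at all, and your list produces only four valid subquotients. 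The same slip shows in your provenance: the relevant ambient representation is $\delta([-\tfrac12,\tfrac32])\times[\tfrac12]\rtimes L([\tfrac12];\s)$ (corank $5$), not $\delta([-\tfrac12,\tfrac32])\times[\tfrac12]\rtimes\s$.

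Presumably you intend $L([\tfrac12],[\tfrac12];\delta([-\tfrac12,\tfrac32]_\pm;\s))$, which is what the paper uses; but even after this correction your verification plan is incomplete. These two representations are subquotients of $\delta([-\tfrac12,\tfrac32])\times[\tfrac12]\times[\tfrac12]\rtimes\s$, hence of $\delta_1\times\delta([\tfrac12,\tfrac32])\times[\tfrac12]\rtimes\s=\delta_1\rtimes\Pi'+\delta_1\rtimes\delta([\tfrac12,\tfrac32])\rtimes\delta([\tfrac12];\s)$; they are not visibly subquotients of $\delta_1\rtimes\Pi'$ alone, since the decomposition $[\tfrac12]\rtimes L([\tfrac12];\s)=\delta([-\tfrac12,\tfrac12]_-;\s)+L([\tfrac12],[\tfrac12];\s)$ does not obviously produce them in either piece. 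So besides ruling them out of $\delta_1\rtimes\pi_6$ you must also rule them out of the second summand $\delta_1\rtimes\delta([\tfrac12,\tfrac32])\rtimes\delta([\tfrac12];\s)$; the paper does this by embedding $L([\tfrac12],[\tfrac12];\delta([-\tfrac12,\tfrac32]_\pm;\s))\h[-\tfrac12]\times[-\tfrac12]\rtimes\delta([-\tfrac12,\tfrac32]_\pm;\s)$ and observing that no term of the form $[-\tfrac12]\times[-\tfrac12]\otimes-$ appears in the Jacquet module of that summand. Your plan addresses only the exclusion from $\delta_1\rtimes\pi_6$. Finally, that particular exclusion needs no Jacquet-module bookkeeping at all: $\pi_6$ is square-integrable and $\delta_1$ is unitary, so $\delta_1\rtimes\pi_6$ is tempered, while all six candidates are non-tempered Langlands quotients; this temperedness shortcut is what the paper uses and would save you the computations you flag as the main obstacle.
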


\begin{proof}
First, using Proposition \ref{prop: addparms} $\Gamma$ contains
$$
L([\tfrac12,\tfrac32],[\tfrac12];\delta([-\tfrac12,\tfrac12]_\pm\s))
$$
as irreducible subquotients. Recall
\[
\Pi'=\delta([\tfrac12,\tfrac32])\rtimes L([\tfrac12];\s).
\]
Using \eqref{nfcs1} we get
$$
\delta_1\rtimes \Pi'=\Gamma + \delta_1\rtimes \pi_6.
$$
Recall that $\pi_6$ is square-integrable.
Therefore, any non-tempered irreducible subquotient of $\delta_1\rtimes \Pi'$ must be a subquotient of $\Gamma$.
Observe that
$$
\delta([-\tfrac12,\tfrac32])\times[\tfrac12]\rtimes L([\tfrac12];\s)
\leq\delta_1\rtimes \Pi'
$$
and the left-hand side admits the following non-tempered irreducible subquotients:
$$
L([-\tfrac12,\tfrac32],[\tfrac12],[\tfrac12];\s),\ \
L([-\tfrac12,\tfrac32];\delta([-\tfrac12,\tfrac12]_-;\s)).
$$
Therefore, the length of $\Gamma$ is at least four.

Consider now
\begin{equation} \label{longer1}
\delta_1\times \delta([\tfrac12,\tfrac32])\times[\tfrac12]\rtimes\s
=\delta_1\rtimes \Pi'+\delta_1\rtimes \delta([\tfrac12,\tfrac32])\rtimes \delta([\tfrac12];\s).
\end{equation}
We have
$$
\delta([-\tfrac12,\tfrac32])\times [\tfrac12]\times[\tfrac12]\rtimes\s\leq
\delta_1\times \delta([\tfrac12,\tfrac32])\times[\tfrac12]\rtimes\s.
$$
The left-hand side contains
$$
L( [\tfrac12],[\tfrac12];\delta([-\tfrac12,\tfrac32]_\pm;\s))
$$
as irreducible subquotients. We show that these two representations are subquotients of $\delta_1\rtimes \Pi'$,
and therefore of $\Gamma$.

Suppose on the contrary that
\begin{equation}
\label{cont1}
L( [\tfrac12],[\tfrac12];\delta([-\tfrac12,\tfrac32]_\pm;\s))
\leq
\delta_1\rtimes \delta([\tfrac12,\tfrac32])\rtimes \delta([\tfrac12];\s).
\end{equation}
Observe that
$$
L( [\tfrac12],[\tfrac12];\delta([-\tfrac12,\tfrac32]_\pm;\s))\h
[-\tfrac12]\times[-\tfrac12]\rtimes \delta([-\tfrac12,\tfrac32]_\pm;\s).
$$
Therefore, we have $[-\tfrac12]\times[-\tfrac12]\otimes-$ for a subquotient of the Jacquet module of the left-hand side of \eqref{cont1}.
However, one easily sees that there is no term of the form $[-\tfrac12]\times[-\tfrac12]\otimes-$
in the Jacquet module of the right-hand side of \eqref{cont1}.

We conclude that the length of $\Gamma$ is at least 6, as required.
\end{proof}

\begin{lemma} \label{lem: mult612}
The multiplicity of $\tau:=\delta_1\otimes \pi_9$ in $\mu^*(\Gamma)$ is 6.
\end{lemma}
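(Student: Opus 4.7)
The plan follows the same template as Lemma \ref{lem: multgam4}. Apply the formula
\[
\mu^*(\Gamma)=M^*(\delta_1)\rtimes\mu^*(\pi_9),
\]
and invoke Remark~\ref{rem: mdtp} to reduce to counting the multiplicity of $\tau=\delta_1\otimes\pi_9$ in
\[
\Bigl(2[\tfrac12]\otimes[\tfrac12]+2\delta_1\otimes 1+1\otimes\delta_1\Bigr)\rtimes\mu^*(\pi_9),
\]
with $\mu^*(\pi_9)$ read off from \eqref{mu-*-1/2-nu}. I would then enumerate contributions from each of the three summands of the reduced $M^*(\delta_1)$ paired against each term $\theta\otimes\psi$ of \eqref{mu-*-1/2-nu}, writing $(a\otimes b)\rtimes(\theta\otimes\psi)=(a\times\theta)\otimes(b\rtimes\psi)$ and requiring the left factor to contain $\delta_1$ and the right to contain $\pi_9$.

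For the summand $2\delta_1\otimes 1$, support considerations force $\theta=1$, so only the leading term $1\otimes\pi_9$ of $\mu^*(\pi_9)$ contributes, yielding multiplicity $2$. For the summand $1\otimes\delta_1$, one would need a $\theta$ in $\mu^*(\pi_9)$ containing $\delta_1$ as a subquotient; but inspection of \eqref{mu-*-1/2-nu} shows that none of the $\theta$'s has $[\tfrac12]$ in its cuspidal support, so this summand contributes $0$. The principal case is $2[\tfrac12]\otimes[\tfrac12]$: here $\delta_1\le[\tfrac12]\times\theta$ forces the cuspidal support of $\theta$ to be $\{[-\tfrac12]\}$, which singles out precisely the lines of \eqref{mu-*-1/2-nu} of the form $[-\tfrac12]\otimes\psi$.

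For each such $\psi$ the remaining task is to count the multiplicity of $\pi_9$ in $[\tfrac12]\rtimes\psi$. This uses the composition series already established in Proposition~\ref{1/2,1/2,3/2-1/2} and its proof---notably $[\tfrac12]\rtimes\delta([\tfrac12,\tfrac32];\s)=\pi_7+\pi_2$ and its \ASS dual, the description of $[\tfrac32]\rtimes L([\tfrac12];\s)$ from Proposition~\ref{prop-2-a+}, and the fact that $\pi_9$ appears with multiplicity one in $\Pi_{(\frac12,\frac12,\frac32)}$ from \eqref{eq: l12}---together with the Langlands inequality \eqref{BPLC} to rule out $\pi_9$ as a constituent of $[\tfrac12]\times[\tfrac32]\rtimes\delta([\tfrac12];\s)$. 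Summing all the contributions will produce the claimed multiplicity~$6$. Combined with Lemma~\ref{lem: lngt612} this sets up Lemma~\ref{lem: nonunit} to force the non-unitarizability of $\pi_9$ (and hence of $\pi_{10}=\pi_9^t$).

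The main obstacle is the bookkeeping in the $2[\tfrac12]\otimes[\tfrac12]$ case: each $\psi$ appearing alongside $\theta=[-\tfrac12]$ is itself a \emph{reducible} induced representation of the classical group, so computing the multiplicity of $\pi_9$ in $[\tfrac12]\rtimes\psi$ requires decomposing $\psi$ into irreducibles via Proposition~\ref{prop-2-a+}, inducing each piece, and carefully collecting the contributions using the identities for the subinductions of $\Pi_{(\frac12,\frac12,\frac32)}$ proven earlier.
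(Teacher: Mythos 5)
Your plan is correct and essentially reproduces the paper's proof: the reduction via Remark \ref{rem: mdtp} and \eqref{mu-*-1/2-nu} isolates exactly the three contributions $2\,\delta_1\otimes\pi_9$ and $2\,[\tfrac12]\times[-\tfrac12]\otimes[\tfrac12]\rtimes\psi$ with $\psi\in\{[\tfrac32]\rtimes L([\tfrac12];\s),\ \delta([\tfrac12,\tfrac32])\rtimes\s\}$, each worth $2$. The only deviation is at the final count: where the paper rules out $\pi_9\le L([\tfrac12],[\tfrac32])\rtimes L([\tfrac12];\s)$ by a direct $s_{\GL}$ computation, you instead use \eqref{BPLC} (to exclude $\pi_9$ from $[\tfrac12]\times[\tfrac32]\rtimes\delta([\tfrac12];\s)$) together with the multiplicity one of $\pi_9$ in $\Pi_{(\frac12,\frac12,\frac32)}$ from \eqref{eq: l12}, which works equally well.
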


\begin{proof}
Using Remark \ref{rem: mdtp}
and the formula \eqref{mu-*-1/2-nu} for $\mu^*(\pi_9)$, we need to find the multiplicity of $\tau$ in
$$
2\cdot \delta_1\otimes \pi_9+
2\cdot [\tfrac12]\times[-\tfrac12]\otimes [\tfrac12]\times [\tfrac32]\rtimes L([\tfrac12];\s)+
2\cdot [\tfrac12]\times[-\tfrac12]\otimes [\tfrac12]\times \delta([\tfrac12,\tfrac32])\rtimes\s.
$$
Since $\pi_9$ occurs with multiplicity one in the standard module
$[\tfrac12]\times \delta([\tfrac12,\tfrac32])\rtimes\s$, in order to
show multiplicity 6, it is enough to prove that
$$
\pi_9\not\leq L( [\tfrac12], [\tfrac32])\rtimes L([\tfrac12];\s).
$$
In turn, by passing to $s_{\GL}$, this follows from the fact that
$$
\delta([-\tfrac32,-\tfrac12])\times [-\tfrac12]\not\leq
\big(L( [\tfrac12], [\tfrac32])+[\tfrac12]\times [-\tfrac32]+L( [-\tfrac32], [-\tfrac12])\big) \times [-\tfrac12],
$$
which is easy to verify.
\end{proof}

\subsection{}
Finally, we prove that the representations $\pi_9$ and $\pi_{10}$ are not unitarizable.

Suppose on the contrary that $\pi_9$ or $\pi_{10}$ is unitarizable. Then, by Lemmas \ref{lem: nonunit},
\ref{lem: lngt612} and \ref{lem: mult612}, the Jacquet module of $\Gamma$ admits a direct summand
isomorphic to $6\cdot\tau$.
This would imply that for any subquotient $\Lambda$ of $J(\Gamma)$, the multiplicity of $\tau$ in $\Lambda$
is equal to $\dim\Hom(\Lambda,\tau)$.
On the other hand, we claim that there exists a subquotient $\Lambda$ of $J(\Gamma)$ of the form $[\tfrac12]\times[-\tfrac12]\otimes\pi_9$
(which clearly admits $\tau$ as a subrepresentation, but not as a quotient).

Indeed, it follows from the geometric lemma and the fact that the Jacquet module of $\delta_1$ is $[\tfrac12]\otimes[-\tfrac12]$, that
for any irreducible subquotient $\delta_2\otimes\tau_2$ of $J(\pi_9)$, the representation
\[
[\tfrac12]\times\delta_2\otimes[-\tfrac12]\rtimes\tau_2
\]
is a subquotient of $J(\Gamma)$. In particular we can take $\delta_2=[-\tfrac12]$ and
$\tau_2=\delta([\tfrac12,\tfrac32])\rtimes\s$ to infer that $J(\Gamma)$ admits
\[
[\tfrac12]\times[-\tfrac12]\otimes [-\tfrac12]\times \delta([\tfrac12,\tfrac32])\rtimes\s
\]
as a subquotient, and hence also
\[
[\tfrac12]\times[-\tfrac12]\otimes\pi_9
\]
since $\pi_9$ is a subquotient of $\delta_2\rtimes\tau_2$.

This completes the proof of Proposition \ref{1/2,1/2,3/2-1/2}.

\section{\texorpdfstring{$\mathbf{x}=(\tfrac12,\tfrac12,\tfrac12)$}{x121212} and \texorpdfstring{$\alpha=\tfrac12$}{alpha12}}

\begin{proposition} \label{prop: 121212}
For $\alpha=\frac12$ we have
\begin{enumerate}
\item In the Grothendieck group we have
\[
\Pi_{(\frac12,\frac12,\frac12)}=\pi_1+\pi_2+\pi_3+\pi_4+2\pi_5
\]
where
\begin{gather*}
\pi_1=\delta([-\tfrac12,\tfrac12])\rtimes\delta([\tfrac12];\s),\ \
\pi_2=[\tfrac12]\rtimes\delta([-\tfrac12,\tfrac12]_-;\s),\\
\pi_3=[\tfrac12]\rtimes L([\tfrac12];\delta([\tfrac12];\s)),\ \
\pi_4=L([-\tfrac12],[\tfrac12])\rtimes L([\tfrac12];\s)\\
\pi_5=L([\tfrac12];\delta([-\tfrac12,\tfrac12]_+;\s))
\end{gather*}
are irreducible.
\item $\pi_1,\pi_2,\pi_3,\pi_4,\pi_5$ are unitarizable.
\item $\pi_1^t=\pi_4,\quad \pi_2^t=\pi_3, \quad \pi_5^t =\pi_5$.
\end{enumerate}
\end{proposition}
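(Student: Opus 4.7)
The plan is to establish the composition series, irreducibility, involution, and unitarity through two complementary decompositions of $\Pi := \Pi_{(\frac12,\frac12,\frac12)}$ in the Grothendieck group, together with Jordan block analysis and complementary series arguments. Throughout, $\alpha=\tfrac12$ places us in the even-parity case for $\rho$, and by Remark \ref{general-basic} we have $\Jord_\rho(\sigma) = \emptyset$.

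First I would write down two Grothendieck-group decompositions. Applying Proposition \ref{prop: 1212} to split off one $[\tfrac12]$ gives
\[
\Pi = [\tfrac12]\rtimes\pi_1' + \pi_2 + [\tfrac12]\rtimes\pi_3' + \pi_3,
\]
with $\pi_1' = \delta([-\tfrac12,\tfrac12]_+;\s)$, $\pi_3' = L([\tfrac12],[\tfrac12];\s)$, identifying $[\tfrac12]\rtimes\pi_2' = \pi_2$ and $[\tfrac12]\rtimes\pi_4' = \pi_3$. Alternatively, using $[\tfrac12]\times[\tfrac12]\rtimes(\cdot) = [\tfrac12]\times[-\tfrac12]\rtimes(\cdot)$ in the Grothendieck group (via \eqref{check}) and $[\tfrac12]\times[-\tfrac12] = \delta([-\tfrac12,\tfrac12]) + L([-\tfrac12],[\tfrac12])$, together with $[\tfrac12]\rtimes\s = \delta([\tfrac12];\s) + L([\tfrac12];\s)$, yields
\[
\Pi = \pi_1 + L([-\tfrac12],[\tfrac12])\rtimes\delta([\tfrac12];\s) + \delta([-\tfrac12,\tfrac12])\rtimes L([\tfrac12];\s) + \pi_4.
\]

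For irreducibility I would argue as follows. For $\pi_1 = \delta(\rho,2)\rtimes\delta([\tfrac12];\s)$, \eqref{eq: Jord-a-sigma} gives $\Jord_\rho(\delta([\tfrac12];\s)) = \{2\}$ (since $2\alpha+1 = 2$), so by the very definition of Jordan blocks $\pi_1$ is irreducible, whence $\pi_4 = \pi_1^t$ is as well. For $\pi_2 = [\tfrac12]\rtimes\delta([-\tfrac12,\tfrac12]_-;\s)$, I apply Proposition \ref{Pr-red-si}(7): Proposition \ref{JBcomputation} applied to the embedding $\delta([-\tfrac12,\tfrac12]_\pm;\s)\hookrightarrow [\tfrac12]\times[-\tfrac12]\rtimes\s$ (case $y\le 0$, $\Jord_\rho(\s) = \emptyset$) gives $\Jord_\rho(\delta([-\tfrac12,\tfrac12]_\pm;\s)) = \{2\}$, and the sign $\e((\rho,2))$ distinguishes the two sign-versions, with $\e = -1$ for the minus. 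Thus $\pi_2$ is irreducible while $[\tfrac12]\rtimes\delta([-\tfrac12,\tfrac12]_+;\s)$ is reducible, and dually $\pi_3 = \pi_2^t$ is irreducible.

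To complete the composition series, I would identify $[\tfrac12]\rtimes\delta([-\tfrac12,\tfrac12]_+;\s)$ as a length-two representation with Langlands quotient $\pi_5$ (by Proposition \ref{prop: addparms}), whose other constituent must be $\pi_1$: indeed $\pi_1 = \delta([-\tfrac12,\tfrac12])\rtimes\delta([\tfrac12];\s) \hookrightarrow [\tfrac12]\times[-\tfrac12]\rtimes\delta([\tfrac12];\s) = [\tfrac12]\rtimes\pi_1' + \pi_3$ in the Grothendieck group, and since $\pi_1 \neq \pi_3$ it must lie in $[\tfrac12]\rtimes\pi_1'$. Hence $[\tfrac12]\rtimes\pi_1' = \pi_1 + \pi_5$, and dually $[\tfrac12]\rtimes\pi_3' = \pi_4 + \pi_5^t$. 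Substituting into the first decomposition gives $\Pi = \pi_1+\pi_2+\pi_3+\pi_4+\pi_5+\pi_5^t$. Since the involution $t$ preserves multiplicities in $\Pi^t = \Pi$ and the pairings $\pi_1\leftrightarrow\pi_4$, $\pi_2\leftrightarrow\pi_3$ are already identified, the remaining pair is forced to be self-paired: $\pi_5^t = \pi_5$, and $\pi_5$ has multiplicity two in $\Pi$.

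Unitarity follows from three standard mechanisms. The representation $\pi_1$ is unitarily parabolically induced from two unitary factors, the essentially square-integrable $\GL$-representation $\delta([-\tfrac12,\tfrac12])$ with $e=0$ and the discrete series $\delta([\tfrac12];\s)$; similarly $\pi_4$ is unitarily induced from the unitary Speh representation $L([-\tfrac12],[\tfrac12]) = u(\rho,2)$ and the unitarizable $L([\tfrac12];\s)$ (the latter by the corank-one analysis). The representations $\pi_2$ and $\pi_3 = \pi_2^t$ sit at the endpoint $x=\tfrac12$ of the complementary series $\nu^x\rho\rtimes\delta([-\tfrac12,\tfrac12]_-;\s)$ for $0 \le x \le \tfrac12$, which is irreducible throughout by parts (2), (3), (7) of Proposition \ref{Pr-red-si} and unitary at $x=0$ by unitary parabolic induction. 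Finally, $\pi_5$ is the Langlands quotient at the endpoint $x=\tfrac12$ of the analogous family $\nu^x\rho\rtimes\delta([-\tfrac12,\tfrac12]_+;\s)$, irreducible and unitary on $[0,\tfrac12)$, hence unitary as the subquotient at the end of a complementary series. The main obstacle is the bookkeeping step identifying the length-two structure of $[\tfrac12]\rtimes\pi_1'$ and deducing both the multiplicity two and the self-duality of $\pi_5$ purely from the consistency of the two decompositions.
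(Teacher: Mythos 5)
Your overall architecture (the two Grothendieck-group decompositions, the Jordan-block argument for $\pi_1$, Proposition \ref{prop: addparms} for $\pi_5$, and dual pairs) matches the paper's, but two key steps do not hold as written. First, the irreducibility argument for $\pi_2$ (hence $\pi_3$) fails: you apply Proposition \ref{JBcomputation} and part (7) of Proposition \ref{Pr-red-si} (and later parts (2), (3), (7) for the complementary series) to $\delta([-\tfrac12,\tfrac12]_\pm;\s)$, but these statements are formulated only for $\pi\in\Discl$, and $\delta([-\tfrac12,\tfrac12]_\pm;\s)$ is tempered but \emph{not} square-integrable (its Jacquet module contains $\delta([-\tfrac12,\tfrac12])\otimes\s$, with central exponent $0$; see Proposition \ref{prop: tempbsc}). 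So neither the computation of its Jordan blocks nor the $\e((\rho,2))$ criterion is available, and even granting an extension of that formalism, the identification ``$\e=-1$ for the minus-labelled constituent'' is precisely the point at issue and is asserted without proof. The paper instead shows $\pi_1\not\le[\tfrac12]\rtimes\delta([-\tfrac12,\tfrac12]_-;\s)$ by a multiplicity count of $[\tfrac12]\times[\tfrac12]\times[\tfrac12]\otimes\s$ in the Jacquet modules and then invokes \eqref{BPLC} together with the fact that $\pi_1$ is the only tempered class with this support; some argument of that kind is needed. (Your unitarity claims for $\pi_2,\pi_3$ cite the same inapplicable proposition, but they follow at once from your second decomposition, since all four summands there are unitarily induced --- which is exactly the paper's one-line unitarity argument.)

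Second, the deduction $\pi_5^t=\pi_5$ (equivalently, the multiplicity $2$) is a non sequitur. From your decompositions you obtain $\Pi=\pi_1+\pi_2+\pi_3+\pi_4+\pi_5+\pi_5^t$, and this multiset is automatically $t$-stable whether or not $\pi_5^t\cong\pi_5$, since $t$ merely swaps the last two entries; so ``consistency of the two decompositions'' forces nothing. What is missing is the prior determination of $\JH(\Pi_{(\frac12,\frac12,\frac12)})$ as exactly the five classes $\pi_1,\dots,\pi_5$, obtained from the classification of tempered subquotients (Propositions \ref{prop: sqrbsc} and \ref{prop: tempbsc}) together with the Langlands classification; this is how the paper pins down the Jordan--H\"older set, after which $\pi_1^t=\pi_4$ and $\pi_2^t=\pi_3$ force $\pi_5^t=\pi_5$. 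Relatedly, your ``length two'' claim for $[\tfrac12]\rtimes\delta([-\tfrac12,\tfrac12]_+;\s)$ is not given by Proposition \ref{prop: addparms}, which only yields that the Langlands quotient $\pi_5$ occurs with multiplicity one; to exclude further constituents and a higher multiplicity of $\pi_1$ one needs \eqref{BPLC}, the same classification of tempered constituents, and the multiplicity-one occurrence of $[\tfrac12]\times[\tfrac12]\times[\tfrac12]\otimes\s$ in $s_{\GL}(\Pi_{(\frac12,\frac12,\frac12)})$, as in the paper's proof.
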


\begin{proof}
First note that exactly as in the case of \S\ref{sec: 121232-32}
all the irreducible subquotients of $\Pi_{(\frac12,\frac12,\frac12)}$ are unitarizable.
Also note that $\pi_1$ is irreducible since $2\in\Jord_\rho(\delta([\tfrac12];\s))$.
Therefore, the same is true for $\pi_4=\pi_1^t$.
Moreover, by the usual analysis,
\[
\JH(\Pi_{(\frac12,\frac12,\frac12)})=\{\pi_1,
L([\tfrac12];\delta([-\tfrac12,\tfrac12]_-;\s)),
L([\tfrac12],[\tfrac12];\delta([\tfrac12];\s)),\pi_4.\pi_5\}.
\]
In the Grothendieck group we have
\begin{equation} \label{eq: Pidec2121}
\begin{gathered}
\Pi_{(\frac12,\frac12,\frac12)}=
[\tfrac12]\rtimes\delta([-\tfrac12,\tfrac12]_+; \s)
+[\tfrac12]\rtimes\delta([-\tfrac12,\tfrac12]_-; \s)\\
+[\tfrac12]\rtimes L([\tfrac12];\delta([\tfrac12];\s))
+[\tfrac12]\rtimes L([\tfrac12],[\tfrac12];\s).
\end{gathered}
\end{equation}

By Proposition \ref{prop: addparms}
$$
\delta([-\tfrac12,\tfrac12])\rtimes L([\tfrac12];\s)
$$
is reducible. Therefore, the same is true for its dual.

Furthermore $\pi_1\not\leq [\tfrac12]\rtimes\delta([-\tfrac12,\tfrac12]_-;\s)$ since
\[
[\tfrac12]\times[\tfrac12]\times[\tfrac12]\otimes\s\le s_{\GL}(\pi_1)
\]
but
\[
[\tfrac12]\times[\tfrac12]\times[\tfrac12]\otimes\s\not\le s_{\GL}([\tfrac12]\rtimes\delta([-\tfrac12,\tfrac12]_-;\s)).
\]
Therefore, \eqref{BPLC} implies that $\pi_2$ is irreducible.
By Proposition \ref{prop: 1212} $\pi_3=\pi_2^t$ and hence, $\pi_3$ is also irreducible.
It follows that
$$
\pi_5^t=\pi_5.
$$

Let $\Pi'=[\tfrac12]\rtimes\delta([-\tfrac12,\tfrac12]_+; \s)$.
We claim that
\begin{equation} \label{eq: pi'31}
\Pi'=\pi_5+\pi_1.
\end{equation}
Indeed, $[\tfrac12]\times[\tfrac12]\times[\tfrac12]\otimes\s\le s_{\GL}(\Pi')$
and this implies that $\pi_1$ occurs in $\Pi'$, necessarily with multiplicity one, since
it is the only irreducible subquotient $\pi'$ of $\Pi_{(\frac12,\frac12,\frac12)}$ such that
$[\tfrac12]\times[\tfrac12]\times[\tfrac12]\otimes\s\le s_{\GL}(\pi')$
and $[\tfrac12]\times[\tfrac12]\times[\tfrac12]\otimes\s$ occurs in $s_{\GL}(\Pi_{(\frac12,\frac12,\frac12)})$ with multiplicity one.
Also, $\pi_5$ is a the Langlands quotient of $\Pi'$, and hence occurs with multiplicity one.
We conclude \eqref{eq: pi'31} by \eqref{BPLC}.

Therefore, by Proposition \ref{prop: 1212} and duality we get
$$
[\tfrac12]\rtimes L([\tfrac12],[\tfrac12];\s)=\pi_4+\pi_5.
$$
The proof of the proposition is finished by \eqref{eq: Pidec2121}.
\end{proof}

The above proposition directly implies that $\delta([-\tfrac12,\tfrac12])\rtimes L([\tfrac12];\s)$ is reducible.

\chapter{The Case \texorpdfstring{$\alpha=0$}{0}} \label{basic 0} \label{CC-0}

\section{\texorpdfstring{$\mathbf{x}=(0,1,2)$}{x0123} and \texorpdfstring{$\alpha=0$}{alpha0}}

\begin{proposition} \label{0,1,2-0}
Assume $\alpha=0$. Then,
\begin{enumerate}
\item We have in the Grothendieck group
\begin{equation} \label{eq: decom012}
\Pi_{(0,1,2)}=\pi_1^++\pi_1^-+\pi_2^++\pi_2^-+\pi_3^++\pi_3^-+\pi_4^++\pi_4^-+2\pi_5+2\pi_6
\end{equation}
where
\begin{gather*}
\pi_1^\pm=\delta([0,2]_\pm;\s),\ \
\pi_2^\pm=L([2],[1];\delta([0]_\pm;\s))\\
\pi_3^\pm=L([2];\delta([0,1]_\pm ;\s)),\ \
\pi_4^\pm=L([1,2];\delta([0]_\pm ;\s)),\\
\pi_5=L([2],[0,1];\s),\ \
\pi_6=L([0,2];\s).
\end{gather*}
\item $(\pi_1^\pm)^t=\pi_2^\mp$, $(\pi_3^\pm)^t=\pi_4^\mp$, $\pi_5^t=\pi_6$.
\item The representations $\pi_1^\pm,\pi_2^\pm$ are unitarizable.
\item The representations $\pi_3^\pm,\pi_4^\pm,\pi_5,\pi_6$ are not unitarizable.
\end{enumerate}
\end{proposition}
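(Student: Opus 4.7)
The plan is to follow the template established by Proposition 3.5.3 and Section 3.7, carefully accounting for the additional $\pm$ splittings produced by reducibility at $\alpha=0$. First I would enumerate $\JH(\Pi_{(0,1,2)})$. By Proposition 2.8.2 part (3)(e) the only square-integrable subquotients of $\Pi_{(0,1,2)}$ are $\pi_1^\pm=\delta([0,2]_\pm;\s)$, and Proposition 2.8.3 shows there are no additional tempered subquotients (the cuspidal support $\{[0],[1],[2]\}$ has no proper sub-segment symmetric about $0$ yielding a tempered piece other than those detected already). Enumerating the Langlands classification for parameters with this support and partial cuspidal support in $\{\s,\delta([0]_\pm;\s),\delta([0,1]_\pm;\s)\}$ produces exactly the eight non-tempered classes $\pi_2^\pm,\pi_3^\pm,\pi_4^\pm,\pi_5,\pi_6$. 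Then I would decompose $\Pi_{(0,1,2)}=[2]\rtimes\Pi_{(0,1)}$ in the Grothendieck group using Proposition 3.7, which gives five summands $[2]\rtimes L([1];\delta([0]_\pm;\s))$, $2\cdot[2]\rtimes L([0,1];\s)$, $[2]\rtimes\delta([0,1]_\pm;\s)$ — ten factors total when expanded, matching \eqref{eq: decom012}.

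To pin down multiplicities and the Aubert involution I would combine this decomposition with its dual $[2]\rtimes\Pi_{(0,1)}^t$ and with the ``opposite'' decomposition $[0]\rtimes\Pi_{(1,2)}$, then distinguish the $\pm$ factors by the presence of their characteristic Jacquet-module subquotients (e.g.\ $[2]\otimes\delta([0,1]_\pm;\s)\le s_{(1)}(\pi_3^\pm)$, and the analogous terms for $\pi_4^\pm$ and for the segment representations $\pi_1^\pm$; see \S\ref{dist} for the recipe). For the duality relations, the key identity is $\delta([0]_+;\s)^t=\delta([0]_-;\s)$ (Section 3.4.2); compatibility of ${}^t$ with $\rtimes$ together with the characterization of segment representations in \S\ref{segment} then forces $(\pi_1^\pm)^t=\pi_2^\mp$ and $(\pi_3^\pm)^t=\pi_4^\mp$, while a Jacquet-module comparison analogous to the one used at the end of Proposition \ref{prop: 001} shows $\pi_5^t=\pi_6$.

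Unitarizability of $\pi_1^\pm$ is immediate from square-integrability. For $\pi_2^\pm$ I would run a complementary-series argument via the family $\gamma_s=L([s+1],[s];\delta([0]_\pm;\s))$ for $s\in[0,1]$. Since $\Jord_\rho(\delta([0]_\pm;\s))=\{1\}$, Proposition \ref{Pr-red-si} shows that $[t]\rtimes\delta([0]_\pm;\s)$ is reducible on $[0,2]$ only at $t=1$; combined with Corollary \ref{irr-simple} this gives the irreducibility and hermiticity of $\gamma_s$ for $s\in(0,1)$. At the left endpoint $\gamma_0=L([1],[0];\delta([0]_\pm;\s))$ is the representation shown unitarizable in Proposition \ref{prop: 001} (denoted there $\pi_1^\pm$), so the whole family lies in the unitary dual (fourth method of \S\ref{ways of getting unitary}); taking the limit as $s\to 1^-$ yields the unitarizability of $\pi_2^\pm$ by the third method (limits of characters).

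The hardest part, as in Propositions \ref{a-1,a,a+1} and \ref{0,1,2-1}, is the non-unitarizability of $\pi_3^\pm,\pi_4^\pm,\pi_5,\pi_6$. For each such $\pi_i$ I would apply Lemma \ref{lem: nonunit} with a self-dual auxiliary inducing representation $\delta_1$ of a $\GL$-group (for $\pi_5,\pi_6,\pi_4^\pm$ the natural choice is $\delta_1=\delta([-1,1])$; for $\pi_3^\pm$ one wants $\delta_1=\delta([-2,2])$ so that the characteristic exponents of $\delta([0,1]_\pm;\s)$ are reached). On the one hand one produces enough irreducible subquotients of $\Gamma=\delta_1\rtimes\pi_i$ using Proposition \ref{prop: addparms} and the embedding of $\pi_i$ into an appropriate standard module, then argues, by Frobenius reciprocity and the structure of $s_{\GL}$, that these subquotients cannot belong to the complementary summand of $\delta_1\times(\text{standard of }\pi_i)\rtimes\s$. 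On the other hand one bounds the multiplicity of $\delta_1\otimes\pi_i$ in $\mu^*(\Gamma)=M^*(\delta_1)\rtimes\mu^*(\pi_i)$ via Remark \ref{rem: mdtp} and the explicit formulas for $M^*$ in \S\ref{some form.}. The main obstacle will be the detailed sign-tracking: because $\alpha=0$ each tempered induction $[\text{cuspidal}]\rtimes\delta([0,\ast]_\pm;\s)$ comes with its own $\pm$ decomposition, so the irreducible subquotients of $\Gamma$ and the contributing summands of $M^*(\delta_1)\rtimes\mu^*(\pi_i)$ must be tracked through this extra layer, requiring a careful case analysis inspired by (but strictly more intricate than) the argument for $\pi_5,\pi_6$ in Proposition \ref{a-1,a,a+1}.
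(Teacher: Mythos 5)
Your argument for the unitarizability of $\pi_2^\pm$ is a genuine gap. You propose a complementary-series/limit argument along the family $\gamma_s=L([s+1],[s];\delta([0]_\pm;\s))$, $0\le s\le 1$, claiming irreducibility via Corollary \ref{irr-simple}. But the hypothesis of that corollary fails for every $s$: the segments $[s]$ and $[s+1]$ are linked, so $[s+1]\times[s]$ is reducible, hence $[s+1]\times[s]\rtimes\delta([0]_\pm;\s)\ge \delta([s,s+1])\rtimes\delta([0]_\pm;\s)+L([s],[s+1])\rtimes\delta([0]_\pm;\s)$ is reducible for all $s$. Thus there is no irreducible hermitian family here, and a family of Langlands quotients of everywhere-reducible standard modules is not a continuous family in the sense needed for the deformation or character-limit methods of \S\ref{ways of getting unitary}. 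The failure is not just technical: $\pi_2^\mp=(\pi_1^\pm)^t$ is the \ASS dual of a square-integrable representation and is \emph{isolated} in the unitary dual (see the remark following Theorem \ref{class-thm}), so it cannot be reached as an endpoint or limit of any complementary series. The paper obtains the unitarizability of $\pi_2^\pm$ by an entirely different input, namely the theorem (Hanzer \cite{MR2448433}, and M\oe glin in the unitary-group case) that the \ASS dual of an irreducible square-integrable representation is unitarizable; some such external input is unavoidable here. Also, your endpoint $\gamma_0=L([1];[0]\rtimes\delta([0]_\pm;\s))$ is the corank-three representation of Proposition \ref{prop: 001alpha=0}, not the representation $\pi_1^\pm$ of Proposition \ref{prop: 001}.

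The remaining parts are closer to the paper but still only a program where the real work lies. For the decomposition, duality relations, and the non-unitarizability of $\pi_5,\pi_6$ your outline (decomposing $[2]\rtimes\Pi_{(0,1)}$, characterizing the $\pm$ pieces by Jacquet modules, and applying Lemma \ref{lem: nonunit} with $\delta_1=\delta([-1,1])$) matches the paper's route. For $\pi_3^\pm,\pi_4^\pm$, however, the multiplicity bound in Lemma \ref{lem: nonunit} requires first establishing the composition series $\delta([1,2])\rtimes\delta([0]_\pm;\s)=\pi_1^\pm+\pi_4^\pm+\pi_6$ (equation \eqref{eps-eq}), which is what shows $\pi_4^\pm$ does not occur in $L([1],[2])\rtimes\delta([0]_\pm;\s)$ and pins the multiplicity of $\delta_1\otimes\pi_4^\pm$ at four; and the length bound requires the delicate argument distributing the four subquotients $L([1];\tau([0]_{\e_1};\delta([-1,2]_{\e_2};\s)))$ two apiece between $\delta_1\rtimes\pi_4^+$ and $\delta_1\rtimes\pi_4^-$. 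Neither step appears in your sketch, and your separate treatment of $\pi_3^\pm$ with $\delta([-2,2])$ is unnecessary (Lemma \ref{lem: nonunit} already covers $\pi_3^\mp=(\pi_4^\pm)^t$) and unverified.
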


We shall prove the above proposition in several steps.

\subsection{}
Using Proposition \ref{prop: 001}, we write $\Pi_{(0,1,2)}$ (in the Grothendieck group) as
\begin{equation} \label{eq: sum0123}
[2]\rtimes\Big(L([1];\delta([0]_+;\s))+L([1];\delta([0]_-;\s))+2L([0,1];\s)
+\delta([0,1]_+;\s)+\delta([0,1]_-;\s)\Big).
\end{equation}
By (\ref{Pr-red-si-item: a-not-a+2-red}) of  Proposition \ref{Pr-red-si} and (\ref{prop: 001-item: 01ds}) of Proposition \ref{prop: 001}, the representations $[2]\rtimes\delta([0,1]_\pm;\s)$ are reducible.
The same is true for $[2]\rtimes L([1];\delta([0]_\pm;\s))$ by duality.

Note that the Jacquet module of $\Pi_{(0,1,2)}$ contains each of $[2]\otimes[1]\otimes \delta([0]_\pm;\s)$
with multiplicity one. It follows that
\begin{equation} \label{eq: pipmchar}
\begin{gathered}
\pi_1^\pm\text{ is the unique irreducible subquotient of $\Pi_{(0,1,2)}$ whose Jacquet module}\\
\text{contains }[2]\otimes[1]\otimes \delta([0]_\pm;\s)\text{ as a subquotient,}\\
\text{and $\pi_1^\pm$ occurs with multiplicity one in }\Pi_{(0,1,2)}.
\end{gathered}
\end{equation}
Dually, the Jacquet module of $(\pi_1^\mp)^t$ contains $[-2]\otimes[-1]\otimes \delta([0]_{\mp};\s)$ as a subquotient and this property
characterizes it uniquely.
Since $[-2]\otimes[-1]\otimes \delta([0]_{\mp};\s)$ it also a subquotient of the Jacquet module of $\pi_2^\mp$, we get
$$
(\pi_1^\pm)^t=\pi_2^\mp.
$$
Note that $\pi_1^\pm$ are square-integrable, while $\pi_2^\pm$ are unitarizable by \cite{MR2448433}.\footnote{This reference does not cover the case of unitary groups.
These groups are covered by results of C. M\oe glin (see \cite[\S 13]{MR3969882} for more details).
She has shown that \ASS dual of a general irreducible square-integrable representation of a classical group over field of
characteristic zero is unitarizable.}

\subsection{}
We will use the following special case of \eqref{jm-seg-ds}
\begin{equation} \label{jm-seg-0}
\mu^*(\delta([0,d]_\pm;\s))=
\sum_{j=-1}^d \delta([j+1,d])\otimes\delta([0,j]_\pm;\s),
\end{equation}
where by convention $\delta(\emptyset_\pm;\s)=\s$.

We have an epimorphism
$[2]\rtimes \delta([0,1]_\pm ;\s)\tha\pi_3^\pm.$
On the other hand by \eqref{jm-seg-0} and \eqref{eq: pipmchar}
$\pi_1^\pm$ occurs with multiplicity one in $[2]\rtimes \delta([0,1]_\pm ;\s)$.
In fact,
\[
\pi_1^\pm \h [2]\rtimes \delta([0,1]_\pm ;\s).
\]
(This follows from \eqref{jm-seg-0} and Frobenius reciprocity.)

By the description of $\JH(\Pi_{(0,1,2)})$, \eqref{BPLC} and the above discussion we infer that
\begin{equation} \label{with-eps}
[2]\rtimes \delta([0,1]_\pm ;\s)=\pi_3^\pm + \pi_1^\pm.
\end{equation}
(We cannot have $\pi_1^\mp$, since it has multiplicity one in $\Pi_{(0,1,2)}$ and it occurs in $[2]\rtimes \delta([0,1]_{\mp} ;\s)$.)

Applying duality we get
\begin{equation} \label{eq: daul1234}
[2]\rtimes L([1];\delta([0]_{\mp};\s))=(\pi_3^\pm)^t+\pi_2^\mp.
\end{equation}

Consider
\[
\xymatrix@C-12pt{
0\ar[r] & \delta([1,2])\rtimes\delta([0]_\pm;\s) \ar@{->>}[d] \ar[r] & [2]\times[1]\rtimes\delta([0]_\pm;\s) \ar@{->>}[d] \ar[r]
& L([1],[2])\rtimes\delta([0]_\pm ;\s) \ar[r]&0\\
& \pi_4^\pm & [2]\rtimes L([1];\delta([0]_\pm;\s))
}
\]
We have
$$
[2]\rtimes L([1];\delta([0]_\pm ;\s))\not\leq L([1],[2])\rtimes\delta([0]_\pm ;\s)
$$
since otherwise, passing to $s_{\GL}$, we would obtain
$$
([2]+[-2])\times L([-1],[0])\otimes\s\leq ( L([1],[2])+ [1]\times[-2] + L([-2],[-1]) )\times[0]\otimes\s.
$$
However $[2]\times L([-1],[0])\otimes\s$ does not occur on the right-hand side.

By Lemma \ref{lem: simpac} we infer that $\pi_4^\pm$ is a subquotient of
$[2]\rtimes L([1];\delta([0]_\pm ;\s))$.

It follows from \eqref{eq: daul1234} that
$$
(\pi_3^\pm)^t=\pi_4^\mp.
$$

We infer that
$\{\pi_5^t, \pi_6^t\}= \{\pi_5, \pi_6\} $.
Observe that
$\delta([-2,0])\otimes\s\le s_{\GL}(\pi_6)$ but
$\delta([0,2])^t\otimes\s\not\le s_{\GL}(\pi_6)$
(and in fact $\delta([0,2])^t\otimes\s\not\le s_{\GL}(\delta([0,2])\rtimes \s)$).
This implies that $\pi_6^t\ne\pi_6$ and hence,
$$
\pi_5^t= \pi_6.
$$

It remains to show the non-unitarizability of $\pi_3^\pm,\pi_4^\pm,\pi_5,\pi_6$.

\subsection{}
Consider
\[
\Pi'=[2]\rtimes L([0,1];\s).
\]
We will show that
\begin{equation} \label{not-eps}
\Pi'= \pi_5+\pi_6.
\end{equation}

Note that $(\Pi')^t=\Pi'$ and that $\pi_5$ (and hence also $\pi_5=\pi_6^t$) occurs in $\Pi'$ with multiplicity one.

By \eqref{BPLC} the only other possible irreducible subquotients of $\Pi'$ are $\pi_1^\pm$, $\pi_3^\pm$ and $\pi_4^\pm$.
Clearly, $\pi_1^\pm$ cannot be a subquotient since its dual is not a subquotient.
Consider
\begin{gather*}
\mu^*(\Pi')=(1\otimes [2]+[2]\otimes1+[-2]\otimes1)\\
\rtimes\Big( 1\otimes L([0,1];\s)+[0]\otimes[1]\rtimes\s+
\delta([-1,0])\otimes\s+L([0],[1])\otimes\s \Big).
\end{gather*}
In particular,
$$
s_{(n_\rho)}(\Pi')=[2]\otimes L([0,1];\s) + [-2] \otimes L([0,1];\s) + [0]\otimes [2]\times [1]\rtimes\s,
$$
where $n_\rho$ is defined by the requirement that $\rho$ is a representation of
$\GL(n_\rho,F')$.
Hence,
\[
s_{(n_\rho)}(\Pi')\not\ge [-2]\otimes\delta([0,1]_\pm ;\s).
\]
It follows that $\pi_3^\pm\not\le\Pi'$ and hence, $\pi_4^\pm\not\le\Pi'$ by duality.
The relation \eqref{not-eps} follows.

The relation \eqref{eq: decom012} now follows from \eqref{eq: sum0123}, \eqref{with-eps}, \eqref{eq: daul1234} and \eqref{not-eps}.

\subsection{Non-unitarizability of \texorpdfstring{$\pi_6$}{pi6} and \texorpdfstring{$\pi_5$}{pi5}}
We show now that the representations $\pi_6$ and $\pi_5$ are not unitarizable.

Let
\[
\delta_1=\delta ([-1,1])\text{ and }\Gamma=\delta_1\rtimes \pi_6.
\]
By Proposition \ref{prop: addparms} $\Gamma$ contains the two irreducible subquotients
$$
L([0,2];\delta ([-1,1]_\pm;\s)) .
$$

The equality
$
\delta([0,2])\rtimes\s=\pi_6 +\pi_1^++\pi_1^-
$ (see \ref{segment})
implies that if we have an irreducible non-tempered subquotient of
$$
\delta_1\times\delta([0,2])\rtimes\s,
$$
then it is a subquotient of $\Gamma.$ On the other hand,
\begin{gather*}
\delta_1\times \delta([0,2])\rtimes\s\geq \delta ([-1,2])\times \delta([0,1])\rtimes\s\\
=\delta ([-1,2])\rtimes \Big(L([0,1];\s) +\delta([0,1]_+;\s)+\delta([0,1]_-;\s)\Big)\\
=\delta ([0,1])\rtimes \Big(L([-1,2];\s) +\delta([-1,2]_+;\s)+\delta([-1,2]_-;\s)\Big).
\end{gather*}
From this we conclude that $\Gamma$ contains the following non-tempered subquotients:
$$
L([-1,2],[0,1];\s) ,\quad L([-1,2],\delta([0,1]_\pm;\s)),
\quad L([0,1],\delta([-1,2]_\pm;\s)).
$$
Therefore, the length of $\Gamma $ is at least 7.

To deduce the non-unitarizability of $\pi_5$ and $\pi_6$ it suffices to show that the multiplicity
of $\tau:=\delta_1\otimes\pi_6$ in $\mu^*(\Gamma)$ is less than 7. In fact, we show that it is 4.

By remark \ref{rem: mdtp} we need to consider the multiplicity of $\tau$ in
\[
\Big(2\cdot\uline{\delta_1\otimes 1}+2\cdot\delta([0,1])\otimes[1]+2\uwave{\cdot[1]\otimes\delta([0,1])}+1\otimes\delta_1\Big)\rtimes
\mu^*(\pi_6).
\]
Recall:
\begin{equation} \label{M[0,2]psi}
\begin{gathered}
\mu^*(\pi_6)= \uline{1 \otimes\pi_6}+\\
+ [2] \otimes L([0,1];\sigma) + [0] \otimes \delta([1,2])\rtimes\sigma\\
+ [0]\times [2] \otimes [1]\rtimes\sigma +\uwave{\delta([-1,0]) \otimes [2]\rtimes\sigma}\\
+ L([0],[1,2]) \otimes\sigma +\delta([-1,0])\times [2] \otimes\sigma+\delta([-2,0]) \otimes\sigma .
\end{gathered}
\end{equation}
Thus, the only relevant terms are
\[
2\cdot\tau+2\cdot [1]\times\delta([-1,0])\otimes\delta([0,1])\times[2]\rtimes\s.
\]
Decomposing (in the Grothendieck group)
\[
[2]\times\delta([0,1])\rtimes\s=
[2]\rtimes \big(L([0,1];\s) +\delta([0,1]_+;\s)+\delta([0,1]_-;\s)\big)
\]
we note that $\pi_6$ is not a subquotient of $[2]\rtimes \delta([0,1]_\pm;\s)$ (by \eqref{BPLC})
and $\pi_6$ occurs with multiplicity one in $[2]\rtimes L([0,1];\s)=\Pi'$ (by \eqref{not-eps}).

In conclusion, the multiplicity of $\tau$ in $\mu^*(\Gamma)$ is 4, as claimed.

\subsection{Non-unitarizability of \texorpdfstring{$\pi_4^\pm$}{pi4pm}, \texorpdfstring{$\pi_3^\pm$}{pi3pm}}
The remaining, and most subtle, part of the proof of the proposition is the non-unitarizability of
$\pi_4^\pm$, $\pi_3^\pm$.

First, we compute
\begin{equation*}
\begin{gathered}
\mu^*([2]\rtimes \delta([0,1]_\pm ;\s))=
\\(1\otimes [2]+ [2]\otimes1+ [-2]\otimes1) \rtimes
\\\Big(1\otimes \delta([0,1]_\pm ;\s)
+[1]\otimes\delta([0]_\pm ;\s)
+\delta([0,1])\otimes1\Big)
\\=1\otimes [2]\rtimes \delta([0,1]_\pm ;\s)
\\+[2]\otimes\delta([0,1]_\pm ;\s) + [-2]\otimes \delta([0,1]_\pm ;\s)
+[1]\otimes[2]\rtimes\delta([0]_\pm ;\s)
\\+[2]\times[1]\otimes\delta([0]_\pm ;\s)+
[-2] \times [1]\otimes\delta([0]_\pm ;\s)+
\delta([0,1])\otimes[2]\rtimes\s
\\+\delta([0,2])\otimes1+ L([2];[0,1])\otimes1+
[-2]\times\delta([0,1])\otimes1.
\end{gathered}
\end{equation*}
Now the above formula, \eqref{with-eps} and \eqref{jm-seg-0} imply
\begin{equation} \label{jm-eps-1}
\begin{gathered}
\mu^*(\pi_3^\pm)=1\otimes \pi_3^\pm
\\+ [-2]\otimes \delta([0,1]_\pm ;\s)
+[1]\otimes[2]\rtimes\delta([0]_\pm ;\s)
\\+L([1],[2])\otimes\delta([0]_\pm ;\s)+
[-2] \times [1]\otimes\delta([0]_\pm ;\s)+
\delta([0,1])\otimes[2]\rtimes\s
\\+L([2];[0,1])\otimes\s+
[-2]\times\delta([0,1])\otimes\s.
\end{gathered}
\end{equation}
Applying duality to this and changing $\mp$ by $\pm$, we get
\begin{equation} \label{jm-eps-2}
\begin{gathered}
\mu^*(\pi_4^\pm)=\uline{1\otimes \pi_4^\pm}
\\+ [2]\otimes L([1];\delta([0]_{\pm} ;\s))
+\dashuline{[-1]\otimes[2]\rtimes\delta([0]_{\pm} ;\s)}
\\+\delta([-2,-1])\otimes\delta([0]_{\pm} ;\s)+
[2] \times [-1]\otimes\delta([0]_{\pm} ;\s)+
L([-1],[0])\otimes[2]\rtimes\s
\\+L([-2,-1],[0])\otimes\s+[2]\times L([-1],[0])\otimes\s.
\end{gathered}
\end{equation}
(Alternatively, we could have also used the relation
\[
[2]\rtimes L([1];\delta([0]_{\mp};\s))=\pi_4^\mp+\pi_2^\mp
\]
to compute the above Jacquet module formula.)

\subsection{}
We shall now describe the composition series of the representation
$$
\delta([1,2])\rtimes \delta([0]_\pm;\s).
$$
Observe
$$
\pi_1^\pm+\pi_4^\pm\leq
\delta([1,2])\rtimes \delta([0]_\pm;\s).
$$

We prove that
$$
\pi_6 \leq\delta([1,2])\rtimes \delta([0]_\pm;\s)
$$
using Lemma \ref{lem: simpac} applied to the diagram
\[
\xymatrix{
0\ar[r] & \delta([0,2])\rtimes\s \ar@{->>}[d] \ar[r] & \delta([1,2])\times [0]\rtimes\s \ar@{->>}[d] \ar[r]
& L([1,2], [0])\rtimes\s\ar[r]&0\\
& \pi_6 & \delta([1,2])\rtimes \delta([0]_\pm;\s)
}
\]
by noting that
$$
\delta([1,2])\rtimes \delta([0]_\pm;\s)\not\leq L([1,2], [0])\rtimes\s.
$$
For otherwise, passing to $s_{\GL}$ and considering the terms whose exponents are non-negative, we would get
$$
\delta([1,2])\times [0]\otimes\s\leq L([1,2], [0])\otimes\s
$$
which is impossible.

This implies that each of $\delta([1,2])\rtimes \delta([0]_\pm;\s)$ has length $\geq 3$.
The same will be therefore true for the dual representations $L([1],[2])\rtimes \delta([0]_\mp;\s)$.
The obvious decomposition (in the Grothendieck group)
\begin{align*}
\Pi_{(0,1,2)}=&
\delta([1,2])\rtimes \delta([0]_+;\s)+
\delta([1,2])\rtimes \delta([0]_-;\s)+\\
&L([1],[2])\rtimes \delta([0]_-;\s)+
L([1],[2])\rtimes \delta([0]_+;\s),
\end{align*}
and the fact that the (total) length of $\Pi_{(0,1,2)}$ is 12, imply that all representations on the right-hand side have length 3.
Therefore, in particular
\begin{equation}
\label{eps-eq}
\delta([1,2])\rtimes \delta([0]_\pm;\s)= \pi_4^\pm+\pi_6+\pi_1^\pm .
\end{equation}

\subsection{}
Consider
$$
\Gamma_\pm:=\delta_1\rtimes \pi_4^\pm
$$
where as before $\delta_1=\delta ([-1,1])$.
First we shall determine the following multiplicity:

\begin{lemma}
The multiplicity of $\tau_\pm:=\delta_1\otimes \pi_4^\pm$ in $\mu^*(\Gamma_\pm)$ is four.
\end{lemma}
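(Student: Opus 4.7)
The plan is a direct computation via the geometric lemma. Since $\mu^*(\Gamma_\pm)=M^*(\delta_1)\rtimes\mu^*(\pi_4^\pm)$ and the formula for $\mu^*(\pi_4^\pm)$ is already available from \eqref{jm-eps-2}, Remark \ref{rem: mdtp} reduces the task to finding the multiplicity of $\tau_\pm$ in
$$\Big(2\cdot [1]\otimes\delta([0,1])+2\cdot\delta([0,1])\otimes[1]+2\cdot\delta_1\otimes 1+1\otimes\delta_1\Big)\rtimes\mu^*(\pi_4^\pm).$$

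The key observation I would exploit is that no first tensor factor $c$ appearing in \eqref{jm-eps-2} contains $[0]$ or $[1]$ in its support, so only a handful of pairs $(a\otimes b,c\otimes d)$ can place $\delta_1$ in the GL-factor of $a\times c\otimes b\rtimes d$. The summand $1\otimes\delta_1$ is immediately discarded since $\delta_1$ does not appear as any first factor of $\mu^*(\pi_4^\pm)$. The summand $2\cdot\delta_1\otimes 1$ is forced to pair with $1\otimes\pi_4^\pm$, yielding $2\cdot\delta_1\otimes\pi_4^\pm$. The summand $2\cdot\delta([0,1])\otimes[1]$ pairs only with $[-1]\otimes[2]\rtimes\delta([0]_\pm;\s)$; using $\delta([0,1])\times[-1]=\delta_1+L([-1],[0,1])$ (from the linked segments $[0,1]$ and $[-1]$) together with the multiplicity-one claim for $\pi_4^\pm$ in $[1]\times[2]\rtimes\delta([0]_\pm;\s)$ (justified below), this contributes another $2\cdot\delta_1\otimes\pi_4^\pm$. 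Finally, the summand $2\cdot [1]\otimes\delta([0,1])$ pairs only with $L([-1],[0])\otimes[2]\rtimes\s$, but the identity $[1]\times L([-1],[0])=L([-1],[0],[1])+L([0,1],[-1])$ shows that $\delta_1$ does not occur in the first factor, so this pair contributes nothing.

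Summing, the total comes to $2+2+0=4$. The only ingredient requiring care is the assertion that $\pi_4^\pm$ has multiplicity one in $[1]\times[2]\rtimes\delta([0]_\pm;\s)$; this is the main obstacle in the sense that every other step is bookkeeping. I would deduce it by expanding the representation in the Grothendieck group as $\delta([1,2])\rtimes\delta([0]_\pm;\s)+L([1],[2])\rtimes\delta([0]_\pm;\s)$: the first summand contributes $\pi_4^\pm$ with multiplicity one by \eqref{eps-eq}, and by \eqref{eq: decom012} the representation $\pi_4^\pm$ has total multiplicity one in $\Pi_{(0,1,2)}$, so it cannot appear in the second summand as well.
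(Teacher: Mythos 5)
Your proof is correct and takes essentially the same route as the paper: Remark \ref{rem: mdtp} together with \eqref{jm-eps-2} reduces the count to the two pairings $2\cdot\delta_1\otimes\pi_4^\pm$ and $2\cdot\delta([0,1])\times[-1]\otimes[1]\times[2]\rtimes\delta([0]_\pm;\s)$, each contributing two. The only (harmless) variations are that you explicitly rule out the $[1]\otimes\delta([0,1])$ pairing, which the paper dismisses implicitly, and you justify the multiplicity-one of $\pi_4^\pm$ in $[1]\times[2]\rtimes\delta([0]_\pm;\s)$ via its total multiplicity one in $\Pi_{(0,1,2)}$ from \eqref{eq: decom012}, whereas the paper invokes the dual of \eqref{eps-eq} to exclude it from $L([1],[2])\rtimes\delta([0]_\pm;\s)$.
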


\begin{proof}
By Remark \ref{rem: mdtp} and the formula \eqref{jm-eps-2} for $\mu^*(\pi_4^\pm)$
we need to compute the multiplicity of $\tau_\pm$ in
\begin{gather*}
2\cdot(\delta_1\otimes 1)\rtimes(1\otimes\pi_4^\pm)+
2\cdot(\delta([0,1])\otimes [1])\rtimes([-1]\otimes[2]\rtimes\delta([0]_\pm ;\s))\\=
2\cdot\tau_\pm+2\cdot \delta([0,1])\times[-1]\otimes [1]\times[2]\rtimes\delta([0]_\pm ;\s)).
\end{gather*}
Clearly, $\delta_1$ occurs with multiplicity one in $\delta([0,1])\times[-1]$.
Decomposing
$$
[1]\times [2]\rtimes\delta([0]_\pm ;\s)=\delta([1,2])\rtimes\delta([0]_\pm ;\s)
+L([1], [2])\rtimes\delta([0]_\pm ;\s)
$$
we note that $\pi_4^\pm$ occurs with multiplicity one in $\delta([1,2])\rtimes\delta([0]_\pm ;\s)$.
However, by \eqref{eps-eq} (passing to the dual), $\pi_4^\pm$ does not occur in
$L([1], [2])\rtimes\delta([0]_\pm ;\s)$.

In conclusion, the multiplicity of $\tau_\pm$ in $\mu^*(\Gamma_\pm)$ is 4, as required.
\end{proof}

\subsection{}
Now we shall determine several irreducible subquotients of $\Gamma_\pm$.

To that end we need to introduce some notation.
Let $\theta\in\Tempcl$ and $\D$ a segment of cuspidal representations such that
$\D\check{\ }=\D$ and $\delta(\D)\rtimes\theta$ is reducible.
Then, it decomposes into a sum of two nonequivalent irreducible tempered subrepresentations which will be denoted by
$$
\tau(\D_\pm;\theta).\footnote{One can be more specific in description of these representations, as in \cite{MR3123571},
but we do not need these details for the purpose of this paper.}
$$
First, by Proposition \ref{prop: addparms} we get that the following two irreducible representations
$$
L([1,2];\tau([-1,1]_\mu;\delta([0]_\pm;\s))),\quad \mu\in\{\pm\}
$$
are subquotients of $\Gamma_\pm$.

It will require supplementary work to determine additional irreducible subquotients of $\Gamma_\pm$.
First, we shall list some natural candidates for subquotients.

Summing the identity \eqref{eps-eq} for $\e=1$ and $-1$ we get
\begin{equation} \label{5terms}
\delta([1,2])\times [0]\rtimes\s
=\pi_1^++\pi_1^-+\pi_4^++\pi_4^- +2\cdot \pi_6.
\end{equation}

Observe that
$$
\delta_1\times \delta([1,2])\times [0]\rtimes\s\geq
\delta([-1,2])\times [1]\times [0]\rtimes\s.
$$
We know that the left-hand side of the above inequality has among others
the following non-tempered irreducible subquotients:
$$
L([0,2]; \delta([-1,1]_\pm;\s)),
$$
$$
L([-1,2],[1];\delta([0]_\pm;\s)),
$$
$$
L([-1,2];\delta([0,1]_\pm;\s)),
$$
$$
L([-1,2],[0,1];\s),
$$
$$
L([1];\tau([0]_{\e_1};\delta([-1,2]_{\e_2};\s))), \quad \e_1,\e_2\in\{\pm\}.
$$
For brevity denote
\[
\Theta_{\e_1,\e_2}=\tau([0]_{\e_1};\delta([-1,2]_{\e_2};\s))
\]
(a tempered representation).

Multiplying \eqref{eps-eq} by $\delta_1$ we get
\begin{equation} \label{first-line1}
\begin{gathered}
\delta_1\times
\delta([1,2])\rtimes \delta([0]_\pm;\s)=
 \delta_1\times (\pi_4^\pm+\pi_6+\pi_1^\pm)\\
\geq\delta([-1,2])\times [1]\rtimes \delta([0]_\pm;\s).
\end{gathered}
\end{equation}
Then, the left-hand side of \eqref{first-line1} has among others
the following irreducible subquotients:
$$
L([0,2]; \delta([-1,1]_\mu;\s)), \quad \mu\in\{\pm\},
$$
$$
L([-1,2],[1];\delta([0]_\pm;\s)).
$$

Now we prove
\begin{lemma} We have
$$
L([-1,2], [1];\delta([0]_\pm;\s))\leq\Gamma_\pm.
$$
\end{lemma}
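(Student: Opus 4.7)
The plan is to locate $L([-1,2],[1];\delta([0]_\pm;\s))$ as a subquotient of $\delta_1\times\delta([1,2])\rtimes\delta([0]_\pm;\s)$ via Langlands classification, and then to eliminate the two unwanted summands of this representation arising from \eqref{eps-eq}.

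First I would observe that the segments $[-1,1]$ and $[1,2]$ are linked (with $[-1,1]\prec[1,2]$, intersection $[1]$ and union $[-1,2]$), so in the Grothendieck group of $\GL$ one has $\delta_1\times\delta([1,2])=L([-1,1],[1,2])+[1]\times\delta([-1,2])$. Since $[1]\subset[-1,2]$, the product $[1]\times\delta([-1,2])$ is irreducible, and it is the standard module of $L([-1,2],[1];\delta([0]_\pm;\s))$. Hence this Langlands quotient is a subquotient of $\delta_1\times\delta([1,2])\rtimes\delta([0]_\pm;\s)$; by \eqref{eps-eq} the latter equals $\Gamma_\pm+\delta_1\rtimes\pi_6+\delta_1\rtimes\pi_1^\pm$ in the Grothendieck group, and it remains to exclude the last two summands.

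Ruling out $\delta_1\rtimes\pi_1^\pm$ is immediate: $\pi_1^\pm=\delta([0,2]_\pm;\s)$ is square-integrable and $\delta_1$ is square-integrable modulo centre, so $\delta_1\rtimes\pi_1^\pm$ is tempered, whereas $L([-1,2],[1];\delta([0]_\pm;\s))$ is a non-tempered Langlands quotient (its $\GL$-Langlands data have positive exponents $1$ and $\tfrac12$).

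The main obstacle will be excluding $\delta_1\rtimes\pi_6$, for which I plan a Jacquet-module argument. The Langlands embedding $L([-1,2],[1];\delta([0]_\pm;\s))\hookrightarrow\delta([-2,1])\times[-1]\rtimes\delta([0]_\pm;\s)$ and Frobenius reciprocity show that $\delta([-2,1])\times[-1]\otimes\delta([0]_\pm;\s)$ appears as a subquotient of the relevant Jacquet module of $L([-1,2],[1];\delta([0]_\pm;\s))$; I would then verify that this term is absent from $\mu^*(\delta_1\rtimes\pi_6)=M^*(\delta_1)\rtimes\mu^*(\pi_6)$. Decomposing $M^*(\delta_1)$ via \eqref{M-seg} as $\sum a_k\otimes b_k$ and writing $\mu^*(\pi_6)=\sum c_j\otimes d_j$ from \eqref{M[0,2]psi}, each summand contributes $(a_k\times c_j)\otimes(b_k\rtimes d_j)$, so a contribution with $\delta([0]_\pm;\s)$ in the classical slot requires $\delta([0]_\pm;\s)\leq b_k\rtimes d_j$. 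A cuspidal-support count forces $d_j=\s$ and $b_k=[0]$, narrowing the analysis to the unique summand $[1]\times[1]\otimes[0]$ of $M^*(\delta_1)$ combined with the three summands of $\mu^*(\pi_6)$ with classical part $\s$ (their $\GL$-parts being $L([0],[1,2])$, $\delta([-1,0])\times[2]$, and $\delta([-2,0])$). The resulting $\GL$-parts $[1]\times[1]\times c_j$ have cuspidal supports $\{0,1,1,1,2\}$, $\{-1,0,1,1,2\}$, and $\{-2,-1,0,1,1\}$ respectively, none of which coincides with the support $\{-2,-1,-1,0,1\}$ of $\delta([-2,1])\times[-1]$, and this exclusion completes the proof.
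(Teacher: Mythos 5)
Your proof is correct, but it takes a genuinely different route from the paper's. The paper keeps the exact sequence $0\to L([-1,1],[1,2])\rtimes\delta([0]_\pm;\s)\to\delta_1\times\delta([1,2])\rtimes\delta([0]_\pm;\s)\to\delta([-1,2])\times[1]\rtimes\delta([0]_\pm;\s)\to0$ together with the surjection of the middle term onto $\Gamma_\pm$ and applies Lemma \ref{lem: simpac}; the entire work there is to show $\Gamma_\pm\not\leq L([-1,1],[1,2])\rtimes\delta([0]_\pm;\s)$, done by a multiplicity-two count of $L([-1],[0],[0,1],[1,2])\otimes\s$ in $s_{\GL}(\Gamma_\pm)$ against the ladder formula \eqref{lad-GL}, with a passage to Zelevinsky duals. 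You instead argue entirely in the Grothendieck group: you split the same induced representation as $\Gamma_\pm+\delta_1\rtimes\pi_6+\delta_1\rtimes\pi_1^\pm$ via \eqref{eps-eq}, locate $L([-1,2],[1];\delta([0]_\pm;\s))$ in it as the Langlands quotient of the irreducible standard module $[1]\times\delta([-1,2])\rtimes\delta([0]_\pm;\s)$ coming from the second constituent of $\delta_1\times\delta([1,2])$, and then eliminate the two unwanted summands: $\delta_1\rtimes\pi_1^\pm$ by temperedness, and $\delta_1\rtimes\pi_6$ by showing that the Jacquet-module term $[-1]\times\delta([-2,1])\otimes\delta([0]_\pm;\s)$ of the Langlands quotient (obtained from the dual Langlands embedding and Frobenius reciprocity) cannot occur in $M^*(\delta_1)\rtimes\mu^*(\pi_6)$, since the cuspidal-support count pins the classical slot to the unique pairing of $[1]\times[1]\otimes[0]$ with a $-\otimes\s$ term of \eqref{M[0,2]psi}, and none of the resulting $\GL$-supports equals $\{-2,-1,-1,0,1\}$. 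Both arguments are valid; your exclusion of $\delta_1\rtimes\pi_6$ is in the same spirit as the paper's treatment of the subquotients $L([1];\Theta_{\e_1,\e_2})$ a few lines later (where it is enough to observe that $\mu^*(\delta_1\rtimes\pi_6)$ has no terms of the form $[-1]\otimes\,-$, an observation that would also shorten your step), while the paper's route yields the marginally stronger fact that the Langlands quotient occurs in the cosocle of $\Gamma_\pm$, which is not needed for the lemma. One cosmetic point: the subrepresentation form of the Langlands classification gives $L([-1,2],[1];\delta([0]_\pm;\s))\hookrightarrow[-1]\times\delta([-2,1])\rtimes\delta([0]_\pm;\s)$; your ordering $\delta([-2,1])\times[-1]$ is harmless since this product is irreducible.
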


\begin{proof}
We will use Lemma \ref{lem: simpac} for the diagram
\[
\xymatrix@C-10pt{
0\ar[r] & L([-1,1], [1,2])\rtimes\omega_\pm\ar[r] &
\delta_1\times \delta([1,2])\rtimes\omega_\pm\ar@{->>}[d] \ar[r]
&\delta([-1,2])\times [1]\rtimes\omega_\pm \ar@{->>}[d] \ar[r]&0\\
& & \Gamma_\pm & L([-1,2], [1];\omega_\pm)
}
\]
where $\omega_\pm=\delta([0]_\pm;\s)$.
It remains to show that
\begin{equation} \label{ineq-1}
\Gamma_\pm\not\leq L([-1,1], [1,2])\rtimes\delta([0]_\pm;\s).
\end{equation}
Suppose on the contrary that this is not the case. Then, we would get
\[
s_{\GL}(\Gamma_\pm)\leq s_{\GL}(L([-1,1], [1,2])\rtimes\delta([0]_\pm;\s)).
\]
Now, on the one hand, by the formula \eqref{jm-eps-2} we have
$$
s_{\GL}(\Gamma_\pm)\ge
2\cdot \delta([0,1])\times [1] \times [2]\times L([-1],[0])\otimes\s
$$
and hence,
$$
s_{\GL}(\Gamma_\pm)\ge2\cdot \delta([0,1])\times \delta([1,2])\times L([-1],[0])\otimes\s.
$$
In particular,
$$
s_{\GL}(\Gamma_\pm)\ge2\cdot L([-1],[0], [0,1],[1,2])\otimes\s.
$$
On the other hand, by \eqref{lad-GL}, the only terms in $s_{\GL}(L([-1,1], [1,2])\rtimes\delta([0]_\pm;\s))$
that can dominate the above term are
\[
[0] \times \Big(L([-1,1] ,[1,2])+ L([0,1] ,[2])\times [-1] \times[1] + L([0,1] ,[-1])\times L([1],[2])\Big).
\]
Clearly,
\[
L([-1],[0], [0,1],[1,2])\not\le[0] \times L([-1,1] ,[1,2])
\]
(since the right-hand has a segment of length three). Therefore,
\[
2\cdot L([-1],[0], [0,1],[1,2])\leq
[0] \times \big( L([0,1] ,[2])\times [-1] \times[1]
+ L([0,1] ,[-1])\times L([1],[2])\big).
\]
By passing to the Zelevinsky dual
\[
2\cdot Z([-1],[0], [0,1],[1,2])\leq
[0] \times \big( Z([0,1] ,[2])\times [-1] \times[1]
+ Z([0,1] ,[-1])\times Z([1],[2])\big).
\]
The Jacquet module of the left-hand side contains
\[
2\cdot [1]\otimes Z([-1],[0],[0,1],[2]).
\]
On the other hand, the part of the Jacquet module of the right-hand side of the form $[1]\otimes *$ is
\[
[1]\otimes [0]\times Z([0,1],[2])\times [-1]
\]
which contains $[1]\otimes Z([-1],[0],[0,1],[2])$ with multiplicity one.
We get a contradiction.
Therefore, \eqref{ineq-1} holds, and the lemma now follows from Lemma \ref{lem: simpac}.
\end{proof}

At this stage we know that the length of $\Gamma_\pm$ is at least three.

\subsection{}
We exhibit two more irreducible subquotients of $\Gamma_\pm$.
Recall
\begin{equation} \label{4leq}
\sum_{\e_1,\e_2\in \{\pm\}}L([1];\Theta_{\e_1,\e_2})
\leq
\delta_1\times \delta([1,2])\times [0]\rtimes\s.
\end{equation}

Next, we note that none of the irreducible representations
$L([1];\Theta_{\e_1,\e_2})$, $\e_1,\e_2\in \{\pm\}$ is a subquotient of $\delta_1\rtimes \pi_6$.

Indeed, the Jacquet module of $\delta_1\rtimes \pi_6$ does not admit terms of the form $[-1]\otimes\ -$,
for such terms do not show up in neither $M^*(\delta_1)$ nor $\mu^*(\pi_6)$.

It now follows from \eqref{5terms} that
\begin{equation} \label{4leq2}
\sum_{\e_1,\e_2\in \{\pm\}}L([1];\Theta_{\e_1,\e_2})\leq\Gamma_++\Gamma_-.
\end{equation}
We shall now prove that each term on the right-hand side has precisely two terms from the left-hand side as subquotients.

We analyse in the Jacquet module of $\Gamma_\pm$ terms of the form $[-1]\otimes \Theta_{\e_1,\e_2}$.

Since $[-1]\otimes -$ does not occur in $M^*(\delta_1)$, we only need to consider
the multiplicity of
$
[-1]\otimes \Theta_{\e_1,\e_2}
$
in $(1\otimes\delta_1)\rtimes\mu^*(\pi_4^\pm)$, hence (in view of \eqref{jm-eps-2}) in
$
[-1]\otimes \delta_1\times [2]\rtimes\delta([0]_\pm ;\s).
$
Equivalently, we need to consider the multiplicity of
$\Theta_{\e_1,\e_2}$ in the representation $\delta_1\times [2]\rtimes\delta([0]_\pm ;\s)$ .

Observe that the Jacquet module of each four representations $\Theta_{\e_1,\e_2}$
contains $[0]\times\delta([-1,2])\otimes\s$ as a subquotient (since
$\Theta_{\e_1,\e_2}\h[0]\times \delta([-1,2])\rtimes\s$).

We claim that the multiplicity of $[0]\times\delta([-1,2])\otimes\s$ in
$s_{\GL}(\delta_1\times [2]\rtimes\delta([0]_\pm ;\s))$ is two.
(It can only come from the part $2\cdot \delta_1\times [2]\times [0]\otimes \s$.)
Therefore, at most two of $L([1];\Theta_{\e_1,\e_2})$ can show up in each of $\Gamma_\pm$.
Since they show up four times in $\Gamma_++\Gamma_-$, this implies that they show up twice in each of $\Gamma_\pm$.

In conclusion, we have proved that the length of $\Gamma_\pm$ is at least five.

By Lemma \ref{lem: nonunit} we deduce that the representations $\pi_4^\pm$ and $\pi_3^\pm$ are not unitarizable.

\section{\texorpdfstring{$\mathbf{x}=(0,1,1)$}{x011} and \texorpdfstring{$\alpha=0$}{alpha0}}

\begin{proposition} \label{0,1,1-0}
Assume $\alpha=0$. Then,
\begin{enumerate}
\item In the Grothendieck group we have
\begin{equation} \label{eq: 12321}
\Pi_{(0,1,1)}=\pi_1^++\pi_1^-+\pi_2^++\pi_2^-+\pi_3^++\pi_3^-+2\pi_4
\end{equation}
where
\begin{gather*}
\pi_1^\pm=L([1],[1], \delta([0]_\pm;\s)),\ \
\pi_2^\pm=\delta([-1,1]_\pm;\s),\\
\pi_3^\pm=L([1];\delta([0,1]_\pm;\s)),\ \ \pi_4=L([1],[0,1];\s).
\end{gather*}
\item $\pi_4=[1]\rtimes L([0,1];\s)$.
\item $(\pi_1^\pm)^t=\pi_2^\mp$, $(\pi_3^+)^t=\pi_3^-$, $\pi_4^t=\pi_4$.
\item The representations $\pi_1^\pm,\pi_2^\pm,\pi_3^\pm$ are unitarizable.
\item The representation $\pi_4$ is not unitarizable.
\end{enumerate}
\end{proposition}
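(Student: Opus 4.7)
My approach is to build on Proposition \ref{prop: 001} by inducing with $[1]$: in the Grothendieck group,
\[
\Pi_{(0,1,1)}=[1]\rtimes\Pi_{(0,1)}=[1]\rtimes\Big(L([1];\delta([0]_+;\s))+L([1];\delta([0]_-;\s))+2L([0,1];\s)+\delta([0,1]_+;\s)+\delta([0,1]_-;\s)\Big).
\]
For part (2), I would prove that $[1]\rtimes L([0,1];\s)$ is irreducible and equals $\pi_4$: the Langlands quotient of the standard module $[1]\times\delta([0,1])\rtimes\s$ is $\pi_4=L([1],[0,1];\s)$, and a Jacquet-module count using \eqref{l3} combined with the self-duality of $L([0,1];\s)$ (Proposition \ref{prop: 001}(\ref{prop: 001-item: inv})) rules out additional constituents of $[1]\rtimes L([0,1];\s)$. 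Consequently $\pi_4$ occurs with multiplicity at least $2$ in $\Pi_{(0,1,1)}$.

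For part (1) and the remaining constituents, I would analyze the other three inductions. By \eqref{jm-seg-ds} and Proposition \ref{JBcomputation}, $\pi_2^\pm=\delta([-1,1]_\pm;\s)$ (tempered by Proposition \ref{prop: tempbsc}(3)(d) in the $\alpha=0$ case) embeds in $\Pi_{(0,1,1)}$ via $\delta([-1,1])\rtimes\s$, and comparing Jacquet modules shows $\pi_2^\pm\le[1]\rtimes\delta([0,1]_\pm;\s)$. The Langlands quotient of $[1]\rtimes\delta([0,1]_\pm;\s)$ is $\pi_3^\pm$, and that of $[1]\rtimes L([1];\delta([0]_\pm;\s))$ is $\pi_1^\pm$; the $s_{\GL}$-bookkeeping from \eqref{l1} and \eqref{l2} rules out additional constituents. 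Summing lengths yields \eqref{eq: 12321}. Part (3) then follows by applying $^t$ to the inductions and invoking Proposition \ref{prop: 001}(\ref{prop: 001-item: inv}): $\pi_4^t=[1]\rtimes L([0,1];\s)^t=\pi_4$, while $(\pi_1^\pm)^t$ and $(\pi_3^+)^t$ are identified with $\pi_2^\mp$ and $\pi_3^-$ respectively via $L([1];\delta([0]_\pm;\s))^t=\delta([0,1]_\mp;\s)$, after matching Langlands parameters and Jacquet modules.

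For (4), $\pi_2^\pm$ are tempered hence unitarizable; $\pi_1^\pm$ are their DL duals, unitarizable by a direct verification (cf.\ \cite{MR794086}, \cite{MR1010153}, and \cite[\S 13]{MR3969882}); and $\pi_3^\pm$ arise as the Langlands quotients at $s=1$ of the complementary series $[s]\rtimes\delta([0,1]_\pm;\s)$, $s\in[0,1]$, which is irreducible and hermitian on $[0,1)$ by Proposition \ref{Pr-red-si} (since $\Jord_\rho(\delta([0,1]_\pm;\s))=\{1,3\}$ by Proposition \ref{prop: 001}(\ref{prop: 001-item: 01ds}), so reducibility is concentrated at $s\in\{1,2\}$) and unitary tempered at $s=0$ (as $\rho\rtimes\delta([0,1]_\pm;\s)$ is irreducible). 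For (5), I would deform $\gamma_s:=[s]\rtimes L([0,1];\s)$: by an irreducibility check for $s\in(1,2)$ using Jantzen decomposition and Remark \ref{rem-irr}, the family is irreducible and hermitian on $[1,2)$, while Proposition \ref{0,1,2-0} yields $\gamma_2=\pi_5+\pi_6$ with both pieces non-unitarizable; hence $\pi_4=\gamma_1$ cannot be unitarizable. The main obstacle lies in part (1): correctly matching the $\pm$ labels on $\pi_2^\pm$ and $\pi_3^\pm$ with those of $\delta([0,1]_\pm;\s)$, which requires tracking the partially defined function $\e$ on $\{1,3\}\subset\Jord_\rho$ via Proposition \ref{Pr-red-si}(\ref{Pr-red-si-item: a-a+1-epsilon}) to determine which of $[1]\rtimes\delta([0,1]_\pm;\s)$ reduces and along which square-integrable constituent.
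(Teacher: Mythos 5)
Your outline follows the paper's proof almost step for step (same decomposition of $\Pi_{(0,1,1)}$ via Proposition \ref{prop: 001}, same Jacquet-module proof that $[1]\rtimes L([0,1];\s)$ is irreducible and equals $\pi_4$, same duality bookkeeping, same deformation of $[x]\rtimes L([0,1];\s)$ from $x=1$ to $x=2$ against Proposition \ref{0,1,2-0}). The one step that does not go through as you justify it is the unitarizability of $\pi_1^\pm$. You derive it from the assertion that the \ASS involution carries the unitarizable tempered representations $\pi_2^\mp=\delta([-1,1]_\mp;\s)$ to unitarizable representations, citing \cite{MR794086}, \cite{MR1010153} and \cite[\S 13]{MR3969882}. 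Preservation of unitarizability under the involution is only an expectation in general (in this paper it comes out as a consequence of the classification, not as an input), and the result that is actually available (M\oe glin; cf.\ \cite{MR2448433} and \cite[\S 13]{MR3969882}) concerns duals of \emph{square-integrable} representations; for $\alpha=0$ and $c=d=1$ the representations $\delta([-1,1]_\pm;\s)$ are tempered but not square-integrable, so it does not apply. The repair is immediate and is exactly what the paper does: by Proposition \ref{prop: addparms}, $\pi_1^\pm$ is the Langlands quotient of $[1]\rtimes L([1];\delta([0]_\pm;\s))$, which is the end $x=1$ of the complementary series $[x]\rtimes L([1];\delta([0]_\pm;\s))$, $0\le x<1$ (irreducible there, and unitarizable at $x=0$ since $[0]\rtimes L([1];\delta([0]_\pm;\s))$ is unitarily induced from a unitarizable representation); hence all subquotients at $x=1$, in particular $\pi_1^\pm$, are unitarizable --- the very argument you already use for $\pi_3^\pm$.

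Concerning the ``main obstacle'' you flag: it is resolved exactly as you anticipate, but note that \emph{both} representations $[1]\rtimes\delta([0,1]_\pm;\s)$ reduce, since $\Jord_\rho(\delta([0,1]_\pm;\s))=\{1,3\}$ and for each sign the partially defined function $\e$ takes the same value at $1$ and $3$, so (\ref{Pr-red-si-item: a-a+1-epsilon}) of Proposition \ref{Pr-red-si} applies to both; the matching of signs is then read off from \eqref{jm-seg-ds}, which shows that $[1]\otimes\delta([0,1]_\e;\s)$ occurs in the Jacquet module of $\delta([-1,1]_\e;\s)$, whence $\pi_2^\e\le[1]\rtimes\delta([0,1]_\e;\s)$ after a multiplicity count (and the extra tempered constituent $\pi_2^\e$ is not square-integrable, so ``along which square-integrable constituent'' is a slip). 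With these two points corrected, your argument coincides in substance with the paper's proof.
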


\begin{proof}
Write $\Pi_{(0,1,1)}$ in the Grothendieck group as
\begin{equation} \label{eq: 01221}
[1]\rtimes\Big(L([1];\delta([0]_+;\s))+L([1];\delta([0]_-;\s))+
2L([0,1];\s)+\delta([0,1]_+;\s)+\delta([0,1]_-;\s)\Big).
\end{equation}

The representations $[1]\rtimes\delta([0,1]_\pm;\s)$ are reducible by (\ref{Pr-red-si-item: a-a+1-epsilon}) of Proposition \ref{Pr-red-si} and (\ref{prop: 001-item: 01ds}) of Proposition \ref{prop: 001} (the partially defined function attached to $\delta([0,1]_+;\s)$ takes the same values on $\Jord_\rho(\delta([0,1]_\pm;\s))=\{1,3\}$, and the same holds for the partially defined function  attached to $\delta([0,1]_-;\s)$).
Hence, by duality, $[1]\rtimes L([1];\delta([0]_\pm;\s))$ are also reducible.
These four representations are in the ends of the complementary series. Therefore, all the subquotients there are unitarizable.
Now Proposition \ref{prop: addparms} gives the following irreducible (unitarizable) subquotients
$$
\pi_1^\pm, \qquad \pi_3^\pm.
$$
Furthermore,
$$
\delta([-1,1])\rtimes\s=\pi_2^+\oplus\pi_2^-,\ \ \delta([-1,1])^t\rtimes\s=\pi_1^+\oplus \pi_1^-.
$$

This completes the unitarizability assertions.

From \eqref{jm-seg-ds} we know that $[1]\times [1]\otimes \delta([0]_\e;\s)$
is (a direct summand) in the Jacquet module of $\delta([-1,1]_\e;\s)$ (and this term in the Jacquet module characterises $\delta([-1,1]_\e;\s)$).
This implies that $[-1]\times [-1]\otimes \delta([0]_{-\e};\s)$ is in the Jacquet module of $\delta([-1,1]_\e;\s)^t$. From this follows
$$
(\pi_2^\pm)^t=\pi_1^\mp.
$$
This implies that the multiplicity of each $\delta([-1,1]_\e;\s)$ and $\delta([-1,1]_\e;\s)^t$ in $\Pi_{(0,1,1)}$ is one.
Furthermore, \eqref{jm-seg-ds} implies that $[1]\otimes \delta([0,1]_\e;\s)$ is a subquotient of the Jacquet module of $\delta([-1,1]_\e;\s)$.

Provisionally, set
\[
\Pi'=[1]\rtimes L([0,1];\s).
\]
(We will show that in fact $\Pi'$ is irreducible, i.e., $\Pi'=\pi_4$.) Note that $(\Pi')^t=\Pi'$.
We have
\begin{gather*}
\mu^*(\Pi')=
(1\otimes [1]+ [1]\otimes1+ [-1]\otimes1) \rtimes\\
\Big(1\otimes L([0,1];\s)+
[0]\otimes[1]\rtimes\s+
\delta([-1,0])\otimes\s+L([0],[1])\otimes\s\Big).
\end{gather*}
Now
$$
s_{(n_\rho)}(\Pi')=[1]\otimes L([0,1];\s) +[-1]\otimes L([0,1];\s) + [0]\otimes [1] \times[1]\rtimes\s
$$
(recall that $n_\rho$ is defined by requirement that $\rho$ is a representation of $\GL(n_\rho,F')$).
Obviously, neither of $[1]\otimes\delta([0,1]_\pm;\s)$ is a subquotient of $\Pi'$.
This implies that neither of $\delta([-1,1]_\pm;\s)$ is a subquotient of $\Pi'$.

Furthermore, neither of $[-1]\otimes\delta([0,1]_\pm;\s)$ is a subquotient of the Jacquet module of $\Pi'$.
This implies that neither of $\pi_3^\pm$ is a subquotient of $\Pi'$.
These two observations and \eqref{BPLC} imply that $\Pi'$ is irreducible (hence, equals to $\pi_4$).
Now we can conclude that $(\pi_3^+)^t$ is either $\pi_3^+$ or $\pi_3^-$. The formula \eqref{jm-seg-0} implies that in fact
$(\pi_3^+)^t=\pi_3^-$.

Next we show that
\begin{equation} \label{eq: 123321}
[1]\rtimes\delta([0,1]_\pm;\s)=\pi_2^\pm+\pi_3^\pm.
\end{equation}
Indeed, $\pi_3^\pm$ is the Langlands quotient of $[1]\rtimes\delta([0,1]_\pm;\s)$. By
\eqref{BPLC} all other irreducible subquotient are tempered.
Now $\delta([0,1])\times[1]\otimes\sigma$ occurs with multiplicity two in $s_{\GL}(\pi_2^+)$, $s_{\GL}([1]\rtimes\delta([0,1]_\pm;\s))$ and $s_{\GL}(\Pi_{(0,1,1)})$.
Therefore, $\pi_2^+$ occurs with multiplicity one in both $[1]\rtimes\delta([0,1]_+;\s)$ and $\Pi_{(0,1,1)}$.
Since both $[1]\rtimes\delta([0,1]_\pm;\s)$ are reducible, we conclude \eqref{eq: 123321}.

By passing to the dual, we conclude \eqref{eq: 12321} from \eqref{eq: 01221}.

Finally, the non-unitarizability of $\pi_4=\Pi'$ follows by deforming it to the representation
$[2]\rtimes L( [0,1];\s)$, which by Proposition \ref{0,1,2-0} contains the non-unitarizable irreducible representation
$L([2], [0,1];\s)$ as a subquotient.
\end{proof}

\section{\texorpdfstring{$\mathbf{x}=(0,0,1)$}{x001} and \texorpdfstring{$\alpha=0$}{alpha0}}

\begin{proposition}
\label{prop: 001alpha=0}
Assume $\alpha=0$. Then,
\begin{enumerate}

\item
\label{prop: 001alpha=0-item: Gg}
 In the Grothendieck group we have
$$
\Pi_{(0,0,1)}=\pi_1^+ + \pi_1^- + 2\pi_2^+ +2 \pi_2^- + \pi_3^+ + \pi_3^- ,
$$
where
\begin{gather*}
\pi_1^\pm= L([1],[0]\rtimes\delta([0]_\pm;\s)),\quad
\pi_2^\pm= L([0,1];\delta([0]_\pm;\s)),\quad
\pi_3^\pm= [0]\rtimes \delta([0,1]_\pm;\s).
\end{gather*}

\item 
\label{prop: 001alpha=0-item: irr}
$\pi_3^\pm$ are irreducible and $\pi_1^\pm= [0]\rtimes L([1]; \delta([0]_\pm;\s))$.
\item
\label{prop: 001alpha=0-item: inv}
 $(\pi_1^\pm)^t=\pi_3^\mp, \ (\pi_2^\pm)^t=\pi_2^\mp$.
\item 
\label{prop: 001alpha=0-item: all unit}
All irreducible subquotients of $\Pi_{(0,0,1)}$ are unitarizable.
\end{enumerate}
\end{proposition}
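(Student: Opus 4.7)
The strategy is to reduce to the corank-two case (Proposition~\ref{prop: 001}) via associativity and then analyze one delicate middle piece. Using \eqref{asso} together with Proposition~\ref{prop: 001}(\ref{prop: 001-item: Gg}) I would write, in the Grothendieck group,
\[
\Pi_{(0,0,1)}=[0]\rtimes \Pi_{(0,1)}=\sum_{\e=\pm}\bigl([0]\rtimes L([1];\delta([0]_\e;\s))+[0]\rtimes\delta([0,1]_\e;\s)\bigr)+2\cdot [0]\rtimes L([0,1];\s).
\]
The task is to identify these five pieces with the representations $\pi_i^\pm$ in the statement.

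For the two outer pieces, irreducibility is immediate: by Proposition~\ref{prop: 001}(\ref{prop: 001-item: 01ds}), $\Jord_\rho(\delta([0,1]_\pm;\s))=\{1,3\}$, so by Proposition~\ref{Pr-red-si}(3) the representation $[0]\rtimes\delta([0,1]_\pm;\s)$ is irreducible, which gives $\pi_3^\pm=[0]\rtimes\delta([0,1]_\pm;\s)$. Applying the \ASS involution and using $\delta([0,1]_\pm;\s)^t=L([1];\delta([0]_\mp;\s))$ from Proposition~\ref{prop: 001}(\ref{prop: 001-item: inv}) shows $[0]\rtimes L([1];\delta([0]_\pm;\s))=(\pi_3^\mp)^t$ is irreducible; it coincides with $\pi_1^\pm=L([1];\tau_\pm)$, where $\tau_\pm:=[0]\rtimes\delta([0]_\pm;\s)$ is irreducible tempered (same criterion applied with $\Jord_\rho(\delta([0]_\pm;\s))=\{1\}$), because both are irreducible quotients of the standard module $[1]\rtimes\tau_\pm$. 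This also yields $(\pi_1^\pm)^t=\pi_3^\mp$.

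The main obstacle will be to prove $[0]\rtimes L([0,1];\s)=\pi_2^+\oplus\pi_2^-$. Unitarizability of $L([0,1];\s)$ (Proposition~\ref{prop: 001}) makes $[0]\rtimes L([0,1];\s)$ unitarizable, hence semisimple, and Proposition~\ref{prop: addparms} (with $d=([0])$, $d'=([0,1])$, $\tau=\s$, so that $\lambda(d_u)\rtimes\tau=\delta([0]_+;\s)\oplus\delta([0]_-;\s)$) embeds each $\pi_2^\pm=L([0,1];\delta([0]_\pm;\s))$ with multiplicity one. To exclude further constituents I would use \eqref{BPLC}: any non-tempered constituent has Langlands exponents $e_*\le(\tfrac12,\tfrac12,0,0,0)$, which rules out $\pi_1^\pm$ ($e_*=(1,0,\ldots)$); to rule out the tempered $\pi_3^\pm$, I would compute $s_{(n_\rho)}([0]\rtimes L([0,1];\s))=M^*([0])\rtimes\mu^*(L([0,1];\s))$ using \eqref{l3}, and observe that every term in the result has first tensor factor $[0]$, whereas $s_{(n_\rho)}(\pi_3^\pm)$ contains $[1]\otimes\tau_\pm$ (arising from $(1\otimes[0])\rtimes([1]\otimes\delta([0]_\pm;\s))$ in $\mu^*(\pi_3^\pm)$). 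This forces the claimed Grothendieck identity. The remaining duality $(\pi_2^\pm)^t=\pi_2^\mp$ follows from self-duality of $[0]\rtimes L([0,1];\s)$ together with a Jacquet-module distinction (the Langlands term $\delta([0,1])\otimes\delta([0]_\pm;\s)$ of $\pi_2^\pm$ dualizes to $L([0],[1])\otimes\delta([0]_\mp;\s)$, matching $\pi_2^\mp$ and not $\pi_2^\pm$), and the unitarizability assertions all reduce to unitary parabolic induction from the unitarizable representations in Proposition~\ref{prop: 001}(\ref{prop: 001-item: Gg}).
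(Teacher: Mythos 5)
Your route through parts (\ref{prop: 001alpha=0-item: Gg}), (\ref{prop: 001alpha=0-item: irr}) and (\ref{prop: 001alpha=0-item: all unit}) is essentially the paper's: decompose $\Pi_{(0,0,1)}=[0]\rtimes\Pi_{(0,1)}$, get irreducibility of $[0]\rtimes\delta([0,1]_\pm;\s)$ from $\Jord_\rho(\delta([0,1]_\pm;\s))=\{1,3\}$ and of its dual by the involution, and handle $\Pi'=[0]\rtimes L([0,1];\s)$ by Proposition \ref{prop: addparms} together with \eqref{BPLC} and a Jacquet-module exclusion; unitarizability is unitary parabolic induction, as in the paper. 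Two small repairs are needed, though. Proposition \ref{Pr-red-si} is stated for square-integrable $\pi$, so it does not apply verbatim to the tempered non-square-integrable $\delta([0]_\pm;\s)$; the irreducibility of $[0]\rtimes\delta([0]_\pm;\s)$ is the corank-two case $\mathbf{x}=(0,0)$, $\alpha=0$ already recorded in the paper. More importantly, $[0]\rtimes L([1];\delta([0]_\pm;\s))$ is a quotient of $[0]\times[1]\rtimes\delta([0]_\pm;\s)$, not of $[1]\times[0]\rtimes\delta([0]_\pm;\s)$; since $[0]\times[1]$ is reducible these induced representations are not isomorphic, so your claim that it is ``an irreducible quotient of the standard module $[1]\rtimes\tau_\pm$'' is unjustified as stated. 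The identification with $\pi_1^\pm$ should instead be read off from Proposition \ref{prop: addparms}, which forces $\pi_1^\pm\le[0]\rtimes L([1];\delta([0]_\pm;\s))$, whence equality by irreducibility.

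The genuine gap is in part (\ref{prop: 001alpha=0-item: inv}), the duality $(\pi_2^\pm)^t=\pi_2^\mp$, which is precisely where the paper works hardest. Your ``Jacquet-module distinction'' starts from a false premise: since $\pi_2^\pm=L([0,1];\delta([0]_\pm;\s))$ embeds into $\delta([-1,0])\rtimes\delta([0]_\pm;\s)$, its Jacquet module contains $\delta([-1,0])\otimes\delta([0]_\pm;\s)$, not $\delta([0,1])\otimes\delta([0]_\pm;\s)$; in fact, a computation with \eqref{jm-seg-0} shows that $\delta([0,1])\otimes\delta([0]_\pm;\s)$ occurs with the same multiplicity (three) in $\mu^*(\pi_3^\pm)$ as in $\mu^*(\delta([0,1])\rtimes\delta([0]_\pm;\s))$, so it has multiplicity zero in $\mu^*(\pi_2^\pm)$. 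If one replaces it by the correct term, its image under the duality on Jacquet modules is $L([0],[1])\otimes\delta([0]_\mp;\s)$, and this term also occurs in the Jacquet module of $\pi_2^\pm$ itself, so a single-term comparison of this kind cannot decide between $(\pi_2^\pm)^t=\pi_2^\pm$ and $(\pi_2^\pm)^t=\pi_2^\mp$; the sign question here is genuinely delicate. The paper settles it by a global bookkeeping argument: it writes $\Pi_{(0,0,1)}=\Pi_1^++\Pi_1^-+\Pi_2^++\Pi_2^-$ with $\Pi_1^\pm=\delta([0,1])\rtimes\delta([0]_\pm;\s)$ and $\Pi_2^\pm=L([0],[1])\rtimes\delta([0]_\pm;\s)=(\Pi_1^\mp)^t$, shows $\Pi_1^\pm=\pi_2^\pm+\pi_3^\pm$ using that the total length of $\Pi_{(0,0,1)}$ is eight, applies the involution, and then compares with the decomposition of $[0]\times[1]\rtimes\delta([0]_\pm;\s)$ obtained from the corank-two identities to isolate $(\pi_2^\pm)^t$. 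Some argument of this global type is needed in your write-up; the shortcut you propose does not work as written.
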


\begin{proof}
Write $\Pi_{(0,0,1)}$ as
\[
[0]\rtimes
\Big(L([1];\delta([0]_+;\s))+L([1];\delta([0]_-;\s))+2L([0,1];\s)
+\delta([0,1]_+;\s)+\delta([0,1]_-;\s)\Big).
\]
The representations $\pi_3^\pm$ are irreducible
(since $\Jord_\rho(\delta([0,1]_\pm;\s))=\{1,3\}$).
The same is true for $[0]\rtimes L([1];\delta([0]_+;\s))$ by duality.

It remains to consider $\Pi':=[0]\rtimes L([0,1];\s)$. Observe that
$$
s_{\GL}(\Pi')=
2\cdot[0]\times\big(\delta([-1,0])+L([0],[1])\big)\otimes\s,
$$
which is a length four representation.
By Proposition \ref{prop: addparms}, both representations $\pi_2^\pm$
are irreducible subquotients of $\Pi'$. Observe that
\begin{gather*}
\pi_2^\pm
\h\delta([-1,0])\rtimes \delta([0]_\pm;\s)
\h\delta([-1,0])\times [0]\rtimes\s\\
\cong
[0]\times\delta([-1,0])\rtimes\s\h
[0]\times[0]\times [-1]\rtimes\s\cong\Pi_{(0,0,1)}.
\end{gather*}
This implies that each of $s_{\GL}(L([0,1];\delta([0]_\pm;\s)))$ has length at least two. Therefore, we have
$$
\Pi'=
\pi_2^++\pi_2^-
$$
in the Grothendieck group. In this way we have proved (\ref{prop: 001alpha=0-item: Gg}) and (\ref{prop: 001alpha=0-item: irr}).

Furthermore, (\ref{prop: 001alpha=0-item: irr}) and corank two case imply $(\pi_1^\pm)^t=\pi_3^\mp$.
Now we shall prove $(\pi_2^\pm)^t=\pi_2^\mp$. For this,
write
\begin{equation}
\label{sum001-0}
\Pi_{(0,0,1)}=\Pi_1^+ + \Pi_1^- + \Pi_2^+ + \Pi_2^-,
\end{equation}
where
$$
\Pi_1^\pm :=\delta([0,1])\rtimes \delta([0]_\pm;\sigma),\quad \Pi_2^\pm :=L([0],[1])\rtimes \delta([0]_\pm;\sigma)=(\Pi_1^\mp)^t.
$$
Obviously $\pi_1^\pm\leq\Pi_2^\pm$. This implies by duality $\pi_3^\pm\leq\Pi_1^\pm$. Obviously $\pi_2^\pm\leq\Pi_1^\pm$. Therefore,
$\pi_2^\pm+\pi_3^\pm\leq\Pi_1^\pm$. Now \eqref{sum001-0} and the fact that (total) length of $\Pi_{(0,0,1)}$ is eight, implies
\begin{equation}
\pi_2^\pm+\pi_3^\pm=\Pi_1^\pm,
\end{equation}
 and after applying duality
 $$
(\pi_2^\pm)^t+\pi_1^\pm=\Pi_2^\pm.
$$

We have
$$
[0]\times[1]\rtimes\delta([0]_\pm;\sigma)=\Pi_2^\pm+\Pi_1^\pm.
$$
Now \eqref{01-0-half} and what we proved about duality up to now, imply
$$
[0]\rtimes\Big(L([1];\delta ([0]_\pm;\s))
+ L([0,1];\s)+\delta([0,1]_\pm;\s)\Big)=(\pi_2^\pm)^t+\pi_1^\pm+\pi_2^\pm+\pi_3^\pm,
$$
i.e.
$$
\pi_1^\pm
+ [0]\rtimes L([0,1];\s)+\pi_3^\pm=(\pi_2^\pm)^t+\pi_1^\pm+\pi_2^\pm+\pi_3^\pm.
$$
Since
$$
\pi_2^++\pi_2^-\leq [0]\rtimes L([0,1];\s),
$$
the above equation implies that the above inequality is actually equality. Therefore,
$$
\cancel{\pi_1^\pm}
+ \pi_2^++\pi_2^-+\cancel{\pi_3^\pm}=(\pi_2^\pm)^t+\cancel{\pi_1^\pm}+\pi_2^\pm+\cancel{\pi_3^\pm}.
$$
This implies
$$
(\pi_2^\pm)^t=\pi_2^\mp.
$$
The proof of (\ref{prop: 001alpha=0-item: inv}) is now complete.

Complementary series in rank 2, and then unitary parabolic induction imply (\ref{prop: 001alpha=0-item: all unit}).
\end{proof}

\section{\texorpdfstring{$\mathbf{x}=(0,0,0)$}{x000} and \texorpdfstring{$\alpha=0$}{alpha0}}

The tempered unitarizable representation $\Pi_{(0,0,0)}$ is of length two and splits as
$$
[0]\times[0]\rtimes\delta ([0]_+;\s)\oplus [0]\times[0]\rtimes\delta ([0]_-;\s).
$$
We have $([0]\times[0]\rtimes\delta ([0]_+;\s))^t=[0]\times[0]\rtimes\delta ([0]_-;\s)$.

\chapter{Introductory Remarks on Unitarizability and Corank 2} \label{rem-on-uni}

In this and the following chapter we fix $\rho\in\Cuspsd$ and $\sigma\in\Cuspcl$.
As usual, let $\alpha\in\tfrac12\Z_{\ge0}$ be such that
$$
[\alpha]^{(\rho)}\rtimes\s
$$
is reducible. We will suppress $\rho$ from the notation.

We shall determine unitarizability of irreducible subquotients of
$$
\Pi_{(x_1,\dots,x_k)}=[x_1]\times\dots \times [x_k]\rtimes\s
$$
when $k\leq 3$ and
$$
0\leq x_1\leq\dots\leq x_k.
$$
\begin{remark} \label{non-uni-half}
Suppose that we are in the critical case.
\begin{enumerate}
\item A consequence of the analysis of chapters \ref{CC-corank12}--\ref{CC-0} is that the unitarizability of
irreducible subquotients of $\Pi_{\mathbf{x}}$, $k\leq3$ is preserved under \ASS duality
\item
Suppose further that for some indices $i,j$ (resp., index $i$), $[x_i]\times[x_j]$ (resp. $[x_i]\rtimes \sigma$) is reducible,
necessarily of length two. Then, we get correspondingly a decomposition in the Grothendieck group
$$
\Pi_{\mathbf{x}}=\Pi'+\Pi''
$$
with $\Pi'^t=\Pi''$.
The first part implies that if one of $\Pi'$ and $\Pi''$ contains a non-unitarizable irreducible subquotient, then so does the other.
We shall use this simple observation in the sequel.
\end{enumerate}
\end{remark}

Let $\pi$ be an irreducible subquotient of $\Pi_{(x_1,\dots,x_k)}$.

Trivially, if $k=0$, then $\pi=\s$, which is obviously unitarizable.

\section{Corank 1} \label{rank1}

Let $k=1$.
Then, $\pi$ is unitarizable if and only if
$$
x_1\leq \alpha.
$$
For $0\leq x_1 <\alpha$, $\pi=[x_1]\rtimes \s$, while for $x_1=\alpha$
we have two non-equivalent irreducible (unitarizable) subquotients. In the case $\alpha>0,$ they are $\delta([\alpha]; \s)$
and $ L([\alpha]; \s)$, while for $\alpha=0$, they are $\delta ([0]_+;\s)$ and $ \delta ([0]_-;\s)$.
These two representations are the only subquotients which are unitarizable for $\alpha=0$.

\section{Corank 2} \label{red2-rank2}

The following proposition is probably well known to experts.\footnote{Classifications of the unitary duals of split rank two
classical groups in the non-archimedean case were obtained in \cite{MR0447491} (unramified case),
\cite{MR1212952}, \cite{MR2247868}, \cite{MR2346481}, \cite{MR2557193} among others}
For convenience we provide the proof, as we will use the argument repeatedly in the sequel.

\begin{proposition} \label{PROP: RK2}
The irreducible unitarizable subquotients of $\Pi_{\mathbf{x}}$ when $\mathbf{x}=(x_1,x_2)\in\R^2_{++}$, are the following.
\begin{enumerate}
\item $(\alpha>1)$ 
All irreducible subquotients when $x_1+1\leq x_2\leq \alpha$.
\item $(\alpha\ne\frac12)$
\label{PROP: RK2-item: alpha ne 1/2}
 All irreducible subquotients when $x_1+x_2\leq 1$.
\item $(\alpha=\tfrac12)$ All irreducible subquotients when $x_2\le\tfrac12$.
\item $(\alpha>0)$ The representations $\delta([\alpha,\alpha+1];\s)$ and $L([\alpha],[\alpha+1];\s)$.
\end{enumerate}
\end{proposition}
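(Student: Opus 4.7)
My plan is to partition the parameter domain $\R^2_{++}$ according to the reducibility walls of $\Pi_{\mathbf{x}}$, namely $x_i=\alpha$ ($i=1,2$) coming from cuspidal reducibility of $[x_i]\rtimes\sigma$, and $x_1+x_2=1$, $x_2-x_1=1$ coming from general-linear reducibility of $[x_1]\times[x_2]$ and $[x_1]\times\check{[x_2]}$ (using that $\rho\cong\check\rho$). Off these walls, $\Pi_{\mathbf{x}}$ is irreducible by \S\ref{GL-LT} and the classification of cuspidal reducibilities, and it is hermitian since $\mathbf{x}\in\R^2$; hence by the family method of \S\ref{ways of getting unitary} unitarizability is constant on each connected component. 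The argument then splits into construction and exhaustion.

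For construction, in region (2) the tensor $[x_1]\times[x_2]$ is unitarizable by Theorem \ref{ud-gl} (it is a complementary series of Speh type), so unitary parabolic induction yields the assertion provided the induced representation is irreducible; when $\alpha\ne\tfrac12$ this is automatic from $x_1+x_2\le 1<2\alpha$ combined with Corollary \ref{irr-simple}, and when $\alpha=\tfrac12$ the smaller region in (3) precisely matches the subregion $x_2\le\tfrac12$ where no cuspidal wall is crossed. Region (1) is built by a complementary-series deformation along the vertical line $x_2=\alpha$: starting from the unitarizable representation $[x_1]\rtimes\delta([\alpha];\sigma)$ with $0\le x_1\le\alpha-1$ (unitarizable since it is the parabolic induction of a unitarizable representation), one slides $x_2$ from $\alpha$ downward to $x_1+1$, noting that by \S\ref{GL-LT} the induction $[x_2]\times[x_1]\rtimes\delta([\alpha];\sigma)$ stays irreducible (and hermitian) on the interior. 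The two isolated representations in (4) are unitarizable because $\delta([\alpha,\alpha+1];\sigma)$ is square-integrable, and its \ASS dual is unitarizable as noted at the end of \S\ref{sec: Castrick}.

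For exhaustion, I would argue that every irreducible hermitian representation not covered above is non-unitarizable. If $\mathbf{x}$ is regular (off the walls) and lies outside the stated regions, then its connected component in the regular locus is unbounded (it escapes along $x_2\to\infty$ above the diagonal $x_2=x_1+1$, or along $x_1,x_2\to\infty$ in the ``deep'' regions), so by the final remark of \S\ref{ways of getting unitary} on families with unbounded unramified parameters, $\Pi_{\mathbf{x}}$ is non-unitarizable. If $\mathbf{x}$ lies on a wall but the multiset $\{[x_1],[x_2]\}$ does not form a segment containing $\alpha$, the Jantzen decomposition reduces the question to the corank 1 case \S\ref{rank1} tensored with an independent GL factor, which by \eqref{ud-gl} and \S\ref{rank1} produces no new unitarizable examples beyond those covered. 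If $\mathbf{x}$ lies on a wall and is critical, the explicit Jordan--H\"older and \ASS-duality analyses of chapter \ref{CC-corank12} (Propositions \ref{prop-2-a+}, \ref{prop-2-a-}, \ref{prop: alphalpha}, \ref{prop: 1212}, \ref{prop: 001} and the subsequent unnumbered cases for $\alpha=\tfrac12$ and $\alpha=0$) already single out the unitarizable subquotients, which match (1)--(4).

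The main obstacle will be the bookkeeping at the interaction of walls, especially when $\alpha\in\{0,\tfrac12,1\}$: several small bounded chambers appear whose boundaries involve both cuspidal and GL reducibility, and one must check that these chambers do not contain extra unitarizable points. This is handled by combining the explicit Jacquet module computations of chapter \ref{CC-corank12} with Remark \ref{non-uni-half}, which ensures that if one piece of a duality-symmetric decomposition $\Pi=\Pi'+\Pi''=\Pi'+(\Pi')^t$ contributes a non-unitarizable subquotient, so does the other, thereby preventing spurious unitarity on either side of the critical walls.
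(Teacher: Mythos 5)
Your exhaustion step has a genuine gap: it is not true that every regular point outside the stated regions lies in an unbounded component. For every $\alpha\ge1$ the chamber $\{(x_1,x_2)\in\R^2_{++}:x_1+x_2>1,\ x_2-x_1<1,\ x_2<\alpha\}$ is bounded, regular and disjoint from (1)--(3), and the same is true of bounded wall pieces such as the families $\delta([x,x+1])\rtimes\s$, $L([x],[x+1])\rtimes\s$ and $[x]\rtimes\delta([\alpha];\s)$, $[x]\rtimes L([\alpha];\s)$ for $\alpha-1<x<\alpha$. The unboundedness criterion says nothing about these; Remark \ref{non-uni-half} only concerns the critical points themselves; and your case (b) never applies, because both exponents lie on the single cuspidal line $X_\rho$, so the Jantzen decomposition is vacuous here. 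What is actually needed (and what the paper does) is to propagate non-unitarizability from the critical corners using the closedness of unitarity under limits of characters: all constituents of $\Pi_{(\alpha,\alpha)}$ are non-unitarizable (Proposition \ref{prop: alphalpha}), which disposes of the bounded chamber and the adjacent wall segments, and at $(\alpha,\alpha+1)$ both $\delta([\alpha,\alpha+1])\rtimes\s$ and $L([\alpha],[\alpha+1])\rtimes\s$ contain non-unitarizable constituents (Proposition \ref{prop-2-a+}, \S\ref{Steinberg-place-corank-two}), which disposes of the remaining segment; the analogous endpoint arguments are what settle the bounded pieces when $\alpha=\tfrac12$.

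The construction is also flawed at two points. In region (2) the representation $[x_1]\times[x_2]$ is not unitarizable off the diagonal (after a sign change Theorem \ref{ud-gl} only yields $\rho\times\rho$ and $\nu^{x}\rho\times\nu^{-x}\rho$ with $x<\tfrac12$), so unitary parabolic induction from the GL factor does not cover that region; this is repairable, since you did record constancy of unitarizability on components, so the anchor $\Pi_{(0,0)}$, or $([x]\times[-x])\rtimes\s$ on the diagonal when $\alpha=0$, suffices. The argument for region (1), however, does not work as written: $[x_2]\times[x_1]\rtimes\delta([\alpha];\s)$ is a representation of a larger group than $\Pi_{(x_1,x_2)}$; the inducing datum $\nu^{x_1}\rho\otimes\delta([\alpha];\s)$ is not unitary for $x_1>0$, so that representation is not ``parabolic induction of a unitarizable representation''; and, most importantly, unitarizability of the constituents at the reducible wall point $x_2=\alpha$ does not propagate into the adjacent irreducible family (already in corank one, $[x]\rtimes\s$ is non-unitarizable for $x$ slightly larger than $\alpha$ although both constituents at $x=\alpha$ are unitarizable). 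The correct anchor, used in the paper, is the segment $x_1=0$, $1<x_2<\alpha$, where $\Pi_{(0,x_2)}=[0]\rtimes([x_2]\rtimes\s)$ is irreducible and genuinely unitarily induced; constancy on the connected component together with limits at its boundary then gives all of (1).
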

Before proving the proposition, we make some basic remarks.

\section{General principles related to graphic interpretations (cf. \S\ref{ways of getting unitary})} \label{gen-principles-unit}

It is advantageous to use drawings that describe regions of unitarizability for $\Pi_{(x,y)}$.
(It is enough to consider the quadrant
$
\R^2_+:=\{(x,y)\in\R^2:x,y\geq 0\}.
$
In fact the picture is also symmetric with respect to interchanging $x$ and $y$.)
The lines of reducibility (singular lines) will be denoted by dashed or solid lines.
The complement of the singular lines is partitioned into connected components. Each connected component consists of irreducible
representations $\Pi_{(x,y)}$ which are either all unitarizable (complementary series) or all non-unitarizable.
The former are denoted by shaded regions and the latter by blank regions.
The boundary of the shaded regions are denoted by thick solid lines -- all the irreducible subquotients there
are unitarizable.
Intersection of singular lines are critical points.
These vertices are denoted by balls (white, gray or black), according to unitarizability of
irreducible subquotients (which is known).
The singular line segments (or rays) delimited by critical points (connected components of lines) correspond to two continuous
families of hermitian representations which are either all unitarizable or all non-unitarizable.
(For example, in Figure \ref{fig: rk2alpha3}, to the open segment connecting $(0,1)$ and $(\alpha,\alpha-1)$
corresponds the families $\delta([x+1,x])\rtimes\sigma,\ 0<x<\alpha-1$ and $L([x+1],[x])\rtimes\sigma,\ 0<x<\alpha-1$.)

Unbounded regions and unbounded segments always consist entirely of non-unitarizable representations.

In the rest of this chapter we prove Proposition \ref{PROP: RK2}.

\section{Proof of Proposition \texorpdfstring{\ref{PROP: RK2}}{} for \texorpdfstring{$\alpha\geq1$}{alphage1}}
We prove Proposition \ref{PROP: RK2} in the case $\alpha\geq1$ using Figure \ref{fig: rk2alpha3}.

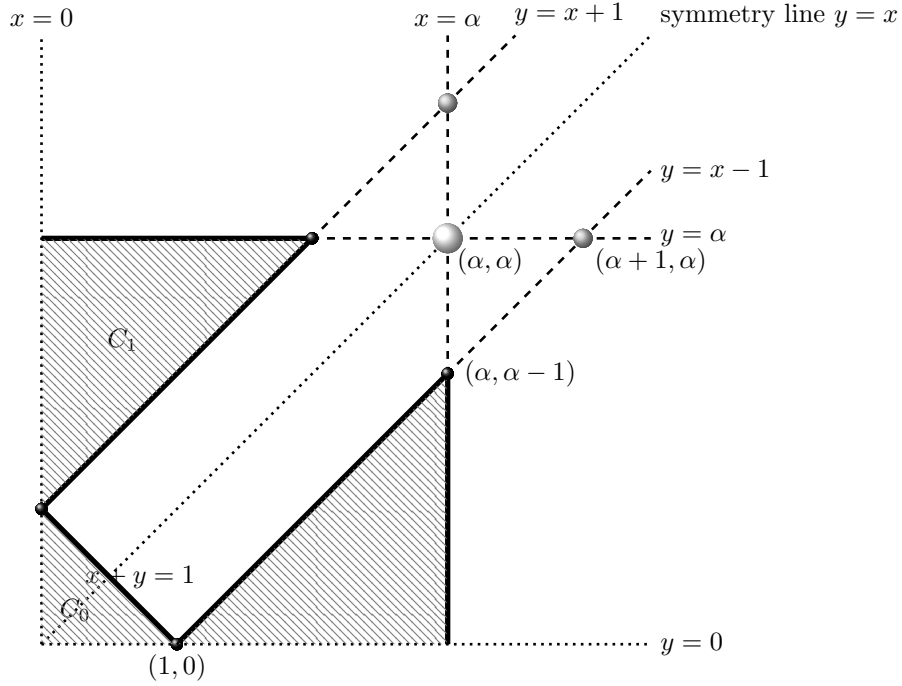
\begin{figure}
\begin{tikzpicture} [thick, scale=0.9]
\draw[style=dotted,line width=1pt] (1,1) -- (10,1);
\draw[style=dotted,line width=1pt] (1,1) -- (1,10);
\draw[style=dotted,line width=1pt] (1,1) -- (10,10);
\draw[style=dashed,line width=1pt] (1,3) -- (8,10);
\draw[style=dashed,line width=1pt] (3,1) -- (10,8);
\draw[style=dashed,line width=1pt] (7,1) -- (7,10);
\draw[style=dashed,line width=1pt] (1,7) -- (10,7);
\draw (10,1) node[right] {$y=0$};
\draw (1,10) node[above] {$x=0$};
\draw (10,10.3) node[right] {symmetry line $y=x$};
\draw (10,8) node[right] {$y=x-1$};
\draw (10,7) node[right] {$y=\alpha$};
\draw (7,10) node[above] {$x=\alpha$};
\draw (8.8,10) node[above] {$y=x+1$};
\draw (7.5,7) node[below] {$\ \ (\alpha,\alpha)$};
\draw (10,7) node[below] {$(\alpha+1,\alpha)$};
\draw (3,1) node[below] {$(1,0)$};
\draw (7.1,5) node[right] {$(\alpha,\alpha-1)$};
\draw (1.5,2) node[right] {$x+y=1$};
\draw (1.5,1.5) node {$C_0$};
\draw (2.2,5.5) node {$C_1$};
\draw[line width=2pt] (3,1) -- (1,3);
\draw[line width=2pt] (3,1) -- (7,5);
\draw[line width=2pt] (7,1) -- (7,5);
\draw[line width=2pt] (1,3) -- (5,7);
\draw[line width=2pt] (1,7) -- (5,7);
\shade[shading=ball,ball color=black] (3,1) circle (.09);
\fill [pattern=north west lines, pattern color=gray] (1,1) -- (3,1) -- (1,3);
\draw [pattern=north west lines, pattern color=gray] (3,1) -- (7,5) -- (7,1);
\draw [pattern=north west lines, pattern color=gray] (1,3) -- (5,7) -- (1,7);
\shade[shading=ball,ball color=black] (7,5) circle (.09);
\shade[shading=ball,ball color=black] (5,7) circle (.09);
\shade[shading=ball,ball color=black] (1,3) circle (.09);
\shade[shading=ball,ball color=lightgray] (7,9) circle (.14);
\shade[shading=ball,ball color=lightgray] (9,7) circle (.14);
\shade[shading=ball,ball color=white] (7,7) circle (.22);
 \end{tikzpicture}
\caption{Unitarizability for $\Pi_{(x,y)}$ (case $\alpha=3$)} \label{fig: rk2alpha3}
\end{figure}

\subsection{Legend (for Figure \ref{fig: rk2alpha3} and all subsequent drawings)} \label{legend}
~\\

\begin{tikzpicture}
\draw[style=dotted,line width=1pt] (1,1) -- (2,1);
\end{tikzpicture}
\quad coordinate axes, or symmetry axis;

\begin{tikzpicture}
\shade[shading=ball,ball color=white] (7,7) circle (.22);
\end{tikzpicture}
\quad all irreducible subquotients are non-unitarizable;

\begin{tikzpicture}
\shade[shading=ball,ball color=lightgray] (9,7) circle (.12);
\end{tikzpicture}
\quad unitarizable and non-unitarizable irreducible subquotients show up;

\begin{tikzpicture}
\shade[shading=ball,ball color=black] (1,1) circle (.09);
\end{tikzpicture}
\quad all irreducible subquotients are unitarizable;

\begin{tikzpicture}
\draw[style=dashed,line width=1pt] (1,1) -- (2,1);
\end{tikzpicture}
\quad only hermitian families of non-unitarizable representations show up;

\begin{tikzpicture}
\draw[line width=1pt] (1,1) -- (2,1);
\end{tikzpicture}
\quad both complementary series and hermitian family of non-unitarizable representations show up;

\begin{tikzpicture}
\draw[line width=2pt] (1,1) -- (2,1);
\end{tikzpicture}
\quad all irreducible subquotients are unitarizable (i.e. all belong to complementary series);

\begin{tikzpicture}
\draw [pattern=north west lines, pattern color=gray] (1,1) -- (2,1) -- (2,2) -- (1,2) -- (1,1);
\end{tikzpicture}
\quad two-dimensional complementary series;

\begin{tikzpicture}
\draw[line width=.2pt] (1,1) -- (2,1);
\draw[line width=.2pt] (2,1) -- (2,2);
\draw[line width=.2pt] (1,2) -- (2,2);
\draw[line width=.2pt] (1,1) -- (1,2);
\end{tikzpicture}
\quad two-dimensional regions of non-unitarizable representations.

\smallskip
\noindent
Two-dimensional connected regions containing the origin will be denoted always by $C_0$ (they will always consist of unitarizable representations).

\subsection{}

The region $C_0$ is unitarizable since $\Pi_{(0,0)}$ is unitarizable.
Furthermore, if $\alpha>1$, the region $C_1$ is unitarizable since
$\Pi_{(x,0)}=[0]\rtimes ([x]\rtimes\s)$ is unitarizable for any $0\le x<\alpha$.

Because of the point $(\alpha,\alpha)$ where all irreducible subquotients
are non-uni\-tarizable (by Proposition \ref{prop: alphalpha}), it remains to explain why the hermitian families corresponding to the
segment from $(\alpha,\alpha-1)$ to $(\alpha+1,\alpha)$ are not unitarizable.
At the end of these families are the representations $\delta([\alpha,\alpha+1])\rtimes\sigma$ and $L([\alpha],[\alpha+1])\rtimes\sigma$.
Since both representations at the end of two families contain a non-unitarizable irreducible subquotient by \S\ref{Steinberg-place-corank-two},
this implies non-unitarizability of both families, and completes the proof of Proposition \ref{PROP: RK2} in this case.

\section{Proof of Proposition \texorpdfstring{\ref{PROP: RK2}}{} for \texorpdfstring{$\alpha=\frac12$}{alpha12}}
Next we prove the proposition in the case $\alpha=\frac12$ -- see Figure \ref{fig: rk2alpha12}.

\begin{figure}
\begin{tikzpicture} [thick, scale=0.8]
\draw[style=dotted,line width=1pt] (1,1) -- (8,1);
\draw[style=dotted,line width=1pt] (1,1) -- (1,8);
\draw[style=dotted,line width=1pt] (1,1) -- (8,8);
\draw[style=dashed,line width=1pt] (1,5) -- (5,1);
\draw[style=dashed,line width=1pt] (1,5) -- (4,8);
\draw[style=dashed,line width=1pt] (5,1) -- (8,4);
\draw[style=dashed,line width=1pt] (3,1) -- (3,8);
\draw[style=dashed,line width=1pt] (1,3) -- (8,3);
\draw (8,1) node[right] {$y=0$};
\draw (1,8) node[above] {$x=0$};
\draw (9,8) node[above] {symmetry line $y=x$};
\draw (3.3,2) node[right] {$x+y=1$};
\draw (8,3) node[right] {$y=\frac12$};
\draw (8,4) node[right] {$y=x-1$};
\draw (3,8) node[above] {$x=\frac12$};
\draw (5,8) node[above] {$y=x+1$};
\draw (4.4,3) node[above] {$(\frac12,\frac12)$};
\draw (7.6,3) node[below] {$(\frac32,\frac12)$};
\draw (2,2) node {$C_0$};
\fill [pattern=north west lines, pattern color=gray] (1,1) -- (1,3) -- (3,3) -- (3,1);
\draw[line width=2pt] (3,1) -- (3,3);
\draw[line width=2pt] (1,3) -- (3,3);
\shade[shading=ball,ball color=black] (3,3) circle (.09);
\shade[shading=ball,ball color=lightgray] (7,3) circle (.14);
\shade[shading=ball,ball color=lightgray] (3,7) circle (.14);
\end{tikzpicture}
\caption{Unitarizability for $\Pi_{(x,y)}$ (case $\alpha=\frac12$)} \label{fig: rk2alpha12}
\end{figure}

As before, the region $C_0$ is unitarizable, since $\Pi_{(0,0)}$ is unitarizable.

For non-unitarizability, we need to consider only two segments.
The first one is the vertical line from $(\frac12,\frac12)$ to $(\frac32,\frac12)$.
At the end of the two families corresponding to this segment are the representations $[\frac32]\rtimes \delta([\frac12];\sigma)$ and
$[\frac32]\rtimes L([\frac12];\sigma)$.
Since both of these representations contain non-unitarizable irreducible subquotient(s) by \S\ref{Steinberg-place-corank-two},
we conclude that both hermitian families are non-unitarizable.
Now consider the segment from $(\frac12,\frac12)$ to $(1,0)$, and from $(1,0)$ to $(\frac32,\frac12)$.
Since at $(1,0)$ the ends of the two families of representations are irreducible, these two families
extend to $(\frac32,\frac12)$ (as families of hermitian irreducible representations).
They end with $\delta([\frac12,\frac32])\rtimes\sigma$ and $L([\frac12],[\frac32])\rtimes\sigma$.
Since both representations contain non-unitarizable irreducible subquotients (again by \ref{Steinberg-place-corank-two}),
this implies the non-unitarizability of both families, and completes the proof of (\ref{PROP: RK2-item: alpha ne 1/2}).

\section{Proof of Proposition \texorpdfstring{\ref{PROP: RK2}}{} for \texorpdfstring{$\alpha=0$}{alpha0}}

Finally, we consider the case $\alpha=0$ using Figure \ref{fig: r2alpha0}.

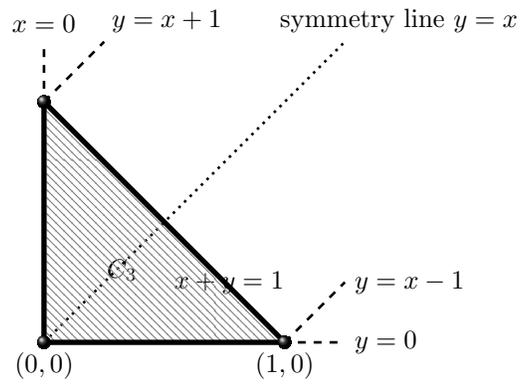
\begin{figure}
\begin{tikzpicture} [thick, scale=0.8]
\draw[style=dashed,line width=1pt] (1,1) -- (6,1);
\draw[style=dashed,line width=1pt] (1,1) -- (1,6);
\draw[style=dotted,line width=1pt] (1,1) -- (6,6);
\draw[style=dashed,line width=1pt] (1,5) -- (5,1);
\draw[style=dashed,line width=1pt] (1,5) -- (2,6);
\draw[style=dashed,line width=1pt] (5,1) -- (6,2);
\draw (6,1) node[right] {$y=0$};
\draw (1,6) node[above] {$x=0$};
\draw (6.9 ,6) node[above] {symmetry line $y=x$};
\draw (6,2) node[right] {$y=x-1$};
\draw (2.6,6) node[above] {$\ \ \ \ \ \ y=x+1$};
\draw (1,1) node[below] {$(0,0)$};
\draw (5,1) node[below] {$(1,0)$};
\draw (3,2) node[right] {$x+y=1$};
\draw (2.3,2.2) node {$C_3$};
\draw [pattern=north west lines, pattern color=gray] (1,1) -- (5,1) -- (1,5);
\draw[line width=2pt] (5,1) -- (1,5);
\draw[line width=2pt] (1,1) -- (1,5);
\draw[line width=2pt] (1,1) -- (5,1);
\shade[shading=ball,ball color=black] (5,1) circle (.11);
\shade[shading=ball,ball color=black] (1,1) circle (.11);
\shade[shading=ball,ball color=black] (1,5) circle (.11);
\end{tikzpicture}
\caption{Unitarizability for $\Pi_{(x,y)}$ (case $\alpha=0$)} \label{fig: r2alpha0}
\end{figure}

The region $C_3$
 is unitarizable, since $\Pi_{(x,x)}\cong ([x]\times[-x])\rtimes\s$ is unitarizable
for all $0\le x<\frac12$.

The non-unitarizability of the remaining part is obvious (all connected components are unbounded).

\chapter{Unitarizability in Corank 3} \label{corank-3}

\section{One-parameter complementary series}
\begin{proposition} \label{prop: unitlines}
In Table \ref{tab: redpnt} we list the one-parameter families $\Pi_x$, $x\in\R$ emanating from unitarizable irreducible representations
of a maximal Levi subgroup with critical parameters.
For each pair of a representation and its dual we record the irreducibility points for $x\ge0$.
When irreducible, $\Pi_x$, $x\ge0$ is unitarizable up to the first point of reducibility, and non-unitarizable otherwise.
When reducibility occurs at $0$, there is no complementary series.

\begin{table}
\[
\begin{array}{|@{}c@{}|l@{}|l@{}|l@{}|@{}c@{}|}
\hline
\text{\small{\rm N}}^{\circ} & \pi_x&\pi_x^t&\mathrm{reducibility\, points\,}&\mathrm{cases}
\\
\hline
1. & \nu^x\delta([-1,1])\rtimes\s & \nu^x L([-1],[0],[1])\rtimes\s &\abs{\alpha-1},\alpha,\alpha+1 & \mathrm{all}
\\
\hline
2. & \nu^x\delta([-\tfrac12,\tfrac12])\rtimes\delta([\alpha];\s)&
\nu^x L([-\tfrac12],[\tfrac12]) \rtimes L([\alpha];\s)& \abs{\alpha\pm\tfrac32}, \alpha+\tfrac12 & \alpha\ne0
\\
3. & \nu^x \delta([-\tfrac12,\tfrac12])\rtimes L([\alpha];\s)&\nu^x L([-\tfrac12],[\tfrac12])\rtimes\delta([\alpha];\s)&
\abs{\alpha\pm\tfrac32},\alpha-\tfrac12&\alpha\ne0
\\
4. & \nu^x\delta([-\tfrac12,\tfrac12])\rtimes\delta([0]_\pm;\s)&
\nu^x L([-\tfrac12],[\tfrac12])\rtimes\delta([0]_\mp;\s)
\,
&\tfrac12,\tfrac32&\alpha=0
\\
\hline
5. & {[x]}\rtimes \delta([\alpha,\alpha+1];\s) & [x]\rtimes L([\alpha],[\alpha+1];\s) & \abs{\alpha-1},\alpha+2 & \alpha\ne0
\\
6. & {[x]}\rtimes \delta([0,1]_\pm;\s) & [x]\rtimes L([1];\delta([0]_\mp;\s))&1,2&\alpha=0
\\
7. & {[x]}\rtimes L([0,1];\s) & [x]\rtimes L([0,1];\s) & 0,2&\alpha=0
\\
\hline
8. & {[x]}\rtimes\delta_{\spsi}([\alpha-1],[\alpha];\s)&[x]\rtimes L([\alpha-1,\alpha];\s)&
\abs{\alpha-2},\alpha+1&\alpha>1
\\
9. & {[x]}\rtimes L([\alpha-1];\delta([\alpha];\s))
\,
&
{[x]}\rtimes L([\alpha-1],[\alpha];\s)&\abs{\alpha-2},\alpha,\alpha+1&\alpha>1
\\
10. & {[x]}\rtimes\tau([0]_+;\delta([1];\s)) &[x]\rtimes L([1];[0]\rtimes\s)&1,2&\alpha=1
\\
11. & {[x]}\rtimes\tau([0]_-;\delta([1];\s)) &[x]\rtimes L([0,1];\s)&1,2&\alpha=1
\\
12. & {[x]}\rtimes\delta([-\tfrac12,\tfrac12]_+;\s) &
{[x]}\rtimes L([\tfrac12],[\tfrac12];\s)&\tfrac12,\tfrac32&\alpha=\tfrac12
\\
13. & {[x]}\rtimes\delta([-\tfrac12,\tfrac12]_-;\s)&
{[x]}\rtimes L([\tfrac12];\delta([\tfrac12];\s))&\tfrac32&\alpha=\tfrac12
\\
\hline
14. & {[x]}\rtimes [0]\rtimes\delta([0]_+;\sigma)&{[x]}\rtimes [0]\rtimes\delta([0]_-;\sigma)&1&\alpha=0
\\
\hline
\end{array}
\]
\caption{Irreducibility points for corank one complementary series} \label{tab: redpnt}
\end{table}
\end{proposition}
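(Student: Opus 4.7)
The plan is to treat the fourteen families uniformly by the following three-step scheme, specializing only when genuine casework is forced upon us. First, for each row of Table~\ref{tab: redpnt}, I would verify that the seed $\pi_0$ is an irreducible hermitian representation parabolically induced from a unitarizable representation of a Levi subgroup (it is either tempered, or of the form considered in chapters \ref{CC-corank12}--\ref{CC-0}, or a unitarily induced representation such as in case 14). Hence $\pi_0$ is unitarizable and $\pi_x^+\cong \pi_{-x}^{\check{\ }}\cong\pi_x$ for all real $x$ when $\rho\cong\check\rho$. Combined with the identification $\pi_x^t$ claimed in the table (which follows from the compatibility of the \ASS involution with parabolic induction together with the computations of $\pi^t$ in chapters \ref{CC-corank12}--\ref{CC-0}), this shows that the family and its dual family satisfy the same irreducibility and unitarizability properties.

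Second, the list of reducibility points in each row has to be read off from the structure of the cuspidal support. For rows 5--14, where the inducing data on the classical group side is square-integrable, this is a direct application of Proposition~\ref{Pr-red-si}: the Jordan blocks $\Jord_\rho$ of the relevant square-integrable representation were computed in chapters~\ref{CC-corank12} and \ref{sec: unit3} (e.g.\ \eqref{eq: Jord-a-sigma}, \eqref{eq: jord01}, \eqref{eq: jord-st-pos-a-1-a} and Proposition~\ref{prop: 001}(\ref{prop: 001-item: 01ds})), and each claimed reducibility point is exactly a half-integer of the form $(a\pm 1)/2$ with $a$ an element of the relevant Jordan set, while the absence of further points follows from (\ref{Pr-red-si-item: a-a+1-irr})--(\ref{Pr-red-si-item: a-a+1-epsilon}) of Proposition~\ref{Pr-red-si} together with Remark~\ref{rem-irr}. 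For rows 1--4, where the GL-factor is $\delta([-1,1])$ or $\delta([-\tfrac12,\tfrac12])$, I would invoke the criterion \eqref{eq: ladcrit} from \S\ref{GL-LT} (the ladder case) to reduce the reducibility of $\nu^x\delta(\D)\rtimes\tau'$ to the reducibility of $\nu^x\delta(\D)\times\nu^x\delta(\D)\check{\ }$ and the already-treated factors $\nu^{x\pm\frac12}\rho\rtimes\tau'$ and $\nu^{x\pm1}\rho\rtimes\tau'$. The half-integers where this reducibility occurs turn out to be exactly the ones listed.

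Third, the unitarizability statement on each segment between consecutive reducibility points is obtained by the standard deformation principle for complementary series recalled in \S\ref{ways of getting unitary}: the family $\{\pi_x\}$ is continuous and hermitian, $\pi_0$ is unitarizable, and on the open segment $\pi_x$ is irreducible, so all $\pi_x$ in that segment are unitarizable. Beyond the first reducibility point $r_1>0$, one argues that after crossing $r_1$ the representation $\pi_x$ becomes a Langlands quotient which, by parabolic reduction in the opposite direction, or by an unbounded family argument (see again \S\ref{ways of getting unitary}), cannot remain unitarizable; alternatively, the explicit composition series computations at $x=r_1$ (in chapters \ref{sec: unit3}--\ref{basic 1}, \ref{CC-0}) exhibit a non-unitarizable irreducible subquotient, and by Remark~\ref{non-uni-half} this propagates to a non-unitarizable continuous family on the neighboring open segment. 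The unitarizability of the irreducible subquotients at the endpoint $x=r_1$ itself (when claimed) is a consequence of the limit principle: they arise as limits of the unitarizable representations on the complementary series just inside the segment.

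The main obstacle is step two for rows 2--4, namely pinpointing the reducibility of $\nu^x\delta([-\tfrac12,\tfrac12])\rtimes\tau'$ when $\tau'$ is $\delta([\alpha];\s)$, $L([\alpha];\s)$ or $\delta([0]_\pm;\s)$: the criterion \eqref{eq: ladcrit} does identify the candidate points from the Jordan blocks, but one still has to rule out spurious reducibility at other half-integers. The cleanest way I see is to observe that $\nu^x\delta([-\tfrac12,\tfrac12])\rtimes\tau'$ is a subquotient of $\nu^{x-\frac12}\rho\times\nu^{x+\frac12}\rho\rtimes\tau'$, so its reducibility is forced by the reducibilities of the two factors $\nu^{x\pm\frac12}\rho\rtimes\tau'$ together with the explicit corank-2 analysis carried out in \S\ref{red2-rank2} and in chapters \ref{CC-corank12}--\ref{CC-0}. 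Once these candidate half-integers are shown to be the only reducibility points, the rest of the proposition follows from the deformation argument and the endpoint analysis already performed.
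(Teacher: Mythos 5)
Your overall frame (seed unitarizable, complementary series up to the first reducibility point, reducibility points read off from the earlier chapters) matches the paper, but the mechanism you propose for non-unitarizability beyond the first reducibility point has a genuine gap on the \emph{bounded} middle segments. Your ``alternative'' of exhibiting a non-unitarizable irreducible subquotient at $x=r_1$ cannot work: at the first reducibility point all irreducible subquotients are unitarizable, precisely because they are limits of the complementary series on $[0,r_1)$ (this is part of what the proposition itself asserts; e.g.\ in row 5 with $\alpha>1$, $\Pi_{\alpha-1}=[\alpha-1]\rtimes\delta([\alpha,\alpha+1];\s)$ has only unitarizable constituents by Proposition \ref{a-1,a,a+1}). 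Your other two suggestions also fail there: the unbounded-family argument only disposes of the last, unbounded segment, not of a segment such as $(r_1,r_2)$; and the ``Langlands quotient / parabolic reduction in the opposite direction'' argument is not available, since the inducing datum $\nu^x\rho\otimes\tau$ is not hermitian for $x\neq0$ (its hermitian contragredient is $\nu^{-x}\rho\otimes\tau$), so Lemma-type unitary parabolic reduction gives nothing.

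The paper closes exactly this gap differently: since each row has at most three reducibility points for $x\ge0$, it suffices to exhibit an irreducible \emph{non-unitarizable} subquotient of $\Pi_{x_0}$ at the \emph{second} reducibility point $x_0$ (this is the content of Table \ref{tab: 2nd}, whose entries come from the critical-case chapters, e.g.\ $L([\alpha+2],\delta([\alpha,\alpha+1];\s))$ for row 5 or $L([\alpha-1,\alpha];\delta([\alpha];\s))$ for row 9). By the limit principle of \S\ref{ways of getting unitary} this kills both open segments adjacent to $x_0$, and the final segment is non-unitarizable by unboundedness; when there is a single reducibility point nothing beyond unboundedness is needed. You should also be more careful in your step two: Proposition \ref{Pr-red-si} only applies when the inducing classical-group factor is square-integrable, so rows 7, 9, 10, 11, 14 (and the tempered non-square-integrable cases) must be settled by the explicit composition-series results of the earlier chapters rather than by Jordan-block criteria, which is in fact all the paper invokes for the reducibility column.
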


\begin{proof}
The reducibility part follows from the previous chapters.
The unitarity statement is clear if there is a unique reducibility point for $x\ge0$.
Otherwise, since there are at most 3 of them, it is enough to exhibit an irreducible non-unitarizable subquotient
at the second reducibility point (from $0$ upwards).
In Table \ref{tab: 2nd} we list for all the families where there is more than one reducibility point for $x\ge0$,
the representation $\Pi_{x_0}$ at the second reducibility point $x_0>0$ as well as
an irreducible non-unitarizable subquotient $\pi$. (By duality, it is enough to consider the second column in Table \ref{tab: redpnt}.)
\end{proof}

\begin{table}
\[
\begin{array}{|l|l|l|l|}
\hline
\Pi_{x_0}&\pi&x_0&\mathrm{cases}\\
\hline
\delta([\alpha-1,\alpha+1])\rtimes\s & L([\alpha-1,\alpha+1];\s) & \alpha & \alpha\ge1\\
\delta([\tfrac12,\tfrac52])\rtimes\s & L([\tfrac12,\tfrac52];\s) & \tfrac32 & \alpha=\tfrac12\\
\delta([0,2])\rtimes\s & L([0,2];\s) & 1 & \alpha=0\\
\hline
\delta([\alpha,\alpha+1])\rtimes\delta([\alpha];\s)&
L([\alpha,\alpha+1];\delta([\alpha];\s))& \alpha+\tfrac12 & \alpha\ge1\\
\delta([\tfrac32,\tfrac52])\rtimes\delta([\tfrac12];\s)&
L([\tfrac32,\tfrac52];\delta([\tfrac12];\s))& 2 & \alpha=\tfrac12\\
\delta([\alpha-1,\alpha])\rtimes L([\alpha];\s)&
L([\alpha-1,\alpha],[\alpha];\s)&\alpha-\tfrac12 &\alpha>1
\\\delta([2,3])\rtimes L([1];\s)&L([2,3],[1];\s)&\tfrac52 &\alpha=1
\\\delta([\tfrac12,\tfrac32])\rtimes L([\tfrac12];\s)&
L([\tfrac12,\tfrac32],[\tfrac12];\s)&1 &\alpha=\tfrac12\\
\delta([1,2])\rtimes\delta([0]_\pm;\s)&
L([1,2];\delta([0]_\pm;\s))&\tfrac32&\alpha=0\\
\hline
{[\alpha+2]}\rtimes \delta([\alpha,\alpha+1];\s) & L([\alpha+2],\delta([\alpha,\alpha+1];\s)) & \alpha+2 & \alpha\ne0\\
{[2]}\rtimes \delta([0,1]_\pm;\s) & L([2];\delta([0,1]_\pm;\s) &2&\alpha=0\\
{[2]}\rtimes L([0,1];\s) & L([2],[0,1];\s) & 2&\alpha=0\\
\hline
{[\alpha+1]}\rtimes\delta_{\spsi}([\alpha-1],[\alpha];\s)&
L([\alpha+1];\delta_{\spsi}([\alpha-1],[\alpha];\s))&\alpha+1&\alpha>1\\
{[\alpha]}\rtimes L([\alpha-1];\delta([\alpha];\s))&
L([\alpha-1,\alpha];\delta([\alpha];\s))&\alpha &\alpha>1\\
{[2]}\rtimes\tau([0]_\pm;\delta([1];\s)) & L([2];\tau([0]_\pm;\delta([1];\s))) &2&\alpha=1\\
{[\tfrac32]}\rtimes\delta([-\tfrac12,\tfrac12]_+;\s) &
L([\tfrac32];\delta([-\tfrac12,\tfrac12]_+;\s)) & \tfrac32&\alpha=\tfrac12\\
\hline
\end{array}
\]
\caption{Non-unitarizable irreducible subquotients at the second reducibility point $x_0$} \label{tab: 2nd}
\end{table}

\smallskip

Next, we consider unitarizability of families of hermitian irreducible representations induced from irreducible,
non-unitarizable representations of maximal Levi subgroups.

\begin{lemma} ~ \label{lemma-nu-m}
\begin{enumerate}
\item
 \label{lemma-nu-m-item: siegel}
 Let
$\pi_x=L([x-1,x],[x+1])\rtimes\s$ (resp. $\pi_x=L([x-1],[x,x+1])\rtimes\s$), $x\geq0$.
Then, $\pi_x$ is reducible if and only if $x\in\{|\alpha-1|,\alpha,\alpha+1\}$. If $\pi_x$ is irreducible, then it is unitarizable if and only if $0\leq x<\alpha -1$.
\item 
\label{lemma-nu-m-item: non-siegel}
For $\alpha>0$ denote
$\pi_x=[x]\rtimes L([\alpha,\alpha+1];\s)$ (resp. $\pi_x=[x]\rtimes L([\alpha+1];\delta([\alpha];\s)),$ $x\geq0.$
Then, $\pi_x$ is reducible if and only if $x\in\{|\alpha-1|,\alpha,\alpha+2\}$. If $\pi_x$ is irreducible, then it is always non-unitarizable.
\end{enumerate}
\end{lemma}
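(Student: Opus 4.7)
The plan is to handle both parts in parallel: first identify the reducibility points using the ladder machinery of \S\ref{GL-LT}, then determine unitarizability on each open interval of irreducibility by combining complementary-series arguments with the critical-case information from chapters \ref{CC-corank12}--\ref{basic 0}. In each part the two representations listed are $\ASS$ duals of each other (the Zelevinsky involution of $L([x-1,x],[x+1])$ is $L([x-1],[x,x+1])$, and analogously for part (2)), and since the $\ASS$ involution preserves both reducibility and unitarizability, it suffices to treat the first representation in each part.

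For part (1), the cuspidal support of $\pi_x=L([x-1,x],[x+1])\rtimes\s$ is $\{\nu^{x-1}\rho,\nu^x\rho,\nu^{x+1}\rho\}$. By the first fact recalled in \S\ref{GL-LT}, $\pi_x$ is reducible whenever $\nu^{\pm\alpha}\rho$ lies in this support, which for $x\ge0$ gives exactly $x\in\{|\alpha-1|,\alpha,\alpha+1\}$; for all other $x\ge0$, irreducibility follows from the ladder criterion \eqref{eq: ladcrit} (which reduces to checking that certain pairs of $\GL$-segments are not linked, a direct verification) together with Remark \ref{rem-irr}(1) when the cuspidal support is not aligned with $\{\nu^{k+\alpha}\rho\}$. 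The unitarizability claim is nontrivial only for $\alpha>1$; for $\alpha\le1$ every irreducibility interval is bordered by a reducibility point whose composition series (analysed in chapters \ref{CC-corank12}--\ref{basic 0}) contains non-unitarizable subquotients, and the limit-of-characters argument below then rules out unitarizability throughout. Assume now $\alpha>1$. The unbounded interval $x>\alpha+1$ yields non-unitarizable representations by the unbounded-family principle of \S\ref{ways of getting unitary}. On the intermediate intervals $(\alpha-1,\alpha)$ and $(\alpha,\alpha+1)$ I would invoke the contrapositive of the limit-of-characters principle: the composition series of $\pi_\alpha$ and $\pi_{\alpha+1}$ contain non-unitarizable subquotients by Propositions \ref{a-1,a,a-a} and \ref{a-1,a,a+1} respectively, so $\pi_x$ is non-unitarizable on both adjacent open intervals.

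The delicate step is unitarizability on $[0,\alpha-1)$, because the limit-of-characters principle only propagates unitarizability forward from an already-known unitarizable point. I would initialise via a two-parameter deformation: the representation $[0]\rtimes(\delta([-1,0])\rtimes\s)$ is irreducible and unitarizable, because $\delta([-1,0])\rtimes\s$ is an end of the corank-two complementary series from Proposition \ref{PROP: RK2} (case $\alpha>1$) and $[0]\rtimes(-)$ is unitary parabolic induction. One then connects this representation to $\pi_0=L([-1,0],[1])\rtimes\s$ along the hermitian irreducible family $z\mapsto L([-1,0],[z])\rtimes\s$, $z\in[0,1]$, with irreducibility along the path verified by \eqref{eq: ladcrit} and hermiticity from \eqref{check}. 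This shows $\pi_0$ is unitarizable, and the one-parameter family $\pi_x$ on $[0,\alpha-1)$ is then a complementary series of irreducible hermitian representations through the unitarizable point $\pi_0$.

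For part (2), the inducing representation $L([\alpha,\alpha+1];\s)$ is non-unitarizable by Proposition \ref{prop-2-a+}. Reducibility of $\pi_x$ at $x=\alpha$ comes from $\nu^\alpha\rho$ lying in the cuspidal support, while reducibility at $x\in\{|\alpha-1|,\alpha+2\}$ comes from the $\GL$-linking of $[x]$ with the segment $[\alpha,\alpha+1]$ together with a short Jacquet-module computation analogous to those in chapter \ref{sec: unit3}; the absence of reducibility at $x=\alpha+1$ reflects the fact that $\{\alpha+1\}\subset[\alpha,\alpha+1]$, so the two segments are not linked. Irreducibility at other $x\ge0$ uses \eqref{eq: ladcrit} and Remark \ref{rem-irr}(1). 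For non-unitarizability, at each reducibility point $x_0\in\{|\alpha-1|,\alpha,\alpha+2\}$ the critical-case analyses of chapters \ref{CC-corank12}--\ref{basic 0} exhibit at least one non-unitarizable irreducible subquotient of $\pi_{x_0}$: for instance $L([x_0],[\alpha,\alpha+1];\s)$ at $x_0=\alpha+2$ is one of the non-unitarizable subquotients of $\Pi_{(\alpha,\alpha+1,\alpha+2)}$ identified in \S\ref{sec: 012}, with analogous Langlands quotients at the other two points. By the contrapositive of the limit-of-characters principle $\pi_x$ is then non-unitarizable on every irreducibility interval adjacent to such a point, and the remaining unbounded interval $(\alpha+2,\infty)$ is handled by the unbounded-family principle.

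\textbf{Main obstacle.} The hardest step is the unitarizability claim on $[0,\alpha-1)$ in part (1). The limit-of-characters principle only runs in the direction ``unitarizable $\Rightarrow$ unitarizable'', so an explicit unitarizable representative must be exhibited. The two-parameter initialisation through $[0]\rtimes(\delta([-1,0])\rtimes\s)$ circumvents this, at the cost of a routine but careful verification of irreducibility and hermiticity along the connecting one-parameter path.
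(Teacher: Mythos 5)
Your overall strategy coincides with the paper's own (very terse) proof: determine the reducibility points from the irreducibility machinery of the earlier chapters, kill each interval of irreducibility by exhibiting a non-unitarizable irreducible constituent of $\pi_{x_0}$ at an adjacent reducibility point $x_0$ (drawing on the critical-case chapters) together with the limit/deformation principles of \S\ref{ways of getting unitary} and the unbounded-family principle, and obtain unitarizability on $[0,\alpha-1)$ in part (1) by a complementary-series argument (the paper only says this ``follows in a simple way''; your explicit initialisation at the unitarily induced $[0]\times\delta([-1,0])\rtimes\s$ and deformation to $\pi_0$ is a legitimate way to fill that in, provided you phrase the passage to $z=1$ as a limit of the induced family $\delta([-1,0])\times[z]\rtimes\s$ rather than as a literal continuous family of irreducibles, and note that irreducibility at $z=0$ is neither needed nor justified by the tools you cite).

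Two justifications should be corrected, though neither undermines the argument. First, the critical-case inputs are mis-cited: $\pi_\alpha=L([\alpha-1,\alpha],[\alpha+1])\rtimes\s$ has support $\{\alpha-1,\alpha,\alpha+1\}$, so its non-unitarizable constituents come from Proposition \ref{a-1,a,a+1} (e.g. $\pi_7=L([\alpha+1],[\alpha-1];\delta([\alpha];\s))$, which lies in $\pi_\alpha$ by Proposition \ref{prop: addparms}), while $\pi_{\alpha+1}$ has support $\{\alpha,\alpha+1,\alpha+2\}$ and is handled by \S\ref{sec: 012}, not by Propositions \ref{a-1,a,a-a} and \ref{a-1,a,a+1} as you wrote; also note that one must check the non-unitarizable representation occurs in the specific induced representation $\pi_{x_0}$, not merely in $\Pi_{\mathbf{x}}$ (Proposition \ref{prop: addparms} does this, as the paper implicitly relies on). Second, in part (2) the criteria you invoke from \S\ref{GL-LT} (reducibility when the cuspidal support contains $\nu^{\pm\alpha}\rho$, and the ladder criterion \eqref{eq: ladcrit}) are statements about $\pi\rtimes\s$ with $\s$ cuspidal; they do not apply to $[x]\rtimes L([\alpha,\alpha+1];\s)$ or $[x]\rtimes L([\alpha+1];\delta([\alpha];\s))$. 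The reducibility at $x=\alpha$ follows instead from Proposition \ref{a,a,a+1-a} (and its $\alpha=\tfrac12$ analogue), irreducibility at $x=\alpha+1$ from Proposition \ref{prop: 011}, reducibility at $x=|\alpha-1|$ and $x=\alpha+2$ from Proposition \ref{a-1,a,a+1}, Proposition \ref{0,1,2-1} and \S\ref{sec: 012}, and irreducibility at generic $x$ from Corollary \ref{irr-simple} together with Jantzen-decomposition arguments. With these citations repaired, your proof is correct and matches the paper's route.
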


\begin{proof}
In (\ref{lemma-nu-m-item: siegel}), the reducibility of $\pi_x$ is determined by criterion \eqref{eq: ladcrit}.
The non-unitarizabi\-lity of the representations $L([0,1],[1];\sigma)$ for $\alpha=0$, $L([\tfrac12,\tfrac32],[\tfrac12];\s)$
(resp. $L([\tfrac32];\delta([-\tfrac12,\tfrac12]_+;\s))$) for $\alpha=\frac12$, $L([2],\tau([0]_-;\delta([1];\sigma)))$
(resp. $L([1,2];[0]\rtimes \sigma)$) for $\alpha=1$ and $L([\alpha-1],[\alpha,\alpha+1];\s)$
(resp. $L([\alpha-1],[\alpha+1];\delta([\alpha];\s))$) for $\alpha>1$, imply the non-unitarizability claimed in (\ref{lemma-nu-m-item: siegel}).
Unitarizability in (\ref{lemma-nu-m-item: siegel}) follows in a simple way.

The non-unitarizability of the representations $L([|\alpha-1|],[\alpha,\alpha+1];\sigma)$ and its $\ASS$ dual for $\alpha\ne1$,
$L([1,2];[0]\rtimes \sigma)$ and its $\ASS$ dual for $\alpha=0$, and any irreducible subquotient of $\Pi_{(\alpha,\alpha,\alpha+1)}$ for $\alpha\geq1$,
implies the non-unitarizability claimed in (\ref{lemma-nu-m-item: non-siegel}).
\end{proof}

\section{Regular components, unitarizability}
For convenience, we say that a point $\mathbf{x}\in\R^3$ is \emph{strongly unitary} (resp., \emph{strongly non-unitary})
if all irreducible subquotients of $\Pi_{\mathbf{x}}$ are unitarizable (resp., non-unitarizable).
This property depends only on the $W$-orbit of $\mathbf{x}$ where $W$ is the group of signed permutations.
The set of strongly unitary points is closed in $\R^3$.

Consider the singular affine hyperplanes
\[
x_i=\pm\alpha,\ \ i=1,2,3\ \ \ \ x_i\pm x_j=\e,\ \ 1\le i<j\le 3,\ \e=\pm1.
\]
We say that $\mathbf{x}\in\R^3$ is regular if it is not on any one of the singular affine hyperplanes.
We denote by $\R^3_{\reg}$ the set of regular points.
Thus $\mathbf{x}\in\R^3_{\reg}$ if and only if $\Pi_{\mathbf{x}}$ is irreducible, in which case
$\mathbf{x}$ is either strongly unitary or strongly non-unitary.
The set of strongly unitary points in $\R^3_{\reg}$ is a (possibly empty) union of connected components of $\R^3_{\reg}$
which will be called unitary.
Clearly, the set of unitary connected components is $W$-invariant.
Recall that we defined
$
\R^3_{++}=\{(x_1,x_2,x_3):0\le x_1\le x_2\le x_3\}.
$
Denote
$$
\R^3_{\reg,++}=\R^3_{\reg}\cap\R^3_{++}.
$$
The (unitary) connected components of $\R^3_{\reg,++}$ are in one-to-one correspondence with the $W$-orbits
of (unitary) connected components of $\R^3_{\reg}$.
Therefore, it is enough to consider connected components of $\R^3_{\reg,++}$.

\begin{proposition} \label{prop: compserex}
The following connected components of $\R^3_{\reg,++}$ are unitary.

For $\alpha\ge1:$
\begin{subequations}
\begin{gather}
\label{eq: xx2x31} x_2+x_3<1,\\
\label{eq: xx1x212} x_1+x_2<1,\, x_3-x_2>1,\, x_3<\alpha,\ \ \ (\alpha>1)\\
\label{eq: xx1x213} x_1+x_2<1,\, x_1+x_3>1,\, x_3-x_1<1,\, x_3<\alpha\\
\label{eq: xx1x2x14} x_2-x_1>1,\, x_3-x_2>1,\, x_3<\alpha,\ \ \ (\alpha>2).
\end{gather}
\end{subequations}
(The constraint $x_3<\alpha$ in \eqref{eq: xx1x213} is redundant unless $\alpha=1$.)

For $\alpha=\tfrac12:$
\begin{equation} \label{eq: x312}
x_3<\tfrac12.
\end{equation}

Consequently, any $\mathbf{x}\in\R^3_{++}$ in the closure of the above regions (i.e., changing strict inequalities
to non-strict ones) is strongly unitary.
\end{proposition}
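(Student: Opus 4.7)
The plan is to establish, for each of the five listed regions, strong unitarity at a suitable boundary point and then propagate it throughout the interior via continuous complementary-series deformation. I will rely on two principles from \S\ref{ways of getting unitary}: (P1) a connected continuous family of irreducible hermitian representations is unitarizable throughout as soon as one member is unitarizable, and (P2) unitarity passes to limits of distribution characters. Combining (P1) and (P2), it suffices, for each region $R$, to exhibit a continuous path $\gamma\colon[0,1]\to\overline{R}\cap\R^3_{++}$ with $\gamma((0,1])\subset R$ and $\Pi_{\gamma(0)}$ a direct sum of irreducible unitarizable representations; the two principles then yield strong unitarity of $\Pi_{\mathbf{x}}$ for every $\mathbf{x}\in\overline{R}\cap\R^3_{++}$, since the interior of $R$ is connected and regular.

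For regions \eqref{eq: xx2x31} and \eqref{eq: x312} the natural seed is the origin. At $\mathbf{x}_0=(0,0,0)$ the representation $\Pi_{\mathbf{x}_0}$ is tempered and unitarizable by the corank-$\le 2$ analysis, and the linear path $\gamma(t)=t\mathbf{x}$ from the origin to any interior point $\mathbf{x}\in R$ rescales the defining strict inequalities, so $\gamma(t)\in R$ for $t\in(0,1]$.

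For the remaining three regions I will seed on a codimension-one face along which $\Pi$ factorizes as a unitary induction from an irreducible unitarizable representation supplied by the corank-$\le 2$ analysis (Proposition \ref{PROP: RK2}) or by the one-parameter families of Proposition \ref{prop: unitlines}. Concretely: for \eqref{eq: xx1x212}, seed on the face $x_1=0$, where $\Pi_{(0,x_2,x_3)}\cong[0]\rtimes([x_2]\times[x_3]\rtimes\sigma)$; the inner representation is irreducible and unitarizable by Proposition \ref{PROP: RK2}(1) (the condition $x_2+1\le x_3<\alpha$ holds), and the outer induction by $[0]$ preserves unitarity and, since $\alpha\ge 1$, introduces no new reducibility at the seed. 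For \eqref{eq: xx1x213}, seed on the face $x_3=\alpha$: in the Grothendieck group
$$
\Pi_{(x_1,x_2,\alpha)}=[x_1]\times[x_2]\rtimes\delta([\alpha];\sigma)+[x_1]\times[x_2]\rtimes L([\alpha];\sigma),
$$
and each summand is unitarizable by the corank-$2$ analysis combined with entries 5 and 9 of Proposition \ref{prop: unitlines} (viewed as complementary series in one of the $x_i$'s). For \eqref{eq: xx1x2x14}, seed on the edge $x_2=\alpha-1,\ x_3=\alpha$, where $\Pi_{(x_1,\alpha-1,\alpha)}$ contains $[x_1]\rtimes\delta_{\spsi}([\alpha-1],[\alpha];\sigma)$ as a summand; entry 8 of Proposition \ref{prop: unitlines} makes this a complementary series in $x_1$ for $0\le x_1<\alpha-2$. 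A piecewise-linear path from each seed to an arbitrary interior point completes the construction.

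The main obstacle is the bookkeeping: verifying, for each region and each chosen path, that every singular hyperplane---namely $x_i=\pm\alpha$, $x_i\pm x_j=\pm 1$, and $x_i=0$ when the extra reducibility at $\alpha=0$ is relevant---is strictly avoided along $t\in(0,1]$. This is routine but must be done case by case; the strict inequalities defining each region, together with the coordinatewise monotonicity of each path, essentially dictate the answer, and no genuinely new reducibility intervenes between the seed and the interior point. Strong unitarizability of the closure $\overline{R}\cap\R^3_{++}$ then follows from principle (P2), applied to sequences of interior points converging to any given boundary point.
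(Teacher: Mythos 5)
Your treatment of \eqref{eq: xx2x31}, \eqref{eq: x312} and \eqref{eq: xx1x212} is essentially correct and coincides with the paper's argument: in each of these cases your ``seed'' is in fact a \emph{regular} point of the region itself (the origin, resp.\ a point with $x_1=0$), where $\Pi_{\mathbf{x}}$ is irreducible and unitarizable because it is parabolically induced from a unitarizable representation, and strong unitarity then spreads over the whole connected component of $\R^3_{\reg,++}$ (your path plus (P1) is just this connectedness argument). The closure statement via limits of characters is also as in the paper.

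The gap is in \eqref{eq: xx1x213} and \eqref{eq: xx1x2x14}, where your seeds lie on singular hyperplanes, and the proposed propagation inward is not justified by the two principles you invoke. The limit principle (P2) transfers unitarity \emph{from} a family of unitarizable representations \emph{to} the constituents of its limit, i.e.\ from the interior toward the boundary, never the other way; and (P1) requires the continuous family of irreducible hermitian representations to already contain a unitarizable member, which at $t>0$ is exactly what has to be proven, while the point $t=0$ is not a member of that family. To start a deformation one needs the seed to be genuinely unitary (in practice, unitarily induced), and your seeds are not: $\Pi_{(x_1,x_2,\alpha)}$ and $\Pi_{(x_1,\alpha-1,\alpha)}$ are induced from non-unitary twisted cuspidal data, are in general not semisimple (e.g.\ $[\alpha]\rtimes\s$ is an indecomposable length-two representation), and the asserted unitarity of their Grothendieck-group constituents is not established by the entries of Table \ref{tab: redpnt} you cite (entries 5 and 9 do not concern $[x_1]\times[x_2]\rtimes\delta([\alpha];\s)$). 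Moreover, for \eqref{eq: xx1x213} the face $x_3=\alpha$ does not even meet the closure of the region once $\alpha>\tfrac32$ (there $x_1<\tfrac12$ forces $x_3<\tfrac32\le\alpha$), and no point of that region has $x_1=0$, so no seed of the ``unitary induction by $[0]$'' type exists; the missing idea is the paper's: use $\Pi_{(x_1,x_1,x_3)}\cong\Pi_{(x_1,-x_1,x_3)}$ and induce the GL complementary series $[x_1]\times[-x_1]$, $0\le x_1<\tfrac12$, against the corank-one complementary series $[x_3]\rtimes\s$, which produces an irreducible unitarizable $\Pi_{\mathbf{x}}$ at an interior regular point of \eqref{eq: xx1x213}. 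Similarly, for \eqref{eq: xx1x2x14} the correct construction is the iterated one-parameter complementary series $[x_2]\rtimes([x_3]\rtimes\s)$ for $0\le x_2<x_3-1$ and then $[x_1]\rtimes\bigl([x_2]\rtimes([x_3]\rtimes\s)\bigr)$ for $0\le x_1<x_2-1$, each step starting at a unitarily induced point $x_i=0$, rather than an appeal to the subquotient $[x_1]\rtimes\delta_{\spsi}([\alpha-1],[\alpha];\s)$ of the reducible representation at the singular edge $(x_2,x_3)=(\alpha-1,\alpha)$.
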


\begin{proof}
First note that for $\alpha\ge1$, the non-empty regions among \eqref{eq: xx2x31}--\eqref{eq: xx1x2x14}
are distinct connected components of $\R^3_{\reg,++}$.

Suppose that $\alpha\ne0$. Then, $\Pi_{\mathbf{0}}$ is unitarizable, and thus, so is its connected component in
$\R^3_{\reg,++}$ which is given by \eqref{eq: xx2x31} if $\alpha\ge1$ and by \eqref{eq: x312} if $\alpha=\tfrac12$.

Assume now that $\alpha\ge1$.

We start with the complementary series $\pi_{x_3}=[x_3]\rtimes\s$ which is irreducible and unitarizable for $0\le x_3<\alpha$.
\begin{enumerate}
\item Consider the complementary series $[x_1]\times [-x_1]$, $0\le x_1<\tfrac12$ for the general linear group.
By parabolic induction we get a unitarizable representation $[x_1]\times [-x_1]\rtimes\pi_{x_3}$,
Taking $\tfrac12<x_3<\min(\tfrac32,\alpha)$ and $\abs{1-x_3}<x_1<\tfrac12$ we get an irreducible unitarizable
representation $\Pi_{(x_1,-x_1,x_3)}\cong\Pi_{(x_1,x_1,x_3)}$. The connected component of
\[
\{(x_1,x_1,x_3):\tfrac12<x_3<\min(\tfrac32,\alpha),\ \abs{1-x_3}<x_1<\tfrac12\}
\]
in $\R^3_{\reg,+}$ is \eqref{eq: xx1x213}.

\item Assume $\alpha>1$ and fix $1<x_3<\alpha$. Clearly $(0,0,x_3)\in\R^3_{\reg,++}$ is (strongly) unitary.
The region \eqref{eq: xx1x212} is the connected component of $(0,0,x_3)$.

\item Assume that $\alpha>2$.
Fixing $2<x_3<\alpha$ we construct a complementary series $[x_2]\rtimes\pi_{x_3}$ for $0\le x_2<x_3-1$.
Fixing $1<x_2<x_3-1$ we then construct the complementary series
$[x_1]\times [x_2]\rtimes\pi_{x_3}=\Pi_{(x_1,x_2,x_3)}$, $0\le x_1<x_2-1$.
Thus, the region \eqref{eq: xx1x2x14} is unitary.
\end{enumerate}
\end{proof}

Eventually, we show that the converse to Proposition \ref{prop: compserex} holds.

We first consider the singular affine hyperplanes.

\section{Two-parameter complementary series -- slanted hyperplanes case}
We start with the hyperplane
\[
H_{\shrt}=\{\mathbf{x}\in\R^3:x_2-x_1=1\}.
\]
All hyperplanes in the $W$-orbit of $H_{\shrt}$ will be called slanted hyperplanes.
We parameterize $H_{\shrt}$ by the coordinates $x=x_1+\frac12$, $y=x_3$.
Thus, let $\iota:\R^2\rightarrow H_{\shrt}$ be the affine isomorphism
\[
\iota(x,y)=(x-\tfrac12,x+\tfrac12,y).
\]
For any $(x,y)\in\R^2$, we decompose $\Pi_{\iota(x,y)}$ in the Grothendieck group as $\Phi_{(x,y)}^++\Phi_{(x,y)}^-$ where
\[
\Phi_{(x,y)}^+=\nu^x\delta([-\tfrac12,\tfrac12])\times [y]\rtimes\s,\ \
\Phi_{(x,y)}^-=\nu^x L([-\tfrac12],[\tfrac12])\times [y]\rtimes\s.
\]
Let $H_{\shrt}^{\circ\circ}$ be the complement in $H_{\shrt}$ of all singular affine hyperplanes other than $x_2-x_1=\pm1$,
in other words, the image under $\iota$ of the complement $\R^2_{\vregshrt}$ of the lines
\begin{gather*}
y+x=\pm\tfrac32,\ \ x+y=\pm\tfrac12,\ \ x=\pm\tfrac12,\ \ y-x=\pm\tfrac32,\ y-x=\pm\tfrac12,\\
\ y=\pm\alpha,\ \ x=\pm(\alpha-\tfrac12),\ \ x=\pm(\alpha+\tfrac12).
\end{gather*}
More importantly, let $H_{\shrt}^\circ\supset H_{\shrt}^{\circ\circ}$ be the complement in $H_{\shrt}$ of the affine hyperplanes
\[
x_3+x_2=\pm1,\ \ x_1-x_3=\pm1,\ \ x_3=\pm\alpha,\ \ x_2=\pm\alpha,\ \ x_1=\pm\alpha.
\]
Thus, $H_{\shrt}^\circ$ is the image under $\iota$ of the complement $\R^2_{\regshrt}$ of the lines
\[
y+x=\pm\tfrac32,\ \ y-x=\pm\tfrac32,\ \ y=\pm\alpha,\ \ x=\pm(\alpha-\tfrac12),\ \ x=\pm(\alpha+\tfrac12).
\]
The representations $\Phi_{(x,y)}^\pm$ are irreducible precisely when $(x,y)\in\R^2_{\regshrt}$.
Thus, for $(x,y)\in\R^2_{\regshrt}$, $\iota(x,y)$ is strongly unitary (resp., strongly non-unitary)
if and only if both $\Phi_{(x,y)}^\pm$ are unitarizable (resp., non-unitarizable).

Denote
$$
W_{\shrt}=\{w\in W:w(H_{\shrt})=H_{\shrt} \}.
$$
Then, $W_{\shrt}\cong \{\pm1\}\times\{\pm1\}$, and it is generated by the following two transformations
$$
(x_1,x_2,x_3)\mapsto (-x_2,-x_1,x_3) \qquad (x_1,x_2,x_3) \mapsto (x_1,x_2,-x_3).
$$
It acts on $H_{\shrt}\cong \R^2$ (preserving $\R^2_{\regshrt}$ and $\R^2_{\vregshrt}$) by
$(x,y)\mapsto (\e_1 x,\e_2 y)$, $\e_1,\e_2=\pm1$.

For $(x,y)\in\R^2_{\regshrt}$,
the representations $\Phi_{x,y}^\pm$ depend only on the $W_{\shrt}$-orbit of $(x,y)$.
We denote
$\R^2_+=\{(x,y)\in\R^2:x,y\ge0\}$. Set $\R^2_{\regshrt,+}=\R^2_+\cap\R^2_{\regshrt}$.

Provisionally, we say that a point $(x,y)\in\R^2_{\regshrt}$ is unitary$^+$ (resp., unitary$^-$) if $\Phi_{(x,y)}^+$
(resp., $\Phi_{(x,y)}^-$) is unitarizable. (Eventually, these two notions are equivalent.)
We also say that $(x,y)\in\R^2_{\regshrt}$ is unitary$^\pm$ if it is both unitary$^+$ and unitary$^-$,
i.e., if $\iota(x,y)$ is strongly unitary.
These properties depend only on the $W_{\shrt}$-orbit and the connected component of $(x,y)$ in $\R^2_{\regshrt}$.
We say that a connected component of $\R^2_{\regshrt}$ is unitary$^+$ (resp., unitary$^-$, unitary$^\pm$) if
the same is true for any (or each) point in it.
As before it is enough to consider the connected components of $\R^2_{\regshrt,+}$.

\label{sec: Cabcd}

Analogously, one defines unitary$^\pm$ components of $R^2_{\vregshrt}$.
For simplicity, we denote in the rest of this chapter the components \eqref{eq: xx2x31}--\eqref{eq: xx1x2x14}
of Proposition \ref{prop: compserex} by $C_a$, $C_b$, $C_c$ and $C_d$ respectively. We start the study of unitary$^\pm$ components of $R^2_{\vregshrt}$ with the following technical

\begin{lemma}(See Figure \ref{fig: s322}) \label{lem: 2dimrreg}
For $\alpha\ge1$ the following connected components of $\R^2_{\vregshrt,+}$ are unitary$^\pm$.
\begin{gather*} \label{eq: dim2a} \tag{$C_1'$}
x+y<\tfrac12,\\
\label{eq: dim2b} \tag{$C_2'$}
x+y<\tfrac32,\ x-y>\tfrac12,\ x<\alpha-\tfrac12,\ \ \ (\alpha>1)\\
\label{eq: dim2c} \tag{$C_3'$}
\tfrac12<x+y<\tfrac32,\
{y-x}<\tfrac12,\ x<\tfrac12,\\
\label{eq: dim2d} \tag{$C_4'$}
\tfrac12<x+y<\tfrac32,\ x-y<\tfrac12,\ \tfrac12<x<\alpha-\tfrac12,\ \ \ (\alpha>1)\\
\label{eq: dim2e} \tag{$C_5'$}
x+y<\tfrac32,\ y-x>\tfrac12,\ y<\alpha,\\
\label{eq: dim2f}\tag{$C_6'$}
y-x>\tfrac32,\ y<\alpha,\ x<\tfrac12,\ \ \ (\alpha\ge2)\\
\label{eq: dim2g}\tag{$C_7'$}
y-x>\tfrac32,\ y<\alpha,\ x>\tfrac12,\ \ \ (\alpha>2)\\
\label{eq: dim2h}\tag{$C_8'$}
x-y>\tfrac32,\ x<\alpha-\tfrac12\ \ \ (\alpha>2).
\end{gather*}
(The constraint $x<\alpha-\frac12$ in $C_2'$ and $C_4'$ is redundant if $\alpha\ge2$.
Similarly the constraint $y<\alpha$ in $C_5'$ is redundant for $\alpha>1$.)
Moreover, there exist $w_1,\dots,w_8\in W$ and $X_1,\dots,X_8\in \{C_a,C_b, C_c,C_d\}$ such that
\begin{enumerate}
\item $\iota(C_i')$ is contained in $\partial(w_i(X_i))$ for $i=1,\dots,8$.
\item $w_1(X_1)\cup w_4(X_4)\cup w_5(X_5)\cup w_6(X_6)$ is contained in the connected component (i.e. half space) of
$\R^3\backslash H_{\shrt}$ containing the origin.
\item $w_2(X_2)\cup w_3(X_3)\cup w_7(X_7)\cup w_8(X_8)$ is contained in the connected component of $\R^3\backslash H_{\shrt}$
that does not contain the origin.
\end{enumerate}
Finally, for $\alpha<1$ there are no unitary$^\pm$ components in $\R^2_{\vregshrt,+}$.
\end{lemma}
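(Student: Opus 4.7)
The plan is to exploit the continuity of unitarizability on one-parameter deformations together with the regions $C_a, C_b, C_c, C_d$ from Proposition \ref{prop: compserex}. The main tool is the standard fact that if a sequence of irreducible unitarizable representations $\Pi_{\mathbf{x}_n}$ converges (in the sense of a continuous family of hermitian representations) to a point $\mathbf{x}_0$ where $\Pi_{\mathbf{x}_0}$ is reducible, then every irreducible subquotient of $\Pi_{\mathbf{x}_0}$ is unitarizable. Since the set of strongly unitary components of $\R^3_{\reg}$ is $W$-invariant, it suffices to work with $W$-translates of the explicit unitary components $C_a,\dots,C_d$ in $\R^3_{\reg,++}$, and to show that each $\iota(C_i')$ lies in the topological boundary of exactly one such translate.

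For each $i$, I would first locate $w_i\in W$ and $X_i\in\{C_a,C_b,C_c,C_d\}$ geometrically as follows. A point $\iota(x,y)=(x-\tfrac12,x+\tfrac12,y)$ lies on $H_{\shrt}$, and a small perturbation moving $(x_2-x_1)$ slightly away from $1$ produces a regular point $\mathbf{x}^\epsilon$ with $\Pi_{\mathbf{x}^\epsilon}$ irreducible. After applying signs and sorting coordinates, $\mathbf{x}^\epsilon$ can be identified with a point in $\R^3_{\reg,++}$. For $C_1'$ a direct check shows the sorted absolute values land in $C_a$; for $C_2',C_4'$ one lands in (a $W$-translate of) $C_c$; for $C_3',C_5'$ one lands on the opposite side of $C_a$; for $C_6'$ in $C_b$; and for $C_7',C_8'$ in $C_d$ (using the extra assumption $\alpha>2$). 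The inequalities defining each $C_i'$ are exactly the ones needed for the resulting sorted triple to satisfy the defining inequalities of $X_i$. The assignment of $w_i(X_i)$ to the correct side of $H_{\shrt}$ is then automatic from the sign of the perturbation. Since $(x,y)\in\R^2_{\vregshrt,+}$ implies $\iota(x,y)\in H_{\shrt}^{\circ\circ}$, the representation $\Pi_{\iota(x,y)}$ decomposes (in the Grothendieck group) as $\Phi_{(x,y)}^+ + \Phi_{(x,y)}^-$ with both summands irreducible, and the limiting argument forces both to be unitarizable.

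The main obstacle is the bookkeeping in the geometric identification: each $C_i'$ is a $2$-dimensional region, and I need to check that a single $w_i(X_i)$ works on all of it, that the two sides of $H_{\shrt}$ are correctly assigned, and that the boundary hyperplanes of $C_i'$ inside $H_{\shrt}$ really correspond either to boundary hyperplanes of $X_i$ (so no new reducibility is hit) or to hyperplanes that are already excluded from $\R^2_{\vregshrt}$. The partition claim in items (2)-(3) of the lemma will fall out of this verification, since the sign of the perturbation that moves $\iota(x,y)$ off of $H_{\shrt}$ into $w_i(X_i)$ determines which half-space contains $w_i(X_i)$.

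For the final negative assertion, I would argue by direct inspection of Proposition \ref{prop: compserex}: when $\alpha=\tfrac12$ the only unitary component in $\R^3_{\reg,++}$ is $\{x_3<\tfrac12\}$, and when $\alpha=0$ the unitary region lies in $\{x_1=0,\ x_2+x_3<1\}$ together with its $W$-translates. In either case, a $W$-translate of the unique unitary region has at most a $1$-dimensional intersection with $H_{\shrt}$: the constraint $x_2-x_1=1$ combined with $|x_i|\le\alpha$ for all $i$ (when $\alpha\in\{0,\tfrac12\}$) forces two of the three coordinates to be determined. Consequently $\iota^{-1}$ of the closure contains no open subset of $\R^2_{\vregshrt,+}$, and hence no unitary$^\pm$ component can exist for $\alpha<1$.
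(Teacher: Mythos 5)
Your strategy for the main part coincides with the paper's: realize each $\iota(C_i')$ inside the boundary of a $W$-translate of one of the three-dimensional unitary components $C_a,\dots,C_d$ of Proposition \ref{prop: compserex}, then invoke the fact that closures of those components are strongly unitary. However, your concrete identifications fail in three of the eight cases, and in two of them the failure is fatal rather than a matter of bookkeeping. For $(x,y)\in C_3'$ the sorted absolute values of $\iota(x,y)=(x-\tfrac12,x+\tfrac12,y)$ are $(\tfrac12-x,\,y,\,x+\tfrac12)$ and $x_2+x_3=x+y+\tfrac12>1$ there, so $\iota(C_3')$ meets the closure of no $W$-translate of $C_a$; the same computation (with sorted triple $(\tfrac12-x,\,x+\tfrac12,\,y)$) rules out $C_a$ for $C_5'$. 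Both must instead be matched with translates of $C_c$, through its faces $x_1+x_3=1$ and $x_1+x_2=1$ respectively, which is what the paper does; as written, your proposal does not establish the unitarity of $C_3'$ and $C_5'$, nor items (2)--(3) for these indices. Likewise $\iota(C_2')$, with sorted triple $(y,\,x-\tfrac12,\,x+\tfrac12)$, violates $x_3-x_1\le1$ because $x-y>\tfrac12$, so it lies in the boundary of no translate of $C_c$; the correct region is $C_b$ via its face $x_3-x_2=1$. The paper's assignment is $C_a,C_b,C_c,C_c,C_c,C_b,C_d,C_d$ with explicit signed permutations, and the side-of-wall statements are then read off from the sign of the remaining strict inequality after applying $w_i$, exactly as you envisage; redoing your checks would recover this, but the identifications you actually assert do not work.

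The argument you give for the final assertion (no unitary$^\pm$ components when $\alpha<1$) has a genuine gap. You appeal to "the only unitary component" of $\R^3_{\reg,++}$ for $\alpha=\tfrac12$ and $\alpha=0$, but Proposition \ref{prop: compserex} only produces unitary components; its converse (exhaustion) is the later Proposition \ref{prop: 3dimex}, whose proof relies on the two-dimensional analysis this lemma initiates, so you cannot invoke it here without circularity or an independent argument. Moreover, even granting exhaustion, your last inference runs backwards: showing that no three-dimensional unitary component meets $H_{\shrt}$ in a two-dimensional set does not show that no wall component is unitary$^\pm$, since unitarity of the irreducible hermitian family $\Phi^\pm_{(x,y)}$ on the wall need not be inherited from an adjacent chamber (boundary unitarity propagates from chambers to walls, not the other way). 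The negative statements of this type are obtained in the paper from boundedness together with non-unitarizable subquotients at critical vertices and the one-parameter families of Proposition \ref{prop: unitlines} (as in the proof of Proposition \ref{prop: dim2unit}), and an argument of that sort is what your proposal would need for $\alpha\in\{0,\tfrac12\}$.
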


\begin{figure}
\begin{tikzpicture}
\draw[style=dotted,line width=1pt] (1,1) -- (11,1);
\draw[style=dotted,line width=1pt] (1,1) -- (1,7.6);
\draw[style=dashed,line width=1pt] (1,4) -- (4,1);
\draw[style=dashed,line width=1pt] (1,7) -- (11,7);
\draw[style=dashed,line width=1pt] (4,1) -- (11,8);
\draw[style=dashed,line width=1pt] (1,4) -- (5,8);
\draw[style=dashed,line width=1pt] (6,1) -- (6,8);
\draw[style=dashed,line width=1pt] (8,1) -- (8,8);
\draw (11,1) node[right] {$y=0$};
\draw (1,7.6) node[above] {$x=0$};
\draw (11,7) node[right] {$y=\alpha$};
\draw (2.5,2.6) node[right] {$y+x=\frac32$};
\draw (4,0.1) node[above] {$(\frac32,0)$};
\draw (2,0.1) node[above] {$(\frac12,0)$};
\draw (6,1) node[below] {$(\alpha-\frac12,0)$};
\draw (8,1) node[below] {$(\alpha+\frac12,0)$};
\draw (1,7) node[left] {$(0,\alpha)$};
\draw (1,2) node[left] {$(\frac12,0)$};
\draw (1,4) node[left] {$(0,\frac32)$\ \ };
\draw (6.1,3.5) node[left] {$(\alpha-\frac12,\alpha-2)$};
\draw (8.1,5) node[right] {$(\alpha+\frac12,\alpha-1)$};
\draw (3.3,7) node[above] {$(\alpha-\frac32,\alpha)$};
\draw (6.6,6.5) node[left] {$(\alpha-\frac12,\alpha)$};
\draw (9.1,6.6) node[left] {$(\alpha+\frac12,\alpha)$};
\draw (10.7,7) node[below] {$(\alpha+\frac32,\alpha)$};
\fill [pattern=north west lines, pattern color=gray] (1,1) -- (4,1) -- (1,4);
\fill [pattern=north west lines, pattern color=gray] (6,1) -- (4,1) -- (6,3);
\fill [pattern=north west lines, pattern color=gray] (1,4) -- (1,7) -- (4,7);
\draw[line width=2pt] (6,1) -- (6,3);
\draw[line width=2pt] (4,1) -- (6,3);
\draw[line width=2pt] (1,4) -- (4,1);
\draw[line width=2pt] (1,4) -- (4,7);
\draw[line width=2pt] (1,7) -- (4,7);
\shade[shading=ball,ball color=black] (4,1) circle (.09);
\shade[shading=ball,ball color=black] (6,3) circle (.09);
\shade[shading=ball,ball color=black] (1,4) circle (.09);
\shade[shading=ball,ball color=black] (4,7) circle (.09);
\draw[line width=1pt] (8,1) -- (8,5);
\draw [thick,dash pattern={on 7pt off 4pt on 1pt off 4pt}] (1,2) -- (2,1);
\draw [thick,dash pattern={on 7pt off 4pt on 1pt off 4pt}] (1,2) -- (7,8);
\draw [thick,dash pattern={on 7pt off 4pt on 1pt off 4pt}] (2,1) -- (9,8);
\draw [thick,dash pattern={on 7pt off 4pt on 1pt off 4pt}] (2,1) -- (2,8);
\draw (1.3,1.3) node {$C_1'$};
\draw (3,1.3) node {$C_2'$};
\draw (1.6,2) node {$C_3'$};
\draw (2.4,2) node {$C_4'$};
\draw (1.4,3) node {$C_5'$};
\draw (1.5,5.7) node {$C_6'$};
\draw (2.6,6.2) node {$C_7'$};
\draw (5.2,1.4) node {$C_8'$};
 \end{tikzpicture}

\caption{Unitarizability for $\Phi_{(x,y)}^\pm$ (case $\alpha=3$; added lines)} \label{fig: s322}
\end{figure}

Note however that for $\alpha=\frac12$ the boundary of the connected component in $\R^3$ that contains \eqref{eq: x312},
intersects $H_{\shrt}$ only at the point $(\frac12,\frac12,0)$.

\begin{proof} Denote
\begin{gather*}
w_1:(x_1,x_2,x_3)\mapsto (-x_2,x_3,x_1),
\\
w_2:(x_1,x_2,x_3)\mapsto (x_2,x_3,x_1),
\\
w_3:(x_1,x_2,x_3)\mapsto (-x_1,x_3,x_2),
\\
w_4:(x_1,x_2,x_3)\mapsto (x_1,x_3,x_2),
\\
w_5:(x_1,x_2,x_3)\mapsto (-x_1,x_2,x_3),
\\
w_6:(x_1,x_2,x_3)\mapsto (-x_1,x_2,x_3),
\\
w_8:(x_1,x_2,x_3)\mapsto (x_1,x_2,x_3),
\\
w_8:(x_1,x_2,x_3)\mapsto (x_2,x_3,x_1).
\end{gather*}
Take $X_1, \dots, X_8$ to be $C_a,C_b,C_c,C_c,C_c,C_b,C_d,C_d$ respectively.

The component $X_1=C_a$ is determined by the conditions $x_2+x_3<1$, $0\leq x_1\le x_2\leq x_3$, and $w_1$ takes
it to $(x_1',x_2',x_3')=(-x_2,x_3,x_1)$, where
$
x_2'-x_1'<1,
$
$0\leq x_3'\leq -x_1'\leq x_2'$. We get at the boundary for $x_2'=x_1'+1$ the region
$0\leq x_3'\leq -x_1'\leq x_2'$.
Introduce $x_1'=x-\frac12$, $y=x_3'$. From $ x_3'\leq -x_1'$ we get $x-y\leq \frac12$.
The interior of this region is $C'_1$.
The above inequality $x_2'-x_1'<1$ is equivalent to the inequality
$
x_2+x_3<1.
$
Therefore, $w_1(X_1)$ is in the connected component of $\R^3\backslash H_{\shrt}$ containing the origin.

The remaining 7 cases are proved in analogous (elementary) way. We omit the details.
\end{proof}

\begin{proposition} \label{prop: dim2unit}
For $\alpha\ge1$, the unitary$^\pm$ connected components of $\R^2_{\regshrt,+}$ are given by
\begin{subequations}
\begin{gather}
\label{eq: dim21} x+y<\tfrac32,\ x<\alpha-\tfrac12,\ y<\alpha\\
\label{eq: dim22} y-x>\tfrac32,\ y<\alpha,\ \ \ (\alpha\ge2)\\
\label{eq: dim23} x-y>\tfrac32,\ x\le\alpha-\tfrac12,\ \ \ (\alpha>2).
\end{gather}
\end{subequations}
(The constraint $x<\alpha-\tfrac12$ (resp., $y<\alpha$) in the first region is redundant
if $\alpha\ge2$ (resp., $\alpha>1$).)
For each point $(x,y)$ in the above three regions there exists $w\in W$ such that $\iota(x,y)$ is contained in the boundary of $w(C)$
where $C$ is one of the unitary connected components \eqref{eq: xx2x31}--\eqref{eq: xx1x2x14} of Proposition \ref{prop: compserex}.
The other connected components of $\R^2_{\regshrt,+}$ are neither unitary$^+$ nor unitary$^-$.

If $\alpha=0$ or $\tfrac12$ then there are no unitary$^+$ or unitary$^-$ regions in $\R^2_{\regshrt}$.
\end{proposition}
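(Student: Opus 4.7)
The plan is to combine Lemma~\ref{lem: 2dimrreg}, the continuity of complementary series, and the one-parameter analyses of Proposition~\ref{prop: unitlines} and Lemma~\ref{lemma-nu-m} to pin down the unitary$^{\pm}$ components of $\R^2_{\regshrt,+}$. The key structural observation is that both $\Phi^{\pm}_{(x,y)}$ are irreducible and hermitian on all of $\R^2_{\regshrt}$, so on every connected open subset of $\R^2_{\regshrt}$ the families $\Phi^{\pm}$ are hermitian continuous families in the sense of \S\ref{ways of getting unitary}, and unitarizability is constant on each connected component.

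For the construction part, each of the three listed regions \eqref{eq: dim21}, \eqref{eq: dim22}, \eqref{eq: dim23} is connected and is a union of the $2$-dimensional unitary$^{\pm}$ components $C_i'$ produced in Lemma~\ref{lem: 2dimrreg} glued along the ``extra'' lines that lie in $\R^2_{\regshrt}\setminus\R^2_{\vregshrt}$, namely $x=\pm\tfrac12$, $y-x=\pm\tfrac12$, $x+y=\pm\tfrac12$. Concretely, I will match \eqref{eq: dim21} with $C_1'\cup C_2'\cup C_3'\cup C_4'\cup C_5'$ (together with the separating open segments), \eqref{eq: dim22} with $C_6'\cup C_7'$ glued along $x=\tfrac12$, and \eqref{eq: dim23} with $C_8'$ up to its relative boundary in $\R^2_{\regshrt,+}$. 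Since by Lemma~\ref{lem: 2dimrreg} each $C_i'$ lies in the boundary of a $W$-translate of one of the three-dimensional unitary regions $C_a,C_b,C_c,C_d$ of Proposition~\ref{prop: compserex}, and since each $\Phi^{\pm}$ stays irreducible on the gluing lines, the continuity argument propagates unitarizability of both $\Phi^{+}$ and $\Phi^{-}$ to the entire region.

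For exhaustion, I will show that in every other connected component of $\R^2_{\regshrt,+}$ at least one of $\Phi^{\pm}$ is non-unitarizable by producing a path that exits the component transversally across one of the reducibility hyperplanes $x+y=\pm\tfrac32$, $y-x=\pm\tfrac32$, $x=\pm(\alpha\pm\tfrac12)$, or $y=\pm\alpha$. Writing $\Phi^{+}_{(x,y)}=[y]\rtimes(\nu^x\delta([-\tfrac12,\tfrac12])\rtimes\s)$ and $\Phi^{-}_{(x,y)}=\nu^x\delta([-\tfrac12,\tfrac12])\times[y]\rtimes\s$ after swapping factors shows that each crossing realizes one of the one-parameter families from rows~$2$--$4$ or from rows~$5$, $8$--$13$ of Table~\ref{tab: redpnt}, fixed at the second reducibility point listed in Table~\ref{tab: 2nd}. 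Hence the crossing produces a non-unitarizable irreducible subquotient, and by the continuity argument in reverse the whole component is non-unitary$^{\pm}$. The main obstacle here is the bookkeeping: one has to list all remaining open components of $\R^2_{\regshrt,+}$ and for each one choose an appropriate crossing. The $W_{\shrt}$-symmetry $(x,y)\mapsto(\pm x,\pm y)$ cuts the work considerably, and a case-by-case inspection of the arrangement in Figure~\ref{fig: s322} shows that it is always possible to exit across a line where the relevant family from Proposition~\ref{prop: unitlines} has already passed a non-unitarizable reducibility point.

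Finally, for $\alpha\in\{0,\tfrac12\}$ Proposition~\ref{prop: compserex} gives at most the single three-dimensional unitary region $x_3<\tfrac12$ (for $\alpha=\tfrac12$), whose $W$-orbit meets $H_{\shrt}$ only at the isolated point $\iota(\tfrac12,0)=(0,1,0)$, and no unitary three-dimensional region at all for $\alpha=0$. Consequently no open two-dimensional piece of $H_{\shrt}$ lies in the closure of a three-dimensional unitary component, and the exhaustion argument above (applied to every open component of $\R^2_{\regshrt,+}$) then rules out any unitary$^{+}$ or unitary$^{-}$ region in $\R^2_{\regshrt}$ for these values of $\alpha$.
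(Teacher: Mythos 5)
Your construction half is essentially the paper's argument: both rest on Lemma \ref{lem: 2dimrreg} plus constancy of unitarizability on connected components of $\R^2_{\regshrt}$ (the gluing lines are harmless since $\Phi^{\pm}$ stay irreducible there), so that part is fine. The exhaustion half, however, has a genuine gap. You propose to rule out every non-special component by exiting across a reducibility wall and invoking rows $2$--$5$, $8$--$13$ of Table \ref{tab: redpnt} (your phrase ``fixed at the second reducibility point listed in Table \ref{tab: 2nd}'' is also off: the crossing parameter is whatever the wall segment dictates, and what one needs is that it lies beyond the \emph{first} reducibility point, where Proposition \ref{prop: unitlines} gives non-unitarity). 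The problem is that this toolkit does not suffice for precisely the two delicate components that the paper singles out: the component $C_3$ in Figure \ref{fig: s32} ($\alpha=\tfrac32$, the triangle with corners $(1,0)$, $(\tfrac32,0)$, $(1,\tfrac12)$) and $C_4$ in Figure \ref{fig: s1} ($\alpha=1$). For $C_3$, crossing the wall $x=\alpha-\tfrac12$ lands the limit of $\Phi^{\pm}$ on $[y]\rtimes L([\alpha-1,\alpha];\s)+[y]\rtimes L([\alpha-1];\delta([\alpha];\s))$ with $0<y<\tfrac12=\abs{\alpha-2}$, i.e.\ inside the complementary-series range of rows $8$--$9$, so all constituents there are unitarizable and no contradiction arises; the wall $y=0$ is a symmetry axis, and $y=\alpha$, $x=\alpha+\tfrac12$ are not walls of $C_3$ at all. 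The only wall that works is the slanted one $x+y=\tfrac32$, where the limit contains $\nu^{x-\frac12}\delta([-1,1])\rtimes\s$ (or its dual) with $\tfrac12<x-\tfrac12<1$, i.e.\ row $1$ of Table \ref{tab: redpnt} past its first reducibility point $\abs{\alpha-1}$ --- and row $1$ is exactly the family you excluded from your list. The same happens for $C_4$ at $\alpha=1$: crossing $y=\alpha$ gives rows $2$--$3$ at parameter $x<\tfrac12$, again unitarizable, so only the slanted wall and row $1$ save the day. Thus your blanket claim that ``a case-by-case inspection shows it is always possible to exit across a line where the relevant family has already passed a non-unitarizable reducibility point'' is unsubstantiated and, with your stated list of families, false in these two cases (note also that Figure \ref{fig: s322} depicts only $\alpha=3$; the hard configurations occur for $\alpha=\tfrac32$ and $\alpha=1$).

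It is also worth noting how the paper avoids this bookkeeping: instead of analysing walls (whose limiting constituents are not a priori classified), it uses the corank-$3$ critical-point classification at the \emph{vertices} of each component together with Remark \ref{non-uni-half} (duality at critical points), so that any bounded component with a non-black vertex is immediately neither unitary$^{+}$ nor unitary$^{-}$; this disposes of everything except $C_3$ ($\alpha=\tfrac32$) and $C_4$ ($\alpha=1$), whose vertices are all unitarizable and which are then handled exactly by the row-$1$ wall argument above. Your wall-crossing strategy could in principle be repaired by adding row $1$ and by verifying, for each $\alpha$-regime separately, that every bounded non-special component has a wall segment lying in the non-unitary range of one of the families of Table \ref{tab: redpnt}; but as written the argument does not close, and the same gap propagates to your final paragraph for $\alpha\in\{0,\tfrac12\}$, where you again defer to ``the exhaustion argument above'' (there the paper's vertex criterion, not the absence of adjacent three-dimensional unitary regions, is what actually excludes the two-dimensional components).
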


\begin{proof}
Assume that $\alpha\geq1$.
Lemma \ref{lem: 2dimrreg} implies that components \eqref{eq: dim21}--\eqref{eq: dim23} (which are the regions $C_0,C_2^u$ and
$C_2^d$ in Figure \ref{fig: s3}) are unitary.\footnote{Note that \eqref{eq: dim21} is a two-dimensional complementary series,
while \eqref{eq: dim22} and \eqref{eq: dim23} can be obtained by iterating one-dimensional complementary series twice.
This is a simpler way to conclude this unitarity.}

We turn to the converse direction.
Let us call the regions \eqref{eq: dim21}--\eqref{eq: dim23} \emph{special}.
Clearly, these regions are distinct connected components of $\R^2_{\regshrt,+}$.

We present graphical interpretation of $\R^2_{\regshrt,+}$ for $\alpha\geq 2,\alpha=\tfrac32, \alpha=1,\alpha=\tfrac12$ and $\alpha=0$ in
Figures \ref{fig: s3}, \ref{fig: s32}, \ref{fig: s1}, \ref{fig: s12} and \ref{fig: s0} respectively, where the special regions are shaded.
We need to show that the non-shaded regions are neither unitary$^+$ nor unitary$^-$.
\begin{figure}
\begin{tikzpicture} [thick, scale=0.9]
\draw[style=dotted,line width=1pt] (1,1) -- (11,1);
\draw[style=dotted,line width=1pt] (1,1) -- (1,12);
\draw[style=dashed,line width=1pt] (1,4) -- (4,1);
\draw[style=dashed,line width=1pt] (1,7) -- (12,7);
\draw[style=dashed,line width=1pt] (4,1) -- (12,9);
\draw[style=dashed,line width=1pt] (1,4) -- (9,12);
\draw[style=dashed,line width=1pt] (6,1) -- (6,12);
\draw[style=dashed,line width=1pt] (8,1) -- (8,12);
\draw (11,1) node[right] {$y=0$};
\draw (1,12) node[above] {$x=0$};
\draw (5.7,12) node[above] {$x=\alpha-\frac12$\ \ };
\draw (7.6,12) node[above] {\ \ $x=\alpha+\frac12$ };
\draw (12,7) node[right] {$y=\alpha$};
\draw (2.5,2.6) node[right] {$y+x=\frac32$};
\draw (12,9) node[above] {$\ y=x-\frac32$};
\draw (4,0.1) node[above] {$(\frac32,0)$};
\draw (1,4) node[left] {$(0,\frac32)$\ \ };
\draw (6.1,3.5) node[left] {$(\alpha-\frac12,\alpha-2)$};
\draw (8.1,5) node[right] {$(\alpha+\frac12,\alpha-1)$};
\draw (8.2,11) node[right] {$(\alpha+\frac12,\alpha+2)$};
\draw (3.1,7) node[above] {$(\alpha-\frac32,\alpha)$};
\draw (6,9.6) node[left] {$(\alpha-\frac12,\alpha+1)$};
\draw (6.1,6.5) node[left] {$(\alpha-\frac12,\alpha)$};
\draw (10.2,7.5) node[left] {$(\alpha+\frac12,\alpha)$};
\draw (10.9,7) node[below] {$(\alpha+\frac32,\alpha)$};
\draw (10,12) node[above] {$y=x+\frac32$};
\draw (2,2) node {$C_0$};
\draw (1.8,5.9) node {$C_2^u$};
\draw (5.3,1.5) node {$C_2^d$};
\fill [pattern=north west lines, pattern color=gray] (1,1) -- (4,1) -- (1,4);
\fill [pattern=north west lines, pattern color=gray] (6,1) -- (4,1) -- (6,3);
\fill [pattern=north west lines, pattern color=gray] (1,4) -- (1,7) -- (4,7);
\draw[line width=2pt] (6,1) -- (6,3);
\draw[line width=2pt] (4,1) -- (6,3);
\draw[line width=2pt] (1,4) -- (4,1);
\draw[line width=2pt] (1,4) -- (4,7);
\draw[line width=2pt] (1,7) -- (4,7);
\draw[line width=1pt] (8,1) -- (8,5);
\shade[shading=ball,ball color=black] (4,1) circle (.09);
\shade[shading=ball,ball color=black] (6,3) circle (.09);
\shade[shading=ball,ball color=black] (1,4) circle (.09);
\shade[shading=ball,ball color=black] (4,7) circle (.09);
\shade[shading=ball,ball color=lightgray] (10,7) circle (.14);
\shade[shading=ball,ball color=lightgray] (8,11) circle (.14);
\shade[shading=ball,ball color=lightgray] (6,9) circle (.14);
\shade[shading=ball,ball color=lightgray] (8,5) circle (.14);
\shade[shading=ball,ball color=lightgray] (6,7) circle (.14);
\shade[shading=ball,ball color=white] (8,7) circle (.22);
\end{tikzpicture}
\caption{Unitarizability for $\Phi_{(x,y)}^\pm$ (case $\alpha=3$)} \label{fig: s3}
\end{figure}

 \begin{figure}
\begin{tikzpicture} [thick, scale=0.8]
\draw[style=dotted,line width=1pt] (1,1) -- (9,1);
\draw[style=dotted,line width=1pt] (1,1) -- (1,9);
\draw[style=dashed,line width=1pt] (1,4) -- (4,1);
\draw[style=dashed,line width=1pt] (1,4) -- (9,4);
\draw[style=dashed,line width=1pt] (4,1) -- (9,6);
\draw[style=dashed,line width=1pt] (1,4) -- (6,9);
\draw[style=dashed,line width=1pt] (3,1) -- (3,9);
\draw[style=dashed,line width=1pt] (5,1) -- (5,9);
\draw (9,1) node[right] {$y=0$};
\draw (1,9) node[above] {$x=0$};
\draw (3,9) node[above] {$x=1$ \ \ };
\draw (5,9) node[above] {$x=2$ };
\draw (9,4) node[right] {$y=\frac32$};
\draw (3.2,2.6) node[left] {$y+x=\frac32$};
\draw (9,6) node[right] {$\ y=x-\frac32$};
\draw (4,0) node[above] {$(\frac32,0)$};
\draw (1,4) node[left] {$(0,\frac32)$\ \ };
\draw (3.1,2) node[right] {$(1,\frac12)$};
\draw (5.1,2) node[right] {$(2,\frac12)$};
\draw (5,4.5) node[right] {$(2,\frac32)$};
\draw (3,4.5) node[right] {$(1,\frac32)$};
\draw (7,3.5) node[right] {$(3,\frac32)$};
\draw (3.1,5.9) node[right] {$(1,\frac52)$};
\draw (5,7.8) node[right] {$(2,\frac72)$};
\draw (7.3,8.8) node[above] {$y=x+\frac32$};
\draw (1.8,1.7) node {$C_0$};
\draw (3.36,1.25) node {$C_3$};
\fill [pattern=north west lines, pattern color=gray] (1,1) -- (3,1) -- (3,2) -- (1,4);
\draw[line width=2pt] (1,4) -- (3,2);
\draw[line width=1pt] (5,1) -- (5,2);
\draw[line width=2pt] (3,1) -- (3,2);
\shade[shading=ball,ball color=black] (3,2) circle (.12);
\shade[shading=ball,ball color=black] (1,4) circle (.12);
\shade[shading=ball,ball color=lightgray] (3,4) circle (.16);
\shade[shading=ball,ball color=lightgray] (7,4) circle (.16);
\shade[shading=ball,ball color=lightgray] (5,2) circle (.16);
\shade[shading=ball,ball color=lightgray] (5,8) circle (.16);
\shade[shading=ball,ball color=white](5,4) circle (.25);
\shade[shading=ball,ball color=lightgray] (3,6) circle (.16);
\end{tikzpicture}
\caption{Unitarizability for $\Phi_{(x,y)}^\pm$ (case $\alpha=\frac32$)} \label{fig: s32}
\end{figure}

\begin{figure}
\begin{tikzpicture} [thick, scale=0.55]
\draw[style=dotted,line width=1pt] (1,1) -- (13,1);
\draw[style=dotted,line width=1pt] (1,1) -- (1,14);
\draw[style=dashed,line width=1pt] (1,7) -- (7,1);
\draw[style=dashed,line width=1pt] (1,5) -- (13,5);
\draw[style=dashed,line width=1pt] (7,1) -- (13,7);
\draw[style=dashed,line width=1pt] (1,7) -- (8,14);
\draw[style=dashed,line width=1pt] (3,1) -- (3,14);
\draw[style=dashed,line width=1pt] (7,1) -- (7,14);
\draw (13,1) node[right] {$y=0$};
\draw (0.5,14.2) node[above] {$x=0$};
\draw (3.3,14) node[above] {$x=\frac12$};
\draw (6.5,14) node[above] {$x=\frac32$};
\draw (9.9,14) node[above] {$y=x+\frac32$ };
\draw (13,7) node[right] {$y=x-\frac32$ };
\draw (13,5) node[right] {$y=1$};
\draw (4,5) node[above] {$(\frac12,1)$};
\draw (4,9.1) node[below] {$(\frac12,2)$};
\draw (7,1) node[below] {$(\frac32,0)$};
\draw (8,5) node[above] {$(\frac32,1)$};
\draw (7.1,13) node[right] {$(\frac32,3)$};
\draw (11,5) node[above] {$(\frac52,1)$\ \ \ \ \ };
\draw (5,3) node[above] {$x+y=\frac32$ };
\draw (2,2.5) node[above] {$C_0$ };
\draw (1.7,5) node[above] {$C_4$ };
\fill [pattern=north west lines, pattern color=gray] (1,1) -- (3,1) -- (3,5) -- (1,5);
\draw[line width=2pt] (1,5) -- (3,5);
\draw[line width=2pt] (3,1) -- (3,5);
\shade[shading=ball,ball color=black] (3,5) circle (.17);
\shade[shading=ball,ball color=lightgray] (3,9) circle (.23);
\shade[shading=ball,ball color=lightgray] (7,1) circle (.23);
\shade[shading=ball,ball color=lightgray] (11,5) circle (.23);
\shade[shading=ball,ball color=lightgray] (7,13) circle (.23);
\shade[shading=ball,ball color=white] (7,5) circle (.35);
 \end{tikzpicture}
\caption{Unitarizability for $\Phi_{(x,y)}^\pm$ (case $\alpha=1$)} \label{fig: s1}
\end{figure}

\begin{figure}
\begin{tikzpicture} [thick, scale=0.55]
\draw[style=dotted,line width=1pt] (1,1) -- (11,1);
\draw[style=dashed,line width=1pt] (1,1) -- (1,12);
\draw[style=dashed,line width=1pt] (1,3) -- (11,3);
\draw[style=dashed,line width=1pt] (5,1) -- (5,12);
\draw[style=dashed,line width=1pt] (7,1) -- (1,7);
\draw[style=dashed,line width=1pt] (1,7) -- (6,12);
\draw[style=dashed,line width=1pt] (7,1) -- (11,5);
\draw (11,1) node[right] {$y=0$};
\draw (.6,12) node[above] {$x=0$};
\draw (4.4,12) node[above] {$x=1$};
\draw (11,3) node[right] {$y=\frac12$};
\draw (2,5) node[right] {$x+y=\frac32$};
\draw (7.8,11.9) node[above] {\ \ $y=x+\frac32$ };
\draw (11,5) node[right] {$y=x-\frac32$ };
\draw (1,3) node[left] {$(0,\frac12)$};
\draw (1,7) node[left] {$(0,\frac32)$};
\draw (6,3) node[above] {$(1,\frac12)$};
\draw (6.5,10.5) node[above] {$(1,\frac52)$};
\draw (9.6,3) node[below] {$(2,\frac12)$};
\draw[line width=1pt] (1,7) -- (5,3);
\draw[line width=1pt] (1,1) -- (1,7);
\draw[line width=1pt] (5,1) -- (5,3);
\draw[line width=1pt] (1,3) -- (5,3);
\shade[shading=ball,ball color=black] (1,3) circle (.17);
\shade[shading=ball,ball color=lightgray] (1,7) circle (.23);
\shade[shading=ball,ball color=lightgray] (5,3) circle (.23);
\shade[shading=ball,ball color=lightgray] (9,3) circle (.23);
\shade[shading=ball,ball color=lightgray] (5,11) circle (.23);
\end{tikzpicture}
\caption{Unitarizability for $\Phi_{(x,y)}^\pm$ (case $\alpha=\frac12$)} \label{fig: s12}
\end{figure}

\begin{figure}
\begin{tikzpicture} [thick, scale=0.5]
\draw[style=dashed,line width=1pt] (1,1) -- (11,1);
\draw[style=dashed,line width=1pt] (1,1) -- (1,10);
\draw[style=dashed,line width=1pt] (1,7) -- (7,1);
\draw[style=dashed,line width=1pt] (3,1) -- (3,10);
\draw[style=dashed,line width=1pt] (7,1) -- (11,5);
\draw[style=dashed,line width=1pt] (1,7) -- (4,10);
\draw (11,1) node[right] {$y=0$};
\draw (.5,10.2) node[above] {$x=0$};
\draw (3,10) node[above] {$x=\frac12$};
\draw (5.9,10) node[above] {\ \ $y=x+\frac32$ };
\draw (11,5) node[right] {$y=x-\frac32$ };
\draw (1,1) node[below] {$(0,0)$};
\draw (3,1) node[below] {$(\frac12,0)$};
\draw (7,1) node[below] {$(\frac32,0)$};
\draw (1,7) node[left] {$(0,\frac32)$};
\draw (3,5) node[right] {\ $(\frac12,1)$};
\draw (3,9) node[right] {\ $(\frac12,2)$};
\draw (5,3) node[above] {$x+y=\frac32$ };
\draw[line width=1pt] (3,1) -- (3,5);
\shade[shading=ball,ball color=black] (1,1) circle (.18);
\shade[shading=ball,ball color=black] (3,1) circle (.18);
\shade[shading=ball,ball color=lightgray] (7,1) circle (.24);
\shade[shading=ball,ball color=lightgray] (3,5) circle (.24);
\shade[shading=ball,ball color=lightgray] (3,9) circle (.24);
\end{tikzpicture}
\caption{Unitarizability for $\Phi_{(x,y)}^\pm$ (case $\alpha=0$)} \label{fig: s0}
\end{figure}
Using the results of chapters \ref{sec: unit3}, \ref{basic 1} and \ref{CC-0} (and having in mind Remark \ref{non-uni-half}),
we have put in Figures \ref{fig: s3} -- \ref{fig: s0} black vertices where all subquotients are unitarizable,
and white or gray where we have at least one
irreducible subquotient non-unitarizable (see \ref{legend} for more precise description).
Suppose that we have a unitary$^+$ or unitary$^-$ component.
Then, it must be bounded, and cannot have a non-black vertex.
Now Figures \ref{fig: s3}, \ref{fig: s12} and \ref{fig: s0} imply that any unitary$^+$ or unitary$^-$ component there must be special.

In the case of $\alpha=\tfrac32$, only $C_3$ does not have a non-black vertex. Since the slanted side of this component
contains either $\nu^{x-\frac12}\delta([-1,1])\rtimes \sigma$, $1<x<\tfrac32$, or $\nu^{x-\frac12}\delta([-1,1])^t\rtimes \sigma$, $1<x<\tfrac32$, and both families consists of non-unitarizable representations by Proposition \ref{prop: unitlines} (use N$^\circ$1 in Table \ref{tab: redpnt}),
we get that also $C_3$ is neither unitary$^+$ nor unitary$^-$.

It remains to consider the case of $\alpha=1$. Here only $C_4$ does not have a non-black vertex. Here the slanted side of this component contains either
$\nu^{\frac12-x}\delta([-1,1])\rtimes \sigma$, $0<x<\tfrac12$, or $\nu^{\frac12-x}\delta([-1,1])\rtimes \sigma$, $0<x<\tfrac12$, and both families consists of non-unitarizable representations by Proposition \ref{prop: unitlines} (use again N$^\circ$1 in Table \ref{tab: redpnt}).
Therefore, $C_4$ can be neither unitary$^+$ nor unitary$^-$. This completes the proof of the proposition.
\end{proof}

\begin{corollary} \label{all-sla-v-reg-u}
The components $C_1',\dots,C_8'$ are precisely the unitary$^\pm$ components in $\R^2_{\vregshrt}$.
\end{corollary}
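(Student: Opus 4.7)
The plan is to reduce the corollary to Proposition \ref{prop: dim2unit} by analyzing how the additional singular lines cut the three unitary$^\pm$ regions of $\R^2_{\regshrt}$ into pieces.

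First, since every point of $\R^2_{\vregshrt}$ is in particular a point of $\R^2_{\regshrt}$, at any such point the representations $\Phi_{(x,y)}^{\pm}$ are irreducible and hermitian, so unitarizability is locally constant along continuous families of irreducibles. Hence the unitary$^{\pm}$ components of $\R^2_{\vregshrt}$ are exactly the connected components of $U\cap\R^2_{\vregshrt}$, where $U$ ranges over unitary$^{\pm}$ components of $\R^2_{\regshrt}$. By Proposition \ref{prop: dim2unit} (and $W_{\shrt}$-symmetry, which reduces to the first quadrant), for $\alpha\le\tfrac12$ there are no such $U$ and correspondingly no $C_i'$, so the corollary holds trivially; for $\alpha\ge1$ the relevant $U$'s in $\R^2_{\regshrt,+}$ are the three regions \eqref{eq: dim21}, \eqref{eq: dim22}, \eqref{eq: dim23}.

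Next, I would list the additional singular lines that separate $\R^2_{\vregshrt,+}$ inside $\R^2_{\regshrt,+}$, namely $x+y=\tfrac12$, $x=\tfrac12$, and $y-x=\pm\tfrac12$, and compute (by a direct case analysis, guided by Figure \ref{fig: s322}) how they partition each of the three regions. The region \eqref{eq: dim21} is cut by all four lines into precisely the five components $C_1', C_2', C_3', C_4', C_5'$ (with $C_2'$ and $C_4'$ empty unless $\alpha>1$, in accordance with Lemma \ref{lem: 2dimrreg}). The region \eqref{eq: dim22} is cut only by $x=\tfrac12$, yielding the two components $C_6'$ (always non-empty for $\alpha\ge2$) and $C_7'$ (non-empty for $\alpha>2$); the other added lines do not meet \eqref{eq: dim22} since $y-x>\tfrac32$ rules out $y-x=\pm\tfrac12$ and $x+y=\tfrac12$, and $y\ge 0$. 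The region \eqref{eq: dim23} is not met by any additional line (since $x-y>\tfrac32$ and $y\ge0$ force $x>\tfrac32$), so it constitutes the single component $C_8'$.

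Together with the forward direction already contained in Lemma \ref{lem: 2dimrreg}, which asserts that each $C_i'$ is indeed unitary$^{\pm}$, this exhibits $C_1',\dots,C_8'$ as precisely the unitary$^{\pm}$ components of $\R^2_{\vregshrt,+}$, and by $W_{\shrt}$-equivariance, of $\R^2_{\vregshrt}$. The only mildly delicate step is the combinatorial enumeration in the middle paragraph, but this amounts to inspecting a small number of line intersections in a planar picture and matching them with the constraints defining the $C_i'$; once the correspondence is set up region by region, there is no further obstacle.
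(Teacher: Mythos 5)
Your argument is correct and is essentially the paper's proof: both reduce to Proposition \ref{prop: dim2unit} via constancy of unitarizability on components of $\R^2_{\regshrt}$, use Lemma \ref{lem: 2dimrreg} for the positive direction, and verify that the extra lines $x+y=\tfrac12$, $x=\tfrac12$, $y-x=\pm\tfrac12$ cut the regions \eqref{eq: dim21}--\eqref{eq: dim23} into exactly $C_1',\dots,C_8'$ (the paper phrases this as the containment of \eqref{eq: dim21}--\eqref{eq: dim23} in the closure of $C_1'\cup\dots\cup C_8'$). The only blemish is the parenthetical ``and $y\ge0$'' in your treatment of \eqref{eq: dim22}, which should read $x\ge0$; it does not affect the argument.
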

\begin{proof}
Observe that the components \eqref{eq: dim21}--\eqref{eq: dim23} are contained in the closure of the union of $C_1',\dots,C_8'$.
Now the claim follows from Proposition \ref{prop: dim2unit}.
\end{proof}

\section{Two-parameter complementary series -- level hyperplanes case}

We turn to the affine hyperplane
\[
H_{\lng}=\{\mathbf{x}\in\R^3:x_3=\alpha\}.
\]
All hyperplanes in the $W$-orbit of $H_{\lng}$ will be called level hyperplanes.
For any $(x,y)\in\R^2$, we decompose $\Pi_{(x,y,\alpha)}$ in the Grothendieck group as $\Psi_{(x,y)}^++\Psi_{(x,y)}^-$ where
\[
\Psi_{(x,y)}^+=[x]\times[y]\rtimes\delta([\alpha];\s),\ \
\Psi_{(x,y)}^-=[x]\times[y]\rtimes L([\alpha];\s)
\]
if $\alpha>0$ and $\Psi_{(x,y)}^\pm=[x]\times[y]\rtimes\delta([0]_\pm;\s)$ if $\alpha=0$.

Let $H_{\lng}^{\circ\circ}$ (resp., $\R^2_{\vreglng}$) be the complement in $H_{\lng}$ (resp., $\R^2$) of the
singular affine hyperplanes other than $x_3=\pm\alpha$:
\[
x_1\pm x_2=\e,\ \ x_i=\pm(\alpha+\e),\ \ x_i=\pm\alpha,\ \ \e=\pm1,\ i=1,2.
\]
Let $H_{\lng}^\circ\supset H_{\lng}^{\circ\circ}$ (resp., $\R^2_{\reglng}\supset\R^2_{\vreglng}$) be the complement in
$H_{\lng}$ (resp., $\R^2$) of the 12 (not necessarily distinct) affine hyperplanes
\[
x_1\pm x_2=\e,\ \ x_i=\pm(\alpha+\e),\ \ \e=\pm1,\ i=1,2.
\]
The representations $\Psi_{(x,y)}^\pm$ are irreducible precisely when $(x,y)\in\R^2_{\reglng}$.
Thus, for $(x,y)\in\R^2_{\reglng}$, $(x,y,\alpha)$ is strongly unitary (resp., strongly non-unitary)
if and only if both $\Psi_{(x,y)}^\pm$ are unitarizable (resp., non-unitarizable).
The group $W_{\lng}$ of signed permutations on $\{1,2\}$ (i.e., the dihedral group $D_4$)
acts on $\R^2$ and preserves $\R^2_{\reglng}$ and $\R^2_{\vreglng}$. For $(x,y)\in\R^2_{\reglng}$,
the representations $\Psi_{x,y}^\pm$ depend only on the $W_{\lng}$-orbit of $(x,y)$.
We have denoted $\R^2_{++}=\{(x,y)\in\R^2:y\ge x\ge0\}$. Set $\R^2_{\reglng,++}=\R^2_{++}\cap\R^2_{\reglng}$
(resp. $H_{\lng,++}^{\circ}$) and $\R^2_{\vreglng,++}=\R^2_{++}\cap\R^2_{\vreglng}$ (resp. $H_{\lng,++}^{\circ\circ}$).

We say that a point $(x,y)\in\R^2_{\reglng}$ is unitary$^+$ (resp., unitary$^-$) if $\Psi_{(x,y)}^+$
(resp., $\Psi_{(x,y)}^-$) is unitarizable. (As before, eventually these notions will turn out to be equivalent.)
We also say that $(x,y)\in\R^2_{\reglng}$ is unitary$^\pm$ if it is both unitary$^+$ and unitary$^-$,
i.e., if $(x,y,\alpha)$ is strongly unitary.
These properties depend only on the $W_{\lng}$-orbit and the connected component of $(x,y)$ in $\R^2_{\reglng}$.
We say that a connected component of $R^2_{\reglng}$ is unitary$^+$ (resp., unitary$^-$, unitary$^\pm$) if
the same is true for any (or each) point in it.
As before it is enough to consider the connected components of $\R^2_{\reglng,++}$.

\begin{proposition} \label{prop: 2dmlng}
The unitary$^\pm$ connected components of $\R^2_{\reglng,++}$ are as follows.
\begin{subequations}
\begin{gather}
\label{eq: >1x-x} (\alpha>2)\ \ y-x>1,\ y<\alpha-1,\\
\label{eq: >1xy1} (\alpha>1)\ \ x+y<1,\ y<\alpha-1,\\
\label{eq: 1axy1} (\alpha=1)\ \ x+y<1,\\
\label{eq: 12y12} (\alpha=\tfrac12)\ \ y<\tfrac12,\\
\label{eq: 0xy1} (\alpha=0)\ \ x+y<1.
\end{gather}
\end{subequations}
(The constraint $y<\alpha-1$ in \eqref{eq: >1xy1} is redundant for $\alpha\ge2$.)
The other connected components of $\R^2_{\reglng,++}$ are neither unitary$^+$ nor unitary$^-$.
\end{proposition}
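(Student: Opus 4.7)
The plan is to mirror the argument for the slanted hyperplane case (Proposition \ref{prop: dim2unit}), adapted to the level hyperplane $H_{\lng}$. First I would establish that each listed region is unitary$^\pm$ by constructing explicit complementary series obtained by iterating one-parameter complementary series from Proposition \ref{prop: unitlines} (Table \ref{tab: redpnt}) and applying unitary parabolic induction from the general linear factor. For instance, when $\alpha>1$, the corank-one family $[x]\rtimes\delta([\alpha];\s)$ (resp.\ $[x]\rtimes L([\alpha];\s)$) is an irreducible complementary series for $0\le x<\alpha-1$ (N$^\circ$5 of Table \ref{tab: redpnt}); combining with the GL-complementary series $[y]\times[x]$ for $0\le x+y<1$ yields the two-dimensional complementary series \eqref{eq: >1xy1}; iterating once more in the second variable yields \eqref{eq: >1x-x} for $\alpha>2$. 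The regions \eqref{eq: 1axy1}, \eqref{eq: 12y12}, \eqref{eq: 0xy1} are obtained analogously from N$^\circ$5--7, 12--14.

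Next I would treat the converse direction by a geometric argument. For each value of $\alpha\in\{0,\tfrac12,1,\tfrac32,2,\text{and }\alpha>2\}$ I would draw the analogue of Figures \ref{fig: s3}--\ref{fig: s0}, namely the picture of $\R^2_{\reglng,++}$ with its singular lines $x\pm y=\pm1$, $x=\pm\alpha\pm1$, $y=\pm\alpha\pm1$. Using the classification chapters \ref{sec: unit3}, \ref{basic 1} and \ref{CC-0}, each vertex (a point in $H_{\lng,++}^{\circ\circ}$) is coloured black, gray or white according to whether all irreducible subquotients of the corresponding $\Pi_{\mathbf{x}}$ are unitarizable, only some are, or none are. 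Since a unitary$^\pm$ component must be bounded and cannot touch any gray or white vertex, one reads off the list of candidate regions from the picture; by the first paragraph these coincide with the listed regions up to the boundary edges.

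The remaining work is to rule out the few bounded components that border only black vertices but are nonetheless not unitary$^\pm$ (the analogue of $C_3$ for $\alpha=\tfrac32$ and $C_4$ for $\alpha=1$ in the slanted case). In each such ambiguous case I would identify one of its (non-vertical, non-horizontal) boundary edges as a one-parameter hermitian family of the form $\nu^{x_0}\delta([-c,d])\rtimes\tau$ with $\tau\in\{\delta([\alpha];\s),L([\alpha];\s),\delta([0]_\pm;\s)\}$, and invoke Proposition \ref{prop: unitlines} (entries N$^\circ$1--4 of Table \ref{tab: redpnt}) to see that this family consists entirely of non-unitarizable representations, hence the adjacent open region is non-unitary$^\pm$.

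The main obstacle will be the bookkeeping for the small-$\alpha$ cases ($\alpha=0,\tfrac12,1$), where the singular lines, the vertex colourings, and the available one-parameter reducibility families from Table \ref{tab: redpnt} all change shape, so the diagrams must be handled case-by-case; moreover, at $\alpha=\tfrac12$ and $\alpha=0$ one must use the level-case analogues of rows 2--4 and 14 of Table \ref{tab: redpnt} to rule out regions like $\{x+y<1,\ x>\tfrac12\}$ that look geometrically similar to unitary regions in higher-$\alpha$ pictures. Once the picture is drawn correctly for each $\alpha$, the argument itself is purely combinatorial and proceeds by elimination exactly as in Proposition \ref{prop: dim2unit}.
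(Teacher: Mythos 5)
Your plan is essentially the paper's proof: the listed regions are obtained from unitarily induced base points by deformation (the paper realizes \eqref{eq: >1xy1}, \eqref{eq: 12y12}, \eqref{eq: 0xy1} as the connected component of the origin, treats $\alpha=1$ via the point $(\tfrac14,\tfrac14)$, and gets \eqref{eq: >1x-x} as part of the boundary of the three-dimensional region \eqref{eq: xx1x2x14} of Proposition \ref{prop: compserex}, while your iterated one-parameter deformation gives the same thing), and the exhaustion is the same vertex-coloured elimination as in Proposition \ref{prop: dim2unit}, with exactly two all-black bounded components left over ($\alpha=\tfrac32$ and $\alpha=\tfrac12$), killed by one-parameter families on their slanted edges via Proposition \ref{prop: unitlines}.

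Two local steps need repair, though. First, $[y]\times[x]$ with $x,y\ge0$ and $x+y<1$ is not a complementary series of a general linear group (it is not even hermitian unless $x=y=0$), so "combining with the GL-complementary series $[y]\times[x]$" is not a valid seed for \eqref{eq: >1xy1}; seed instead at a point where the representation is genuinely unitarily induced, e.g.\ the origin ($[0]\times[0]\rtimes\pi$, $\pi\in\{\delta([\alpha];\s),L([\alpha];\s)\}$) or a diagonal point $([x]\times[-x])\rtimes\pi$ with $x<\tfrac12$, and propagate over the connected component; also the corank-two family $[x]\rtimes\delta([\alpha];\s)$ you use is not entry N$^\circ$5 of Table \ref{tab: redpnt} (that entry is $[x]\rtimes\delta([\alpha,\alpha+1];\s)$), although its unitarizability for $0\le x<\alpha-1$ is indeed available from the corank-two analysis. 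Second, and more seriously, for the ambiguous component $\{y>\tfrac12,\ x+y<1\}$ at $\alpha=\tfrac12$ your template family $\nu^{x_0}\delta([-c,d])\rtimes\tau$ with $\tau=\delta([\tfrac12];\s)$ fails: for $\alpha=\tfrac12$ the family $\nu^{x}\delta([-\tfrac12,\tfrac12])\rtimes\delta([\tfrac12];\s)$ has first reducibility at $x=1$ and is unitarizable on the relevant range $0<x<\tfrac12$, so it yields no contradiction against unitary$^+$. You must use the mixed families of row 3 of Table \ref{tab: redpnt}, namely $\nu^{x}L([-\tfrac12],[\tfrac12])\rtimes\delta([\tfrac12];\s)$ to exclude unitary$^+$ and $\nu^{x}\delta([-\tfrac12,\tfrac12])\rtimes L([\tfrac12];\s)$ to exclude unitary$^-$; these have reducibility already at $0$, hence no complementary series, and are non-unitarizable for all $x>0$ (this is exactly what the paper does; at $\alpha=\tfrac32$ both rows 2 and 3 work since there reducibility starts at $0$ as well).
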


\begin{figure}
\begin{tikzpicture} [thick, scale=0.93]
\draw[style=dotted,line width=1pt] (1,1) -- (12,1);
\draw[style=dotted,line width=1pt] (1,1) -- (1,12);
\draw[style=dotted,line width=1pt] (1,1) -- (12,12);
\draw[style=dashed,line width=1pt] (1,3) -- (3,1);
\draw[style=dashed,line width=1pt] (1,3) -- (10,12);
\draw[style=dashed,line width=1pt] (3,1) -- (12,10);
\draw[style=dashed,line width=1pt] (1,5) -- (12,5);
\draw[style=dashed,line width=1pt] (5,1) -- (5,12);
\draw[style=dashed,line width=1pt] (9,1) -- (9,12);
\draw[style=dashed,line width=1pt] (1,9) -- (12,9);
\draw (12,1) node[right] {$y=0$};
\draw (1,12) node[above] {$x=0$};
\draw (12.7,12) node[above] {sym. $y=x$};
\draw (5,12) node[above] {$x=\alpha-1$};
\draw (8.7,12) node[above] {$x=\alpha+1$};
\draw (12,5) node[right] {$y=\alpha-1$};
\draw (12,9) node[right] {$y=\alpha+1$};
\draw (10.8,12) node[above] {$y=x+1$};
\draw (12,10) node[right] {$\ y=x-1$};
\draw (2.4,2.4) node[above] {$y+x=1$};
\draw (7.6,11.5) node[below] {$(\alpha+1,\alpha+2)$};
\draw (9,9) node[above] {$(\alpha+1,\alpha+1)$};
\draw (5,5) node[above] {$(\alpha-1,\alpha-1)$};
\draw (4,7.5) node[below] {$(\alpha-1,\alpha)$};
\draw (6.4,9.8) node[below] {$(\alpha,\alpha+1)$};
\draw (3.7,9.7) node[below] {$(\alpha-1,\alpha+1)$};
\draw (1.6,1.5) node {$C_0$};
\draw (1.6,4.3) node {$C_7$};
\fill [pattern=north west lines, pattern color=gray] (1,1) -- (3,1) -- (1,3);
\fill [pattern=north west lines, pattern color=gray] (5,1) -- (3,1) -- (5,3);
\fill [pattern=north west lines, pattern color=gray] (1,3) -- (1,5) -- (3,5);
\draw[line width=2pt] (5,1) -- (5,3);
\draw[line width=2pt] (3,1) -- (5,3);
\draw[line width=2pt] (1,3) -- (3,1);
\draw[line width=2pt] (1,3) -- (3,5);
\draw[line width=2pt] (1,5) -- (3,5);
\draw[line width=1pt] (9,1) -- (9,5);
\draw[line width=1pt] (1,9) -- (5,9);
\shade[shading=ball,ball color=black] (3,1) circle (.09);
\shade[shading=ball,ball color=black] (5,3) circle (.09);
\shade[shading=ball,ball color=black] (1,3) circle (.09);
\shade[shading=ball,ball color=black] (3,5) circle (.09);
\shade[shading=ball,ball color=lightgray] (11,9) circle (.12);
\shade[shading=ball,ball color=lightgray] (9,11) circle (.12);
\shade[shading=ball,ball color=lightgray] (9,5) circle (.12);
\shade[shading=ball,ball color=lightgray] (5,9) circle (.12);
\shade[shading=ball,ball color=lightgray] (7,5) circle (.12);
\shade[shading=ball,ball color=lightgray] (5,7) circle (.12);
\shade[shading=ball,ball color=white] (9,9) circle (.19);
\shade[shading=ball,ball color=white] (5,5) circle (.19);
\shade[shading=ball,ball color=white] (9,7) circle (.19);
\shade[shading=ball,ball color=white] (7,9) circle (.19);
\end{tikzpicture}
\caption{Unitarizability for $\Psi_{(x,y)}^\pm$ (case $\alpha = 3$)} \label{fig: l3}
\end{figure}

\begin{proof}
The regions \eqref{eq: >1xy1}, \eqref{eq: 12y12}, \eqref{eq: 0xy1} are the connected components
of the origin (i.e. $C_0$) in $\R^2_{\reglng,++}$ in the cases $\alpha>1$,
$\alpha=\tfrac12$ and $\alpha=0$ respectively.
Therefore, they are unitary$^\pm$ since $\delta([\alpha];\sigma)$ and $L([\alpha];\sigma)$ (for $\alpha>0$) and $\delta([0]_\pm;\sigma)$
(for $\alpha=0$) are unitarizable; see Figures \ref{fig: l3}, \ref{fig: l2}, \ref{fig: l32}, \ref{fig: l12}, and \ref{fig: l0}).
\begin{figure}
\begin{tikzpicture} [thick, scale=0.8]
\draw[style=dotted,line width=1pt] (1,1) -- (10,1);
\draw[style=dotted,line width=1pt] (1,1) -- (1,10);
\draw[style=dotted,line width=1pt] (1,1) -- (10,10);
\draw[style=dashed,line width=1pt] (1,3) -- (3,1);
\draw[style=dashed,line width=1pt] (1,3) -- (8,10);
\draw[style=dashed,line width=1pt] (3,1) -- (10,8);
\draw[style=dashed,line width=1pt] (1,3) -- (10,3);
\draw[style=dashed,line width=1pt] (3,1) -- (3,10);
\draw[style=dashed,line width=1pt] (7,1) -- (7,10);
\draw[style=dashed,line width=1pt] (1,7) -- (10,7);
\draw (10,1) node[right] {$y=0$};
\draw (.8,10) node[above] {$x=0$};
\draw (3.3,10) node[above] {$x=1$};
\draw (6.7,10) node[above] {$x=3$};
\draw (10,3) node[right] {$y=1$};
\draw (10,7) node[right] {$y=3$};
\draw (9,10) node[above] {$y=x+1$};
\draw (10,8) node[right] {$\ y=x-1$};
\draw (2,2) node[above] {$y+x=1$};
\draw (2.4,3) node[above] {$(1,1)$};
\draw (6.4,7) node[above] {$(3,3)$};
\draw (3,5.1) node[left] {$(1,2)$};
\draw (2.3,7) node[below] {$(1,3)$};
\draw (4.5,7) node[above] {$(2,3)$};
\draw (7.2,9) node[left] {$(3,4)$ \ \ };
\draw (2,1.5) node[left] {$C_0$};
\fill [pattern=north west lines, pattern color=gray] (1,1) -- (3,1) -- (1,3);
\draw[line width=2pt] (1,3) -- (3,1);
\draw[line width=1pt] (7,1) -- (7,3);
\draw[line width=1pt] (1,7) -- (3,7);
\shade[shading=ball,ball color=black] (3,1) circle (.09);
\shade[shading=ball,ball color=black] (1,3) circle (.09);
\shade[shading=ball,ball color=lightgray] (7,9) circle (.14);
\shade[shading=ball,ball color=lightgray] (3,7) circle (.14);
\shade[shading=ball,ball color=lightgray] (7,3) circle (.14);
\shade[shading=ball,ball color=lightgray] (9,7) circle (.14);
\shade[shading=ball,ball color=lightgray] (5,3) circle (.14);
\shade[shading=ball,ball color=lightgray] (3,5) circle (.14);
\shade[shading=ball,ball color=white] (7,7) circle (.22);
\shade[shading=ball,ball color=white] (3,3) circle (.22);
\shade[shading=ball,ball color=white] (5,7) circle (.22);
\shade[shading=ball,ball color=white] (7,5) circle (.22);
 \end{tikzpicture}
 \caption{Unitarizability for $\Psi_{(x,y)}^\pm$ (case $\alpha=2$)} \label{fig: l2}
\end{figure}

\begin{figure}
\begin{tikzpicture} [thick, scale=0.7]
\draw[style=dotted,line width=1pt] (0,0) -- (11,0);
\draw[style=dotted,line width=1pt] (0,0) -- (0,11);
\draw[style=dotted,line width=1pt] (0,0) -- (11,11);
\draw[style=dashed,line width=1pt] (0,4) -- (4,0);
\draw[style=dashed,line width=1pt] (0,4) -- (7,11);
\draw[style=dashed,line width=1pt] (4,0) -- (11,7);
\draw[style=dashed,line width=1pt] (0,2) -- (11,2);
\draw[style=dashed,line width=1pt] (2,0) -- (2,11);
\draw[style=dashed,line width=1pt] (10,0) -- (10,11);
\draw[style=dashed,line width=1pt] (0,10) -- (11,10);
\draw (11,0) node[right] {$y=0$};
\draw (-0.3,11.2) node[above] {$x=0$};
\draw (2.3,11) node[above] {$x=\frac12$};
\draw (10.4,11) node[above] {$x=\frac52$};
\draw (11,2) node[right] {$y=\frac12$};
\draw (11,10) node[right] {$y=\frac52$};
\draw (6.8,11) node[above] {$y=x+1$};
\draw (11,7) node[right] {$\ y=x-1$};
\draw (3.4,0.4) node[above] {$y+x=1$};
\draw (3.4,2) node[above] {$(\frac12,\frac12)$};
\draw (2.8,5) node[above] {$(\frac12,\frac32)$};
\draw (9.2,10) node[above] {$(\frac52,\frac52)$};
\draw (6.7,9) node[above] {$(\frac32,\frac52)$};
\draw (2.8,10) node[below] {$(\frac12,\frac52)$};
\draw (1,1) node {$C_0$};
\draw (0.65,2.55) node {$C_{9}$};
\fill [pattern=north west lines, pattern color=gray] (0,0) -- (0,2) -- (2,2) -- (2,0) -- (0,0);
\draw[line width=2pt] (2,0) -- (2,2);
\draw[line width=2pt] (0,2) -- (2,2);
\draw[line width=1pt] (10,0) -- (10,2);
\draw[line width=1pt] (0,10) -- (2,10);
\shade[shading=ball,ball color=black] (2,2) circle (.15);
\shade[shading=ball,ball color=lightgray] (10,6) circle (.18);
\shade[shading=ball,ball color=lightgray] (6,10) circle (.18);
\shade[shading=ball,ball color=lightgray] (6,2) circle (.18);
\shade[shading=ball,ball color=lightgray] (2,6) circle (.18);
\shade[shading=ball,ball color=lightgray] (2,10) circle (.18);
\shade[shading=ball,ball color=lightgray] (10,2) circle (.18);
\shade[shading=ball,ball color=white] (10,10) circle (.3);
\end{tikzpicture}
\caption{Unitarizability for $\Psi_{(x,y)}^\pm$ (case $\alpha=\frac32$)} \label{fig: l32}
\end{figure}

\begin{figure}
\begin{tikzpicture} [thick, scale=0.5]
\draw[style=dashed,line width=1pt] (1,1) -- (14,1);
\draw[style=dashed,line width=1pt] (1,1) -- (1,14);
\draw[style=dotted,line width=1pt] (1,1) -- (14,14);
\draw[style=dashed,line width=1pt] (1,5) -- (5,1);
\draw[style=dashed,line width=1pt] (1,5) -- (10,14);
\draw[style=dashed,line width=1pt] (5,1) -- (14,10);
\draw[style=dashed,line width=1pt] (9,1) -- (9,14);
\draw[style=dashed,line width=1pt] (1,9) -- (14,9);
\draw (14,1) node[right] {$y=0$};
\draw (1,14) node[above] {$x=0$};
\draw (14,10) node[right] {$y=x-1$};
\draw (14,9) node[right] {$y=2$};
\draw (10.5,13.9) node[above] {$\ \ \ \ \ \ y=x+1$};
\draw (9,14) node[above] {$x=2$\ \ \ \ \ \ };
\draw (9.3,13) node[below] {$\ \ \ \ \ (2,3)$};
\draw (1,1) node[below] {$(0,0)$};
\draw (5,1) node[below] {$(1,0)$};
\draw (9,1) node[below] {$(2,0)$};
\draw (5.8,9) node[below] {$(1,2)$};
\draw (10,9) node[below] {$(2,2)$};
\draw (3,2) node[right] {$x+y=1$};
\draw (1.3,2) node[right] {$C_{10}$};
\draw [pattern=north west lines, pattern color=gray] (1,1) -- (5,1) -- (1,5);
\draw[line width=2pt] (5,1) -- (1,5);
\draw[line width=2pt] (1,1) -- (1,5);
\draw[line width=2pt] (1,1) -- (5,1);
\shade[shading=ball,ball color=black] (5,1) circle (.15);
\shade[shading=ball,ball color=black] (1,1) circle (.15);
\shade[shading=ball,ball color=black] (1,5) circle (.15);
\shade[shading=ball,ball color=lightgray] (9,1) circle (.24);
\shade[shading=ball,ball color=lightgray] (1,9) circle (.24);
\shade[shading=ball,ball color=lightgray] (13,9) circle (.24);
\shade[shading=ball,ball color=lightgray] (9,13) circle (.24);
\shade[shading=ball,ball color=white] (9,5) circle (.4);
\shade[shading=ball,ball color=white] (5,9) circle (.4);
\shade[shading=ball,ball color=white] (9,9) circle (.4);
\end{tikzpicture}
\caption{Unitarizability for $\Psi_{(x,y)}^\pm$ (case $\alpha=1$)} \label{fig: l1}
\end{figure}

\begin{figure}
\begin{tikzpicture} [thick, scale=0.6]
\draw[style=dotted,line width=1pt] (1,1) -- (12,1);
\draw[style=dotted,line width=1pt] (1,1) -- (1,12);
\draw[style=dotted,line width=1pt] (1,1) -- (12,12);
\draw[style=dashed,line width=1pt] (1,5) -- (5,1);
\draw[style=dashed,line width=1pt] (1,5) -- (8,12);
\draw[style=dashed,line width=1pt] (5,1) -- (12,8);
\draw[style=dashed,line width=1pt] (3,1) -- (3,12);
\draw[style=dashed,line width=1pt] (7,1) -- (7,12);
\draw[style=dashed,line width=1pt] (1,3) -- (12,3);
\draw[style=dashed,line width=1pt] (1,7) -- (12,7);
\draw (12,1) node[right] {$y=0$};
\draw (1,12.2) node[above] {$x=0$};
\draw (12,8) node[right] {$y=x-1$};
\draw (0,4.7) node[right] {$x+y=1$};
\draw (12,3) node[right] {$y=\frac12$};
\draw (12,7) node[right] {$y=\frac32$};
\draw (6.7,12) node[above] {$x=\frac32$};
\draw (3.4,12) node[above] {$x=\frac12$};
\draw (9.4,12) node[above] {$y=x+1$};
\draw (4,3) node[above] {$(\frac12,\frac12)$};
\draw (7.9,7) node[below] {$(\frac32,\frac32)$};
\draw (3.9,7) node[below] {$(\frac12,\frac32)$};
\draw (5.7,11.6) node[below] {$(\frac52,\frac32)$};
\draw (2,2.5) node[below] {$C_0$};
\draw (1.7,4.1) node[below] {$C_{11}$};
\fill [pattern=north west lines, pattern color=gray] (1,1) -- (1,3) -- (3,3) -- (3,1);
\draw[line width=2pt] (3,1) -- (3,3);
\draw[line width=2pt] (1,3) -- (3,3);
\draw[line width=1pt] (3,3) -- (5,1);
\draw[line width=1pt] (3,3) -- (1,5);
\draw[line width=1pt] (5,1) -- (7,3);
\draw[line width=1pt] (1,5) -- (3,7);
\draw[line width=1pt] (1,7) -- (3,7);
\draw[line width=1pt] (7,1) -- (7,3);
\draw[line width=1pt] (3,3) -- (7,3);
\draw[line width=1pt] (3,3) -- (3,7);
\shade[shading=ball,ball color=black] (3,3) circle (.15);
\shade[shading=ball,ball color=lightgray] (7,3) circle (.2);
\shade[shading=ball,ball color=lightgray] (3,7) circle (.2);
\shade[shading=ball,ball color=lightgray] (11,7) circle (.2);
\shade[shading=ball,ball color=lightgray] (7,11) circle (.2);
\shade[shading=ball,ball color=white] (7,7) circle (.33);
\end{tikzpicture}
\caption{Unitarizability for $\Psi_{(x,y)}^\pm$ (case $\alpha=\frac12$)} \label{fig: l12}
\end{figure}

\begin{figure}
\begin{tikzpicture} [thick, scale=0.5]
\draw[style=dotted,line width=1pt] (1,1) -- (10,1);
\draw[style=dotted,line width=1pt] (1,1) -- (1,10);
\draw[style=dotted,line width=1pt] (1,1) -- (10,10);
\draw[style=dashed,line width=1pt] (1,5) -- (5,1);
\draw[style=dashed,line width=1pt] (1,5) -- (6,10);
\draw[style=dashed,line width=1pt] (5,1) -- (10,6);
\draw[style=dashed,line width=1pt] (5,1) -- (5,10);
\draw[style=dashed,line width=1pt] (1,5) -- (10,5);
\draw (10,1) node[right] {$y=0$};
\draw (10,5) node[right] {$y=1$};
\draw (.6,10) node[above] {$x=0$};
\draw (4.2,10) node[above] {$x=1$};
\draw (10,6) node[right] {$y=x-1$};
\draw (7.2,10) node[above] {$y=x+1$};
\draw (6,5) node[below] {$(1,1)$};
\draw (5,1) node[below] {$(1,0)$};
\draw (6,9) node[below] {$(1,2)$};
\draw (2.3,2.8) node[below] {$C_0$};
\fill [pattern=north west lines, pattern color=gray] (1,1) -- (5,1) -- (1,5);
\draw[line width=2pt] (5,1) -- (1,5);
\draw[line width=1pt] (5,1) -- (5,5);
\draw[line width=1pt] (1,5) -- (5,5);
\shade[shading=ball,ball color=black] (5,1) circle (.15);
\shade[shading=ball,ball color=black] (1,5) circle (.15);
\shade[shading=ball,ball color=lightgray] (5,5) circle (.25);
\shade[shading=ball,ball color=lightgray] (9,5) circle (.25);
\shade[shading=ball,ball color=lightgray] (5,9) circle (.25);
\end{tikzpicture}
\caption{Unitarizability for $\Psi_{(x,y)}^\pm$ (case $\alpha=0$)} \label{fig: l0}
\end{figure}

In the case $\alpha=1$ the origin is not a regular point in $\R^2$.
However, $[\frac14]\times[\frac14]\rtimes\pi\cong ([-\frac14]\times[\frac14])\rtimes\pi$ is in \eqref{eq: 1axy1} (i.e. in $C_{10}$
of Figure \ref{fig: l1}) for $\pi=L([1];\sigma)$ or $\pi=\delta([1];\sigma)$. Therefore, this region is also unitary$^\pm$.

Suppose that $\alpha>2$.
Consider Figure \ref{fig: l3}.
The region \eqref{eq: >1x-x} (i.e. $C_7$ in Figure \ref{fig: l3}) is clearly a connected component of $\R^2_{\reglng,++}$
(not containing the origin).
For any $(x,y)\in\R^2_{\reglng,++}$ satisfying \eqref{eq: >1x-x} (where $\alpha>2$), the point
$(x,y,\alpha)$ lies in the boundary of \eqref{eq: xx1x2x14}. Hence, it is unitary$^\pm$ by Proposition \ref{prop: compserex}.

For the converse, as before, call the regions \eqref{eq: >1xy1}--\eqref{eq: 0xy1} above
(with the respective conditions on $\alpha$) \emph{special}. We proceed now in the same way as in the proof of exhaustion in Proposition \ref{prop: dim2unit}
(therefore, we shall not repeat all details from there). We need to prove that any non-shaded component is neither unitary$^+$ nor unitary$^-$.
Here, only the components $C_{9}$ in Figure \ref{fig: l32} and $C_{11}$ in Figure \ref{fig: l12} do not have a
non-black vertex (therefore all other non-shaded components in Figures \ref{fig: l3} -- \ref{fig: l0} are non-unitary).
We now show that also these two components are neither unitary$^+$ nor unitary$^-$, which will complete the proof of the proposition.

Let $\alpha=\tfrac32$. The slanted side of $C_{9}$ contains the family $\nu^{\frac12-x}\delta([-\frac12,\frac12])\rtimes \pi$, $0<x<\tfrac12$,
for either $\pi=\delta([\frac32];\sigma)$ or $\pi=L([\frac32];\sigma)$. By Proposition \ref{prop: unitlines} (use N$^\circ$2 and 3 in Table \ref{tab: redpnt})
both families consists of non-unitarizable representations. Therefore, $C_{9}$ is neither unitary$^+$ nor unitary$^-$.

Assume now $\alpha=\frac12$. Here the slanted side of this component contains either the family $\nu^{\frac12-x}\delta([-\frac12,\frac12])\rtimes L([\frac12];\sigma)$, $0<x<\tfrac12$,
or the family $\nu^{\frac12-x}L([-\frac12],[\frac12])\rtimes \delta([\frac12];\sigma)$, $0<x<\tfrac12$.
Both families consists of non-unitarizable representations by Proposition \ref{prop: unitlines} (use N$^\circ$3 in Table \ref{tab: redpnt}),
which implies that $C_{11}$ is neither unitary$^+$ nor unitary$^-$.
\end{proof}

\begin{corollary} \label{cor: 2dmlng}
The unitary$^\pm$ connected components of $\R^2_{\vreglng,++}$ are as follows.
\begin{enumerate}
\item $\alpha>1:$
\begin{gather*}
x+y<1,\ y<\alpha-1,\\
y-x>1,\ y<\alpha-1,
\end{gather*}
where the constraint $y<\alpha-1$ in the first region is redundant for $\alpha\ge2$
and the second region is empty if $\alpha\le 2$.
\item $\alpha=1:$
\[
x+y<1
\]
\item $\alpha=\tfrac12:$
\[
y<\tfrac12
\]
\item $\alpha=0:$
\[
x+y<1,\ x>0.
\]
\end{enumerate}
The other connected components of $\R^2_{\vreglng,++}$ are neither unitary$^+$ nor unitary$^-$.
\qed
\end{corollary}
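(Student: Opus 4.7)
The plan is to derive this corollary directly from Proposition \ref{prop: 2dmlng} by a routine comparison of the two stratifications. By definition, $\R^2_{\vreglng}$ differs from $\R^2_{\reglng}$ precisely in that the four lines $x=\pm\alpha$ and $y=\pm\alpha$ are further removed. Since the unitarizability of $\Psi^\pm_{(x,y)}$ is a pointwise property that is constant on connected components of $\R^2_{\reglng}$, any connected component $C$ of $\R^2_{\vreglng,++}$ is unitary$^\pm$ if and only if its enclosing component $\widetilde{C}$ in $\R^2_{\reglng,++}$ is one of the unitary$^\pm$ regions listed in Proposition \ref{prop: 2dmlng}. The task therefore reduces to understanding how each unitary$^\pm$ region of Proposition \ref{prop: 2dmlng} is partitioned by the four new lines, together with the observation that the remaining components of $\R^2_{\vreglng,++}$ are contained in non-unitary$^\pm$ regions of $\R^2_{\reglng,++}$ and are thus themselves neither unitary$^+$ nor unitary$^-$.

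For $\alpha>1$, each unitary$^\pm$ region in Proposition \ref{prop: 2dmlng} lies strictly inside $\{|x|<\alpha,\,|y|<\alpha\}$: in $x+y<1,\ y<\alpha-1$ one has $0\le x\le y<\alpha-1<\alpha$, and in $y-x>1,\ y<\alpha-1$ one has $0\le x<y-1<\alpha-2$ together with $y<\alpha-1$. Consequently none of the new hyperplanes meets these regions and they persist unchanged in $\R^2_{\vreglng,++}$. For $\alpha=1$ the region $x+y<1$ sits inside $\{|x|,|y|<1\}=\{|x|,|y|<\alpha\}$ and is likewise unaffected. For $\alpha=\tfrac12$ the region $y<\tfrac12$, viewed in $\R^2_{++}$, gives $0\le x\le y<\tfrac12=\alpha$, again avoiding all four new lines. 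In each of these cases the corollary's statement follows without any further work.

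The only genuinely new phenomenon occurs for $\alpha=0$, where the lines $x=\pm\alpha$ and $y=\pm\alpha$ degenerate to the single pair $x=0$ and $y=0$, which lie on the boundary of the fundamental chamber $\R^2_{++}$ itself. In $\R^2_{\reglng,++}$ the unitary$^\pm$ region is $\{(x,y):0\le x\le y,\ x+y<1\}$ (which contains the ray $x=0$); passing to $\R^2_{\vreglng,++}$ removes this ray (the ray $y=0$ being automatically excluded since $y\ge x>0$), leaving the connected open region $\{x+y<1,\ x>0\}$ in $\R^2_{++}$. This matches the formula in the corollary.

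The main substantive input of the argument is Proposition \ref{prop: 2dmlng}; the comparison above is essentially bookkeeping and presents no serious obstacle. The only small subtlety is the $\alpha=0$ case, where one must recognize that the removal of $x=0$ genuinely alters the description of the component (though not its unitarity), and that the constraint $y>0$ is automatic from $y\ge x>0$.
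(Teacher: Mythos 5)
Your proposal is correct and follows exactly the route the paper intends: the corollary is stated with \qed as an immediate consequence of Proposition \ref{prop: 2dmlng}, obtained by intersecting its unitary$^\pm$ regions with the finer regular set $\R^2_{\vreglng,++}$ and noting that constancy of unitarizability on components of $\R^2_{\reglng}$ passes to the subcomponents. Your case-by-case check that the extra lines $x=\pm\alpha$, $y=\pm\alpha$ miss the unitary regions except when $\alpha=0$ (where removing $x=0$ yields the extra condition $x>0$) is precisely the required bookkeeping.
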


\section{Three-parameter complementary series}

In the proof of Proposition \ref{prop: 3dimex} below we use the following two simple lemmas.

\begin{lemma} \label{edge-simple}
\begin{enumerate}
\item
\label{edge-simple-item: =}
Let $\alpha > 1$.
Suppose that $\mathscr C$ is a bounded connected component of $\R^3_{\reg}$ whose boundary contains a segment
$\overline{AB}$, where $A=(a_1,a_2,a_3)$ and $B=(b_1,b_2,b_3)$ are two distinct points such that
there exist an index $k$ and $\epsilon\in\{\pm1\}$ for which $a_k=b_k=\epsilon \alpha$.
Then, there exists a two-dimensional face of the closure $\overline{\mathscr C}$ of $\mathscr C$ that is contained in a level hyperplane.

\item
\label{edge-simple-item: 678}
 Suppose that a connected component $C$ of $\R^3_{\reg,++}$ contains in its boundary at least one of the sets
$\iota(C_6')$, $\iota(C_7')$ or $\iota(C_8')$ (see Lemma \ref{lem: 2dimrreg}).
Then, $\overline{C}$ admits a two-dimensional face that is contained in a level hyperplane.
\end{enumerate}
\end{lemma}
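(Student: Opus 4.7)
For part (1), $\overline{\mathscr C}$ is a bounded convex polytope in $\R^3$; since $\mathscr C$ is a connected open set disjoint from $L := \{x_k = \e\alpha\}$, $\mathscr C$ lies entirely on one side of $L$, hence $L$ supports $\overline{\mathscr C}$, and $F := \overline{\mathscr C} \cap L$ is a face containing the nondegenerate segment $\overline{AB}$, so $\dim F \ge 1$. The goal is to exclude $\dim F = 1$.

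Suppose for contradiction $F$ is an edge. Then $F = F_1 \cap F_2$ where $F_1, F_2$ are the two facets of $\overline{\mathscr C}$ incident to it, lying in singular hyperplanes $H_1, H_2$. If some $H_i = L$, the corresponding facet $F_i$ is two-dimensional and contained in $F$, contradicting $\dim F = 1$. Otherwise $F \subseteq H_1 \cap H_2$, which is an affine line, and the inclusion $F \subseteq L$ forces $H_1 \cap H_2 \subseteq L$. The crux of the proof is thus to verify, under $\alpha > 1$, that no two distinct singular hyperplanes (neither equal to $L$) have their intersection line contained in $L$. This is a direct enumeration, split into three subcases (slanted $\cap$ slanted, slanted $\cap$ level, level $\cap$ level): in each, the fixed coordinates along the intersection line take values in explicit finite sets such as $\{0, \pm 1, \pm\alpha, \pm(\alpha\pm 1)\}$, and requiring the $k$-th coordinate to equal $\e\alpha$ either forces some $H_i = L$ or pins $\alpha$ to an excluded value in $\{0, \tfrac12, 1\}$.

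For part (2), the key observation is that each $C_i' \subseteq H_{\shrt}$ (for $i \in \{6,7,8\}$) has a nondegenerate boundary segment lying on a line that lifts under $\iota$ to a level hyperplane. Concretely, $C_6'$ and $C_7'$ are bounded in part by $y = \alpha$, which under $\iota(x,y) = (x-\tfrac12, x+\tfrac12, y)$ becomes a segment in $\{x_3 = \alpha\}$; $C_8'$ is bounded in part by $x = \alpha - \tfrac12$, lifting to a segment in $\{x_2 = \alpha\}$. From $\iota(C_i') \subseteq \partial C$ we obtain by closure a nondegenerate segment on $\partial \overline C$ lying on a level hyperplane. Applying the argument of part (1) to the bounded convex polytope $\overline C$, whose facets lie either on singular hyperplanes or on the chamber walls $x_1 = 0$, $x_1 = x_2$, $x_2 = x_3$, yields the desired two-dimensional face on a level hyperplane; the extension of the enumeration to include chamber-wall hyperplanes produces no new obstruction, since any intersection of a chamber wall with another singular hyperplane that lies in a level hyperplane must involve a level hyperplane as one of the two meeting facets, still yielding a facet on some level hyperplane (possibly different from the one initially identified).

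The main obstacle is the careful bookkeeping of the case analysis in part (1): one must systematically verify, across the three subcases and all sign choices, that $\alpha > 1$ genuinely excludes every possibility of the intersection line lying in a level hyperplane. The chamber-wall extension needed for part (2) is more of the same, with a few additional pair-combinations to check, but the underlying mechanism, namely that intersections of two hyperplanes with rational coefficients can only fix coordinates to values in a small explicit list determined by the defining equations, remains the driving principle throughout.
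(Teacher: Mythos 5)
Your proposal is correct and runs along essentially the same lines as the paper: the segment on $L=\{x_k=\e\alpha\}$ forces the face $\overline{\mathscr C}\cap L$ to have dimension $1$ or $2$; in the one-dimensional case the edge lies on the intersection of the two facet hyperplanes, and a short enumeration of pairs of singular hyperplanes, using $\alpha>1$, rules out every coincidence. The differences are organizational rather than substantive: because you work with the fixed hyperplane $L$ and exclude $\dim(\overline{\mathscr C}\cap L)=1$ outright, your enumeration must (and correctly does) also cover level--level and level--slanted pairs, where the excluded values $\alpha\in\{0,\tfrac12\}$ arise, whereas the paper's contradiction hypothesis (no two-dimensional face in \emph{any} level hyperplane) reduces everything to the slanted--slanted case; and for part (2), where the paper simply says it follows from part (1), you rerun the argument for $\overline C$ with the chamber walls $x_1=0$, $x_1=x_2$, $x_2=x_3$ added to the list of possible facet hyperplanes, and you correctly note that the only new configurations (for instance $\{x_1=x_2\}\cap\{x_1=\alpha\}\subset\{x_2=\alpha\}$) force one of the two incident facets onto a level hyperplane, so the conclusion survives with a possibly different level hyperplane -- a point the paper's one-line reduction glosses over. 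Two small remarks: the enumeration is only described, not written out, though your description of its outcome (constant coordinates confined to $\{0,\pm1,\pm\alpha,\pm(\alpha\pm1)\}$, hence either $H_i=L$ or $\alpha\in\{0,\tfrac12,1\}$) is accurate and easy to check; and in part (2) you assert that $\overline C$ is bounded, which is not in the hypothesis -- fortunately your argument never uses it, since a one-dimensional face of a three-dimensional polyhedron still lies on at least two facets, which is all you need.
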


\begin{proof}
Suppose that $\mathscr C$ is a component as in (\ref{edge-simple-item: =}).
Clearly, $\overline{AB}$ is contained in either an edge or the relative interior of a two-dimensional face of $\overline{\mathscr C}$.
In the second case, there is nothing to prove. Therefore, suppose that $\overline{AB}$ is contained in an edge of $\overline{\mathscr C}$.
Clearly, it is enough to prove the lemma for $\epsilon=1$.

Suppose on the contrary that no two-dimensional face of $\overline{\mathscr C}$ is contained in a level hyperplane.
Then, the edge containing $\overline{AB}$ must be contained in the intersection of two different (non-parallel) slanted hyperplanes
given by the following equations
$$
\epsilon_1x_i+\epsilon_2x_j=1,\quad \epsilon_3x_r+\epsilon_4x_s=1, \quad \epsilon_l\in\{\pm1\},\ \ i,j,r,s\in\{1,2,3\}, \ \ i\ne j, \ r\ne s.
$$
Suppose $\{i,j\}=\{r,s\}$. Then, $(\epsilon_i,\epsilon_j)\ne \pm (\epsilon_r,\epsilon_s)$. Therefore, we are left with the hyperplanes $\epsilon_1x_i+\epsilon_1x_j=1$
and $\epsilon_1x_i-\epsilon_1x_j=1$. If $k\not\in \{i,j\}$ then $A=B$, which is a contradiction. Therefore, $k\in \{i,j\}$. If $k=i$, then $\epsilon_1\alpha=1$,
which cannot be. If $k=j$, then $\epsilon_1\alpha=0$, which again is not possible.

Therefore, $\{i,j\}\ne\{r,s\}$. Denote $\{i,j\}\cap\{r,s\}=\{t\}.$
Now hyperplanes are determined by equations
$$
\epsilon_1x_i+\epsilon_2x_t=1,\quad \epsilon_3x_t+\epsilon_4x_s=1, \quad \epsilon_l\in\{\pm1\},\ \ \{i,t,s\}=\{1,2,3\}.
$$
Suppose that $k=i$ or $k=s$. Then, $a_t=b_t$, and further $a_s=b_s$, which is a contradiction.
Similarly, if $k=t$, then again $A=B$, which is a contradiction.
This completes the proof (\ref{edge-simple-item: =}). Further, (\ref{edge-simple-item: 678}) follows directly from (\ref{edge-simple-item: =}).
\end{proof}

\begin{remark}
By the same reasoning, the closure of the connected components \eqref{eq: xx1x212} and \eqref{eq: xx1x2x14} admit two-dimensional faces
which are contained in level hyperplanes. The same is true for \eqref{eq: xx1x213} if $\alpha>1$.
\end{remark}

\begin{lemma} \label{lemma-C-23-}
Suppose that $\mathscr C$ is a connected component of $\R^3_{\reg}$ which has in its boundary $\iota(C_2')$ (with $\alpha>1$) or
$\iota(C_3')$. Assume further that $\mathscr C$ lies in the connected component of $\R^3\backslash H_{\shrt}$ that contains the origin.
Then, $\mathscr C$ is not a unitary component.
\end{lemma}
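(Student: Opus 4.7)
The plan is to prove non-unitarizability of $\mathscr C$ by exhibiting a specific point $P\in\overline{\mathscr C}$ at which the representation $\Pi_P$ admits an irreducible subquotient that is known (from the corank-$3$ critical-case analysis of Chapters~\ref{sec: unit3}--\ref{basic 1}) to be non-unitarizable. The relevant limit principle is that if $\mathscr C$ were a unitary component, then for $P\in\overline{\mathscr C}$ every irreducible subquotient of $\Pi_P$, being a limit of irreducible unitarizable characters $\Pi_{\mathbf{x}}$ with $\mathbf{x}\in\mathscr C$, would itself be unitarizable by \cite{MR0324429}.

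First I would describe $\mathscr C$ concretely. Parametrize $\iota(C_2')$ (respectively $\iota(C_3')$) in $\R^3$ via $(y_1,y_2,y_3)=(x-\tfrac12,x+\tfrac12,y)$, so that it lies on the hyperplane $y_2-y_1=1$; list its vertices; and identify $\mathscr C$ as the connected component of $\R^3_{\reg}$ whose closure contains this $2$-dimensional face and which satisfies $y_2-y_1<1$. A direct inspection of reducibility walls shows that the vertices of $\iota(C_2')$ in $\R^3$ are $(0,1,0)$, $(1,2,0)$ and $(\tfrac12,\tfrac32,\tfrac12)$ (the last replaced by one on the level hyperplane $y_2=\alpha$ when $1<\alpha\le 2$), and analogously for $\iota(C_3')$ with $(-\tfrac12,\tfrac12,\tfrac12)$, $(0,1,0)$, $(0,1,1)$. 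Each of these vertices is easily seen to lie in $\overline{\mathscr C}$ by constructing an explicit straight-line path from an interior point of $\mathscr C$ to the vertex that crosses no reducibility hyperplane until its endpoint.

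Next I would extract a non-unitarizable subquotient at such a vertex. For $\iota(C_2')$ and $\alpha>1$, $\alpha\ne 2$, the vertex $(1,2,0)\in\overline{\mathscr C}$ is (after a signed permutation) the point $(0,1,2)$, and by \S\ref{sec: 012} the representation $\Pi_{(0,1,2)}$ has non-unitarizable irreducible subquotients. For $\iota(C_3')$ and $\alpha\ne 1$, the shared vertex $(0,1,0)$ corresponds (after signed permutation) to $\Pi_{(0,0,1)}$ (in the non-critical case for $\alpha$), and the analysis of subquotients inherited from the corank-$2$ representation $[1]\rtimes L([0],[\alpha];\s)$ or $[1]\rtimes\delta([\alpha];\s)$ — which is non-unitary by Proposition \ref{PROP: RK2} and Lemma \ref{lemma-nu-m} — provides a non-unitarizable irreducible subquotient.

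The main obstacle, and where the proof demands real care, is the handful of exceptional values of $\alpha$ at which the naively chosen vertex turns out to be strongly unitary — namely $\alpha=2$ for $\iota(C_2')$ (where $\Pi_{(0,1,2)}$ is strongly unitary by Proposition \ref{prop: 012alpha=2}) and $\alpha=1$ for $\iota(C_3')$ (where all three natural vertices yield strongly unitary representations). In those cases I would replace the vertex argument by exhibiting a $1$-parameter edge of $\overline{\mathscr C}$ along which $\Pi_{\mathbf{x}}$ is an irreducible family of the form treated in Lemma \ref{lemma-nu-m} (for instance $[x]\rtimes L([\alpha,\alpha+1];\s)$ or $[x]\rtimes L([\alpha+1];\delta([\alpha];\s))$, both of which are always non-unitarizable when irreducible), or, alternatively, by checking directly that $\overline{\mathscr C}$ has an unbounded face and invoking the unramified-parameter non-unitarity criterion of \cite{MR733166}. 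Stitching these cases together is the genuinely case-by-case part of the argument, but each individual case is routine once the vertex/edge is located.
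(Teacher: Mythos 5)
Your limit principle (non-unitarizable subquotient at a point of $\overline{\mathscr C}$ forces $\mathscr C$ non-unitary) is fine, but every specific witness you propose fails, and in fact no point of the face you are testing can work. For $\iota(C_2')$: \S\ref{sec: 012} concerns $\mathbf{x}=(\alpha,\alpha+1,\alpha+2)$, not the absolute point $(0,1,2)$; for $\alpha>2$ the point $(0,1,2)$ lies in the closure of the unitary region \eqref{eq: xx1x212}, and for $\alpha=2$ it is the critical point of Proposition \ref{prop: 012alpha=2} — in both cases it is strongly unitary — while for $1<\alpha<2$ it is not even a vertex of $\iota(C_2')$. For $\iota(C_3')$: the vertex $(0,1,0)\sim(0,0,1)$ lies in the closure of \eqref{eq: xx1x212} for every $\alpha>1$, hence is strongly unitary, and the representations $[1]\rtimes L([0],[\alpha];\s)$, $[1]\rtimes\delta([\alpha];\s)$ you invoke have nothing to do with $\JH(\Pi_{(0,0,1)})$ when $\alpha>1$. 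The obstruction is structural: by Lemma \ref{lem: 2dimrreg} the faces $C_2'$, $C_3'$ are unitary$^\pm$ (they bound the unitary components $w_2(C_b)$, $w_3(C_c)$ lying on the other side of $H_{\shrt}$), so the entire closed faces $\overline{\iota(C_2')}$, $\overline{\iota(C_3')}$ — vertices and edges included — consist of strongly unitary points; this is precisely why the lemma is delicate, and why inspecting that face can never detect the non-unitarity of $\mathscr C$. Your fallbacks do not repair this: the families $[x]\rtimes L([\alpha,\alpha+1];\s)$, $[x]\rtimes L([\alpha+1];\delta([\alpha];\s))$ live at exponents $\alpha,\alpha+1$, whereas all coordinates on $\overline{\mathscr C}$ are bounded by $\alpha$ in absolute value, and $\mathscr C$ is bounded, so there is no unbounded face to invoke.

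What is missing is a way to reach non-unitary territory from inside $\mathscr C$, away from the given face. The paper's proof does exactly this: it writes down explicit convex regular regions $Y_2=\{x_1+x_2<1,\ x_3-x_2<1,\ x_3-x_1>1,\ x_3<\alpha\}$ and $Y_3=\{x_1+x_3<1,\ x_3-x_2<1,\ x_2+x_3>1,\ x_3<\alpha\}$ whose translates $w_2(Y_2)$, $w_3(Y_3)$ are adjacent to $\iota(C_2')$, $\iota(C_3')$ on the origin side of $H_{\shrt}$ and are therefore contained in $\mathscr C$; non-unitarity of $Y_2$ is then detected at a \emph{different} boundary point $(0,1,\tfrac54)=\iota(\tfrac12,\tfrac54)$ of $H_{\shrt}$ (outside $\overline{C_2'}$), where Proposition \ref{prop: dim2unit} applies, and non-unitarity of $Y_3$ by unitary parabolic reduction at the interior point $(\tfrac15,\tfrac35,\tfrac35)$, since $[\tfrac35]\times[-\tfrac35]$ is not unitarizable. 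Neither of these ideas — enlarging the test set beyond the face $\iota(C_i')$, or using parabolic reduction at an interior point of $\mathscr C$ — appears in your proposal, so the argument does not go through.
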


\begin{proof}
We use below the transforms $w_2:(x_1,x_2,x_3)\mapsto (x_2,x_3,x_1)$ and $w_3:(x_1,x_2,x_3)\mapsto (-x_1,x_3,x_2)$ that were introduced in
the proof of Lemma \ref{lem: 2dimrreg}.

Suppose first that $\iota(C_2')$ lies in the boundary of $\mathscr C$ (and $\alpha >1$).
Denote by $Y_2$ the non-empty convex open subset of $\R^3_{++}$ defined by the following inequalities:
$$
x_1+x_2< 1, \quad x_3-x_2< 1, \quad x_3-x_1>1,\quad x_3<\alpha.\footnote{The last condition is automatically satisfied if $\alpha\geq2$.}
$$
It is straightforward to verify that $Y_2\subset\R^3_{\reg}$.
However, $Y_2$ is not a unitary component since for instance $(0,1,\frac54)=\iota(\frac12,\frac54)\in\partial(Y_2)\cap H_{\shrt}$
and $(\frac12,\frac54)\in\R^2_{\regshrt,+}$ is not a unitary point by Proposition \ref{prop: dim2unit} (see also Figures \ref{fig: s3} and \ref{fig: s32}). Hence $w_2(Y_2)$ is not unitary as well.
However, it is immediate to see that the boundary of $w_2(Y_2)$ contains $C_2'$.
Since $w_2(Y_2)$ and $\mathscr C$ are on the same side of $H_{\shrt}$ we conclude that $w_2(Y_2)\subset\mathscr C$.
Hence $\mathscr C$ is not unitary.

\smallskip

Similarly, suppose that $\iota(C_3')$ lies in the boundary of $\mathscr C$.
Denote by $Y_3$ the nonempty open convex subset of $\R^3_{++}$ consisting of the points $(x_1,x_2,x_3)$ that satisfy
$$
x_1+x_3< 1, \quad x_3-x_2<1, \quad 1<x_2+x_3, \quad x_3<\alpha.
$$
One checks that $Y_3\subset\R^3_{\reg}$.
Moreover, $Y_3$ is not unitary since it contains for instance the point $(\frac15,\frac35,\frac35)$ and if this point were unitary
then by unitary parabolic reduction we would get that the representation $[\frac35]\times[-\frac35]$ is unitarizable, which is a contradiction (since $\frac12<\frac35$).

Once again, we check that $w_3(Y_2)$ contains $\iota(C'_3)$ in its boundary, and hence
$w_3(Y_2)\subset \mathscr C$ since $w_3(Y_2)$ and $\mathscr C$ lie on the same side of $H_{\shrt}$.
Hence $\mathscr C$ is not unitary as required.
\end{proof}

\begin{proposition} \label{prop: 3dimex}
The list of unitary connected components of $\R^3_{\reg,++}$ in Proposition \ref{prop: compserex} is exhaustive.
In particular, if $\alpha=0$ there are no unitary connected components.
\end{proposition}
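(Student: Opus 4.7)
The plan is to establish by geometric case analysis that any unitary connected component $\mathscr{C}$ of $\R^3_{\reg,++}$ coincides (up to $W$-symmetry) with one of the components listed in Proposition \ref{prop: compserex}. First, $\mathscr{C}$ must be bounded, for otherwise it would contain a continuous family of hermitian irreducible representations parameterized by an unbounded set of unramified twists, which forces non-unitarity (\S\ref{ways of getting unitary}). Working in $\R^3_{\reg}$, the corresponding lift $\tilde{\mathscr{C}}$ is then a bounded open convex polytope whose two-dimensional boundary faces lie on singular affine hyperplanes---either slanted ($x_i\pm x_j=\pm 1$) or level ($x_i=\pm\alpha$). Each such face, in its regular part, must in turn lie inside a unitary$^\pm$ two-dimensional region, since every representation there is a limit of the unitarizable representations parameterized by $\tilde{\mathscr{C}}$. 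These regions are exhaustively classified: by Corollary \ref{all-sla-v-reg-u} for slanted hyperplanes (yielding $\iota(C_1'),\dots,\iota(C_8')$ when $\alpha\geq 1$ and nothing at all for $\alpha\in\{0,\tfrac12\}$), and by Corollary \ref{cor: 2dmlng} for level hyperplanes.

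For $\alpha\in\{0,\tfrac12\}$ no slanted face of $\tilde{\mathscr{C}}$ can be unitary$^\pm$, so every $2$-face must lie on a level hyperplane. When $\alpha=0$ the three level hyperplanes $x_i=0$ are concurrent at the origin and cannot enclose a bounded $3$-dimensional region on their own; hence no unitary $\mathscr{C}$ exists. When $\alpha=\tfrac12$ the level hyperplanes $x_i=\pm\tfrac12$ admit a unique bounded enclosed polytope, the cube $\{|x_i|<\tfrac12\}$, which is untouched by any slanted hyperplane (since $|x_i\pm x_j|<1$ throughout) and whose intersection with $\R^3_{++}$ is exactly the region $\{x_3<\tfrac12\}$ of \eqref{eq: x312}; moreover, each of its $2$-faces lies in the unique unitary$^\pm$ level region of Corollary \ref{cor: 2dmlng}, confirming that this is the unique unitary component.

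For $\alpha\geq 1$ a bounded polytope cannot be enclosed solely by level hyperplanes: the only candidate is the big cube $\{|x_i|<\alpha\}$, which is cut by slanted hyperplanes. Hence $\tilde{\mathscr{C}}$ has at least one slanted face. Lemma \ref{lem: 2dimrreg} is then the decisive input: if a slanted face of $\tilde{\mathscr{C}}$ has regular part in $\iota(C_i')$, the known unitary component $w_i(X_i)$, with $X_i\in\{C_a,C_b,C_c,C_d\}$, also has $\iota(C_i')$ as a boundary face. Since distinct open components are separated by singular hyperplanes, either $\tilde{\mathscr{C}}$ and $w_i(X_i)$ lie on the same side of the slanted hyperplane---in which case a local small-ball argument forces $\tilde{\mathscr{C}}=w_i(X_i)$, so $\mathscr{C}$ is a $W$-translate of one of $C_a,\dots,C_d$---or they lie on opposite sides. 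The opposite-side case is ruled out by Lemma \ref{lemma-C-23-} for $i=2,3$; for $i=6,7,8$, Lemma \ref{edge-simple}(\ref{edge-simple-item: 678}) forces $\overline{\tilde{\mathscr{C}}}$ to carry a level $2$-face, which by Corollary \ref{cor: 2dmlng} must sit in an admissible unitary$^\pm$ level region, and a direct inspection shows no such bounded polytope closes up. The remaining indices $i=1,4,5$ are handled by a parallel analysis combining Lemma \ref{edge-simple} and Corollary \ref{cor: 2dmlng}.

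The main obstacle, I expect, is the careful bookkeeping at the corner values $\alpha\in\{1,\tfrac32,2\}$, where some of $C_a,\dots,C_d$ degenerate or vanish and coincidences among singular hyperplanes produce extra small candidate components. In those boundary cases one supplements the geometric argument with the explicit non-unitarizability results of chapters \ref{sec: unit3}, \ref{basic 1} and \ref{CC-0}, deforming any hypothetical additional unitary component toward the non-black critical vertices visible in Figures \ref{fig: s3}--\ref{fig: s0} to exhibit a non-unitarizable irreducible subquotient, in the spirit of the proofs of Proposition \ref{prop: unitlines} and Lemma \ref{lemma-nu-m}.
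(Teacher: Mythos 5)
Your overall geometric strategy (boundedness, passing to the two-dimensional faces, and matching them against the classified unitary$^\pm$ regions of Proposition \ref{prop: dim2unit}/Corollary \ref{all-sla-v-reg-u} and Proposition \ref{prop: 2dmlng}/Corollary \ref{cor: 2dmlng}) is the same as the paper's, and your treatment of $\alpha=0$ and $\alpha=\tfrac12$ is correct and close to the paper's. The problem is in the case $\alpha\ge1$, where your case analysis on an \emph{arbitrary} slanted face leaves genuine holes. In the ``opposite side'' situation you only have real tools for $i=2,3$ (Lemma \ref{lemma-C-23-}). For $i=6,7,8$ your phrase ``no such bounded polytope closes up'' is not the right statement (such components do exist and are bounded; the actual contradiction would have to be that their forced level face lands outside the regions of Corollary \ref{cor: 2dmlng}), and you do not carry out that check. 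Worse, for $i=1,4,5$ on the wrong side there is nothing to cite at all: Lemma \ref{edge-simple}(\ref{edge-simple-item: 678}) only covers $C_6',C_7',C_8'$, so no level face is forced, and Corollary \ref{cor: 2dmlng} has nothing to act on. Ruling out unitarity of the components adjacent to $\iota(C_1'),\iota(C_4'),\iota(C_5')$ from the non-origin side is a substantive claim — essentially a new analogue of Lemma \ref{lemma-C-23-} — which you assert as ``a parallel analysis'' without proof; as stated it is circular, since the non-unitarity of exactly these neighboring components is part of the exhaustion being proved.

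The paper avoids these cases by two structural moves you omit. First, it treats separately the case where the component has a two-dimensional face in a \emph{level} hyperplane: using $C\subseteq\{x_3<\alpha\}$ (obtained by passing to the boundary and using Propositions \ref{prop: dim2unit} and \ref{prop: 2dmlng}) it identifies the component directly as \eqref{eq: xx1x212}, \eqref{eq: xx1x213} or \eqref{eq: xx1x2x14}, since those are the unique components containing the unitary level regions in their closures inside $\{x_3<\alpha\}$. Second, in the remaining case (all faces slanted) it does not take an arbitrary face: using boundedness it \emph{chooses} a slanted hyperplane for which the component lies on the same side as the origin (condition (Int)). With that choice, Lemma \ref{edge-simple} excludes $C_6',C_7',C_8'$ (they would force a level face), Lemma \ref{lemma-C-23-} kills $C_2',C_3'$, and for $C_1',C_4',C_5'$ the same-side identification with $w_i(X_i)$ from Lemma \ref{lem: 2dimrreg} concludes. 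Your write-up needs either this choice-of-face argument or explicit non-unitarity proofs for the wrong-side neighbors of $C_1',C_4',C_5'$ (and a worked-out version of the level-face check for $C_6',C_7',C_8'$); without one of these, the proof is incomplete.
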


\begin{proof}
Let $C$ be a unitary connected component of $\R^3_{\reg,++}$.
Denote by $\mathscr C$ the (unitary) connected component of $\R^3_{\reg}$ containing $C$.
Recall that $C$ (and also $\mathscr C$) must be bounded, and the boundary of $\mathscr C$ is contained
in the union of all reducibility hyperplanes.
Since all slanted (resp. level) hyperplanes are in the same $W$-orbit,
any two-dimensional face of the boundary of $\mathscr C$ is contained in a $W$-translate of either
$H_{\shrt}$ or $H_{\lng}$.

Consider first the case $\alpha=0$.
Clearly, $\mathscr C$ cannot be bounded only by level hyperplanes (otherwise, $\mathscr C$ would not be bounded).
Therefore, there exists a two-dimensional face of the boundary of $\mathscr C$ that lies in a slanted hyperplane.
This would contradict Proposition \ref{prop: dim2unit} (see Figure \ref{fig: s0}).
Therefore, we do not have unitary connected components in this case.

For the rest of the proof, we consider the case $\alpha>0$.
We first note that
\[
\mathscr C\subseteq\{\mathbf{x}\in \R^3_{\reg}:\abs{x_i}<\alpha,i=1,2,3\}
\]
and
$$
C\subseteq\{\mathbf{x}\in \R^3_{\reg,++}: x_3<\alpha\}.
$$
This follows immediately from Propositions \ref{prop: dim2unit} and \ref{prop: 2dmlng} by passing
to the boundary.\footnote{Observe that in $(x,y)$-planes, $x<\alpha-\frac12$ in the unitary two-dimensional slanted components.}

In particular, in the case $\alpha=\frac12$ we conclude that $C$ is the component \eqref{eq: x312}.

For the rest of the proof we consider the case $\alpha\geq 1$.

Suppose that a two-dimensional face of the boundary of $\mathscr C$ lies in a level hyperplane.
Since $\mathscr C\supset C$ we may assume that this level hyperplane is $H_{\lng}$ itself.
We therefore need to consider unitary$^\pm$ connected components of $H^{\circ\circ}_{\lng,++}$
(which correspond to $\R^2_{\vreglng,++}$).

Consider first the case $\alpha>1$. We have two possibilities for unitary$^\pm$ connected components.
The first possibility is $x+y<1, y<\alpha-1$, which clearly lies in the closure of the component \eqref{eq: xx1x212}.
Moreover, \eqref{eq: xx1x212} is the only component that contains \eqref{eq: >1x-x} in its closure
and that is contained in $\{\mathbf{x}:x_3<\alpha\}$.
We conclude that $C$ must be the component \eqref{eq: xx1x212}.
The second possibility is $y-x>1, y<\alpha-1$ (when $\alpha>2$).
Again, \eqref{eq: xx1x2x14} is the only component that contains \eqref{eq: >1xy1} in its closure
and is contained in $\{\mathbf{x}:x_3<\alpha\}$. Thus, $C$ must be the component \eqref{eq: xx1x2x14}.

Consider now the case $\alpha =1$.
Then, we have only one possibility for unitary$^\pm$ connected component: $x+y<1$.
This is in the closure of \eqref{eq: xx1x213}. In the same way as before, we conclude that $C$ must be the component \eqref{eq: xx1x213}.

It remains to consider the case where all the two-dimensional faces of the boundary of $\mathscr C$ are contained in
slanted hyperplanes. Take a slanted hyperplane $H$ such that $C$ has a two-dimensional face in it.
Since $C$ is bounded, we can find $H$ such that $C$ is in the same half-space of $\R^3\setminus H$ as the origin.
Let $w\in W$ be such that $w(H)=H_{\shrt}$.
Then, $w(C)$ has a two-dimensional face in $w(H)=H_{\shrt}$, and $w(C)$ is in the same half-space of
$\R^3\setminus H_{\shrt}$ as the origin.
Furthermore, acting by $W_{\shrt}$ (if necessary), we conclude that the following holds:

\begin{enumerate}
\item[{(Int) }]
There exists unique $w\in W$ such that $w(\partial C)\cap H_{\shrt,+}$ is the closure of a unitary
connected component $C'$ of $H_{\shrt,+}^{\circ\circ}$ and
\begin{equation}
\label{decide-side}
(w\mathbf{x})_2- (w\mathbf{x})_1<1
\end{equation}
for all $\mathbf{x}\in C$.
\end{enumerate}

Consider $w\mathbf{x}\in C'$ where $\mathbf{x}\in \partial C$. Then,
\begin{equation} \label{all8}
w(\mathbf{x})=\iota(x,y)=(x-\tfrac12,x+\tfrac12,y)
\end{equation}
for some $ (x,y)\in \R^2_{\vregshrt,+}$.
Now $C'$ is one out of the 8 unitary components $C_1',C_2',\dots,C_8'$ of $\R^2_{\vregshrt,+}$.
Since by our assumption $\mathscr C$ does not have two-dimensional faces contained in level hyperplanes,
Lemma \ref{edge-simple} implies that $C'$ must be one of the 5 unitary components $C_1',\dots,C_5'$.
We examine these 5 cases below.

Denote the components
\eqref{eq: xx2x31},
\eqref{eq: xx1x212},
\eqref{eq: xx1x2x14},
\eqref{eq: xx1x2x14},
by $C_a,C_b, C_c,C_d$ respectively in the sequel, as we did in \S\ref{sec: Cabcd}.
Further, we use below the elements $w_1,\dots,w_5\in W$ defined in the proof of Lemma \ref{lem: 2dimrreg}.

Suppose $C'=C'_i$ for some index $i\in\{1,4,5\}$. Then, $w(C)=w_i(X_i)$, where $X_i\in \{C_a,C_b,C_c,C_d\}$ by Lemma \ref{lem: 2dimrreg}. Therefore, the proposition holds in this case.

Consider the remaining case when $C'=C'_i$ for some $i\in\{2,3\}$. Then, Lemma \ref{lemma-C-23-} implies that $C$ is not unitary, which is a contradiction.
This completes the proof of the proposition.
\end{proof}

\section{Conclusion}

\begin{theorem}
\label{class-thm}
The irreducible unitarizable subquotients of $\Pi_{\mathbf{x}}$ when $\mathbf{x}=(x_1,x_2,x_3)\in\R^3_{++}$, are the following.
\begin{enumerate}

\item 
\label{class-thm-item: 3dim-geq1}
$(\alpha\ge1)$ All irreducible subquotients when $\mathbf{x}$ lies in the closure of one of the domains
\eqref{eq: xx2x31}--\eqref{eq: xx1x2x14}.

\item 
\label{class-thm-item: 3dim-1/2}
$(\alpha=\tfrac12)$ All irreducible subquotients when $x_3\le\tfrac12$.

\item
\label{class-thm-item: 2dim-0}
 $(\alpha=0$) All irreducible subquotients when $x_1=0$, $x_2+x_3\le1$.
 
\item
\label{class-thm-item: St+inv}
 $(\alpha>0)$ The representations $\delta([\alpha,\alpha+2];\s)$ and $L([\alpha],[\alpha+1],[\alpha+2];\s)$.

\item 
\label{class-thm-item: 2ds0+inv}
$(\alpha=0)$ The representations $\delta([0,2]_\pm;\s)$ and $L([1],[2];\delta([0]_\pm;\s))$.

\item $(\alpha>0)$ 
\label{class-thm-item: cs-pos}
The complementary series $[x]\rtimes \delta([\alpha,\alpha+1];\s)$ and
$[x]\rtimes L([\alpha],[\alpha+1];\s)$ for $0\le x<\abs{\alpha-1}$ (if $\alpha\ne1$) and its irreducible subquotients for
$x=\alpha-1$.

\item
\label{class-thm-item: 3dim-0}
 $(\alpha=0)$ The complementary series (including subquotients at the ends)
\[
[x]\rtimes\delta([0,1]_\pm;\s),\ [x]\rtimes L([1];\delta([0]_\pm;\s)),\ \ 0\le x\le1.
\]

\item
\label{class-thm-item: artur-moe}
 $(\alpha>1)$ The representation $L([\alpha-1],[\alpha];\delta([\alpha];\s))$.

\item 
\label{class-thm-item: cs12}
$(\alpha=\tfrac12)$ The complementary series (including subquotients at the ends)
\begin{gather*}
\delta([x-\tfrac12,x+\tfrac12])\rtimes\delta([\tfrac12];\s),\ L([x-\tfrac12],[x+\tfrac12])\rtimes L([\tfrac12];\s),\ \ 0\le x\le1\\
[x]\rtimes\delta([-\tfrac12,\tfrac12]_-;\s),\ [x]\rtimes L([\tfrac12];\delta([\tfrac12];\s)),\ \ 0\le x\le\tfrac32\\
\delta([x-1,x+1])\rtimes\s,\ L([x-1],[x],[x+1])\rtimes\s,\ \ 0\le x\le\tfrac12.
\end{gather*}
\end{enumerate}
\end{theorem}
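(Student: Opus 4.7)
The proof will be an assembly of the detailed results established in the preceding chapters, organized according to the stratification of $\R^3_{++}$ by the critical hyperplanes (i.e., by the codimension of the stratum containing $\mathbf{x}$). The plan is as follows.

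First, I would invoke Proposition~\ref{intro-red} to reduce the classification to the case where $\pi$ is supported on a single cuspidal line. If this line does not contain a reducibility point for any of the twists in the problem, then Remark~\ref{rem-irr} together with the Jantzen decomposition reduces the question to unitarizability of irreducible subquotients of products of general linear group representations, which is governed by Theorem~\ref{ud-gl}. Thus the genuine work is in the critical case, where $\{[x_1],[x_2],[x_3]\}$ lies in a segment containing $[\alpha]$, together with its ``mirror'' case obtained by moving past $[0]$. I would then reduce to the strict standard position $\mathbf{x}\in\R^3_{\reg,++}$ and its closure, observing that the set of strongly unitary points is $W$-invariant and closed.

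Next, I would treat the interior (three-parameter) case: Proposition~\ref{prop: compserex} constructs the unitary regions listed in items~(\ref{class-thm-item: 3dim-geq1})--(\ref{class-thm-item: 3dim-1/2}) as complementary series obtained by iterated unitary parabolic induction from the known corank~$0,1,2$ classification (see \S\ref{rank1} and Proposition~\ref{PROP: RK2}), while Proposition~\ref{prop: 3dimex} establishes exhaustion at regular points by combining the information about the two-parameter strata from Propositions~\ref{prop: dim2unit} and~\ref{prop: 2dmlng} with the boundedness constraint on unitary components and the face-tracing arguments of Lemmas~\ref{edge-simple} and~\ref{lemma-C-23-}. The closure statement then follows because unitarizability is preserved under limits of characters.

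The lower-dimensional strata then have to be handled. The two-dimensional complementary series along slanted and level hyperplanes are identified in Propositions~\ref{prop: dim2unit} and~\ref{prop: 2dmlng}; their closures attach to the edges of the three-dimensional regions and contribute the endpoint families in (\ref{class-thm-item: cs-pos})--(\ref{class-thm-item: cs12}). The one-parameter families at the boundary are listed in Proposition~\ref{prop: unitlines} (Table~\ref{tab: redpnt}), with non-unitarizability beyond the first reducibility point ensured by the explicit witnesses in Table~\ref{tab: 2nd} together with the non-unitarizable subquotients of Lemma~\ref{lemma-nu-m}. The isolated critical vertices themselves are exactly the points treated in chapters~\ref{CC-corank12}--\ref{CC-0}: specifically, (\ref{class-thm-item: St+inv}) and (\ref{class-thm-item: 2ds0+inv}) are read off from \S\ref{sec: 012} and Proposition~\ref{0,1,2-0}, while the isolated representation in (\ref{class-thm-item: artur-moe}) appears as $\pi_0$ of Proposition~\ref{a-1,a,a-a}, whose unitarizability is precisely the content of C.~M\oe glin's Appendix~\ref{appendix-M} (it is constructed there as a local component of an Arthur packet). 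All other vertices appearing in the ``non-black ball'' positions of the figures of chapter~\ref{corank-3} are excluded from the list by the non-unitarizability proofs in chapters~\ref{sec: unit3}--\ref{CC-0}, which is the genuinely delicate input.

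The hardest part, and the one that is not merely bookkeeping, is precisely the exhaustion step at the critical vertices where $\alpha>1$ and the cuspidal support sits inside a segment around $[\alpha]$ that does not induce fully. There the proof by elimination carried out in chapter~\ref{sec: unit3} is unavoidable: one must produce, for each candidate $\pi$, an induced representation $\Pi=\delta_1\rtimes\pi$ (typically with $\delta_1=\delta([-\alpha,\alpha])$ or $\delta([-1,1])$) and count the length of $\Pi$ against the multiplicity of $\delta_1\otimes\pi$ in $\mu^*(\Pi)$, applying Lemma~\ref{lem: nonunit}. Once these estimates are in place for $\pi_5,\pi_6,\pi_7,\pi_8$ of Proposition~\ref{a-1,a,a+1} and the analogous representations at $\mathbf{x}=(0,1,2)$ with $\alpha=1$ and $\mathbf{x}=(\frac12,\frac12,\frac32)$ with $\alpha=\frac12$, the remaining non-unitarity assertions propagate by the deformation principle of \S\ref{ways of getting unitary}, and the classification is complete.
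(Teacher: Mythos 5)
Your proposal is correct in substance and follows essentially the same route as the paper: unitarity is assembled from Proposition~\ref{prop: compserex}, Propositions~\ref{prop: dim2unit} and~\ref{prop: 2dmlng}, Proposition~\ref{prop: unitlines} and the critical-case chapters (with Appendix~\ref{appendix-M} supplying item~(\ref{class-thm-item: artur-moe})), while exhaustion is the stratified geometric argument of chapter~\ref{corank-3} (Proposition~\ref{prop: 3dimex}, Lemmas~\ref{edge-simple} and~\ref{lemma-C-23-}, Tables~\ref{tab: redpnt} and~\ref{tab: 2nd}, Lemma~\ref{lemma-nu-m}) combined with the elimination results of chapters~\ref{sec: unit3}--\ref{CC-0}. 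The one caveat is your opening reduction, which is both unnecessary and incorrect as stated: the theorem already concerns a single cuspidal line with fixed $\rho$ and $\sigma$, so Proposition~\ref{intro-red} plays no role, and non-critical configurations do \emph{not} reduce to unitarizability for general linear groups via Theorem~\ref{ud-gl} — for example the constraint $x_3<\alpha$ in the regions \eqref{eq: xx1x212}--\eqref{eq: xx1x2x14}, and the non-unitarizability of the irreducible $\nu^x\rho\rtimes\sigma$ for $x>\alpha$, are genuinely classical-group phenomena, handled by the regular-point and boundary analysis (Proposition~\ref{prop: 3dimex} and the two-parameter propositions), not by any GL statement. Since your second and third paragraphs then treat exactly those strata with the correct tools (including the non-regular, non-critical points via Table~\ref{tab: redpnt} and Lemma~\ref{lemma-nu-m}), the misstep is harmless, but the sentence asserting that the genuine work lies only in the critical case should be dropped.
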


\begin{proof}
The unitarity of the above representations follows from Proposition \ref{prop: compserex},
\eqref{eq: 0xy1} in Proposition \ref{prop: 2dmlng}, Proposition \ref{prop: unitlines} (N$^\circ$5, 6, 2, 13 and 1 in Table \ref{tab: redpnt})
and the unitarity in the critical cases dealt with in the previous chapters (where we noted that the irreducible square-integrable representations and its $\ASS$ duals are unitary,
together with the representation in (\ref{class-thm-item: artur-moe}) of the theorem). It remains to prove the exhaustion.

Suppose that $\Pi_{\mathbf{x}}$, $\mathbf{x}=(x_1,x_2,x_3)\in\R^3_{++}$ admits a unitarizable irreducible subquotient.
For $\mathbf{x}\in\R^3_{\reg}$ by Proposition \ref{prop: 3dimex}, $\alpha>0$ and $\mathbf{x}$ belongs to
one of the regions \eqref{eq: xx2x31}--\eqref{eq: xx1x2x14} if $\alpha\ge1$
and $x_3<\tfrac12$ if $\alpha=\tfrac12$.

Before we proceed further, we observe below that all the irreducible subquotients of the representations listed in Table \ref{tab: redpnt} and corresponding to the parameter $x$
between $0$ and the first reducibility point (including the end points), are listed in the above theorem.
First, such representations corresponding to N$^\circ$5 (resp. N$^\circ$6, resp. N$^\circ$13) are contained in (\ref{class-thm-item: cs-pos}) (resp. (\ref{class-thm-item: 3dim-0}), resp. (\ref{class-thm-item: cs12}))
of the theorem.
Consider now such representations in the case of N$^\circ$1. For $\alpha>1$, that representations are contained in the set of representations corresponding to the closure of $C_d$
(resp. $C_b$, resp. $C_c$) if $x> 1$ (resp. $\frac12\leq x\leq1$, resp. $0\leq x\leq \frac12$). For $\alpha=1$ such representations are contained in the set of representations
corresponding to the closure of $C_a$. Therefore, they are all contained in (\ref{class-thm-item: 3dim-geq1}) of the theorem. For $\alpha=\frac12$ (resp. $\alpha=0$),
such representations are contained in (\ref{class-thm-item: cs12}) (resp. (\ref{class-thm-item: 2dim-0})) of the theorem.
In the case of N$^\circ$2, such representations are contained in the set of representations corresponding to the closure of $C_b$ (resp. $C_c$) if $\alpha>1$ (resp. $\alpha=1$).
For $\alpha=\frac12$, they are contained in (\ref{class-thm-item: cs12}) of the theorem.
For N$^\circ$3, they are contained in the set of representations corresponding to the closure of $C_b$ (resp. (\ref{class-thm-item: 3dim-1/2}) of the theorem) if $\alpha\geq 1$ (resp. $\alpha=\frac12$).
The representations of N$^\circ$4 and N$^\circ$7 are contained in (\ref{class-thm-item: 2dim-0}) of the theorem.
In the case of N$^\circ$8 and N$^\circ$9, such representations are contained in the set of representations corresponding to the closure of $C_d$ (resp. $C_b$) if $\alpha>2$
(resp. $1<\alpha\leq2$). For N$^\circ$10 and N$^\circ$11, they are contained in the set of representations corresponding to the closure of $C_c$, for N$^\circ$12,
they are contained in (\ref{class-thm-item: 3dim-1/2}) of the theorem and for N$^\circ$14, they are contained in (\ref{class-thm-item: 2dim-0}) of the theorem.

Suppose that the $W$-orbit of $\mathbf{x}$ intersects $H_{\shrt}^{\circ}$. Then,
by Proposition \ref{prop: dim2unit} and Lemma \ref{lem: 2dimrreg}, $\alpha\ge1$ and $\mathbf{x}$ belongs to the closure of
one of the regions \eqref{eq: xx2x31}--\eqref{eq: xx1x2x14}.
Similarly, if the $W$-orbit of $\mathbf{x}$ intersects $H_{\lng}^{\circ}$ then
by Proposition \ref{prop: 2dmlng}, either:
\begin{enumerate}
\item $(\alpha>1)$ $\mathbf{x}$ belongs to the closure of \eqref{eq: xx1x212} or \eqref{eq: xx1x2x14},
\item $(\alpha=1)$ $\mathbf{x}$ belongs to the closure of \eqref{eq: xx1x213},
\item $(\alpha=\tfrac12)$ $\mathbf{x}$ belongs to the closure of \eqref{eq: x312},
\item $(\alpha=0)$ $x_1=0$ and $x_2+x_3<1$.
\end{enumerate}
Suppose now that $\mathbf{x}$ is not regular but the $W$-orbit of $\mathbf{x}$ intersects neither $H_{\shrt}^{\circ}$
nor $H_{\lng}^\circ$. Let $\pi\in\JH(\Pi_{\mathbf{x}})$.
Consider first the case when $\mathbf{x}$ is not critical. Then, there exists $x\ge0$ such that one of the following options holds.
\begin{enumerate}
\item 
\label{item: tau-siegel-x-1xx+1}
$\pi=\tau\rtimes\s$ where $\tau\in\JH([x-1]\times[x]\times[x+1])=$
\[
\{\delta([x-1,x+1]),L([x-1],[x],[x+1]),L([x-1,x],[x+1]),L([x-1],[x,x+1])\}.
\]

\item 
\label{item: midle-par}
$\pi\le\tau'\rtimes\pi'$ where
\begin{gather*}
\tau'\in\JH([x-\tfrac12]\times[x+\tfrac12])=
\{\delta([x-\tfrac12,x+\tfrac12]),L([x-\tfrac12],[x+\tfrac12])\}\\
\text{and }\pi'\in\JH([\alpha]\rtimes\s)=\begin{cases}\{\delta([\alpha];\s),L([\alpha];\s)\}&
\alpha>0\\\{\delta([0]_\pm;\s)\}&\alpha=0\end{cases}.
\end{gather*}

\item
\label{item: top-St}
 $\pi=[x]\rtimes\pi'$ where $\pi'\in\JH([\alpha]\times[\alpha+1]\rtimes\s)$.

\item
\label{item: top-st-pos-no-St}
 $(\alpha\ne0)$ $\pi=[x]\rtimes\pi'$ where $\pi'\in\JH([\alpha-1]\times[\alpha]\rtimes\s)$.
\end{enumerate}
The unitarity in the cases (\ref{item: tau-siegel-x-1xx+1}) when $\tau$ is unitarizable, (\ref{item: midle-par}) and (\ref{item: top-St}) above, as well as in the case (\ref{item: top-st-pos-no-St}) if $\pi'$ is unitarizable, was dealt with in Proposition \ref{prop: unitlines}
(Table \ref{tab: redpnt}), and there is determined when one has complementary series. We have seen above that these complementary series all appear in the irreducible unitarizable
representations listed in the above theorem. It remains to consider only the case of non-unitarizable $\tau$ and $\pi'$ as above.
The unitarity in these cases was dealt with in Lemma \ref{lemma-nu-m}. Of these two cases, unitarizable representations show up only for $\tau$ when
$\alpha>1$ and $0\leq x\leq \alpha-1$. Obviously, all these representations show up in the group (\ref{class-thm-item: 3dim-geq1}) in the theorem (corresponding to $C_d)$.

It remains to consider the critical points (the unitarity corresponding to these points was dealt with in the chapters \ref{sec: unit3}, \ref{basic 1} and \ref{basic 0}).
Unitarizability of the irreducible subquotients there,
excluding the representations listed in (\ref{class-thm-item: St+inv}), (\ref{class-thm-item: 2ds0+inv}) and (\ref{class-thm-item: 3dim-0}), was proved applying the unitary parabolic induction from a maximal parabolic subgroup, or proving that
they show up in the ends of complementary series starting from maximal parabolic subgroups (listed in Proposition \ref{prop: unitlines}).
For both methods of the proof, our above checking that
all the irreducible subquotients of the representations listed in Table \ref{tab: redpnt} and corresponding to the parameter $x$ between $0$ and the first reducibility point
(including the end points) are listed in the above theorem,
implies that all the irreducible subquotients for which we have proved in chapters \ref{sec: unit3}, \ref{basic 1} and \ref{basic 0}
that they are unitarizable, are listed in the above theorem.
For the remaining irreducible subquotients considered in chapters \ref{sec: unit3}, \ref{basic 1} and \ref{basic 0}, we proved there that they are not unitarizable.
Therefore, any irreducible unitarizable subquotient in the case of critical points is listed in the above theorem.
This finishes the proof of the theorem.
\end{proof}

\begin{remark}
Let $\tau:=\rho\otimes\rho\otimes\rho\otimes\sigma$ be a representation of a Levi subgroup $M$ of a classical group $G$.
Then, the representations listed in (\ref{class-thm-item: St+inv}), (\ref{class-thm-item: 2ds0+inv}) and (\ref{class-thm-item: artur-moe}) of the above theorem are isolated in $\hat G$, and the other listed in the theorem are not.
They are in the part $\hat G_\Omega$ of $\hat G$ corresponding to the Bernstein component $\Omega$ that contains the conjugacy class of $(M,\tau)$ (see \cite{MR771671} and \cite{MR963153} for details).
The cardinality of these representations is $4$, $2$ and $3$ if $\alpha=0$, $\alpha\in\{\tfrac12,1\}$ and $\alpha>1$ respectively,
but we have always more isolated representations in $\hat G_\Omega$.

There exists an unramified character $\chi$ of a general linear group such that $\rho':=\chi\rho$ is $F'/F$-selfdual and $\chi\rho\not\cong\rho$.
The isolated representations in $\hat G_\Omega$ are precisely the isolated representations corresponding $\rho,\sigma$ and $\rho',\sigma$ in the above theorem.
Therefore, the number of isolated representations in $\hat G_\Omega$ is between $4$ and $8$ ($\hat G_\Omega$ is an open subset of $\hat G$).
Actually, section 8 of \cite{MR3156864} implies that all numbers between $4$ and $8$ can show up as these cardinalities\footnote{Having in mind \cite{MR2742213}
(where the number of isolated unitarizable spherical representations for split classical groups is calculated), it is natural to expect that the number
of isolated representations in a component determined by $\underbrace{\rho\otimes\dots\otimes\rho}_{n-\text{times}}\otimes \sigma$ grows rapidly as $n\rightarrow\infty$.}.
As an example, observe that the component of irreducible unitarizable representations with non-trivial Iwahori-fixed vector of $Sp(6,F)$ (resp. $SO(7,F)$) has $6$ (resp. $4$) isolated representations.

If we have a component $\hat G_\Omega$ where $\Omega$ is not of the type that we just considered (in corank 3), then it does not have isolated representations.
\end{remark}

\section{Conjectures} \label{sec: conjectures}
We end this chapter by stating two conjectures, motivated by the results of \cite{MR2046512}, \cite{MR2767523} and this paper (see also \cite{MR3969882}).

\begin{conjecture} \label{conj: isoaut}
Suppose that $\pi$ is an isolated representation in the unitary dual of a split classical group $G$.
Then, $\pi$ is automorphic\footnote{See \cite{MR2331344} (and also \cite{MR2742213})}.
\end{conjecture}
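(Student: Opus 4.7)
The plan is to attack this by combining a reduction step (that isolates the phenomenon on a single cuspidal line) with a classification step (that matches isolated representations to Arthur parameters). First, using Theorem \ref{unitary-red-1} together with the classification of $\cup_n\GL(n,F)\widehat{\ }$ in Theorem \ref{ud-gl}, one reduces to the weakly real case and observes that unramified twists of Speh representations contribute only continuous families, so any isolated $\pi\in \hat G$ must be induced from a weakly real isolated representation. Then, assuming the natural strengthening of Proposition \ref{intro-red} (the Jantzen decomposition conjecture for isolatedness, not just unitarizability), one reduces further to representations supported on a single cuspidal line $X_\rho$. This reduction is essential because the Bernstein component topology decomposes along cuspidal lines and a product in the Jantzen decomposition is isolated iff each factor is.

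Second, on a single cuspidal line with reducibility exponent $\alpha$, I would classify candidates for isolated representations by extending the corank $\le 3$ analysis in this paper. The anticipated pattern, visible from part (8) of Theorem \ref{thm: mainintro} and from the spherical classification of \cite{MR2767523}, is that isolated irreducible unitarizable representations have Langlands data of a very rigid Arthur-type form --- built from segments $[\alpha-k,\alpha]$ and Speh-like ladder data with parity conditions matching $\Jord(\pi_{cusp})$ --- and their tempered part is itself isolated (built from square-integrable constituents compatible with these Arthur SL$(2)$-blocks). The task is to show that any Langlands datum not of this shape admits a continuous deformation through irreducible hermitian representations, which by the continuity argument of \S\ref{ways of getting unitary} would prevent isolatedness; conversely, the rigid data should genuinely yield isolated points.

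Third, for each such rigid Langlands datum one constructs an Arthur parameter $\psi:L_F\times \SL(2,\C)\to {}^L G$ whose corresponding Arthur packet $\Pi_\psi$ contains $\pi$. This is exactly what C.~M{\oe}glin carries out in the appendix for the distinguished representation \eqref{intro-dist}, using \cite[Theorem 1.5.1]{MR3135650} together with her explicit structure theorems for Arthur packets. Once $\pi\in\Pi_\psi$ is established, \cite{MR3135650} gives $\pi$ as a local component of a global discrete automorphic representation (and in particular establishes unitarity, which provides an independent consistency check with the isolatedness hypothesis).

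The main obstacle is the first two steps: controlling the topology of $\hat G$ well enough to rule out hidden continuous families of hermitian representations is precisely the exhaustion problem that required the laborious proof-by-elimination in Chapters \ref{sec: unit3}--\ref{basic 0} of this paper. The hard part of the conjecture is thus a \emph{direct} criterion for isolatedness that does not pass through an enumeration of the entire unitary dual. A plausible such criterion is that $\pi=L(d;\tau)$ is isolated iff the Langlands parameter $\Phi(d)$ has no continuous unramified deformation within the set of hermitian parameters inside the given Bernstein component --- equivalently, iff the exponents of every $\delta_i$ in $d$ are determined by $\Jord(\tau_{cusp})$ through the Arthur SL$(2)$-combinatorics. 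Establishing this equivalence, and not merely verifying it case-by-case, is the crux of a general proof.
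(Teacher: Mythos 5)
The statement you are addressing is not proved in the paper: it is stated as an open conjecture (Conjecture \ref{conj: isoaut}), motivated by the results of the paper, by \cite{MR2046512}, \cite{MR2767523} and \cite{MR2742213}. The only "proof" content the paper offers is a verification in corank $\le 3$: after the full classification in Theorem \ref{class-thm}, the isolated representations turn out to be the generalized Steinberg representations, their \ASS{}duals, and $L([\alpha-1],[\alpha];\delta([\alpha];\s))$, and the last of these is shown to lie in an Arthur packet only through M\oe glin's explicit construction in Appendix \ref{appendix-M}. So there is no proof in the paper against which your argument could be matched, and your proposal, as you yourself concede, is a research outline rather than a proof.

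Concretely, three steps of your outline are genuine gaps rather than reductions. First, the passage from Proposition \ref{intro-red} to "the Jantzen decomposition preserves isolatedness" is unproved even as a statement about unitarizability in general corank (it is itself a conjecture in Chapter \ref{mixed-case}); preservation of isolatedness would additionally require control of the topology of $\hat G$ across the decomposition, which nothing in the paper provides. Relatedly, the first reduction via Theorem \ref{unitary-red-1} is not automatic: Speh representations $u(\delta(\rho,m),n)$ with $m,n>2$ are themselves isolated modulo center, so one cannot simply assert that the $\GL$-factor only contributes continuous families without an argument (e.g.\ via unramified unitary twists and irreducibility of the induced representation). Second, your proposed rigidity criterion — that $\pi=L(d;\tau)$ is isolated iff the exponents of $d$ are forced by $\Jord(\tau_{cusp})$ through Arthur $\SL(2,\C)$-combinatorics — is exactly the exhaustion problem the paper only solves by elimination in corank $\le3$; stating the criterion does not establish it, and the paper explicitly says no direct approach is currently in sight. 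Third, even granting that every isolated $\pi$ lies in some local Arthur packet $\Pi_\psi$, the conclusion "$\pi$ is automorphic" in the sense of \cite{MR2331344} requires a globalization: one must produce a discrete-spectrum automorphic representation (or a limit of such) having $\pi$ as a local component, which involves globalizing the cuspidal support and the parameter and satisfying Arthur's multiplicity (sign) conditions. That step is not supplied by \cite{MR3135650} alone and is not addressed in your proposal.
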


By \cite{MR2742213}, assuming Arthur + $\epsilon$ conjecture from \cite{MR2331344},
there are many irreducible representations that are isolated in the automorphic dual.
All these spherical representations are subquotients in critical points.

The papers \cite{MR2046512}, \cite{MR2767523}, \cite{MR2742213} and the present paper give some evidence for the following

\begin{conjecture}
 Suppose that $\pi$ is an isolated representation in the unitary dual of a split classical group $G$.
Then, $\pi$ is a subquotient of a representation of critical type.
\end{conjecture}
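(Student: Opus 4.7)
The plan is to prove the contrapositive: if an irreducible unitarizable representation $\pi$ is not a subquotient of any representation of critical type, then $\pi$ is not isolated in $\hat G$. I would begin with the standard two reductions. First, by Theorem \ref{unitary-red-1} it suffices to handle weakly real $\pi$; second, by the (partly conjectural, but established in all cases relevant here) Jantzen decomposition, it suffices to treat $\pi$ supported on a single cuspidal line $X_\rho$ along some $\rho\in\Cuspsd$, with reducibility exponent $\alpha=\alpha_{\rho,\sigma}$. Using the Langlands classification of \S\ref{LCCG}, write $\pi=L(a;\tau)$ with $a\in M(\SC_+)$ and $\tau\in\Tempcl$, and let $b\in M(\SC)$ denote the multiset of segments attached to $\tau$ under M\oeglin's parameters (\cite{MR1896238}). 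The claim to be reached is: if $\pi$ is isolated, then $\supp(a+b+a^\vee)\subset\Delta_0^{(\rho)}$ for some $\Z$-segment $\Delta_0$ containing $\alpha$.

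The heart of the argument is to produce, whenever the support fails to lie in such a segment, a one-parameter family $\{\pi_s\}_{s\in(-\epsilon,\epsilon)}$ of irreducible hermitian (hence unitarizable by continuity) representations with $\pi_0=\pi$, parametrized by unramified twists of the Langlands data, contradicting isolatedness. I would break the obstruction into three cases: (i) some segment $\Delta\in a$ is disjoint from the ``critical zone'' $[-\alpha,\alpha]^{(\rho)}$ and not linked to any other $\Delta'\in a+b+a^\vee$; (ii) $\supp(a+b)$ meets the critical zone but is not contained in a single $\Z$-segment through $\alpha$; (iii) $\supp(a+b)$ is contained in a $\Z$-segment but one missing $\nu^\alpha\rho$. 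In case (i), the family $L(a+\nu^s\Delta-\Delta;\tau)$, for $s$ small, gives a complementary series by the irreducibility criteria of \S\ref{GL-LT} together with Corollary \ref{irr-simple}; the hermitian condition persists because $\rho\cong\check\rho$. In case (iii), one uses \eqref{eq: ladcrit} and the cuspidal reducibility criterion to see $\pi$ already sits in a one-parameter family obtained by translating the whole segment away from $\alpha$. Case (ii) is subsumed by a careful splitting into ``subsegments inside the critical zone'' versus ``the rest'', reducing it to (i) applied to the outer pieces.

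The main obstacle is case (ii), and more precisely the step of showing that the relevant deformation actually stays irreducible for small $s$. This requires an analogue of Lemma \ref{lemma-irr} and Corollary \ref{irr-simple} valid for arbitrary rank, so that a deformation on one segment of the Langlands data does not create a new reducibility with segments inside the critical zone. For $\rho$-supports disjoint from the critical zone this is \eqref{eq: ladcrit} reduced to the $\GL$-case; the difficulty is segments that partially overlap the critical zone. Here I would invoke M\oeglin's Jacquet-module formulas from \cite{MR1896238}, the refined Jacquet-module calculus of \S\ref{segment} and \S\ref{some form.}, and the formulas \eqref{M-seg}, \eqref{lad-GL} to show that any new reducibility point created by the deformation is accompanied by a \emph{complementary} reducibility that preserves irreducibility generically (which is an instance of the general principle, already visible in the corank-$\le 3$ analysis of this paper, that reducibility points of non-critical inductions come in canceling pairs outside the critical zone).

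Finally, once irreducibility along the family is secured, unitarity follows from the limit argument of \S\ref{ways of getting unitary}: the family $\pi_s$ consists of irreducible hermitian representations with $\pi_0$ unitarizable, so a neighborhood of $\pi_0$ is unitarizable, contradicting isolatedness. The base cases of the argument, for corank $\le 3$, are exactly Theorem \ref{thm: mainintro}, which already exhibits this dichotomy: the only isolated representations there ($\delta([\alpha,\alpha+2];\s)$ and its dual, $\delta([0,2]_\pm;\s)$ and its dual, and $L([\alpha-1],[\alpha];\delta([\alpha];\s))$) are precisely the subquotients of critical-type inductions, and every non-critical point either belongs to an explicit complementary series region or lies on its closure. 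This is the model the general argument is meant to generalize.
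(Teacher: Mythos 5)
This statement is not proved in the paper at all: it appears in \S\ref{sec: conjectures} as a conjecture, supported only by evidence from the corank $\le 3$ classification (Theorem \ref{thm: mainintro}) and the known generic and spherical unitary duals, so there is no ``paper proof'' for your argument to match. Your proposal is a reasonable strategy sketch, but it is not a proof, and the gaps sit exactly at the points the paper identifies as the open obstructions. First, the reduction to a single cuspidal line rests on the statement that the Jantzen decomposition preserves unitarizability in both directions, which in this paper is itself only a conjecture (proved only for corank $\le 3$, Proposition \ref{intro-red}); moreover, even granting it, isolation is a statement about the topology of $\hat G$, and you would still have to argue that non-isolation of a Jantzen component yields a family of unitarizable representations converging to $\pi$ itself, which the decomposition alone does not give.

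Second, and more seriously, your case (ii) is carried by two assertions that do not exist as theorems: an arbitrary-rank analogue of Lemma \ref{lemma-irr}/Corollary \ref{irr-simple} guaranteeing that the deformation stays irreducible, and a ``canceling pairs of reducibility points outside the critical zone'' principle. The irreducibility criteria actually available, such as \eqref{eq: ladcrit}, apply only to ladder representations or to $\alpha\le 1$, and the fine control of reducibility of deformed standard data in general rank is precisely the exhaustion-type problem the paper declares unfeasible with present methods. Without that control, the limit/family argument of \S\ref{ways of getting unitary} cannot be invoked, because it requires a continuous family of \emph{irreducible} hermitian representations through $\pi$; if the deformation reducibility cannot be ruled out, non-isolation does not follow. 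So the proposal reduces the conjecture to statements that are at least as hard as the conjecture itself, rather than proving it; as a research plan it is sensible (and consistent with how the corank $\le 3$ cases behave), but it should be presented as such.
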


We can slightly extend the notion of critical point.
\begin{definition}
Let $\rho_1,\dots,\rho_k\in\Cusp$ and $\sigma\in\Cuspcl$.
Assume that for all $i$, $\rho_i^u\cong (\rho_i^u)\check{\ }$ and the set
$$
\{e(\rho_j):\rho_j^u\cong \rho_i^u\}
$$
is a $\Z$-segment in $\frac12\Z$ (possibly with multiplicities) that contains the reducibility point $\alpha_{ \rho_i^u,\sigma}$.
Then, we say that the representation $\rho_1\times\dots\times \rho_k\rtimes\sigma$ is of critical type.
\end{definition}

We would expect that any irreducible unitarizable subquotient of a representation of critical type of a split classical group is automorphic.
Very preliminary results in this direction are the subject matter of a work in progress.

\chapter{Unitarizability in Mixed Case for Corank \texorpdfstring{$\leq 3$}{Lg}} \label{mixed-case}

In this chapter we shall use notation and terms introduced in sections 8 and 9 of \cite{MR3969882} regarding
Jantzen decomposition of an irreducible representation of a classical $p$-adic group We shall recall
some of the most basic definitions. See section 8 of \cite{MR3969882} for more details.

\section{Jantzen decomposition}

\subsection{Support of representation of classical group}
Let $X \subseteq \Cusp$ and suppose that $X$ is $F'/F$-selfcontragredient, i.e. that
$$
\check X=X,
$$
where $\check X=\{\check\rho;\rho\in\ X\}$, and let $\s\in\Cuspcl$.
We say that $\g\in\Irrcl$ is supported in $X\cup\{\s\}$ if there exist $\rho_1,\dots,\rho_k\in X$
(not necessarily distinct) such that
$$
\g\leq \rho_1\times\dots\times\rho_k\rtimes\s.
$$
We denote by $\Irrcl_{X\cup\{\s\}}$ the set of representations in $\Irrcl$ supported in $X\cup\{\s\}$.
For a not-necessarily irreducible representation $\pi$ of a classical group, one says that it is supported on $X\cup\{\s\}$
if each irreducible subquotient of it is supported on that set.

\subsection{Regular partition} Let
$$
X=X_1\cup X_2
$$
be a partition of an $F'/F$-selfcontragredient $X\subseteq \Cusp$. We shall say that this partition is regular if $X_1$ (and hence also $X_2$)
is $F'/F$-selfcontragredient,
and if among $X_1$ and $X_2$ there is no reducibility, i.e. if $\rho_1\times\rho_2$ is irreducible for all $\rho_1\in X_1$ and $\rho_2\in X_2$.

\subsection{Decomposition}
Let $\pi\in\Irrcl_{X\cup\{\s\}}$, where $X$ is $F'/F$-selfcontragredi\-ent, and let $X=X_1\cup X_2$ be a regular partition of $X$.
Fix $i \in \{1, 2 \}$.
Then, there exists $\beta\in\Irr$ supported in $X_{3-i}$ and $\g\in\Irrcl_{X_i\cup\{\s\}}$ such that
\[
\pi \hookrightarrow \beta\rtimes \g.
\]
Moreover,
$
\g
$
is uniquely determined by the above requirement. It is denoted by
$$
X_i(\pi)
$$
and called the Jantzen component of $\pi$ corresponding to the member $X_i$ in the regular partition $X=X_1\cup X_2$.
Furthermore, let $X\subset \Cusp$ be such that $\Cusp=X\cup (\Cusp\setminus X)$ is a regular partition of $\Cusp$ and let
$\pi\in\Irrcl$. Then,
$
X(\pi)
$
is defined as above for the regular partition $\Cusp=X\cup (\Cusp\setminus X)$
(and it is called again the Jantzen component of $\pi$ corresponding to $X$).

For $\rho\in\Cuspsd$, denote $X_\rho=\{\nu^x\rho:x\in \R\}$.
Let $\pi\in\Irrcl$ be weakly real.
Then, there exist finitely many distinct $\rho_1,\dots,\rho_k\in\Cuspsd$ and $\s\in\Cuspcl$ such that the support of $\pi$ is in
$X_{\rho_1}\cup\dots\cup X_{\rho_k}\cup\{\s\}$. The representations
$$
(X_{\rho_1}(\pi), \dots,X_{\rho_k}(\pi))
$$
determine $\pi$, and this defines a bijection
\[
\Irrcl_{X_{\rho_1}\cup\dots\cup X_{\rho_k}\cup\{\s\}}\rightarrow\prod_{i=1}^k\Irrcl_{X_{\rho_i}\cup\{\s\}}.
\]
The inverse map is denoted by
$$
\Psi_{X_{\rho_1},\dots,X_{\rho_k}}.
$$
The correspondence $\pi \mapsto (X_{\rho_1}(\pi), \dots,X_{\rho_k}(\pi))$ has a number of very nice properties
(see \cite{MR1481814} or section 8 of \cite{MR3969882}).
We shall now prove one additional very simple property which we shall use often.

\begin{lemma} \label{J-ind}
Let $X$ be an $F'/F$-selfcontragredient subset of $\Cusp$, and let $X=X_1\cup X_2$ be a regular partition of $X$.
Let $\theta_i\in\Irr$ supported in $X_i$ and $\pi_i\in\Irrcl_{X_i\cup\{\s\}}$, $i=1,2$.
Suppose that $\theta_i\rtimes\pi_i$, $i=1,2$ are both irreducible.
Then,
\begin{equation} \label{JDI}
\Psi_{X_1,X_2}(\theta_1\rtimes\pi_1,\theta_2\rtimes\pi_2)\cong \theta_1\times \theta_2\rtimes \Psi_{X_1,X_2}(\pi_1,\pi_2).
\end{equation}
\end{lemma}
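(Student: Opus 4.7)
Set $\pi := \Psi_{X_1,X_2}(\pi_1,\pi_2)$, so that $X_i(\pi) = \pi_i$ by definition, and put $\Pi := \theta_1\times\theta_2\rtimes\pi$. The plan is to show that $\Pi$ is irreducible with Jantzen components $X_i(\Pi) = \theta_i\rtimes\pi_i$; the bijectivity of the Jantzen correspondence will then identify $\Pi$ with $\Psi_{X_1,X_2}(\theta_1\rtimes\pi_1,\theta_2\rtimes\pi_2)$ as required.

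First I would identify the candidate Jantzen components, which is the easier step. From the definition of $X_1(\pi) = \pi_1$, pick an irreducible $\beta$ supported in $X_2$ together with an embedding $\pi\hookrightarrow\beta\rtimes\pi_1$. Exactness of parabolic induction, combined with the commutativities $\theta_1\times\theta_2\cong\theta_2\times\theta_1$ and $\theta_1\times\beta\cong\beta\times\theta_1$ that follow from the regularity of the partition $X=X_1\cup X_2$, yields
\[
\Pi\hookrightarrow\theta_1\times\theta_2\times\beta\rtimes\pi_1\cong(\theta_2\times\beta)\rtimes(\theta_1\rtimes\pi_1).
\]
Given any irreducible subrepresentation $\tau$ of $\Pi$, a Jordan--H\"older argument on the finite length representation $\theta_2\times\beta$ (still supported in $X_2$) produces an irreducible composition factor $\beta'$ of it for which $\tau\hookrightarrow\beta'\rtimes(\theta_1\rtimes\pi_1)$. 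The defining property of the Jantzen decomposition, together with the irreducibility of $\theta_1\rtimes\pi_1\in\Irrcl_{X_1\cup\{\s\}}$, forces $X_1(\tau)=\theta_1\rtimes\pi_1$; by symmetry, $X_2(\tau)=\theta_2\rtimes\pi_2$. Thus every irreducible subrepresentation of $\Pi$ equals the single representation $\tau_0:=\Psi_{X_1,X_2}(\theta_1\rtimes\pi_1,\theta_2\rtimes\pi_2)$, and a dual argument using the hermitian contragredient shows the same for every irreducible quotient of $\Pi$.

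The main obstacle is to upgrade this to an actual isomorphism $\Pi\cong\tau_0$, ruling out both higher multiplicity of $\tau_0$ and non-trivial self-extensions. The cleanest route is to invoke the functoriality of the Jantzen decomposition (\cite{MR1481814} and \S 8 of \cite{MR3969882}): inducing by an irreducible $\theta$ supported in $X_{3-i}$ leaves $X_i$ unchanged, whereas inducing by $\theta$ supported in $X_i$ has the effect of $\theta\rtimes(\cdot)$ on $X_i$, and in the latter case the irreducibility of $\theta\rtimes\pi'$ is equivalent to the irreducibility of $\theta\rtimes X_i(\pi')$. Applied with $i=2$, $\theta_2\rtimes\pi$ is irreducible because $\theta_2\rtimes X_2(\pi)=\theta_2\rtimes\pi_2$ is so by hypothesis, while $X_1(\theta_2\rtimes\pi)=X_1(\pi)=\pi_1$; applied once more with $i=1$, $\Pi=\theta_1\rtimes(\theta_2\rtimes\pi)$ is irreducible because $\theta_1\rtimes X_1(\theta_2\rtimes\pi)=\theta_1\rtimes\pi_1$ is. Combined with the computation of Jantzen components above, this yields the claimed isomorphism \eqref{JDI}.
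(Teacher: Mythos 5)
Your proposal is correct and follows essentially the same route as the paper: the Jantzen components are identified by the same embedding $\pi\hookrightarrow\beta\rtimes\pi_1$, commutation coming from regularity of the partition, and a Jordan--H\"older refinement, while the irreducibility of $\theta_1\times\theta_2\rtimes\Psi_{X_1,X_2}(\pi_1,\pi_2)$ is in both arguments imported from the known compatibility of the Jantzen decomposition with parabolic induction (the paper quotes Remark 8.9(1) of \cite{MR3969882} directly; you obtain it by applying the corresponding single-induction statement twice). Since the paper's own proof rests on the same external input, your appeal to \cite{MR1481814} and \S 8 of \cite{MR3969882} is on equal footing.
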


\begin{proof}
Note that $\theta_1\times \theta_2\rtimes \Psi_{X_1,X_2}(\pi_1,\pi_2)$ is irreducible by (1) of Remark 8.9 of \cite{MR3969882}.

By the definition of $\Psi_{X_1,X_2}(\pi_1,\pi_2)$, we know that $\Psi_{X_1,X_2}(\pi_1,\pi_2)\h \tau \rtimes\pi_1$,
where $\tau $ is irreducible and supported on $X_2$. This implies $\tau\times\theta_1\cong\theta_1\times\tau$. Therefore,
$$
\theta_1\times \theta_2\rtimes \Psi_{X_1,X_2}(\pi_1,\pi_2)\h \theta_1\times \theta_2\rtimes\tau \rtimes\pi_1\cong
\theta_2\times\tau \times\theta_1 \rtimes\pi_1.
$$
If $\theta_2\times\tau$ is not irreducible, we can easily show that it admits an irreducible subquotient $\varphi$ such that
$\theta_1\times \theta_2\rtimes \Psi_{X_1,X_2}(\pi_1,\pi_2)\h \varphi \times\theta_1 \rtimes\pi_1$.
Therefore, $X_1(\theta_1\times \theta_2\rtimes \Psi_{X_1,X_2}(\pi_1,\pi_2))\cong \theta_1 \rtimes\pi_1$.
Analogously $X_2(\theta_1\times \theta_2\rtimes \Psi_{X_1,X_2}(\pi_1,\pi_2))\cong \theta_2 \rtimes\pi_2$. This proves \eqref{JDI}.
\end{proof}

\section{Preservation of unitarizability by decomposition in corank \texorpdfstring{$\leq 3$}{leq3}}

\begin{lemma}
Let $\pi$ be a weakly real irreducible subquotient of $\theta_1\times\dots\times\theta_k\rtimes\s$, where $\theta_i\in\Cusp $ and
$$
k\leq 3.
$$
Suppose that all $X_{\rho_i}(\pi)$ in the Jantzen decomposition of $\pi$ are unitarizable. Then, $\pi$ is unitarizable.
\end{lemma}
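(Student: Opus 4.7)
The plan is to proceed by induction on the number $m$ of distinct cuspidal lines appearing in the support of $\pi$. Since the total corank $k$ is at most $3$, we always have $m \le 3$. The base case $m = 1$ is immediate, because then $\pi$ coincides with its unique Jantzen component, which is unitarizable by hypothesis.

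For the inductive step, assume $m \ge 2$ and set $\pi_i := X_{\rho_i}(\pi)$. Since $k \le 3$ and $m \ge 2$, at least one of the Jantzen components, say $\pi_{i_0}$, has corank exactly one on its line $X_{\rho_{i_0}}$ (if $m=2$ at least one coordinate of the corank pair equals one, and if $m=3$ all three Jantzen components have corank one). Pick an embedding $\pi_{i_0} \hookrightarrow \nu^x\rho_{i_0} \rtimes \s$; unitarizability of $\pi_{i_0}$ in corank one (Section \ref{rank1}) forces $|x| \le \alpha := \alpha_{\rho_{i_0},\s}$. Consider the coarse Jantzen partition $\{X_{\rho_{i_0}}, \bigcup_{j\ne i_0} X_{\rho_j}\}$ and let $\pi' := \Psi_{X_{\rho_j},\, j\ne i_0}((\pi_j)_{j\ne i_0})$; by the compatibility of the Jantzen decomposition with refinements of regular partitions, $\pi'$ is the Jantzen component of $\pi$ on $\bigcup_{j \ne i_0} X_{\rho_j}$, it is supported on at most $m-1$ cuspidal lines, and its own refined Jantzen components are precisely the unitarizable representations $\pi_j$, $j \ne i_0$. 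The inductive hypothesis therefore gives that $\pi'$ is unitarizable.

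By the defining property of the Jantzen decomposition there exists an irreducible $\GL$-representation $\beta$ supported on $X_{\rho_{i_0}}$ with $\pi \hookrightarrow \beta \rtimes \pi'$; comparing cuspidal supports on $X_{\rho_{i_0}}$ and using that $\pi_{i_0}$ has corank exactly one, we conclude $\beta = \nu^x\rho_{i_0}$, and hence $\pi \hookrightarrow \nu^x\rho_{i_0} \rtimes \pi'$. The concluding step is a complementary-series-plus-limits argument. Since $\rho_{i_0} \cong \check\rho_{i_0}$ and $\pi'$ is hermitian, the family $\{\nu^s\rho_{i_0}\rtimes\pi'\}_{s\in\R}$ consists of hermitian representations. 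Because the support of $\pi'$ is disjoint from $X_{\rho_{i_0}}$, Remark \ref{rem-irr}(2) together with the standard Jantzen-based reduction to the one-line problem shows that $\nu^s\rho_{i_0}\rtimes\pi'$ is irreducible for $|s| < \alpha$, while at $s = 0$ the representation $\rho_{i_0}\rtimes\pi'$ is unitarizable by unitary parabolic induction (Section \ref{ways of getting unitary}) from the unitarizable $\rho_{i_0}\otimes\pi'$. Hence $\{\nu^s\rho_{i_0}\rtimes\pi' : 0\le s<\alpha\}$ is a complementary series, and the standard limits-of-characters argument extends unitarizability across the endpoint $s=\alpha$ to every irreducible subquotient of $\nu^{\alpha}\rho_{i_0}\rtimes\pi'$. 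In particular, $\pi$ itself is unitarizable.

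The main obstacle is the identification $\beta = \nu^x\rho_{i_0}$ in the embedding $\pi \hookrightarrow \beta \rtimes \pi'$—a tracking of cuspidal supports across the Jantzen partition that is clean here only because of the corank-one hypothesis secured by $k\le 3$. Secondarily, certifying the irreducibility of $\nu^s\rho_{i_0}\rtimes\pi'$ throughout the entire open interval $|s|<\alpha$ is essential to running the deformation, and it relies crucially on the disjointness of $X_{\rho_{i_0}}$ from the lines supporting $\pi'$; the failure of such disjointness in a general corank-$\ge 4$ setting is precisely the source of the difficulty in the full conjecture of Chapter \ref{mixed-case}.
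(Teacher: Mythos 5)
Your proposal is correct and follows essentially the same route as the paper: the corank-one unitarizability of a Jantzen component pins its exponent to $[0,\alpha_{\rho_{i_0},\s}]$, and $\pi$ is then realized inside a complementary series (or its end, via the limit argument) built over the remaining unitarizable component, with irreducibility off the endpoint supplied by the Jantzen reduction to a single line. The only cosmetic difference is that you peel off one line at a time by induction on the number of cuspidal lines, whereas the paper treats the three-distinct-lines case directly as a three-parameter complementary series and only argues your way in the two-line case.
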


\begin{proof}
For $k=1$, there is nothing to prove.
We shall now prove the case $k=3$. (The case $k=2$ is easier, and will be omitted.)

Let $\pi \mapsto (X_{\rho_1}(\pi), \dots,X_{\rho_\ell}(\pi))$ be the Jantzen decomposition of $\pi$.
For the proof, we consider only those $\rho_i$ for which
$X_{\rho_i}(\pi)\ne \s$. We assume this for the rest of the proof.
Denote
$$
\alpha_i=\alpha_{\rho_i,\s}.
$$
If $\ell=1$, the claim obviously holds (since then $\pi=X_{\rho_1}(\pi)$).

Consider first the case $\ell=3$. Then, $\pi$ is a subquotient of $[x_1]^{(\rho_1)}\times[x_2]^{(\rho_2)} \times [x_3]^{(\rho_3)}\rtimes\s$,
where $x_i\geq0$.
Then, $X_{\rho_i}(\pi)$ are irreducible subquotients of $[x_i]^{(\rho_i)}\rtimes\s$.
Since $X_{\rho_i}(\pi)$ are unitarizable, then we know $x_i\leq\alpha_i$, $1\leq i\leq 3$.
But then each irreducible subquotient of $[x_1]^{(\rho_1)}\times[x_2]^{(\rho_2)} \times [x_3]^{(\rho_3)}\rtimes\s$ is unitarizable
(since we are in complementary series or its ends). Therefore, $\pi$ is unitarizable.

Suppose $\ell=2$. We may assume that $\pi$ is a subquotient of $[x_1]^{(\rho_1)}\times[x_2]^{(\rho_1)} \times [x_3]^{(\rho_2)} \rtimes\s$.
Then, $X_{\rho_2}(\pi)$ is an irreducible unitarizable subquotient of $[x_3]^{(\rho_2)} \rtimes\s$.
This implies $x_3\leq \alpha_2$. Furthermore, $\pi$ is an irreducible subquotient of
$[x_3]^{(\rho_2)} \rtimes X_{\rho_1}(\pi)$. Since $X_{\rho_1}(\pi)$ is unitarizable and $x_3\leq\alpha_2$,
this implies that $\pi$ is unitarizable (again we are in complementary series or its ends).
\end{proof}

\begin{lemma} \label{k=2}
Let $\pi$ be a weakly real irreducible representation of a classical group.
Suppose that some $X_{\rho_i}(\pi)$ is a non-unitarizable subquotient of $\theta_1\times\dots\times\theta_k\rtimes\s$,
where $\theta_i\in\Cusp $ and
$$
k\leq 2.
$$
Then, $\pi$ is not unitarizable.
\end{lemma}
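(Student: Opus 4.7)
The strategy is to exploit the Jantzen decomposition to realize $\pi$ inside a parabolic induction whose inducing Levi datum involves the non-unitarizable Jantzen component, and then to place this induction inside a continuous family of irreducible hermitian representations parameterized by an unbounded unramified parameter. Without loss of generality, assume $\pi_1 := X_{\rho_1}(\pi)$ is non-unitarizable. Set $X_2 := \bigcup_{j \geq 2} X_{\rho_j}$ and $\tilde\pi := X_2(\pi) = \Psi_{X_{\rho_2}, \dots, X_{\rho_\ell}}(\pi_2, \dots, \pi_\ell)$. By the defining property of the Jantzen decomposition applied to the regular partition $X_{\rho_1} \cup X_2$, there is an irreducible $\beta \in \Irr$ supported in $X_{\rho_1}$ with
$$
\pi \hookrightarrow \beta \rtimes \tilde\pi.
$$
Since $\pi_1$ has corank $k \leq 2$ above $\sigma$, the cuspidal support of $\beta$ is a multiset of at most two cuspidals of the form $\nu^{x_i} \rho_1$, and its GL-segment structure mirrors the GL-part of the cuspidal support of $\pi_1$.

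Granting the central claim that $\beta \rtimes \tilde\pi$ is irreducible (so that $\pi \cong \beta \rtimes \tilde\pi$), consider the continuous one-parameter family
$$
\pi(t) := \nu^t \beta \rtimes \tilde\pi, \qquad t \in \R_{\geq 0}.
$$
Each $\pi(t)$ is hermitian (being weakly real), and the same irreducibility argument applied to $\nu^t\beta$ in place of $\beta$ shows that $\pi(t)$ remains irreducible for all but finitely many $t$. Thus $\{\pi(t) : t \in \R_{\geq 0}\}$ is a continuous family of irreducible hermitian representations parameterized by an unbounded set of unramified parameters, and the principle recalled in \S\ref{ways of getting unitary} forces every member of the family, and in particular $\pi = \pi(0)$, to be non-unitarizable.

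The hard part is the irreducibility claim for $\beta \rtimes \tilde\pi$. The key input is the fact that, since $X_{\rho_1}(\tilde\pi) = \sigma$, the reducibility of $\nu^a \rho_1 \rtimes \tilde\pi$ in $a \in \R$ occurs precisely at the same points as for $\nu^a \rho_1 \rtimes \sigma$ (namely $a = \pm\alpha_1$), and this transfer principle extends to the equivalence: $\beta \rtimes \tilde\pi$ is irreducible if and only if $\beta \rtimes \sigma$ is. For $k = 1$ the corank one classification (\S\ref{rank1}) forces $x_1 > \alpha_1$, so $\beta = [x_1]^{(\rho_1)}$ with $x_1 \neq \pm \alpha_1$ and $[x_1]^{(\rho_1)} \rtimes \sigma$ is irreducible, yielding the claim. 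For $k = 2$, the argument splits by the form of $\beta$: when $(x_1,x_2)$ lies in an open non-unitary region of Proposition \ref{PROP: RK2}, $\beta$ can be chosen so that $\beta \rtimes \sigma = \pi_1$ is irreducible and the transfer closes the argument; in the remaining cases, where $\pi_1$ is a proper subquotient on a reducibility line, one instead deforms the Langlands parameters of $\pi_1$ along an unbounded non-unitary direction guaranteed by the corank two classification, producing a continuous family of irreducible Langlands quotients of the full $\pi$ to which the same unbounded-family principle applies.
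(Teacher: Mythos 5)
Your overall strategy (use the Jantzen decomposition together with Lemma \ref{J-ind} to rewrite $\pi$ as an induced representation and then deform the unramified parameter) is indeed the skeleton of the paper's argument, but two of your steps fail. First, the fallback you offer when $X_{\rho_1}(\pi)$ is a proper subquotient on a reducibility line (e.g.\ a non-unitarizable subquotient of $[\alpha]^{(\rho_1)}\times[\alpha+1]^{(\rho_1)}\rtimes\s$, or of $[x_1]^{(\rho_1)}\rtimes L([\alpha]^{(\rho_1)};\s)$) does not work: producing a nearby unbounded, non-unitary family of irreducible representations says nothing about the subquotient $\pi$ sitting at the endpoint, because the limit principle of \S\ref{ways of getting unitary} only goes one way — unitarity of the family forces unitarity of \emph{all} subquotients of the limit, while non-unitarity of the family transfers nothing to a particular endpoint subquotient. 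This is exactly why the paper must invoke the external input \eqref{Psi1} (Theorem 1.2 of \cite{MR3969882}): $\Psi_{X_{\rho},X_{\rho}^c}(\tau,\pi_\rho^c)$ is non-unitarizable for every non-unitarizable irreducible subquotient $\tau$ of $[\alpha]^{(\rho)}\times[\alpha+1]^{(\rho)}\rtimes\s$; all the delicate cases are then settled by deforming \emph{toward} the critical segment $(\alpha,\alpha+1)$ and running the limit argument in the correct direction (if $\pi$ were unitarizable, every subquotient of the limit would be, contradicting \eqref{Psi1}). Your proposal contains no substitute for this ingredient.

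Second, even in the fully induced case your one-parameter twist $\nu^t\beta\rtimes\tilde\pi$ is not a legitimate application of the unbounded-parameter criterion: as $t$ grows the exponents cross reducibility hyperplanes (e.g.\ $x_i+t=\alpha_1$, or $x_1+x_2+2t=1$ when $k=2$), so the connected component of the irreducibility locus containing $t=0$ is in general bounded, and "irreducible for all but finitely many $t$" is not enough since unitarity propagates only along connected families of irreducible hermitian representations. The paper avoids this by arguing region by region: when possible it deforms a single exponent and uses unitary parabolic reduction (non-unitarizability of $[x]\times[-x]$ for $x>\tfrac12$, or of complementary series beyond their range for $\GL$), and otherwise it deforms to the segment $(\alpha,\alpha+1)$ and quotes \eqref{Psi1}. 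A smaller issue: your "transfer principle" that $\beta\rtimes\tilde\pi$ is irreducible if and only if $\beta\rtimes\s$ is, is asserted but not proved; the paper derives the needed identifications from Lemma \ref{J-ind} together with the standard properties of the Jantzen decomposition, and you would have to do the same.
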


\begin{proof}
Suppose on the contrary that $\pi$ is unitarizable. Denote $\rho_i$ simply by $ \rho$ and let
$X_\rho^c=\Cusp\setminus X_\rho.$ We also denote $\alpha=\alpha_{\rho,\s}$. Now $X_\rho(\pi)$ is a subquotient of
$$
[x_1]^{(\rho)}\times\dots\times[x_k]^{(\rho)}\rtimes\s,
$$
where $x_i\geq0 $ and $k\leq2$. Denote $\pi_\rho=X_\rho(\pi)$ and $\pi_\rho^c=X_\rho^c(\pi)$.
Clearly
$$
\pi= \Psi_{X_{\rho},X_{\rho}^c}( \pi_\rho ,\pi_\rho^c ).
$$

If $k=1$, then non-unitarizability of $\pi_\rho $ implies $\pi_\rho \cong [x_1]^{(\rho)}\rtimes\s$ where $x_1>\alpha$.
Now Lemma \ref{J-ind} implies
$$
\pi\cong [x_1]^{(\rho)}\rtimes\pi_\rho^c .
$$
This cannot be unitarizable, since we can deform $x_1$ to the right as far as we want.
We get a contradiction (with the fact that unitarizability can show up only in bounded domains -- see \cite{MR733166} for more details).

Consider now the case $k=2$. We shall suppose as usually $0\leq x_1\leq x_2$. Recall that $\pi_\rho $ is a subquotient of
$$
[x_1]^{(\rho)}\times[x_2]^{(\rho)}\rtimes\s.
$$
We consider several cases.
The first is
$$
\alpha=0.
$$
The non-unitarizability of $\pi_\rho $ implies that $x_1+x_2>1$.
This implies $\pi_\rho \cong [x_1]^{(\rho)}\times[x_2]^{(\rho)}\rtimes\s$. Now Lemma \ref{J-ind} implies
$$
\pi\cong [x_1]^{(\rho)}\times[x_2]^{(\rho)}\rtimes\pi_\rho^c.
$$
Further, we can deform $x_1$ to $x_2$, use the unitary parabolic reduction and get a contradiction with the
unitarizability in the case of general linear groups (more precisely, with the existence of the complementary series there).

It remains to consider the case
$$
\alpha>0.
$$
First recall that Theorem 1.2 of \cite{MR3969882} implies that
\begin{equation} \label{Psi1}
\Psi_{X_{\rho},X_{\rho}^c}( \tau ,\pi_\rho^c ) \text{ is not unitarizable}
\end{equation}
for any non-unitarizable irreducible subquotient $\tau$ of $ [\alpha]^{(\rho)}\times[\alpha+1]^{(\rho)}\rtimes\s $.
Therefore, we assume
$$
(x_1,x_2)\ne (\alpha,\alpha+1)
$$
in the sequel.

Consider first the case
$$
\alpha=\tfrac12.
$$
Since $\pi_\rho $ is not unitarizable, we have $x_1>\tfrac12$ or $x_2>\tfrac12$.

Suppose $x_i\ne \tfrac12$ for $i=1,2$.

Assume first that $x_2\pm x_1\ne1$. Then, $\pi\cong [x_1]^{(\rho)}\times [x_2]^{(\rho)}\rtimes\s$. Now Lemma \ref{J-ind} implies
$$
\pi\cong [x_1]^{(\rho)}\times [x_2]^{(\rho)}\rtimes\pi_\rho^c.
$$
If $x_1>\tfrac12$, then we can deform $x_1$ to $x_2$, switch one $x_2$ to $-x_2$,
use the unitary parabolic reduction and get a contradiction (with existence of complementary series
for general linear group), which implies that $\pi$ cannot be unitarizable.

If $x_1<\tfrac12$, then we can deform $x_2$ to get $x_2+\e x_1=1$ for some $\e\in\{\pm1\}$,
and take there an irreducible subquotient denoted again by $\pi$ (which must be unitarizable).
Now $\pi_\rho \cong \tau \rtimes\s$ for some irreducible subquotient of the reducible representation
$[\e x_1]^{(\rho_1)}\times [x_2]^{(\rho_1)}$. Lemma \ref{J-ind} implies
$$
\pi\cong \tau \rtimes\pi_\rho^c.
$$
Now we can deform $\tau$ to exponents $(\tfrac12,\tfrac32)$.
The properties of the Jantzen decomposition and \eqref{Psi1}
imply that $\pi$ is not unitarizable (since in the limit we have a non-unitarizable subquotient).

Assume now that $x_2+\e x_1=1$ for some $\e\in\{\pm1\}$. Then, in the same way as above we get
$$
\pi\cong \tau \rtimes\pi_\rho^c
$$
for some irreducible subquotient of the reducible representation $[\e x_1]^{(\rho)}\times [x_2]^{(\rho)}$.
Now we finish this case as the previous one.

It remains to consider the case $x_1=\tfrac12$.
Then, $\pi_\rho \cong [x_2]^{(\rho)}\rtimes\theta$ where $\theta$ is an irreducible subquotient of
$[\tfrac12]^{(\rho_2)}\rtimes\s$ (recall $x_2\ne\tfrac32$). Lemma \ref{J-ind} implies
$$
\pi\cong [x_2]^{(\rho)}\rtimes \Psi_{X_{\rho},X_{\rho}^c}( \theta ,\pi_\rho^c ).
$$
We now deform $x_2$ to $\tfrac32$, and in a similar way as before, we get a contradiction.

It remains to consider the case
$$
\alpha\geq1.
$$
First assume
$$
x_2>\alpha.
$$
We know from Proposition 2.2 of \cite{MR3711838} that $x_1\leq \alpha$ and $x_2-x_1\leq1$, which implies $x_2\leq \alpha+1$.
Therefore, if $x_2=\alpha+1$, then $x_1=\alpha$. Then, we know that $\pi$ is not unitarizable by \eqref{Psi1}.
It remains to consider the case
$$
x_2<\alpha+1.
$$

Let
$$
x_2>\alpha.
$$
Consider first the case $x_1= \alpha$. Then, $\pi_\rho =[x_2]^{(\rho)}\rtimes \theta$ where $\theta$ is $
\delta([\alpha]^{(\rho)}; \s)$ or $L([\alpha]^{(\rho)}; \s)$.
One directly gets that
$$
\pi\cong [x_2]^{(\rho)}\rtimes \Psi_{X_{\rho},X_{\rho}^c}( \theta ,\pi_\rho^c ).
$$
We now deform $x_2$ to $\alpha+1$, and in a similar way as before, we get a contradiction.

Therefore, we need to consider the case
$$
x_1<\alpha.
$$

Assume first that $x_2-x_1=1$. Then, a short analysis using Lemma \ref{J-ind} implies that
$\pi\cong \tau\rtimes\pi_\rho^c $, where $\tau$ is an irreducible subquotient of $[x_1]^{(\rho_1)}\times [x_2]^{(\rho_2)}$.
Now deforming $\tau$ to exponents $\alpha,\alpha+1$ and using the properties of the Jantzen decomposition,
we would get a contradiction.

If $x_2-x_1<1$, then similarly we get $\pi\cong [x_1]^{(\rho_1)}\times [x_2]^{(\rho_2)}\rtimes\pi_\rho^c$.
We can now deform (increase) $x_2$ to the previous case, consider a limit, and repeat the above argument.
Therefore, we get again a contradiction.

Consider now the case
$$
x_2=\alpha.
$$
We need to consider the case $\alpha-1<x_1\leq\alpha$.
Then, we know that $\pi_\rho \cong [x_1]^{(\rho)}\rtimes \theta$ for some irreducible subquotient
$\theta$ of $[\alpha]^{(\rho)}\rtimes\s$, which implies by Lemma \ref{J-ind}
$$
\pi\cong [x_1]^{(\rho)}\rtimes \Psi_{X_{\rho},X_{\rho}^c}( \theta ,\pi_\rho^c ).
$$
Now we can deform $x_1$ to $\alpha+1$, and get a contradiction in the same way as in the previous cases
(here we use that $[\alpha]\rtimes L([\alpha];\sigma)$ and $[\alpha]\rtimes \delta([\alpha];\sigma)$ are irreducible, and properties of the Jantzen decomposition).

It remains to consider the case
$$
x_2<\alpha
$$
and the region
$$
1-x_1<x_2<x_1+1.
$$
Then, representations $\pi\cong [x_1]^{(\rho)}\times [x_2]^{(\rho)}\rtimes\pi_\rho^c $, and moreover,
for the exponents satisfying above relations, $[x_1]^{(\rho)}\times [x_2]^{(\rho)}\rtimes\pi_\rho^c$ form
a continuous family of irreducible hermitian representations. Consider the point $\tfrac12<x_1=x_2<\alpha$ of the above region.
After switching $x_1$ to $-x_1$, the unitary parabolic reduction implies that this representation is not unitarizable (since $[x_1]\times[-x_1]$ is not unitarizable).
Therefore, the whole family is non-unitarizable.
This completes the proof of the
lemma\end{proof}

We infer

\begin{corollary}
Let $\pi$ be a weakly real irreducible subquotient of $\theta_1\times\dots\times\theta_k\rtimes\s$,
where $\theta_i\in\Cusp $, $k\leq 3$. Then, $\pi$ is unitarizable if and only if all $X_{\rho_i}(\pi)$ in the
Jantzen decomposition of $\pi$ are unitarizable.
\qed
\end{corollary}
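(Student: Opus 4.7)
The corollary is an immediate synthesis of the two lemmas established earlier in the chapter, so the plan is essentially a clean case split rather than new work.

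My plan is to split into the two directions. The backward direction (all Jantzen components unitarizable implies $\pi$ unitarizable) is exactly the first lemma of this chapter, which handles each of the subcases $\ell=1,2,3$ by combining the bijection $\Psi_{X_{\rho_1},\dots,X_{\rho_\ell}}$ with iterated parabolic induction along the ends of complementary series. I would simply cite it.

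For the forward direction, I would argue by contrapositive: assume that some Jantzen component $X_{\rho_j}(\pi)$ is non-unitarizable and deduce that $\pi$ is non-unitarizable. The key observation is a case split on the number $\ell$ of nontrivial Jantzen components. If $\ell=1$, then $\pi \cong X_{\rho_j}(\pi)$ (via $\Psi$) and the implication is immediate. If $\ell \geq 2$, then since the $\theta_i$'s distribute among the $\ell$ distinct cuspidal lines $X_{\rho_1},\dots,X_{\rho_\ell}$, each Jantzen component $X_{\rho_j}(\pi)$ is supported on at most $k-1 \leq 2$ members of $\Cusp$ (in addition to $\s$). Hence the hypothesis of Lemma~\ref{k=2} is satisfied, and it directly produces the non-unitarizability of $\pi$.

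The only mildly delicate point is the combinatorial observation in the $\ell \geq 2$ case, namely that with $k \leq 3$ total cuspidal factors spread over at least two distinct lines, each line contributes at most $2$ factors, which is exactly the corank bound required by Lemma~\ref{k=2}. Once this is noted, no further work is needed: the two lemmas together cover all cases with no overlap or gap. I do not anticipate any substantive obstacle; the corollary really is an assembly statement.
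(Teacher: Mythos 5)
Your proposal is correct and coincides with the paper's intended argument: the corollary is stated with no written proof precisely because it is the assembly of the two preceding lemmas, with the backward direction given by the first lemma and the forward direction by Lemma \ref{k=2} together with the trivial observation that when only one cuspidal line occurs one has $\pi\cong X_{\rho_1}(\pi)$, while for two or more lines each component has corank at most $k-1\le 2$. Nothing is missing.
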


Having in mind the results of \cite{MR2046512}, \cite{MR2767523}, \cite{MR3969882} and the present paper,
we conjecture that the above corollary holds for any $k$, namely

\begin{conjecture}
Jantzen decomposition preserves unitarizability in both directions.
\end{conjecture}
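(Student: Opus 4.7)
The plan is to attack the conjecture by induction on the total corank, using the corollary just above as the base case (corank $\leq 3$), and to treat the two directions separately because they have quite different flavors.

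For the direction ``all $X_{\rho_i}(\pi)$ unitarizable $\Rightarrow \pi$ unitarizable,'' the main engine would be Lemma \ref{J-ind}, which shows that $\Psi_{X_1,X_2}$ intertwines parabolic induction with the Jantzen product when the relevant induced representations are irreducible. I would argue that $\Psi_{X_{\rho_1},\dots,X_{\rho_\ell}}(\pi_{\rho_1},\dots,\pi_{\rho_\ell})$ can be realized as an iterated parabolic induction: starting from $\pi_{\rho_\ell}$ and repeatedly inducing by the auxiliary $\GL$-data that builds $\pi_{\rho_i}$ from its partial cuspidal support. At each stage of the induction the inducing $\GL$-factor is supported on a cuspidal line $X_{\rho_j}$ disjoint from the classical-group piece, and by the standard irreducibility statements recalled in \S\ref{GL-LT} (and in Remark \ref{rem-irr}) these inductions are irreducible. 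Hence unitarity propagates by unitary parabolic induction.

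For the harder direction ``$\pi$ unitarizable $\Rightarrow$ all $X_{\rho_i}(\pi)$ unitarizable,'' the strategy would parallel Lemma \ref{k=2}. Assume that some $X_\rho(\pi)$ is non-unitarizable. Then I would try to produce a continuous family of hermitian irreducible representations $X_\rho(\pi_t)$, $t\in[0,1]$, with $X_\rho(\pi_0)=X_\rho(\pi)$ and $X_\rho(\pi_1)$ a representation whose non-unitarizability is accessible by an elementary tool (e.g.\ unitary parabolic reduction from a non-unitarizable $\GL$-family, or a boundary effect producing a standard module with non-unitarizable Langlands quotient, as in the proof of Lemma \ref{k=2}). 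Using Lemma \ref{J-ind} to lift the deformation along the $X_\rho$-component while freezing the other $X_{\rho_j}(\pi)$'s, one gets a continuous hermitian family $\pi_t$. Since the unitarizable locus of hermitian families is closed, non-unitarity of $\pi_1$ forces non-unitarity of $\pi_0=\pi$.

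The main obstacle is exactly where the conjecture bites: in corank greater than three we do not yet have a classification of unitarizable representations supported on a single cuspidal line (that is the setting of the whole paper for $k\le 3$), so the deformation and the ``elementary'' non-unitarity at $t=1$ cannot be arranged by the direct case analysis used in Lemma \ref{k=2}. Two routes around this seem worth trying. The first is to prove the hard direction \emph{relative} to the single-line case: show that any non-unitarizable single-line $X_\rho(\pi)$ can be deformed within $X_\rho$ until it either hits a Steinberg-type endpoint (where non-unitarity is forced by a $\GL$-type complementary series via \S\ref{GL-LT}) or becomes fully $\GL$-induced from a non-unitarizable standard module in the sense of Theorem \ref{ud-gl}. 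The second, more structural approach is to exploit \cite{MR2331344} and M\oe glin's work on Arthur packets: if the Jantzen decomposition can be shown to be compatible with the formation of Arthur parameters, then unitarizability of isolated/Arthur-type representations decomposes across cuspidal lines automatically, reducing the conjecture to the continuous part of the unitary dual, where deformation arguments against an unbounded family of unramified parameters (\cite{MR733166}) become applicable. I would expect the second route to be closer to the philosophy of the paper, although the first might yield unconditional results in corank $4$ as a natural next test case.
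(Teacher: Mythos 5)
The statement you are addressing is stated in the paper only as a conjecture: the paper proves the corank $\le 3$ case (the corollary immediately preceding it, via the two lemmas of the last chapter) and explicitly leaves the general case open. So your text should be judged as an attempted proof of an open problem, and as it stands it is a programme rather than a proof: both directions are left incomplete, and the second one you concede yourself.

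Beyond that, the first direction contains a step that would fail as written. You claim that $\Psi_{X_{\rho_1},\dots,X_{\rho_k}}(\pi_{\rho_1},\dots,\pi_{\rho_k})$ can be realized as an iterated parabolic induction starting from one component and inducing by the $\GL$-data of the others, so that unitarity propagates by unitary parabolic induction. Lemma \ref{J-ind} does not give this: it only applies when each component is already of the form $\theta_i\rtimes\pi_i$ with this induction irreducible, and in general the $\GL$-datum $\beta$ realizing a component via $X_{\rho_i}(\pi)\hookrightarrow\beta\rtimes\sigma$ is not unitarizable even when the component is (already $\delta([\alpha]^{(\rho_i)};\sigma)\hookrightarrow\nu^{\alpha}\rho_i\rtimes\sigma$ shows this), so unitary induction is simply not the operation being performed. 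Worse, when the components are square-integrable on their respective lines, $\Psi(\pi_{\rho_1},\dots,\pi_{\rho_k})$ is itself square-integrable (discrete series have Jordan blocks spread over several cuspidal lines), hence not parabolically induced from any proper Levi; there is only one classical factor in a Levi subgroup, so the components cannot be "tensored and induced." The paper's own corank $\le 3$ argument avoids this entirely: it uses the smallness of the parameters to place the full induced representation $[x_1]^{(\rho_1)}\times\cdots\rtimes\sigma$ (or $[x]^{(\rho_j)}\rtimes X_{\rho_i}(\pi)$) inside complementary series or their ends, and this explicit exponent control is precisely what is unavailable in higher corank. Similarly, for the converse direction the deformation scheme of Lemma \ref{k=2} depends on the complete single-line analysis in corank $\le 2$ (knowing exactly which endpoints carry non-unitarizable subquotients), and neither of your two proposed routes supplies a substitute; the Arthur-packet compatibility you invoke is itself unproved. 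In short: the base case and general philosophy match the paper, but no proof of the conjecture is obtained, and the induction-of-components mechanism in your first direction is incorrect as a general statement.
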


\appendix

\chapter{The Arthur Packet of \texorpdfstring{$L(\nu^{\alpha}\rho, \nu^{\alpha-1}\rho;\delta(\nu^\alpha\rho;\s))$\\
 by Colette M\oe glin}{}} \label{appendix-M}

 \section{The representations}

Let $\s$ be an irreducible cuspidal representation of a classical groups $G=G(F)$ and let $\rho$ be an irreducible unitarizable cuspidal representation of a $\GL(d,F)$.
Let $\alpha\in \tfrac12\Z$ be strictly greater then $1$. Suppose that $\nu^\alpha\rho\rtimes\s$ is reducible. Denote by
$
\delta(\nu^\alpha\rho;\s)
$
the unique irreducible square-integrable subquotient of the last representation.

The induced representation
$$
\nu^\alpha\rho\times\nu^{\alpha-1}\rho\rtimes \delta(\nu^\alpha\rho;\s)
$$
has a unique irreducible quotient which will be denoted by $\pi$ (this is the Langlands quotient).

\section{The parameters}

Denote by $\phi$ the Langlands parameter of $\s$. This parameter is an admissible homomorphism of $W_F\times SL(2,\C)$ into $^LG$.
We denote also by $\rho$ the morphism of $W_F$ into $\GL(d, \C)$ which parameterizes the cuspidal representation $\rho$.

The assumption on $\alpha$ regarding the reducibility implies that there exists an admissible homomorphism $\phi_-$ for a classical group of smaller rank than $G$,
which is discrete and for which holds
$$
\phi=\phi_- \ \oplus \ \rho \otimes R[2\alpha-3] \ \oplus \ \rho \otimes R[2\alpha-1],
$$
where $R[n]$ denotes the irreducible $n$-dimensional algebraic representation of the group $SL(2,\C)$.
For this it is necessary that $2\alpha-3\geq0$, i.e. $\alpha\geq \tfrac32$.
If $\alpha= \tfrac32$, there one does not need to write the
term $\rho \otimes R[2\alpha-3]$.

Moreover $\phi_-$ is non-trivial if $G$ is quasi-split, but this is not used in the sequel.

One defines now the Arthur parameter $\psi$. It is the following morphism of $W_F\times SL(2,\C) \times SL(2,\C)$ into $^LG$:
$$
\psi:= \phi_- \otimes R[1] \ \oplus \ \rho \otimes R[2\alpha+1] \otimes R[1] \ \oplus \ \rho \otimes R[1] \otimes R[2\alpha+1] .
$$

\section{The result}

\

{\bf Claim}: The representation $\pi$ is in the Arthur packet associated to $\psi$.

Denote by $\psi_+$ the morphism where one changes a dimension $2\alpha+ 1$ into $2\alpha+ 3$. We have two possibilities and we make the choice that suits us for the proof
(the result is obviously independent of the choice) That is:
$$
\psi_+:= \phi_- \otimes R[1] \ \oplus \ \rho \otimes R[2\alpha+3] \otimes R[1] \ \oplus \ \rho \otimes R[1] \otimes R[2\alpha+1] .
$$
From the section 3.1.2 of \cite{MR2767522}, we know that the representations of the packet associated with $\psi$ are obtained from those associated with $\psi_+$ as follows:

{\sl Let $\tau_+$ be a representation in the packet parameterized by $\psi_+$. Then the term $Jac_{\nu^{\alpha+1}\rho}( \tau_+)$ is either zero or an irreducible representation.
In the second case, this irreducible representation is in the packet associated with $\psi$ and all the representations of the packet associated with $\psi$ are
obtained in this way and for a unique choice of $\tau_+$.}

We will construct in the packet associated with $\psi_+$, a representation denoted $\pi_+$ which will give $\pi$ by the procedure above.

Indeed, according to the construction in the section 3.1.1 in
\cite{MR2767522} (which is a summary of \cite{MR2209850} for the case which matter here of $\ell(\psi)=0$),
 the packet associated with $\psi_+$ contains
the representations of the packet obtained as the unique irreducible sub-representations of the induced representations
$$
\nu^{-\alpha}\rho \times \nu^{-\alpha-1}\rho\rtimes\tau_-,
$$
where $\tau_-$ runs over the representations of the packet
$$
\psi_-:= \phi_- \otimes R[1] \ \oplus \ \rho \otimes R[2\alpha+3] \otimes R[1] \ \oplus \ \rho \otimes R[1] \otimes R[2\alpha-3] .
$$
This even describes all representations of the packet associated with $\psi_+$ if $2\alpha-3>0$, and some are missing if $2\alpha-3=0$, but it does not matter to us.

According to the same reference, the packet associated with $\psi_-$ is formed of the irreducible subrepresentations
of the induced representations
$$
\nu^{\alpha+1}\rho\rtimes\tau',
$$
where $\tau'$ runs over the representations of the packet
$$
\psi':= \phi_- \otimes R[1] \ \oplus \ \rho \otimes R[2\alpha+ 1] \otimes R[1] \ \oplus \ \rho \otimes R[1] \otimes R[2\alpha-3] .
$$
This packet contains the representation $\delta(\nu^\alpha\rho;\s) $, the point is somewhat subtle: the representations of the packet $\psi'$ are the unique irreducible
subrepresentations of the induced representation $ \nu^{\alpha}\rho\rtimes\tau''$ with $\tau''$ in the packet associated to
$$
\phi_- \otimes R[1] \ \oplus \ \rho \otimes R[2\alpha-1] \otimes R[1] \ \oplus \ \rho \otimes R[1] \otimes R[2\alpha-3] .
$$
But this package contains the cuspidal representation $\s$, hence the assertion.

We have thus constructed a representation $\pi_+$ which is an irreducible sub representation
of the induced representation:
$$
\nu^{-\alpha}\rho\times \nu^{-\alpha+1}\rho \times \nu^{\alpha+1}\rho \rtimes \delta(\nu^{\alpha}\rho;\s).
$$
We can exchange $\nu^{\alpha+1}\rho$ and $\nu^{-\alpha}\rho\times \nu^{-\alpha+1}\rho$ (i.e. commute), so $Jac_{\nu^{\alpha+1}\rho} (\pi_+)$ is non-zero and actually is a submodule of the induced representation
$$
\nu^{-\alpha}\rho\times \nu^{-\alpha+1}\rho \rtimes \delta(\nu^{\alpha}\rho;\s).
$$
Since this induced representation has only one irreducible sub representation, which is $\pi$, and one knows a priori that the representation
$Jac_{\nu^{\alpha+1}\rho}(\pi_+)$ is irreducible, we identify this Jacquet module with $\pi$. We have thus shown that $\pi$ is in the packet associated with $\psi$.

\chapter{Jacquet Module of \texorpdfstring{$L(\nu^{\alpha}\rho, \nu^{\alpha-1}\rho;\delta(\nu^\alpha\rho;\s))$}{}} \label{App-JM}
As in the body of the paper, let $\rho$ be a $F'/F$-self-contragredient irreducible cuspidal representation of a general linear group
(which we denote here by $\GL(n_\rho,F')$) and $\sigma$
an irreducible cuspidal representation of a classical group. We assume that $\alpha=\alpha_{\rho,\s}\ge\tfrac32$.
In this appendix we calculate the Jacquet modules of the distinguished representation
\[
\pi_0=L(\nu^{\alpha}\rho, \nu^{\alpha-1}\rho;\delta(\nu^\alpha\rho;\s)).
\]
Although the result is not used in the body of the paper, we opted to include this computation as it might be useful in the future.
For simplicity, we suppress $\rho$ from the notation.
Thus,
$$
\pi_0=L([\alpha],[\alpha-1]; \delta([\alpha];\s)).
$$
The aim of this appendix is to prove the following
\begin{proposition} \label{mu-star-of-Art-type-rep}
We have
\begin{gather*}
\mu^*(L([\alpha],[\alpha-1]; \delta([\alpha];\s)))
=1\otimes L([\alpha],[\alpha-1]; \delta([\alpha];\s))\\
+[-\alpha]\otimes L([\alpha-1];\delta([\alpha];\s))+
[\alpha]\otimes L([\alpha-1],[\alpha];\s)+
\\
[-\alpha]\times[\alpha]\otimes[\alpha-1]\rtimes\s+
L([-\alpha],[-\alpha+1])\otimes \delta([\alpha];\s)+
\delta([\alpha-1,\alpha])\otimes L([\alpha]; \s)\\
+L([-\alpha], [-\alpha+1])\times[\alpha]\otimes\s+
[-\alpha]\times \delta([\alpha-1,\alpha])\otimes\s.
\end{gather*}
\end{proposition}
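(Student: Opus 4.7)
The plan is to extract $\mu^*(\pi_0)$ by subtraction from the Jacquet module of a larger, fully computable parabolic induction. By \eqref{to-use-below-1} we have the equality in the Grothendieck group
\[
[\alpha]\rtimes L([\alpha-1]; \delta([\alpha];\s)) = \pi_0 + \pi_3, \qquad \pi_3 = \delta([\alpha-1,\alpha])\rtimes \delta([\alpha]; \s),
\]
so the additivity of $\mu^*$ on the Grothendieck group yields
\[
\mu^*(\pi_0) = \mu^*\bigl([\alpha]\rtimes L([\alpha-1]; \delta([\alpha];\s))\bigr) - \mu^*(\pi_3).
\]
Both summands on the right are Jacquet modules of explicit parabolic inductions, hence directly computable from \eqref{mu*}.

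For the first term I would use $M^*([\alpha]) = 1\otimes[\alpha] + [\alpha]\otimes 1 + [-\alpha]\otimes 1$, together with the explicit formula for $\mu^*(L([\alpha-1]; \delta([\alpha];\s)))$ recorded in Proposition \ref{prop-2-a-}, which has five summands; this produces fifteen raw terms after applying $\rtimes$. For $\pi_3$ I would first use \eqref{M-seg} to expand $M^*(\delta([\alpha-1,\alpha]))$ as a sum of six summands and then combine with the two-term Jacquet module $\mu^*(\delta([\alpha]; \sigma)) = 1 \otimes \delta([\alpha]; \sigma) + [\alpha] \otimes \sigma$, producing twelve raw terms. The next step is to carry out the subtraction: many summands coming from the segment side of $M^*(\delta([\alpha-1,\alpha]))$ match summands arising from $M^*([\alpha])$ combined with terms of $\mu^*(L([\alpha-1]; \delta([\alpha];\s)))$, since $\pi_3$ already saturates the ``regular'' part of the Jacquet module of the reducible ambient induced representation. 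After all cancellations, exactly the eight terms listed in the proposition remain.

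The main obstacle is bookkeeping. Throughout the calculation one must repeatedly identify which products of general-linear representations are irreducible and display each term in its correct Langlands form: for example, $[-\alpha]\times[\alpha]$ and $L([-\alpha],[-\alpha+1])$ are irreducible since the relevant segments are unlinked for $\alpha\ge\tfrac{3}{2}$, whereas $[\alpha]\times[\alpha-1]$ is reducible of length two and its two constituents $\delta([\alpha-1,\alpha])$ and $L([\alpha],[\alpha-1])$ have to be handled separately when they occur in the first tensor factor. Two useful sanity checks are built in: first, the formula must be consistent with the transitivity of Jacquet modules applied to the embedding $\pi_0\hookrightarrow L([-\alpha],[-\alpha+1])\rtimes\delta([\alpha];\s)$ coming from the Langlands classification, which immediately predicts two of the eight terms; second, by Proposition \ref{a-1,a,a-a} we have $\pi_0^t=\pi_0$, so the final answer must be invariant under the involution $\pi_1\otimes\cdots\otimes\pi_l\otimes\mu\mapsto \check\pi_1^t\otimes\cdots\otimes\check\pi_l^t\otimes\mu^t$, and inspection shows that the eight listed summands are indeed permuted in pairs (with the two self-dual ones being fixed), which confirms the calculation.
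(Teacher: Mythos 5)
Your argument is correct, but it takes a genuinely different route from the paper's proof in Appendix \ref{App-JM}. You invoke the full Grothendieck-group identity $[\alpha]\rtimes L([\alpha-1];\delta([\alpha];\s))=\pi_0+\delta([\alpha-1,\alpha])\rtimes\delta([\alpha];\s)$ from \eqref{to-use-below-1} (legitimately available from Proposition \ref{a-1,a,a-a}, which is proved in the body independently of the appendix, so there is no circularity) and obtain $\mu^*(\pi_0)$ by subtracting two fully computable expressions, $M^*([\alpha])\rtimes\mu^*(L([\alpha-1];\delta([\alpha];\s)))$ and $M^*(\delta([\alpha-1,\alpha]))\rtimes\mu^*(\delta([\alpha];\s))$; carrying out the subtraction with the corank-two composition series of Proposition \ref{prop-2-a-} and \eqref{eq: dec345} (for $[\alpha]\times[\alpha-1]\rtimes\s$, $\delta([\alpha-1,\alpha])\rtimes\s$, $[\alpha-1]\rtimes\delta([\alpha];\s)$, $[\alpha]\rtimes\s$, and the reducible product $[-\alpha]\times[-\alpha+1]$) does leave exactly the eight stated terms. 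The paper instead uses from \eqref{to-use-below-1} only that $\pi:=[\alpha]\rtimes L([\alpha-1];\delta([\alpha];\s))$ is reducible with $\pi_0$ among its subquotients: it expands $\mu^*(\pi)$ and then apportions the terms between the two constituents via transitivity of Jacquet modules, Frobenius reciprocity and multiplicity counts, in effect re-deriving the decomposition $\pi=\gamma+\pi_0$ inside the appendix and extracting $s_{(kn_\rho)}(\pi_0)$ degree by degree. Your route is mechanically simpler and purely formal once the corank-two data are fed in; the paper's is more self-contained in identifying the complementary subquotient and yields the cuspidal-support level Jacquet module as a by-product. Two small slips in your bookkeeping remarks, neither of which affects the method: $L([-\alpha],[-\alpha+1])$ is irreducible because it is a Langlands quotient, not because the segments $[-\alpha]$ and $[-\alpha+1]$ are unlinked (they are linked, which is exactly why $[-\alpha]\times[-\alpha+1]$ is the length-two product that must be split when it arises in the first tensor factor); and $[\alpha]\times[\alpha-1]$ does not occur as a first tensor factor in this computation --- it occurs in the second factor as $[\alpha]\times[\alpha-1]\rtimes\s$, where Proposition \ref{prop-2-a-} supplies the needed decomposition.
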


\begin{proof}
From \eqref{to-use-below-1}, we know that
\[
\pi:=[\alpha]\rtimes L ([\alpha-1]; \delta([\alpha];\s))
\]
is reducible.
Furthermore we know that $\pi_0$ is a subquotient.
We have
\begin{gather*}
\mu^*(\pi)=
(1\otimes [\alpha]+[\alpha ]\otimes1+
[-\alpha]\otimes1)\rtimes\\
\Big(1\otimes L([\alpha-1];\delta([\alpha];\s))\\
+[\alpha]\otimes[\alpha-1]\rtimes\s+
[-\alpha+1]\otimes \delta([\alpha];\s)\\
+[-\alpha+1]\times[\alpha]\otimes\s +
\delta([\alpha-1,\alpha])\otimes\s\Big)
\end{gather*}
which (after multiplication) we write as
\[
1\otimes\pi \ +
\]
\begin{gather*}
[\alpha]\otimes L([\alpha-1];\delta([\alpha];\s))+
[-\alpha]\otimes L([\alpha-1];\delta([\alpha];\s))\\
+[\alpha]\otimes[\alpha]\times[\alpha-1]\rtimes\s+
[-\alpha+1]\otimes [\alpha]\rtimes\delta([\alpha];\s)+
\end{gather*}
\begin{gather*}
[\alpha]\times [\alpha]\otimes[\alpha-1]\rtimes\s+
[\alpha]\times[-\alpha+1]\otimes \delta([\alpha];\s)\\+
[-\alpha]\times[\alpha]\otimes[\alpha-1]\rtimes\s+
[-\alpha]\times[-\alpha+1]\otimes \delta([\alpha];\s)\\+
[-\alpha+1]\times[\alpha]\otimes[\alpha]\rtimes\s+
\delta([\alpha-1,\alpha])\otimes[\alpha]\rtimes \s+
\end{gather*}
\begin{gather*}
[\alpha]\times [-\alpha+1]\times[\alpha]\otimes\s +
[\alpha]\times \delta([\alpha-1,\alpha])\otimes\s\\
[-\alpha]\times [-\alpha+1]\times[\alpha]\otimes\s +
[-\alpha]\times \delta([\alpha-1,\alpha])\otimes\s
\end{gather*}
or (after decomposing into irreducible representations) as
\[
1\otimes \pi \ +
\]
\begin{gather*}
2\cdot[\alpha]\otimes L([\alpha-1];\delta([\alpha];\s)) +
\overbrace{ [-\alpha]\otimes L([\alpha-1];\delta([\alpha];\s))}^{\widehat\omega_1}+
[\alpha]\otimes L([\alpha-1,\alpha];\s)+\\
[\alpha]\otimes L([\alpha-1],[\alpha];\s)+
[\alpha]\otimes\delta_{\spsi}([\alpha-1],[\alpha];\s)+
\overbrace{[-\alpha+1]\otimes [\alpha]\rtimes\delta([\alpha];\s)}^{\omega_2}+
\end{gather*}
\begin{gather*}
\overbrace{[\alpha]\times [\alpha]\otimes[\alpha-1]\rtimes\s}^{\omega_2'}+
[\alpha]\times[-\alpha+1]\otimes \delta([\alpha];\s)+
[-\alpha]\times[\alpha]\otimes[\alpha-1]\rtimes\s+\\
L([-\alpha],[-\alpha+1])\otimes \delta([\alpha];\s)+
\delta([-\alpha,-\alpha+1])\otimes \delta([\alpha];\s)+
\\
[-\alpha+1]\times[\alpha]\otimes\delta([\alpha];\s)+
[-\alpha+1]\times[\alpha]\otimes L([\alpha];\s)+\\
\delta([\alpha-1,\alpha])\otimes\delta([\alpha]; \s)+
\delta([\alpha-1,\alpha])\otimes L([\alpha]; \s)+
\end{gather*}
\begin{gather*}
\overbrace{[\alpha]\times [-\alpha+1]\times[\alpha]\otimes\s}^{\omega_1} +
\overbrace{[\alpha]\times \delta([\alpha-1,\alpha])\otimes\s}^{\omega_3'}+\\
\overbrace{\delta([-\alpha,-\alpha+1])\times[\alpha]\otimes\s}^{\omega_3}+
\overbrace{L([-\alpha], [-\alpha+1])\times[\alpha]\otimes\s}^{\widehat\omega_2}+
\overbrace{[-\alpha]\times \delta([\alpha-1,\alpha])\otimes\s}^{\widehat\omega_2'}.
\end{gather*}
Let $\gamma$ be the irreducible subquotient of $\pi$ such that $s_{\GL}(\gamma)\ge\omega_1$.
Transitivity of Jacquet modules implies that $s_{(n_\rho)}(\gamma)$ contains an irreducible term of the form $[-\alpha+1]\otimes -$.
However, by the above formula, the only irreducible subquotient of $s_{(n_\rho)}(\pi)$ of this form is $\omega_2$.
Therefore, $\omega_2\leq \mu^*(\gamma)$. Similarly, considering $s_{(2n_\rho)}(\pi)$ and terms of form $[\alpha]\times[\alpha]\otimes-$,
we conclude that $\omega_2'\leq \mu^*(\gamma)$.
This and transitivity of Jacquet modules implies that in the Jacquet module of $\gamma$ we have
$[-\alpha+1]\otimes [-\alpha]\otimes[\alpha]\otimes\sigma$.
Again, transitivity of Jacquet modules implies that there exists an irreducible subquotient of $s_{(3n_\rho)}(\pi)$
which has $[-\alpha+1]\otimes [-\alpha]\otimes[\alpha]\otimes\sigma$ in its Jacquet module.
The above formula implies that the only possibility for such subquotient is $\omega_3$.
Therefore, $\omega_3\leq \mu^*(\gamma)$. Similarly, $\omega_2'\leq \mu^*(\gamma)$ implies $\omega_3'\leq \mu^*(\gamma)$ ($\omega_3'$
is the only term in $s_{(3n_\rho)}(\pi)$ with all the exponents in the cuspidal support positive).
Therefore $\omega_1+\omega_3+\omega_3'\leq s_{(3n_\rho)}(\gamma)$.

Let now $\widehat\gamma$ be the irreducible subquotient of $\pi$
such that $\mu^*(\widehat\gamma)\ge\widehat\omega_1$.
The formula for $s_{(n_\rho)}(\pi)$ implies that $\omega_1$ is a direct summand (consider infinitesimal character in the sense of Bernstein center). Therefore, it is also a direct summand in the Jacquet module $s_{(n_\rho)}(\widehat\gamma)$. Frobenius reciprocity now implies that $\widehat\gamma$ embeds into $ [-\alpha]\rtimes L([\alpha-1];\delta([\alpha];\s))$, which implies $\widehat\gamma=\pi_0$.

From \ref{subsec-w-prop-2-a-} we see that
$
s_{(2n_\rho)}(L([\alpha-1];\delta([\alpha];\s)))=[-\alpha+1]\times[\alpha]\otimes\s +\delta([\alpha-1,\alpha])\otimes\s.
$
This implies that $[-\alpha]\otimes[-\alpha+1]\otimes [\alpha]\otimes\sigma$ and $[-\alpha]\otimes [\alpha]\otimes [\alpha-1]\otimes\sigma$ are in the Jacquet module of $\pi_0$. The only two irreducible pieces of $s_{(3n_\rho)}(\pi)$ having these terms in their Jacquet modules are $\widehat\omega_2$ and $\widehat\omega_2'$ respectively. Thus, $\omega_2+\omega_2'\leq s_{(3n_\rho)}(\pi_0)$.
Now the fact that $s_{(3n_\rho)}(\pi)=\omega_1+\omega_3+\omega_3'+\widehat\omega_2+\widehat\omega_2'$ and the two above inequalities that we have proved for $s_{(3n_\rho)}(\gamma)$ and $s_{(3n_\rho)}(\pi_0)$ imply that $\pi=\gamma+\pi_0$ and
$$
s_{(3n_\rho)}(\pi_0)=\widehat\omega_2+\widehat\omega_2'.
$$
This implies that the semi-simplification of the minimal non-trivial Jacquet modules of $\pi_0$ is
\begin{gather*}
[\alpha]\otimes [\alpha-1]\otimes [-\alpha]\otimes \s+
[\alpha]\otimes [-\alpha]\otimes [\alpha-1]\otimes \s+
[\alpha]\otimes [-\alpha]\otimes [-\alpha+1]\otimes \s+
\\
 [-\alpha]\otimes [\alpha]\otimes [\alpha-1]\otimes \s+
[-\alpha]\otimes [\alpha]\otimes [-\alpha+1]\otimes \s+
[-\alpha]\otimes [-\alpha+1]\otimes [\alpha]\otimes \s.
\end{gather*}
We get $s_{(2n_\rho)}(\pi_0)\geq [-\alpha]\times[\alpha]\otimes[\alpha-1]\rtimes\s+
L([-\alpha],[-\alpha+1])\otimes \delta([\alpha];\s)+
\delta([\alpha-1,\alpha])\otimes L([\alpha]; \s)$ directly from the above formula. The fact that the minimal non-trivial Jacquet module of the right hand side has length 6 implies that the above inequality is actually equality, and therefore we have computed that
$
s_{(2n_\rho)}(\pi_0)=
$
$$
[-\alpha]\times[\alpha]\otimes[\alpha-1]\rtimes\s+
L([-\alpha],[-\alpha+1])\otimes \delta([\alpha];\s)+
\delta([\alpha-1,\alpha])\otimes L([\alpha]; \s).
$$

We know $s_{(n_\rho)}(\pi_0)\geq \widehat\omega_1$. From the formula for $s_{(2n_\rho)}(L([\alpha-1],[\alpha];\s))$ in \ref{subsec-w-prop-2-a-}, we get $s_{(n_\rho)}(\pi_0)\geq [\alpha]\otimes L([\alpha-1],[\alpha];\s)$ (the representation $[\alpha-1]\times[\alpha]\rtimes\sigma$ is regular). Now the inequality
$s_{(n_\rho)}(\pi_0)\geq \widehat\omega_1+ [\alpha]\otimes L([\alpha-1],[\alpha];\s)$ that we have got, is actually the equality
$$
s_{(n_\rho)}(\pi_0)= \widehat\omega_1+ [\alpha]\otimes L([\alpha-1],[\alpha];\s)
$$
since the minimal non-trivial Jacquet module of the right hand side has length 6.
This completes a proof of the proposition.
\end{proof}

\backmatter

\bibliographystyle{plain}

\begin{thebibliography}{}

\end{thebibliography}


\begin{thebibliography}{10}

\bibitem{MR3135650}
James Arthur.
\newblock {\em The endoscopic classification of representations}, volume~61 of
 {\em American Mathematical Society Colloquium Publications}.
\newblock American Mathematical Society, Providence, RI, 2013.
\newblock Orthogonal and symplectic groups.

\bibitem{MR1285969}
Anne-Marie Aubert.
\newblock Dualit\'e dans le groupe de {G}rothendieck de la cat\'egorie des
 repr\'esentations lisses de longueur finie d'un groupe r\'eductif
 {$p$}-adique.
\newblock {\em Trans. Amer. Math. Soc.}, 347(6):2179--2189, 1995.

\bibitem{MR2684298}
Alexandru I. Badulescu and David A. Renard.
\newblock Unitary dual of {${\rm GL}(n)$} at {A}rchimedean places and global
 {J}acquet-{L}anglands correspondence.
\newblock {\em Compos. Math.}, 146(5):1115--1164, 2010.

\bibitem{MR2055385}
Alexandru I. Badulescu and David A. Renard.
\newblock Sur une conjecture de {T}adi\'c.
\newblock {\em Glas. Mat. Ser. III}, 39(59)(1):49--54, 2004.

\bibitem{MR3269346}
Alexandru I. Badulescu.
\newblock On {$p$}-adic {S}peh representations.
\newblock {\em Bull. Soc. Math. France}, 142(2):255--267, 2014.

\bibitem{MR1010153}
Dan Barbasch and Allen Moy.
\newblock A unitarity criterion for {$p$}-adic groups.
\newblock {\em Invent. Math.}, 98(1):19--37, 1989.

\bibitem{MR0579172}
Joseph N. Bernstein and Andrey V.  Zelevinsky.
\newblock Induced representations of reductive {${\germ p}$}-adic groups. {I}.
\newblock {\em Ann. Sci. \'Ecole Norm. Sup. (4)}, 10(4):441--472, 1977.

\bibitem{MR771671}
Joseph N. Bernstein.
\newblock Le ``centre'' de {B}ernstein.
\newblock In {\em Representations of reductive groups over a local field},
 Travaux en Cours, pages 1--32. Hermann, Paris, 1984.
\newblock Edited by P. Deligne.

\bibitem{MR3769724}
Joseph Bernstein, Roman Bezrukavnikov, and David Kazhdan.
\newblock Deligne--{L}usztig duality and wonderful compactification.
\newblock {\em Selecta Math. (N.S.)}, 24(1):7--20, 2018.

\bibitem{MR748505}
Joseph~N. Bernstein.
\newblock {$P$}-invariant distributions on {${\rm GL}(N)$} and the
 classification of unitary representations of {${\rm GL}(N)$}
 (non-{A}rchimedean case).
\newblock In {\em Lie group representations, {II} ({C}ollege {P}ark, {M}d.,
 1982/1983)}, volume 1041 of {\em Lecture Notes in Math.}, pages 50--102.
 Springer, Berlin, 1984.

\bibitem{MR1721403}
Armand Borel and Nolan R. Wallach.
\newblock {\em Continuous cohomology, discrete subgroups, and representations
 of reductive groups}, volume~67 of {\em Mathematical Surveys and Monographs}.
\newblock American Mathematical Society, Providence, RI, second edition, 2000.

\bibitem{MR1204652}
Colin~J. Bushnell and Philip~C. Kutzko.
\newblock {\em The admissible dual of {${\rm GL}(N)$} via compact open
 subgroups}, volume 129 of {\em Annals of Mathematics Studies}.
\newblock Princeton University Press, Princeton, NJ, 1993.

\bibitem{CassNotes}
Bill Casselman.
\newblock Introduction to admissible representations of $p$-adic groups.
\newblock {\em Unpublished notes}, 1995.
\newblock \url{http://www.math.ubc.ca/~cass/research/pdf/p-adic-book.pdf}.

\bibitem{MR2331344}
Laurent Clozel.
\newblock Spectral theory of automorphic forms.
\newblock In {\em Automorphic forms and applications}, volume~12 of {\em
 IAS/Park City Math. Ser.}, pages 43--93. Amer. Math. Soc., Providence, RI,
 2007.

\bibitem{MR0023246}
Izrail{\cprime} M. Gel{\cprime}fand and Mark A. Na{\u\i}mark.
\newblock Unitary representations of semisimple {L}ie groups. {I}.
\newblock {\em Mat. Sbornik N.S.}, 21(63):405--434, 1947.

\bibitem{MR0046370}
Izrail{\cprime} M. Gel{\cprime}fand and Mark A. Na{\u\i}mark.
\newblock {\em Unitarnye predstavleniya klassi\v ceskih grupp}.
\newblock Trudy Mat. Inst. Steklov., vol. 36. Izdat. Nauk SSSR,
 Moscow-Leningrad, 1950.

\bibitem{MR2247868}
Marcela Hanzer.
\newblock The unitary dual of the {H}ermitian quaternionic group of split rank
 2.
\newblock {\em Pacific J. Math.}, 226(2):353--388, 2006.

\bibitem{MR2448433}
Marcela Hanzer.
\newblock Unitarizability of a certain class of irreducible representations of
 classical groups.
\newblock {\em Manuscripta Math.}, 127(3):275--307, 2008.

\bibitem{MR2346481}
Marcela Hanzer.
\newblock Unitary dual of the non-split inner form of {${\rm Sp}(8,F)$}.
\newblock {\em Trans. Amer. Math. Soc.}, 360(2):1005--1034 (electronic), 2008.

\bibitem{MR3052686}
Marcela Hanzer and Chris Jantzen.
\newblock A method of proving non-unitarity of representations of {$p$}-adic
 groups.
\newblock {\em J. Lie Theory}, 22(4):1109--1124, 2012.

\bibitem{MR2652536}
Marcela Hanzer and Marko Tadi{\'c}.
\newblock A method of proving non-unitarity of representations of {$p$}-adic
 groups {I}.
\newblock {\em Math. Z.}, 265(4):799--816, 2010.

\bibitem{MR1876802}
Michael Harris and Richard Taylor.
\newblock {\em The geometry and cohomology of some simple {S}himura varieties},
 volume 151 of {\em Annals of Mathematics Studies}.
\newblock Princeton University Press, Princeton, NJ, 2001.
\newblock With an appendix by Vladimir G. Berkovich.

\bibitem{MR1738446}
Guy Henniart.
\newblock Une preuve simple des conjectures de {L}anglands pour {${\rm GL}(n)$}
 sur un corps {$p$}-adique.
\newblock {\em Invent. Math.}, 139(2):439--455, 2000.

\bibitem{MR1481814}
Chris Jantzen.
\newblock On supports of induced representations for symplectic and
 odd-orthogonal groups.
\newblock {\em Amer. J. Math.}, 119(6):1213--1262, 1997.

\bibitem{MR0209390}
David A. Kazhdan.
\newblock On the connection of the dual space of a group with the structure of
 its closed subgroups.
\newblock {\em Funkcional. Anal. i Prilo\v{z}en.}, 1:71--74, 1967.

\bibitem{MR2050093}
Takuya Konno.
\newblock A note on the {L}anglands classification and irreducibility of
 induced representations of {$p$}-adic groups.
\newblock {\em Kyushu J. Math.}, 57(2):383--409, 2003.

\bibitem{MR2996769}
Arno Kret and Erez Lapid.
\newblock Jacquet modules of ladder representations.
\newblock {\em C. R. Math. Acad. Sci. Paris}, 350(21-22):937--940, 2012.

\bibitem{KudlaNotes}
Stephen~S. Kudla.
\newblock Notes on the local theta correspondence.
\newblock {\em Unpublished Notes}.

\bibitem{MR3573961}
Erez Lapid and Alberto M{\'{\i}}nguez.
\newblock On parabolic induction on inner forms of the general linear group
 over a non-archimedean local field.
\newblock {\em Selecta Math. (N.S.)}, 22(4):2347--2400, 2016.

\bibitem{MR2046512}
Erez Lapid, Goran Mui{\'c}, and Marko Tadi{\'c}.
\newblock On the generic unitary dual of quasisplit classical groups.
\newblock {\em Int. Math. Res. Not.}, (26):1335--1354, 2004.

\bibitem{1703.09475}
Erez Lapid and Marko Tadi{\'c}.
\newblock Some results on reducibility of parabolic induction for classical
 groups.
\newblock {\em Amer. J. Math.}, 142(2):505--546, 2020.

\bibitem{MR2557193}
Ivan Mati{\'c}.
\newblock The unitary dual of {$p$}-adic {${\rm SO}(5)$}.
\newblock {\em Proc. Amer. Math. Soc.}, 138(2):759--767, 2010.

\bibitem{MR3360752}
Ivan Mati{\'c} and Marko Tadi{\'c}.
\newblock On {J}acquet modules of representations of segment type.
\newblock {\em Manuscripta Math.}, 147(3-4):437--476, 2015.

\bibitem{MR0093558}
 Friederich I. Mautner.
\newblock Spherical functions over {${\germ P}$}-adic fields. {I}.
\newblock {\em Amer. J. Math.}, 80:441--457, 1958.

\bibitem{MR0324429}
Dragan Mili{\v{c}}i{\'c}.
\newblock On {$C^{\ast} $}-algebras with bounded trace.
\newblock {\em Glasnik Mat. Ser. III}, 8(28):7--22, 1973.

\bibitem{MR1913095}
Colette M\oe glin.
\newblock Sur la classification des s\'{e}ries discr\`etes des groupes
 classiques {$p$}-adiques: param\`etres de {L}anglands et exhaustivit\'{e}.
\newblock {\em J. Eur. Math. Soc. (JEMS)}, 4(2):143--200, 2002.

\bibitem{MR2209850}
Colette M\oe glin.
\newblock Sur certains paquets d'{A}rthur et involution
 d'{A}ubert-{S}chneider-{S}tuhler g\'{e}n\'{e}ralis\'{e}e.
\newblock {\em Represent. Theory}, 10:86--129, 2006.

\bibitem{MR2767522}
Colette M\oe glin.
\newblock Multiplicit\'{e} 1 dans les paquets d'{A}rthur aux places
 {$p$}-adiques.
\newblock In {\em On certain {$L$}-functions}, volume~13 of {\em Clay Math.
 Proc.}, pages 333--374. Amer. Math. Soc., Providence, RI, 2011.

\bibitem{MR1896238}
Colette M\oe glin and Marko Tadi\'{c}.
\newblock Construction of discrete series for classical {$p$}-adic groups.
\newblock {\em J. Amer. Math. Soc.}, 15(3):715--786, 2002.

\bibitem{MR3823813}
Colette Moeglin and Jean-Loup Waldspurger.
\newblock {\em Stabilisation de la formule des traces tordue. {V}ol. 1}, volume
 316 of {\em Progress in Mathematics}.
\newblock Birkh\"{a}user/Springer, Cham, 2016.

\bibitem{MR3823814}
Colette Moeglin and Jean-Loup Waldspurger.
\newblock {\em Stabilisation de la formule des traces tordue. {V}ol. 2}, volume
 317 of {\em Progress in Mathematics}.
\newblock Birkh\"{a}user/Springer, Cham, 2016.

\bibitem{MR2767523}
Goran Mui{\'c} and Marko Tadi{\'c}.
\newblock Unramified unitary duals for split classical {$p$}-adic groups; the
 topology and isolated representations.
\newblock In {\em On certain {$L$}-functions}, volume~13 of {\em Clay Math.
 Proc.}, pages 375--438. Amer. Math. Soc., Providence, RI, 2011.

\bibitem{MR2567785}
David Renard.
\newblock {\em Repr\'esentations des groupes r\'eductifs {$p$}-adiques},
 volume~17 of {\em Cours Sp\'ecialis\'es [Specialized Courses]}.
\newblock Soci\'et\'e Math\'ematique de France, Paris, 2010.

\bibitem{MR0447491}
Fran{\c{c}}ois Rodier.
\newblock Les repr\'esentations de {${\rm GSp}\ (4, k)$} o\`u {$k$} est un
 corps local.
\newblock {\em C. R. Acad. Sci. Paris S\'er. A-B}, 283(7):Ai, A429--A431, 1976.

\bibitem{MR689531}
Fran{\c{c}}ois Rodier.
\newblock Repr\'esentations de {${\rm GL}(n,\,k)$} o\`u {$k$} est un corps
 {$p$}-adique.
\newblock In {\em Bourbaki {S}eminar, {V}ol. 1981/1982}, volume~92 of {\em
 Ast\'erisque}, pages 201--218. Soc. Math. France, Paris, 1982.

\bibitem{MR1212952}
Paul~J. Sally, Jr. and Marko Tadi\'c.
\newblock Induced representations and classifications for {${\rm GSp}(2,F)$}
 and {${\rm Sp}(2,F)$}.
\newblock {\em M\'em. Soc. Math. France (N.S.)}, (52):75--133, 1993.

\bibitem{MR1471867}
Peter Schneider and Ulrich Stuhler.
\newblock Representation theory and sheaves on the {B}ruhat-{T}its building.
\newblock {\em Inst. Hautes \'Etudes Sci. Publ. Math.}, (85):97--191, 1997.

\bibitem{MR2492994}
Vincent S\'{e}cherre.
\newblock Proof of the {T}adi\'{c} conjecture ({U}0) on the unitary dual of
 {${\rm GL}_m(D)$}.
\newblock {\em J. Reine Angew. Math.}, 626:187--203, 2009.

\bibitem{MR0507262}
Allan~J. Silberger.
\newblock The {L}anglands quotient theorem for {$p$}-adic groups.
\newblock {\em Math. Ann.}, 236(2):95--104, 1978.

\bibitem{MR577138}
Allan~J. Silberger.
\newblock Special representations of reductive {$p$}-adic groups are not
 integrable.
\newblock {\em Ann. of Math. (2)}, 111(3):571--587, 1980.

\bibitem{MR733166}
Marko~Tadi{\'c}.
\newblock The topology of the dual space of a reductive group over a local
 field.
\newblock {\em Glas. Mat. Ser. III}, 18(38)(2):259--279, 1983.

\bibitem{MR3711838}
Marko~Tadi\'{c}.
\newblock Some bounds on unitary duals of classical groups---non-archimeden
 case.
\newblock {\em Bull. Iranian Math. Soc.}, 43(4):405--433, 2017.

\bibitem{MR794086}
Marko Tadi{\'c}.
\newblock Proof of a conjecture of {B}ernstein.
\newblock {\em Math. Ann.}, 272(1):11--16, 1985.

\bibitem{TadicBonn1985}
Marko Tadi\'c.
\newblock Unitary representations of general linear group over real and complex
 field.
\newblock {\em Preprint MPI/SFB 85-22 Bonn}, 1985.
\newblock \url{http://www.mpim-bonn.mpg.de/preblob/5395}.

\bibitem{MR870688}
Marko Tadi{\'c}.
\newblock Classification of unitary representations in irreducible
 representations of general linear group (non-{A}rchimedean case).
\newblock {\em Ann. Sci. \'Ecole Norm. Sup. (4)}, 19(3):335--382, 1986.

\bibitem{MR963153}
Marko Tadi{\'c}.
\newblock Geometry of dual spaces of reductive groups (non-{A}rchimedean case).
\newblock {\em J. Analyse Math.}, 51:139--181, 1988.

\bibitem{MR1040995}
Marko Tadi{\'c}.
\newblock Induced representations of {${\rm GL}(n,A)$} for {$p$}-adic division
 algebras {$A$}.
\newblock {\em J. Reine Angew. Math.}, 405:48--77, 1990.

\bibitem{MR1181278}
Marko Tadi{\'c}.
\newblock An external approach to unitary representations.
\newblock {\em Bull. Amer. Math. Soc. (N.S.)}, 28(2):215--252, 1993.

\bibitem{MR1266251}
Marko Tadi{\'c}.
\newblock Representations of {$p$}-adic symplectic groups.
\newblock {\em Compositio Math.}, 90(2):123--181, 1994.

\bibitem{MR1356358}
Marko Tadi{\'c}.
\newblock Structure arising from induction and {J}acquet modules of
 representations of classical {$p$}-adic groups.
\newblock {\em J. Algebra}, 177(1):1--33, 1995.

\bibitem{MR1658535}
Marko Tadi{\'c}.
\newblock On reducibility of parabolic induction.
\newblock {\em Israel J. Math.}, 107:29--91, 1998.

\bibitem{MR2090870}
Marko Tadi{\'c}.
\newblock On classification of some classes of irreducible representations of
 classical groups.
\newblock In {\em Representations of real and {$p$}-adic groups}, volume~2 of
 {\em Lect. Notes Ser. Inst. Math. Sci. Natl. Univ. Singap.}, pages 95--162.
 Singapore Univ. Press, Singapore, 2004.

\bibitem{MR2504024}
Marko Tadi{\'c}.
\newblock On reducibility and unitarizability for classical {$p$}-adic groups,
 some general results.
\newblock {\em Canad. J. Math.}, 61(2):427--450, 2009.

\bibitem{MR2537046}
Marko Tadi{\'c}.
\newblock {${\rm GL}(n,\mathbb C)\sphat$} and {${\rm GL}(n,\mathbb R)\sphat$}.
\newblock In {\em Automorphic forms and {$L$}-functions {II}. {L}ocal aspects},
 volume 489 of {\em Contemp. Math.}, pages 285--313. Amer. Math. Soc.,
 Providence, RI, 2009.

\bibitem{MR2742213}
Marko Tadi{\'c}.
\newblock On automorphic duals and isolated representations; new phenomena.
\newblock {\em J. Ramanujan Math. Soc.}, 25(3):295--328, 2010.

\bibitem{MR2908042}
Marko Tadi{\'c}.
\newblock Reducibility and discrete series in the case of classical {$p$}-adic
 groups; an approach based on examples.
\newblock In {\em Geometry and analysis of automorphic forms of several
 variables}, volume~7 of {\em Ser. Number Theory Appl.}, pages 254--333. World
 Sci. Publ., Hackensack, NJ, 2012.

\bibitem{MR3156864}
Marko Tadi{\'c}.
\newblock On interactions between harmonic analysis and the theory of
 automorphic forms.
\newblock In {\em Automorphic representations and {$L$}-functions}, volume~22
 of {\em Tata Inst. Fundam. Res. Stud. Math.}, pages 591--650. Tata Inst.
 Fund. Res., Mumbai, 2013.

\bibitem{MR3123571}
Marko Tadi{\'c}.
\newblock On tempered and square-integrable representations of classical
 {$p$}-adic groups.
\newblock {\em Sci. China Math.}, 56(11):2273--2313, 2013.

\bibitem{MR3456591}
Marko Tadi{\'c}.
\newblock Remark on representation theory of general linear groups over a
 non-archimedean local division algebra.
\newblock {\em Rad Hrvat. Akad. Znan. Umjet. Mat. Znan.}, 19(523):27--53, 2015.

\bibitem{MR3969882}
Marko Tadi{\'c}.
\newblock On unitarizability in the case of classical p-adic groups.
\newblock In {\em Geometric aspects of the trace formula}, Simons Symp., page
 405–453. Springer, Cham, 2018.

\bibitem{MR827363}
David~A. Vogan, Jr.
\newblock The unitary dual of {${\rm GL}(n)$} over an {A}rchimedean field.
\newblock {\em Invent. Math.}, 83(3):449--505, 1986.

\bibitem{MR584084}
Andrey V. Zelevinsky.
\newblock Induced representations of reductive {${\germ p}$}-adic groups. {II}.
 {O}n irreducible representations of {${\rm GL}(n)$}.
\newblock {\em Ann. Sci. \'Ecole Norm. Sup. (4)}, 13(2):165--210, 1980.

\end{thebibliography}

\def\cprime{$'$}

\end{document}